\newcounter{dummy}
\newcommand\myitem[1][]{\item[#1]\refstepcounter{dummy}\def\@currentlabel{#1}}
\newcommand{\lto}{\longrightarrow}
\newcommand{\CONE}{\mathsf{Cone}}
\newcommand{\GenCONE}{\mathsf{GenCone}}
\newcommand{\CONEg}{\mathsf{Cone}_{R>0}}
\newcommand{\CONEgeq}{\mathsf{Cone}_{R\geq 0}}
\newcommand{\MMgrad}{\MM_{\grad}}
\newcommand{\MMg}{\MM_{R > 0}}
\newcommand{\MMgeq}{\MM_{R \geq 0}}
\newcommand{\MMggrad}{\MM_{\grad, R > 0}}
\newcommand{\MMgeqgrad}{\MM_{\grad,R \geq 0}}
\newcommand{\IR}{\mathbb{R}}
\newcommand{\lb}{\linebreak[1]}
\newcommand{\IC}{\mathbb{C}}
\newcommand{\IZ}{\mathbb{Z}}
\newcommand{\LL}{\mathcal{L}}
\newcommand{\MM}{\mathcal{M}}
\newcommand{\EE}{\mathcal{E}}
\newcommand{\WW}{\mathcal{W}}
\newcommand{\eps}{\varepsilon}
\newcommand{\la}{\lambda}
\newcommand{\ov}[1]{\overline{#1}}
\newcommand{\td}[1]{\widetilde{#1}}
\DeclareMathOperator{\Null}{null}
\DeclareMathOperator{\Image}{Image}
\DeclareMathOperator{\idx}{index}
\DeclareMathOperator{\grad}{grad}
\DeclareMathOperator{\Index}{index}
\DeclareMathOperator{\loc}{loc}
\DeclareMathOperator{\Int}{Int}
\DeclareMathOperator{\Ric}{Ric}
\DeclareMathOperator{\Rm}{Rm}
\DeclareMathOperator{\tr}{tr}
\DeclareMathOperator{\id}{id}
\DeclareMathOperator{\diam}{diam}
\DeclareMathOperator{\inj}{inj}
\DeclareMathOperator{\eucl}{eucl}
\DeclareMathOperator{\vol}{vol}
\DeclareMathOperator{\supp}{supp}
\DeclareMathOperator{\End}{End}
\DeclareMathOperator{\spann}{span}
\DeclareMathOperator{\rank}{rank}
\DeclareMathOperator{\DIV}{div}
\DeclareMathOperator{\proj}{proj}
\newcommand{\EMPTY}[1]{}
\newtheorem{Theorem}[equation]{Theorem}
\newtheorem{Lemma}[equation]{Lemma}
\newtheorem{Corollary}[equation]{Corollary}
\newtheorem{Proposition}[equation]{Proposition}
\newtheorem{Claim}[equation]{Claim}
\newtheorem{Question}[equation]{Question}
\newtheorem{Conjecture}[equation]{Conjecture}
\theoremstyle{definition}
\newtheorem{Definition}[equation]{Definition}
\theoremstyle{remark}
\newtheorem{Remark}[equation]{Remark}
\numberwithin{equation}{section}
\title[Degree theory for expanding solitons]{Degree theory for 4-dimensional asymptotically conical gradient expanding solitons}
\author{Richard H  Bamler and Eric Chen}
\address{Department of Mathematics, UC Berkeley, CA 94720, USA}
\email{rbamler@berkeley.edu}
\email{ecc@berkeley.edu}
\thanks{R.B. was supported by NSF grants DMS-1906500, DMS-2204364, E.C. was supported by NSF award DMS-3103392}
\date{\today}
\begin{document}

\begin{abstract}
We develop a new degree theory for 4-dimensional, asymptotically conical gradient expanding solitons.
Our theory implies the existence of gradient expanding solitons that are asymptotic to any given cone over $S^3$ with non-negative scalar curvature.
We also obtain a similar existence result for cones whose link is diffeomorphic to $S^3/\Gamma$ if we allow the expanding soliton to have orbifold singularities.

Our theory reveals the existence of a new topological invariant, called the \emph{expander degree,} applicable to a particular class of compact, smooth 4-orbifolds with boundary.
This invariant is roughly equal to a signed count of all possible gradient expanding solitons that can be defined on the interior of the orbifold and are asymptotic to any fixed cone metric with non-negative scalar curvature.
If the expander degree of an orbifold is non-zero, then gradient expanding solitons exist for any such cone metric.
We show that the expander degree of the 4-disk $D^4$ and any orbifold of the form $D^4/\Gamma$ equals $1$.
Additionally, we demonstrate that the expander degree of certain orbifolds, including exotic 4-disks, vanishes.

Our theory also sheds light on the relation between gradient and non-gradient expanding solitons with respect to their asymptotic model. 
More specifically, we show that among the set of asymptotically conical expanding solitons, the subset of those solitons that are \emph{gradient} forms a union of connected components.
\end{abstract}

\maketitle

\section{Introduction}
\subsection{Motivation and brief summary of the main results}
Ricci flows have had profound applications to the resolution of various topological conjectures in dimension 3.
In part, this was thanks to Perelman's surgery construction \cite{Perelman1, Perelman2}, which allowed the removal or resolution of singularities and the continuation of the flow past singular times.
It remains a central question in the field of Ricci flow whether a similar construction can be carried out in higher dimensions.
An affirmative answer to this question may lead to further interesting topological and geometric applications.

Recent work of the first author \cite{Bamler_HK_entropy_estimates,Bamler_RF_compactness,Bamler_HK_RF_partial_regularity} has revealed that in 4-dimensional Ricci flows, singularities may be described by cylindrical and conical singularity models -- at certain scales and in certain regions.
Notably, the \emph{conical} models represent a new type of model in dimension~4 (and higher); see also \cite{FIK_shrinker, Maximo_Ric_sing}.
So a new surgery procedure is required in order to continue a 4-dimensional Ricci flow beyond a conical singularity.
It has been proposed \cite{Gianniotis_Schulze_2018, Angenent_Knopf_2022} that asymptotically conical expanding solitons could be used to resolve such singularities.
This motivates the following question:

\begin{Question} \label{Q_main}
Given a 4-dimensional Riemannian cone with non-negative scalar curvature, is there a gradient expanding soliton with non-negative scalar curvature that is asymptotic to this cone?
\end{Question}

In this paper, we resolve Question~\ref{Q_main} affirmatively if the link of the given cone is diffeomorphic to a 3-sphere.
The topology of the expanding soliton will be diffeomorphic to $\IR^4$.
Our results will also resolve Question~\ref{Q_main} in the case where the link of the given cone is diffeomorphic to a spherical space form $S^3/\Gamma$.
In this case, the expanding soliton will be diffeomorphic to the orbifold $\IR^4/\Gamma$.
It is worth noting that orbifolds with isolated singularities are expected to naturally arise in the construction of 4-dimensional Ricci flow with surgery \cite{Simon_2020}, and thus the study of expanding solitons on orbifolds is highly relevant.

Our resolution of Question~\ref{Q_main} rests on a new degree theory for the space of asymptotically conical expanding solitons, which we believe is of independent interest.
The key concept of this theory is a new notion called the \emph{expander degree} $\deg_{\exp} (X) \in \IZ$, which is defined for a certain class of compact, 4-dimensional orbifolds with boundary.
This degree can be viewed as a signed count of all possible gradient expanding solitons on the interior of $X$ that are asymptotic (near $\partial X$) to a fixed cone metric with non-negative scalar curvature.
Therefore, if $\deg_{\exp} (X) \neq 0$, then gradient expanding solitons exist for any such prescribed asymptotic cone metric.
We compute the expander degree for $X \approx D^4$ or $D^4/\Gamma$ to be equal to $1$, which implies our aforementioned existence results.
We also show that if $\partial X \approx S^3$ and $\deg_{\exp}(X) \neq 0$, then $X$ must be diffeomorphic to a 4-disk.
In particular, this shows that the expander degree of any exotic 4-disk vanishes.

It is worth emphasizing that our degree theory differs from conventional degree theories in that it only holds in a certain localized sense.
Specifically, our theory is built upon the study of a much larger space which also includes \emph{non-gradient} expanding solitons that are asymptotic to a \emph{generalization} of a cone metric.
This is necessary due to the fact that the space of asymptotically conical \emph{gradient} expanding solitons, which is our main focus, has poor analytical properties, because it may not be a Banach manifold.
To overcome these analytical complications, we develop a degree theory for this larger space, which needs to be localized near the subset of \emph{gradient} expanding solitons.
As a byproduct of this theory, we show that the set of asymptotically conical \emph{gradient} expanding solitons that are asymptotic to a cone is a union of connected components of the space of asymptotically conical expanding solitons.

For a detailed discussion of our degree theory and the main results of this paper, we refer the reader to Subsection~\ref{subsec_statement_of_results}. 

\subsection{Historical context}
In recent years, there has been an increased interest in asymptotically conical expanding solitons, with some work focusing on their existence.
For instance, Schulze, Simon and Deruelle \cite{Schulze_Simon_2013, Deruelle} showed the existence of a gradient expanding soliton asymptotic to any cone metric with \emph{positive curvature operator,} in any dimension.
In this case, the soliton also has positive curvature operator and is unique up to diffeomorphisms within this class.
In addition, Conlon, Deruelle and Sun \cite{Conlon_Deruelle_2020, Conlon_Deruelle_Sun_2019}, in part building on earlier work of Siepmann \cite{Siepmann_thesis_2013}, established the existence and uniqueness of asymptotically conical \emph{K\"ahler} gradient expanding solitons on smooth canonical models of K\"ahler cones, again in any dimension.
This leads to a classification of all 4-dimensional, asymptotically conical, gradient expanding solitons in the K\"ahler case.

Deruelle's proof in \cite{Deruelle} used a continuity method that relied heavily on the positive curvature condition for two reasons.
First, it was necessary to establish the invertibility of the linearization of the soliton equation to guarantee the existence of nearby solitons corresponding to deformations of the cone metric.
Second, it was used to derive the gradient property for these nearby solitons, which was crucial for the analysis to continue.
By contrast, our degree theory does not rely on this curvature condition.
By focusing on the degree, we can remove the need for invertibility of the linearization, albeit at the expense of sacrificing uniqueness.
Furthermore, we develop a new continuity method that enables us to establish the preservation of the gradient condition within connected components of the space of asymptotically conical expanding solitons.
For further related work on expanding solitons see \cite{Chodosh_2014, Chen_Deruelle_2015,Deruelle_2015,Chodosh_Fong_2016, Deruelle_Lamm_2017,Deruelle_2017b,Deruelle_2017, Lott_Wilson_2017, Deruelle_Schulze_2021, Deruelle_Schulze_Simon_2022b,Lee_Topping_2022, Chan_2023, Cao_Liu_Xie_2023,Yudowitz}.

In mean curvature flow, expanders (the analogs of expanding solitons) have also received increased attention \cite{Bernstein_Wang_2018_degree_MCF, Bernstein_2020, Deruelle_Schulze_2020, Bernstein_Wang_2021, Bernstein_Wang_2021b, Bernstein_Wang_2022, Bernstein_Wang_2022b, Bernstein_Wang_2022c,Topping_Yin_2022}.
A useful feature of the expander equation is its equivalence to the minimal surface equation after conformally changing the Euclidean metric by the factor $e^{|x|^2/4}$.
This equivalence allows the mean curvature flow analog of Question~\ref{Q_main} to be approached using a minimization technique. 
However, expanding Ricci solitons do not enjoy a similar equivalence, rendering our degree technique seemingly the only viable approach for constructing expanding solitons with a prescribed asymptotic behavior.
We also refer to related work for expanders of the harmonic map heat flow in \cite{Deruelle_2019, Deruelle_Lamm_2021}.

Degree theories have found wide applications in the study of analysis, with Smale \cite{Smale_Sard_thm} laying the foundations for the $\IZ_2$-case and Elworthy and Tromba \cite{Elworthy_Tromba_1970} extending them to the integer case. 
Their significance in geometric analysis was first demonstrated for the minimal surface equation by Tomi, Tromba and White \cite{Tomi_Tromba_1978,Tromba_1985,White_1987, White_1989,White_1989b, White_1991}. 
Applications to other geometric equations are abundant. 
Of particular relevance to our work are the degree theories of Bernstein and Wang \cite{Bernstein_Wang_2021,Bernstein_Wang_2018_degree_MCF} for mean curvature flow expanders and by Anderson, Chang, Ge and Qing \cite{Anderson_2008,Chang_Ge_2020} for asymptotically hyperbolic Einstein manifolds.
In both cases, the authors show that the projection map in question is indeed a proper map between Banach manifolds and therefore it has a well defined degree.
Unfortunately, in our setting, the set of gradient expanding solitons only forms a possibly singular subvariety of a larger Banach manifold, on which the projection map may not be proper. 
To address this issue, we develop a new degree theory for the map on the larger Banach manifold, which only holds in a localized sense near the subvariety of gradient expanding solitons.
This localization poses particular challenges in the definition of the \emph{integer} degree, as the differential of the projection map may be degenerate everywhere along this subvariety.
For more details see Subsection~\ref{subsec_Overview}.

\subsection{Description of the degree theory and statement of the results}\label{subsec_statement_of_results}
Let us now provide a more detailed definition of the expander degree. 
For the sake of clarity and brevity, some technical aspects, such as regularity discussions, will be suppressed.
It is also worth noting that the expander degree can be defined as either  an integer or its reduction modulo 2.
The definition of the $\IZ_2$-degree is easier and sufficient for many purposes, for example for resolving Question~\ref{Q_main} in the case in which the expanding soliton is diffeomorphic to $\IR^4/\Gamma$.

Fix a compact, smooth 4-dimensional orbifold $X$ with isolated singularities and non-empty boundary $\partial X$ that satisfies the following two topological assumptions:
\begin{enumerate}[label=(\arabic*)]
\item \label{Property_1} The boundary $\partial X$ consists of regular points and admits a metric of positive scalar curvature.
So its components are diffeomorphic to connected sums of spherical space forms and copies of $S^2 \times S^1$, after possibly passing to orientable double covers.
\item \label{Property_2} $X$ admits a (possibly infinite) orbifold cover $\hat X \to X$, such that $\hat X$ is a manifold whose integer-valued  first and second homology groups satisfy the following properties:
\[ H_2(\hat X;\IZ) = 0, \qquad \text{$H_1(\hat X;\IZ)$ is torsion free} \]
\end{enumerate}
An interesting class of examples of such orbifolds is given by $X = D^4 / \Gamma$, where $D^4 \subset \IR^4$ is the closed unit 4-disk and $\Gamma \subset SO(4)$ is a finite subgroup that acts freely on the unit 3-sphere $S^3 = \partial D^4$.

Fix an integer or infinity $30 \leq k^* \leq \infty$.
We now define\footnote{See Definitions~\ref{Def_MM} and \ref{Def_MMX} for more details. We also remark that in the case $k^*=\infty$, we can take $g$ and $f$ to be smooth.} $\MMgrad^{k^*}(X)$ to be the space of isometry classes of complete, gradient expanding solitons on the interior $\Int X$ of $X$ that are asymptotic to a $C^{k^*}$-regular conical metric on a tubular neighborhood of $\partial X$.
More specifically, these classes are represented by triples of the form $(g, \nabla f,\gamma)$.
Here $g$ and $\nabla f$ denote a Riemannian metric and the gradient of a potential function $f$ on $\Int X$ that satisfy the gradient expanding soliton equation
\[ \Ric_g + \nabla^2 f + \tfrac12 g = 0. \]
Moreover, $\gamma$ is a $C^{k^*}$-regular Riemannian cone metric of the form $\gamma = dr^2 + r^2 h$ defined on $\IR_+ \times \partial X$, for some Riemannian metric $h$ on $\partial X$, which describes an asymptotically conical behavior of $g$ and $\nabla f$ at infinity. 
To make this last requirement more precise, we identify a tubular neighborhood $U \subset X$ of $\partial X$ with $(1,\infty] \times \partial X$ and require\footnote{Note that it is necessary for the first asymptotic condition in \eqref{eq_gdrhO} to hold for higher derivatives as well. Furthermore, we can ensure that the second condition in \eqref{eq_gdrhO} is satisfied by pullback through a suitable diffeomorphism, assuming the first condition is already met. For more details see Definition~\ref{Def_MM} and Lemma~\ref{Lem_iota}.} that for some $r_0 > 0$
\begin{equation} \label{eq_gdrhO}
 g = \gamma + O(r^{-2}), \qquad\qquad \nabla^g f= - \tfrac12 r \partial_r \qquad \text{on} \quad (r_0, \infty) \times \partial X,
\end{equation}
where $r$, $\partial_r$ denotes the first coordinate and coordinate vector field on on $U \approx (1,\infty] \times \partial X$.
Two triples $(g_1, \nabla f_1,\gamma_1)$ and $(g_2, \nabla f_2,\gamma_2)$ are said to represent the same class in $\MMgrad^{k^*}(X)$ if $g_2 = \phi^* g_1$ and $\nabla^{g_2} f_2 = \phi^*( \nabla^{g_1} f_1 )$ for a diffeomorphism $\phi : X \to X$ that fixes the boundary\footnote{Thanks to the second condition in \eqref{eq_gdrhO} such a diffeomorphism even fixes a neighborhood of the boundary of $\partial X$.}; this condition also implies that $\gamma_1=\gamma_2$.

Denote\footnote{Compare with Definition~\ref{Def_GenCONE}.} by $\CONE^{k^*}(\partial X)$ the set of all $C^{k^*}$-regular cone metrics $\gamma = dr^2 + r^2 h$ on $\IR_+ \times \partial X$ and let
\[ \Pi : \MMgrad^{k^*} (X) \longrightarrow \CONE^{k^*}(\partial X), \]
be the map that assigns to any gradient expanding soliton $[(g,\nabla f,\gamma)]  \in \MMgrad^{k^*}(X)$ its asymptotic cone metric $\gamma$.
We will denote by $$\MMgeqgrad^{k^*}(X) \subset \MMgrad^{k^*}(X), \qquad \CONEgeq^{k^*}(\partial X) \subset \CONE^\infty(\partial X)$$ the subsets of solitons and cone metrics, respectively, with non-negative scalar curvature.
Note that by restricting $\Pi$ to $\MMgeqgrad^{k^*}(X)$, we obtain a map of the form
\begin{equation} \label{eq_intro_Pi_restr}
\Pi \big|_{\MMgeqgrad^{k^*}(X)} \; : \; \MMgeqgrad^{k^*}(X)  \lto \CONEgeq^{k^*}(\partial X).
\end{equation}

We can now state our main result in a vague form:

\begin{Theorem} \label{Thm_main_vague}
The map \eqref{eq_intro_Pi_restr} is proper in a suitable topology.
In a certain generalized sense it has a well defined, integer-valued degree, called the \emph{expander degree,}
\[  \deg_{\exp} (X)  \in \IZ. \]
This degree is an invariant of the smooth structure of $X$.
For any fixed cone metric $\gamma \in\CONEgeq^{k^*}(\partial X)$ it can be solely determined from the local behavior of a certain extension of $\Pi$ near $\gamma$.
In particular, if $\deg_{\exp} (X)  \neq 0$, then the map \eqref{eq_intro_Pi_restr} is surjective.
\end{Theorem}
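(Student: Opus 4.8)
The plan is to read Theorem~\ref{Thm_main_vague} as a package of statements about an auxiliary, analytically better-behaved space and to establish them in turn. Since $\MMgrad^{k^*}(X)$ is expected to be only a possibly singular subvariety, I would first pass to the larger space $\MM^{k^*}(X)$ of \emph{all} complete asymptotically conical expanding solitons $(g,W,\gamma)$ on $\Int X$, not necessarily gradient, solving $\Ric_g + \tfrac12 \LL_W g + \tfrac12 g = 0$ with $\gamma$ a \emph{generalized} cone in $\GenCONE^{k^*}(\partial X)$, together with the extended projection $\Pi:\MM^{k^*}(X)\to\GenCONE^{k^*}(\partial X)$. Working in weighted Hölder spaces adapted to the conical decay of \eqref{eq_gdrhO} and quotienting by the natural gauge/diffeomorphism action, I would show that $\MM^{k^*}(X)$ is a Banach manifold and that $\Pi$ is a Fredholm map of index $0$, the index being computed from the linearized expanding soliton operator and its indicial roots at the conical ends, using the asymptotic normalization \eqref{eq_gdrhO}. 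Simultaneously I would realize $\MMgrad^{k^*}(X)$ as the locus inside $\MM^{k^*}(X)$ on which $W$ is a gradient and prove — the ``new continuity method'' advertised in the introduction — that this locus is both open and closed, hence a union of connected components (and likewise for the non-negative scalar curvature subsets); openness should come from a unique continuation argument for an elliptic equation governing the non-gradient part of $W$, and closedness from passing to limits.

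Next I would prove that $\Pi\big|_{\MMgeqgrad^{k^*}(X)}$ is proper: given gradient expanding solitons with non-negative scalar curvature whose asymptotic cones converge in $\CONEgeq^{k^*}(\partial X)$, one extracts a subsequential limit, modulo diffeomorphisms of $X$, that is again such a soliton on $\Int X$. This combines interior curvature and injectivity-radius estimates for expanders, control of the geometry near $\partial X$ from the normalization \eqref{eq_gdrhO}, the $4$-dimensional non-negative scalar curvature hypothesis to exclude non-removable singularities and collapse, and the topological hypotheses \ref{Property_1}--\ref{Property_2} to guarantee that the limit still lives on $\Int X$ with no topology escaping to infinity. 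I expect this to be the main obstacle: the delicate point is uniform control of the asymptotic ends as the sequence degenerates — ruling out curvature concentration or collapse along the conical ends — which forces one to combine $\varepsilon$-regularity for the Ricci-flow-type soliton equation with the rigidity supplied by the soliton and cone structure.

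With $\Pi$ a proper Fredholm map of index $0$ near $\MMgeqgrad^{k^*}(X)$, I would then localize: fix a small open neighborhood $\mathcal{U}$ of $\MMgeqgrad^{k^*}(X)$ inside $\MMgeq^{k^*}(X)$ over which $\Pi$ is proper onto its image in $\GenCONEgeq^{k^*}(\partial X)$. Since the differential of $\Pi$ may be degenerate at \emph{every} point of the subvariety $\MMgrad^{k^*}(X)$, the degree cannot be defined by counting preimages of a regular value directly; instead, for a fixed generalized cone $\gamma$ one perturbs $\Pi$ within its Fredholm, index-$0$ homotopy class to a map transverse to $\gamma$ near the compact fiber $\Pi^{-1}(\gamma)\cap\mathcal{U}$, and sets $\deg_{\exp}(X)$ equal to the resulting signed count of preimages, the signs coming from an orientation of the determinant line bundle of the linearized operator. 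Properness makes this count finite and independent of the admissible perturbation, and a homotopy-invariance argument (along paths of generalized cones in $\GenCONEgeq^{k^*}(\partial X)$) makes it independent of $\gamma$ — which is precisely the assertion that the degree is determined by the local behavior of an extension of $\Pi$ near any fixed cone. For the $\IZ_2$-reduction one may instead invoke Sard--Smale directly. As every step uses only the smooth structure of $X$, the integer $\deg_{\exp}(X)$ is a diffeomorphism invariant.

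Finally, surjectivity of \eqref{eq_intro_Pi_restr} is immediate from this local description: if some $\gamma\in\CONEgeq^{k^*}(\partial X)$ had empty preimage under $\Pi\big|_{\MMgeqgrad^{k^*}(X)}$, then $\Pi^{-1}(\gamma)\cap\mathcal{U}=\emptyset$, the localized perturbed count near $\gamma$ would vanish, and hence $\deg_{\exp}(X)=0$; contrapositively, $\deg_{\exp}(X)\neq 0$ forces a gradient expanding soliton with non-negative scalar curvature asymptotic to every such $\gamma$.
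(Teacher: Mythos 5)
Your overall skeleton does match the paper's strategy: enlarge to the space $\MM^{k^*}(X)$ of possibly non-gradient expanding solitons asymptotic to generalized cones, prove properness of $\Pi$ restricted to $\MMgeqgrad^{k^*}(X)$ by entropy/non-collapsing estimates plus the topological hypotheses (ruling out ALE bubbles), localize a degree near the gradient locus, and deduce surjectivity from nonvanishing degree. However, two of your intermediate claims are not correct as stated. First, the gradient locus is \emph{not} open in $\MM^{k^*}(X)$: by Theorem~\ref{Thm_many_deformations}, every gradient soliton admits nearby non-gradient deformations asymptotic to genuinely generalized cones, so no unique-continuation argument can make $\MMgrad^{k^*}(X)$ open--closed in $\MM^{k^*}(X)$. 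What is true (Theorem~\ref{Thm_preservation_gradient_intro}) is that it is a union of connected components of $\Pi^{-1}(\CONE^{k^*}(\partial X))$, a possibly \emph{singular} real-analytic subvariety; proving this needs the preservation of the gradient condition along $C^1$-families (the continuity method of Section~\ref{sec_gradientness}) combined with local path-connectedness of real-analytic varieties, and in the paper this is carried out inside finite-dimensional slices. Relatedly, your claim that all of $\MM^{k^*}(X)$ is a Banach manifold with $\Pi$ Fredholm overreaches: the DeTurck/IFT analysis requires a \emph{gradient} base soliton, so only a neighborhood $\MM'$ of $\MMgrad^{k^*}(X)$ acquires this structure (which suffices, but your plan should say so).

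The more serious gaps are in your localization and orientation steps. Properness of $\Pi|_{\MMgeqgrad^{k^*}(X)}$ does not yield an open neighborhood $\mathcal{U}$ of $\MMgeqgrad^{k^*}(X)$ in the infinite-dimensional space over which $\Pi$ is proper: the standard argument producing such a neighborhood (as in Lemma~\ref{Lem_proper_maps}) uses relatively compact open neighborhoods of fibers and is intrinsically finite-dimensional, and moreover a fiber $\Pi^{-1}(\gamma)\cap\mathcal{U}$ could a priori contain non-gradient solitons over the cone $\gamma$, which the properness result does not control. The paper resolves both issues at once by intersecting with $\Pi^{-1}(Q)$ for a finite-dimensional real-analytic family $Q\subset\GenCONE^{k^*}(\partial X)$ transverse to $\Pi$, obtaining a finite-dimensional $P$ and shrinking so that $(\Pi|_P)^{-1}(\CONEgeq^{k^*}(\partial X))=\MMgeqgrad^{k^*}(X)\cap P$; only then is the localized degree of $\Pi|_P$ over $\{\gamma\}$ defined. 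Finally, for the integer degree your ``perturb $\Pi$ to transversality and orient the determinant line bundle'' glosses over the central obstruction the paper is designed to overcome: $D\Pi$ may be degenerate at \emph{every} point of $\MMgeqgrad^{k^*}(X)\cap\Pi^{-1}(\gamma)$ while $\CONEgeq^{k^*}(\partial X)$ has infinite codimension in $\GenCONE^{k^*}(\partial X)$, so no regular value among cones is available, the orientation cannot be transported via $d\Pi$, and orientability of the determinant line over your neighborhood is asserted rather than proved. The paper instead constructs an orientation on $P$ by hand, using $\idx(-L_g)$ and the change-of-index formula of Section~\ref{sec_change_of_index}, which is also what produces the signs $(-1)^{\idx(-L_p)}$ in Theorem~\ref{Thm_degexp_identity}; without an argument of this type your signed count is neither well defined nor comparable to that formula. (A minor point: independence of $\gamma$ must be run along paths inside $\CONEgeq^{k^*}(\partial X)$, using connectedness of the space of PSC metrics on the link, since properness is only available over such cones.)
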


We will also compute the expander degree in the most elementary case.

\begin{Theorem} \label{Thm_deg_1}
If $\Gamma \subset SO(4)$ is a finite subgroup acting freely on $S^3$, then
\[  \deg_{\exp} (D^4/\Gamma) = 1. \]
In particular, $\deg_{\exp} (D^4) = 1$.
\end{Theorem}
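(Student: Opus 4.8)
The plan is to reduce the assertion to a purely local computation near one carefully chosen cone metric, and then to analyze the linearized soliton operator there. As a preliminary step one checks that $X = D^4/\Gamma$ satisfies the topological assumptions \ref{Property_1} and \ref{Property_2}: the boundary $S^3/\Gamma$ carries the round metric of positive scalar curvature, and the quotient map $\IR^4 \to \IR^4/\Gamma$ restricts to an orbifold cover $D^4 \to D^4/\Gamma$ whose total space $\hat X = D^4$ is a manifold with $H_2(D^4;\IZ) = 0$ and $H_1(D^4;\IZ) = 0$ torsion free. Thus $\deg_{\exp}(D^4/\Gamma) \in \IZ$ is defined, and by Theorem~\ref{Thm_main_vague} it may be evaluated from the local behavior of (the relevant extension of) $\Pi$ near any single $\gamma \in \CONEgeq^{k^*}(\partial X)$. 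I would take $\gamma = \gamma_0$ to be the flat cone $dr^2 + r^2 h_0$, where $h_0$ is the round metric of radius $1$ on $S^3/\Gamma$; then $R_{\gamma_0} \equiv 0$, so $\gamma_0 \in \CONEgeq^{k^*}(\partial X)$, and $(\IR^4/\Gamma,\gamma_0)$ is the flat orbifold $\IR^4/\Gamma$. The expected unique preimage is the Gaussian expander: the flat metric $g_{\mathrm{flat}}$ on $\IR^4/\Gamma$ together with the $\Gamma$-invariant potential $f_0 = -\tfrac14|x|^2$, which satisfies $\Ric + \nabla^2 f_0 + \tfrac12 g_{\mathrm{flat}} = 0$ and $\nabla f_0 = -\tfrac12 r\partial_r$, so that $[(g_{\mathrm{flat}},\nabla f_0,\gamma_0)] \in \MMgeqgrad^{k^*}(D^4/\Gamma)$.

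The second step is a rigidity statement: in the (extended) space entering the definition of the degree, the flat Gaussian expander is the only expanding soliton asymptotic to $\gamma_0$, and it is an isolated preimage once the gauge group of boundary-fixing diffeomorphisms is quotiented out. I would carry this out $\Gamma$-equivariantly on $\IR^4$. On an expanding soliton the scalar curvature obeys $\Delta_f R + R + 2|\Ric|^2 = 0$ with $\Delta_f := \Delta - \langle\nabla f,\nabla\cdot\rangle$; combining this with $R \ge 0$, with the decay $g = \gamma_0 + O(r^{-2})$ forcing $R \to 0$ at the conical end, and with a Bishop--Gromov-type volume comparison on the resulting asymptotically Euclidean end, one deduces $\Ric \equiv 0$. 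A complete Ricci-flat orbifold carrying a function with $\nabla^2 f = -\tfrac12 g$ is then, by a Tashiro/Kanai-type classification, isometric to flat $\IR^4/\Gamma$ with $f$ equal to $f_0$ up to an additive constant, which identifies the soliton with the flat Gaussian expander. For the non-gradient expanders (and expanders asymptotic to generalized cone metrics near $\gamma_0$) that the larger space also records, the same scalar-curvature argument applies, and the only additional freedom — adding a rotational Killing field of $h_0$ to the soliton vector field — is removed either by the boundary asymptotics or by the identification by boundary-fixing diffeomorphisms, so the flat Gaussian remains the unique preimage.

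The third step is to establish nondegeneracy at this preimage and to compute its local sign. Linearizing the gauge-fixed expanding soliton operator at the flat Gaussian yields a drift Lichnerowicz-type Laplacian on symmetric $2$-tensors over $(\IR^4/\Gamma, g_{\mathrm{flat}})$ with the Gaussian weight $e^{-f_0} = e^{|x|^2/4}$; this operator is self-adjoint with discrete spectrum, its eigentensors assembled from Hermite polynomials. Imposing the asymptotically conical boundary conditions that fix $\gamma_0$ (hence the admissible indicial behavior at the end) and fixing the diffeomorphism gauge, one checks that the resulting operator has trivial kernel: a nonzero element would be an infinitesimal asymptotically conical deformation of the Gaussian expander preserving $\gamma_0$, which the matching of the Hermite spectrum with the admissible indicial roots excludes. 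Hence the extension of $\Pi$ is a local diffeomorphism near $\gamma_0$, so its local degree is $\pm 1$; to pin it to $+1$, I would unwind the orientation conventions in the generalized degree and verify that the linearized operator is a compact perturbation of a nonnegative (harmonic-oscillator-type) model operator with no negative eigenvalues, so that the local contribution is $(-1)^0 = +1$. Assembling the three steps gives $\deg_{\exp}(D^4/\Gamma) = 1$, and the case $\Gamma = \{1\}$ yields $\deg_{\exp}(D^4) = 1$.

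The step I expect to be the genuine obstacle is the last one — the honest bookkeeping of orientations and the determination of the local sign within this \emph{localized} degree theory, where the differential of $\Pi$ may degenerate along the subvariety of gradient solitons, together with the spectral and indicial matching needed to rule out the kernel of the linearized operator at the Gaussian. The rigidity in the second step is conceptually standard (maximum principle for the scalar curvature plus a Tashiro-type classification), but the weighted estimates at the conical end and the orbifold points require care, and one must also ensure that uniqueness is robust in the extended space of non-gradient solitons with generalized conical asymptotics rather than merely in $\MMgeqgrad^{k^*}$.
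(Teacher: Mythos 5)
Your overall skeleton matches the paper's: evaluate the degree at the flat cone, show the Gaussian expander on $\IR^4/\Gamma$ is the unique preimage, check that the linearization there has nullity and index zero, and invoke the degree formula (the paper does exactly this in Theorem~\ref{Thm_deg_std_disk}, using Theorem~\ref{Thm_def_degree}/Theorem~\ref{Thm_degexp_identity} and Proposition~\ref{Prop_eucl_case}). However, your second step — the rigidity statement — has a genuine gap as argued. From $\triangle_f R + R + 2|\Ric|^2 = 0$, $R \ge 0$ and $R \to 0$ at the conical end you cannot conclude $\Ric \equiv 0$ by a maximum principle: at an interior maximum of $R$ the equation gives $\triangle_f R < 0$, which is no contradiction, and indeed there exist expanders with $R>0$ everywhere decaying at infinity, so scalar-curvature asymptotics alone do not force flatness. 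The appeal to a ``Bishop--Gromov-type volume comparison'' does not close this, since Bishop--Gromov requires a Ricci lower bound, which is exactly what is unknown; knowing $R\ge 0$ and that the end is asymptotic to the flat cone of order $2$ gives no pointwise Ricci control. (Once $\Ric\equiv 0$ is granted, your Tashiro-type conclusion from $\nabla^2 f = -\tfrac12 g$ is fine, but the hard content is precisely $\Ric\equiv 0$.)

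The paper fills this gap with a global entropy argument rather than a local one: after passing to the orbifold cover so that the cone is the genuine Euclidean cone over round $S^3$, Lemma~\ref{Lem_nu_cone_exp} (monotonicity of Perelman's entropy along the associated Ricci flow, pushed to the conical initial condition) gives $0 \ge \nu[g] \ge \nu[\gamma_{\eucl}] = 0$, and $\nu[g]=0$ forces $\mu[g_t,\tau]=0$ for all $\tau$, i.e.\ the sharp log-Sobolev inequality, whence $(M,g)$ is flat; the asymptotics then pin down the vector field and the ensemble (Proposition~\ref{Prop_eucl_case}). Two further remarks: your worry about non-gradient preimages and about generalized-cone deformations is not needed for the degree formula, since Theorem~\ref{Thm_degexp_identity} only sums over $\MMgeqgrad^{k^*}\cap\Pi^{-1}(\gamma)$ once $\gamma$ is a regular value over that set; and the nullity/index computation at the Gaussian is simpler than your Hermite/indicial-root matching suggests: since $\Rm_{g_{\eucl}}=0$ one has $L = \triangle_f$, so the weighted Dirichlet form is nonnegative (index $0$) and a maximum-principle argument on $|h|^2$ kills the kernel, after which the sign $(-1)^0=+1$ is supplied by the formula itself, with no extra orientation bookkeeping.
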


Combining Theorems~\ref{Thm_main_vague}, \ref{Thm_deg_1} yields the following corollary:

\begin{Corollary}
Suppose that $\Gamma \subset SO(4)$ is a finite subgroup that acts freely on $S^3$.
Then for any $C^{k^*}$-regular cone metric $\gamma = dr^2 + r^2 h$ on $\IR_+ \times S^3/\Gamma$ with non-negative scalar curvature there is a gradient expanding soliton metric $g$ on $\IR^4/\Gamma$ with non-negative scalar curvature that is asymptotic to $\gamma$.
\end{Corollary}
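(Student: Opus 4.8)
\emph{Proof proposal.}
The plan is to apply Theorems~\ref{Thm_main_vague} and~\ref{Thm_deg_1} to the orbifold $X = D^4/\Gamma$, whose boundary is $\partial X = S^3/\Gamma$. First I would check that $X$ belongs to the class of orbifolds for which the expander degree is defined, i.e.\ that Properties~\ref{Property_1} and~\ref{Property_2} hold. For~\ref{Property_1}: since $\Gamma \subset SO(4)$ acts freely on $S^3$, the boundary $S^3/\Gamma$ consists of regular points and carries the quotient of the round metric, which has positive scalar curvature. For~\ref{Property_2}: the space $D^4$ is contractible, so the orbifold cover $\hat X = D^4 \to D^4/\Gamma$ (with deck group $\Gamma$, fixing only the cone point) is a manifold with $H_2(D^4;\IZ) = 0$ and $H_1(D^4;\IZ) = 0$, the latter being trivially torsion free. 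Hence $\deg_{\exp}(D^4/\Gamma) \in \IZ$ is defined, and Theorem~\ref{Thm_deg_1} gives $\deg_{\exp}(D^4/\Gamma) = 1 \neq 0$.

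Next I would invoke the surjectivity clause of Theorem~\ref{Thm_main_vague}: since $\deg_{\exp}(D^4/\Gamma) \neq 0$, the restricted projection
\[ \Pi\big|_{\MMgeqgrad^{k^*}(D^4/\Gamma)} \; : \; \MMgeqgrad^{k^*}(D^4/\Gamma) \lto \CONEgeq^{k^*}(S^3/\Gamma) \]
is surjective. Given a $C^{k^*}$-regular cone metric $\gamma = dr^2 + r^2 h$ on $\IR_+ \times S^3/\Gamma$ with non-negative scalar curvature — that is, an element of $\CONEgeq^{k^*}(S^3/\Gamma)$ — I would choose a preimage $[(g, \nabla f, \gamma)] \in \MMgeqgrad^{k^*}(D^4/\Gamma)$. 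By the definition of $\MMgeqgrad^{k^*}$, this represents a complete gradient expanding soliton $(g,\nabla f)$ on $\Int(D^4/\Gamma)$ solving $\Ric_g + \nabla^2 f + \tfrac12 g = 0$, having non-negative scalar curvature, and satisfying the asymptotic normalization \eqref{eq_gdrhO} on a tubular neighborhood of $\partial X$; in particular $g = \gamma + O(r^{-2})$ there, which is exactly the assertion that $g$ is asymptotic to $\gamma$.

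Finally I would identify the underlying space: $\Int(D^4)$ is diffeomorphic to $\IR^4$ and the linear $\Gamma$-action is compatible with this identification, so $\Int(D^4/\Gamma) = \Int(D^4)/\Gamma$ is orbifold-diffeomorphic to $\IR^4/\Gamma$; transporting $g$ and $\nabla f$ through this diffeomorphism produces the desired gradient expanding soliton metric on $\IR^4/\Gamma$. Since the corollary is an immediate consequence of the two quoted theorems, I do not anticipate any genuine obstacle — all of the analytic and topological difficulty is absorbed into Theorems~\ref{Thm_main_vague} and~\ref{Thm_deg_1}. The only point requiring mild care is confirming that ``asymptotic to $\gamma$'' as encoded by membership in $\MMgeqgrad^{k^*}$ coincides with the notion in the statement, which is built into \eqref{eq_gdrhO} and the definitions it references (Definitions~\ref{Def_MM} and~\ref{Def_MMX}).
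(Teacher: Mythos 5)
Your proposal is correct and follows exactly the paper's route: the corollary is obtained by combining Theorem~\ref{Thm_deg_1} ($\deg_{\exp}(D^4/\Gamma)=1$) with the surjectivity clause of Theorem~\ref{Thm_main_vague}, and your verification of Properties~\ref{Property_1}, \ref{Property_2} and the identification $\Int(D^4/\Gamma)\approx\IR^4/\Gamma$ are the only (routine) checks the paper leaves implicit.
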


One of the main complications in the definition of the expander degree is that the space  $\MMgrad^{k^*} (X)$ 
may not be a Banach manifold.
As a result, the usual method of assigning a degree to the map $\Pi$ or its restriction \eqref{eq_intro_Pi_restr} is not applicable.
To resolve this issue, we consider extensions of the spaces $\CONE^{k^*}(\partial X)$ and $\MMgrad^{k^*}(X)$.
More specifically, we define\footnote{Compare again with Definition~\ref{Def_GenCONE}.} $\GenCONE^{k^*} (\partial X) \supset \CONE^{k^*}(\partial X)$ as the space of \emph{generalized cone metrics} on $\IR_+ \times \partial X$, which are of the form
\begin{equation} \label{eq_gen_cone_intro}
 \gamma = dr^2 + r \, (dr \otimes \beta + \beta \otimes dr) + r^2 h, 
\end{equation}
for some 1-form $\beta$ and Riemannian metric $h$ on $\partial X$ of regularity $C^{k^*}$.
Furthermore, we define\footnote{See Definitions~\ref{Def_MM} and \ref{Def_MMX} for further details.} $\MM^{k^*}(X) \supset \MMgrad^{k^*}(X)$ to be the space of (not necessarily gradient) expanding solitons on $\Int X$ that are asymptotic to a generalized cone metric.
Similar to the definition of $\MMgrad^{k^*}(X)$, each element of $\MM^{k^*} (X)$ is represented by a triple $(g,V,\gamma)$, consisting of a Riemannian metric $g$, a vector field $V$, and a generalized cone metric $\gamma \in \GenCONE^{k^*}(\partial X)$.
Instead of the gradient soliton equation, we require the general soliton equation
\[ \Ric + \tfrac12 \LL_V g + \tfrac12 g = 0 \]
and we require a similar asymptotic assumption as in \eqref{eq_gdrhO}.
Note that we still have a projection of the form
\[ \Pi : \MM^{k^*} (X) \lto \GenCONE^{k^*} (\partial X), \]
which we will denote by the same letter.

Suppose that $k^* < \infty$ for the remainder of this discussion.
We can now state a more precise version of the second part of Theorem~\ref{Thm_main_vague}.

\begin{Theorem} \label{Thm_Banach_mf_intro}
There is an open neighborhood
\[ \MMgrad^{k^*} (X) \subset \MM' \subset \MM^{k^*} (X) \]
that can be equipped with a $C^{1,\alpha}$-Banach manifold structure such that the map $\Pi |_{\MM'}$ is smooth (even real-analytic) in appropriate coordinate charts.
The map $\Pi |_{\MM'}$ may not be proper, but it is still possible to compute a local degree of $\Pi |_{\MM'}$ by considering its local behavior near the preimage of any fixed $\gamma \in \CONEgeq^{k^*}(\partial X)$.
This degree, called the \emph{expander degree} $\deg_{\exp}(X) \in \IZ$, is independent of the choice of $\gamma$, so it is a topological invariant of $X$.
\end{Theorem}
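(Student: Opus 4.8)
The plan is to build the Banach manifold structure on $\MM'$ by a fixed-point/implicit-function-theorem analysis of the expanding soliton equation near the gradient locus, and then to extract the local degree by a transversality argument applied to the extended projection $\Pi|_{\MM'}$. First I would set up the analytic framework: fix a reference background metric $\bar g$ on $\Int X$ which is exactly conical (equal to some $\gamma$) outside a compact set, and express a general soliton $(g,V,\gamma)$ as a perturbation $g = \bar g + h$ with $h$ lying in a weighted H\"older space $C^{1,\alpha}_\delta$ adapted to the conical end, with decay rate matching \eqref{eq_gdrhO} (i.e.\ $h = O(r^{-2})$). The soliton equation $\Ric_g + \tfrac12\LL_V g + \tfrac12 g = 0$ is underdetermined-elliptic because of diffeomorphism invariance, so the standard move is to impose a gauge: I would use a DeTurck-type gauge-fixing term, replacing the equation by $\Ric_g + \tfrac12 \LL_{V}g + \tfrac12 g + \delta^*_g(\text{gauge}) = 0$, or equivalently work in harmonic-type coordinates relative to $\bar g$, so that the linearization becomes a genuinely elliptic operator $L_{(g,V)}$ on the weighted spaces. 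The key technical input (which the full paper must establish, but which I would cite as available once the function spaces are pinned down) is that for the decay rate $\delta$ corresponding to $O(r^{-2})$ the operator $L$ is Fredholm between the appropriate weighted spaces on the asymptotically conical end; this is the standard asymptotically-conical Fredholm theory (Lockhart--McOwen / Pacard) applied to the cone $\gamma$, and it holds provided $\delta$ avoids the discrete set of indicial roots of the conical Laplacian-type operator on the link $\partial X$.

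Granting Fredholmness, the construction of $\MM'$ proceeds as follows. Consider the map $\mathcal F : (g,V,\gamma) \mapsto \big(\Ric_g + \tfrac12\LL_V g + \tfrac12 g + (\text{gauge}),\ \text{asymptotic data}\big)$ defined on an open set of $C^{1,\alpha}_\delta$-perturbations of gradient solitons together with the generalized-cone parameter $\gamma \in \GenCONE^{k^*}(\partial X)$. The zero set of $\mathcal F$ is (after un-doing the gauge) precisely $\MM^{k^*}(X)$ near $\MMgrad^{k^*}(X)$. The linearization $D\mathcal F$ at a point of $\MMgrad^{k^*}(X)$ is Fredholm of index $0$ (after incorporating the cone parameters, whose count balances the cokernel of the interior operator — this index bookkeeping is the analytic heart and I expect to borrow it from the body of the paper). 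Since $D\mathcal F$ need not be surjective along the gradient locus, one cannot apply the implicit function theorem directly to get a manifold chart; instead I would use a Lyapunov--Schmidt reduction: split off a finite-dimensional cokernel, solve the complementary (surjective) part of the equation by the implicit function theorem to get a genuine $C^{1,\alpha}$-Banach submanifold, and observe that the remaining finite-dimensional obstruction vanishes identically along $\MMgrad^{k^*}(X)$ because gradient solitons automatically solve the equation — this forces the reduced equation's zero set to contain an open neighborhood $\MM'$ of $\MMgrad^{k^*}(X)$ that is itself a Banach manifold on which $\mathcal F$ vanishes to first order, hence $\Pi|_{\MM'}$ is smooth (real-analytic, since $\Ric$ and $\LL$ are polynomial in the metric and its derivatives and the gauge term is algebraic/analytic) in these charts.

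For the local degree, with $\MM'$ a Banach manifold and $\Pi|_{\MM'} : \MM' \to \GenCONE^{k^*}(\partial X)$ a smooth Fredholm map of index $0$, I would invoke the Elworthy--Tromba / Smale degree machinery in its \emph{localized} form: although $\Pi|_{\MM'}$ is not proper globally, one restricts to a sufficiently small open neighborhood $\mathcal O$ of $\Pi^{-1}(\gamma) \cap \MMgeqgrad^{k^*}(X)$ on which properness over a neighborhood of $\gamma$ does hold — this uses the compactness/properness statement from Theorem~\ref{Thm_main_vague} for the restricted map \eqref{eq_intro_Pi_restr}, pushed slightly into the non-gradient regime. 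Then the count of preimages (with signs given by the parity/orientation of $D\Pi$) of a regular value near $\gamma$ is well defined; call it $\deg_{\exp}(X)$. Independence of $\gamma$ follows by a connectedness argument: any two cone metrics in $\CONEgeq^{k^*}(\partial X)$ are joined by a path (the space of positive-scalar-curvature cone metrics on $\partial X$ is path-connected by Property~\ref{Property_1}, or at worst one passes through $\GenCONE$), and the local degree is locally constant along such a path by the homotopy invariance of the Elworthy--Tromba degree, using that properness persists along the path by the uniform estimates underlying Theorem~\ref{Thm_main_vague}. Topological invariance of $X$ is then immediate since nothing in the construction used more than the diffeomorphism type of $\Int X$ and $\partial X$.

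The main obstacle I expect is the \emph{localization of properness together with the index-zero bookkeeping at the gradient locus}: one must show that the finite-dimensional cokernel of the interior linearized operator is exactly compensated by the space of infinitesimal deformations of the generalized cone metric $\gamma$ (so the full linearization has index $0$), and simultaneously that $\Pi|_{\MM'}$ restricted to a neighborhood of the \emph{gradient} preimage of a fixed $\gamma$ is proper even though $\Pi|_{\MM'}$ on all of $\MM'$ is not. This is precisely where the distinction between gradient and non-gradient solitons, and the fact that the gradient ones form a union of connected components (as asserted in the introduction), must be used: properness can fail only through sequences limiting onto genuinely non-gradient solitons or through loss of asymptotic control, and one rules both out near the gradient locus by a combination of the a priori estimates of Theorem~\ref{Thm_main_vague} and the rigidity that prevents non-gradient solitons from being close to gradient ones. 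Everything else — the weighted-space setup, the DeTurck gauge, the Lyapunov--Schmidt reduction, and the abstract degree theory — is technically involved but follows established patterns.
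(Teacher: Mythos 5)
Your overall skeleton (gauge-fix, weighted-space Fredholm analysis, handle the cokernel, localize the degree, connect cone metrics by paths) matches the paper, but several of the steps you lean on would fail as stated. First, the analytic framework: the linearization of the gauged soliton equation at a gradient expanding soliton is $L_g = \triangle_f + 2\Rm_g = \triangle - \nabla_{\nabla f} + 2\Rm_g$, whose drift term grows linearly at infinity, so this is \emph{not} an asymptotically conical elliptic operator and Lockhart--McOwen/Pacard indicial-root theory does not apply; the paper instead develops Lunardi--Deruelle type estimates for elliptic operators with unbounded coefficients (Section~\ref{sec_elliptic}), for which $\triangle_f-\lambda$ is invertible on the weighted spaces for every polynomial weight, and the Fredholm/kernel issues come solely from the $2\Rm_g$ term. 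Second, your Lyapunov--Schmidt step is where the real idea is missing: the finite-dimensional obstruction does not ``vanish identically along $\MMgrad^{k^*}(X)$'' on a neighborhood (it vanishes only on the solution set), so that argument cannot produce a Banach manifold. What actually kills the cokernel is that kernel elements of $L_g$ have the Gaussian asymptotics $\kappa \sim r^{-n}e^{-r^2/4}\dot\gamma$ (Deruelle--Schulze), and pairing $L_g(T(\gamma'))$ against these shows that variations of the asymptotic model in \emph{generalized-cone} directions surject onto the cokernel (Lemma~\ref{Lem_linear_identities}\ref{Lem_linear_identities_e}); this is precisely why $\MM^{k^*}(X)$ must be built over $\GenCONE^{k^*}(\partial X)$ rather than $\CONE^{k^*}(\partial X)$, and it is the full-rank property of the obstruction map $G$ in the cone direction, not vanishing of $G$, that yields the chart (Proposition~\ref{prop_smooth_dependence}).

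Third, the degree construction you propose -- count signed preimages of a regular value near $\gamma$, with signs from $D\Pi$, using Elworthy--Tromba -- is exactly what the paper explains cannot be the definition: $D\Pi$ may be degenerate everywhere along $\MMgeqgrad^{k^*}(X)$, so regular values over the gradient locus need not exist, and the orientation/sign must instead be manufactured from $\idx(-L_g)$ via the change-of-index formula (Section~\ref{sec_change_of_index}, Proposition~\ref{Prop_summary}). Relatedly, your properness localization ``pushed slightly into the non-gradient regime'' by a ``rigidity that prevents non-gradient solitons from being close to gradient ones'' is false: Theorem~\ref{Thm_many_deformations} produces non-gradient solitons arbitrarily close to any gradient one (over generalized cones). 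The paper instead restricts to a finite-dimensional transverse slice $Q\subset\GenCONE^{k^*}(\partial X)$, proves $(\Pi|_P)^{-1}(\CONEgeq^{k^*}(\partial X)) = \MMgeqgrad^{k^*}(X)\cap P$ using the preservation of the gradient condition along $C^1$-paths (Section~\ref{sec_gradientness}) together with local path-connectedness of real-analytic varieties, and only then applies the localized-degree lemma using properness of $\Pi|_{\MMgeqgrad^{k^*}(X)}$ alone. Finally, independence of $\gamma$ cannot go ``through $\GenCONE$'' (properness is lost there); it uses path-connectedness of $\CONEgeq^{k^*}(\partial X)$, which rests on the nontrivial theorem that the space of positive scalar curvature metrics on a $3$-manifold is connected -- Property~\ref{Property_1} gives existence, not connectedness.
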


The next theorem provides a formula for the expander degree over regular values of $\Pi$.
To make sense of this formula, we need to consider the Einstein operator
\[ L_{p} = \triangle_f + 2 \Rm_g = \triangle - \nabla_{\nabla f} + 2 \Rm_g, \]
associated with an element $p = [(g,\nabla f,\gamma)] \in \MMgeqgrad^{k^*}(X)$, which acts on symmetric $(0,2)$-tensors on $\Int X$.
In Section~\ref{sec_elliptic} we will see that this operator has well defined and finite nullity $\Null (-L_p)$ and index $\idx(-L_p)$, which are independent of the choice of the representative of $p$.
See Subsections~\ref{subsec_Lunardi_weighted}, \ref{subsec_spectral_theory} for more details.

\begin{Theorem} \label{Thm_degexp_identity}
Suppose now that $\gamma$ is a regular value of $\Pi$ over $\MMgeqgrad^{k^*}(X)$, by which we mean that for any $p  \in \MMgeqgrad^{k^*}(X) \cap \Pi^{-1}(\gamma)$ we have $\Null(-L_p)=0$.
Then
\begin{equation} \label{eq_degexp_identity_intro}
 \deg_{\exp}(X) = \sum_{p \in \MMgeqgrad^{k^*}(X) \cap \Pi^{-1}(\gamma)} (-1)^{\idx(-L_p)}. 
\end{equation}
For the $\IZ_2$-degree this formula simplifies to
\[ \deg_{\exp}(X) = \# \big( \MMgeqgrad^{k^*}(X) \cap \Pi^{-1}(\gamma) \big) \qquad \textnormal{mod} \; 2. \]
\end{Theorem}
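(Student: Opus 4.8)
The plan is to reduce the identity \eqref{eq_degexp_identity_intro} to a local computation of the degree of $\Pi$ at each point of the fiber and to identify the local contribution at a point $p$ with the sign $(-1)^{\idx(-L_p)}$. \emph{Step 1 (localizing the fiber).} First I would show that, after shrinking the neighborhood $\MM'$ of Theorem~\ref{Thm_Banach_mf_intro} if necessary, one has $\Pi^{-1}(\gamma)\cap\MM' = \MMgeqgrad^{k^*}(X)\cap\Pi^{-1}(\gamma)$. Non-gradient solitons are excluded using the structural result quoted in the introduction: the gradient expanding solitons asymptotic to a genuine cone form a union of connected components of the space of asymptotically conical expanding solitons, so $\MMgrad^{k^*}(X)$ is relatively closed in $\Pi^{-1}(\CONE^{k^*}(\partial X))$ and disjoint from the remaining components; choosing $\MM'$ inside a neighborhood of $\MMgrad^{k^*}(X)$ that avoids those components forces every soliton in $\MM'$ asymptotic to the genuine cone $\gamma$ to be gradient, and non-negativity of its scalar curvature when $\gamma\in\CONEgeq^{k^*}(\partial X)$ follows from the maximum principle for the identity satisfied by $R_g$ together with the asymptotics \eqref{eq_gdrhO}. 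Since $\deg_{\exp}(X)$ is by construction the local degree of $\Pi|_{\MM'}$ over a small neighborhood of $\gamma$, it then decomposes as the sum of the local degrees of $\Pi|_{\MM'}$ at the individual points $p$ of this fiber, which is finite by the properness part of Theorem~\ref{Thm_Banach_mf_intro} once the fiber is known to be discrete (Step~2).

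\emph{Step 2 ($\gamma$ is a genuine regular value).} I would then show that $\Null(-L_p)=0$ forces the differential $D\Pi_p\colon T_p\MM'\to T_\gamma\GenCONE^{k^*}(\partial X)$ to be an isomorphism. In the gauge-fixed (de Turck type) slice description of $\MM'$ near $p$ underlying the Banach manifold structure of Theorem~\ref{Thm_Banach_mf_intro}, $D\Pi_p$ is Fredholm of index $0$, and $\ker D\Pi_p$ consists of the infinitesimal soliton deformations fixing the asymptotic cone, taken modulo gauge. By the weighted elliptic theory of Section~\ref{sec_elliptic} these are in bijection with the suitably decaying solutions $h$ of $L_p h=0$, so $\dim\ker D\Pi_p = \Null(-L_p)=0$; by the index-$0$ property this also forces $\operatorname{coker}D\Pi_p=0$. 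Hence every $p$ in the fiber is a nondegenerate preimage point, the fiber is discrete, and Step~1 is complete.

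\emph{Step 3 (the sign of the local degree).} It remains to show that the local degree of $\Pi|_{\MM'}$ at such a nondegenerate $p$ equals $(-1)^{\idx(-L_p)}$; this is the heart of the matter. In the Elworthy--Tromba framework the local degree is the sign of $D\Pi_p$ measured against the chosen orientation of the determinant line bundle of the family of linearizations. The point is that, in the same gauge, the fiber part of this linearization is $-L_p$ acting on a weighted space of symmetric $(0,2)$-tensors, and $L_p=\triangle_f+2\Rm_g$ is self-adjoint with respect to the weighted measure $e^{-f}\,d\vol_g$; moreover, since $f\sim-\tfrac14 r^2$ at infinity by \eqref{eq_gdrhO}, the operator $-L_p$ has compact resolvent on this space, so its spectrum is discrete, bounded below, and $\idx(-L_p)$ is the number of negative eigenvalues (Section~\ref{sec_elliptic}). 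Taking $-\triangle_f$ as the reference operator, the determinant line of $D\Pi_p$ is canonically identified with that of the self-adjoint Fredholm operator $-L_p$, the complementary cone-variation directions (both the $h$- and $\beta$-variations of $\gamma$) being modeled on a fixed isomorphism that contributes trivially; for self-adjoint operators the sign against the canonical orientation is $(-1)^{n^-}$ with $n^-$ the number of negative eigenvalues. To pin this down I would deform by the family $-\triangle_f - t\,(2\Rm_g)$, $t\in[0,1]$: it stays within self-adjoint Fredholm operators with compact resolvent, at $t=0$ it equals $-\triangle_f$, which is positive and invertible (a parallel tensor lying in the weighted space vanishes), hence contributes $+1$, and each eigenvalue crossing zero as $t$ runs from $0$ to $1$ reverses the sign, the signed number of crossings being exactly $\idx(-L_p)$. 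Summing $(-1)^{\idx(-L_p)}$ over $p\in\MMgeqgrad^{k^*}(X)\cap\Pi^{-1}(\gamma)$ gives \eqref{eq_degexp_identity_intro}, and reducing modulo $2$, where $(-1)^{\idx(-L_p)}\equiv 1$, gives the $\IZ_2$ formula.

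I expect Step~3 to be the principal difficulty: one must reconcile the abstract orientation data entering the definition of the integer expander degree on $\MM'$ with the concrete spectral theory of the weighted Einstein operator $L_p$, control the interplay between the fiber directions (governed by $-L_p$) and the cone-variation directions of $\GenCONE^{k^*}(\partial X)$, and keep the deformation within the class of Fredholm operators for which the degree is defined.
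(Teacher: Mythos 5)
Your Steps~1 and~2 run parallel to the paper's argument: properness of $\Pi|_{\MMgeqgrad^{k^*}}$ makes the fiber compact, the identification of the fiber over a genuine cone with the gradient solitons is obtained (after shrinking) from the preservation-of-gradient machinery, and $\dim\ker D\Pi_p=\Null(-L_p)$ comes from the DeTurck-gauge slice description (in the paper this is Proposition~\ref{Prop_summary}, parts \ref{Prop_summary_b}--\ref{Prop_summary_e} and \ref{Prop_summary_h}; note that identifying the weighted H\"older kernel of $L_p$ with the $L^2_f$-nullity also uses the Gaussian decay estimates for kernel elements, which you only gesture at). The genuine gap is in Step~3. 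The expander degree is \emph{not} defined in the paper as an Elworthy--Tromba degree of $\Pi|_{\MM'}$ with a determinant-line orientation normalized at $-\triangle_f$; precisely because $\Pi|_{\MM'}$ is improper and $D\Pi$ may degenerate everywhere along the gradient locus, the degree is defined through a finite-dimensional transverse slice $Q\subset\GenCONE^{k^*}(\partial X)$, the manifold $P=\Pi^{-1}(Q)$, and an orientation on $P$ that is prescribed along \emph{all} of $\MMgeqgrad\cap P$ (not just at nondegenerate points) by comparing $d(\Pi|_P)_p$ with the map $S_p$ built from $\gamma'\mapsto\proj_{K_g}(L_g(T(\gamma')))$ and the parity of $\idx(-L_p)$ (Proposition~\ref{Prop_summary}, part (j*)). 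With that definition in hand, the formula at a regular value is essentially immediate (Theorem~\ref{Thm_def_degree}); the substance lies in proving the orientation is consistent, which requires the comparison maps $Z$, $\Lambda$ and the index-change formula $\Index(-L_{g'})=\Index(-L_g)+\Index(-D_2G)$ of Proposition~\ref{prop_index_formula}.

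Your pointwise spectral-flow computation (deforming $-\triangle_f-t\,2\Rm_g$) does correctly show that the sign of $\det(-L_p)$ relative to $-\triangle_f$ is $(-1)^{\idx(-L_p)}$, but it does not connect this sign to the orientation actually entering the definition of $\deg_{\exp}(X)$: you would have to show that your normalization extends to a single consistent orientation over a neighborhood of the whole compact preimage set (equivalently over the slice $P$), including points where $K_g\neq 0$, and that the cone-variation directions ``contribute trivially.'' The latter is false as a throwaway remark --- those directions carry the cokernel identification via $S_p$, and at nearby points with nontrivial $K_{g}$ their contribution is exactly what the index-change formula controls. Without this, your Step~3 computes a sign in a framework that has not been shown to coincide with the paper's degree, so the identity \eqref{eq_degexp_identity_intro} is not established. (The $\IZ_2$-statement survives your argument, since there no orientation data is needed and Steps~1--2 already give a bijective count mod~2.)
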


It is important to note that, in contrast to standard degree theories, the existence of a regular value of $\Pi$ over $\MMgeqgrad^{k^*}(X)$ is not guaranteed, as the differential of the map $\Pi$ may be degenerate everywhere along $\MMgeqgrad^{k^*}(X) \subset \MM^{k^*}(X)$.
Consequently, the identity \eqref{eq_degexp_identity_intro} cannot serve as the primary definition of the expander degree; rather it is a consequence of it.
Nevertheless, Theorem~\ref{Thm_degexp_identity} shows that if $\deg_{\exp}(X) \neq 0$, then the map \eqref{eq_intro_Pi_restr} is surjective.
This follows from the fact that if $\MMgeqgrad^{k^*}(X) \cap \Pi^{-1}(\gamma) = \emptyset$, then $\gamma$ must be a regular value of $\Pi$ over $\MMgeqgrad^{k^*}(X)$ and thus $\deg_{\exp}(X) = 0$ by \eqref{eq_degexp_identity_intro}.

Our degree theory ensures the existence of gradient expanding solitons corresponding to many deformations of the asymptotic cone within the space of generalized cone metrics.
The following theorem expresses this existence statement by characterizing the image of $\Pi$. 

\begin{Theorem} \label{Thm_many_deformations}
Let $\gamma \in \CONE^{k^*}(\partial X)$.
Then the following is true:
\begin{enumerate}[label=(\alph*)]
\item \label{Thm_many_deformations_a}
If $\gamma \in \Image \Pi(\MMgrad^{k^*}(X))$, then the following is true. 
There is a real-analytic Banach-submanifold $\Gamma \lb \subset \lb \GenCONE^{k^*}(\partial X)$ of finite codimension such that
\[ \gamma \in \Gamma  \subset \Image \Pi. \]
\item \label{Thm_many_deformations_b}
If $\deg_{\exp}(X) \neq 0$ and $\gamma \in \CONEgeq^{k^*}(\partial X)$, then the following is true.
Let $Q \subset \GenCONE^{k^*}(\partial X)$ be a finite dimensional, real-analytic submanifold (for example, a subset of an affine linear subspace) such that $\gamma \in Q$.
Then $Q \cap \Image \Pi$ contains a neighborhood of $\gamma$ in $Q$.
\end{enumerate}
\end{Theorem}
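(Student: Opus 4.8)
The plan is to derive both parts from the Banach-manifold picture of Theorem~\ref{Thm_Banach_mf_intro}. Part~\ref{Thm_many_deformations_a} will be an essentially soft consequence of the real-analytic structure on $\MM'$ together with the Fredholm property of the linearized projection supplied by the elliptic theory of Section~\ref{sec_elliptic}; part~\ref{Thm_many_deformations_b} uses in addition the localized properness (compactness) that underlies the definition of $\deg_{\exp}(X)$, reducing the statement to a mapping-degree count for a finite-dimensional map over $Q$.

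For part~\ref{Thm_many_deformations_a}, fix a representative $p=[(g,\nabla f,\gamma)]\in\MMgrad^{k^*}(X)\subset\MM'$ with $\Pi(p)=\gamma$ and work in a coordinate chart around $p$ in which $\Pi|_{\MM'}$ is real-analytic. By the elliptic theory of Section~\ref{sec_elliptic}, the differential $D\Pi_p\colon T_p\MM'\to T_\gamma\GenCONE^{k^*}(\partial X)$ is Fredholm; set $d:=\dim\operatorname{coker}D\Pi_p<\infty$. Choose a closed complement $W$ of the (finite-dimensional) kernel $\ker D\Pi_p$ in $T_p\MM'$ and let $\SS\subset\MM'$ be the real-analytic submanifold through $p$ that is the affine slice modeled on $W$ in the chart. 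Then $\Pi|_{\SS}$ is real-analytic and $D(\Pi|_{\SS})_p=D\Pi_p|_W$ is injective with closed image of codimension $d$, which is complemented since $d<\infty$; after shrinking $\SS$ to a small neighborhood of $p$, $\Pi|_{\SS}$ is therefore a real-analytic embedding, and $\Gamma:=\Pi(\SS)$ is a real-analytic Banach submanifold of $\GenCONE^{k^*}(\partial X)$ of codimension $d<\infty$. Since $\gamma=\Pi(p)\in\Gamma$ and $\Gamma\subset\Pi(\MM')\subset\Image\Pi$, this $\Gamma$ has the required properties.

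For part~\ref{Thm_many_deformations_b}, the hypothesis $\gamma\in\CONEgeq^{k^*}(\partial X)$ places us exactly where the compactness behind the properness of \eqref{eq_intro_Pi_restr} can be applied near $\Pi^{-1}(\gamma)$ — which is what makes $\deg_{\exp}(X)$ well defined by Theorem~\ref{Thm_Banach_mf_intro}. Given the finite-dimensional real-analytic submanifold $Q\ni\gamma$, I would compute $\deg_{\exp}(X)$ as the local mapping degree near $\gamma$ of the finite-dimensional map $\Pi^{-1}(Q)\cap\MM'\to Q$ obtained by restricting $\Pi|_{\MM'}$ over $Q$: after a $C^{k^*}$-small perturbation of $Q$ making it transverse to $\Pi|_{\MM'}$ — which leaves the local degree unchanged — the set $\Pi^{-1}(Q)\cap\MM'$ is, near $\Pi^{-1}(\gamma)$, a compact finite-dimensional manifold mapping properly onto a neighborhood of $\gamma$ in $Q$, and the signed count of its preimages of a regular value equals $\deg_{\exp}(X)$ as in Theorem~\ref{Thm_degexp_identity}. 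The key point is that this localized properness is an \emph{open} condition in the asymptotic data: the a priori estimates and no-escape arguments used to establish properness of \eqref{eq_intro_Pi_restr} are stable under $C^{k^*}$-small perturbation of the asymptotic model, so the same finite-dimensional degree is defined over an entire neighborhood of $\gamma$ in $Q$ — even though points of $Q$ near $\gamma$ need not have non-negative scalar curvature, nor be honest cone metrics — and it is locally constant there, hence equal to $\deg_{\exp}(X)\neq 0$. A nonzero local degree forces $\Pi^{-1}(\gamma')\cap\MM'\neq\emptyset$ for every $\gamma'$ in that neighborhood of $\gamma$ in $Q$, which is the assertion that $Q\cap\Image\Pi$ contains a neighborhood of $\gamma$ in $Q$.

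The main difficulty is in part~\ref{Thm_many_deformations_b}: one must genuinely verify that the localized compactness, available a priori only over $\CONEgeq^{k^*}(\partial X)$ and used precisely to make the integer-valued expander degree meaningful, persists over a full neighborhood of $\gamma$ inside $Q$ — i.e.\ that no sequence of expanding solitons over $\gamma_i\to\gamma$ in $Q$ can escape the chart on which $\deg_{\exp}(X)$ is computed. This reduces to checking that the blow-up analysis behind Theorem~\ref{Thm_Banach_mf_intro} degrades continuously as the asymptotic data is deformed away from a non-negatively curved cone. Part~\ref{Thm_many_deformations_a}, by contrast, is a formal consequence of the real-analytic Banach-manifold structure and the Fredholm property once those are in hand.
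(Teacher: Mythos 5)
Your part~\ref{Thm_many_deformations_a} is correct and is essentially the paper's argument in different clothing: in the chart supplied by Theorem~\ref{Thm_Banach_mf_intro} (i.e.\ Proposition~\ref{prop_smooth_dependence}), the slice $\SS$ you take through $p$ transverse to the finite-dimensional kernel of $D\Pi_p$ corresponds to the zero set $\{G(\cdot,0)=0\}\subset U_1$, and your $\Gamma=\Pi(\SS)$ is the same finite-codimension real-analytic Banach submanifold the paper produces.

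Part~\ref{Thm_many_deformations_b}, however, has a genuine gap exactly where you flag the ``main difficulty''. You need the local degree of $\Pi$ over $Q$ to be defined and locally constant on a full neighborhood of $\gamma$ in $Q$, and you propose to obtain this by asserting that the a priori estimates behind the properness of \eqref{eq_intro_Pi_restr} are ``stable under $C^{k^*}$-small perturbation of the asymptotic model''. Nothing of that sort is available, and it is precisely what the paper cannot and does not prove: the compactness argument (Proposition~\ref{Prop_properness}, via Lemma~\ref{Lem_nu_cone_exp} and Proposition~\ref{Prop_curv_diam_bound}) uses the non-negative scalar curvature of the asymptotic cone and the gradient property in an essential way (see Remark~\ref{Rmk_scal_bound_necessary}), while the elements of $\Pi^{-1}(\gamma')$ for a nearby \emph{generalized} cone $\gamma'$ are necessarily non-gradient and no compactness theory exists for them at all; indeed even $\Pi^{-1}(\gamma)\cap\MM'$ need not be compact --- only $\Pi^{-1}(\gamma)\cap\MMgeqgrad^{k^*}(X)$ is. The paper's route avoids re-proving any estimate: properness over the single closed set $\{\gamma\}$ (more generally over $Q\cap\CONEgeq^{k^*}(\partial X)$) is propagated to open neighborhoods by the purely topological Lemma~\ref{Lem_proper_maps} (one covers the compact fiber by relatively compact sets), producing open sets $M'\subset \Pi^{-1}(Q)\cap\MM'$ and $N'\subset Q$ with $\Pi|_{M'}\colon M'\to N'$ proper; its degree equals $\deg_{\exp}(X)\neq 0$ (Definition~\ref{Def_degfX}, Lemmas~\ref{Lem_deg_MNiQ}, \ref{Lem_deg_indep_Q}, Theorem~\ref{Thm_def_degree}), and a proper map of non-zero degree onto the connected $N'$ is surjective, so $N'\subset\Image\Pi$. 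A second, smaller, defect is your transversality step: perturbing $Q$ changes the manifold about which the conclusion is asserted (and may move $\gamma$ off $Q$); the correct move is to \emph{enlarge} $Q$ while keeping $\gamma\in Q$, as in Lemma~\ref{Lem_transverse_Q_exists}, and at the end intersect the resulting neighborhood with the original $Q$.
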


Let us now turn our attention to the gradient property and the relationship between the spaces $\MM^{k^*}(X)$ and $\MMgrad^{k^*}(X)$.
A simple argument (see Remark~\ref{Rmk_gradient_implies_cone}) implies that the gradient condition necessitates the asymptotic model to be a cone, rather than a generalized cone.
In other words, if $(g,V,\gamma)$ is a representative of an element in $\MM^{k^*}(X)$ for which $V$ is a gradient vector field, then $\gamma \in \CONE^{k^*}(\partial X)$, i.e., $\beta \equiv 0$ in \eqref{eq_gen_cone_intro}.
So Theorem~\ref{Thm_many_deformations} implies the existence of many expanding solitons near $\gamma$ that are not gradient.
In the special case of expanding solitons asymptotic to cones near the Euclidean cone metric, this also follows from \cite{Koch_Lamm}.
Conversely, one may ask:

\begin{Question} \label{Q_cone_implies_gradient}
Does $\gamma \in \CONE^{k^*}(\partial X)$ imply that $V$ is a gradient vector field and thus $[(g,V,\gamma)] \in \MMgrad^{k^*}(X)$?
In other words, is $\Pi^{-1}(\CONE^{k^*}(\partial X))$ the same as $\MMgrad^{k^*}(X)$?
\end{Question}

In \cite{Deruelle}, Deruelle provided a positive answer to Question~\ref{Q_cone_implies_gradient} under the additional assumption that $g$ has positive curvature operator.
However, Question~\ref{Q_cone_implies_gradient} remains open in its general form.
In this paper, we make partial progress towards this question by showing that $\MMgrad^{k^*}(X)$ is a union of $C^1$-connected components of $\Pi^{-1}(\CONE^{k^*}(\partial X)) \subset \MM^{k^*}(X)$.
Specifically, we prove:

\begin{Theorem} \label{Thm_preservation_gradient_intro}
Let $\sigma: [0,1] \to \MM^{k^*} (X)$ be a curve that is of regularity $C^1$ in a neighborhood of $\MMgrad^{k^*}(X)$ (with respect to the Banach-manifold structure from Theorem~\ref{Thm_Banach_mf_intro}).
Suppose that $\Pi(\sigma(s)) \in \CONE^{k^*}(\partial X)$ for all $s \in [0,1]$ and that $\sigma(0) \in \MMgrad^{k^*}(X)$.
Then $\sigma(s) \in \MMgrad^{k^*}(X)$ for all $s \in [0,1]$.
\end{Theorem}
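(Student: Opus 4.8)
The plan is to show that the subset of $s \in [0,1]$ for which $\sigma(s) \in \MMgrad^{k^*}(X)$ is both open and closed in $[0,1]$, hence all of $[0,1]$ since it contains $0$. Closedness is comparatively soft: if $\sigma(s_i) \in \MMgrad^{k^*}(X)$ and $s_i \to s_\infty$, then since each $\sigma(s_i)$ is represented by a triple $(g_i, \nabla f_i, \gamma_i)$ with $\gamma_i \to \gamma_\infty \in \CONE^{k^*}(\partial X)$, the $C^1$-convergence in $\MM^{k^*}(X)$ (together with the asymptotic normalizations \eqref{eq_gdrhO}) forces the limiting vector field $V_\infty$ of $\sigma(s_\infty)$ to be a limit of gradient vector fields; since being a gradient field is a closed condition (e.g. the vanishing of $dV^\flat$, or equivalently the vanishing of the antisymmetric part of $\nabla V$), $V_\infty = \nabla f_\infty$ and $\sigma(s_\infty) \in \MMgrad^{k^*}(X)$. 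The real content is openness.

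For openness, fix $s_0$ with $p_0 := \sigma(s_0) = [(g, \nabla f, \gamma)] \in \MMgrad^{k^*}(X)$ and write $\sigma(s) = [(g_s, V_s, \gamma_s)]$ for $s$ near $s_0$, with $\gamma_s \in \CONE^{k^*}(\partial X)$ by hypothesis. Decompose $V_s = \nabla^{g_s} u_s + W_s$ (a Hodge-type decomposition adapted to the weighted geometry), so that $V_s$ is gradient precisely when $W_s \equiv 0$; one checks $W_{s_0} = 0$. Differentiating the soliton equation $\Ric_{g_s} + \tfrac12 \LL_{V_s} g_s + \tfrac12 g_s = 0$ in $s$ and taking a suitable divergence (the contracted second Bianchi identity, as in the derivation of the Einstein operator $L_{p_0} = \triangle_f + 2\Rm_g$), one extracts an equation governing the evolution of $W_s$. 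The key structural fact — which is where the asymptotic hypothesis $\gamma_s \in \CONE^{k^*}$ rather than merely $\GenCONE^{k^*}$ enters, via Remark~\ref{Rmk_gradient_implies_cone} — is that $\tfrac{d}{ds}\big|_{s_0} W_s$ satisfies a homogeneous linear elliptic equation of the form $P\big(\tfrac{d}{ds}\big|_{s_0} W_s\big) = 0$ with the correct decay at infinity, where $P$ is (conjugate to) the relevant drift-Laplacian-type operator acting on vector fields. A Bochner/weighted-maximum-principle argument — using that the soliton has non-negative scalar curvature is not needed here, but the expanding structure $+\tfrac12 g$ gives the favorable zeroth-order term in the weighted integration by parts — shows this operator has trivial kernel in the relevant weighted Hölder space, so $\tfrac{d}{ds}\big|_{s_0} W_s = 0$. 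Running this along the $C^1$-curve (a Gronwall-type argument, or observing that $s \mapsto W_s$ solves a linear ODE in a Banach space with zero initial data) yields $W_s \equiv 0$ near $s_0$, i.e. $\sigma(s) \in \MMgrad^{k^*}(X)$ for $s$ near $s_0$.

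The main obstacle is the elliptic kernel step: one must identify the precise operator controlling $W_s$, confirm that the gauge/diffeomorphism freedom in the representatives $(g_s, V_s)$ has been fixed so that this operator is genuinely elliptic (not merely elliptic modulo gauge), and then prove the vanishing of its kernel in the weighted function space dictated by \eqref{eq_gdrhO}. The conceptual input is that for a gradient expanding soliton the "non-gradient direction" is governed by an operator closely related to the Lichnerowicz/Einstein operator $L_{p_0}$ whose Fredholm and spectral theory is developed in Section~\ref{sec_elliptic}; the expanding sign $+\tfrac12 g$ makes the associated quadratic form on compactly supported (and suitably decaying) fields strictly positive after integration by parts, ruling out kernel. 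Subtleties requiring care: the $C^{1,\alpha}$-only regularity of the Banach-manifold structure from Theorem~\ref{Thm_Banach_mf_intro} means $\tfrac{d}{ds} W_s$ must be interpreted in the Banach-space sense and the linear-ODE/Gronwall argument phrased accordingly; and one must ensure the Hodge-type splitting $V_s = \nabla u_s + W_s$ depends on $s$ with enough regularity and respects the decay conditions, so that $W_s$ lies in the space on which the kernel-triviality statement applies.
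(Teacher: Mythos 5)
Your reduction to an open--closed argument and the closedness remark match the paper's own start (the proof of Proposition~\ref{Prop_gradientness} begins exactly by noting that gradientness is closed under local $C^1$-convergence, so only openness is at stake), and the overall spirit -- control the deviation from gradientness by an elliptic estimate exploiting the expander term $-\tfrac12$ and propagate along the curve -- is the right one. But the core mechanism you describe does not suffice. Proving that $\tfrac{d}{ds}\big|_{s_0} W_s$ solves a homogeneous elliptic equation with trivial kernel \emph{at the gradient parameter} $s_0$ only yields first-order vanishing of $W_s$ at $s_0$; it does not give openness, because at nearby $s$ the soliton is not yet known to be gradient and the analogous identity acquires error terms involving $dV_s^{\flat}$ and its derivatives. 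This is exactly the content of Lemma~\ref{Lem_gauged_div_equation}, which generalizes the divergence identity to the non-gradient case with right-hand side of the schematic form $dV^{\flat} \ast \DIV_{V} h + \nabla dV^{\flat} \ast h + dV^{\flat} \ast \nabla h$; the estimate that actually drives the argument is $\Vert \beta_s \Vert_{C^{2,\alpha}} \leq C \Vert d\xi_s \Vert_{C^{1,\alpha}}$, where $\xi_s = V_s^{\flat}$ and $\beta_s = (\DIV_{V_s} h_s)^{\flat}$, valid uniformly along the family (which requires the a priori bound $|d\xi_s| \leq C|s|$ to keep the zeroth-order term of the $\beta$-equation negative definite), and only then does $\partial_s d\xi_s = -d\beta_s$ become the Gronwall inequality you gesture at. Your proposal leaves this non-gradient estimate -- the heart of the proof -- unestablished, and the Hodge-type splitting $V_s = \nabla^{g_s} u_s + W_s$ is both unjustified on this noncompact weighted geometry and unnecessary: the paper works with $dV_s^{\flat}$ directly and only produces a potential at the very end (integrating $\partial_s f'_s = \tfrac12 \tr_{\td g_s} \td h_s$ when $H^1(M;\IR) \neq 0$).

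Two further concrete gaps. First, the identities you differentiate only hold after gauging: the evolution formula $\partial_s V_s^{\flat} = -\beta_s + \tfrac12 d\tr h_s$ and the elliptic equation for $\beta_s$ require the family to be in the \emph{infinitesimal} DeTurck gauge, and constructing the gauging diffeomorphisms (Lemma~\ref{Lem_gauged_variation}) means solving, for each $s$, an elliptic system for a vector field $Y_s$ with linearly growing drift via Proposition~\ref{Prop_Lunardi} and verifying the bounds \eqref{eq_tdhtdV_bounds}; you flag this only as a ``subtlety''. Second, you misplace where the hypothesis $\gamma_s \in \CONE^{k^*}$ rather than $\GenCONE^{k^*}$ is used: Remark~\ref{Rmk_gradient_implies_cone} is the converse implication and plays no role here. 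The cone hypothesis enters through \eqref{eq_psgV_bounded}, i.e.\ $|(\partial_s g_s)(V_s)| \leq C$: since $V_s$ grows linearly, this is precisely what makes $\beta_s = \DIV h_s - h_s(V_s)$ bounded, so that the unweighted Schauder estimate of Proposition~\ref{Prop_Lunardi} applies. Without that input the estimate must fail, as Theorem~\ref{Thm_many_deformations} produces non-gradient deformations asymptotic to genuinely generalized cones.
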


Our degree theory shifts the focus from understanding individual asymptotically conical expanding solitons to the question of determining the expander degree of a given orbifold.
Moreover, it provides a new invariant for the smooth structure, which in contrast to other such invariants, holds under vanishing of the second homology group (and even requires it).
The following theorem demonstrates that this invariant can be used to detect standard smooth structures.

\begin{Theorem}\label{Thm_standard_disk}
Let $X$ be as described in the beginning of this subsection and suppose that $\partial X \approx S^3$.
Then $\deg_{\exp}(X) = 1$ if and only if $X$ is diffeomorphic to the standard disk $D^4$. Otherwise  $\deg_{\exp}(X) = 0$.

More generally, if $X$ has a finite orbifold cover $\hat X$ with $\partial \hat X \approx S^3$, then  $\deg_{\exp}(X) = 1$ if and only if $X$ is diffeomorphic to the standard $D^4/\Gamma$ for some $\Gamma \subset SO(4)$. Otherwise $\deg_{\exp}(X) = 0$.
\end{Theorem}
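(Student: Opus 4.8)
The plan is to deduce the statement from Theorem~\ref{Thm_deg_1} together with a rigidity property of expanding solitons asymptotic to a highly symmetric cone, exploiting the surjectivity of \eqref{eq_intro_Pi_restr} supplied by Theorem~\ref{Thm_main_vague} whenever $\deg_{\exp}(X)\neq 0$. The ``if'' directions are immediate: if $X\approx D^4$, or more generally $X\approx D^4/\Gamma$ for some finite $\Gamma\subset SO(4)$ acting freely on $S^3$, then $\deg_{\exp}(X)=1$ by Theorem~\ref{Thm_deg_1}. It therefore suffices to prove that $\deg_{\exp}(X)\neq 0$ forces $X$ to be one of these standard models; this yields at once $\deg_{\exp}(X)\in\{0,1\}$ and the stated characterization of the value $1$.

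So assume $\deg_{\exp}(X)\neq 0$. Since $\partial X$ consists of regular points (Property~\ref{Property_1}) and, in the second assertion, is finitely covered by $\partial\hat X\approx S^3$, we have $\partial X\approx S^3/\Gamma$ for a finite group acting freely on $S^3$; by the resolution of the spherical space form conjecture $\Gamma$ may be taken to sit linearly in $SO(4)$, so the round cones $\gamma_\rho=dr^2+r^2\rho^2 g_{S^3/\Gamma}$ with $\rho\le 1$ all lie in $\CONEgeq^{k^*}(\partial X)$ (for $\partial X\approx S^3$, $\gamma_1$ is just Euclidean space). By Theorem~\ref{Thm_main_vague} the map $\Pi|_{\MMgeqgrad^{k^*}(X)}$ is surjective, so for each such $\rho$ there is a complete gradient expanding soliton $(g,\nabla f)$ with $R\ge 0$ on $\Int X$ asymptotic to $\gamma_\rho$.

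The heart of the matter -- and what I expect to be the main obstacle -- is to read off the smooth type of $\Int X$ from its asymptotic cone, proving $\Int X$ diffeomorphic to the \emph{standard} orbifold $\IR^4/\Gamma$ with $\Gamma$ acting linearly (and freely on $S^3$). I see two routes. One is to take $\rho<1$, so that $\gamma_\rho$ has positive curvature operator, and to show that this positivity propagates inward to $(g,\nabla f)$ -- a propagation one expects to obtain via the self-similar Ricci flow $g(t)=t\,\psi_t^* g$ (which preserves positive curvature operator) together with curvature estimates at infinity in the spirit of Deruelle and of Chen--Deruelle; then the soul theorem, in its orbifold form, identifies $\Int X$ with the standard $\IR^4/\Gamma$ (the soul being a point, with linear isotropy $\Gamma$), while Deruelle's uniqueness theorem pins $(g,\nabla f)$ down as the expander he constructs on this model. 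The other route is to take $\rho=1$, i.e., the \emph{flat} cone, and to prove directly that the resulting soliton is flat: its self-similar flow has the flat cone -- a static solution -- as its blow-down at $t\to 0^+$, so one expects $g(t)\to g_{\textup{eucl}}$, after which forward uniqueness of complete (orbifold) Ricci flows with bounded curvature gives $g=g(1)$ flat; here the delicate points are the decay rate of the curvature of $g$ and the behavior of $g(t)$ near the vertex of the cone. Either way one concludes $\Int X\cong\IR^4/\Gamma$ as smooth orbifolds, with $\Gamma\subset SO(4)$ acting freely on $S^3$.

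Granting this, it remains to pass from $\Int X$ to $X$ by a collar argument. Cutting $\Int X=\IR^4/\Gamma$ along a large round sphere $S^3(R)/\Gamma$, which bounds a standard ball $B\approx D^4/\Gamma$, the region $X\setminus\Int B$ is a compact orbifold with interior the product end $(S^3/\Gamma)\times(0,\infty)$ of $\IR^4/\Gamma$ and with boundary $(S^3/\Gamma)\sqcup(S^3/\Gamma)$, hence $X\setminus\Int B\approx (S^3/\Gamma)\times[0,1]$ and $X\approx B\cup\big((S^3/\Gamma)\times[0,1]\big)\approx D^4/\Gamma$, the standard model. Together with the ``if'' directions, this proves that $\deg_{\exp}(X)=1$ precisely when $X$ is diffeomorphic to a standard $D^4/\Gamma$ ($\Gamma=\{1\}$ in the first assertion) and $\deg_{\exp}(X)=0$ otherwise.
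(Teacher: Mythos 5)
Your top-level reduction is sound and close in spirit to the paper: everything hinges on showing that an expanding soliton with $R\ge 0$ on $\Int X$ asymptotic to a maximally symmetric cone forces $\Int X$ to be the standard $\IR^4/\Gamma$. But that rigidity step is exactly what you have not proved, and neither of your sketched routes closes it. For route 1, propagating (non)negativity of the curvature operator from the asymptotic cone $\gamma_\rho$, $\rho<1$, into the soliton is a genuine theorem, not a consequence of forward preservation of curvature conditions: the cone is only the blow-down limit of the self-similar flow $g_t=t\,\phi_t^*g$, not an initial datum on which the tensor maximum principle can be run, and Deruelle's uniqueness theorem assumes the \emph{expander} (not just the cone) has positive curvature operator, so without this propagation the argument is circular. (Also, $\gamma_\rho$ has nonnegative, not positive, curvature operator: radial $2$-planes are flat.) For route 2, forward uniqueness from the flat cone presupposes that the flow is a flow \emph{on} $\IR^4/\Gamma$ attaining $g_{\eucl}$ at $t=0$; but the convergence $g_t\to\gamma_{\eucl}$ as $t\searrow 0$ holds only on the conical end (the compact core of $M$ collapses to the vertex), and $M$ is a priori a different orbifold --- identifying it with $\IR^4/\Gamma$ is precisely what is to be proved, so the appeal to uniqueness is again circular.

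The paper closes this gap with Perelman's entropy rather than curvature propagation or flow uniqueness: Lemma~\ref{Lem_nu_cone_exp} gives $0\ge\nu[g]\ge\nu[\gamma_{\eucl}]=0$ for any soliton (gradient or not) in $\Pi^{-1}(\gamma_{\eucl})$, and the rigidity case $\nu[g]=0$ forces $(M,g)$ to be isometric to (a quotient of) Euclidean space, which simultaneously pins down the metric, the vector field and the smooth type of the ensemble (Proposition~\ref{Prop_eucl_case}). With this in hand the paper does not even need the surjectivity statement you invoke: it evaluates the degree formula of Theorem~\ref{Thm_degexp_identity} at the flat cone, which is automatically a regular value because the fiber is either empty (nonstandard $X$, giving $\deg_{\exp}(X)=0$) or consists of the single Gaussian expander with $\Null(-L)=\idx(-L)=0$ (standard $X$, giving $\deg_{\exp}(X)=1$); this is Theorem~\ref{Thm_deg_std_disk}, of which the statement at hand is a restatement. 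If you replace the ``heart of the matter'' in your outline by this entropy comparison and rigidity argument, the rest of your proposal (the ``if'' direction via Theorem~\ref{Thm_deg_1} and the concluding collar argument) goes through.
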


\bigskip

\subsection{Open Questions}
It is an interesting question to compute the expander degree for more complicated orbifolds $X$.
The following question may be within closest reach:

\begin{Question}
Compute $\deg_{\exp} (S^1 \times D^3)$ and $\deg_{\exp} ((S^1 \times D^3)/\IZ_2)$.
\end{Question}

Here the $\IZ_2$-quotient should be taken with respect to the antipodal maps on each factor.
From a philosophical perspective, it would be convenient if $\deg_{\exp} (S^1 \times D^3) \neq 0$.
This may allow the resolution of a conical singularity in Ricci flow with link $\approx S^1 \times S^2$ without increasing the second Betti number.

In order to compute the expander degree of even more complicated orbifolds, it may be necessary to investigate the behavior of the expander degree under connected sums.
To be more precise, consider two orbifolds $X_1, X_2$ that satisfy the properties listed in the beginning of Subsection~\ref{subsec_statement_of_results}.
We define $X_1 \#_{\partial} X_2$ to be their connected sum at the boundary, i.e., the result of removing a copy of a half 4-ball $D^4_+$ from a tubular neighborhood of the boundary of each orbifold and gluing in a copy of $S^3_+ \times I$, where $S^3_+$ denotes the upper hemisphere.
Note that $\partial (X_1 \#_{\partial} X_2) = \partial X_1 \# \partial X_2$.

\begin{Question}\label{Q_relate_degX1X2}
Relate $\deg_{\exp} (X_1 \#_{\partial} X_2)$ with $\deg_{\exp}(X_1)$ and $\deg_{\exp}(X_2)$.
\end{Question}

An answer to Question~\ref{Q_relate_degX1X2} may allow us to find an orbifold $X$ such that $\partial X$ is diffeomorphic to any given closed, orientable 3-manifold.
Recall that any such 3-manifold must be diffeomorphic to a connected sum of the form
\[ (S^3/\Gamma_1) \#_{\partial} \ldots \#_{\partial} (S^3/\Gamma_k) \#_{\partial} (S^1 \times S^2)\#_{\partial} \ldots \#_{\partial} (S^1 \times S^2). \]
This may lead to the resolution of the following conjecture:

\begin{Conjecture}
For any orientable Riemannian cone metric $\gamma = dr^2 + r^2 h$ with non-negative scalar curvature there is a gradient expanding soliton with non-negative scalar curvature that is asymptotic to $\gamma$ and diffeomorphic to the interior of a connected sum of the form
\[ (D^4/\Gamma_1) \#_{\partial} \ldots \#_{\partial} (D^4/\Gamma_k) \#_{\partial} (S^1 \times D^3)\#_{\partial} \ldots \#_{\partial} (S^1 \times D^3). \]
\end{Conjecture}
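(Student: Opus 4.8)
The plan is to obtain the statement as an immediate combination of Theorem~\ref{Thm_deg_1} with the surjectivity clause of Theorem~\ref{Thm_main_vague}; no new analysis is required beyond those inputs. First I would record that $X := D^4/\Gamma$ is admissible for the degree theory, i.e.\ that it satisfies Properties~\ref{Property_1} and \ref{Property_2} from the beginning of Subsection~\ref{subsec_statement_of_results}: its boundary $\partial X = S^3/\Gamma$ is a spherical space form and hence carries a metric of positive scalar curvature (Property~\ref{Property_1}), while taking the orbifold cover $\hat X = D^4$, a genuine manifold with $H_2(D^4;\IZ)=0$ and $H_1(D^4;\IZ)=0$ torsion free, verifies Property~\ref{Property_2}. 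The unique orbifold point of $X$ is the image of $0 \in D^4$, which is fixed by $\Gamma$. Consequently $\deg_{\exp}(X)$ is well defined, and the restricted projection \eqref{eq_intro_Pi_restr}, which here takes the form $\Pi|_{\MMgeqgrad^{k^*}(X)} : \MMgeqgrad^{k^*}(X) \to \CONEgeq^{k^*}(S^3/\Gamma)$, is meaningful. By Theorem~\ref{Thm_deg_1} we have $\deg_{\exp}(X) = 1 \neq 0$.

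Next I would invoke the final assertion of Theorem~\ref{Thm_main_vague}, equivalently the observation following Theorem~\ref{Thm_degexp_identity} that a cone metric not lying in the image of $\Pi$ over $\MMgeqgrad^{k^*}(X)$ is automatically a regular value with empty fiber, hence would force $\deg_{\exp}(X)=0$ via \eqref{eq_degexp_identity_intro}: since $\deg_{\exp}(X)\neq 0$, the map \eqref{eq_intro_Pi_restr} is surjective. Therefore, for any $C^{k^*}$-regular cone metric $\gamma = dr^2 + r^2 h$ on $\IR_+ \times (S^3/\Gamma)$ with non-negative scalar curvature there is a class $[(g,\nabla f,\gamma)] \in \MMgeqgrad^{k^*}(X)$. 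Unwinding the definition of this space, such a class is represented by a complete gradient expanding soliton $(g,\nabla f)$ on $\Int X$ solving $\Ric_g + \nabla^2 f + \tfrac12 g = 0$ with scalar curvature $\geq 0$, and asymptotic to $\gamma$ near $\partial X$ in the sense of \eqref{eq_gdrhO}. To conclude I would identify the interior: the radial diffeomorphism $\Int D^4 \cong \IR^4$ is equivariant for the linear $\Gamma$-action, so it descends to $\Int X = \Int(D^4/\Gamma) \cong \IR^4/\Gamma$, transporting $g$ to the asserted gradient expanding soliton metric on $\IR^4/\Gamma$.

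I expect the main obstacle to be essentially absent: all of the genuine content is packaged inside Theorems~\ref{Thm_main_vague} and \ref{Thm_deg_1}, and what remains is bookkeeping. The points requiring care are confirming that $D^4/\Gamma$ truly satisfies the structural hypotheses (done above), matching the regularity class --- for $k^*=\infty$ one appeals to the smooth version of $\MMgrad^{k^*}(X)$ indicated in the footnote to its definition --- and verifying the diffeomorphism type $\Int(D^4/\Gamma) \cong \IR^4/\Gamma$ of the underlying space. None of these should present real difficulty; the substantive ingredients are the nonvanishing $\deg_{\exp}(D^4/\Gamma)=1$ and the localized-degree surjectivity principle.
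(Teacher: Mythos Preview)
The statement you are attempting to prove is a \emph{Conjecture}, not a theorem: the paper explicitly presents it as open and does not provide a proof. Your argument does not prove the Conjecture either --- it proves only the special case in which the link of the cone is a single spherical space form $S^3/\Gamma$, which is precisely the Corollary that the paper already states and proves (combining Theorems~\ref{Thm_main_vague} and \ref{Thm_deg_1}). In that restricted setting your reasoning is correct and matches the paper's.

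The gap is that the Conjecture concerns an \emph{arbitrary} orientable cone metric with non-negative scalar curvature. Such a cone has link an arbitrary closed orientable 3-manifold admitting positive scalar curvature, hence a connected sum of several spherical space forms and copies of $S^2 \times S^1$. Your choice $X = D^4/\Gamma$ has boundary a single $S^3/\Gamma$; it cannot see cones whose links have multiple prime factors or $S^2 \times S^1$ summands. To handle the general case one would need to know $\deg_{\exp}$ for the boundary connected sums $(D^4/\Gamma_1) \#_{\partial} \ldots \#_{\partial} (D^4/\Gamma_k) \#_{\partial} (S^1 \times D^3)\#_{\partial} \ldots$, and the paper poses precisely this as an open problem: computing $\deg_{\exp}(S^1 \times D^3)$ and relating $\deg_{\exp}(X_1 \#_{\partial} X_2)$ to $\deg_{\exp}(X_1)$ and $\deg_{\exp}(X_2)$ are listed among the open questions that would be needed to resolve the Conjecture.
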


A resolution of this conjecture would affirmatively answer Question~\ref{Q_main} and is expected to have direct applications in the construction of a 4-dimensional Ricci flow with surgery.
\medskip

Second, the role of the degree theory in showing the existence of expanding solitons with prescribed conical asymptotics raises the question of uniqueness.
More specifically, one may ask:

\begin{Question} \label{Q_non_unique}
Is there an example of two non-isometric gradient expanding solitons with non-negative scalar curvature, defined on the same orbifold $\Int X$, that are asymptotic to the same conical metric $\gamma \in \CONEgeq^\infty(\partial X)$?
\end{Question}

If we do not require that both expanding solitons are defined on the same orbifold, then Question~\ref{Q_non_unique} has a positive answer: the flat cone $\mathbb{C}^2/\mathbb{Z}_k$ for $k\geq 3$ is the asymptotic cone of both a smooth expanding soliton on a resolution of $\mathbb{C}^2/\mathbb{Z}_k$ by \cite{Conlon_Deruelle_Sun_2019}, and also the asymptotic cone of the flat orbifold expanding soliton on $\mathbb{C}^2/\mathbb{Z}_k$ itself.

Question~\ref{Q_non_unique} also has a positive answer in dimensions 5 and higher due to examples of \cite{Angenent_Knopf_2022}, in which the asymptotic cone of a gradient \emph{shrinking} soliton can be resolved both by expanding solitons on different smooth manifolds as well as non-isometric expanding solitons on the same smooth manifold. If there are also examples for Question~\ref{Q_non_unique} in dimension 4, this would show that the resolution of conical singularities in 4-dimensional Ricci flows may not be unique, and suggest that an analogous version of the uniqueness theorem from \cite{Bamler_Kleiner_uniqueness} may be false in dimension 4.


\medskip
Third, the role of the non-negative scalar curvature condition in our theory seems to be nebulous.
In fact, this condition is only used once in this paper: in the proof of the properness of the restricted projection map \eqref{eq_intro_Pi_restr} (see Lemma~\ref{Lem_nu_cone_exp} and Remark~\ref{Rmk_scal_bound_necessary}).
An alternative argument may allow us to remove the scalar curvature condition and affirmatively answer the following question.

\begin{Question}
Does $\deg_{\exp}(X) \neq 0$ imply surjectivity of the map
\[ 
\Pi \big|_{\MMgrad^\infty(X)} \; : \; \MMgrad^\infty(X)  \lto \CONE^\infty(\partial X). \]
\end{Question}

Lemma~\ref{Lem_some_lower_nu_bound} in Section~\ref{sec_properness} motivates the expectation that the surjectivity can at least be extended to all cones $\gamma = dr^2 + r^2 h$ whose link metric satisfies the bound $\lambda[h] \geq 2$, where the left-hand side denotes Perelman's $\lambda$-functional.
\medskip

Lastly, we remark that Question~\ref{Q_cone_implies_gradient} from the previous subsection is still open.

\subsection{Overview of the proof} \label{subsec_Overview}
Our proof draws inspiration from the deformation theory of expanding soliton by Deruelle \cite{Deruelle} and the degree theories for minimal surfaces by White \cite{White_1987} and conformally compact Einstein metrics by Anderson \cite{Anderson_2008}.
To this end, we adapt some of the analytical tools for elliptic operators with unbounded coefficients by Deruelle \cite{Deruelle_2015, Deruelle}, which builds on earlier work of Lunardi \cite{Lunardi_1998}.
In addition, the structure of several arguments is, to some extent, informed by the approach in \cite{White_1987}.
Furthermore, we employ recent asymptotic kernel estimates due to Deruelle and Schulze \cite{Deruelle_Schulze_2021}.

One of the main contributions of our paper is the development of a novel localized degree theory for improper maps.
Such a theory is necessary since the standard degree theories do not seem to be applicable to our setting.
This is due in part to the fact that the space of \emph{gradient} expanding solitons may not form a Banach manifold over the space of cones, but instead a possibly singular subvariety within the larger space of (possibly non-gradient) expanding solitons over generalized cones.
Consequently, the projection map defined on neither of these spaces satisfies the necessary conditions for a degree theory, as its domain may either not be a Banach manifold or the map itself may be improper.
This presents particular challenges in defining the \emph{integer} degree, as we cannot rely on the existence of regular values within the set of cones.

In addition, we address several specific issues that arise from the expanding soliton equation:
\begin{itemize}
\item We develop a framework to address the diffeomorphism invariance of the soliton equation and the effects of switching between different gauges while ensuring adequate control over the asymptotics at infinity.
\item We describe the relationship between the gradient condition and the asymptotic cone condition.
To achieve this, we present a new continuity method, which allows us to show the preservation of the gradient condition along $C^1$-families of asymptotically conical expanding solitons.
\item We establish a compactness theory for asymptotically conical gradient expanding solitons, which is based on the geometry of the asymptotic cone.
\end{itemize}
\medskip

Let us now provide a more detailed description of our proof strategy. 
We begin by addressing the diffeomorphism invariance of the soliton equation, which we eliminate by working in two different types of gauges.

The first gauge condition, which we will refer to as the \emph{gauge at infinity} for the purposes of this discussion, is used to define\footnote{For technical reasons, we actually use a slightly different topological setup in the definition of $\MM^{k^*}(X)$.
Specifically, we replace $X$ with an ensemble $(\Int X, \partial X, \iota)$, where $\iota$ describes the identification of a neighborhood of $\partial X$ with $(1,\infty] \times \partial X$. See Definition~\ref{Def_ensemble} for more details.} the space $\MM^{k^*}(X)$.
Given an orbifold $X$ satisfying the conditions outlined in the beginning of Subsection~\ref{subsec_statement_of_results} and an identification of a tubular neighborhood of $\partial X$ with $(1,\infty] \times \partial X$, we say that a metric $g$ and vector field $V$ on $\Int X$ satisfying the expanding soliton equation
\[ \Ric + \tfrac12 \LL_V g + \tfrac12 g = 0 \]
are gauged at infinity if there exists a generalized cone metric $\gamma \in \GenCONE^{k^*}(\partial X)$ such that for some $r_0 > 1$ (compare also with \eqref{eq_gdrhO})
\[ g = \gamma + O(r^{-2}), \qquad\qquad V= - \tfrac12 r \partial_r \qquad \text{on} \quad (r_0, \infty) \times \partial X. \]
This condition, which has also been used in \cite{Deruelle_Schulze_2021}, uniquely characterizes the metric on $(r_0 , \infty) \times \partial X$ and allows us to deduce a priori asymptotic estimates on $g$, which depend only on $\gamma$ and imply that $g$ possesses a polynomial asymptotic expansion.
However, the gauge at infinity does not determine the metric uniquely everywhere.
We are still left with a gauge group consisting of diffeomorphism that equal the identity on the complement of a compact subset; note that we have taken the quotient by this group in the definition of $\MM^{k^*}(X)$.
Nevertheless, the gauge at infinity condition allows us to define a metric structure and a topology on the space $\MM^{k^*}(X)$, which will later form the basis of the Banach manifold structure.

The second type of gauge, called the \emph{DeTurck gauge,} will be used to characterize local deformations of a given expanding soliton metric.
Consider two expanding soliton metrics $g,g'$ with the corresponding soliton vector fields $V, V'$.
We say that $(g',V')$ is in the DeTurck gauge with respect got $(g,V)$ if
\begin{equation} \label{eq_DeTurck_overview}
 V' = V + \DIV_g(g') - \tfrac12 \nabla^g \tr_g (g'), 
\end{equation}
where $\DIV_g$ denotes the divergence operator.
The DeTurck gauge condition allows us to recast the soliton equation for $(g',V')$ in terms of a strongly elliptic non-linear equation of the form
\begin{equation} \label{eq_Q_0_overview}
 Q_g(g') = - 2 \Ric(g') - g' - \LL_{V}g' + \LL_{\DIV_g (g') - \frac12 \tr_g (g') }g' = 0. 
\end{equation}
If $V = \nabla^g f$ is a gradient vector field, then the linearization of the equation \eqref{eq_Q_0_overview} is given by the operator
\begin{equation} \label{eq_linearization_overview}
 L_g h = \triangle_f h + 2 \Rm_g (h), 
\end{equation}
where $\triangle_f = \triangle - \nabla_{\nabla f}$.
Assuming that $(g,\nabla f, \gamma)$ represents an element of $\MMgrad^{k^*}(X)$, the ellipticity of this linearization enables us to apply an Implicit Function Theorem to equation \eqref{eq_Q_0_overview}.
This yields a description of the set of solutions of \eqref{eq_Q_0_overview} near $g$ that are asymptotic to some possibly different generalized cone metric $\gamma'$.
Specifically, we obtain that this set can be parameterized by an open subset of a Banach space.
Repeating this characterization at any point in $\MMgrad^{k^*}(X)$ leads to a Banach manifold structure on $\MM^{k^*}(X)$ near $\MMgrad^{k^*}(X)$.
We will revisit this step later in the overview to discuss further subtleties that affect our general strategy.

It is important to note that solutions $(g',V')$ of \eqref{eq_Q_0_overview} are generally not gauged at infinity.
Likewise, given a representative $(g',V',\gamma')$ of an element of $\MM^{k^*}(X)$ near the element represented by $(g, \nabla f, \gamma)$, it is generally not true that $(g',V')$ is in the DeTurck gauge with respect to $(g,V)$.
To complicate matters further, $(g',V')$ may not even be close to $(g,V)$ as tensor fields.
Therefore, in order to establish a local Banach manifold structure of $\MM^{k^*}(X)$ near $\MMgrad^{k^*}(X)$, it is necessary to convert between the gauge at infinity and the DeTurck gauge, and vice versa.
These conversions constitute a significant portion of the technical work in this paper, as it is crucial to ensure that the desired asymptotic properties at infinity remain preserved.

The metric on the space $\MM^{k^*}(X)$, which was defined via the gauge at infinity, allows us to express the properness of the map
\begin{equation} \label{eq_Pi_MMgeqgrad_overview}
 \Pi \big|_{\MMgeqgrad^{k^*}(X)} \; : \; \MMgeqgrad^{k^*}(X)  \lto \CONEgeq^{k^*} (\partial X) 
\end{equation}
To establish this properness, we first relate the entropy of a gradient expanding soliton to the entropy of its asymptotic cone.
Notably, this is the only step where the non-negative scalar curvature condition seems essential.
Second, we derive a bound on the scalar curvature of an asymptotically conical gradient expanding soliton, which only depends on its asymptotic geometry.
Third, we derive a bound on the entire curvature tensor by excluding the possibility of an instanton arising in the limit.
This step relies on the topological assumption on $X$ involving the first and second homology groups.
Finally, we deduce Cheeger-Gromov compactness of preimages of compact subsets with respect to the map \eqref{eq_Pi_MMgeqgrad_overview} and convert this to compactness with respect to the topology on $\MM^{k^*}(X)$.

Let us now explain why it is important to consider the larger space $\MM^{k^*}(X) \supset \MMgrad^{k^*}(X)$ of (possibly non-gradient) expanding solitons.
To do so, we need to revisit the application of the Implicit Function Theorem, which led to the local Banach manifold structure of $\MM^{k^*}(X)$.
We faced two constraints.
First, our analysis can only be carried out if the base metric $g$ is the metric of a \emph{gradient} expanding soliton.
Consequently, we only obtain a Banach manifold structure on a neighborhood $\MMgrad^{k^*}(X) \subset \MM' \subset \MM^{k^*}(X)$, as stated in Theorem~\ref{Thm_Banach_mf_intro}.
Second, the linearization \eqref{eq_linearization_overview} may have a non-trivial kernel $K_g$.
In this case, we use an asymptotic estimate for $h \in K_g$ due to Deruelle and Schulze \cite{Deruelle_Schulze_2021}, which is of the form
\begin{equation} \label{eq_h_K_asympt_overview}
 h = r^{-n} e^{-r^2/4} \dot\gamma + o(r^{-n} e^{-r^2/4}), 
\end{equation}
and holds in an $L^2$-sense over $\partial X$.
Here $\dot\gamma$ denotes an infinitesimal variation of a generalized cone metric, that is, $\dot\gamma$ is an element of the vector space that contains $\GenCONE^{k^*}(\partial X)$ as an open subset.
In order to eliminate the cokernel of the operator $L_g$, which is necessary to establish the Banach manifold structure, we need to allow variations of the asymptotic cone metric $\gamma$ in the directions $\dot\gamma$ appearing in \eqref{eq_h_K_asympt_overview}.
As these variations may be perpendicular to $\CONE^{k^*}(\partial X)$, we must consider variations of $\gamma$ in the direction of \emph{generalized} cones.
Consequently, we are only able to establish a local Banach manifold structure on the larger space $\MM^{k^*}(X)$, while the subset $\Pi^{-1}(\CONE^{k^*}(\partial X))$ may only be a real-analytic subvariety, potentially with singularities.

Let us now turn our attention to the gradient condition. 
The Implicit Function Theorem argument from the previous paragraph does not provide information on whether the vector field $V'$ of a nearby soliton metric $g'$, as given in \eqref{eq_DeTurck_overview}, is gradient.
In addition, as discussed in Subsection~\ref{subsec_statement_of_results}, the gradient condition can only hold in the non-generic case
when $\gamma'$ belongs to $\CONE^{k^*}(\partial X)$.
Therefore, we need to deduce the gradient condition \emph{a posteriori} under the assumption that $\gamma'$ is a cone.
We accomplish this via a continuity method; compare also with Theorem~\ref{Thm_preservation_gradient_intro}.
Consider a $C^1$-family of solutions $(g'_s)_{s \in [0,1]}$ of \eqref{eq_Q_0_overview} with $g'_0 = g$ and let $V'_s$ be the corresponding vector fields defined via \eqref{eq_DeTurck_overview}.
Recall that we have assumed that $V'_0 = V = \nabla f$ is gradient and assume furthermore that for all $s \in [0,1]$ the metric $g'_s$ is asymptotic to a cone $\gamma'_s \in \CONE^{k^*}(\partial X)$.
We prove that then $V'_s$ must be gradient for all $s \in [0,1]$.
To motivate the idea of our proof, let us consider the infinitesimal variation $h = \frac{d}{ds} |_{s=0} g'_s$.
Then \eqref{eq_DeTurck_overview} implies that
\begin{equation} \label{eq_ddsVpbgp}
 \frac{d}{ds} \bigg|_{s= 0} (V'_s)^{\flat, g'_s} = -h(\nabla f) + \big(\DIV_g ( h) \big)^{\flat,g} - \tfrac12 d \big(\tr_{g}(h) \big) =:  \big({\DIV_{g,f} ( h) }\big)^{\flat,g} - \tfrac12 d \big(\tr_{g}(h) \big). 
\end{equation}
On the other hand, as $h$ is an infinitesimal variation of the Einstein equation, we have $L_g h = 0$, which implies via a Bochner-type argument that
\[  \triangle_f {\DIV_{g,f} ( h)}  - \tfrac12 {\DIV_{g,f} ( h)}  = 0. \]
So we have ${\DIV_{g,f} ( h)}  \equiv 0$ by the maximum principle, which implies that the right-hand side of \eqref{eq_ddsVpbgp} is exact.
In other words, the gradient condition for $V'_s$ is preserved at first order near $s = 0$.
To extend this infinitesimal picture to the macroscopic level and to all $s$, we generalize the previous computations to the non-gradient case and we introduce a new analytical technique that allows us to bound the evolution of the deviation from the gradient condition with respect to the parameter $s$.

We emphasize that the argument described in the previous paragraph only establishes the \emph{preservation} of the gradient condition along $C^1$-families of asymptotically conical expanding solitons.
It does not rule out the existence of points in $\Pi^{-1}(\CONE^{k^*}(\partial X))$ that do not belong to $\MMgrad^{k^*}(X)$.
To address this issue, we need to consider a neighborhood of $\MMgrad^{k^*}(X)$ which is disjoint from $C^1$-connected components of $\Pi^{-1}(\CONE^{k^*}(\partial X))$ that consist of non-gradient expanding solitons.
This step requires that we work over a finite dimensional family of generalized cone metrics.
Specifically, fix a finite dimensional real-analytic submanifold $Q \subset \GenCONE^{k^*}(\partial X)$ containing a point $\gamma \in \CONEgeq^{k^*}(\partial X)$.
Suppose that $Q$ is transverse to $\Pi|_{\MM'}$, after possibly shrinking $\MM'$.
Then $P := \Pi^{-1}(Q) \cap \MM'$ must be a submanifold of $\MM^{k^*}(X)$.
It can be seen that the subset $\Pi^{-1}(\CONE^{k^*}(\partial X)) \cap P \subset P$ is a locally real-analytic subvariety of $P$.
Consequently, its connected components are $C^1$-path connected and thus each connected component is either fully contained in $\MMgrad^{k^*}(X)$ or disjoint from it.
So, after possibly shrinking the neighborhood $\MM'$ of $\MMgrad^{k^*}(X)$ and therefore $P$, we may assume that
\[ \Pi^{-1}(\CONE^{k^*}(\partial X)) \cap P = \MMgrad^{k^*}(X) \cap P. \]
This implies that we also have
\[ \Pi^{-1}(\CONEgeq^{k^*}(\partial X)) \cap P = \MMgeqgrad^{k^*}(X) \cap P. \]

We are now ready to sketch the definition of the expander degree.
Consider the restricted map
\[ \Pi |_P : P \lto Q \]
between finite dimensional manifolds of the same dimension.
While the map $\Pi|_P$ itself is generally not proper, the properness of the map \eqref{eq_Pi_MMgeqgrad_overview} implies properness of the restriction
\[ \Pi |_P :\MMgeqgrad^{k^*}(X) \cap P   \lto \CONEgeq^{k^*}(\partial X) \cap Q. \]
It turns out that this is sufficient to define a local degree of $\Pi |_P$ near $\MMgeqgrad^{k^*}(X) \cap P$.
Furthermore, we find that this degree is independent of the choices of $Q$, $\MM'$, $\gamma$ and $k^*$.
We define this to be the expander degree $\deg_{\exp}(X)$.

Our new degree construction faces particular challenges in the case of the \emph{integer} degree, as we also need to define an orientation on the manifolds $P$ and $Q$ in this case.
Defining this orientation involves the index of the operator $L_g$, as suggested by the formula \eqref{eq_degexp_identity_intro}.
This creates the following complication. 
On the one hand, this index is only defined in the case in which $g$ is a \emph{gradient} soliton.
On the other hand, the differential $\Pi|_P$ may be degenerate everywhere along $\MMgeqgrad^{k^*}(X) \cap P$.
This means that we cannot establish a relation between the orientations of both tangent spaces via the differential of $\Pi|_P$ as usual.
In addition, the corresponding operator $L_g$ would have a non-trivial kernel in this case.
To overcome this issue, we introduce a new method for defining a consistent orientation on the tangent spaces along $\MMgeqgrad^{k^*}(X) \cap P$.
With this method, we obtain a well defined orientation of $P$, and therefore, an integer degree for the map $\Pi|_P$.
In the case in which $\Pi|_P$ has a regular point in $\CONEgeq^{k^*}(\partial X) \cap Q$ (which we referred to as ``regular value over $\MMgeqgrad^{k^*}(X)$'' in Theorem~\ref{Thm_degexp_identity}), our new definition reduces the conventional definition of the orientation and the degree formula \eqref{eq_degexp_identity_intro} holds.
\medskip

Let us now describe the structure of our paper.
In Section~\ref{sec_preliminaries}, we recall and extend several known results involving expanding solitons.
These allow us to define the space $\MM^{k^*}(X)$ and its metric structure in Section~\ref{sec_MM}.
We also establish several basic properties relating to this definition, some of which explain the relation between different regularity assumptions.
In Section~\ref{sec_properness}, we establish the properness of the map \eqref{eq_Pi_MMgeqgrad_overview}.
In Section~\ref{sec_elliptic}, we recall and extend several analytical results due to Deruelle and Lunardi \cite{Deruelle, Deruelle_2015, Lunardi_1998} involving elliptic operators with unbounded coefficients, which will be needed frequently throughout this paper.
In Section~\ref{sec_gradientness}, we prove the preservation of the gradient condition for $C^1$-families of asymptotically conical expanding solitons.
In Section~\ref{sec_DT_gauge}, we prove that every expanding soliton near a fixed asymptotically conical gradient expanding soliton can be brought into the DeTurck gauge.
In Section~\ref{sec_smooth_dependence}, we define the local Banach manifold structure of $\MM^{k^*}(X)$ near $\MMgrad^{k^*}(X)$; this involves a process of gauging solitons at infinity.
In Section~\ref{sec_change_of_index}, we derive the necessary results to define the orientation on the manifold $P$; this section is only needed in the definition of the \emph{integer} degree.
In Section~\ref{sec_main_argument}, we combine all our results, define the expander degree and prove all our main theorems.
Appendix~\ref{appx_technical} contains further technical statements that are needed throughout the paper and Appendix~\ref{appx_real_analytic} discusses the local connectedness properties of real-analytic varieties.

Subsection~\ref{subsec_proofs_of_main_thms} contains the formal proofs of the main theorems, as stated in Subsection~\ref{subsec_statement_of_results}.
Note that many of these theorems are restated later in this paper, often in a more general form and using slightly different language.
So the proofs in Subsection~\ref{subsec_proofs_of_main_thms} will often only involve references to these theorems along with a short explanation.
\bigskip

\subsection{Acknowledgements}
We would like to thank Ronan Conlon and John Lott for helpful conversations.
\bigskip

\section{Preliminaries} \label{sec_preliminaries}
\subsection{Conventions}
In this paper, we will frequently work with smooth, 4-dimensional orbifolds $M$ that may have isolated conical singularities.
That is, the neighborhood of any point $p \in M$ can be modeled on an open subset in the quotient space $\IR^4/\Gamma$, where  $\Gamma \subset SO(4)$ is a finite group that acts freely on the unit sphere in $\IR^4$.
A point $p \in M$ is called \textbf{regular} if we can choose $\Gamma = 1$, in which case a neighborhood of $p$ corresponds to an open subset of $\IR^4$.
Otherwise, $p$ is called \textbf{singular.} 

A map $F : M \to M'$ between two such orbifolds is called \textbf{smooth} (resp. $C^k$-regular) if for any $p \in M$ there are open neighborhoods $p \in U \subset M$, $F(p) \in U' \subset M'$ with $F(U) \subset U'$ such that $U$ and $U'$ can be identified with open subsets $\hat U / \Gamma \subset \IR^4/\Gamma$ and $\hat U' / \Gamma' \subset \IR^4/\Gamma'$ and such that $F|_U : U  \to U'$ is covered by a smooth (resp. $C^k$-regular)  map of the form $\hat F : \hat U \to \hat U'$.
We say that $F$ has \textbf{invertible differential} if the maps $\hat F$ have invertible differential.
Similarly, we say that a path $\gamma : I \to M$ is smooth (resp. $C^k$-regular)  if it is locally the projection of a smooth (resp. $C^k$-regular)  path in $\IR^4$ under the projection map $\IR^4 \to \IR^4/\Gamma$.
A smooth map $F : M \to M'$ is called an \textbf{orbifold cover} if its differential is invertible and if it has the path-lifting property for $C^1$-paths.
We remark that all orbifolds considered in this paper are \textbf{good,} meaning they possess an orbifold cover, which is a smooth manifold.
In other words, any orbifold in this paper can be viewed as a quotient $\hat M / \Gamma$, where $\hat M$ is a smooth manifold and $\Gamma$ is a discrete group that acts properly on $\hat M$.

A continuous tensor field $u$ on an orbifold $M$ is given by a continuous tensor field $u_{\textnormal{reg}}$ on the set of regular points of $M$ with the property that around every point $p \in M$ there is a neighborhood $p \in U \subset M$ and an orbifold cover $\pi : \hat U \to U$ such that $\hat U$ is a manifold and such that $\pi^* u_{\textnormal{reg}}$ extends to a continuous tensor field on $\hat U$.
We say that $u$ has regularity $C^{k \leq \infty}$ if the pullbacks $\pi^* u_{\textnormal{reg}}$ have regularity $C^k$.
Note that if $M = \hat M / \Gamma$, then $u$ is given by a tensor field $\hat u$ on $\hat M$ that is invariant under the action of $\Gamma$ and $u$ is defined to be of regularity $C^k$ if $\hat u$ is of regularity $C^k$.
Differential operators acting on tensor fields on $M$, such as the Laplacian, are defined using pullbacks via local   orbifold covers $\pi : \hat U \to U$ as described above.
Similarly, a $(0,2)$-tensor field $g$ on $M$ is called \textbf{Riemannian metric,} if the local pullbacks $\pi^* g$ are Riemannian metrics.
If $(M,g)$ is a Riemannian orbifold and $p \in M$ is regular, then we define the \textbf{injectivity radius} $\inj(p)$ to be equal to the injectivity radius of $p$ as a point in the Riemannian manifold consisting of all regular points.
If $p$ is singular, then we set $\inj(p) := 0$. 

If $(M,g)$ is a Riemannian manifold and $E$ is a tensor bundle over $M$, then we define $C^{k < \infty}(M;E)$ to be the space of all sections $u$ of $E$ for which $|u|, \ldots, |\nabla^k u|$ are defined and uniformly bounded.
See Subsection~\ref{subsec_list_Holder} for more details.
We write $C^k_{\loc}(M;E)$ for the space of all sections of $E$ of \textbf{regularity} $C^k$.
Note that $C^k_{\loc}(M;E) \supsetneq C^k(M;E)$ if $M$ is non-compact.
If $M$ is an orbifold, then we adopt the same terminologies.
Note that in this case the covariant derivatives of $u$ are only defined after passing to appropriate local orbifold covers.

We use $\DIV=\DIV_g$ to denote the \textbf{divergence operator} with respect to a Riemannian metric $g$, dropping the subscript if it is clear which metric is used. 
That is, for any vector field $X$ and $(0,2)$-tensor $h$ we set 
\[ \DIV X := \sum_{i=1}^n e_i \cdot \nabla_{e_i} X, \qquad \DIV h := \sum_{i=1}^n (\nabla_{e_i} h)(e_i, \cdot), \]
where $\{ e_i \}_{i=1}^n$ denotes a local orthonormal frame.
Depending on the context, we will sometimes view $\DIV h$ as a 1-form or a vector field.
Given a $(0,2)$-tensor $h$, a vector field $V$ and a scalar function $f$, we define 
$$\DIV_V h :=\DIV h- h(V), \qquad \DIV_f h:=\DIV h- h(\nabla f).$$ 
 Similarly, we define
\[ \triangle_V:=\triangle-\nabla_V, \qquad \triangle_f:=\triangle-\nabla_{\nabla f}. \]

By a \textbf{$C^{1,\alpha}$-Banach manifold,} $\alpha \in (0,1)$, we mean a topological Hausdorff space $\MM$ together with a maximal atlas 
$$\big\{ \Phi_i : \MM \supset U_i \lto \Phi_i(U_i) \subset X_i \big\}_{i \in I},$$ 
where  $U_i \subset \MM$ are open subsets, $X_i$ are Banach spaces and each map $ \Phi_i : U_i \to \Phi_i(U_i)$ is a homeomorphism onto an open subset of $X_i$.
We require that the transition maps
\[ \Phi_j \circ \Phi_i^{-1} \big|_{\Phi_i (U_i \cap U_j) } : \Phi_i (U_i \cap U_j) \lto \Phi_j (U_i \cap U_j) \]
are of regularity $C^{1,\alpha}$.
Differentiability and H\"{o}lder continuity of maps between Banach spaces can be defined analogously to the situation for maps between finite-dimensional spaces.
Throughout this paper, all $C^{1,\alpha}$-Banach manifolds  will be metrizable and therefore paracompact.
So any such $C^{1,\alpha}$-Banach manifold is a $C^{1,\alpha}$-manifold if all Banach spaces $X_i$ are finite dimensional and their dimensions agree.
We remark that the Implicit Function Theorem still holds on $C^{1,\alpha}$-Banach manifolds.

If $f : M \to M'$ is a $C^1$-regular map between two manifolds, then we denote its differential at a point $p \in M$ by $df_p : T_p M \to T_p M'$.
If $F : \MM \to \MM'$ is a $C^1$-regular map between two $C^{1,\alpha}$-Banach manifolds, then we denote its differential point $p \in \MM$ by $DF_p : T_p \MM \to T_p \MM'$.
We use a capital `$D$' in this case in order to avoid confusion with spatial derivatives.
For instance, if $F : C^1(M) \to C^1(M)$ is a smooth map between Banach spaces and $f \in C^1(M)$, then $DF_{f}$ denotes the differential of the map $F$, while $d(F(f))$ denotes the differential of the function $F(f) : M \to \IR$, which is a 1-form on $M$.
Using a distinct notations allows us to drop the argument $f$ and distinguish $DF : TC^1(M) \to TC^1(M)$ from $dF : C^1(M) \to C^0(M; T^*M)$.
Note also that if $\MM$ and $\MM'$ happen to be finite dimensional, then we may use both notions interchangeably, so $DF = df$.
\bigskip

\subsection{Basic identities of expanding solitons} \label{subsec_basic_identities}
In this paper an {\bf expanding soliton} is given by a triple $(M,g,V)$, where $M$ is an $n$-dimensional smooth orbifold with isolated conical singularities, $g$ is a complete Riemannian metric of regularity $C^2$ and $V$ a complete vector field (in the sense that its trajectories are complete) of regularity $C^1$, that satisfy the following equation:
\begin{equation*} %
 \Ric_g + \tfrac12 \LL_V g + \tfrac12 g = 0 
\end{equation*}
Note that by Lemma~\ref{Lem_soliton_smooth} there is a (possibly different) smooth structure on $M$ with respect to which $g$ and $V$ are smooth.
If $V = \nabla f$ for some {\bf potential function} $f \in C^1(M)$, then the expanding soliton $(M,g,V)$ is called \textbf{gradient} and we have
\begin{equation} \label{eq_gradient_sol_eq}
 \Ric_g + \nabla^2 f + \tfrac12 g = 0. 
\end{equation}
We will sometimes write $(M,g,f)$ instead of $(M,g,V)$.
Note that if $V$ and $g$ are of regularity $C^k$, then $f$ is automatically of regularity $C^{k+1}$.
The following identities hold on a gradient expanding soliton:
\[ R + \triangle f + \tfrac{n}2 = 0, \qquad R + |\nabla f|^2 + f \equiv const. \]
Moreover, on a general expanding soliton $(M,g,V)$, where  $g$ is of regularity $C^4$, we have
\[ \triangle_V R + 2|{\Ric}|^2 = -R. \]

We remark that in \eqref{eq_gradient_sol_eq} we have chosen a different convention for the sign in front of the $\nabla^2 f$-term compared to some of the existing literature (for example \cite{Deruelle, Deruelle_Schulze_2021}).
Our convention is more consistent with the shrinking soliton equation.
However, one peculiarity of our convention is that the potential function $f$ often diverges to $-\infty$ at infinity.
For example, the {\bf standard Euclidean soliton} $(\IR^n, g_{\eucl}, f_{\eucl})$ satisfies $f_{\eucl} = - \frac14 r^2$ and $\nabla f = - \tfrac12 r \partial_r$, where $r$ and $\partial_r$ are the radial distance function and radial vector field.
We will observe similar asymptotic behavior in the asymptotically conical case.

An expanding soliton $(M,g,V)$ gives rise to its {\bf associated Ricci flow} $(g_t := t \phi_t^{*} g)_{t > 0}$, where $(\phi_t : M \to M)_{t > 0}$ is the flow of the time-dependent vector field $t^{-1} V$:
\begin{equation} \label{eq_ass_RF_phi}
 \phi_1 = \id_M, \qquad \partial_t \phi_t = t^{-1} V \circ \phi_t .
\end{equation}
\medskip

\subsection{Lower scalar curvature on expanding solitons}
The following theorem allows us to bound the scalar curvature on an expanding soliton from below.
The second part of the theorem seems to be new; we will use it in Lemma~\ref{Lem_Rgeq0_CONE_implies_MM} to show that an asymptotically conical expanding soliton must have non-negative scalar curvature if its asymptotic cone does.

\begin{Theorem} \label{Thm_soliton_PSC}
Let $(M,g,V)$ be an expanding soliton of dimension $n \geq 3$ with bounded curvature and suppose that $g$ is not Einstein.
Then $R > -\frac{n}2$.
If in addition
\begin{equation} \label{eq_R_minus_int}
\inf_M R > -\tfrac{n}2 \quad \text{and} \quad \int_M \max\{ -R,0 \} dg < \infty, 
\end{equation}
then we even have either $R > 0$ on $M$ or $(M,g)$ is isometric to a quotient of Euclidean space by an isometric action fixing the origin.
\end{Theorem}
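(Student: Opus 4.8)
The plan is to establish the bound $R \geq -\tfrac n2$ from the associated Ricci flow, to upgrade it to a strict inequality by the strong maximum principle, and then to prove the second statement by splitting into the cases $R \geq 0$ on $M$ and $R < 0$ somewhere. In the first of these cases a strong-maximum-principle argument will force the soliton to be Ricci-flat and hence, via its homothety, a flat Euclidean quotient; the second case will be excluded using a divergence identity together with a pointwise curvature inequality. To carry out the first part, I would pass to the associated Ricci flow $(g_t = t\phi_t^* g)_{t>0}$ from \eqref{eq_ass_RF_phi}, which is complete with curvature bounded by $Ct^{-1}$ on compact subintervals of $(0,\infty)$. The maximum principle for the scalar curvature of complete bounded-curvature Ricci flows (in the form due to B.-L.~Chen), applied on $[\eps,T]$ and using that $\inf_M R_{g_\eps} = \eps^{-1}\inf_M R_g \to -\infty$ as $\eps \to 0$ when $\inf_M R_g < 0$, gives $R_{g_T} \geq -\tfrac{n}{2T}$ for every $T > 0$; since $R_{g_T} = T^{-1}R_g \circ \phi_T$ and $\phi_T$ is a diffeomorphism, this is exactly $R_g \geq -\tfrac n2$ on $M$. (Alternatively one can run an Omori-Yau argument for $\triangle_V$ directly.) For strictness, set $u := R + \tfrac n2 \geq 0$; from $\triangle_V R = -R - 2|\Ric|^2$ and $|\Ric|^2 \geq \tfrac1n R^2$ one computes $\triangle_V u \leq u\big(1 - \tfrac2n u\big) \leq u$. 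If $u(p) = 0$ for some $p$, then $u$ is a nonnegative supersolution of $\triangle_V - 1$ with an interior zero, so the strong maximum principle forces $u \equiv 0$, i.e.\ $R \equiv -\tfrac n2$; feeding this back into $\triangle_V R + R + 2|\Ric|^2 = 0$ yields $|\Ric|^2 = \tfrac1n R^2$, hence $\Ric = -\tfrac12 g$, contradicting the hypothesis. Thus $R > -\tfrac n2$.

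\emph{The case $R \geq 0$ in the second statement.} Assume now $\inf_M R > -\tfrac n2$, $\int_M \max\{-R,0\}\,dg < \infty$, and that $R$ is not everywhere positive. If $R \geq 0$ on $M$, choose $p$ with $R(p) = 0$; since $\triangle_V R = -R - 2|\Ric|^2 \leq 0$, the strong maximum principle gives $R \equiv 0$, hence $|\Ric|^2 \equiv 0$, so $g$ is Ricci-flat and the soliton equation reduces to $\LL_V g = -g$. The flow $(\psi_s)$ of the complete vector field $V$ then satisfies $\psi_s^* g = e^{-s}g$, so for $s>0$ the maps $\psi_s$ are contractions of the complete metric space $(M,\dist_g)$ and share a common fixed point $p_0$. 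Around $p_0$ the resulting homothety group forces $g$ to be a metric cone $dr^2 + r^2 h$ with $r = \dist_g(p_0, \cdot)$; since $g$ is a smooth orbifold metric near $p_0$ and is Ricci-flat, $h$ must be a round metric on a quotient $S^{n-1}/\Gamma$, so $g$ is flat and $(M,g)$ is isometric to $\IR^n/\Gamma$ with $\Gamma$ acting isometrically and fixing the origin. (For $n=3$ this step is immediate, Ricci-flatness being flatness.)

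\emph{Excluding $R < 0$ somewhere.} It remains to show $\Omega := \{R < 0\}$ is empty. Combining $\triangle_V R = -R - 2|\Ric|^2$ with the trace identity $\DIV V = -R - \tfrac n2$ yields, on all of $M$,
\[ \DIV\big(\nabla R - R V\big) \;=\; R^2 + \big(\tfrac n2 - 1\big)R - 2|\Ric|^2 . \]
For a.e.\ $\eps > 0$ the set $\Omega_\eps := \{R < -\eps\}$ is bounded by a smooth hypersurface $\{R = -\eps\}$ (Sard); moreover $\vol(\Omega_\eps) \leq \eps^{-1}\int_M \max\{-R,0\}\,dg < \infty$, and $\eps\,\vol(\Omega_\eps) \to 0$ as $\eps \to 0$ because $t \mapsto \vol(\{R < -t\})$ is integrable near $t=0$. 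Integrating the identity over $\Omega_\eps$ against a spatial cutoff and letting the cutoff radius tend to infinity --- the error terms being controlled by $\vol(\Omega_\eps) < \infty$, the bound $|\nabla R| \leq C$ from bounded curvature, and the at-most-linear growth of $|V|$ (a standard consequence of the soliton equations) --- the boundary contribution of $\nabla R$ equals $\int_{\{R=-\eps\}}|\nabla R| \geq 0$ while that of $RV$ is $O(\eps\,\vol(\Omega_\eps))$. Sending $\eps \to 0$ then gives
\[ \int_\Omega \Big[R^2 + \big(\tfrac n2 - 1\big)R - 2|\Ric|^2\Big]\,dg \;\geq\; 0 . \]
On the other hand, on $\Omega$ we have $-\tfrac n2 < R < 0$ by the first part, so with $w := -R \in (0,\tfrac n2)$ and $2|\Ric|^2 \geq \tfrac2n R^2$,
\[ R^2 + \big(\tfrac n2 - 1\big)R - 2|\Ric|^2 \;\leq\; \tfrac{n-2}{n}w^2 - \tfrac{n-2}{2}w \;=\; \tfrac{n-2}{2}\,w\Big(\tfrac2n w - 1\Big) \;\leq\; 0 \]
for $n \geq 3$. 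Hence the integrand vanishes identically on $\Omega$, which forces $w \equiv \tfrac n2$, i.e.\ $R \equiv -\tfrac n2$ on $\Omega \neq \emptyset$, contradicting $\inf_M R > -\tfrac n2$. So $\Omega = \emptyset$, and we are in the previous case, which completes the argument.

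\emph{Expected main obstacle.} The delicate point is the integration by parts over the possibly non-compact, non-smooth set $\{R<0\}$ in the last step: this is exactly where the hypothesis $\int_M \max\{-R,0\}\,dg < \infty$ is indispensable (it makes $\{R<-\eps\}$ have finite volume with $\eps\,\vol(\{R<-\eps\}) \to 0$), and it has to be combined with derivative bounds from the bounded-curvature assumption and with the co-area formula to make the boundary integrals converge. A secondary point needing care is the classical rigidity fact invoked in the case $R \geq 0$, namely that a complete Ricci-flat manifold admitting a nontrivial homothety must be a flat quotient of Euclidean space by an isometric action fixing the origin.
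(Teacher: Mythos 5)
Your first part (the bound $R \ge -\tfrac n2$ via the associated Ricci flow, with strictness from a strong maximum principle) is correct and essentially the paper's argument, and your endgame in the case $R\ge 0$ with a zero (strong maximum principle $\Rightarrow$ Ricci-flat, $\LL_Vg=-g$ $\Rightarrow$ the flow of $V$ is a contraction with a fixed point, so $(M,g)$ equals its own tangent cone there and is a Euclidean quotient) is exactly how the paper concludes. For the remaining case, however, you take a genuinely different route --- a static divergence identity $\DIV(\nabla R - RV)=R^2+(\tfrac n2-1)R-2|\Ric|^2$ integrated over the sublevel sets $\{R<-\eps\}$ --- whereas the paper stays parabolic: it pairs $R$ with a positive kernel $u$ of the backward equation $-\partial_t u=\triangle_{g_t}u-(1-\tfrac2n)Ru$ centered at $(p,1)$, uses the pointwise bound $u\le Ct^{-(1-\frac2n)\frac{n'}2}$ (this is where $\inf R>-\tfrac n2$ enters, via $n'<n$), shows $\int_M Ru\,dg_t$ is monotone, and lets $t\searrow 0$ using the scaling $\int(R_{g_t})_-\,dg_t = t^{\frac n2-1}\int (R_g)_-\,dg$. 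The crucial feature of that choice is that the soliton vector field never appears in any integral: it is absorbed into the diffeomorphisms $\phi_t$, and the super-exponential spatial decay of $u$ legitimizes Green's identity on the noncompact manifold.

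This is precisely where your version has a genuine gap. You justify the cutoff/integration-by-parts over the noncompact set $\{R<-\eps\}$ by invoking "at-most-linear growth of $|V|$ (a standard consequence of the soliton equations)". For \emph{gradient} expanders this is standard, but the theorem is stated for general $(M,g,V)$, and the soliton equation only controls the \emph{symmetric} part of $\nabla V$; the antisymmetric part is unconstrained. Indeed, taking the divergence of the soliton equation gives $\triangle V+\Ric(V)=0$, which is compatible with exponential growth (think of Killing fields on negatively curved spaces, which can always be added to $V$), and the paper itself treats $|V|\le Cd(p,\cdot)+C$ as a \emph{hypothesis} in Lemma~\ref{Lem_iota} rather than a consequence. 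Without a growth bound on $V$, neither the cutoff error term $\int \nabla\eta_\rho\cdot (RV)$ nor your claim that the boundary contribution of $RV$ is $O(\eps\,\vol(\Omega_\eps))$ can be controlled; note also that the latter is not a direct estimate of the boundary integral (there is no control on the area of $\{R=-\eps\}$) but requires converting $\eps\int_{\partial\Omega_\eps}V\cdot\nu$ into $\eps\int_{\Omega_\eps}\DIV V$ by a second application of the divergence theorem, which runs into the same problem. A minor further slip: on $\{R<0\}$ your pointwise bound $\tfrac{n-2}2 w(\tfrac2n w-1)$ is \emph{strictly} negative for $w\in(0,\tfrac n2)$, so one gets the contradiction immediately rather than "forcing $w\equiv\tfrac n2$"; this does not affect the conclusion. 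In summary: your scheme would work for gradient expanders (where $|\nabla f|$ does grow linearly), but as written it does not prove the stated theorem; the paper's kernel-based argument is designed exactly to avoid integrating anything against $V$.
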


See also \cite[Theorem~5]{Chan_2023} for a version of this theorem (and a different proof) for the case in which the soliton is gradient.

\begin{proof}
Consider the associated Ricci flow $(g_t := t\phi_t^* g)_{t > 0}$, where $(\phi_t)_{t > 0}$ is as in \eqref{eq_ass_RF_phi}.
On this Ricci flow we have the usual identity for the evolution of the scalar curvature
\begin{equation} \label{eq_scal_evolution_eq}
 \partial_t R = \triangle R + 2 |{\Ric}|^2 = \triangle R + 2 |{\Ric^\circ}|^2 + \tfrac2n R^2 \geq \triangle R  + \tfrac2n R^2, 
\end{equation}
where $\Ric^\circ$ denotes the traceless part of $\Ric$.
A well known application of the maximum principle shows that $R \geq -\frac{n}{2t}$ and strict inequality holds everywhere unless the flow (and thus $g$) consists of Einstein metrics.
The first statement now follows by setting $t = 1$.

Next, suppose that \eqref{eq_R_minus_int} holds.
Choose $0 < n' < n$ such that $R_g \geq - \frac{n'}2$; so $R_{g_t} \geq - \frac{n'}{2t}$ for the associated Ricci flow.
Fix an arbitrary point $p \in M$ and consider a kernel  $u \in C^\infty (M \times (0,1))$ to the following backward heat equation centered at $(p,1)$
\[ -\partial_t u = \triangle_{g_t} u - \big( 1 - \tfrac{2}{n} \big) R u, \qquad u \xrightarrow[t \nearrow 1]{\qquad} \delta_p. \]
See \cite[Theorems~24.40, 26.25]{Chow_RF_book_3} for the existence of such a solution and for the fact that this solution decays super-exponentially at spatial infinity, which implies that $u(\cdot, \frac12) \leq C$ on $M$ for some constant $C < \infty$.
By the maximum principle, we obtain that $u > 0$ and that for $t \in (0,\frac12)$
\[ -\frac{d}{dt} \max_M u(\cdot,t) \leq \big( 1 - \tfrac{2}{n} \big) \frac{n'}{2t} \max_M u(\cdot,t), \]
so
\begin{equation} \label{eq_u_improved_pol_bound}
  u(\cdot, t) \leq C t^{-(1-\frac{2}{n}) \frac{n'}{2} }.
\end{equation}

On the other hand, we compute, using \eqref{eq_scal_evolution_eq},
\begin{equation} \label{eq_dt_int_Ru}
 \frac{d}{dt} \int_M R u \, dg_t
\geq \int_M \big( (\triangle R)u + \tfrac2n R^2 u - R (\triangle u) + (1- \tfrac2n) R^2u - R^2 u \big) dg_t = 0. 
\end{equation}
Here we have used Green's identity, which is justified in the non-compact setting due to the uniform curvature bound and the decay bounds on $u$ mentioned earlier.
It follows, using \eqref{eq_dt_int_Ru}, \eqref{eq_u_improved_pol_bound}, and using the notion $a_- := \max \{ -a, 0 \}$ that
\begin{multline} \label{eq_scal_bound_computation}
 R_g (p) = R_{g_1} (p) \geq \int_M R u \, dg_t \geq -\int_M (R_{g_t})_-   u \, dg_t
\geq - C t^{-(1-\frac{2}{n}) \frac{n'}{2} } \, \int_M (R_{g_t})_-  \, dg_t  \\
= - C t^{-(1-\frac{2}{n})\frac{n'}{2} -1 + \frac{n}2}  \, \int_M (R_{g})_-  \, dg .
\end{multline}
Since
\[ -\big(1-\tfrac{2}{n} \big) \tfrac{n'}{2}  -1+\tfrac{n}2
> -\big(1-\tfrac{2}{n} \big) \tfrac{n}{2} -1+\tfrac{n}2 = 0,\]
the right-hand side in \eqref{eq_scal_bound_computation} goes to zero as $t \searrow 0$.
So $R_g \geq 0$.
If equality is attained at some point, then by the strong maximum principle applied to \eqref{eq_scal_evolution_eq}, we get that $\Ric_g \equiv 0$.
This implies that $\phi_{2}$ is a contraction, so it must have a fixed point $q \in M$.
It follows that $(M,g)$ is isometric to the blow-up limit of $(M,g)$ at $q$, which finishes the proof.
\end{proof}
\bigskip

\subsection{Asymptotics at infinity}
The following lemma, which will be used frequently in the course of the paper, provides an asymptotic description of certain gradient expanding solitons if coordinates at infinity are chosen appropriately.
Throughout this paper, we will refer to these coordinates as \emph{gauge at infinity.}

To be more specific we will consider (not necessarily gradient) expanding solitons that are asymptotic at infinity to what we will refer to as a \emph{generalized cone} (see Definition~\ref{Def_GenCONE} and subsequent discussion).
By this, we will mean a metric on $\IR_+ \times N$ of the form
\[ \gamma = dr^2 + r (dr \otimes \beta + \beta \otimes dr) + r^2 h, \]
where $\beta$ is a 1-form and $h$ is a metric on the link $N$.
In the case in which the expanding soliton is \emph{gradient,} and either $(N,h)$ has no Killing fields or, more generally, if the gradient vector field is asymptotic to the vector field $-\frac12 r \partial_r$, we have $\beta = 0$ and the metric $\gamma$ is a standard cone metric.
In this case, the following lemma is essentially well known (see, for example, \cite[Theorem~3.8]{Conlon_Deruelle_Sun_2019} and \cite[Subsec~4.3]{Siepmann_thesis_2013}); in the general case, however, a slightly different proof is necessary.

Note that the following lemma allows us to gain derivatives in the asymptotic expansion depending only on the regularity of $\gamma$.
More specifically, the lemma assumes only a low regularity on the soliton metric and its vector field ($C^k/C^1$ regularity), and a weak asymptotic convergence to the generalized cone at infinity.
Assuming that the regularity of the generalized cone metric is $C^{k^*}$, for some $k^* \gg k$, the lemma establishes strong asymptotic estimates up to $\approx k^*$ many derivatives, which in turn implies a stronger convergence to the generalized cone at infinity.

\begin{Lemma} \label{Lem_iota}
Let $(M,g,V)$ be an $n$-dimensional expanding soliton, where $g$ has regularity $C^k$ and $V$ has regularity $C^{1}$ for $k \geq 2$.
Suppose that $|V| : M \to \IR$ is proper and that for some $C < \infty$ and $p \in M$ we have the bounds
\begin{equation} \label{eq_Rm_V_bound}
 |{\Rm}| \leq C d^{-2}(p,\cdot), \qquad |V| \leq C d(p,\cdot) + C. 
\end{equation}
Moreover, suppose  that there is a sequence $\lambda_i \to 0$ such that we have Gromov-Hausdorff convergence of $(M,\lambda_i^2 g, p)$ to the metric completion of a generalized cone metric of the form
\begin{equation} \label{eq_ovg_lemma}
 \gamma = dr^2 + r (dr \otimes \beta + \beta \otimes dr) + r^2 h 
\end{equation}
on $\IR_+ \times N$, where $N$ is a smooth, compact, $(n-1)$-dimensional manifold, $r$ denotes the coordinate of the first factor and $\beta, h$ are a $1$-form and a Riemannian metric on $N$ of regularity $C^{k^*}$ for $k^* \geq k+2$.
We also suppose that under this convergence, trajectories of $V$ converge to radial lines of the form $\IR_+ \times \{ q' \}$.

Then there is a $C^{k+1}$-regular embedding $\iota : \IR_+ \times N \to M$ such that:
\begin{enumerate}[label=(\alph*)]
\item \label{Lem_iota_a} $M \setminus \iota ((r,\infty) \times N)$ is compact for all $r \geq 0$.
\item \label{Lem_iota_b} $\iota^* V = -\frac12 r \partial_r$, where $\partial_r$ denotes the standard vector field on $\IR_+$.
\item \label{Lem_iota_c} The pullback $\iota^* g$ has regularity $C^{k^*-2}$ and we have the following quantitative asymptotics to $\gamma$.
Suppose that for some constants $a > 0$, $A_0, \ldots, A_{k^*-4} < \infty$ we have the bounds
\[ \inj_{\gamma} \geq a r, \qquad |\nabla^{m,\gamma} {\Rm}_{\gamma}| \leq A_m r^{-2-m} \qquad \text{for} \quad m = 0, \ldots, k^*-4. \]
Then the following bounds hold on $(R_0(a,A_0,n), \infty) \times N$:
\begin{alignat}{2}
 |\nabla^{\gamma, m} (\iota^* g - \gamma  ) |_{\gamma} &\leq C_m (a,A_0, \ldots, A_{m},n) r^{-2-m}, \qquad & m &= 0,\ldots, k^*-4, \label{eq_ovg_asymp_1} \\
\qquad |\nabla^{\gamma, m} (\iota^* g - (\gamma - 2 \Ric_{\gamma} )) |_{\gamma} &\leq C_m (a,A_0, \ldots, A_{m+2},n) r^{-4-m}, \qquad & m &= 0,\ldots, k^*-6. \label{eq_ovg_asymp_2}
\end{alignat}
Moreover, if $(M,g,V = \nabla^g f)$ is gradient, then
\begin{equation} \label{eq_fp14r2}
 \big| f + \tfrac14 r^2 \big| \leq C < \infty. 
\end{equation}
If $f$ is normalized to satisfy $R + |\nabla f|^2 + f \equiv 0$, then we can choose $C = C(a,A_0)$.
\end{enumerate}
The map $\iota$ is unique in the following sense.
Suppose that $\iota' : \IR_+ \times N \to M$ is another map that satisfies Assertion~\ref{Lem_iota_a}, has the property that $s \mapsto \iota'(e^{s/2},z)$ is a trajectory of $-V$ for any $z \in N$ (this is equivalent to Assertion~\ref{Lem_iota_b} if $\iota'$ is $C^1$) and has the property that for any $(r_1, z_1), (r_2,z_2) \in \IR_+ \times N$ we have
\[ \lim_{s \to \infty} \frac1r d_g\big(\iota'(sr_1,z_1),\iota'(sr_2,z_2) \big) = d_{\gamma}\big( (r_1,z_1), (r_2,z_2) \big) \]
(this is weaker than Assertion~\ref{Lem_iota_c}).
Then there is an isometry $\psi : (N,h) \to (N,h)$ such that $\psi^* \beta = \beta$ and $\iota' = \iota \circ (\id_{\IR_+}, \psi)$.
\end{Lemma}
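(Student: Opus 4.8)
The plan is to prove Lemma~\ref{Lem_iota} in two stages: first construct the embedding $\iota$ using the flow of $V$, then establish the asymptotics by a combination of an ODE argument along trajectories and elliptic (Shi-type) estimates, and finally prove uniqueness by a limiting argument on the rescaled pointed spaces.

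\textbf{Construction of $\iota$.} The vector field $-V$ generates a complete flow $\Phi_s$ on $M$ (completeness being part of the hypotheses). Since $\triangle_V R + 2|\Ric|^2 = -R$ and the associated Ricci flow $g_t = t\phi_t^* g$ satisfies the maximum principle, one gets the lower bound $R \geq -\tfrac{n}{2}$, hence $|\Ric|^2 \leq C$ from the soliton equation plus the curvature bound \eqref{eq_Rm_V_bound}; in particular $|V|$ grows at most linearly and is proper, so its sublevel sets are compact. The function $|V|^2$ increases along trajectories of $-V$ at a rate comparable to $|V|^2$ itself (this is the standard computation $\tfrac{d}{ds}|V|^2 = -2\langle \nabla_V V, V\rangle$ combined with the soliton identities), which exhibits $M$ outside a compact set as a flow cylinder. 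The Gromov--Hausdorff convergence of $(M,\lambda_i^2 g, p)$ to the completion of the generalized cone, together with the assumption that $V$-trajectories converge to radial lines $\IR_+ \times \{q'\}$, lets us identify the orbit space of the flow at infinity with $N$: pick a cross-section hypersurface $\{|V| = c\}$ for large $c$, which for $c$ large is diffeomorphic to $N$ by the GH convergence and the radial-line hypothesis, and define $\iota(r,z) := \Phi_{-2\log(r/c)}(\sigma(z))$ where $\sigma : N \to \{|V|=c\}$ is this diffeomorphism. By construction $\iota$ satisfies Assertion~\ref{Lem_iota_a} (compactness of complements of the end) and Assertion~\ref{Lem_iota_b} (since $|V| = \tfrac12 r$ along the reparametrized orbits, so $\iota^* V = -\tfrac12 r\partial_r$), and $\iota$ inherits $C^{k+1}$ regularity from $g \in C^k$ (the flow of a $C^1$ vector field, plus the soliton equation upgrading $V$'s regularity, or rather the smooth structure from Lemma~\ref{Lem_soliton_smooth}).

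\textbf{Asymptotics.} Writing $\iota^* g$ in the coordinates $(r,z)$, the gauge condition $\iota^*V = -\tfrac12 r\partial_r$ converts the soliton equation $\Ric_g + \tfrac12\LL_V g + \tfrac12 g = 0$ into an equation of the schematic form $r\partial_r(\iota^*g) = -2r^2\Ric_{\iota^*g} - (\text{terms in }\iota^* g)$, i.e.\ an ODE in $r$ for the metric with the Ricci tensor as a (lower-order in $r$, after rescaling) source. The GH convergence identifies the leading term: $\iota^*g = \gamma + o(r^2)$ in a weak sense. Bootstrapping this ODE — using that $|\Rm_\gamma| \leq A_0 r^{-2}$ and $\inj_\gamma \geq ar$ give, via Shi's estimates applied to the associated Ricci flow at dyadic scales, full $C^m$ control of $\iota^*g$ in terms of $A_0,\dots,A_m$ — yields the decay \eqref{eq_ovg_asymp_1}. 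Feeding $\iota^*g = \gamma + O(r^{-2})$ back into the Ricci term produces the refined expansion: since $\Ric_{\iota^*g} = \Ric_\gamma + O(r^{-4})$ and solving the linear ODE $r\partial_r(\iota^*g - \gamma) \approx -2r^2\Ric_\gamma + \dots$ to one more order gives $\iota^*g = \gamma - 2\Ric_\gamma + O(r^{-4})$, which is \eqref{eq_ovg_asymp_2}; differentiating the ODE $m$ times and using the curvature derivative bounds $A_{m+2}$ gives the derivative estimates. In the gradient case, the identity $R + |\nabla f|^2 + f \equiv \text{const}$ together with $\nabla f = -\tfrac12 r\partial_r$ (so $|\nabla f|^2 = \tfrac14 r^2 + O(1)$ from \eqref{eq_ovg_asymp_1}) and $R \to 0$ at the cone rate gives $f = -\tfrac14 r^2 + O(1)$, with the constant controlled by $a, A_0$ once the normalization $R + |\nabla f|^2 + f \equiv 0$ is imposed, proving \eqref{eq_fp14r2}.

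\textbf{Uniqueness.} Given a second map $\iota'$ with the stated properties, the hypothesis that $s\mapsto \iota'(e^{s/2},z)$ is a $-V$-trajectory forces $\iota'$ to be, up to reparametrization of the $r$-variable, determined by its restriction to a cross-section, i.e.\ by a map $N \to \{|V| = c\}$; so $\iota^{-1}\circ\iota'$ has the form $(\id_{\IR_+}, \psi_0)$ for some homeomorphism $\psi_0$ of $N$ (possibly after a harmless $r$-rescaling, which the distance-limit hypothesis rules out). The distance asymptotics $\tfrac1r d_g(\iota'(sr_1,z_1),\iota'(sr_2,z_2)) \to d_\gamma((r_1,z_1),(r_2,z_2))$, combined with the corresponding statement for $\iota$ (a consequence of \eqref{eq_ovg_asymp_1}, which is stronger), forces $\psi = \psi_0$ to be an isometry of $(N,h)$; and since the generalized cone metric $\gamma$ — including its $\beta$ cross-term — is the GH limit in both parametrizations, $\psi$ must also satisfy $\psi^*\beta = \beta$. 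Hence $\iota' = \iota\circ(\id_{\IR_+},\psi)$ as claimed.

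\textbf{Main obstacle.} The technical heart is the bootstrap in the asymptotics step: converting the very weak input (GH convergence at a sequence of scales, plus only $C^k/C^1$ regularity) into the sharp polynomial expansion \eqref{eq_ovg_asymp_1}--\eqref{eq_ovg_asymp_2} with constants depending in the precise stated way on $a$ and the $A_m$. This requires setting up the ODE-in-$r$ for $\iota^*g$ carefully in the gauge $\iota^*V = -\tfrac12 r\partial_r$, controlling the homogeneous and inhomogeneous parts so that no logarithmic corrections appear, and interleaving this with Shi-type interior estimates for the associated Ricci flow at scale $\sim r$ to upgrade $C^0$ decay to $C^m$ decay. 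The subtlety that $\gamma$ is a \emph{generalized} cone (nonzero $\beta$) rather than a genuine cone means one cannot directly quote the references \cite{Conlon_Deruelle_Sun_2019, Siepmann_thesis_2013} and must check that the $\beta$-cross-term is compatible with the flow gauge — this is exactly where the hypothesis that $V$-trajectories limit onto radial lines is used.
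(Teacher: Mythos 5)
Your high-level strategy (parametrize the end by the flow of $V$, exploit the self-similarity of the associated Ricci flow together with Shi-type estimates for the decay, and prove uniqueness by comparing trajectories and rescaled distances) is the same as the paper's, and your uniqueness argument is essentially identical to the paper's. But there are two genuine gaps in the construction/asymptotics. First, with your flow-based parametrization $\iota(r,z)=\Phi_{s(r)}(\sigma(z))$, Assertion~\ref{Lem_iota_b} holds by fiat, but it is then unjustified that the rescaled pullbacks of $\iota^*g$ converge to $\gamma$ itself rather than to $\Psi^*\gamma$ for a fiberwise radial rescaling $\Psi(r,z)=(\rho(z)r,z)$: Gromov--Hausdorff convergence plus the hypothesis that trajectories limit onto radial lines fixes the direction of the flow lines in the limit but not the normalization of the flow time against the cone's radial distance, and your assertion that ``$|V|=\tfrac12 r$ along the reparametrized orbits'' is exactly what needs proof (a priori the limit $\lim_{t\to0}t|V_{\phi_t(q)}|^2$ can vary over your cross-section $\{|V|=c\}$). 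The paper proceeds in the opposite order: it first shows that the associated Ricci flow $g_t=t\phi_t^*g$ has a $C^0$ limit $g_0$ on the end as $t\searrow0$, identifies $(U,g_0)$ isometrically with the generalized cone (this defines $\iota$), and only then proves $\iota^*V=-\tfrac12 r\partial_r$ by passing the soliton equation to the limit, writing $\iota^*V=-\tfrac12 u\,r\partial_r$ and deducing $u\equiv1$ from $\LL_{\iota^*V}\gamma=-\gamma$ because the resulting left-hand side is nowhere definite. Some equivalent of this normalization step is missing from your argument.

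Second, the quantitative bootstrap cannot be started with Shi's estimates alone. The only information near the cone is weak ($C^0$/GH) closeness, there are no curvature-derivative bounds for the flow near $t=0$, and the constants in \eqref{eq_ovg_asymp_1}--\eqref{eq_ovg_asymp_2} must depend only on $(a,A_0,\ldots,A_m,n)$ and not on the constant $C$ in \eqref{eq_Rm_V_bound}. The paper first applies Perelman's pseudolocality theorem at scale $\sim r$ (using $\inj_\gamma\geq ar$ and $|\Rm_\gamma|\leq A_0r^{-2}$) to obtain $|\Rm_{\iota^*g_t}|\leq C(a,A_0,n)r^{-2}$ for \emph{all} $t\in[0,1]$, and only then invokes Shi and integrates $\partial_t(\iota^*g_t)=-2\Ric_{\iota^*g_t}$ in time; your ``Shi at dyadic scales'' has no starting curvature bound with cone-only constants. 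Relatedly, the claim $\iota^*g\in C^{k^*-2}$ (far better than the ambient $C^k$) is never addressed: in the paper it comes from Lemma~\ref{Lem_RF_Ck_convergence}, which upgrades the $C^0$ convergence $g_t\to g_0$ to $C^{k^*-2}_{\loc}$ once $g_0$ is known to be isometric to a $C^{k^*}$ metric. These omissions are precisely the difficulties flagged in the remark following the lemma, and they are why the proof cannot simply quote the standard asymptotic expansions.
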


\begin{Remark}
The bound on $k^*$ and the (bounded) drop in regularity in Lemma~\ref{Lem_iota} is not optimal, but sufficient for our purposes.
\end{Remark}

\begin{Remark}
We will apply Lemma~\ref{Lem_iota} to expanding solitons whose metric $g$ and vector field $V$ admit an expansion at infinity of the form
\[ g = \gamma + O(r^{-2}), \qquad V = - \tfrac12 r \partial_r + O(1), \]
where $\gamma$ is as in \eqref{eq_ovg_lemma} and the first and second derivatives of $g - \gamma$ also decay like $O(r^{-2})$.
This assumption clearly implies the asymptotic assumption from Lemma~\ref{Lem_iota} involving Gromov-Hausdorff convergence.
Note, however, that in our applications, higher derivatives of $g - \gamma$ are, a priori, only known to decay like $O(r^{-2})$ and not like $O(r^{-2-m})$.
So it is crucial that we do not assume bounds on curvature derivatives of the form $|{\nabla^m \Rm}| \leq C d^{-2-m}(p,\cdot)$ in \eqref{eq_Rm_decay} or any form of Cheeger-Gromov convergence involving the convergence of higher derivatives in place of the Gromov-Hausdorff convergence.
This complicates the proof of Lemma~\ref{Lem_iota} since it leads to a lack of curvature derivative bounds in the Ricci flow $(g_t)_{t \in (0,1]}$ associated with $g$ near $t = 0$.
\end{Remark}

\medskip

\begin{proof}
Let us assume first that $g$ and $V$ are smooth.
Consider the associated Ricci flow $(g_t = t \phi^*_t g)_{t > 0}$, where $\phi_t$ is the flow of the time-dependent vector field $t^{-1} V$ with $\phi_1 = \id_M$.
Note that $g_1 = g$ and $\phi_t^* V = V$ for all $t > 0$ and
\begin{equation} \label{eq_phit_composition}
\phi_{t_1} \circ \phi_{t_2} = \phi_{t_1t_2} \qquad \text{for all} \quad t_1, t_2 > 0.
\end{equation}

For any $q \in M$ we have
\[ t \, \frac{d}{dt} |V_{\phi_t(q)}|^2 =  \nabla_{V_{\phi_t(q)}} |V_{\phi_t(q)}|^2
= \big( (\mathcal{L}_{V} g)(V,V) \big)_{\phi_t(q)}
= -2 \Ric(V_{\phi_t(q)},V_{\phi_t(q)}) - |V_{\phi_t(q)}|^2. \]
Since for some generic $C < \infty$, we have
\[ |{\Ric}(V,V)| \leq C|{\Ric}| \, |V|^2 \leq \frac{C}{d^2(p,\cdot) + 1} \, (d^2(p, \cdot) + C) \leq C, \]
we obtain
\[ \bigg|  \frac{d}{dt} \big( t |V_{\phi_t(q)}|^2 \big)  \bigg|
=
\bigg| t\, \frac{d}{dt} |V_{\phi_t(q)}|^2  + |V_{\phi_t(q)}|^2 \bigg| \leq C. \]
Integrating this ODI backwards implies that there is a constant $C_0 < \infty$ such that for any $q \in U_0 := \{ |V| > C_0 \}$ we have
\begin{equation} \label{eq_V_upper_lower}
 c t^{-1/2} |V_q| \leq |V_{\phi_t(q)}| \leq C t^{-1/2} |V_q|
\qquad \text{for all} 
\quad t \in (0,1],
\end{equation}
where $c > 0$ is a generic constant.
We may also assume without loss of generality that $C_0$ is chosen such that $U_0 \neq M$.
Combining the lower bound with \eqref{eq_Rm_V_bound} implies that, after possibly adjusting $C_0$ and $c$, we have for $q \in U_0$ and $t \in (0,1]$
\[ c t^{-1/2} |V_q| \leq d(p, \phi_t(q)). \]
Moreover, since for $t \in (0,1]$, we have by \eqref{eq_V_upper_lower},
\[ \bigg| \frac{d}{dt} d(p,\phi_t (q)) \bigg|
\leq t^{-1} |V_{\phi_t(q)}| 
\leq C t^{-3/2} |V_q|, \]
we also obtain that
\[ d(p, \phi_t(q)) \leq d(p,q) + C t^{-1/2} |V_q| \qquad \text{for all} \quad q \in U_0, 
\quad t \in (0,1]. \]
So, in summary,
\begin{equation} \label{eq_d_upper_lower}
 c t^{-1/2} |V_q| \leq d(p, \phi_t(q)) \leq d(p,q) + C t^{-1/2} |V_q|  \qquad \text{for all} \quad q \in U_0, \quad t \in (0,1]. 
\end{equation}

Combining \eqref{eq_Rm_V_bound}, \eqref{eq_d_upper_lower} implies that for any $q \in U_0$ we have
\begin{equation} \label{eq_Rm_decay}
 \big|{\Rm}_{\phi_t(q)}\big| \leq \frac{C}{d^2(p,\phi_t(q))} \leq C t \qquad \text{for all} \quad t \in (0,1]. 
\end{equation}
Similarly, if we set
\[ U := \big\{ q\in M \;\; : \;\; \text{$\phi_t(q) \in U_0$ for some $t > 0$} \big\} = \bigcup_{t > 0} \phi_t (U_0), \]
then by \eqref{eq_phit_composition} a bound of the form \eqref{eq_Rm_decay} holds for all $q \in U$, where $C$ may depend on $q$ in a continuous way.
Therefore the Ricci flow $(g_t)_{t \in (0,1]}$ restricted to $U \times (0,1]$ has locally uniformly bounded curvature.
It follows that $g_t |_{U}$ converges locally uniformly to some $C^0$-metric $g_0$ on $U$.

On the other hand, by assumption we know that $M \setminus U = \bigcap_{t > 0} \phi_t (M \setminus U_0)$ is compact and non-empty (as an intersection of compact subsets) and $(M, \lambda^2_i g)$ is isometric to $(M, g_{\lambda^2_i})$ via an isometry that maps  $M \setminus U, \phi_{\la_i^2}(q)$ to $M \setminus U, q$ for any $q \in U$ and that preserves the trajectories of $V$.
So by the assumption on the Gromov-Hausdorff convergence of the rescalings $(M, \la_i^2 g)$, it therefore follows that the length metrics of $(\IR_+ \times N, \gamma)$ and $(U, g_0)$ are locally isometric via a map $\iota : \IR_+ \times N \to U$ that maps radial lines of the form $\IR_+ \times \{ q' \}$ to the trajectories of $V$.
We are now in a position to apply Lemma~\ref{Lem_RF_Ck_convergence}, which implies that $g_0$ is of regularity $C^{k^*-2}$ and that 
\begin{equation} \label{eq_gtg0_Cksm2}
g_t \xrightarrow[t \searrow 0]{\quad C^{k^*-2}_{\loc} \quad} g_0. 
\end{equation}
So $\iota$ is even a \emph{Riemannian} isometry of regularity $C^{k^*-1}$ and $\iota^*V$ must be a scalar multiple of $\partial_r$ at every point.
Since the metrics $g_t$, for $t >0$, satisfy the soliton equation
\[ t \Ric_{g_t} + \tfrac12 \LL_V g_t + \tfrac1{2} g_t = 0, \]
we obtain that
\[ \LL_V g_0 + g_0 = 0, \]
which implies that
\[ \LL_{\iota^* V} \gamma + \gamma = 0. \]
Writing $\iota^*V = - \frac12 u r \partial_r$ for some $u \in C^1(\IR_+ \times N)$ gives
\[ u \LL_{-\frac12 r \partial_r} \gamma + du \otimes (-\tfrac12 r \partial_r)^\flat +  (-\tfrac12 r \partial_r)^\flat\otimes du = - \gamma, \]
so, since $\LL_{-\frac12 r \partial_r} \gamma = -\gamma$,
\[ r \, du \otimes (dr + r \beta) + r \, (dr + r \beta) \otimes du = 2(u-1) \gamma. \]
Since the left-hand side is nowhere definite, this implies that $u \equiv 1$ and thus Assertion~\ref{Lem_iota_b}.

Let us now consider again the Ricci flow $(g_t)_{t \in [0,1]}$ on $U$ and its pullback $(\ov g_t := \iota^* g_t)_{t \in [0,1]}$ on $\IR_+ \times N$.
Due to \eqref{eq_gtg0_Cksm2} we have for any $q = \iota(r,q') \in U$ with $r > 2$ and for any $m = 0, \ldots, k^*-4$
\[ \limsup_{t \searrow 0} \sup_{B(q,1)} |\nabla^{g_t,m}{\Rm_{g_t}}|_{g_t} \leq \sup_{B((r,z),1.1)} |\nabla^{\gamma,m}{\Rm_{\gamma}}|_{\gamma} \leq CA_m r^{-2-m}, \]
\[ \liminf_{t \searrow 0} \inj_{g_t}(q) \geq \tfrac12 \inj_{\gamma} (r,z) \geq \tfrac12 r. \]
So by applying Perelman's Pseudolocality Theorem \cite[10.3]{Perelman1} to the flow $(g_t)_{t \in [t_0,0]}$ for sufficiently small $t_0 > 0$, we obtain constants $R_0 (a, A_0,n), C'_0(a,A_0,n) < \infty$ such that
\[ |{\Rm}_{\ov g_t}|  \leq C'_0 r^{-2} \qquad \text{on} \quad (R_0, \infty) \times N, \qquad \text{for} \quad t \in [0,1]. \]
Similarly, after adjusting $R_0$, we can use Shi's estimates \cite[Theorem~14.16]{Chow_etal_book_II} to conclude that 
\[ |\nabla^{\ov g_t, m} {\Rm}_{\ov g_t}| \leq C(a, A_0, \ldots, A_{m},n) r^{-2-m} \qquad \text{for all} \quad t \in [0,1] \quad \text{and} \quad m = 0, \ldots, k^*-4. \]
Integrating this bound, via the evolution equation
\[ \partial_t \nabla^{\gamma,m} \ov g_t = - 2 \nabla^{\gamma,m} {\Ric}_{\ov g_t}, \]
inducting on $m$ and using the bound
\[ |\nabla^{\gamma,m} {\Rm}_{\ov g_t}| \leq C |\nabla^{\ov g_t,m} {\Rm}_{\ov g_t}|_{\ov g_t} + C \sum_{\substack{i + j_1 \ldots + j_l =m, \\ j_1, \ldots, j_l \geq 1}}
 |\nabla^{\gamma,i}  {\Rm}_{\ov g_t}|_{\gamma} |\nabla^{\gamma, j_1} \ov g_t|_{\gamma} \cdots |\nabla^{\gamma, j_l} \ov g_t|_{\gamma}
  \]
implies that
\begin{equation} \label{eq_ovg_asymp_11}
 |\nabla^{\gamma, m} (\ov g_t - \gamma  ) |_{\gamma} \leq C_m (a,A_0, \ldots, A_{m},n) r^{-2-m} \qquad \text{for all} \quad t \in [0,1] \quad \text{and} \quad  m = 0,\ldots, k^*-4,  
\end{equation}
which implies \eqref{eq_ovg_asymp_1} for $t = 1$.
The bound \eqref{eq_ovg_asymp_2} follows by integration from the fact that due to \eqref{eq_ovg_asymp_11} we have (compare also with \cite[Theorem~3.8]{Conlon_Deruelle_Sun_2019})
\[ \bigg| \frac{d}{dt} \nabla^{\gamma, m}{\Rm}_{\ov g_t} - \frac{d}{dt} \Big|_{t=0} \nabla^{\gamma, m}{\Rm}_{\ov g_t} \bigg| \leq C_m (a, A_0, \ldots, A_{m+2}) r^{-4-m}. \]
The bound \eqref{eq_fp14r2} is a direct consequence of \eqref{eq_ovg_asymp_1} and the fact that $R + |\nabla f|^2 + f \equiv const$.
This proves Assertion~\ref{Lem_iota_c}.

Consider now the general case in which $g,V$ only have regularity $C^k, C^{1}$, respectively.
By Lemma~\ref{Lem_soliton_smooth}, we may choose another smooth structure on $M$ with respect to which $g, V$ are smooth and we can carry out the previous construction.
Since $\iota$ has regularity $C^{k^*-1}$ with respect to the new smooth structure on $M$, we obtain that $\iota^* g$ has regularity $C^{k^*-2}$.
So since $k^*-2 \geq k$, we find that $\iota$ has regularity $C^{k+1}$ with respect to the original smooth structure on $M$.

To see the uniqueness statement, note that the first two conditions imply that the images of $\iota$ and $\iota'$ consist of the points whose $-V$-trajectories escape from any compact subset.
So these images agree and we must have $\iota' = \iota \circ \psi_0$ for some $\psi_0 : N \times \IR_+ \to N \times \IR_+$ of the form $\psi_0 (r,z) = (u(z) r, \psi(z))$, where $u : N \to \IR_+$, $\psi : N \to N$.
The last condition implies that $\psi_0$ is a Riemannian isometry for the metric $\gamma$.
So $u \equiv 1$ and $\psi$ is a Riemannian isometry for the metric $h$.
\end{proof}

\subsection{The elliptic problem and the gauge identity} \label{subsec_the_elliptic_problem}
Let $(M,g,f)$ be a fixed gradient expanding soliton.
A frequent goal in this paper will be to find and analyze nearby expanding soliton metrics $g'$ with soliton vector field $V'$ (which may not be gradient).
In order to overcome the lack of ellipticity of the expanding soliton equation, which is caused by its diffeomorphism invariance, we will often work in the {\bf DeTurck gauge,} which is defined by
\begin{equation} \label{eq_DT_gauge}
 P_g(g',V'):= V' - \nabla^g f + \DIV_g (g') - \tfrac12 \nabla^g \tr_g (g') = 0. 
\end{equation}
So if we set
\begin{equation} \label{eq_def_Qg}
 Q_g (g') = - 2 \Ric(g') - g' - \LL_{\nabla^g f}g' + \LL_{\DIV_g (g') - \frac12 \tr_g (g') }g', 
\end{equation}
then under the gauge condition \eqref{eq_DT_gauge} we have
\begin{equation} \label{eq_Q_soliton_equivalence}
 Q_g (g') = 0 \qquad \Longleftrightarrow \qquad \Ric_{g'} + \tfrac12 \LL_{V'} g + \tfrac12 g' = 0. 
\end{equation}
So, in particular, if $g'$ is a metric such that $Q_g (g') = 0$ and if we set
\[ V' := \nabla^g f - \DIV_g (g') + \tfrac12 \nabla^g \tr_g (g'), \]
then $(M,g',V')$ is an expanding soliton in the DeTurck gauge \eqref{eq_DT_gauge}.
Vice versa, if $(M, g', V')$ is an expanding soliton metric, then we often would like to find a diffeomorphism $\phi : M \to M$ such that $(M, g'' := \phi^* g', V'' := \phi^* V')$ satisfies the DeTurck gauge $P_g(g'',V'')=0$ and since the soliton equation is diffeomorphism invariant, the equivalence \eqref{eq_Q_soliton_equivalence} implies $Q_g(g'') = 0$.
As we will see in Section~\ref{sec_DT_gauge}, 
the equation $P_g(\phi^* g', \phi^* V') = 0$ is strongly elliptic in $\phi$, so it can often be solved.

Let us now focus on the identity
\[ Q_g (g') = 0. \]
A basic computation implies that
\begin{align*}
 \Ric (g')  &=  \Ric(g) + (g')^{-1} * \nabla^2 g' + (g')^{-1} * (g')^{-1} * \nabla g' * \nabla g', \\
 \LL_{\nabla^g f}(g')   &= \nabla_{\nabla f} g' + 2\nabla^2 f +  (\nabla^2 f + \tfrac12 g) *(g'-g)  - (g'-g), \\
 \LL_{\DIV_g ( g') - \frac12 \nabla^g \tr_g ( g') } (g')
&= \nabla g' * \nabla g' + g' * \nabla^2 g',
\end{align*}
where all covariant derivatives are taken with respect to $g$, $(g')^{-1}$ denotes the inverse of $g'$ viewed as a $(2,0)$-tensor and ``$*$'' denotes a multilinear contraction of the listed tensors, possibly after raising or lowering indices with respect to $g$.
Therefore, we obtain
\begin{multline} \label{eq_Q_expansion_intro}
 Q_g(g') = - \nabla_{\nabla f} (g'-g) + \Ric * (g'-g) \\ + (g')^{-1} * \nabla^2 g'  + g' * \nabla^2 g' + (g')^{-1} * (g')^{-1} * \nabla g' * \nabla g'  + \nabla g' * \nabla g',
\end{multline}
Linearizing this expression at $g$ gives
\begin{equation} \label{eq_DQg}
  (DQ_g)_g(h) = \triangle_f h + 2\Rm_g (h) =: L_g h, 
\end{equation}
where
\[ \big( {\Rm} (h) \big)_{ij} = g^{st} g^{kl} \Rm_{iskj} h_{tl}. \]
\bigskip

\section{The space of expanding solitons} \label{sec_MM}
In this section, we define the space of 4-dimensional expanding solitons that are asymptotic to a generalized cone metric.
We introduce a metric distance on this space, which induces a natural topology.
We also discuss regularity questions and various other elementary aspects, which will be needed frequently throughout this paper.

\subsection{Cones and generalized cones}
We first define the space of generalized cone metrics.
In the following let $N$ be a smooth, closed $(n-1)$-dimensional manifold (in this paper, we will take $n=4$).

\begin{Definition} \label{Def_GenCONE}
Given an integer or infinity $0 \leq k^* \leq \infty$, we define the set of \textbf{generalized cone metrics,} $\GenCONE^{k^*}(N)$, to be the set of Riemannian metrics $\gamma$ on $\IR_+ \times N$ of regularity $C^{k^*}$ and with the property that
\[ \LL_{\frac12 r \partial_r} \gamma = \gamma \qquad\text{and}\qquad \gamma(\partial_r,\partial_r ) = 1, \]
where $r$ and $\partial_r$ denote the standard coordinates and vector field on the $\IR_+$-factor.
We define the set of \textbf{cone metrics,} $\CONE^{k^*}(N) \subset \GenCONE^{k^*}(N)$, to be the subset of metrics $\gamma$ such that in addition
\[ \gamma(\partial_r, \cdot ) = dr. \]
In other words, any $\gamma \in \GenCONE^{k^*}(N)$ has the representation
\begin{equation} \label{eq_rep_gamma}
 \gamma = dr^2 +  r \big( dr \otimes (\pi^*\beta) + (\pi^*\beta) \otimes dr \big) + r^2 \pi^* h 
\end{equation}
for some $1$-form $\beta$ and some Riemannian metric $h$ on $N$ of regularity $C^{k^*}$, where $\pi : \IR_+ \times N \to N$ is the standard projection.
Note that $\gamma \in \CONE^{k^*}(N)$ is equivalent to $\beta = 0$; in this case $\gamma$ is uniquely determined by $h$, which we call the {\bf link metric.}

We  define $\CONEg^{k^*}(N),\CONEgeq^{k^*}(N) \subset \CONE^{k^*}(N)$ to be the subset of cone metrics with $R_\gamma >0$ and $R_\gamma \geq 0$, respectively.
Note that this is equivalent to $R_h > (n-2)(n-1)$ and $R_h \geq (n-2)(n-1)$, respectively, for the link metric $h$.

We also denote by $T\GenCONE^{k^*}(N)$ the associated vector space of symmetric $(0,2)$-tensors of regularity $C^{k^*}$ with the property that
\[ \LL_{\frac12 r \partial_r} \gamma = \gamma \qquad\text{and}\qquad \gamma(\partial_r,\partial_r ) = 0 \]
and we define $T\CONE^{k^*}(N) \subset T\GenCONE^{k^*}(N)$ to be the subspace of all $\gamma \in T\GenCONE^{k^*}(N)$ with
\[ \gamma(\partial_r, \cdot ) = 0. \]
If $k^* < \infty$, then we endow $T\GenCONE^{k^*}(N)$ and $T\CONE^{k^*}(N)$ with a standard Banach norm via the sum of  the $C^{k^*}$-norms on $\beta$ and $h$ on $N$ from \eqref{eq_rep_gamma} (since $N$ is compact, we can simply define these $C^{k^*}$-norms via a collection of coordinate charts).
\end{Definition}
\medskip

\begin{Remark} \label{Rmk_cone_vs_gencone}
If $\gamma \in \CONE^{k^*}(N)$, then the metric completion of $(\IR_+ \times N, \gamma)$ is a metric cone.
However, the converse is generally false: If the metric completion of $(\IR_+ \times N, \gamma)$ for some arbitrary $\gamma \in \GenCONE^{k^*}(N)$ is a metric cone, then we may have $\beta = u \, h(X,\cdot)$ for a Killing field $X$ and a suitable scalar function $u$ on $(N,h)$.
Only if $(N,h)$ has no Killing fields, then it is indeed possible to conclude that $\gamma \in \CONE^{k^*}(N)$.
\end{Remark}

\begin{Remark}
For any $\gamma_1, \gamma_2 \in \GenCONE^{k^*}(N)$ we have $\gamma_1 - \gamma_2 \in T\GenCONE^{k^*}(N)$ and for any $\gamma \in \GenCONE^{k^*}(N)$ we have $\gamma + \gamma' \in \GenCONE^{k^*}(N)$ for sufficiently small $\gamma' \in T\GenCONE^{k^*}(N)$.
The analogous statement holds for $\CONE^{k^*}(N)$.
\end{Remark}
\bigskip

\subsection{Ensembles and compactifications} \label{subsec_ensembles}
The main theorems of this paper, as stated in Subsection~\ref{subsec_statement_of_results}, concern expanding solitons defined on the interior of a compact orbifold $X$ with boundary.
Its conical asymptotics were described by identifying a tubular neighborhood of its boundary with $(1,\infty] \times \partial X$.
In the following we will make this characterization more precise.
For this purpose, we will consider an equivalent topological setup involving an ``ensemble'', where we only consider the interior of $X$ and fix a coordinate system at infinity.
This turns out to be more convenient for our analysis.

\begin{Definition} \label{Def_ensemble}
An \textbf{ensemble} is a triple $(M, N, \iota)$ consisting of a closed, smooth 3-manifold $N$, a smooth 4-orbifold $M$ with isolated, conical singularities and a smooth embedding $\iota : [1, \infty) \times N \to M$ such that the following is true:
\begin{enumerate}[label=(\arabic*)]
\item \label{Def_ensemble_1} $M \setminus \iota ((r,\infty) \times N)$ is compact for all $r \geq 1$.
\item \label{Def_ensemble_2} $N$ admits a metric of positive scalar curvature, i.e., the components of $N$ are diffeomorphic to connected sums of spherical space forms and copies of $S^2 \times S^1$, after possibly passing to their orientable double covers.
\item \label{Def_ensemble_3} $M$ has an orbifold cover $\hat M$ (of possibly infinite index), which is a smooth manifold with $H_2(\hat M; \IZ) = 0$ and torsion-free $H_1(\hat M; \IZ)$.
\end{enumerate}
\end{Definition}

The following lemma expresses the equivalence between both topological setups, modulo diffeomorphisms relative boundary and restriction of $\iota$.

\begin{Lemma} \label{Lem_ensemble_vs_X}
Let $X$ be a compact, smooth 4-orbifold with isolated singularities and non-empty boundary.
Suppose that $X$ satisfies Properties~\ref{Property_1}, \ref{Property_2} from the beginning of Subsection~\ref{subsec_statement_of_results}.
Then there is a smooth embedding $\iota : [1,\infty] \times \partial X \to X$  such that $(\Int X, \partial X, \iota)$ is an ensemble and such that $\iota(\cdot, \infty) = \id_{\partial X}$.
Moreover, $\iota$ is unique modulo diffeomorphisms of $X$ relative boundary in the following sense:
For any two such maps $\iota_1,\iota_2$ there is a diffeomorphism $\phi : X \to X$ such that $\phi|_{\partial X} = \id_{\partial X}$ and $\iota_2 = \phi \circ \iota_1$.

On the other hand, suppose that $(M,N,\iota)$ is an ensemble and let $X$ be the orbifold obtained by identifying $M$ with $[1,\infty] \times N$ via the map $\iota$.
Then $(M = \Int X,N = \partial X,\iota)$ is a possible ensemble as constructed in the first part of the lemma.
\end{Lemma}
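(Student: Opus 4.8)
The plan is to prove the two directions separately, treating the passage from $X$ to an ensemble first, and then checking that the reverse construction reproduces such a triple. For the first direction, I would start from Property~\ref{Property_1}: since $\partial X$ consists of regular points, a tubular neighborhood of $\partial X$ in $X$ is diffeomorphic to a genuine product $\partial X \times [0,1)$ by the collar neighborhood theorem (applied to the underlying manifold, as the orbifold singularities are isolated and lie in the interior). Composing with an orientation-preserving diffeomorphism $[0,1) \to (1,\infty]$ sending $0 \mapsto \infty$ (for instance $t \mapsto (1-t)^{-1}$) produces an embedding $\iota: [1,\infty] \times \partial X \to X$ onto this collar with $\iota(\cdot,\infty) = \id_{\partial X}$. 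I then need to verify that $(\Int X, \partial X, \iota)$ is an ensemble in the sense of Definition~\ref{Def_ensemble}: condition \ref{Def_ensemble_1} holds because $\Int X \setminus \iota((r,\infty]\times \partial X)$ is the complement of an open collar in the compact orbifold $X$, hence compact; condition \ref{Def_ensemble_2} is exactly Property~\ref{Property_1} combined with the standard topological classification of closed 3-manifolds admitting positive scalar curvature (Gromov--Lawson, Schoen--Yau, Perelman); and condition \ref{Def_ensemble_3} is exactly Property~\ref{Property_2}, noting that an orbifold cover of $X$ restricts to an orbifold cover of $\Int X$ with the stated homology vanishing.

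For the uniqueness clause, given two such embeddings $\iota_1, \iota_2$, I would invoke the uniqueness of tubular/collar neighborhoods up to isotopy: the two collars $\iota_1([1,\infty]\times\partial X)$ and $\iota_2([1,\infty]\times\partial X)$ are ambient isotopic rel $\partial X$, and moreover the isotopy can be chosen so that it intertwines the two product structures, since both $\iota_1, \iota_2$ restrict to the identity on $\partial X = \{\infty\}\times\partial X$. This yields a diffeomorphism $\phi: X \to X$ with $\phi|_{\partial X} = \id$ and $\iota_2 = \phi \circ \iota_1$; one must check $\phi$ can be taken to be an orbifold diffeomorphism, which is automatic because all the modification happens in the collar, away from the singular locus. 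The second, reverse direction is essentially formal: given an ensemble $(M,N,\iota)$, the orbifold $X$ obtained by gluing $[1,\infty]\times N$ to $M$ along $\iota$ is compact (by \ref{Def_ensemble_1}, $M$ minus the image of $\iota$ is compact, and we are attaching a compact collar), has boundary $\{\infty\}\times N \approx N$ consisting of regular points, and inherits Properties~\ref{Property_1}, \ref{Property_2} directly from \ref{Def_ensemble_2}, \ref{Def_ensemble_3}; the tautological inclusion of $[1,\infty]\times N$ into $X$ is then a valid choice of $\iota$ as in the first part, and $(\Int X, \partial X, \iota)$ recovers $(M,N,\iota)$ up to the identification made.

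The main obstacle I anticipate is not any single deep step but rather the careful bookkeeping around the orbifold structure and the "rel boundary" uniqueness: one must ensure that all the collar-neighborhood and isotopy-extension arguments, which are classical for smooth manifolds, apply verbatim in the category of 4-orbifolds with isolated conical singularities. This is fine because the singular set is a finite set of interior points, disjoint from every collar of $\partial X$, so each diffeomorphism or isotopy can be taken to be supported in a region where $X$ is an honest manifold and to fix the singular points; thus standard differential topology (collar neighborhood theorem, uniqueness of collars up to isotopy, isotopy extension) transfers without change. A secondary point requiring a line of justification is that $\iota$ should land in a genuine tubular neighborhood and that the stated homological/PSC conditions really are equivalences rather than mere implications — but these are exactly the content of Properties~\ref{Property_1}, \ref{Property_2} and Definition~\ref{Def_ensemble}, so the equivalence is definitional once the collar is produced.
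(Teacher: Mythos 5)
Your proposal is correct and follows essentially the same route as the paper: existence via the collar neighborhood theorem (the isolated interior singularities being irrelevant near $\partial X$), uniqueness via an isotopy of collars together with the isotopy extension lemma, and the reverse direction being formal. The only difference is one of explicitness: where you invoke classical collar uniqueness ``intertwining the two product structures,'' the paper carries this out by hand --- interpolating $\iota_1,\iota_2$ and applying isotopy extension only on a sub-collar $[r_1,\infty]\times\partial X$ (since the naive interpolation need not remain an embedding far from $\partial X$), and then composing with radial reparametrization diffeomorphisms $\phi'',\phi'''$ to match the full parametrized collars --- which is exactly the step hidden in your phrase ``since both restrict to the identity on $\partial X$.''
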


\begin{proof}
The existence of $\iota$ follows from the existence of a collar neighborhood near $\partial X$.
For the uniqueness statement, choose a smooth family of maps $(\iota_s : [1, \infty] \times \partial X \to X)_{s \in [1,2]}$ interpolating between $\iota_1$ and $\iota_2$.
Then there is an $r_0 < \infty$ such that $\iota_s |_{[r_0, \infty] \times \partial X}$ consists of embeddings that move by smooth isotopies.
By the isotopy extension lemma 
(see for instance \cite[Chapter 8, Theorem 1.8]{Hirsch})
we can find an isotopy $(\phi'_s : X \to X)_{s \in [1,2]}$ such that $\iota_s = \phi'_s \circ \iota_1$ on $[r_1, \infty] \times \partial X$ for some $r_1 \in (r_0, \infty)$.
Let $\phi' := \phi'_2$.
Next, choose a diffeomorphism $\eta : [1, \infty] \to [r_1, \infty]$ and choose $\phi'', \phi''' : X \to X$ such that $\phi'' (\iota_1 (r,z)) =  \iota_1 (\eta(r),z)$ and $\phi''' (\iota_2 (r,z)) =  \iota_2 (\eta^{-1}(r),z)$.
Then $\phi := \phi''' \circ \phi' \circ \phi''$ has the desired property.

The last statement of the lemma is trivial.
\end{proof}

Throughout this paper, we will only consider metrics $g$ on $M$ that are asymptotic to a cone or a generalized cone, in the sense that $|\iota^* g - \gamma| \to 0$ as $r \to \infty$ for some $\gamma \in \GenCONE^{k^*}(N)$.
All such metrics $g$ are bilipschitz to one another.
Therefore, the following definition makes sense.

\begin{Definition}
If $(M, N,\iota)$ is an ensemble, then a map $\phi : M \to M$ is said to have {\bf finite displacement} if for one (and therefore all) Riemannian metrics $g$ on $M$ with the property that $|\iota^* g - \gamma| \to 0$ as $r \to \infty$ for some $\gamma \in \GenCONE^{k^*}(N)$ we have $\sup_{p\in M} d_g (p,\phi(p)) < \infty$.
\end{Definition}
\bigskip

\subsection{The space $\MM$}
We can now define the space $\MM^{k^*}$ of expanding solitons that are asymptotic to a generalized cone, as well as associated subspaces.
We will provide two different characterizations of these spaces, depending on different topological setups.
In Definition~\ref{Def_MM}, we first define $\MM^{k^*}(M,N,\iota)$ and its subspaces, which depend on the choice of an ensemble $(M,N,\iota)$. Based on this, we then define the space $\MM^{k^*}(X)$ and its subspaces in Definition~\ref{Def_MMX}, where $X$ is the orbifold that satisfies the assumptions of Lemma~\ref{Lem_ensemble_vs_X}.
Both spaces $\MM^{k^*}(M,N,\iota)$ and $\MM^{k^*}(X)$ will be canonically isomorphic if $(M,N,\iota)$ and $X$ are related as explained in Lemma~\ref{Lem_ensemble_vs_X}, so both terminologies can be used interchangeably in most situations. 
Throughout this paper, we mostly use the notation $\MM^{k^*}(M,N,\iota)$ because it is the less technical one. 
However, in order to state our results more succinctly, we have relied on $\MM^{k^*}(X)$ in the introduction.

\begin{Definition} \label{Def_MM}
Consider an ensemble $(M,N,\iota)$ and an integer or infinity $2 \leq k^* \leq \infty$.
Let $\MM^{k^*}_{\#}(M,N,\iota)$ be the set of triples $(g,V,\gamma)$ consisting of a Riemannian metric $g$ of regularity $C^{2}$, a vector field $V$ on $M$ of regularity $C^{1}$ and an element $\gamma \in \GenCONE^{k^*}(N)$ with the following properties:
\begin{enumerate}[label=(\arabic*)]
\item \label{Def_MM_1} $(M,g,V)$ is an expanding soliton, i.e., we have
\[ \Ric_g + \tfrac12 \LL_V g + \tfrac12 g = 0. \]
\item \label{Def_MM_2} We have $\iota^* V = -\frac12 r \partial_r$ on $(r_0, \infty) \times N$ for some $r_0 \geq 1$.
Here $r$ and $\partial_r$ denote the coordinate function and the standard vector field on the factor $(1, \infty)$.
\item \label{Def_MM_3} We have for $m = 0,1,2$
\begin{equation*} %
\sup_{(r_0, \infty) \times N} r^{m} |\nabla^m(\iota^* g - \gamma) | \xrightarrow[r_0 \to \infty]{\quad \quad} 0. 
\end{equation*}
\end{enumerate}
We now define the space of \textbf{expanding solitons that are asymptotic to a generalized cone} by
\[ \MM^{k^*}(M,N,\iota) := \MM_{\#}^{k^*} (M,N,\iota) \big/ \sim \]
where we write $(g_1, V_1, \gamma_1) \sim (g_2, V_2, \gamma_2)$ if $\gamma_1 = \gamma_2$ and there exists a $C^1$-diffeomorphism $\psi : M \to M$ that equals the identity on the complement of a compact subset of $M$ and such that
\[ \psi^* g_1 = g_2, \qquad \psi^* V_1 = V_2. \]
We also define the natural  projection map
\[ \Pi : \MM^{k^*} (M,N,\iota) \longrightarrow \GenCONE^{k^*} (N), \qquad [(g,V,\gamma)] \longmapsto \gamma. \]

Define 
\[  \MMg^{k^*} (M,N,\iota), \; \MMgeq^{k^*} (M,N,\iota), \; \MMgrad^{k^*}(M,N,\iota) \; \subset \; \MM^{k^*} (M,N,\iota) \]
to be the subsets of classes $[(g,V,\gamma)]$ for which $g$ has positive scalar curvature, for which $g$ has non-negative scalar curvature and for which $(M,g,V)$ is a \textbf{gradient expanding soliton that is asymptotic to a cone,} i.e., for which we have $\gamma \in \CONE^{k^*}(N)$ and $V = \nabla^g f$ for some $f \in C^1(M)$.
Set 
\begin{align*}
 \MMggrad^{k^*}(M,N,\iota) &:= \MMgrad^{k^*}(M,N,\iota)  \cap \MMg^{k^*}(M,N,\iota), \\
\MMgeqgrad^{k^*}(M,N,\iota) &:= \MMgrad^{k^*}(M,N,\iota)  \cap \MMgeq^{k^*}(M,N,\iota), 
\end{align*}
If the ensemble $(M,N,\iota)$ and $k^*$ are fixed, then we will often omit the superscript $k^*$ and the term $(M,N,\iota)$ and write $\MM, \MMgrad \MMggrad, \MMgeqgrad$ instead of $\MM^{k^*}(M,N,\iota)$, $\MMgrad^{k^*}(M,N,\iota)$,  $\MMggrad^{k^*}(M,N,\iota)$, $\MMgeqgrad^{k^*}(M,N,\iota)$.

Lastly, we say that a representative $(g,V,\gamma) \in \MM_{\#}^{k^*} (M,N,\iota)$ of an element $p = [(g,V,\gamma)] \in \MM^{k^*} (M,N,\iota)$ is \textbf{$C^{k}$-regular} for $1 \leq k \leq \infty$ if $g,V$ have regularity $C^k$.
\end{Definition}
\medskip

\begin{Remark} \label{Rmk_gradient_implies_cone}
In the definition of $\MMgrad^{k^*}(M,N,\iota)$ the condition that $\gamma \in \CONE^{k^*}(N)$ is dispensable.
To see this suppose that $p = [(g,V,\gamma)] \in \MM^{k^*}(M,N,\iota)$ such that $(M,g,V = \nabla^g f)$ is a gradient expanding soliton.
Then
\[ (\iota^* g)(-\tfrac12 r \partial_r, \cdot ) =
 (\iota^* g)(\iota^* V, \cdot) = \iota^* ( g(V, \cdot)) = \iota^* df = d (f \circ \iota). \]
is closed.
This implies via Definition~\ref{Def_MM}\ref{Def_MM_3} that $\gamma( r \partial_r, \cdot)$ is closed.
Using the representation \eqref{eq_rep_gamma}, we obtain that
\[ 0 = d \big( \gamma( r \partial_r, \cdot) \big)= d \big( r dr + r^2 \beta \big) = 2 r \, dr \wedge \beta. \]
It follows that $\beta \equiv 0$ and thus $\gamma \in \CONE^{k^*}(N)$.

Note that if we relax the gradient condition and only require that the metric $g$ belongs to gradient expanding soliton $(M,g,f)$ for a possibly different gradient vector field $\nabla f \neq V$, then we may have $\gamma \not\in \CONE^{k^*} (N)$.
Take for example a rotationally symmetric gradient expanding soliton and consider the discussion from Remark~\ref{Rmk_cone_vs_gencone}.

On the other hand, we are not aware of examples for elements $p = [(g,V,\gamma)]$ for which $\gamma \in \CONE^{k^*}(N)$, but for which $V$ is not a gradient vector field.
In the case in which $g$ has non-negative curvature operator the existence of such elements was ruled out by Deruelle \cite{Deruelle}.
In the general case, our discussion in Section~\ref{sec_gradientness} implies that such a $p$ must lie in a connected component of $\Pi^{-1}(\CONE^{k^*} (N))$ that is disjoint from $\MMgrad^{k^*}(M,N,\iota)$ with respect to the topology defined later.
\end{Remark}
\medskip

\begin{Remark}
Lemma~\ref{Lem_MM_regular_rep} below shows that every element in $\MM^{k^*} (M,N,\iota)$ has a $C^{k^*-2}$-regular representative.
Hence the space $\MM^{k^*} (M,N,\iota)$ remains the same if we required $g$ and $V$ to be of regularity $C^k$ and $C^{l}$, respectively, as long as $2 \leq k \leq k^* - 2$ and $1 \leq l \leq k^*-2$.
Likewise, the requirement that the isometry $\psi$ is of regularity $C^1$ is arbitrary.
If $g_1, g_2$ are of regularity $C^k$, then $\psi$ is automatically of regularity $C^{k+1}$.
\end{Remark}

\begin{Remark} \label{Rmk_differen_regularity_MM}
Our definition shows that for any $2 \leq k^*_1 \leq k^*_2 \leq \infty$ we have an inclusion map of the form
\[ \MM^{k^*_2} (M,N,\iota) \hookrightarrow \MM^{k^*_1} (M,N,\iota), \]
whose image is the preimage of $\GenCONE^{k^*_2}(N)$ under the map
\[ \Pi : \MM^{k_1^*} (M,N,\iota) \lb\longrightarrow\lb \GenCONE^{k_1^*} (N). \]
\end{Remark}
\medskip

Lemma~\ref{Lem_ensemble_vs_X} implies that the space $\MM^{k^*}(M,N,\iota)$ only depends on the associated orbifold $X$.
More specifically, we have the following result.

\begin{Lemma} \label{Lem_equivalence_MM}
Let $X$ be an orbifold satisfying the assumptions of Lemma~\ref{Lem_ensemble_vs_X} and $2 \leq k^* \leq \infty$. 
Consider two ensembles $(M = \Int X, N = \partial X, \iota_i)$, $i =1,2$, associated with $X$.
Then there is a unique map
\[ \Phi_{\iota_1,\iota_2} : \MM^{k^*}(M,N,\iota_2) \lto \MM^{k^*}(M,N,\iota_1) \]
such that for any $[(g,V,\gamma)] \in \MM^{k^*}(M,N,\iota_2)$ we have
\begin{equation} \label{eq_Phi_identity}
 \Phi_{\iota_1,\iota_2}\big( [(g,V,\gamma)]  \big) = [(\phi^* g, \phi^* V, \gamma)] 
\end{equation}
for some smooth diffeomorphism $\phi : X \to X$ with $\iota_2 = \phi \circ \iota_1$.
\end{Lemma}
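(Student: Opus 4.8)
The plan is to define $\Phi_{\iota_1,\iota_2}$ using the diffeomorphism $\phi$ produced by the uniqueness part of Lemma~\ref{Lem_ensemble_vs_X}, and then to verify that this is well-defined (independent of the choice of $\phi$) and that it maps $\MM^{k^*}_\#(M,N,\iota_2)$ into $\MM^{k^*}_\#(M,N,\iota_1)$, descending to the quotients. First I would recall that both ensembles are associated with $X$, meaning $X$ is obtained from $M$ by gluing in $[1,\infty]\times N$ via $\iota_1$, respectively $\iota_2$; by Lemma~\ref{Lem_ensemble_vs_X} there is a smooth diffeomorphism $\phi : X \to X$ with $\phi|_{\partial X} = \id_{\partial X}$ and $\iota_2 = \phi\circ\iota_1$. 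I would then check that for $(g,V,\gamma) \in \MM^{k^*}_\#(M,N,\iota_2)$, the triple $(\phi^* g, \phi^* V, \gamma)$ lies in $\MM^{k^*}_\#(M,N,\iota_1)$: property~\ref{Def_MM_1} is immediate since the soliton equation is diffeomorphism-invariant, and properties~\ref{Def_MM_2}, \ref{Def_MM_3} follow from $\iota_1^*(\phi^* g) = (\phi\circ\iota_1)^* g = \iota_2^* g$ and likewise $\iota_1^*(\phi^* V) = \iota_2^* V$, so the asymptotic conditions on $\iota_1^*(\phi^*g)$ and $\iota_1^*(\phi^*V)$ are literally the same conditions already assumed for $\iota_2^* g$ and $\iota_2^* V$. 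Hence $\Phi_{\iota_1,\iota_2}([(g,V,\gamma)]) := [(\phi^* g, \phi^* V, \gamma)]$ is a reasonable candidate.

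The main point requiring care is well-definedness, which has two facets. First, independence of the representative $(g,V,\gamma)$ within its $\sim$-class: if $(g',V',\gamma) = (\psi^* g, \psi^* V, \gamma)$ for some $C^1$-diffeomorphism $\psi$ equal to the identity off a compact set, then $\phi^* g' = \phi^*\psi^* g = (\psi\circ\phi)^* g$, and $\psi\circ\phi$ differs from $\phi$ by $\psi$, which is the identity off a compact set; so $(\phi^*g', \phi^*V')$ and $(\phi^*g, \phi^*V)$ are related by $\phi^{-1}\psi\phi$, which again is the identity off a compact set (conjugation of a compactly-supported diffeomorphism by any diffeomorphism is compactly supported), hence they represent the same class in $\MM^{k^*}(M,N,\iota_1)$. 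Second, independence of the choice of $\phi$: if $\phi_1,\phi_2$ are two diffeomorphisms of $X$ with $\iota_2 = \phi_i\circ\iota_1$ and $\phi_i|_{\partial X}=\id$, then $\psi := \phi_1\circ\phi_2^{-1}$ satisfies $\psi\circ\iota_2 = \phi_1\circ\phi_2^{-1}\circ\phi_2\circ\iota_1 = \phi_1\circ\iota_1 = \iota_2$, so $\psi$ fixes the image $\iota_2([1,\infty]\times N)$ pointwise — wait, more precisely $\psi$ restricted to this image is $\iota_2\circ\iota_2^{-1} = \id$, hence $\psi$ equals the identity on a neighborhood of $\partial X$ in $X$, i.e. on the complement of a compact subset of $M = \Int X$; therefore $\phi_2^*g = \psi^*\phi_1^* g$... more directly, $(\phi_1^* g, \phi_1^* V)$ and $(\phi_2^* g, \phi_2^* V)$ are related by $\phi_2\circ\phi_1^{-1} = \psi^{-1}$ which is the identity off a compact set, so they represent the same class. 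This establishes that $\Phi_{\iota_1,\iota_2}$ is a well-defined map satisfying~\eqref{eq_Phi_identity}.

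The hardest step is really the bookkeeping around ``identity off a compact set'': one must be careful that the diffeomorphism $\phi$ from Lemma~\ref{Lem_ensemble_vs_X} need \emph{not} be the identity off a compact set (it only fixes $\partial X$), so $\Phi_{\iota_1,\iota_2}$ is genuinely nontrivial, while the gauge diffeomorphisms $\psi$ appearing in the equivalence relation \emph{are} compactly supported, and I need conjugation to preserve this class — which it does. Finally, uniqueness of $\Phi_{\iota_1,\iota_2}$ as a map satisfying~\eqref{eq_Phi_identity} is automatic: any map satisfying~\eqref{eq_Phi_identity} is forced to send $[(g,V,\gamma)]$ to $[(\phi^*g,\phi^*V,\gamma)]$ for some admissible $\phi$, and we have just shown this class is independent of that choice. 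One should also remark (as the paper does more generally) that $\Phi_{\iota_2,\iota_1}$ is the inverse of $\Phi_{\iota_1,\iota_2}$ and that these maps are compatible with the projections $\Pi$ and intertwine the various subspaces $\MMgrad$, $\MMgeq$, etc., though that is not strictly part of the present statement.
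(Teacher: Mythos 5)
Your proposal is correct and follows essentially the same route as the paper: define $\Phi_{\iota_1,\iota_2}$ via the diffeomorphism $\phi$ from Lemma~\ref{Lem_ensemble_vs_X}, observe that any two admissible choices of $\phi$ agree on the image of $\iota_1$ (a neighborhood of $\partial X$, hence the complement of a compact set), and check compatibility with the gauge equivalence by conjugation. The only nit is a harmless index slip in the second well-definedness step: the diffeomorphism relating $(\phi_1^*g,\phi_1^*V)$ and $(\phi_2^*g,\phi_2^*V)$ is $\phi_1^{-1}\circ\phi_2$ (which fixes the image of $\iota_1$), not $\phi_2\circ\phi_1^{-1}$, but the same computation gives the conclusion.
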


\begin{proof}
Let $\phi$ be the map from Lemma~\ref{Lem_ensemble_vs_X}.
Note that $\phi$ is uniquely determined in a neighborhood of $\partial X$.
Then \eqref{eq_Phi_identity} determines $\Phi_{\iota_2,\iota_1}$ uniquely and does not depend on the choice of $\phi$.
\end{proof}

As a consequence of Lemma~\ref{Lem_equivalence_MM}, we can give an equivalent definition of the space $\MM^{k^*}$, and associated spaces, depending on the orbifold $X$ instead of the ensemble $(M,N,\iota)$.

\begin{Definition} \label{Def_MMX}
Let $X$ be an orbifold satisfying the assumptions of Lemma~\ref{Lem_ensemble_vs_X} and $2 \leq k^* \leq \infty$. 
Consider all possible associated ensembles $\{ (M = \Int X, N = \partial X, \iota_i) \}_{i \in I}$.
Then we define 
\[ \MM^{k^*}(X) := \bigcup_{i \in I} \MM^{k^*}(M,N,\iota_i) \Big\slash \sim, \]
where $p_1 \in \MM^{k^*}(M,N,\iota_{i_1})$ and $p_2 \in \MM^{k^*}(M,N,\iota_{i_2})$ are said to be equivalent if $\Phi_{\iota_{i_2},\iota_{i_1}}(p_1) = p_2$ for the map from Lemma~\ref{Lem_equivalence_MM}.
We define the subspaces $\MMgrad^{k^*}(X), \MMgeq^{k^*}(X), \ldots \subset \MM^{k^*}(X)$ similarly.
\end{Definition}
\medskip

Let us now switch back to the notation $\MM^{k^*}(M,N,\iota)$.
The following lemma discusses some basic properties of the spaces $\MM^{k^*} (M,N,\iota)$.
For example, it clarifies some of the regularity issues and introduces the embedding $\td\iota : \IR_+ \times N \to M$, which we will frequently use.
Note that, as discussed before, the regularity degrees (e.g. the $C^{k^*-2}$-regularity in Assertion~\ref{Lem_MM_regular_rep_a}) may not be optimal and the precise regularity drops are not important for this paper.
The reader may replace these by significantly lower regularity assertions, e.g., by $C^{k^*-10}$-regularity, for easier recollection.

\begin{Lemma} \label{Lem_MM_regular_rep}
Consider an ensemble $(M,N,\iota)$, an integer or infinity $4 \leq k^* \leq \infty$ and an element $p \in \MM^{k^*}(M,N,\iota)$.
Then the following is true:
\begin{enumerate}[label=(\alph*)]
\item \label{Lem_MM_regular_rep_a} $p$ has a $C^{k^*-2}$-regular representative $(g,V,\gamma)$ such that $\iota^* V = - \frac12 r \partial_r$ on $(2,\infty) \times N$.
\item \label{Lem_MM_regular_rep_c} For any representative $(g,V,\gamma)$ of $p$ there is a unique map $\td\iota : \IR_+ \times N \to M$ of regularity $C^{1}$ such that $\td\iota^* V = - \frac12 r \partial_r$ and such that $\td\iota = \iota$ on $(r_0,\infty) \times N$ for some $r_0 > 1$.
Moreover, if $(g,V,\gamma)$ is a $C^k$-regular representative, $2 \leq k \leq k^* - 2$, then $\td\iota$ has regularity $C^{k+1}$.
If $(g,V,\gamma)$ is the representative from Assertion~\ref{Lem_MM_regular_rep_a}, then $\td\iota$ has regularity $C^{k^*-1}$ and we can set $r_0 = 2$.
\item \label{Lem_MM_regular_rep_b} For any representative $(g,V,\gamma)$ of $p$, Assertion~\ref{Lem_iota_c} of Lemma~\ref{Lem_iota} holds for $\iota$ replaced with $\td\iota$, where the constants $a,\lb A_0, \lb\ldots,\lb A_{k^*-4}$ only depend on $\gamma$ in the indicated way.
In particular, if $p  \in \MMgrad^{k^*}(M,N,\iota)$ and $V = \nabla^g f$, then we have
\begin{equation} \label{eq_f_asymptotics_iota}
  \big| f \circ \iota + \tfrac14 r^2 \big| \leq C < \infty. 
\end{equation}
\item \label{Lem_MM_regular_rep_d} Consider the setting of Lemma~\ref{Lem_iota} and suppose that  the map $\iota : \IR_+ \times N \to M$ from this lemma agrees with the map $\iota: (1,\infty) \times N \to M$ on a subset of the form $(r_0,\infty) \times N$, $r_0 \geq 1$ and that $k^* \geq 4$.
Then $(g,V,\gamma)$ represents an element in $\MM^{k^*}(M,N,\iota)$.
\end{enumerate}
\end{Lemma}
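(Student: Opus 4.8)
\emph{Overall strategy.} Every clause of the lemma is a repackaging of Lemma~\ref{Lem_iota}: the plan is to check that any representative of $p$ satisfies the hypotheses of that lemma, to identify the map it produces with a canonical flow‑extension $\td\iota$ of $\iota$, and then to read off (b)--(d). For part (b), fix a representative $(g,V,\gamma)$; by Definition~\ref{Def_MM}\ref{Def_MM_2} we have $\iota^*V=-\tfrac12 r\partial_r$ on $(r_0,\infty)\times N$ for some $r_0>1$. Since $V$ is complete, define $\td\iota(r,z):=\Psi^V_{-2\log(r/r_1)}(\iota(r_1,z))$ for a fixed $r_1>r_0$, where $\Psi^V_t$ is the flow of $V$; in the chart given by $\iota$ this flow acts by $(r,z)\mapsto(e^{-t/2}r,z)$, so $\td\iota=\iota$ on $(r_0,\infty)\times N$, while $\td\iota^*V=-\tfrac12 r\partial_r$ on all of $\IR_+\times N$ by construction. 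Uniqueness is immediate: two such maps agree near $r=\infty$ and each is determined on $\IR_+\times N$ by its restriction to $\{r_1\}\times N$ together with the ODE $\partial_r\td\iota=-\tfrac2r\,V\circ\td\iota$. A priori $\td\iota$ has the regularity of the flow of $V$, hence $C^k$ if $V\in C^k$; the extra derivative will come from the identification below.

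\emph{Parts (b) and (c) via Lemma~\ref{Lem_iota}.} I would check that $(M,g,V)$ satisfies the hypotheses of Lemma~\ref{Lem_iota}: compactness of $M\setminus\iota((r_0,\infty)\times N)$ together with the asymptotics of Definition~\ref{Def_MM}\ref{Def_MM_3} gives $|{\Rm}|\le C d^{-2}(p,\cdot)$ and $|V|\le C d(p,\cdot)+C$ (using $|\iota^*V|_{\iota^*g}=\tfrac12 r|\partial_r|_{\iota^*g}$ and $\iota^*g\to\gamma$), and also properness of $|V|$; the rescaled Gromov--Hausdorff convergence of $(M,\lambda_i^2 g,p)$ to the completion of $(\IR_+\times N,\gamma)$ is the tangent‑cone‑at‑infinity statement implied by Definition~\ref{Def_MM}\ref{Def_MM_3}, with trajectories of $V$ converging to radial lines because $\iota^*V=-\tfrac12 r\partial_r$ near infinity; and $k^*\ge k+2$ holds by hypothesis. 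Lemma~\ref{Lem_iota} then produces an embedding $\iota_{\mathrm{Lem}}$, and applying its uniqueness clause with $\td\iota$ in the role of $\iota'$ (its first two conditions hold by construction and its metric‑limit condition follows from $\td\iota^*g\to\gamma$) yields $\iota_{\mathrm{Lem}}=\td\iota\circ(\id_{\IR_+},\psi)$ for an isometry $\psi$ of $(N,h)$ with $\psi^*\beta=\beta$. This gives the $C^{k+1}$‑ (resp.\ $C^{k^*-1}$‑) regularity asserted in (b), and since $(\id,\psi)$ is an isometry of $(\IR_+\times N,\gamma)$ preserving $\Ric_\gamma$, every estimate of Lemma~\ref{Lem_iota}\ref{Lem_iota_c} transfers verbatim from $\iota_{\mathrm{Lem}}^*g$ to $\td\iota^*g$ with constants those of $\gamma$; this is (c), the bound $|f\circ\iota+\tfrac14 r^2|\le C$ following from \eqref{eq_fp14r2} on $(r_0,\infty)\times N$ and from compactness on $[1,r_0]\times N$.

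\emph{Part (a).} Start from an arbitrary representative $(g_0,V_0,\gamma)$. Near infinity $\iota^*g_0$ already has high regularity by (c); in the interior, pass to a local DeTurck gauge relative to a fixed smooth background metric and bootstrap the elliptic equation $Q_{\bar g}(\cdot)=0$ — this is Lemma~\ref{Lem_soliton_smooth} — to obtain, in the same class and without change near $\partial X$, a $C^{k^*-2}$‑regular pair $(g_0,V_0)$. Now apply (b) to this representative to get the embedding $\td\iota$ with $\td\iota=\iota$ on $(r_0,\infty)\times N$ and $\td\iota^*V_0=-\tfrac12 r\partial_r$ everywhere; since $\iota$ and $\td\iota|_{(1,\infty)\times N}$ are two embedded collars agreeing on $(r_0,\infty)\times N$, the isotopy‑extension/uniqueness‑of‑collars argument yields a diffeomorphism $\psi$ of $M$ equal to the identity off a compact set with $\psi\circ\iota=\td\iota$ on $(2,\infty)\times N$. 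Replacing the representative by $(\psi^*g_0,\psi^*V_0,\gamma)$ then gives $\iota^*V=-\tfrac12 r\partial_r$ on $(2,\infty)\times N$, preserves $C^{k^*-2}$‑regularity, and still lies in $\MM^{k^*}_\#(M,N,\iota)$ (conditions \ref{Def_MM_1}--\ref{Def_MM_3} checked directly, \ref{Def_MM_3} via \eqref{eq_ovg_asymp_1}); for this representative $\td\iota$ has regularity $C^{k^*-1}$ and $r_0=2$, consistent with (b).

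\emph{Part (d) and the main obstacle.} For (d) one simply verifies Definition~\ref{Def_MM}\ref{Def_MM_1}--\ref{Def_MM_3}: \ref{Def_MM_1} is the given soliton equation, \ref{Def_MM_2} is Lemma~\ref{Lem_iota}\ref{Lem_iota_b} read through the agreement $\iota=\iota_{\mathrm{Lem}}$ on $(r_0,\infty)\times N$, and \ref{Def_MM_3} follows from the decay bounds \eqref{eq_ovg_asymp_1}, which are far stronger than needed. The conceptual crux of the whole lemma is the identification of $\td\iota$ with the map produced by Lemma~\ref{Lem_iota}, modulo the unavoidable link‑isometry ambiguity — once this is in place, the gain of a derivative, the decay of $\iota^*g-\gamma$, and the $f$‑asymptotics are all just readouts of that lemma. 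The fussiest technical point is the interior regularity upgrade in (a): one must invoke the DeTurck‑gauge elliptic regularity of Lemma~\ref{Lem_soliton_smooth} while ensuring that the resulting modification neither disturbs the gauge at infinity nor takes the representative out of $\MM^{k^*}_\#(M,N,\iota)$, and then keep careful track of how much regularity survives each diffeomorphism pullback.
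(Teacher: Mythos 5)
Your overall route is the same as the paper's: define $\td\iota$ by flowing $\iota(\{r_1\}\times N)$ along $V$, feed the representative into Lemma~\ref{Lem_iota} and use its uniqueness clause to identify $\td\iota$ with the map produced there up to a link isometry preserving $\gamma$ (which gives the gain of one derivative, the decay estimates and \eqref{eq_f_asymptotics_iota}), then obtain (a) in two steps (regularity upgrade via Lemma~\ref{Lem_soliton_smooth}, followed by a compactly supported diffeomorphism carrying $\iota$ to $\td\iota$ on $(2,\infty)\times N$), and read off (d) from Lemma~\ref{Lem_iota}. Your explicit verification of the hypotheses of Lemma~\ref{Lem_iota} and your appeal to relative uniqueness of collars are both fine; the paper simply realizes the collar adjustment explicitly, as the composition of a radial rescaling $r\mapsto br$ along $\iota$ (identity for $r<1.1$) with the inverse rescaling along $\td\iota$, which is exactly the isotopy your collar argument would have to produce, and which keeps track of the fact that the resulting diffeomorphism is only $C^{k^*-1}$ and equals the identity outside a compact set.

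The one place where your argument is not actually complete is the regularity upgrade in (a). Lemma~\ref{Lem_soliton_smooth} only asserts the existence of a \emph{different} smooth structure (equivalently a $C^1$-diffeomorphism $\phi:M'\to M$) in which $g,V$ become smooth; it does not by itself give a new representative ``in the same class and without change near $\partial X$'', and you assert this without supplying the mechanism. The missing step, which the paper carries out, is: since $\iota^*g=\td\iota^*g$ is already $C^{k^*-2}$ on the end by (c) and $\phi^*g$ is smooth, the transition map $\iota^{-1}\circ\phi$ (hence $\phi$) is automatically of regularity $C^{k^*-1}$ over $\phi^{-1}(\iota((r_0,\infty)\times N))$; one then smooths $\phi$ only in the interior to get a $C^{k^*-1}$-diffeomorphism $\phi'$ agreeing with $\phi$ near infinity, and pushes forward by $\phi'$. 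Because $\phi\circ(\phi')^{-1}$ equals the identity outside a compact set, the pushed-forward pair $(g'',V'')$ is $C^{k^*-2}$-regular and represents the \emph{same} class $p$. Without this observation (regularity of the structure-change map over the end, interior smoothing rel infinity, and the resulting compactly supported gauge change), the claim that the upgrade stays in the class and does not disturb the gauge at infinity is exactly the point you flagged but left unproved.
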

\medskip

\begin{Remark}
Throughout this paper we will often work with a $C^{k^*-2}$-regular representative $(g,V,\gamma)$ of a given element $p \in \MM^{k^*}(M,N,\iota)$; the existence of such a representative is guaranteed by Assertion~\ref{Lem_MM_regular_rep_a}. 
However, we may sometimes generate nearby elements $p' \in \MM^{k^*}(M,N,\iota)$ by specifying representatives $(g',V',\gamma')$ of lower regularity.
\end{Remark}

\medskip

We record the following special case of Lemma~\ref{Lem_MM_regular_rep}\ref{Lem_MM_regular_rep_a}:

\begin{Corollary} \label{Cor_smooth_rep}
Consider an ensemble $(M,N,\iota)$ an integer $k^* \geq 4$ and an element $p \in \MM^{k^*}(M,\lb N, \lb \iota)$.
If $\Pi(p) \in \GenCONE^\infty (N)$, then $p$ has a smooth representative $(g,V,\gamma)$.
\end{Corollary}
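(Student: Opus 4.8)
The plan is to reduce the statement to the case $k^*=\infty$ of Lemma~\ref{Lem_MM_regular_rep}\ref{Lem_MM_regular_rep_a}. The point is that the regularity index $k^*$ enters the definition of $\MM^{k^*}_{\#}(M,N,\iota)$ only through the requirement $\gamma\in\GenCONE^{k^*}(N)$; conditions~\ref{Def_MM_1}--\ref{Def_MM_3} in Definition~\ref{Def_MM} impose nothing on $g$ and $V$ beyond $C^2$- and $C^1$-regularity. Consequently, as recorded in Remark~\ref{Rmk_differen_regularity_MM}, the inclusion $\MM^\infty(M,N,\iota)\hookrightarrow\MM^{k^*}(M,N,\iota)$ has image $\Pi^{-1}(\GenCONE^\infty(N))$, and the hypothesis $\Pi(p)\in\GenCONE^\infty(N)$ says precisely that $p$ lies in this image. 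So I would first pick any representative $(g,V,\gamma)$ of $p$; since $\gamma\in\GenCONE^\infty(N)$, the same triple also represents an element $\td p\in\MM^\infty(M,N,\iota)$ which maps to $p$ under the inclusion.

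Next I would apply Lemma~\ref{Lem_MM_regular_rep}\ref{Lem_MM_regular_rep_a} directly to $\td p$, which is legitimate because that lemma is stated for all $4\le k^*\le\infty$. With $k^*=\infty$ we have $k^*-2=\infty$, so the lemma produces a $C^\infty$-regular representative $(g',V',\gamma)$ of $\td p$, that is, one for which $g'$ and $V'$ are smooth; since $\gamma$ is smooth as an element of $\GenCONE^\infty(N)$, the triple $(g',V',\gamma)$ is a smooth representative. Because the inclusion $\MM^\infty(M,N,\iota)\hookrightarrow\MM^{k^*}(M,N,\iota)$ is induced by the obvious inclusion of sets of triples $\MM^\infty_{\#}(M,N,\iota)\subset\MM^{k^*}_{\#}(M,N,\iota)$ together with the same equivalence relation $\sim$, the triple $(g',V',\gamma)$ also represents $p$, which is the desired conclusion.

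I do not expect a genuine obstacle here: the interior smoothing (via the change of smooth structure from Lemma~\ref{Lem_soliton_smooth}) and the gain of regularity near infinity (via the associated Ricci flow argument of Lemma~\ref{Lem_iota}) are already packaged inside Lemma~\ref{Lem_MM_regular_rep}\ref{Lem_MM_regular_rep_a}. The only thing needing a moment of care is the identification between $\MM^{k^*}(M,N,\iota)$ and the image of $\MM^\infty(M,N,\iota)$ inside it, i.e.\ checking that replacing a low-regularity representative of $p$ by the smooth one does not change the class $p$; this is immediate from the fact that the equivalence relation $\sim$ in Definition~\ref{Def_MM} is literally the same for every value of $k^*$, so that the diffeomorphism witnessing $\sim$ between the two representatives works in $\MM^{k^*}_{\#}(M,N,\iota)$ as well.
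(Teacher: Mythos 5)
Your proposal is correct and is exactly the paper's intended argument: the paper records the corollary as a direct special case of Lemma~\ref{Lem_MM_regular_rep}\ref{Lem_MM_regular_rep_a} applied with $k^*=\infty$, using the identification from Remark~\ref{Rmk_differen_regularity_MM} that you spell out. No further comment is needed.
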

\medskip

\begin{proof}[Proof of Lemma~\ref{Lem_MM_regular_rep}.]
Let $\td\iota : \IR_+ \times N \to M$ be the unique map with the property that for any $z \in N$ the curve $s \mapsto \td\iota(e^{s/2}, z)$ is the trajectory of $-V$ that agrees with $s \mapsto \iota(e^{s/2}, z)$ for large $s$.
In other words, $\td\iota$ is the unique map such that $\td\iota^* V = -\frac12 r \partial_r$ and $\td\iota = \iota$ on $(r_0, \infty) \times N$ for some large enough $r_0 \geq 1$.
Then $\td\iota$ has regularity $C^{1}$ (the same regularity as $V$), and it satisfies the assertions of Lemma~\ref{Lem_iota} (for $k=k^*-2$), due to the uniqueness statement in the end.
This shows Assertions~\ref{Lem_MM_regular_rep_c}, \ref{Lem_MM_regular_rep_b}.
In particular, it follows that $\td\iota^* g$ has regularity $C^{k^*-2}$ and $\td\iota^* V = -\frac12 r \partial_r$ is smooth.

Next, we prove Assertion~\ref{Lem_MM_regular_rep_a}.
By Lemma~\ref{Lem_soliton_smooth} there is a smooth structure on $M$ with respect to which $g,V$ are smooth, so there is a smooth orbifold $M'$ and a $C^1$-diffeomorphism $\phi : M' \to M$ such that $g' := \phi^* g$, $V' := \phi^* V$ are smooth.
Since $\iota^*g = \td\iota^* g$ (which holds on $(r_0, \infty) \times N$) and $\phi^* g$ both have regularity $C^{k^*-2}$, we obtain that $\iota^{-1} \circ \phi$, and therefore $\phi$, restricted to $\phi^{-1} (\iota((r_0, \infty) \times N))$ has regularity $C^{k^*-1}$.
So by smoothing $\phi$, we can construct a diffeomorphism $\phi' : M' \to M$ of regularity $C^{k^*-1}$ such that $\phi' = \phi$ on $\phi^{-1} (\iota((r_0+1, \infty) \times N))$.
Then $g'' := (\phi')_* \phi^* g = (\phi')_* g'$ and $V'' := (\phi')_* \phi^* V = (\phi')_* V'$ have regularity $C^{k^*-2}$.
Since $\phi \circ (\phi')^{-1}$ is equal to the identity on the complement of a compact subset, we obtain that $[(g,V,\gamma)] = [(g'',V'',\gamma)]$.

Suppose now without loss of generality that we have started with a representative $(g,V,\gamma)$ such that $g,V$ have regularity $C^{k^*-2}$.
Our goal will be to modify $g, V$ by a diffeomorphism such that $\iota^* V = -\frac12 r \partial_r$ on $(2,\infty) \times N$.
We will achieve this by applying an isotopy along $\td\iota$ and then along $\iota$.
Note by Assertion~\ref{Lem_MM_regular_rep_b} the map $\td\iota$ has regularity $C^{k^*-1}$.
Let $\phi : M \to M$ be a smooth diffeomorphism such that equals the identity on $M \setminus \Image \iota$ and is of the form $\phi (\iota(r,z)) = \iota(\eta(r),z)$ for some $\eta : (1,\infty) \to (1,\infty)$ with $\eta(r) = r$ for $r \in (1,1.1)$ and $\eta(r) = br$ on $(2,\infty)$ for some $b > 1$.
Then
\[ \phi \big( \iota((2,\infty) \times N) \big) \subset \iota( (r_0,\infty) \times N) \qquad \text{and} \qquad \phi^* V =  V \qquad \text{on} \quad \iota((2,\infty) \times N). \]
Similarly, let $\td\phi : M \to M$ be a $C^{k^*-1}$-diffeomorphism that equals the identity on $M \setminus \Image \td\iota$ and is of the form $\td\phi (\td\iota(r,z)) = \td\iota(\td\eta(r),z)$ for some smooth $\td\eta : \IR_+ \to \IR_+$ with $\td\eta(r) = r$ on $(0, \frac12 r_0)$ and $\td\eta(r) = b^{-1} r$ on $(r_0,\infty)$.
Then
\[  \td\phi^* V =  V \qquad \text{on} \quad \iota((r_0,\infty) \times N) \]
and $\td\phi \circ \phi$ equals the identity on $\iota (( \max\{ r_0, 2 \}, \infty) \times N)$.
It follows that
\[ g' := (\td\phi \circ \phi)^* g = \phi^* \td\phi^* g, \qquad V' := (\td\phi \circ \phi)^* V = \phi^* \td\phi^* V   \]
are of regularity $C^{k^*-2}$ and $(g',V',\gamma)$ satisfies Assertion~\ref{Lem_MM_regular_rep_a}.

Assertion~\ref{Lem_MM_regular_rep_d} is an immediate consequence of Lemma~\ref{Lem_iota}.
\end{proof}
\medskip

The following lemma shows that any (gradient) expanding that is asymptotic to a cone with positive (or non-negative) scalar curvature has non-negative scalar curvature.
Although the statement is for four-dimensional expanding solitons, the proof works in all dimensions except for dimension $n=2$ due to the application of Theorem \ref{Thm_soliton_PSC}.

\begin{Lemma} \label{Lem_Rgeq0_CONE_implies_MM}
Consider an ensemble $(M,N,\iota)$ and an integer $k^* \geq 2$.
Then
\begin{align*}
\Pi \big( \MMgeq^{k^*}(M,N,\iota) \big) &\subset \CONEgeq(N) \\
 \Pi^{-1} \big( \CONEg^{k^*}(N) \big) &\subset \MMg^{k^*}(M,N,\iota), \\
 \Pi^{-1} \big( \CONEgeq^{k^*}(N) \big) \cap \MMgrad^{k^*}(M,N,\iota) &\subset \MMgeqgrad^{k^*}(M,N,\iota) 
\end{align*}
\end{Lemma}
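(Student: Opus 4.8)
The plan is to prove the three inclusions in order, the first being the substantive one and the remaining two following quickly from it. For the first inclusion, let $p = [(g,V,\gamma)] \in \MMgeq^{k^*}(M,N,\iota)$, so that $R_g \geq 0$ on $M$. I want to show $\gamma \in \CONEgeq(N)$, equivalently $R_h \geq (n-2)(n-1)$ for the link metric $h$. The idea is to use the asymptotic expansion of Lemma~\ref{Lem_iota} (applied via Lemma~\ref{Lem_MM_regular_rep}\ref{Lem_MM_regular_rep_b} to $\td\iota$): since $\iota^* g = \gamma + O(r^{-2})$ with the corresponding derivative bounds, the scalar curvature satisfies $R_{\iota^* g} = R_\gamma + O(r^{-4})$ as $r \to \infty$. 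On the other hand, for the generalized cone metric $\gamma = dr^2 + r(dr\otimes\beta + \beta\otimes dr) + r^2 h$, a direct computation gives $R_\gamma = r^{-2}\big(R_h - (n-2)(n-1)\big) + (\text{lower order in the $\beta$-correction})$, but in any case $R_\gamma$ scales like $r^{-2}$ times a function that is (up to a positive factor depending only on the link) $R_h - (n-1)(n-2)$ at leading order. Hence $r^2 R_{\iota^* g} \to R_h - (n-1)(n-2)$ pointwise on $N$ as $r \to \infty$; since $R_{\iota^* g} = R_g \circ \iota \geq 0$ everywhere, the limit is $\geq 0$, giving $R_h \geq (n-1)(n-2)$, i.e. $\gamma \in \CONEgeq(N)$. (For $\gamma$ with a nontrivial $\beta$ I would note that $\CONEgeq$ is only defined as a subset of $\CONE$, so strictly speaking I should first invoke Remark~\ref{Rmk_gradient_implies_cone} in the gradient case; in the non-gradient case I interpret the conclusion as: the underlying link has $R_h \geq (n-1)(n-2)$, which is what the statement records.)

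For the second inclusion, let $p = [(g,V,\gamma)] \in \Pi^{-1}(\CONEg^{k^*}(N))$, so $\gamma \in \CONEg^{k^*}(N)$, i.e. $R_h > (n-1)(n-2)$, hence $R_\gamma > 0$ and in fact $R_\gamma \geq c\, r^{-2}$ near infinity for some $c > 0$ on the compact link $N$. Again by Lemma~\ref{Lem_iota}, $R_g \circ \iota = R_\gamma + O(r^{-4})$, so $R_g > 0$ on a neighborhood of infinity. To conclude $R_g > 0$ everywhere, I would apply Theorem~\ref{Thm_soliton_PSC}: $(M,g,V)$ is an expanding soliton, and by the curvature decay $|{\Rm}| = O(d^{-2})$ coming from Lemma~\ref{Lem_iota}\ref{Lem_iota_c} together with compactness of the core, $g$ has bounded curvature; moreover $g$ is not Einstein (an Einstein asymptotically conical metric asymptotic to a non-flat cone is impossible, and if $\gamma$ were flat then $R_h = (n-1)(n-2)$ contradicting $R_h > (n-1)(n-2)$). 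Theorem~\ref{Thm_soliton_PSC} then gives $R_g > -\tfrac n2$; to upgrade to $R_g > 0$ I check the hypothesis \eqref{eq_R_minus_int}: $\inf_M R_g > -\tfrac n2$ holds because $R_g \to 0^+$ at infinity and $R_g$ is continuous on the compact core hence bounded below strictly above $-\tfrac n2$ by the first part of the theorem; and $\int_M \max\{-R_g, 0\}\, dg < \infty$ holds trivially since $\max\{-R_g,0\}$ is compactly supported (as $R_g > 0$ near infinity). Theorem~\ref{Thm_soliton_PSC} then yields either $R_g > 0$ everywhere — which is what we want — or $(M,g)$ is a quotient of Euclidean space; but the latter would force $\gamma$ to be (a quotient of) the flat cone, contradicting $R_h > (n-1)(n-2)$. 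Hence $R_g > 0$, so $p \in \MMg^{k^*}(M,N,\iota)$.

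For the third inclusion, let $p = [(g,\nabla^g f,\gamma)] \in \Pi^{-1}(\CONEgeq^{k^*}(N)) \cap \MMgrad^{k^*}(M,N,\iota)$, so $R_h \geq (n-1)(n-2)$ and hence $R_\gamma \geq 0$ near infinity with $R_g \circ \iota = R_\gamma + O(r^{-4})$. If $R_h > (n-1)(n-2)$ we are reduced to the second inclusion. Otherwise $R_h \equiv (n-1)(n-2)$ on (a component of) $N$, and the leading $r^{-2}$ term of $R_\gamma$ vanishes; here I would again invoke Theorem~\ref{Thm_soliton_PSC}, checking \eqref{eq_R_minus_int} as above — $\inf_M R_g > -\tfrac n2$ since $R_g \to 0$ at infinity, and $\int_M \max\{-R_g,0\}\,dg < \infty$ because by Lemma~\ref{Lem_iota} and the vanishing leading term we get $R_g = O(r^{-4})$, whose negative part (integrated against the volume element $\sim r^{n-1}\,dr\,dh$ of the asymptotically conical end) is integrable since $n - 1 - 4 < -1$ when $n = 4$. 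Theorem~\ref{Thm_soliton_PSC} then gives $R_g \geq 0$ on $M$, so $p \in \MMgeqgrad^{k^*}(M,N,\iota)$.

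\textbf{Main obstacle.} I expect the main work to be the computation of $R_\gamma$ for a \emph{generalized} cone (the $\beta$-term contributes cross terms that must be tracked to see that the leading $r^{-2}$ coefficient is still governed by $R_h - (n-1)(n-2)$), together with the bookkeeping needed to turn "$R_g \circ \iota = R_\gamma + O(r^{-4})$" into the integrability hypothesis \eqref{eq_R_minus_int} of Theorem~\ref{Thm_soliton_PSC} in the borderline case $R_h = (n-1)(n-2)$, and ruling out the Euclidean-quotient alternative in that theorem by matching asymptotic cones.
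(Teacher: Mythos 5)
Your first two inclusions follow essentially the paper's route: the forward inclusion is immediate from the decay in Definition~\ref{Def_MM}\ref{Def_MM_3}, and for $\gamma \in \CONEg^{k^*}(N)$ one gets $R_g>0$ outside a compact set, so the negative part of $R$ is compactly supported, \eqref{eq_R_minus_int} holds trivially, and Theorem~\ref{Thm_soliton_PSC} (with the Euclidean-quotient alternative excluded by the non-flat asymptotic cone) gives $R_g>0$. Those parts are fine.

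The third inclusion, however, has a genuine gap exactly at the integrability check. In dimension $n=4$ the volume element on the conical end grows like $r^{3}\,dr$, so a bound $\max\{-R_g,0\}=O(r^{-4})$ gives $\int_M \max\{-R_g,0\}\,dg \sim \int^\infty r^{-4}\cdot r^{3}\,dr=\int^\infty r^{-1}\,dr=\infty$: your inequality ``$n-1-4<-1$ when $n=4$'' is false ($3-4=-1$), so the hypothesis \eqref{eq_R_minus_int} of Theorem~\ref{Thm_soliton_PSC} is \emph{not} verified by the asymptotic expansion of Lemma~\ref{Lem_iota} alone. (Also, the dichotomy ``$R_h>(n-1)(n-2)$ everywhere or $R_h\equiv(n-1)(n-2)$ on a component'' is not exhaustive, but that is secondary: in every borderline case the expansion only yields $R_g\geq R_\gamma-Cr^{-4}\geq -Cr^{-4}$, which is the same non-integrable rate.) This is precisely the difficulty the paper's proof is built to overcome: after normalizing $f$ via $R+|\nabla f|^2=-f+10$, it runs a barrier/maximum-principle argument on $M\setminus K$ with the barrier $-a(-f)^{-1}-C_0(-f)^{-3}$, using the gradient-soliton identities $\triangle_f R=-2|\Ric|^2-R$ and $\triangle_f f=f-12$ to show that in fact $R\geq -C_0(-f)^{-3}\sim -Cr^{-6}$ outside a compact set; only this improved decay makes \eqref{eq_R_minus_int} hold, after which Theorem~\ref{Thm_soliton_PSC} applies. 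Note that this is also where the gradient hypothesis enters; your proposed argument never uses it, and if it worked it would prove $\Pi^{-1}(\CONEgeq^{k^*}(N))\subset\MMgeq^{k^*}(M,N,\iota)$ without any gradient assumption, which the paper does not claim. To repair your proof you would need to supply an argument (such as the paper's barrier construction, or some substitute exploiting the elliptic equation satisfied by $R$) that upgrades the negative part of $R$ from $O(r^{-4})$ to an integrable rate.
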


\begin{proof}
Let $(g,V,\gamma)$ be a representative of an element of $\MM^{k^*}(M,N,\iota)$.
By Definition~\ref{Def_MM}\ref{Def_MM_3}, if $g$ has non-negative scalar curvature, then so does $\gamma$.
On the other hand, if $\gamma \in  \CONEg^{k^*}(N)$, then by Definition~\ref{Def_MM}\ref{Def_MM_3} we have $R_g > 0$ on the complement of a compact subset.
So we can apply Theorem~\ref{Thm_soliton_PSC} and obtain that $R > 0$ on $M$.

Next assume that $\gamma \in  \CONEgeq^{k^*}(N)$ and that $V = \nabla f$ is gradient.
Then by Definition~\ref{Def_MM}\ref{Def_MM_3} we obtain that for any $a > 0$ we have $R \circ \iota \geq - a r^{-2}$ for large enough $r$.
Without loss of generality, we may assume that $f$ is normalized such that
\begin{equation} \label{eq_f_normalize_2}
 R + |\nabla f|^2 = - f + 10 . 
\end{equation}
Combining Definition~\ref{Def_MM}\ref{Def_MM_2}, \ref{Def_MM_3} yields that for some generic constant $C < \infty$ we have
\[ - f \circ \iota \leq  C + |\nabla f|^2 \circ \iota \leq C + C r^2 \]
and similarly $-f \circ \iota \geq -C-c r^2$ for some $c > 0$.
So we can choose a compact subset $K \subset M$ with smooth boundary such that
\begin{equation} \label{eq_fgeq1_RleqC}
 -f \geq 10^3, \qquad |R| \leq 1 \qquad \text{on} \quad M \setminus K. 
\end{equation}
Moreover, since $\gamma \in \CONEgeq^{k^*}(N)$, for any $a > 0$ we have $R \geq - a(-f)^{-1}$ on the complement of some (possibly larger) compact subset.

Let $C_0 < \infty$ be a constant such that
\begin{equation} \label{eq_RC0onboundary}
  R > -C_0(-f)^{-3} \qquad \text{on} \quad \partial K. 
\end{equation}
We claim that
\begin{equation} \label{eq_RgeqC010}
 R \geq -C_0(-f)^{-3} \qquad \text{on} \quad M \setminus K. 
\end{equation}
Note that \eqref{eq_RgeqC010} allows us to apply Theorem~\ref{Thm_soliton_PSC} as before to conclude that $R \geq 0$ on $M$, hence finishing the lemma.
So assume that \eqref{eq_RgeqC010} is false.
In the following we will work with the smooth structure supplied by Lemma~\ref{Lem_soliton_smooth}.
By the discussion from the previous paragraph there is an $a > 0$, which may be very large, such that
\begin{equation} \label{eq_RgeqC010witha}
 R \geq -a(-f)^{-1} -C_0(-f)^{-3} \qquad \text{on} \quad M \setminus K. 
\end{equation}
Choose $a \geq 0$ minimal with the property that \eqref{eq_RgeqC010witha} holds.
Then $a > 0$, since we have assumed that \eqref{eq_RgeqC010} is false.
Moreover, again by the discussion from the previous paragraph and by \eqref{eq_RC0onboundary}, we must have equality in \eqref{eq_RgeqC010witha} at some point $p \in M \setminus K$.
At this point we have
\begin{equation} \label{eq_true_at_p}
 R = -a(-f)^{-1} -C_0(-f)^{-3} , \qquad 
\triangle_f R \geq \triangle_f \big(-a(-f)^{-1} -C_0(-f)^{-3} \big)  
\end{equation}
By the soliton identities and \eqref{eq_f_normalize_2} we have
\begin{equation} \label{eq_Revol_1}
 \triangle_f R = - 2|{\Ric}|^2 - R 
\end{equation}
and
\[ \triangle_f f = \triangle f - |\nabla f|^2
= -R-2 - |\nabla f|^2 = f-12. \]
Therefore, for any $b \in \{ 1, 3 \}$, we have by \eqref{eq_fgeq1_RleqC}
\begin{align*}
 \triangle_f (-f)^{-b}
&=  b(\triangle_f f) (-f)^{-b-1} + b(b+1) |\nabla f|^2 (-f)^{-b-2} \\
&=  -b (-f)^{-b} -12b (-f)^{-b-1} + b(b+1)(-f-R+10) (-f)^{b-2} \\
&\leq  -b (-f)^{-b} +b(b-11) (-f)^{-b-1} + 11b(b+1)(-f)^{b-2} < - b(-f)^{-b},
\end{align*}
which implies
\[ \triangle_f \big(-a(-f)^{-1} -C_0(-f)^{-3} \big)
> a(-f)^{-1} + C_0(-f)^{-3}. \]
Combining this with \eqref{eq_true_at_p}, \eqref{eq_Revol_1}, each time evaluated at $p$, implies that
\begin{equation*}
 -R > \triangle_f R \geq \triangle_f \big(-a(-f)^{-1} -C_0(-f)^{-3} \big) 
> a(-f)^{-1} + C_0(-f)^{-3} = - R,
\end{equation*}
which is a contradiction.
\end{proof}
\medskip

The following lemma shows that the geometry of a representative $(g,V,\gamma)$ of an element of $\MM^{k^*}(M,N,\iota)$ is already determined by its geometry restricted to the image of the map $\td\iota : \IR_+ \times N \to M$ from Assertion~\ref{Lem_MM_regular_rep_c} of Lemma~\ref{Lem_MM_regular_rep}.
This will be the basis of the definition of a metric on $\MM^{k^*}(M,N,\iota)$ in the next subsection.

\begin{Lemma} \label{Lem_Im_iota_dense}
Consider an ensemble $(M,N,\iota)$, an integer $k^* \geq 4$, an element $p = [(g,V,\gamma)]  \in \MM^{k^*}(M,N,\iota)$ and the map $\td\iota : \IR_+ \times N \to M$ from  Lemma~\ref{Lem_MM_regular_rep}\ref{Lem_MM_regular_rep_c}.
Then $\td\iota(  \IR_+ \times N) \subset M$ is dense.
Moreover, if $\inf_M R \geq -2 + \eps$, then we have for $0 < r_1 \leq r_2$
\[ \big|M \setminus \td\iota ( (r_1,\infty) \times N ) \big|_g \leq \Big( \frac{r_1}{r_2} \Big)^{2\eps} \big|M \setminus \td\iota ( (r_2,\infty) \times N ) \big|_g. \]
If we even have $R \geq 0$, then for any $r > 0$
\[ \big|M \setminus \td\iota ( (r,\infty) \times N ) \big|_g \leq r^4 \big| (0,1) \times N \big|_{\gamma}. \]
\end{Lemma}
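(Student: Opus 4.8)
The plan is to run everything through the \emph{associated Ricci flow} $g_t:=t\phi_t^{*}g$, $t>0$, of a $C^{k^*-2}$-regular representative $(g,V,\gamma)$ of $p$, where $\phi_t$ is the flow of $t^{-1}V$ with $\phi_1=\id$ (Lemma~\ref{Lem_MM_regular_rep}\ref{Lem_MM_regular_rep_a},\ref{Lem_MM_regular_rep_c}); since all three assertions depend only on $g$ as a metric measure, on the sets $\td\iota((\rho,\infty)\times N)$, and on the topology of $M$, I would first pass to the smooth structure of Lemma~\ref{Lem_soliton_smooth} and assume $g,V$ smooth. Write $A_\rho:=M\setminus\td\iota((\rho,\infty)\times N)$, which is compact for every $\rho\geq 0$ by Definition~\ref{Def_ensemble}\ref{Def_ensemble_1}, put $v(\rho):=|A_\rho|_g$, and note $M\setminus\td\iota(\IR_+\times N)=\bigcap_{\rho>0}A_\rho$. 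The first step is the scaling identity $\phi_t\circ\td\iota=\td\iota\circ\sigma_{t^{-1/2}}$, where $\sigma_\lambda(\rho,z):=(\lambda\rho,z)$: both sides agree at $t=1$ and, using $\td\iota^{*}V=-\tfrac12 r\partial_r$, solve the same ODE $\partial_t(\cdot)=t^{-1}V\circ(\cdot)$, so they coincide by uniqueness. This yields $\phi_t(A_\rho)=A_{t^{-1/2}\rho}$, hence, since composing with $\phi_t$ and rescaling by $t$ multiplies $4$-volumes by $t^2$,
\[ |A_\rho|_{g_t}=t^{2}\,v(t^{-1/2}\rho),\qquad |M\setminus\td\iota(\IR_+\times N)|_{g_t}=t^{2}\,|M\setminus\td\iota(\IR_+\times N)|_g, \qquad \td\iota^{*}g_t=t\,\sigma_{t^{-1/2}}^{*}(\td\iota^{*}g). \]

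Next I would use that $g_t$ is an (orbifold) Ricci flow — this follows from the soliton equation via $\LL_V g=-2\Ric_g-g$ — so that $\tfrac{d}{dt}|A_\rho|_{g_t}=-\int_{A_\rho}R_{g_t}\,dg_t$ on the compact $A_\rho$, and $R_{g_t}=t^{-1}(R_g\circ\phi_t)$. For assertion (b): if $\inf_M R\geq-2+\eps$ then $R_{g_t}\geq t^{-1}(-2+\eps)$, so $\tfrac{d}{dt}\log|A_\rho|_{g_t}\leq(2-\eps)/t$, and integrating for $t_1\leq t_2$ gives $|A_\rho|_{g_{t_2}}\leq(t_2/t_1)^{2-\eps}|A_\rho|_{g_{t_1}}$. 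Taking $\rho=1$ and $t_i=r_i^{-2}$ (so $|A_1|_{g_{r_i^{-2}}}=r_i^{-4}v(r_i)$, while $r_1\leq r_2$ corresponds to $t_2\leq t_1$ after relabeling) and rearranging produces exactly $v(r_1)\leq(r_1/r_2)^{2\eps}v(r_2)$.

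For assertion (a) it suffices to show $|M\setminus\td\iota(\IR_+\times N)|_g=0$, since that set is closed and hence then nowhere dense. I would argue that $\inf_M R>-2$ for every element of $\MM^{k^*}$: $g$ has bounded curvature (Lemma~\ref{Lem_MM_regular_rep}\ref{Lem_MM_regular_rep_b} with Lemma~\ref{Lem_iota}\ref{Lem_iota_c} gives $|\Rm_g|\leq C r^{-2}$ on the end, and $|\Rm_g|$ is bounded on the compact core), and $g$ is not negatively Einstein: if $\Ric_g=\lambda g$ then the soliton equation gives $\LL_V g=-(2\lambda+1)g$, but $\iota^{*}V=-\tfrac12 r\partial_r$ together with $\iota^{*}g\to\gamma$ forces $\LL_V g\to-g$ at infinity, hence $\lambda=0$. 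So either $R_g\equiv0$ (if $g$ is Einstein, i.e.\ Ricci-flat) or, by Theorem~\ref{Thm_soliton_PSC}, $R_g>-2$ pointwise; since moreover $R_g\to0$ at infinity (as $\iota^{*}g\to\gamma$ and $R_\gamma\to0$) while $R_g$ is continuous on the compact core, $\inf_M R>-2$ in all cases. Then assertion (b) applies with some $\eps>0$, so $v(\rho)\leq\rho^{2\eps}v(1)\to0$ as $\rho\searrow0$, whence $|M\setminus\td\iota(\IR_+\times N)|_g=\lim_{\rho\searrow0}v(\rho)=0$.

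For assertion (c): if $R\geq0$ then $R_{g_t}\geq0$, so $t\mapsto|A_\rho|_{g_t}$ is non-increasing, hence $v(\rho)=|A_\rho|_{g_1}\leq\lim_{t\searrow0}|A_\rho|_{g_t}$. Writing $|A_\rho|_{g_t}=t^{2}|M\setminus\td\iota(\IR_+\times N)|_g+|\td\iota((0,\rho]\times N)|_{g_t}$, the first term tends to $0$, and by $\td\iota^{*}g_t=t\sigma_{t^{-1/2}}^{*}(\td\iota^{*}g)$ the second equals $t^{2}|(0,t^{-1/2}\rho]\times N|_{\td\iota^{*}g}=(\rho/\rho_*)^{4}\,|(0,\rho_*]\times N|_{\td\iota^{*}g}$ with $\rho_*:=t^{-1/2}\rho\to\infty$, so one is left to show $\rho_*^{-4}|(0,\rho_*]\times N|_{\td\iota^{*}g}\to|(0,1)\times N|_\gamma$ as $\rho_*\to\infty$; this I would do by splitting off a fixed $(0,R]\times N$ (finite volume, as it lies in the compact $A_R$), whose contribution is $O(\rho_*^{-4})$, and on $(R,\rho_*]\times N$ using that the volume density of $\td\iota^{*}g$ is $(1+o(1))$ times that of $\gamma$ as $r\to\infty$ (Definition~\ref{Def_MM}\ref{Def_MM_3}) together with the scaling $|(0,\lambda]\times N|_\gamma=\lambda^{4}|(0,1)\times N|_\gamma$ (the meaning of $\LL_{\frac12 r\partial_r}\gamma=\gamma$ in dimension $4$). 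This gives $\lim_{t\searrow0}|A_\rho|_{g_t}=\rho^{4}|(0,1)\times N|_\gamma$, hence (c). I expect the two delicate points to be the unconditional bound $\inf_M R>-2$ for (a) — where the only real content is excluding that $g$ is negatively Einstein, which the soliton equation plus the conical asymptotics handle as above — and the limit $\rho_*^{-4}|(0,\rho_*]\times N|_{\td\iota^{*}g}\to|(0,1)\times N|_\gamma$ in (c), the point being that the identity $\td\iota^{*}g_t=t\sigma_{t^{-1/2}}^{*}(\td\iota^{*}g)$ trades the uncontrolled region near the tip for the asymptotically conical region near infinity, where Definition~\ref{Def_MM}\ref{Def_MM_3} supplies the needed control.
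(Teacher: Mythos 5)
Your proof is correct and follows essentially the same route as the paper: the key points — the scaling identity $\phi_t(K_r)=K_{t^{-1/2}r}$, a volume differential inequality driven by the traced soliton equation together with the lower scalar curvature bound (the paper writes it as $-\DIV V=2+R\geq\eps$ and applies the divergence theorem to the moving region with the fixed metric $g$, which is exactly your Ricci-flow volume evolution $\tfrac{d}{dt}|A_\rho|_{g_t}=-\int_{A_\rho}R_{g_t}\,dg_t$ conjugated by $\phi_t$ and rescaled), and the asymptotically conical volume computation for the $t\searrow0$ limit — are identical. Your extra care with the unconditional bound $\inf_M R>-2$ (excluding the negatively Einstein case) and with the limit $\rho_*^{-4}|(0,\rho_*]\times N|_{\td\iota^*g}\to|(0,1)\times N|_\gamma$ correctly fills in details the paper's proof leaves implicit.
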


\begin{proof}
By Theorem~\ref{Thm_soliton_PSC}, and the asymptotics in Definition~\ref{Def_MM}\ref{Def_MM_3} we may always assume that $R > - 2 + \eps$ for some $\eps > 0$.
Consider the compact sets $K_r := M \setminus \td\iota ( (r,\infty) \times N )$.
If $(\phi_t : M \to M)_{t \in \IR_+}$ denotes the flow of $t^{-1} V$, then we have $\phi_t (K_r) = K_{ t^{-1/2} r }$, which implies via $-\DIV V = -\frac12 \tr (\LL_V g) =  2 +  R \geq \eps$ that
\[ \frac{d}{dt} \big|K_{ t^{-1/2} r } \big|_g 
= t^{-1} \int_{K_{ t^{-1/2} r }} (\DIV V) dg 
\leq - \eps t^{-1} |K_{ t^{-1/2} r }|_g . \]
Integrating this inequality implies the first statement.
For the second statement, we set $\eps = 2$ and observe that
\[ \big|K_{  r } \big|_g \leq \lim_{t \to 0} t^{2} \big|K_{t^{-1/2} r } \big| _g = r^4 | (0,1) \times N |_{\gamma}. \qedhere \]
\end{proof}
\bigskip

\subsection{A metric on $\MM$} \label{subsec_metric_on_MM}
Fix an ensemble $(M,N,\iota)$ and an integer $k^* \geq 2$ and set in the following $\MM := \MM^{k^*}(M,N,\iota)$.
We will now define a metric on $\MM$.

\begin{Definition} \label{Def_MM_metric}
We define a map $d : \MM \times \MM \to [0,\infty)$ as follows.
Let $[(g_i,V_i,\gamma_i)] \in \MM$, $i=1,2$.
For each $i = 1,2$ let $\td\iota_i : \IR_+ \times N \to M$ be the unique map from Lemma~\ref{Lem_MM_regular_rep}\ref{Lem_MM_regular_rep_c} with the property that $\td\iota_i^* V_i = -\frac12 r \partial_r$ and $
\td\iota_i = \iota$ on $(r_0, \infty) \times N$ for some  $r_0 > 1$.
Using the function $F(x) := \frac{x}{1+x}$, we then define
\begin{multline*}
 d\big([(g_1,V_1,\gamma_1)],[(g_2,V_2,\gamma_2)] \big)
:= \sum_{j=1}^\infty 2^{-j} F \big( d'_j \big([(g_1,V_1,\gamma_1)],[(g_2,V_2,\gamma_2)] \big)\big) \\ + \big| \max |{\Rm}_{g_1}|_{g_1} -\max |{\Rm}_{g_2}|_{g_2} \big| + \Vert \gamma_1 - \gamma_2 \Vert, 
\end{multline*}
where the last norm is the Banach norm on $T\GenCONE^{k^*}(N)$ and
\[ d'_j \big([(g_1,V_1,\gamma_1)],[(g_2,V_2,\gamma_2)] \big) := \sup_{x,y \in [j^{-1},j] \times N} 
|d_{g_1}(\td\iota_1(x),\td\iota_1(y))-d_{g_2}(\td\iota_2(x),\td\iota_2(y))|. \]
\end{Definition}
\medskip

The following lemma states that $d$ defines a metric on $\MM$.

\begin{Lemma}
$d$ is well defined, $(\MM,d)$ is a metric space and $\Pi$ is continuous (even $1$-Lipschitz) with respect to $d$.
\end{Lemma}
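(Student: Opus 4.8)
The plan is to verify the three defining properties of a metric for $d$, together with well-definedness and continuity of $\Pi$, in the following order. First I would check that $d$ is well defined, i.e., independent of the choice of representatives $(g_i,V_i,\gamma_i)$ of the classes $[(g_i,V_i,\gamma_i)]$ and independent of the ambiguity in $\td\iota_i$. Two representatives of the same class differ by a diffeomorphism $\psi : M \to M$ that is the identity outside a compact set; since $\psi$ is then a $g$-isometry relating the two metrics and intertwines the two vector fields, it intertwines the two maps $\td\iota_i$ (by the uniqueness in Lemma~\ref{Lem_MM_regular_rep}\ref{Lem_MM_regular_rep_c}) up to an isometry of $(N,h)$ preserving $\beta$, as in the uniqueness part of Lemma~\ref{Lem_iota}. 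Because the defining expression for $d'_j$ only involves $g_i$-distances between points $\td\iota_i(x),\td\iota_i(y)$ and the quantities $\max|{\Rm}_{g_i}|_{g_i}$ and $\|\gamma_1-\gamma_2\|$ are manifestly isometry-invariant, $d$ is unchanged. (Finiteness of $\max|{\Rm}_{g_i}|$ is guaranteed since a representative of regularity $C^{k^*-2}$ exists by Lemma~\ref{Lem_MM_regular_rep}\ref{Lem_MM_regular_rep_a}, and the curvature decays by Lemma~\ref{Lem_iota}\ref{Lem_iota_c}; finiteness of each $d'_j$ is clear since $[j^{-1},j]\times N$ is compact and $\td\iota_i$ is $C^1$.) The outer sum converges because $F\le 1$ and $\sum 2^{-j}<\infty$, so $d$ takes values in $[0,\infty)$.

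Next I would establish symmetry, which is immediate from the symmetry of each term, and the triangle inequality. For the latter, the key observations are: (i) for fixed $x,y$, the quantity $|d_{g_1}(\td\iota_1(x),\td\iota_1(y))-d_{g_2}(\td\iota_2(x),\td\iota_2(y))|$ satisfies the triangle inequality in the three metrics, hence so does its supremum $d'_j$ over $(x,y)$, by the standard sup-of-triangle-inequalities argument; (ii) the composition $F\circ d'_j$ preserves the triangle inequality because $F(t)=t/(1+t)$ is concave, nondecreasing, and satisfies $F(0)=0$, so $F(a+b)\le F(a)+F(b)$; (iii) $|\max|{\Rm}_{g_1}|-\max|{\Rm}_{g_2}||$ and $\|\gamma_1-\gamma_2\|$ each satisfy the triangle inequality directly (the latter because it is a norm). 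Summing over $j$ with weights $2^{-j}$ and adding the last two terms preserves it.

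The main obstacle is positivity: showing $d(p_1,p_2)=0 \implies p_1=p_2$ in $\MM$. Vanishing of $d$ forces $\gamma_1=\gamma_2=:\gamma$, equality $\max|{\Rm}_{g_1}|=\max|{\Rm}_{g_2}|$, and $d'_j=0$ for all $j$, i.e., $d_{g_1}(\td\iota_1(x),\td\iota_1(y))=d_{g_2}(\td\iota_2(x),\td\iota_2(y))$ for all $x,y\in(0,\infty)\times N$. Thus the map $\td\iota_2\circ\td\iota_1^{-1}$ is a distance-preserving bijection between the dense subsets $\td\iota_1(\IR_+\times N)$ and $\td\iota_2(\IR_+\times N)$ (dense by Lemma~\ref{Lem_Im_iota_dense}); it extends uniquely to a surjective isometry $\psi:(M,g_1)\to(M,g_2)$ of the metric completions, which are $M$ itself since $g_1,g_2$ are complete. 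By Myers--Steenrod (and the orbifold version, since $M$ is a good orbifold) $\psi$ is a smooth Riemannian isometry with $\psi^*g_2=g_1$. One then checks $\psi$ intertwines $\td\iota_1,\td\iota_2$: it maps $-V_1$-trajectories to $-V_2$-trajectories (an isometry sends geodesics to geodesics and the $\td\iota_i$ are built from $-V_i$-trajectories, cf.\ the uniqueness clause of Lemma~\ref{Lem_iota}), so $\psi^*V_2=V_1$ up to an isometry of the link; composing with that link isometry if necessary, we may assume $\psi\circ\td\iota_1=\td\iota_2$ on a neighborhood of infinity, hence $\psi$ agrees with $\iota\circ\iota^{-1}=\id$ outside a compact set. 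Therefore $(g_1,V_1,\gamma)\sim(g_2,V_2,\gamma)$, i.e., $p_1=p_2$. Finally, continuity of $\Pi$: since the term $\|\gamma_1-\gamma_2\|$ appears as a direct summand in $d$, we have $\|\Pi(p_1)-\Pi(p_2)\|=\|\gamma_1-\gamma_2\|\le d(p_1,p_2)$, so $\Pi$ is $1$-Lipschitz, hence continuous.
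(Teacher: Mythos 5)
Your proposal is correct and follows essentially the same route as the paper: well-definedness via $\td\iota_i=\psi_i\circ\td\iota_i'$ for the gauge diffeomorphisms, the triangle inequality from subadditivity and monotonicity of $F$, positive definiteness by extending the distance-preserving map $\td\iota_2\circ\td\iota_1^{-1}$ from the dense image (Lemma~\ref{Lem_Im_iota_dense}) to a global isometry that is the identity outside a compact set, and $1$-Lipschitz continuity of $\Pi$ from the $\Vert\gamma_1-\gamma_2\Vert$ summand. Only remark: the ``up to a link isometry'' detour is superfluous (and the geodesic justification inapposite), since $\psi\circ\td\iota_1=\td\iota_2$ holds on all of $\IR_+\times N$ by the very construction of $\psi$ as the extension of $\td\iota_2\circ\td\iota_1^{-1}$, and $\td\iota_1=\td\iota_2=\iota$ near infinity already forces $\psi=\id$ outside a compact set, exactly as in the paper.
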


\begin{proof}
To see that $d$ is well defined, suppose that $[(g_i, V_i, \gamma_i)] = [(g'_i, V'_i, \gamma_i)]$, $i=1,2$, and fix $C^1$-dif\-feo\-mor\-phisms $\psi_i : M \to M$ that equal the identity outside of some compact subset such that $\psi_i^* g_i = g'_i$ and $\psi_i^* V_i = V'_i$.
Let $\td\iota_i, \td\iota'_i$ be the corresponding maps from Definition~\ref{Def_MM_metric}.
Then
\[ \td\iota_i^* V_i  = -\tfrac12 r \partial_r, \qquad (\psi \circ \td\iota'_i)^* V_i = (\td\iota'_i)^* \psi^*_i V_i =(\td\iota'_i)^* V'_i = -\tfrac12 r \partial_r \]
and on $(r_0, \infty) \times N$ for some $r_0 > 1$ we have $\td\iota_i = \iota = \psi_i \circ \iota = \psi_i \circ\td\iota'_i$.
This shows that $\td\iota_i = \psi \circ\td\iota'_i$ on $\IR_+ \times N$ and therefore we have for any $x,y \in \IR_+ \times N$
\[ d_{g_i} (\td\iota_i(x),\td\iota_i(y))
= d_{g_i} \big( \psi_i(\td\iota'_i(x)), \psi_i(\td\iota'_i(y)) \big)
= d_{\psi^*_i g_i} \big( \td\iota'_i(x), \td\iota'_i(y) \big) = d_{g'_i} \big( \td\iota'_i(x), \td\iota'_i(y) \big). \]
So $d'_j$ and thus $d$ are well defined.

To see that $(\MM,d)$ is a metric space, observe first that the $d'_j$ satisfy the triangle inequality.
So since $F(0) = 0$ and $F'' <0$, the same is true for $d$.
To prove the positive definiteness, suppose that $d([(g_1,V_1,\gamma_1)],[(g_2,V_2,\gamma_2)] ) = 0$.
It follows that
\[ d_{g_1}(\td\iota_1(x),\td\iota_1(y)) = d_{g_2}(\td\iota_2(x),\td\iota_2(y)) \qquad \text{for all} \quad x,y \in \IR_+ \times N. \]
So if we set $\psi' := \td\iota_2 \circ \td\iota^{-1}_1 : \td\iota_1 ( \IR_+ \times N) \to \td\iota_2 ( \IR_+ \times N)$, then $(\psi')^* g_2 = g_1$ and $(\psi')^* V_2 = (\td\iota_1)_* \td\iota_2^* V_2 =  (\td\iota_1)_*  (-\frac12 r \partial_r) = V_1$.
Moreover, $\psi'$ and its inverse map Cauchy sequences to Cauchy sequences.
So by Lemma~\ref{Lem_Im_iota_dense} we can extend $\psi'$ to a map $\psi : M \to M$ with $\psi^* g_2 = g_1$ and $\psi^* V_2 = V_1$.
Since $\td\iota_1 = \td\iota_2 = \iota$ on $(r_0,\infty) \times N$ for some $r_0 \geq 1$, we know that $\psi$ agrees with the identity outside some compact subset.
Hence $[(g_1,V_1,\gamma_1)]=[(g_2,V_2,\gamma_2)]$.

The continuity of $\Pi$ follows directly from the definition of $d$.
\end{proof}
\bigskip

\subsection{The map $T$} \label{subsec_map_T}
An ensemble $(M,N,\iota)$ allows us to define a map $T$, which relates the set of cone deformations in $T \GenCONE^{k^*}(N)$ with metric deformations on $M$.

\begin{Definition} \label{Def_T}
Fix some smooth cutoff function $\eta : (1,\infty) \to [0,1]$ such that $\eta \equiv 0$ on $(1,2)$ and $\eta \equiv 1$ on $[3,\infty)$.
If $(M,N,\iota)$ is an ensemble and $0 \leq k^* \leq \infty$, then we define
\[ T : T\GenCONE^{k^*} (N) \longrightarrow C^{k^*} (M; S^2 T^*M), \qquad \gamma \longmapsto \iota_* ( \eta(r) \gamma ), \]
where $r$ denotes the coordinate on the $(1,\infty)$ factor and the tensor $\gamma$ is restricted from $\IR_+ \times N$ to $(1,\infty) \times N$.
\end{Definition}

We remark that the precise choice of the function $\eta$ will be unimportant in the following and the function $\eta$ will not be used anymore.
We will only use the fact that $T$ is linear, that $T\gamma = \iota_* \gamma$ on the complement of some compact subset and that $T$ composed with the restriction to any compact subset is a bounded operator with respect to the $C^{k^*}$-norm.

We will sometimes need the following lemma.

\begin{Lemma} \label{Lem_nabf_Tgamma}
Let $(M, N, \iota)$ be an ensemble and $k^* \geq 4$ and let $(g,V,\gamma)$ be a $C^{k^*-2}$-regular representative of some $p \in \MM^{k^*}(M,N,\iota)$.
Then there is a constant $C < \infty$ such that for any $\gamma' \in T\GenCONE^{k^*}(N)$ and on $(1,\infty) \times N$
\begin{alignat*}{2}
 \big| \nabla^m (T(\gamma')) \big| \circ \iota &\leq C r^{-m} \Vert \gamma' \Vert & \qquad &\text{for} \quad m = 0, \ldots, k^*-4, \\
 \big| \nabla^m \big( \nabla_{V}( T(\gamma')) \big) \big| \circ \iota &\leq C r^{-2-m} \Vert \gamma' \Vert  & \qquad &\text{for} \quad m = 0, \ldots, k^*-5.
\end{alignat*}
\end{Lemma}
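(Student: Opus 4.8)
\emph{Reduction to the end.} The asserted bounds are local in $r$. On $(1,2)\times N$ we have $\eta(r)=0$, so $T(\gamma')=0$ and there is nothing to prove, and on any region $\{r\le R\}$ the bounds are immediate from the boundedness of $T$ followed by restriction to a compact set, together with the $C^{k^*-2}$-regularity of $g$ and $V$ (the factors $r^{-m}$, $r^{-2-m}$ being harmless there). Hence it suffices to work on a set $(r_0,\infty)\times N$ with $r_0$ large; by Lemma~\ref{Lem_MM_regular_rep}\ref{Lem_MM_regular_rep_c} we may take $r_0$ so large that there $\iota$ agrees with the map $\td\iota$ adapted to $V$, so $\iota^*V=-\tfrac12 r\partial_r$ and, since $\eta\equiv1$, $\iota^*(T(\gamma'))=\gamma'$. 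Writing $\bar g:=\iota^*g$ and pulling back by $\iota$, the two estimates become
\begin{equation*}
 |\nabla^{\bar g,m}\gamma'|_{\bar g}\le C\,r^{-m}\,\Vert\gamma'\Vert,\qquad
 |\nabla^{\bar g,m}(\nabla^{\bar g}_{-\frac12 r\partial_r}\gamma')|_{\bar g}\le C\,r^{-2-m}\,\Vert\gamma'\Vert
\end{equation*}
on $(r_0,\infty)\times N$. Throughout we invoke Lemma~\ref{Lem_MM_regular_rep}\ref{Lem_MM_regular_rep_b} (that is, Lemma~\ref{Lem_iota}\ref{Lem_iota_c}): $\bar g$ is $C^{k^*-2}$-regular, and with constants depending only on $\gamma$ one has $|\nabla^{\gamma,j}(\bar g-\gamma)|_{\gamma}\le C_j r^{-2-j}$ and $|\nabla^{\gamma,j}\Rm_{\bar g}|\le C_j r^{-2-j}$ for $j=0,\dots,k^*-4$; in particular $\bar g$ and $\gamma$ are uniformly equivalent for $r\ge r_0$, and we freely pass between $|\cdot|_{\bar g}$ and $|\cdot|_{\gamma}$.

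\emph{First estimate.} Since $\gamma$ is a generalized cone and $\gamma'\in T\GenCONE^{k^*}(N)$, both are homogeneous of weight $2$ under the rescalings $\rho_\lambda(r,z)=(\lambda r,z)$, so $|\nabla^{\gamma,m}\gamma'|_{\gamma}$ is homogeneous of weight $-m$; evaluating at $r=1$, using the definition of the Banach norm on $T\GenCONE^{k^*}(N)$ (Definition~\ref{Def_GenCONE}) and compactness of $N$, gives $|\nabla^{\gamma,m}\gamma'|_{\gamma}\le C_m r^{-m}\Vert\gamma'\Vert$ for $m\le k^*$. The Christoffel-difference tensor $A:=\Gamma^{\bar g}-\Gamma^{\gamma}$ is a $\bar g^{-1}$-contraction of $\nabla^{\gamma}(\bar g-\gamma)$, so $|\nabla^{\gamma,j}A|_{\gamma}\le C_j r^{-3-j}$ for $j\le k^*-5$. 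Expanding $\nabla^{\bar g,m}\gamma'$ in the usual way as a sum of contractions of a single factor $\nabla^{\gamma,i}\gamma'$ with factors $\nabla^{\gamma,j_l}A$, every factor of $A$ contributes a strictly negative power of $r$, so the dominant term is $\nabla^{\gamma,m}\gamma'$ and $|\nabla^{\bar g,m}\gamma'|_{\bar g}\le C_m r^{-m}\Vert\gamma'\Vert$ for $m\le k^*-4$.

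\emph{Second estimate.} Set $V':=-\tfrac12 r\partial_r=\iota^*V$ on the end. The soliton equation pulls back to $\LL_{V'}\bar g=-2\Ric_{\bar g}-\bar g$, so the symmetric part of $\nabla^{\bar g}(V')^{\flat,\bar g}$ is $-\Ric_{\bar g}-\tfrac12\bar g$, while its antisymmetric part is $\tfrac12\omega$ with $\omega:=d\big((V')^{\flat,\bar g}\big)$. On the other hand $\LL_{V'}\gamma'=-\gamma'$, directly from $\gamma'\in T\GenCONE^{k^*}(N)$. Feeding these into the identity $(\LL_{V'}\gamma')_{ab}=(\nabla^{\bar g}_{V'}\gamma')_{ab}+\gamma'_{cb}\nabla^{\bar g}_a(V')^c+\gamma'_{ac}\nabla^{\bar g}_b(V')^c$, the two $-\tfrac12\bar g$-contributions cancel the left-hand side and one obtains, with indices raised via $\bar g$,
\begin{equation*}
 \nabla^{\bar g}_{V'}\gamma'=\Ric_{\bar g}*\gamma'-\tfrac12\,\omega*\gamma',\qquad
 (T*\gamma')_{ab}:=T^c{}_{a}\gamma'_{cb}+T^c{}_{b}\gamma'_{ac}.
\end{equation*}
For the first term, Lemma~\ref{Lem_iota}\ref{Lem_iota_c} gives $\Ric_{\bar g}=\Ric_{\gamma}+O(r^{-4})$ and, by homogeneity, $|\nabla^{\gamma,m}\Ric_{\gamma}|_{\gamma}\le C_m r^{-2-m}$, so by the Leibniz rule and the first estimate $|\nabla^{\bar g,m}(\Ric_{\bar g}*\gamma')|_{\bar g}\le C_m r^{-2-m}\Vert\gamma'\Vert$. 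For the second term, $(V')^{\flat,\bar g}=\bar g(-\tfrac12 r\partial_r,\cdot)=-\tfrac12 r\,dr-\tfrac12 r^2\pi^*\beta-\tfrac12 r\,(\bar g-\gamma)(\partial_r,\cdot)$, where $\gamma=dr^2+r(dr\otimes\pi^*\beta+\pi^*\beta\otimes dr)+r^2\pi^*h$; the exact $1$-form $-\tfrac12 r\,dr$ drops out of $\omega$, and for the elements $p$ to which the lemma is applied $\beta=0$ (this holds in particular whenever $V$ is a gradient vector field, by Remark~\ref{Rmk_gradient_implies_cone}), so $\omega=-\tfrac12 d\big(r\,(\bar g-\gamma)(\partial_r,\cdot)\big)$; since $|\nabla^{\gamma,j}(\bar g-\gamma)|_{\gamma}\le C_j r^{-2-j}$ and $|\nabla^{\gamma,j}r|_{\gamma}\le C_j r^{1-j}$, this gives $|\nabla^{\gamma,j}\omega|_{\gamma}\le C_j r^{-2-j}$ for $j\le k^*-5$, whence $|\nabla^{\bar g,m}(\omega*\gamma')|_{\bar g}\le C_m r^{-2-m}\Vert\gamma'\Vert$ for $m\le k^*-5$. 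Adding the two contributions proves the second estimate.

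\emph{Main obstacle.} The crux is the cancellation in the second estimate: the tensor $\nabla_V(T(\gamma'))$ is a priori only of size $O(1)$, and it is only by combining the scaling homogeneity of $\gamma'$ with the soliton equation in the gauge $\iota^*V=-\tfrac12 r\partial_r$ — together with the sharp asymptotics of $g$ and $V$ from Lemma~\ref{Lem_iota} that enter the error form $\omega$ — that one recognizes it to decay like $r^{-2}$. The remaining work, mainly propagating the Christoffel-difference bounds through high-order covariant derivatives in both estimates, is routine bookkeeping.
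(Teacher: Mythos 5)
Your first estimate is essentially the paper's argument: homogeneity of $\gamma'$ under the dilations gives $|\nabla^{\gamma,m}\gamma'|_\gamma\le C r^{-m}\Vert\gamma'\Vert$, and the decay $|\nabla^{\gamma,j}(\iota^*g-\gamma)|\le C_j r^{-2-j}$ from Lemma~\ref{Lem_MM_regular_rep}\ref{Lem_MM_regular_rep_b} lets you convert $\nabla^\gamma$- into $\nabla^{\iota^*g}$-derivatives. For the second estimate, however, you take a genuinely different route, and it is there that the proposal falls short of the statement. The paper does not use the soliton equation at all: it observes that $\nabla^\gamma_{\partial_r}\gamma'=0$, so on the end $\nabla^{\iota^*g}_{r\partial_r}\gamma'$ is exactly the connection-difference term $(\iota^*g)^{-1}*\nabla^\gamma(\iota^*g-\gamma)*(r\partial_r)*\gamma'$, which decays like $r^{-2}\Vert\gamma'\Vert$, and then differentiates this identity $m$ times. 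You instead combine the soliton equation with $\LL_{V'}\gamma'=-\gamma'$ (where $V'=-\tfrac12 r\partial_r$) to get $\nabla^{\iota^*g}_{V'}\gamma'=\Ric_{\iota^*g}*\gamma'-\tfrac12\,\omega*\gamma'$ with $\omega=d\big((V')^{\flat,\iota^*g}\big)$; the algebra there is correct, but the needed decay $|\omega|\lesssim r^{-2}$ forces you to discard the contribution $-\tfrac12\,d(r^2\pi^*\beta)$, which is only $O(1)$, and you do so by assuming $\beta=0$.

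That assumption is not among the hypotheses. The lemma is stated for an arbitrary $C^{k^*-2}$-regular representative of any $p\in\MM^{k^*}(M,N,\iota)$, and the generic elements of $\MM^{k^*}$ are non-gradient solitons asymptotic to genuinely generalized cones, so Remark~\ref{Rmk_gradient_implies_cone} does not apply and ``this is the only case in which the lemma gets used'' is not a proof of the statement; as written you have proved the lemma only under the extra hypothesis $\gamma\in\CONE^{k^*}(N)$. What replaces your soliton-equation computation in the paper is the single pointwise identity $\nabla^\gamma_{\partial_r}\gamma'=0$, which is immediate for cone backgrounds (radial parallelism of the homogeneous tensor $\gamma'$) and needs neither the soliton equation nor any discussion of the $\beta'$-part of $\gamma'$. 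In fairness, for a background with $\beta\neq0$ that identity is itself no longer automatic --- for $\gamma=dr^2+br(dr\otimes d\theta+d\theta\otimes dr)+r^2d\theta^2$ and $\gamma'$ with $\gamma'_{r\theta}=b'r$ one computes $(\nabla^\gamma_{r\partial_r}\gamma')_{rr}=-2bb'/(1-b^2)$ --- so the point you stumble over is precisely the delicate one in the generalized-cone case; but the task was to prove the lemma as stated, and your proposal instead proves a restricted version by appealing to its applications, which is a genuine gap relative to the statement and should at least have been flagged prominently rather than in a parenthesis.
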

\bigskip

\begin{proof}
Note that $\nabla^\gamma_{\partial_r} \gamma' = 0$.
So for sufficiently large $r_0 \geq 1$ we have on $(r_0, \infty) \times N$
\[ \iota^* \nabla^m (T(\gamma')) = \nabla^{\iota^* g, m} (\iota^* T(\gamma'))
= \nabla^{\iota^* g, m} \gamma',
\]
\begin{multline} \label{eq_iotanabm}
 \iota^* (\nabla^m (\nabla_{V} T( \gamma' )) )
= \nabla^{\iota^* g, m} (\nabla^{\iota^* g}_{\iota^* V} (\iota^* T(\gamma')))
= - \tfrac12\nabla^{\iota^* g, m} (\nabla^{\iota^* g}_{ r \partial_r} \gamma') \\
=  -\tfrac12 \nabla^{\iota^* g, m} (\nabla^{\iota^* g}_{  r \partial_r} \gamma' -\nabla^{\gamma}_{  r \partial_r} \gamma' ). %
\end{multline}
Moreover,
\[ |\nabla^{\gamma, m} \gamma' | \leq C r^{-m} \]
and the difference of the two covariant derivatives in \eqref{eq_iotanabm} can be computed in terms of $\nabla^\gamma (\iota^* g)$ as follows:
\begin{equation} \label{eq_diff_connections}
 \nabla^{\iota^* g}_{  r \partial_r} \gamma' -\nabla^{\gamma}_{  r \partial_r} \gamma' =  (\iota^* g)^{-1}  * \nabla^\gamma (\iota^* g) * (r\partial_r) * \gamma', 
\end{equation}
where $(\iota^* g)^{-1}$ denotes the dual $(2,0)$-tensor.
By Lemma~\ref{Lem_MM_regular_rep}\ref{Lem_MM_regular_rep_b} we have for all $m = 0, \ldots, k^* - 4$
\begin{equation} \label{eq_iotag_minus_gamma}
 \big| \nabla^{\gamma,m} \big( \iota^* g - \gamma \big) \big| \leq C r^{-2-m}, 
\end{equation}
where $C$ is some generic constant.
Hence by differentiating \eqref{eq_diff_connections}, we obtain the following bound for $m = 0, \ldots, k^*-5$
\[ \big| \nabla^{\gamma, m} \big( \nabla^{\iota^* g}_{  r \partial_r} \gamma' -\nabla^{\gamma}_{  r \partial_r} \gamma' \big) \big| \leq C r^{-2-m} \Vert \gamma' \Vert. \]
Due to \eqref{eq_iotag_minus_gamma}, this implies that we have a similar bound with respect to the $\nabla^{\iota^* g}$-derivatives:
\[ \big|\nabla^{\iota^* g, m} \gamma' \big| \leq C r^{-m} , \qquad
 \big| \nabla^{\iota^* g, m} \big( \nabla^{\iota^* g}_{  r \partial_r} \gamma' -\nabla^{\gamma}_{  r \partial_r} \gamma' \big) \big| \leq C r^{-2-m} \Vert \gamma' \Vert. \]
Combining this with \eqref{eq_iotanabm} finishes the proof.
\end{proof}
\bigskip

\subsection{Convergence and closeness in $\MM$}
The following proposition shows that convergence within the space $\MM^{k^*}(M,N,\iota)$ is essentially equivalent to  $C^{k^*-5}_{\loc}$-convergence for appropriately chosen representatives.

\begin{Proposition} \label{Prop_converging_representatives}
Consider an ensemble $(M,N,\iota)$ and an integer $k^* \geq 7$.
Suppose that $p_i \to p_\infty \in \MM := \MM^{k^*}(M,N,\iota)$ and let $(g_\infty, V_\infty, \gamma_\infty)$ be a $C^{k^*-5}$-regular representative of $p_\infty$.
Then we can find $C^{k^*-5}$-regular representatives $(g_i, V_i, \gamma_i)$ of $p_i$ such that
\begin{equation} \label{eq_conv_giVi}
 g_i \xrightarrow[i\to\infty]{\quad C^{k^*-5}_{\loc} \quad} g_\infty, \qquad  V_i \xrightarrow[i\to\infty]{\quad C^{k^*-5}_{\loc} \quad} V_\infty 
\end{equation}
and such that $\iota^* V_i =- \frac12 r \partial_r$ on $(r_0,\infty) \times N$ for some uniform $r_0 > 2$.
\end{Proposition}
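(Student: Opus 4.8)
The plan is to start from convenient representatives of the $p_i$, read off uniform geometric bounds from the three pieces of information contained in $d(p_i,p_\infty)\to 0$, apply an (orbifold) Cheeger--Gromov compactness, and then use the distance-comparison terms in Definition~\ref{Def_MM_metric} together with the density statement of Lemma~\ref{Lem_Im_iota_dense} to identify the resulting limit with the prescribed $(g_\infty,V_\infty,\gamma_\infty)$ rather than with some abstract isometric copy of it.

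First I would fix, for each $i$, a $C^{k^*-2}$-regular representative $(g_i',V_i',\gamma_i)$ of $p_i$ with $\iota^* V_i' = -\tfrac12 r\partial_r$ on $(2,\infty)\times N$, as provided by Lemma~\ref{Lem_MM_regular_rep}\ref{Lem_MM_regular_rep_a}, so that the maps $\td\iota_i'$ appearing in Definition~\ref{Def_MM_metric} equal $\iota$ on $(2,\infty)\times N$; likewise $\td\iota_\infty=\iota$ on $(r_\infty,\infty)\times N$ for some $r_\infty$. From $d(p_i,p_\infty)\to 0$ one extracts: (a) $\gamma_i\to\gamma_\infty$ in $T\GenCONE^{k^*}(N)$; (b) $\max|{\Rm}_{g_i'}|_{g_i'}$ is uniformly bounded; and (c) $d_{g_i'}(\td\iota_i'(x),\td\iota_i'(y))\to d_{g_\infty}(\td\iota_\infty(x),\td\iota_\infty(y))$ uniformly on compact subsets of $(\IR_+\times N)^2$. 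Using (b), the soliton equation, and standard soliton derivative estimates (applying Shi's estimates to the associated Ricci flow $(t\phi_t^* g_i')_{t\in(0,1]}$, whose curvature is bounded by $2\max|{\Rm}_{g_i'}|$ on $[\tfrac12,1]$), I obtain uniform bounds $|\nabla^m{\Rm}_{g_i'}|\le C_m$ for $m\le k^*-5$ on all of $M$. On the end, (a) together with Lemma~\ref{Lem_MM_regular_rep}\ref{Lem_MM_regular_rep_b} — whose constants depend only on $\gamma_i$, and the $\gamma_i$ remain in a compact set — gives uniform $C^{k^*-4}$ control of $\iota^* g_i'$ and the refined asymptotics $\iota^* g_i' = \gamma_i - 2\Ric_{\gamma_i}+O(r^{-4})$, hence uniform lower injectivity-radius bounds $\inj_{g_i'}\gtrsim r$ there; in particular the sets $\td\iota_i'((1,\infty)\times N)$ exhaust $M$ with uniformly controlled complements by Lemma~\ref{Lem_Im_iota_dense}.

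Next, by Lemma~\ref{Lem_Im_iota_dense} each $\td\iota_i'$ realizes $(\IR_+\times N)$, equipped with the pullback of $d_{g_i'}$, as a dense subset of the complete space $(M,d_{g_i'})$, so (c) upgrades to pointed Gromov--Hausdorff convergence $(M,d_{g_i'})\to (M,d_{g_\infty})$ that is compatible with the fixed maps $\td\iota_i'$. Since the limit is the $4$-dimensional Riemannian orbifold $(M,g_\infty)$, the sequence cannot collapse; combining this with the uniform curvature-derivative and injectivity-radius bounds and the fact that the orbifold singularities are modelled on finitely many $\IR^4/\Gamma$, orbifold Cheeger--Gromov compactness yields, after passing to a subsequence, $C^{k^*-4}$ orbifold diffeomorphisms $\psi_i$ from exhausting open subsets of $M$ onto open subsets of $M$ with $\psi_i^* g_i'\to g_\infty$ in $C^{k^*-5}_{\loc}$. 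Because on a fixed end $\iota((R_*,\infty)\times N)$ all the $\iota^* g_i'$ are uniformly $C^{k^*-4}$-close to the converging metrics $\gamma_i-2\Ric_{\gamma_i}$, the $\psi_i$ can be taken equal to the identity on some $\iota((r_0,\infty)\times N)$ with $r_0>2$ independent of $i$, and (after modifying them outside a large compact piece, which does not affect the interior convergence) equal to the identity outside a compact set. Setting $g_i:=\psi_i^* g_i'$ and $V_i:=\psi_i^* V_i'$, we then have $\iota^* V_i=-\tfrac12 r\partial_r$ on $(r_0,\infty)\times N$, and $(g_i,V_i,\gamma_i)$ represents $p_i$. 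For the convergence of the vector fields I use that $V_i$ is characterized by $\LL_{V_i}g_i+2\Ric_{g_i}+g_i=0$ together with its value on the end; taking a divergence yields the elliptic equation $\triangle V_i + \Ric_{g_i}(V_i) = -2\nabla R_{g_i}$, whose coefficients converge since $g_i\to g_\infty$ in $C^{k^*-5}_{\loc}$, so interior elliptic estimates give $V_i\to V_\infty$ in $C^{k^*-5}_{\loc}$ (consistently, since $V_\infty$ is exactly the soliton vector field of $g_\infty$ in the same gauge). Finally, since every subsequence admits a further subsequence converging in this way to the same limit, the whole sequence converges.

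The main obstacle is precisely the identification of the abstract Cheeger--Gromov limit with the prescribed soliton: the compactness theorem only determines the limit metric up to isometry, so pinning down both the correct diffeomorphism type and the correct gauge at infinity is exactly the role played by the distance-comparison terms $d'_j$ in Definition~\ref{Def_MM_metric} and by the density statement of Lemma~\ref{Lem_Im_iota_dense}. Arranging the approximating diffeomorphisms $\psi_i$ to equal the identity near infinity for one uniform $r_0$ — while not destroying the interior $C^{k^*-5}_{\loc}$-convergence — is the technical point that requires the most care, and it is where the uniform conical asymptotics from Lemma~\ref{Lem_MM_regular_rep}\ref{Lem_MM_regular_rep_b} and the convergence $\gamma_i\to\gamma_\infty$ are essential.
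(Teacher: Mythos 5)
Your overall route is the same as the paper's (which funnels the argument through Lemma~\ref{Lem_good_reps_technical}): gauge the representatives via Lemma~\ref{Lem_MM_regular_rep}\ref{Lem_MM_regular_rep_a}, read off $\gamma_i\to\gamma_\infty$, the uniform curvature bound and the distance convergence from $d(p_i,p_\infty)\to 0$, upgrade via Shi and the conical asymptotics of Lemma~\ref{Lem_MM_regular_rep}\ref{Lem_MM_regular_rep_b}, use orbifold Cheeger--Gromov compactness together with Lemma~\ref{Lem_Im_iota_dense} to identify the Gromov--Hausdorff limit with $(M,d_{g_\infty})$, and then fix the gauge at infinity. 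However, the step you yourself flag as the delicate one is asserted rather than proved, and it is precisely the content of the paper's Lemma~\ref{Lem_good_reps_technical}: it is not automatic that the Cheeger--Gromov comparison maps $\psi_i$ ``can be taken equal to the identity'' on a uniform end $\iota((r_0,\infty)\times N)$. The $\psi_i$ produced by compactness have a priori no relation to $\iota$; closeness of all the metrics to the converging cones $\gamma_i$ only determines them near infinity up to almost-isometries of the end (e.g.\ rotations of the link when $(N,h_\infty)$ has symmetries, or radial shifts), which would destroy the claim that the pullbacks still represent $p_i$. The paper resolves this by showing (Arzel\`a--Ascoli with the uniform $C^{k^*-4}$ control) that $\phi_i$ restricted to the end subconverges to a limit embedding $\phi_\infty$, interpolating between $\phi_i$ and $\phi_\infty$ on an annulus, and composing with a fixed inverse so that the resulting diffeomorphisms are literally the identity outside a compact set; some version of this limit-and-interpolate construction is needed in your argument and is missing.

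The second gap concerns the vector fields. First, the displayed equation is wrong: taking the divergence (with the $-\tfrac12\nabla\tr$ correction) of $\Ric+\tfrac12\LL_{V}g+\tfrac12 g=0$ gives $\triangle V_i+\Ric_{g_i}(V_i)=0$, not $-2\nabla R_{g_i}$; this is harmless in itself. More seriously, ``interior elliptic estimates give $V_i\to V_\infty$'' presupposes local uniform $C^0$-bounds on $V_i$ on the compact part of $M$ (elliptic estimates only trade high norms for low ones) and an identification of the limit; neither is automatic from the gauge on the end. The paper handles this by setting $a_i:=\max|V_i|$ over the compact region, normalizing $V_i^*=a_i^{-1}V_i$, and ruling out $a_i\to\infty$ because the limit of $V_i^*$ would be a Killing field of the limit metric vanishing outside a compact set, hence identically zero --- a contradiction. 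You need this (or an equivalent a priori bound, e.g.\ by integrating $\nabla_{V}|V|^2=-2\Ric(V,V)-|V|^2$ along trajectories) before elliptic theory and Arzel\`a--Ascoli can deliver the convergence $V_i\to V_\infty$; and at the very end one still needs the paper's final step of pulling back by a fixed diffeomorphism $\psi$, equal to the identity outside a compact set, to turn the limit representative produced by the construction into the \emph{prescribed} representative $(g_\infty,V_\infty,\gamma_\infty)$.
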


\begin{Remark}
Again, the regularity properties in this proposition are not optimal. 
\end{Remark}

Combined with Lemma~\ref{Lem_iota}, we even obtain uniform convergence with respect to a decaying weight at infinity:
\begin{Corollary} \label{Cor_conv_weighted}
Let $\mathbf{r} : M \to \IR_+$ be a smooth function such that $\mathbf{r} \circ \iota = r$ on $(r_0,\infty) \times N$ for some $r_0 \geq 1$.
Then in the setting of Proposition~\ref{Prop_converging_representatives} we even have for any $\eps > 0$ and $m = 0, \ldots, k^*-5$
\begin{equation} \label{eq_conv_decaying_weight}
 \sup_M \mathbf{r}^{2-\eps} |\nabla^{g_\infty,m} (g_i - g_\infty - T(\gamma_i - \gamma_\infty))|_{g_\infty} \xrightarrow[i\to\infty]{} 0. 
\end{equation}
\end{Corollary}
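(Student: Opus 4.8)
The plan is to split $M$ into a fixed compact core and a conical end, and to treat the two pieces by different means: on the end by $i$-uniform decay estimates coming from Lemma~\ref{Lem_iota}, and on the core by the local $C^{k^*-5}$-convergence furnished by Proposition~\ref{Prop_converging_representatives}. First I would record the relevant uniformities. The representatives $(g_i,V_i,\gamma_i)$ produced by Proposition~\ref{Prop_converging_representatives} satisfy $\iota^*V_i=-\tfrac12 r\partial_r$ on $(r_0,\infty)\times N$ for a common $r_0$, which we may take $\geq 3$ and large enough that also $\mathbf{r}\circ\iota=r$ on $(r_0,\infty)\times N$; since $V_i\to V_\infty$ in $C^{k^*-5}_{\loc}$, the same identity holds for $V_\infty$, so for each of these representatives $\td\iota=\iota$ on $(r_0,\infty)\times N$ and the asymptotic estimates of Lemma~\ref{Lem_iota}\ref{Lem_iota_c}, applied via Lemma~\ref{Lem_MM_regular_rep}\ref{Lem_MM_regular_rep_b}, hold with $\iota$ in place of $\td\iota$. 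Because the metric $d$ on $\MM$ dominates $\Vert\gamma_i-\gamma_\infty\Vert$, we have $\gamma_i\to\gamma_\infty$ in $T\GenCONE^{k^*}(N)$, so the link data $(\beta_i,h_i)$ converge in $C^{k^*}(N)$ to $(\beta_\infty,h_\infty)$; hence the injectivity-radius coefficient $a$ and the curvature constants $A_0,\dots,A_{k^*-4}$ entering Lemma~\ref{Lem_iota}\ref{Lem_iota_c} may be chosen uniformly in $i\in\IN\cup\{\infty\}$. Combining this with \eqref{eq_ovg_asymp_1}, there exist $C_m<\infty$ and $R_1\geq r_0$, independent of $i$, with
\[
 |\nabla^{\gamma_i,m}(\iota^*g_i-\gamma_i)|_{\gamma_i}\leq C_m r^{-2-m},\qquad
 |\nabla^{\gamma_\infty,m}(\iota^*g_\infty-\gamma_\infty)|_{\gamma_\infty}\leq C_m r^{-2-m}
\]
on $(R_1,\infty)\times N$ for $m=0,\dots,k^*-4$.

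Next I would estimate on the end. By the choice of $\eta$ in Definition~\ref{Def_T}, on $\iota((R_1,\infty)\times N)$ we have $\iota^*T(\gamma_i-\gamma_\infty)=\gamma_i-\gamma_\infty$, whence
\[
 \iota^*\big(g_i-g_\infty-T(\gamma_i-\gamma_\infty)\big)=(\iota^*g_i-\gamma_i)-(\iota^*g_\infty-\gamma_\infty).
\]
Using the bounds just displayed, in all orders up to $m$, to pass from the $\gamma_\infty$-connection and $\gamma_\infty$-norm to $\nabla^{g_\infty}$ and $|\cdot|_{g_\infty}$ — which costs at most a bounded factor, since $\iota^*g_\infty-\gamma_\infty$ and its derivatives are suitably small — we obtain a constant $C<\infty$, independent of $i$, with
\[
 \big|\nabla^{g_\infty,m}\big(g_i-g_\infty-T(\gamma_i-\gamma_\infty)\big)\big|_{g_\infty}\leq C r^{-2-m}
 \qquad\text{on }\ \iota((R_1,\infty)\times N)
\]
for $m=0,\dots,k^*-5$. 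Since $\mathbf{r}=r$ on this set, multiplying by $\mathbf{r}^{2-\eps}$ gives the bound $C r^{-m-\eps}\leq C R^{-\eps}$ on $\iota((R,\infty)\times N)$ for every $R\geq R_1$, uniformly in $i$.

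Finally I would combine the two regions. Given $\delta>0$, fix $R\geq R_1$ with $CR^{-\eps}<\delta/2$ and set $K:=M\setminus\iota((R,\infty)\times N)$, which is compact. On $K$ the function $\mathbf{r}$ is bounded, the map $\gamma'\mapsto T(\gamma')|_K$ is bounded in the $C^{k^*}$-norm (hence in the $C^{k^*-5}$-norm) by the remark following Definition~\ref{Def_T}, and $\Vert\gamma_i-\gamma_\infty\Vert\to0$; together with $g_i\to g_\infty$ in $C^{k^*-5}_{\loc}$ from \eqref{eq_conv_giVi} this yields $\sup_K\mathbf{r}^{2-\eps}|\nabla^{g_\infty,m}(g_i-g_\infty-T(\gamma_i-\gamma_\infty))|_{g_\infty}\to 0$, hence $<\delta/2$ for $i$ large. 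Adding the estimates on $K$ and on $\iota((R,\infty)\times N)$ gives $\sup_M\mathbf{r}^{2-\eps}|\nabla^{g_\infty,m}(g_i-g_\infty-T(\gamma_i-\gamma_\infty))|_{g_\infty}<\delta$ for $i$ large, which is \eqref{eq_conv_decaying_weight}.

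The step I expect to be the main obstacle is the $i$-uniformity of the constants on the end: one must verify that $C^{k^*}$-convergence of the link data $(\beta_i,h_i)$ forces the geometric quantities $a,A_0,\dots,A_{k^*-4}$ of Lemma~\ref{Lem_iota}\ref{Lem_iota_c} to remain uniformly bounded, and that converting between the $\gamma_i$-, $\gamma_\infty$- and $g_\infty$-geometries does not degrade the $O(r^{-2-m})$ rate. Both are careful but routine book-keeping arguments resting once more on the estimates \eqref{eq_ovg_asymp_1}.
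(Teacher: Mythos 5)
Your argument is correct and is exactly the route the paper intends: the corollary is stated as a direct consequence of Proposition~\ref{Prop_converging_representatives} combined with Lemma~\ref{Lem_iota}, i.e.\ uniform $O(r^{-2-m})$ decay on the conical end (with constants controlled by the convergent cone data $\gamma_i\to\gamma_\infty$, via Lemma~\ref{Lem_MM_regular_rep}\ref{Lem_MM_regular_rep_b}) plus $C^{k^*-5}_{\loc}$-convergence on a compact core. Your write-up simply makes this two-region splitting and the uniformity book-keeping explicit, so there is nothing to add.
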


\begin{Remark}
Expressed in terms of the norms introduced in Subsection~\ref{subsec_list_Holder} and \eqref{eq_f_asymptotics_iota}, the convergence \eqref{eq_conv_decaying_weight} implies that in the case in which $p_\infty \in \MMgrad^{k^*} (M,N,\iota)$ we have
\begin{equation} \label{eq_Rmk_conv_giginfty}
 \big\Vert g_i - g_\infty - T(\gamma_i - \gamma_\infty) \big\Vert_{C^{k^*-5}_{-2+\eps}} \xrightarrow[i\to\infty]{} 0. 
\end{equation}
\end{Remark}

\begin{Remark}
It follows from \eqref{eq_ovg_asymp_2} in Lemma~\ref{Lem_iota} that we can even take $\eps = 0$ or even $\eps > -2$ in \eqref{eq_conv_decaying_weight} as long as $m \leq k^*-6$.
Moreover, by computing more precise asymptotic expansions of $\iota^* g$, we may choose $\eps$ arbitrarily small, as long as $k^*- m$ is large enough.
However, Corollary~\ref{Cor_conv_weighted} and \eqref{eq_Rmk_conv_giginfty} will be enough for our purposes.
\end{Remark}

The proof of Proposition~\ref{Prop_converging_representatives} is based on the following technical lemma, which we will also invoke in Section~\ref{sec_properness} to establish properness of the map $\Pi|_{\MMgeqgrad}$.

\begin{Lemma} \label{Lem_good_reps_technical}
Consider an ensemble $(M,N,\iota)$, an integer $k^* \geq 7$ and a sequence $p_i = [(g_i, V_i, \gamma_i)] \in \MM := \MM^{k^*}(M,N,\iota)$ such that $(g_i, V_i,\gamma_i)$ are $C^{k^*-2}$-regular representatives with the property that $\iota^* V_i = - \frac12 r \partial_r$ on $(2,\infty) \times N$; these exist due to Lemma~\ref{Lem_MM_regular_rep}\ref{Lem_MM_regular_rep_a}.
Suppose that:
\begin{enumerate}[label=(\roman*)]
\item \label{Lem_good_reps_technical_i} $\gamma_i \to \gamma_\infty \in \GenCONE^{k^*}(N)$.
\item \label{Lem_good_reps_technical_ii} For some $y_0 \in M$ we have pointed Cheeger-Gromov convergence
\begin{equation} \label{eq_CG_MgVy}
 (M, g_i, V_i, y_0) \xrightarrow[i\to\infty]{\quad \textrm{\rm CG} \quad} (M_\infty, g_\infty, V_\infty, y_\infty), 
\end{equation}
where $M_\infty$ is an orbifold with isolated singularities.
By this we mean that there are open subsets $U_1 \subset U_2 \subset \ldots \subset M_\infty$ with $\bigcup_{i=1}^\infty U_i = M_\infty$ and $C^1$-diffeomorphisms $\phi_i : U_i \to \phi_i(U_i) \subset M$ with $\phi_i(y_\infty) = y_0$ such that
\begin{equation} \label{eq_phi_gi_to_g_infty}
 \phi_i^* g_i \xrightarrow[i\to\infty]{\quad C^{\infty}_{\loc} \quad} g_\infty, \qquad \phi_i^* V_i \xrightarrow[i\to\infty]{\quad C^{\infty}_{\loc} \quad} V_\infty. 
\end{equation}
\item \label{Lem_good_reps_technical_iii} For any $r > 0$ the diameters $\diam(M \setminus \iota((r,\infty) \times N), g_i)$ are uniformly bounded.
\end{enumerate}
Then, after passing to a subsequence, we can find representatives $p_i = [(g'_i, V'_i, \gamma_i)]$ for $i \leq \infty$ such that the assertions of Proposition~\ref{Prop_converging_representatives} hold for $(g_i, V_i)$ replaced with $(g'_i, V'_i)$.
Moreover, we have $p_i \to p_\infty$.
\end{Lemma}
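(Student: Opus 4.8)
The plan is to treat $M$ in two regimes — a fixed neighborhood of infinity, where the geometry of the $g_i$ is pinned down by their asymptotic cones $\gamma_i$, and a compact core, governed by the Cheeger--Gromov convergence \eqref{eq_CG_MgVy} — and then to reconcile them. In the first regime: since $\iota^*V_i = -\tfrac12 r\partial_r$ on $(2,\infty)\times N$ and $\gamma_i\to\gamma_\infty$ in $\GenCONE^{k^*}(N)$, the relevant geometric quantities of the $\gamma_i$ (a lower bound for $\inj_{\gamma_i}$ and bounds on $\nabla^m\Rm_{\gamma_i}$) are uniformly controlled, so Lemma~\ref{Lem_iota}, via Lemma~\ref{Lem_MM_regular_rep}\ref{Lem_MM_regular_rep_b}, yields a uniform $R_0>2$ and bounds $|\nabla^{\gamma_i,m}(\iota^* g_i-\gamma_i)|\le C_m r^{-2-m}$ for $m=0,\dots,k^*-4$ on $(R_0,\infty)\times N$, with $C_m$ independent of $i$; in particular $|\Rm_{g_i}|\le C_0 r^{-2}$ there. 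By Arzelà--Ascoli and a diagonal argument I would pass to a subsequence along which $\iota^* g_i\to\bar g_\infty$ in $C^{k^*-5}_{\loc}((R_0,\infty)\times N)$, where $(\bar g_\infty,-\tfrac12 r\partial_r)$ is an expanding soliton end asymptotic to $\gamma_\infty$ and obeys the same bounds.

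Next I would reconcile this with the Cheeger--Gromov picture and identify the limit space. By \eqref{eq_CG_MgVy} the sets $\phi_i(U_i)$ exhaust $M$, so by \ref{Lem_good_reps_technical_iii} — after enlarging $R_0$ so that $y_0$ lies in the core — the collar $C:=\iota((R_0,R_0+2)\times N)$ lies in a fixed $g_i$-ball about $y_0$ and hence in $\phi_i(U_i)$ for all large $i$. The maps $F_i:=\phi_i^{-1}\circ\iota$ on $(R_0,R_0+2)\times N$ carry the (converging, non-degenerate) metrics $\phi_i^* g_i$ back to $\iota^* g_i$ and satisfy $F_i^*(\phi_i^* V_i)=-\tfrac12 r\partial_r$; being isometries between converging metrics they are uniformly bounded in a suitable $C^l_{\loc}$-topology, so after a further subsequence $F_i\to\Psi$, an embedding with $\Psi^*g_\infty=\bar g_\infty$ and $\Psi^*V_\infty=-\tfrac12 r\partial_r$. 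Using that on $M$ the $-V_i$-flow lines through $\iota(\{R_0+1\}\times N)$ sweep out $\iota([R_0+1,\infty)\times N)$ and escape every compact set, together with $C^\infty_{\loc}$-convergence of $\phi_i^* V_i$ over compact time intervals, the $-V_\infty$-flow is complete on the component $M_\infty^{\mathrm{out}}$ of $M_\infty\setminus\Psi(\{R_0+1\}\times N)$ not containing $y_\infty$ and foliates it; since $(\bar g_\infty,-\tfrac12 r\partial_r)$ and $(g_\infty,V_\infty)$ both solve the soliton equation and agree on $C$ under $\Psi$, the intertwining of the two flows together with the soliton equation propagates this to an isometry $\Psi:([R_0+1,\infty)\times N,\bar g_\infty)\xrightarrow{\sim}(M_\infty^{\mathrm{out}},g_\infty)$. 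Combined with the fact that, for large $i$, $\phi_i$ restricts to a diffeomorphism from $M_\infty\setminus M_\infty^{\mathrm{out}}$ onto $M\setminus\iota((R_0+1,\infty)\times N)$, one obtains a diffeomorphism $\Theta:M_\infty\to M$ that equals $\iota\circ\Psi^{-1}$ on $M_\infty^{\mathrm{out}}$. Then $(g'_\infty,V'_\infty):=((\Theta^{-1})^*g_\infty,(\Theta^{-1})^*V_\infty)$ is a $C^{k^*-5}$-regular representative of a class $p_\infty\in\MM^{k^*}(M,N,\iota)$ with $\iota^*V'_\infty=-\tfrac12 r\partial_r$ near infinity.

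For $i<\infty$ I would then produce diffeomorphisms $\psi_i:M\to M$ equal to the identity on $\iota((r_*,\infty)\times N)$ for a fixed $r_*>2$, with $g'_i:=\psi_i^* g_i\to g'_\infty$ and $V'_i:=\psi_i^* V_i\to V'_\infty$ in $C^{k^*-5}_{\loc}(M)$. The map $\phi_i\circ\Theta^{-1}$ is defined on $\Theta(U_i)$, which for large $i$ exhausts $M$, and equals $\phi_i\circ\Psi\circ\iota^{-1}\to\mathrm{id}$ on the part of $\iota((R_0,\infty)\times N)$ where it is defined; interpolating — across a fixed collar $\iota((R_0+1,R_0+3)\times N)$, using that both maps nearly respect the $V$-gauge there — between $\phi_i\circ\Theta^{-1}$ on the inside and the identity on the far end yields $\psi_i^{-1}$. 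Since $\psi_i$ is the identity outside a compact set, $[(g'_i,V'_i,\gamma_i)]=p_i$; on the core $g'_i\to g'_\infty$ and $V'_i\to V'_\infty$ follow by transporting \eqref{eq_phi_gi_to_g_infty} through $\Theta$, and on $\iota((r_*,\infty)\times N)$, where $\psi_i$ is the identity, they follow from the first regime (there $g'_\infty=\iota_*\bar g_\infty$). Together with $\iota^*V'_i=-\tfrac12 r\partial_r$ on $(r_*,\infty)\times N$, this gives the assertions of Proposition~\ref{Prop_converging_representatives}. Finally, $p_i\to p_\infty$ in $(\MM,d)$ is read off from Definition~\ref{Def_MM_metric}: the term $\Vert\gamma_i-\gamma_\infty\Vert\to0$ by \ref{Lem_good_reps_technical_i}; the term $\big|\max|\Rm_{g'_i}|-\max|\Rm_{g'_\infty}|\big|\to0$ since the uniform decay $|\Rm_{g'_i}|\le C_0 r^{-2}$ on the end confines the relevant maxima to a fixed compact set on which $|\Rm_{g'_i}|\to|\Rm_{g'_\infty}|$ uniformly; and each $d'_j\to0$ because the gauge maps $\td\iota'_i$ from Lemma~\ref{Lem_MM_regular_rep}\ref{Lem_MM_regular_rep_c} agree with $\iota$ on $\iota((r_*,\infty)\times N)$ and, by continuous dependence of the $V'_i$-flow on $V'_i$, converge to $\td\iota'_\infty$ uniformly on each $[j^{-1},j]\times N$, while $g'_i\to g'_\infty$ in $C^0_{\loc}$ forces $d_{g'_i}(\td\iota'_i(x),\td\iota'_i(y))\to d_{g'_\infty}(\td\iota'_\infty(x),\td\iota'_\infty(y))$.

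I expect the crux to be the two middle steps: reconciling the Cheeger--Gromov convergence on the core with the $\gamma_i$-controlled behaviour at infinity, namely establishing that $M_\infty$ is diffeomorphic to $M$ and, above all, arranging the correcting diffeomorphisms $\psi_i$ to be the identity on a genuine neighborhood of infinity — so that the gauge $\iota^*V'_i=-\tfrac12 r\partial_r$ survives and the $\psi_i$ are admissible for the equivalence relation defining $\MM$ — while still realizing the local convergence of the representatives. The routine regularity bookkeeping, including the harmless drops below $k^*$, is suppressed throughout.
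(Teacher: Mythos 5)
Your overall architecture is the same as the paper's (uniform end estimates via Lemma~\ref{Lem_MM_regular_rep}\ref{Lem_MM_regular_rep_b}, a limiting identification of the end, an interpolation producing diffeomorphisms $\psi_i$ that are the identity near infinity so the gauge $\iota^*V'_i=-\tfrac12 r\partial_r$ and the equivalence relation in $\MM$ are respected, and finally convergence of the maps $\td\iota'_i$ to get $p_i\to p_\infty$). The genuine gap is in your middle step: you only extract the limit $\Psi$ of $F_i=\phi_i^{-1}\circ\iota$ on a fixed collar and then claim that extending $\Psi$ by conjugating the flow of $-\tfrac12 r\partial_r$ with that of $-V_\infty$ yields an \emph{isometry} onto $M_\infty^{\mathrm{out}}$ because ``the intertwining of the two flows together with the soliton equation propagates'' the identity $\Psi^*g_\infty=\bar g_\infty$ from the collar. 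This is not an ODE/conjugacy argument: the soliton vector field is not Killing, so pulling back along the flow changes the metric by $\LL_{\frac12 r\partial_r}g=2\Ric_g+g$. What you would actually have to prove is that two metrics on the end satisfying the soliton equation with the \emph{same} vector field and agreeing on an annulus agree on the whole outer region; writing $k_s=\omega_{e^{s/2}}^*g$ this becomes uniqueness, forward in $s$, for the evolution $\partial_s k_s=2\Ric_{k_s}+k_s$, i.e.\ backward uniqueness/unique continuation for a Ricci-flow-type equation on a collar. That is a delicate statement (Carleman-type, not available off the shelf in this localized form), and your one-clause justification does not supply it. Moreover, as written you do use this isometry: you assert $g'_\infty=\iota_*\bar g_\infty$ on $\iota((r_*,\infty)\times N)$, i.e.\ $(\Theta^{-1})^*g_\infty=\iota_*\bar g_\infty$ there, which is exactly the contested claim. (Relatedly, the statement that $\phi_i$ maps $M_\infty\setminus M_\infty^{\mathrm{out}}$ \emph{onto} $M\setminus\iota((R_0+1,\infty)\times N)$ is false for any fixed $i$ and itself needs an interpolation.)

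The fix is easy and stays inside your framework, and it is what the paper does: your ``first regime'' bounds give uniform $C^{k^*-4}$ control of $\iota^*g_i$ on all of $(R_0,\infty)\times N$, and together with the convergence of $\phi_i^*g_i$ this gives uniform local $C^{k^*-3}$ bounds on $F_i$ (equivalently on $\phi_i$) over the \emph{entire} end, not just a collar; Arzel\`a--Ascoli then produces $\Psi$ on the whole end with $\Psi^*g_\infty=\bar g_\infty$ and $\Psi^*V_\infty=-\tfrac12 r\partial_r$ for free, after which your gluing/interpolation and the verification of $p_i\to p_\infty$ (where your treatment of the $\max|{\Rm}|$ and $\Vert\gamma_i-\gamma_\infty\Vert$ terms is actually more careful than the paper's) go through. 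One further slip to correct: you should set $\psi_i$, not $\psi_i^{-1}$, equal to the interpolation of $\phi_i\circ\Theta^{-1}$ with the identity; with your convention one gets $\psi_i^*g_i=(\Theta\circ\phi_i^{-1})^*g_i$ on the core, which does not converge via \eqref{eq_phi_gi_to_g_infty}, whereas with $\psi_i=\phi_i\circ\Theta^{-1}$ one has $\psi_i^*g_i=(\Theta^{-1})^*\phi_i^*g_i\to(\Theta^{-1})^*g_\infty$ as intended.
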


\begin{proof}
Recall from Lemma~\ref{Lem_MM_regular_rep} that we have local uniform $C^{k^*-4}$-bounds on $g_i$ restricted to $\iota((R_0, \infty) \times N)$ for some uniform $R_0 > 1$.

We first argue that the maps $\phi_i$ restricted the complement of a compact subset $K \subset M_\infty$ subsequentially converge to a map $\phi_\infty : M_\infty \setminus K \to M$.
To see this, note that by Assumption~\ref{Lem_good_reps_technical_iii} the set $M \setminus \iota ( (2,\infty) \times N)$ is contained in $g_i$-balls around $\phi_i(y_\infty) = y_0$ of uniform radius.
Therefore, after possibly increasing $R_0$ there is a compact subset $K \subset M_\infty$ such that for large $i$ we have for some $r_i \to \infty$
\[ \iota((2R_0,r_i) \times N) \subset \phi_i (M_\infty \setminus K) \subset \iota((R_0,\infty) \times N). \]
Moreover, for any $y' \in M_\infty$ the sequence $\phi_i(y')$ must be bounded.
Since we have local uniform $C^{k^*-4}$-bounds on both $g_i$ restricted to $\iota((R_0,\infty))$ and $\phi_i^* g_i$, we obtain local uniform $C^{k^*-3}$-bounds on the maps $\phi_i$.
So by Arzela-Ascoli there is a subsequence such that for some sequence $r_i \to \infty$
\begin{equation} \label{eq_beta_conv}
 \phi_i |_{M_\infty \setminus K} \xrightarrow[i\to\infty]{\quad C^{k^*-4}_{\loc} \quad} \phi_\infty, 
\end{equation}
where $\phi_\infty : M \setminus K \to \iota((R_0,\infty) \times N)$ is an embedding of regularity $C^{k^*-4}$ whose image contains.

Due to \eqref{eq_beta_conv} we may, for each large $i$, construct a $C^{k^*-4}$-regular interpolation $\phi'_i : M_\infty \to M$ between $\phi_i$ and $\phi_\infty$ such that for some slightly larger compact subset $K \subset K' \subset M_\infty$ we have $\phi'_i = \phi_\infty$ on $M_\infty \setminus K'$ and $\phi'_i = \phi_i$ on a neighborhood of $K$ and such that \eqref{eq_beta_conv} still holds with $\phi_i$ replaced by $\phi'_i$.
Then for large $i$, $\phi'_i$ is a $C^{k^*-4}$-regular diffeomorphism and we have for some uniform $R'_0 > 2 R_0$
\[ \iota((R'_0,\infty) \times N) \subset \phi'_i (M_\infty \setminus K'). \]
So after passing to a subsequence, the map $\psi_i := \phi'_i \circ (\phi'_1)^{-1} : M \to M$ is a diffeomorphism that equals the identity on $\iota((R'_0,\infty) \times N)$ for any $i$.
Set, for $i \leq \infty$,
\[ g'_i := \psi_i^* g_i, \qquad V'_i := \psi_i^* V_i. \]
Then $[(g'_i,V'_i,\gamma_i)] = [(g_i,V_i,\gamma_i)]$. and in a neighborhood of $\phi'_1(K)$ we have
\begin{align*}
 g'_i = \psi_i^* g_i 
= (\phi'_1)_* (\phi'_i)^* g_i
= (\phi'_1)_* (\phi_i)^* g_i 
&\xrightarrow[i\to\infty]{\quad C^{k^*-5}_{\loc} \quad }
(\phi'_1)_* g_\infty 
=g'_\infty  \\
V'_i = \psi_i^* V_i 
= (\phi'_1)_* (\phi'_i)^* V_i
= (\phi'_1)_* (\phi_i)^* V_i 
&\xrightarrow[i\to\infty]{\quad C^{k^*-5}_{\loc} \quad }
(\phi'_1)_* V_\infty 
=V'_\infty.
\end{align*}
On the other hand on $M \setminus \phi'_1(K)$
\begin{align*}
 g'_i = \psi_i^* g_i = (\phi'_1)_* (\phi'_i)^* g_i = (\phi'_1)_* (\phi'_i)^* (\phi_i)_* \phi_i^* g_i&\xrightarrow[i\to\infty]{\quad C^{k^*-5}_{\loc} \quad }  (\phi'_1)_* \phi_\infty^* (\phi_\infty)_* g_\infty  = (\phi'_1)_* g_\infty = g'_\infty , \\
  V'_i = \psi_i^* V_i = (\phi'_1)_* (\phi'_i)^* V_i = (\phi'_1)_* (\phi'_i)^* (\phi_i)_* \phi_i^* V_i &\xrightarrow[i\to\infty]{\quad C^{k^*-5}_{\loc} \quad }   (\phi'_1)_* \phi_\infty^* (\phi_\infty)_* V_\infty = (\phi'_1)_* V_\infty = V'_\infty.
\end{align*}

It remains to show that $p_i \to p_\infty$.
To see this, consider the maps $\td\iota'_i : \IR_+ \times N \to M$ from Lemma~\ref{Lem_MM_regular_rep} with respect to the representatives $(g'_i, V'_i, \gamma_i)$ for $i \leq \infty$ and note that $\td\iota'_i = \iota$ on $(r_1,\infty) \times N$.
Since $s \mapsto \td\iota'_i(e^{s/2},z)$ are trajectories of $-V'_i$, we obtain that $\td\iota'_i \to \td\iota'_\infty$ in $C^0_{\loc}$.
This proves that $p_i \to p_\infty$ according to Definition~\ref{Def_MM_metric}.
\end{proof}
\bigskip

\begin{proof}[Proof of Proposition~\ref{Prop_converging_representatives}.]
Use Lemma~\ref{Lem_MM_regular_rep}\ref{Lem_MM_regular_rep_a} to choose $C^{k^*-2}$-regular representatives $(g_i, V_i,\gamma_i)$ with the property that $\iota^* V_i = - \frac12 r \partial_r$ on $(2,\infty) \times N$ and consider the maps $\td\iota_i : \IR_+ \times N \to M$, $i \leq \infty$ with $\td\iota = \iota$ on $(2, \infty) \times N$, from Lemma~\ref{Lem_MM_regular_rep}\ref{Lem_MM_regular_rep_c}.

Next, observe that it is enough to show that the convergence \eqref{eq_conv_giVi} holds for \emph{some} subsequence of any \emph{given} subsequence of the original sequence (and for possibly different representatives $(g_i,V_i,\gamma_i)$ of $p_i$).
So let us now pass to an arbitrary subsequence of the original sequence.
We will verify the assumptions of Lemma~\ref{Lem_good_reps_technical}.
This will then imply that \eqref{eq_conv_giVi} holds for a subsequence, which will finish the proof.
Assumption~\ref{Lem_good_reps_technical_i} holds trivially.

By the convergence $p_i \to p_\infty$, the metrics $g_i$ have uniformly bounded curvature.
So due to Shi's estimates applied to the flows associated with each soliton metric, we obtain uniform bounds on the curvature derivatives of these metrics (possibly after choosing a different smooth structure, see Lemma~\ref{Lem_soliton_smooth}).
By Lemma~\ref{Lem_iota} we moreover have uniform lower injectivity radius bounds at some point $y_0 \in \iota ((R_0,\infty) \times N)$ for some uniform $R_0 > 1$.
So, after passing to a subsequence, we have pointed Cheeger-Gromov convergence
\begin{equation*} %
 (M,g_i, y_0) \xrightarrow[i \to \infty]{\quad\text{CG}\quad} (M_\infty, g^*_\infty, y_\infty), 
\end{equation*}
where $M_\infty$ is a smooth orbifold (see \cite{Fukaya_1986, Lu_2001} for a discussion of Cheeger-Gromov compactness for orbifolds; note that by Definition~\ref{Def_ensemble}\ref{Def_ensemble_3} all orbifolds have manifold covers).
Due to the convergence $p_i \to p_\infty$ and Lemma~\ref{Lem_Im_iota_dense}, we moreover have pointed Gromov-Hausdorff convergence of
\begin{equation} \label{eq_GH_conv_Wi}
 (W_i := \td\iota_i ((r_i,\infty) \times N)), g_i, y_0) \xrightarrow[i \to \infty]{\quad\text{GH}\quad} (M, g_\infty, y_0) 
\end{equation}
for some $r_i \to 0$.
This implies that there is an isometric embedding of the pointed metric space $(M,d_{g_\infty},y_0)$ into $(M_\infty, d_{g^*_\infty}, y_\infty)$.
Since both spaces are complete and connected, it follows that they are isometric.
Thus we may even take $W_i = M$ in \eqref{eq_GH_conv_Wi}, which implies Assumption~\ref{Lem_good_reps_technical_iii} of Lemma~\ref{Lem_good_reps_technical}.

To verify Assumption~\ref{Lem_good_reps_technical_ii} of Lemma~\ref{Lem_good_reps_technical}, choose open subsets $U_1 \subset U_2 \subset \ldots \subset M_\infty$ with $\bigcup_{i=1}^\infty U_i = M_\infty$ and $C^1$-diffeomorphisms $\phi_i : U_i \to \phi_i(U_i) \subset M$ such that $\phi_i (y_\infty) = y_0$ and $\phi_i^* g_i \to g^*_\infty$ in $C^\infty_{\loc}$.
It remains to show that after passing to a subsequence we have local smooth convergence of the vector fields $\phi_i^* V_i$.
To do this, observe that by Lemma~\ref{Lem_iota} we have local uniform bounds on $|V_i|$ on $\iota ((R_0,\infty) \times N)$ for some uniform $R_0 > 0$.
We claim that we even have uniform bounds on $a_i := \max_{M \setminus \iota ((R_0,\infty) \times N)} |V_i|$.
Set $V^*_i := a_i^{-1} V_i$ and observe that by the soliton equation
\[ \triangle V_i + \Ric(V_i) 
= \DIV \LL_{V_i} g_i - \frac12 \nabla \tr \LL_{V_i} g_i
= 0, \]
so
\begin{equation*} %
\triangle V_i^* + \Ric(V^*_i) = 0.
\end{equation*}
Since $|V^*_i| \leq 1$  on $M \setminus \iota ((R_0,\infty) \times N)$, this implies local uniform higher derivative bounds on $V^*_i$ by elliptic regularity, whenever $a_i$ is bounded away from $0$ (which is the case the we are interested in).
Now if for a subsequence we had $a_i \to \infty$, then a subsequence of the vector fields $\phi_i^* V^*_i$ would converge to a vector field $V^*_\infty$ on $M_\infty$.
Since the $V_i$ were locally uniformly bounded on $\iota ((R_0,\infty) \times N)$, we would get $V^*_\infty = 0$ outside a compact set.
However, passing the equation $a_i^{-1} \Ric + \frac12 \LL_{V^*_i} g_i + \frac12 a_i^{-1} g_i = 0$ to the limit implies that $V^*_\infty$ is a Killing field of $g^*_\infty$.
Since Killing fields restricted to any geodesic are Jacobi fields, we therefore find that $V^*_\infty \equiv 0$, a contradiction.
So $a_i$ is uniformly bounded and by the same arguments we have local smooth convergence $\phi^*_i V_i \to V_\infty$ for some subsequence.
This concludes the proof of Assumption~\ref{Lem_good_reps_technical_ii} of Lemma~\ref{Lem_good_reps_technical}.

Consider now the representatives $(g'_i, V'_i,\gamma_i)$, $i \leq \infty$ from Lemma~\ref{Lem_good_reps_technical}.
If $g'_\infty = g_\infty$ and $V'_\infty = V_\infty$, then we are done.
Otherwise, choose a $C^1$-diffeomorphism $\psi : M \to M$ that equals the identity on the complement of a compact subset such that $\psi^* g'_\infty = g_\infty$ and $\psi^* V'_\infty = V_\infty$.
Since $g_\infty, g'_\infty$ are both of regularity $C^{k^*-5}$, the map $\psi$ is of regularity $C^{k^*-4}$ and we have
\[ \psi^* g_i \xrightarrow[i\to\infty]{\quad C^{k^*-5}_{\loc} \quad} g_\infty, \qquad  \psi^* V_i \xrightarrow[i\to\infty]{\quad C^{k^*-5}_{\loc} \quad} V_\infty. \]
So \eqref{eq_conv_giVi} holds after replacing $g_i, V_i$ with $\psi^* g_i, \psi^* V_i$.
\end{proof}
\bigskip

\section{Properness of $\Pi$ restricted to $\MMgeqgrad$} \label{sec_properness}
\subsection{Statement of the main result}
The main result of this section is the following.

\begin{Proposition} \label{Prop_properness}
Let $(M,N,\iota)$ be an ensemble and $k^* \geq 7$.
Consider the space $\MM := \MM^{k^*}(M, \lb N, \lb \iota)$ and the subset $\MMgeqgrad := \MMgeqgrad^{k^*}(M,N,\iota) \subset \MM$.
Then the following map is proper
\[ \Pi\big|_{\MMgeqgrad} : \MMgeqgrad \lto \CONEgeq^{k^*}(N). \]
\end{Proposition}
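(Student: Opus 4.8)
The goal is to show that if $\gamma_i \to \gamma_\infty$ in $\CONEgeq^{k^*}(N)$ and $p_i = [(g_i, V_i = \nabla^{g_i} f_i, \gamma_i)] \in \MMgeqgrad$, then after passing to a subsequence $p_i$ converges to some $p_\infty \in \MMgeqgrad$ with $\Pi(p_\infty) = \gamma_\infty$. By Lemma~\ref{Lem_MM_regular_rep}\ref{Lem_MM_regular_rep_a} we may pick $C^{k^*-2}$-regular representatives with $\iota^* V_i = -\frac12 r \partial_r$ on $(2,\infty)\times N$, and by Lemma~\ref{Lem_MM_regular_rep}\ref{Lem_MM_regular_rep_b} these satisfy the asymptotic estimates of Lemma~\ref{Lem_iota}\ref{Lem_iota_c} with constants controlled only by $\gamma_i$, hence uniformly (since the relevant geometric quantities $a, A_0, \ldots$ of $\gamma_i$ converge). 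The strategy is to verify the hypotheses of Lemma~\ref{Lem_good_reps_technical}: uniform curvature bounds giving Cheeger--Gromov subconvergence, and uniform diameter bounds on the compact "core" $M \setminus \iota((r,\infty)\times N)$; once these are in place, Lemma~\ref{Lem_good_reps_technical} produces converging representatives with $p_i \to p_\infty$, and it remains to check that the limit is again a \emph{gradient} soliton asymptotic to $\gamma_\infty$ with non-negative scalar curvature.

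First I would establish a uniform bound on the whole curvature tensor $|\Rm_{g_i}|$. On the region $\iota((R_0,\infty)\times N)$ the estimate \eqref{eq_ovg_asymp_1} already gives $|\Rm_{g_i}| \leq C r^{-2}$ uniformly, so the issue is a uniform bound on the compact core. This should proceed in the three steps outlined in the overview (Subsection~\ref{subsec_Overview}): (i) an entropy comparison relating Perelman's entropy of the expanding soliton to the entropy (equivalently, the $\lambda$-functional) of the asymptotic cone link $(N,h_i)$ --- this is the one place the non-negative scalar curvature hypothesis is essential, since it enters the monotonicity/comparison of the functional along the associated Ricci flow $(g_t)_{t>0}$ (cf.\ Lemma~\ref{Lem_nu_cone_exp} referenced in the introduction); (ii) a scalar curvature bound on the core depending only on the asymptotic geometry, which follows from the argument already used in Lemma~\ref{Lem_Rgeq0_CONE_implies_MM} together with the lower entropy bound, preventing the flow from becoming singular; (iii) upgrading to a full curvature bound by a point-picking/contradiction argument: if $|\Rm_{g_i}|$ blew up at points $x_i$ in the core, rescaling would produce (using the gradient soliton structure and the no-local-collapsing that comes from the entropy bound) either a non-flat Ricci-flat ALE manifold (a gravitational instanton) or a non-flat Ricci-flat gradient soliton bubble; the former is excluded by the topological hypothesis $H_2(\hat M;\IZ)=0$ together with $H_1(\hat M;\IZ)$ torsion-free (which obstructs the existence of such instantons sitting inside $M$), and the scalar curvature bound from (ii) rules out the degenerate soliton case. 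With the uniform curvature bound and the uniform injectivity-radius lower bound at a point $y_0 \in \iota((R_0,\infty)\times N)$ coming from Lemma~\ref{Lem_iota}, Shi's estimates give local derivative bounds and Cheeger--Gromov subconvergence of $(M,g_i,V_i,y_0)$ to an orbifold limit, verifying hypothesis~\ref{Lem_good_reps_technical_ii}.

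Next, the uniform diameter bound on the core, hypothesis~\ref{Lem_good_reps_technical_iii}: for $r>0$ fixed, $M\setminus\iota((r,\infty)\times N)$ has volume bounded by $r^4|(0,1)\times N|_{\gamma_i}$ (Lemma~\ref{Lem_Im_iota_dense}, using $R_{g_i}\geq 0$), which is uniformly bounded; combined with the uniform curvature bound and non-collapsing from the entropy bound, and the fact that the boundary $\partial(M\setminus\iota((r,\infty)\times N)) = \iota(\{r\}\times N)$ has uniformly bounded geometry by \eqref{eq_ovg_asymp_1}, a standard argument (volume + curvature + non-collapsing + bounded boundary geometry $\Rightarrow$ bounded diameter) gives the claim. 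Then Lemma~\ref{Lem_good_reps_technical} applies and yields representatives $(g'_i,V'_i,\gamma_i)$ with $g'_i \to g'_\infty$, $V'_i \to V'_\infty$ in $C^{k^*-5}_{\loc}$ and $p_i \to p_\infty = [(g'_\infty,V'_\infty,\gamma_\infty)] \in \MM^{k^*}(M,N,\iota)$. Finally I would check $p_\infty \in \MMgeqgrad$: non-negative scalar curvature passes to the $C^2_{\loc}$ limit; the gradient property passes to the limit because $V'_i = \nabla^{g'_i} f_i$ and, after normalizing $f_i$ by $R+|\nabla f_i|^2 + f_i \equiv 0$, the bound \eqref{eq_f_asymptotics_iota} from Lemma~\ref{Lem_MM_regular_rep}\ref{Lem_MM_regular_rep_b} gives uniform local control of $f_i$, so $f_i \to f_\infty$ locally and $V'_\infty = \nabla^{g'_\infty} f_\infty$; and $\Pi(p_\infty) = \gamma_\infty \in \CONEgeq^{k^*}(N)$ by construction, so indeed $p_\infty \in \MMgeqgrad$ (using the last inclusion of Lemma~\ref{Lem_Rgeq0_CONE_implies_MM} if needed).

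\textbf{Main obstacle.} The crux is step (i)--(iii) above, i.e.\ the uniform curvature bound on the core. Steps (i) and (ii)---the entropy comparison and the resulting scalar curvature bound---require carefully tracking the behavior of Perelman-type functionals along the non-compact associated Ricci flow $(g_t)_{t\in(0,1]}$, where a priori one lacks curvature derivative bounds near $t=0$ (as the Remarks after Lemma~\ref{Lem_iota} emphasize); this is exactly where the non-negative scalar curvature hypothesis cannot be dispensed with. Step (iii)---ruling out a gravitational instanton bubble---is the most topologically delicate point and is precisely where Definition~\ref{Def_ensemble}\ref{Def_ensemble_3} ($H_2(\hat M;\IZ)=0$, $H_1(\hat M;\IZ)$ torsion-free) is invoked: one must argue that any non-flat hyperkähler (or Ricci-flat ALE) bubble forming in a Cheeger--Gromov limit would inject a non-trivial class into $H_2(\hat M;\IZ)$, contradicting the hypothesis.
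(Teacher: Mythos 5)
Your proposal is correct and follows essentially the same route as the paper: the paper also reduces properness to the hypotheses of Lemma~\ref{Lem_good_reps_technical}, verifying them via Proposition~\ref{Prop_curv_diam_bound}, which packages exactly your three steps (entropy/non-collapsing via Lemmas~\ref{Lem_some_lower_nu_bound} and \ref{Lem_nu_cone_exp}, a scalar curvature bound, and exclusion of a Ricci-flat ALE bubble via Lemma~\ref{Lem_no_instanton}), followed by the volume-plus-ball-packing diameter bound and passage of the gradient and $R \geq 0$ conditions to the $C^2_{\loc}$ limit. The only small discrepancy is that the paper obtains the uniform scalar curvature bound on the core directly from the soliton identity $R + |\nabla f|^2 = -f$ and the maximum principle applied to $-f$ (using $\triangle(-f) = R + 2 > 0$ with boundary control from the conical asymptotics), rather than from the barrier argument of Lemma~\ref{Lem_Rgeq0_CONE_implies_MM} or entropy considerations.
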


We will prove this proposition using Lemma~\ref{Lem_good_reps_technical}, which will reduce the problem to proving Cheeger-Gromov convergence and a uniform diameter bound.

As a byproduct we also obtain:

\begin{Proposition} \label{Prop_eucl_case}
Let $(M,N,\iota)$ be an ensemble and $k^* \geq 7$.
Suppose that $N \approx S^3/\Gamma$ and that $M$ has an orbifold cover such that $\iota$ lifts to a map from $(1,\infty) \times S^3$.
Consider the projection
\[ \Pi : \MM^{k^*}(M,N,\iota) \lto \GenCONE^{k^*}(N). \]
Let $\gamma_{\eucl} \in \CONE^{k^*}(N)$ be a locally Euclidean cone metric (so its link metric is isometric to a quotient of the standard round sphere).
Then 
\begin{equation} \label{eq_numberPiinvleq1}
 \# \big( \Pi^{-1}(\gamma_{\eucl}) \big) \leq 1 
\end{equation}
 and the following is true:
\begin{enumerate}[label=(\alph*)]
\item 
If $(M,N,\iota)=(\IR^4/\Gamma,S^3/\Gamma, \iota_\Gamma)$, where $\iota_\Gamma : [1,\infty) \times S^3/\Gamma \to \IR^4/\Gamma$ is the standard radial embedding and if $\gamma_{\eucl}$ is the standard Euclidean cone metric on $\IR^4/\Gamma$, then $\Pi^{-1} (\gamma_{\eucl}) \subset \MM^{k^*}(M,N,\iota)$ consists of a single element, which is represented by $(g_{\eucl}, V_{\eucl},\gamma_{\eucl})$, where $(\IR^4/\Gamma, g_{\eucl}, V_{\eucl} = -\tfrac12 r \partial_r)$ (see Subsection~\ref{subsec_basic_identities}) is the quotient of the standard Euclidean soliton.
\item If we have equality in \eqref{eq_numberPiinvleq1}, then $(M,N,\iota)$ is isomorphic to the standard ensemble $(\IR^4/\Gamma, \lb S^3/\Gamma, \lb \iota_\Gamma)$ in the sense that
 there are diffeomorphisms $\phi : M \to \IR^4 / \Gamma$ and $\psi : N \to S^3/\Gamma$ such that
\[ \iota_\Gamma \circ (\id_{(1,\infty)} ,\psi) = \phi \circ \iota, \]
\end{enumerate}
\end{Proposition}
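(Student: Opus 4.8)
The plan is to reduce everything to the rigidity part of Theorem~\ref{Thm_soliton_PSC}. Let $p = [(g,V,\gamma_{\eucl})] \in \Pi^{-1}(\gamma_{\eucl})$ with $\gamma_{\eucl}$ locally Euclidean. First, since the link metric $h$ of $\gamma_{\eucl}$ is a quotient of the round $S^3$ of curvature $1$, we have $R_h = 6 = (n-2)(n-1)$, so $\gamma_{\eucl} \in \CONEgeq^{k^*}(N)$. If $V = \nabla^g f$ is gradient, Lemma~\ref{Lem_Rgeq0_CONE_implies_MM} gives $R_g \geq 0$; but I want this even in the non-gradient case. Here I would use the first part of Theorem~\ref{Thm_soliton_PSC}: since $g$ is asymptotic to $\gamma_{\eucl}$ which has $R_{\gamma_{\eucl}} = 0$, Definition~\ref{Def_MM}\ref{Def_MM_3} shows that if $g$ were Einstein it would be Ricci-flat and in fact flat (being asymptotic to a flat cone), hence $(M,g)$ would be the flat cone itself — which already forces the conclusion — and otherwise $R_g > -\tfrac{n}{2}$. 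To upgrade to $R_g \geq 0$ in the non-gradient case I would either invoke Lemma~\ref{Lem_Rgeq0_CONE_implies_MM} after first establishing gradientness along a suitable continuity argument, or, more directly, observe that $\gamma_{\eucl}$ being the asymptotic model means $\int_M \max\{-R,0\}\,dg < \infty$ by the asymptotic estimates \eqref{eq_ovg_asymp_2} of Lemma~\ref{Lem_iota} (since $\Ric_{\gamma_{\eucl}} = 0$ and $R_g \circ \iota = O(r^{-4})$ with the right sign control coming from the cone having $R_{\gamma_{\eucl}}=0$), together with $\inf_M R > -\tfrac{n}{2}$ from the first part. Then the second part of Theorem~\ref{Thm_soliton_PSC} applies.

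Applying Theorem~\ref{Thm_soliton_PSC}, either $R_g > 0$ everywhere on $M$, or $(M,g)$ is isometric to a quotient $\IR^4/\Gamma'$ of Euclidean space by an isometric action fixing the origin. The first alternative must be excluded: if $R_g > 0$ everywhere, then by the asymptotic expansion of $R$ along $\iota$ we would have $R_g \circ \iota = O(r^{-4}) > 0$, which is consistent, so I cannot exclude it by asymptotics alone. Instead I would use the scalar curvature argument more carefully — in the sharp case, since $R_{\gamma_{\eucl}} \equiv 0$, the asymptotic lower bound $R_g \circ \iota \geq -a r^{-2}$ for all $a>0$ combined with a barrier argument (as in the proof of Lemma~\ref{Lem_Rgeq0_CONE_implies_MM}, with the model now being exactly flat) forces $R_g \circ \iota = o(r^{-2})$ and in fact the equality case of that maximum-principle argument is attained, giving $\Ric_g \equiv 0$. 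Then, as in the end of the proof of Theorem~\ref{Thm_soliton_PSC}, $\phi_2$ is a contraction with a fixed point $q$, and $(M,g)$ is isometric to the Euclidean tangent cone at $q$, i.e. to $\IR^4/\Gamma'$ for a finite $\Gamma' \subset SO(4)$ acting freely on $S^3$. Matching the asymptotic cone $\gamma_{\eucl}$ forces $\Gamma' = \Gamma$ up to conjugacy, and matching the ensemble structure via the uniqueness clause of Lemma~\ref{Lem_iota} (the map $\iota$ sending radial lines to $V$-trajectories is unique up to a link isometry) gives diffeomorphisms $\phi : M \to \IR^4/\Gamma$ and $\psi : N \to S^3/\Gamma$ with $\iota_\Gamma \circ (\id,\psi) = \phi \circ \iota$. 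This proves (b), and uniqueness of the element — i.e. \eqref{eq_numberPiinvleq1} — follows because any two such solitons are isometric to the same $\IR^4/\Gamma$ compatibly with $\iota$, hence equal in $\MM^{k^*}(M,N,\iota)$. Part (a) is then immediate: the standard Euclidean soliton $(\IR^4/\Gamma, g_{\eucl}, V_{\eucl} = -\tfrac12 r\partial_r)$ is one such element (it visibly satisfies Definition~\ref{Def_MM}\ref{Def_MM_1}--\ref{Def_MM_3} with $\gamma = \gamma_{\eucl}$), so it is the unique one.

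The main obstacle I anticipate is excluding the $R_g > 0$ alternative in Theorem~\ref{Thm_soliton_PSC} — i.e. ruling out a strictly-positive-scalar-curvature expanding soliton asymptotic to a flat cone. The clean way is the sharp barrier argument: on an expanding soliton asymptotic to $\gamma_{\eucl}$ with $R_{\gamma_{\eucl}} \equiv 0$, the drift equation $\triangle_f R = -2|\Ric|^2 - R$ (valid in the gradient case; in the non-gradient case one uses $\triangle_V R + 2|\Ric|^2 = -R$) combined with the asymptotic decay $R = O(r^{-4})$ and a sub-barrier of the form $-C(-f)^{-3}$, exactly as in Lemma~\ref{Lem_Rgeq0_CONE_implies_MM}, forces $R \geq 0$; and then if $R > 0$ strictly one runs the same computation with the improved decay to get a contradiction at the minimum of $R(-f)^3$ unless $\Ric \equiv 0$. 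A secondary subtlety is handling the non-gradient case throughout, which I would address by first showing that an expanding soliton asymptotic to $\gamma_{\eucl}$ must be gradient — this should follow from Theorem~\ref{Thm_preservation_gradient_intro} together with the fact that the standard Euclidean soliton is gradient and that $\MM^{k^*}(M,N,\iota)$ near $\gamma_{\eucl}$ is connected, or alternatively directly from Remark~\ref{Rmk_gradient_implies_cone} once $\Ric \equiv 0$ is known (a flat metric's radial field is manifestly gradient).
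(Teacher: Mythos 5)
Your overall strategy differs from the paper's: the paper passes to the cover with $N=S^3$, applies Lemma~\ref{Lem_nu_cone_exp} to get $\nu[g]\geq\nu[\gamma_{\eucl}]=0$, and then uses the rigidity of Perelman's $\mu$-functional to conclude that $(M,g)$ is isometric to Euclidean space outright; the identification of the ensemble then follows from $\LL_V g=-g$ and the asymptotics via $\td\iota$. Your route through the dichotomy of Theorem~\ref{Thm_soliton_PSC} has a genuine gap precisely at the step you flag as the main obstacle: excluding the alternative $R>0$. Your proposed ``contradiction at the minimum of $R(-f)^3$'' does not work. Since $R>0$ decays at infinity (potentially like $r^{-n}e^{-r^2/4}$), the quantity $R(-f)^3$ has infimum $0$ which is not attained, so there is no interior minimum at which to apply the drift-Laplacian inequality; and at a maximum the inequality points the wrong way. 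More structurally, the maximum principle with polynomial barriers cannot rule out a small positive $R$: for instance $e^{f}$ is itself a positive supersolution of $\triangle_f+1$ decaying at infinity, so positivity plus $\triangle_f R\leq -R$ yields no contradiction. Ruling out $R>0$ here genuinely requires a different mechanism --- either the entropy rigidity the paper uses, or a weighted integral identity $\int(\triangle_f R)e^{-f}\,dg=0$, which in turn would require first proving sharp Gaussian-type decay of $R$ against the weight $e^{-f}\sim e^{r^2/4}$, something your sketch does not supply.

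Two further steps are also not justified as written. First, your claim that $\int_M\max\{-R,0\}\,dg<\infty$ follows from $R=O(r^{-4})$ is false in dimension $4$: the end contributes $\int r^{-4}\cdot r^{3}\,dr$, which diverges logarithmically, so the hypothesis of the second part of Theorem~\ref{Thm_soliton_PSC} is not verified (and the alternative route via Lemma~\ref{Lem_Rgeq0_CONE_implies_MM} requires the soliton to be gradient, which is exactly what is not known here). Second, your treatment of the non-gradient case --- invoking Theorem~\ref{Thm_preservation_gradient_intro} together with an unproven connectedness of $\MM^{k^*}(M,N,\iota)$ near $\gamma_{\eucl}$ --- rests on machinery (the Banach manifold structure, $k^*\geq 30$) that is neither available at $k^*\geq 7$ nor applicable without the connectedness claim; by contrast, the paper's entropy argument never needs gradientness of $V$, since Lemma~\ref{Lem_nu_cone_exp} and the equality case of $\mu$ apply to the general expanding soliton and flatness is concluded before any discussion of the vector field beyond $\LL_V g=-g$.
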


\subsection{A non-collapsing property for expanding solitons} \label{subsec_soliton_noncollapsing}
In this subsection we establish non-collapsedness bounds for asymptotically conical expanding solitons.
These bounds only depend on certain geometric data of the asymptotic cones.
The following proposition is our main result.
It is kept more general than needed in this paper as it may be of independent interest.

\begin{Proposition}\label{Prop_NLC_bound}
Let $(M,g,V)$ be an $(n\geq 3)$-dimensional, expanding soliton on an orbifold with isolated singularities such that $g$ is of regularity $C^2$ and $V$ is of regularity $C^1$.
Let $(N,h)$ be a closed $(n-1)$-dimensional Riemannian manifold such that $h$ is of regularity $C^2$.
Suppose that $(M,g)$ is 
asymptotic to the conical metric $\gamma := dr^2 + r^2 h$ on $\IR_+ \times N$ in the following sense:
For any $\eps > 0$ there is a $D_\eps > 0$ and a diffeomorphism onto its image $\psi_\eps : (D_\eps,\infty) \times N \to M$ such that $M \setminus \Image \psi_\eps$ is compact and
\begin{equation} \label{eq_psi_eps_char}
 e^{-2\eps} \gamma \leq \psi_\eps^* g  \leq e^{2\eps} \gamma, \qquad  R_g \circ \psi_\eps \geq  R_{\gamma} - \frac{\eps}{r^2},
\end{equation}
where $r$ denotes the first coordinate of $(D_\eps, \infty) \times N$.
Suppose also $\gamma$ and $g$ have non-negative scalar curvature.

Then we have the following bounds involving Perelman's $\mu$ and $\nu$-functionals (see the discussion after this proposition in which these functionals are recalled and extended to the non-compact setting):
\begin{equation} \label{eq_nu_lower_prop}
 \nu[g] \geq \nu[\gamma] \geq \inf_{0 < \tau \leq 1}  \mu[h,\tau] - C(n).  
\end{equation}
If we have $|{\Rm}| \leq A$ on $(N,h)$ and $\inj(N,h) \geq A^{-1}$ for some $A < \infty$, then the right-hand side of \eqref{eq_nu_lower_prop} is bounded from below by $- C(n,A)$.

Moreover, there is a constant $c > 0$, which only depends on $n$ and a lower bound on $\nu[g]$ (for example the right-hand side in \eqref{eq_nu_lower_prop}) such that the following is true for any $r > 0$ and $y \in M$:
\[ R \leq r^{-2} \quad \text{on} \quad B(y,r) \qquad \Longrightarrow \qquad \vol_g (B(y,r)) \geq c r^n. \]
\end{Proposition}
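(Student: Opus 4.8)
The plan is to deduce the volume non-collapsing estimate from a logarithmic Sobolev inequality implied by the lower bound on $\nu[g]$, applied to a cutoff function at an adaptively chosen scale. Fix $A<\infty$ with $\nu[g]\geq -A$ (the first part of the proposition provides such an $A$ in terms of the link geometry, but for this statement it is enough to treat it as given). Writing Perelman's $\WW$-functional in its ``$w$-form'', i.e.\ substituting $(4\pi\tau)^{-n/2}e^{-f}=w^2$ into $\WW(g,f,\tau)\geq\mu[g,\tau]\geq\nu[g]$, one obtains: for every $\tau>0$ and every compactly supported Lipschitz function $w$ on $M$ with $\int_M w^2\,dg=1$,
\[ 4\tau\int_M|\nabla w|^2\,dg+\tau\int_M R\,w^2\,dg-\int_M w^2\log w^2\,dg\;\geq\;-A+\tfrac{n}{2}\log(4\pi\tau)+n. \]
Since the competitors are compactly supported, this is valid on the non-compact orbifold $M$ for the functionals $\mu[g,\cdot]$, $\nu[g]$ set up in the discussion following the proposition (by density it suffices to take $w$ smooth).

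Now fix $y\in M$ and $r>0$ with $R\leq r^{-2}$ on $B(y,r)$, and set $\alpha(\rho):=\vol_g(B(y,\rho))\,\rho^{-n}$ for $\rho\in(0,r]$. The function $\alpha$ is continuous and, since $M$ is a smooth orbifold with isolated singularities, $\alpha(\rho)$ converges as $\rho\to0^+$ to a positive constant (the volume of a Euclidean unit $n$-ball divided by the order of the local group at $y$); in particular $\inf_{(0,r]}\alpha>0$. I would then choose $\rho^*\in(0,r]$ with $\alpha(\rho^*)\leq 2\inf_{(0,r]}\alpha$, so that $\alpha(\rho)\geq\tfrac12\alpha(\rho^*)$ for all $\rho\in(0,\rho^*]$; taking $\rho=\rho^*/2$ gives the scale-invariant doubling bound $\vol_g(B(y,\rho^*))\leq 2^{n+1}\vol_g(B(y,\rho^*/2))$.

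Next I would apply the displayed inequality at $\tau=(\rho^*)^2$ with $w=\tilde\phi/\Vert\tilde\phi\Vert_{L^2(g)}$, where $\tilde\phi(x):=\big(1-d_g(x,y)/\rho^*\big)_+$ (mollified if one insists on smoothness). Then $\tilde\phi$ is supported in $B(y,\rho^*)$, is $\geq\tfrac12$ on $B(y,\rho^*/2)$, and $|\nabla\tilde\phi|\leq1/\rho^*$ a.e., so $\Vert\tilde\phi\Vert_{L^2}^2\geq\tfrac14\vol_g(B(y,\rho^*/2))$ and $\int_M|\nabla\tilde\phi|^2\,dg\leq(\rho^*)^{-2}\vol_g(B(y,\rho^*))$. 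Estimating the three terms: the gradient term is at most $4(\rho^*)^2(\rho^*)^{-2}\vol_g(B(y,\rho^*))/\Vert\tilde\phi\Vert_{L^2}^2\leq 16\,\vol_g(B(y,\rho^*))/\vol_g(B(y,\rho^*/2))\leq 16\cdot 2^{n+1}$; the curvature term is at most $(\rho^*)^2(\rho^*)^{-2}\int_M w^2\,dg=1$, using $R\leq(\rho^*)^{-2}$ on the support of $w$; and the entropy term satisfies $-\int_M w^2\log w^2\,dg\leq\log\vol_g(B(y,\rho^*))$, since $w^2$ is a probability density supported on a set of that volume and the uniform density maximizes entropy. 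Comparing with the right-hand side $-A+\tfrac{n}{2}\log\big(4\pi(\rho^*)^2\big)+n$ yields $\log\big(\vol_g(B(y,\rho^*))(\rho^*)^{-n}\big)\geq -C(n,A)$, i.e.\ $\alpha(\rho^*)\geq c_1(n,A)>0$. Since $\alpha(r)\geq\inf_{(0,r]}\alpha\geq\tfrac12\alpha(\rho^*)$, this gives $\vol_g(B(y,r))\geq\tfrac12 c_1(n,A)\,r^n$, proving the claim with $c=\tfrac12 c_1(n,A)$.

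The substantive point — and the only real obstacle — is the choice of the auxiliary scale $\rho^*$: a cutoff of $B(y,r)$ itself would leave the ratio $\vol_g(B(y,r))/\Vert\tilde\phi\Vert_{L^2}^2\sim\vol_g(B(y,r))/\vol_g(B(y,r/2))$ uncontrolled, and it is precisely by selecting $\rho^*$ to almost minimize $\alpha$ over $(0,r]$ that this ratio becomes bounded by a dimensional constant. The orbifold hypothesis enters only to guarantee $\inf_{(0,r]}\alpha>0$ (via the positive limit of $\alpha$ at $0$); note that although this infimum depends on $M$, only its positivity is used, whereas the estimate at scale $\rho^*$ produces the uniform lower bound $c_1(n,A)$, which then propagates back to scale $r$.
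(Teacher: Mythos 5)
Your proposal only addresses the final assertion of the proposition. The inequalities
\[
\nu[g] \;\geq\; \nu[\gamma] \;\geq\; \inf_{0<\tau\leq 1}\mu[h,\tau] - C(n),
\]
together with the lower bound $-C(n,A)$ under bounded link geometry, are the actual substance of Proposition~\ref{Prop_NLC_bound}, and you assume them outright (``treat it as given''). They are not routine: the paper proves $\nu[g]\geq\nu[\gamma]$ in Lemma~\ref{Lem_nu_cone_exp} by running the associated Ricci flow $(g_t=t\phi_t^*g)_{t>0}$, evolving a compactly supported competitor backwards by the conjugate heat equation, using monotonicity of $\WW$, and then transplanting the resulting density to the cone via the maps $\psi_\eps$ and a cutoff; this is also the only place where the hypothesis $R_g\geq 0$ (and $R_\gamma\geq 0$, via \eqref{eq_psi_eps_char}) enters, so any proof of the proposition must engage with it. The bound $\nu[\gamma]\geq\inf_{0<\tau\leq1}\mu[h,\tau]-C(n)$ is Lemma~\ref{Lem_some_lower_nu_bound}, proved by separation of variables on $\IR_+\times N$ and a reduction to a two-dimensional log-Sobolev problem, and it genuinely requires $\lambda[h]\geq n-2$ (which here follows from $R_\gamma\geq0$); without that hypothesis $\nu[\gamma]=-\infty$. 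The $-C(n,A)$ statement additionally needs Lemma~\ref{Lem_mu_bounded_geometry}. None of this is supplied or even sketched in your write-up, so as a proof of the stated proposition it has a substantial gap.

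For the part you do prove — that a lower bound $\nu[g]\geq -A$ together with $R\leq r^{-2}$ on $B(y,r)$ forces $\vol_g(B(y,r))\geq c(n,A)r^n$ — your argument is correct. The $w$-form of the log-Sobolev inequality is derived properly, the adaptive choice of the scale $\rho^*$ nearly minimizing $\alpha(\rho)=\vol_g(B(y,\rho))\rho^{-n}$ over $(0,r]$ is exactly what controls the ratio $\vol_g(B(y,\rho^*))/\vol_g(B(y,\rho^*/2))$ by a dimensional constant, the curvature term uses only $R\leq(\rho^*)^{-2}\leq$ nothing more than the hypothesis on $B(y,r)\supset B(y,\rho^*)$, the Jensen bound on $-\int w^2\log w^2$ is standard, and the orbifold structure with isolated singularities indeed gives $\lim_{\rho\to0}\alpha(\rho)=\omega_n/|\Gamma_y|>0$, so $\inf_{(0,r]}\alpha>0$ and the estimate at scale $\rho^*$ propagates back to scale $r$. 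The paper disposes of this assertion by citing the scalar-curvature improvement of Perelman's No Local Collapsing Theorem (Remark~13.13 in Kleiner--Lott), so your computation is a valid self-contained replacement for that citation — but it does not remove the need to prove the entropy inequalities on which it rests.
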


Let us briefly recall Perelman's $\nu$ and $\mu$-functionals \cite{Perelman1} and discuss the modifications that have to be made in the non-compact setting.
Let, for a moment, $(M,g)$ be an arbitrary $n$-dimensional and possibly incomplete Riemannian orbifold with isolated singularities, where $g$ is assumed to have regularity $C^2$.
In the remainder of this chapter we will denote by $f : M \to (-\infty, \infty]$ an arbitrary function ($f$ is not the soliton potential!), in order to stay close to Perelman's notation.
More specifically, we allow $f$ to attain the value $\infty$, but we require that $f$ is continuous and that $e^{-f} \in C^\infty_c (M)$ is smooth and has compact support via the standard convention $e^{-\infty} = 0$.
Note that then $|\nabla f|^2 e^{-f} , f e^{-f} \in C^0_c (M)$, so the integral
\[ \WW[g,f,\tau] = \int_M \big( \tau (R + |\nabla f|^2) + f - n \big)(4\pi \tau)^{-n/2} e^{-f} dg \in (-\infty, \infty] \]
is well defined.
The functional $\mu[g,\tau]$ is defined as usual by taking the infimum of $\WW[g,f,\tau]$ over all $e^{-f} \in C^\infty_c (M)$ with $\int_M (4\pi\tau)^{-n/2} e^{-f} dg = 1$.
Note that, equivalently, we may also define $\mu[g,\tau]$ by taking the infimum over all $e^{-f}$ that have sufficient decay on complete ends of $M$; for example if an end has bounded curvature and Euclidean volume growth, then it suffices to assume exponential decay of $e^{-f}$ instead of compact support within this end.
The definition $\nu[g] := \inf_{\tau > 0} \mu[g,\tau]$ carries over from the compact case.
We will also use Perelman's $\mathcal{F}$-functional
\[ \mathcal{F}[g,f] = \int_M (R+ |\nabla f|^2) e^{-f} dg \]
on a compact manifold $M$ and define $\lambda[g]$ to be the infimum of $\mathcal{F}[g,f]$ over all $f \in C^\infty(M)$ with $\int_M  e^{-f} dg = 1$.
\medskip

The following lemma is a slight improvement of \cite[Theorem 1.3]{Ozuch} using similar methods, which also shows that the $\nu$-functional of a cone metric $\gamma = dr^2 + r^2 h$ satisfies $\nu[\gamma] = -\infty$ if $\lambda[h] < n-2$. The difference is that we additionally bound $\nu[\gamma]$ from below based on certain geometric data of the link metric $h$ when $\lambda[h] \geq n-2$.
We remark that Lemma~\ref{Lem_some_lower_nu_bound} will only be needed in the proof of Proposition~\ref{Prop_NLC_bound}, where we assumed that $\gamma$ has non-negative scalar curvature and consequently $\lambda[h] \geq \inf_N R \geq (n-1)(n-2) \geq n-2$.
In fact, some computations in the proof of Lemma~\ref{Lem_some_lower_nu_bound} can be slightly simplified if we impose the additional assumption that $\gamma$ has non-negative scalar curvature.
We have, however, decided to state Lemma~\ref{Lem_some_lower_nu_bound} in its most general form, as it may be of independent interest.

\begin{Lemma} \label{Lem_some_lower_nu_bound}
Let $(N,h)$ be a closed $(n-1)$-dimensional Riemannian manifold, where $n \geq 2$ and $h$ is assumed to have regularity $C^2$.
Consider the cone metric $\gamma = dr^2 + r^2 h$ on $\IR_+ \times N$.
If  \[ \lambda[h] \geq n-2, \] then
\begin{equation} \label{eq_nu_bound_inf}
 \nu[\gamma] \geq \inf_{0 < \tau \leq 1}  \mu[h,\tau] - C(n) . 
\end{equation}
In particular, if $|{\Rm}| \leq A$ on $(N,h)$ and $\inj(N,h) \geq A^{-1}$, then we have
\begin{equation} \label{eq_nu_bound_simplified}
 \nu[\gamma] \geq  - C(n, A) > - \infty.
\end{equation}
Vice versa, if $\lambda[h] < n-2$, then $\nu[\gamma] = - \infty$.\end{Lemma}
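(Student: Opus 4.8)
## Proof Proposal for Lemma~\ref{Lem_some_lower_nu_bound}

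The plan is to compute $\WW[\gamma, f, \tau]$ for cone metrics using the substitution that separates radial and link directions, and then to optimize. The key is the \emph{warped product} structure: on $(\IR_+\times N, \gamma)$ with $\gamma = dr^2 + r^2 h$ one has $R_\gamma = r^{-2}\bigl(R_h - (n-1)(n-2)\bigr)$ and $dg_\gamma = r^{n-1}\, dr\, dg_h$. First I would exploit the scaling symmetry of cones: since $\gamma$ is scale-invariant under $(r,z)\mapsto (sr,z)$ up to a factor, the quantity $\nu[\gamma]=\inf_{\tau>0}\mu[\gamma,\tau]$ is, after rescaling $\tau$, an infimum over a single normalized problem together with the scaling parameter. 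Concretely, for a test function of product-like type, write $e^{-f(r,z)} = \varphi(r)^2\, w(z)$ (or more flexibly a general $f$) and reduce $\WW[\gamma,f,\tau]$ to a one-dimensional variational problem in $r$ coupled to the functional $\mathcal F[h,\cdot]$ (equivalently $\lambda[h]$, or $\mu[h,\tau]$) on the link.

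The main computation: after the separation, the terms $\tau|\nabla f|^2$ split into a radial piece $\tau(\partial_r f)^2$ and a link piece $\tau r^{-2}|\nabla^h f_N|^2$; the scalar curvature term contributes $\tau r^{-2}(R_h - (n-1)(n-2))$. Crucially the coefficient of $\tau r^{-2}$ that multiplies $\int (R_h - (n-1)(n-2)) e^{-f_N}$ combines, after integrating by parts in $z$, with the link gradient term to produce something controlled from below by $\lambda[h] - (n-1)(n-2)$ times the appropriate weight — and here is where the hypothesis enters. The assumption $\lambda[h]\ge n-2$ is exactly what is needed so that, after also accounting for the $n-1$ extra ``dimensions worth'' of volume weight $r^{n-1}$ contributing a Gaussian-type term in $r$ (which behaves like an $(n-1)$-dimensional Gaussian density and hence is responsible for the comparison with $\mu[h,\tau]$ rather than with $\lambda[h]$ alone), the radial integral is bounded below by a universal constant $-C(n)$ plus $\inf_{0<\tau\le 1}\mu[h,\tau]$. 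The restriction to $\tau \le 1$ comes from the fact that after rescaling the cone, only small values of the link's time parameter survive, since the cone point forces concentration at small scales. I would carry this out by first fixing $\tau$ and an admissible $f$ for $\gamma$, slicing $\WW[\gamma,f,\tau]$ over the level sets of $r$, applying the definition of $\mu[h,\cdot]$ on each slice, and then performing the remaining one-variable minimization in $r$ explicitly (a Gaussian integral computation yielding the dimensional constant).

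For the converse, if $\lambda[h] < n-2$, pick $f_N$ on $N$ with $\mathcal F[h,f_N] < (n-2)\int e^{-f_N}dg_h$, and build a family of test functions on the cone concentrated in a shell $r\in(\rho, 2\rho)$ with the link profile $f_N$; as $\rho\to 0$ the scalar curvature term $\int \tau r^{-2}(R_h-(n-1)(n-2))e^{-f}$ is dominated by the favorable sign of $\lambda[h]-(n-1)(n-2)$ being too negative — more precisely the negative part of $\tau r^{-2}(R_h - (n-1)(n-2) - 2\lambda[h] + 2(n-2) + \ldots)$ — so that $\WW[\gamma,f,\tau]\to -\infty$. One must check the normalization $\int (4\pi\tau)^{-n/2}e^{-f}dg_\gamma = 1$ can be maintained while letting the shell shrink, choosing $\tau \sim \rho^2$ to balance scales.

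The deduction of \eqref{eq_nu_bound_simplified} from \eqref{eq_nu_bound_inf} is then routine: under $|{\Rm}|\le A$ and $\inj \ge A^{-1}$ on the compact manifold $(N,h)$, standard estimates (e.g.\ via a uniform local Sobolev/log-Sobolev inequality or Rothaus-type bounds) give $\mu[h,\tau]\ge -C(n,A)$ for all $0<\tau\le 1$, which combined with \eqref{eq_nu_bound_inf} yields the claim. The hard part, and the place where I expect to spend the most effort, is the precise bookkeeping in the slicing argument for \eqref{eq_nu_bound_inf}: ensuring that the cross terms between radial and spherical derivatives are handled correctly (they should vanish or have a sign after integration by parts using $\nabla^h$-selfadjointness), that the correct power of $r$ accompanies each term so the reduced radial problem is genuinely an $(n-1)$-dimensional Gaussian minimization giving $\mu[h,\tau]$ (not $\lambda[h]$), and that the threshold $n-2$ — rather than, say, $(n-1)(n-2)$ — emerges, which requires carefully tracking how the link's conformal Laplacian-type term interacts with the collapse of the shell.
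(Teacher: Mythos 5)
Your first step — slicing $\WW[\gamma,f,\tau]$ over the level sets of $r$, applying $\mu[h,\cdot]$ (and, via $\lambda[h]\geq n-2$, the $\mathcal F$-functional) on each slice — is exactly how the paper begins, and by scale invariance one may fix $\tau=1$ from the start. The gap is in what you do next. After the slice estimates one is \emph{not} left with an explicitly solvable Gaussian problem; one is left with a radial functional of the schematic form $\int_0^\infty\bigl(-(n-2)^2r^{-2}+F+(F')^2-(n-1)(\log r)_+\bigr)e^{-F}r^{n-1}\,dr$ under a normalization constraint on $\int e^{-F}r^{n-1}dr$. The singular potential $-(n-2)^2/r^2$ sits exactly at the critical Hardy constant relative to the weighted gradient term: writing $\phi^2=e^{-F}$ one has $\int(F')^2e^{-F}r^{n-1}dr=4\int(\phi')^2r^{n-1}dr\geq (n-2)^2\int\phi^2r^{n-3}dr$ with no room to spare, so after absorbing the singular term there is no gradient left to dominate the entropy term $\int F e^{-F}r^{n-1}dr$, which by itself is unbounded below. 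The paper resolves this by a further change of variables that recasts the radial problem on an auxiliary two-dimensional, rotationally symmetric, asymptotically cylindrical surface, where the $1/r^2$ term is cancelled algebraically against derivatives of the weight (the cancellation $(n-1)^2-2(n-1)-(n-2)^2=-1$), leaving the full two-dimensional Dirichlet energy free to control the entropy via a Sobolev inequality. Some argument of this delicacy is unavoidable; "performing the remaining one-variable minimization explicitly" is not a valid plan. (Also, the restriction $\tau\leq1$ arises from the region $r\geq1$, where the slice parameter is $r^{-2}\leq1$, not from concentration at small scales.)

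The converse as you propose it also fails, and for a structural reason: a test function supported in a single shell $r\in(\rho,2\rho)$ with $\tau\sim\rho^2$ gains nothing as $\rho\to0$, because the cone is dilation-invariant and $\mu[\gamma,\tau]$ is independent of $\tau$; up to an isometry and parabolic rescaling your family is one fixed test function at unit scale, whose $\WW$-value is a finite negative number, not $-\infty$. The divergence must come from spreading the radial profile over unboundedly many scales near the vertex: with $\lambda[h]<n-2$ the slice estimate leaves an uncompensated term of size $-\delta/r^2$, and a radial profile that is essentially constant on $(0,r_0)$ (in the paper's two-dimensional reduction, $u_i\to a>0$ on $(0,r_0)$ with $\int u_i^2\,dg_{\eucl}=1$) makes $\int\delta r^{-2}u_i^2\,dg_{\eucl}$ diverge logarithmically while the gradient and entropy terms stay bounded, which is what forces $\nu[\gamma]=-\infty$. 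Your deduction of \eqref{eq_nu_bound_simplified} from \eqref{eq_nu_bound_inf} is fine and matches the paper (Lemma~\ref{Lem_mu_bounded_geometry}).
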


We remark that due to scaling invariance we have $\mu[\gamma, \tau] = \nu[\gamma]$ for all $\tau > 0$.

\begin{proof}
Let $f : \IR_+ \times N \to (-\infty,\infty]$ such that $e^{-f}$ is smooth with compact support and
\begin{equation} \label{eq_ovM_int_1}
 \int_{\IR_+ \times N} (4\pi)^{-n/2} e^{-f} d\gamma = 1. 
\end{equation}
Define $F : \IR_+ \to (-\infty, \infty]$ via
\begin{equation} \label{eq_def_F}
 e^{-F(r)} :=  \int_N (4\pi)^{-(n-1)/2} e^{-f(r,\cdot)} dh. 
\end{equation}
So $e^{-F}$ has compact support and
\begin{equation} \label{eq_F_int_1}
\int_N (4\pi)^{-(n-1)/2} e^{-(f(r,\cdot)-F(r))} dh = 1, \qquad
 \int_0^\infty (4\pi)^{-1/2} e^{-F(r)} r^{n-1} dr = 1. 
\end{equation}
Next, we use separation of variables to compute
\begin{align}
 \WW[\gamma, f, 1]
&= \int_0^\infty \int_N \big( r^{-2} |\nabla^h f(r,\cdot)|^2 + (\partial_r f)^2 + r^{-2} R_h - (n-1)(n-2) r^{-2} \notag \\
&\qquad\qquad\qquad\qquad\qquad\qquad\qquad\qquad + f - n \big) (4\pi)^{-n/2} e^{-f} r^{n-1} dh \, dr \notag \\
&= (4\pi)^{-1/2} \int_0^\infty \bigg( \WW[h,f(r,\cdot)-F(r),r^{-2}] - \frac{(n-1)(n-2)}{r^{2}} + F(r)\bigg)  e^{-F(r)} r^{n-1}   dr \notag \\
&\qquad\qquad + \int_0^\infty  \int_N (\partial_r f)^2  (4\pi)^{-n/2} e^{-f(r,\cdot)} r^{n-1} dh \, dr - 1. \label{eq_WW_cone_1}
\end{align}
If $0 < r < 1$, then we can bound the $\WW$-functional in \eqref{eq_WW_cone_1} as follows, using \eqref{eq_F_int_1},
\begin{multline} \label{eq_WW_rl1}
 \WW[h,f(r,\cdot)-F(r),r^{-2}] = (r^{-2}-1) \mathcal{F}[h, f(r,\cdot)-F(r)](4\pi r^{-2})^{-(n-1)/2}  + \WW[h,f(r,\cdot)-F(r),1] r^{n-1}      \\
\geq \Big( (r^{-2}-1) \lambda[h] + \mu[h,1]  \Big) r^{n-1} e^{-F(r)} 
\geq \Big( \frac{n-2}{r^2} - (n-2)  + \mu[h,1]  \Big) r^{n-1} e^{-F(r)}.
\end{multline}
On the other, hand, if $r \geq 1$, then using the fact that
\[ \int_N (4\pi r^{-2})^{-(n-1)/2} e^{-(f(r,\cdot) - F(r) + (n-1) \log r)} dr = 1, \]
we obtain
\begin{multline} \label{eq_WW_rg1}
 \WW[h,f(r,\cdot)-F(r),r^{-2}]    
=  \Big( \WW\big[h,f(r,\cdot) - F(r) + (n-1) \log r, r^{-2} \big]  - (n-1) \log r \Big) r^{n-1}    \\
\geq  \Big( \mu[h,r^{-2}]  - (n-1) \log r \Big) r^{n-1} .
\end{multline}
Plugging \eqref{eq_WW_rl1}, \eqref{eq_WW_rg1} back into \eqref{eq_WW_cone_1}, using \eqref{eq_F_int_1} and applying Cauchy-Schwarz to the second integral in \eqref{eq_WW_cone_1} gives
\begin{multline} \label{eq_WW_in_F}
\WW[\gamma, f, 1]
\geq -C(n) + \inf_{0<\tau\leq 1} \mu[h,\tau] \\
+ \int_0^\infty \bigg({ -\frac{(n-2)^2}{r^2} + F(r) - (n-1)(\log r)_+ + (F')^2} \bigg)  (4\pi)^{-1/2}  e^{-F(r)}r^{n-1} dr.
\end{multline}
In order to establish a lower bound on the last integral, fix a smooth function $q : \IR_+ \to \IR_+$ such that $q(r) = r$ for $0 < r < 1$ and $q(r) = 1$ for $r \geq 2$.
In the following, we will use the rotationally symmetric, asymptotically cylindrical metric $g^* := dr^2 + q^2(r) g_{S^1}$ on $M^* := \IR^2$.
Set 
\[ H(r) := F(r) - (n-1) \log r + \log q(r) - \tfrac12 \log(4\pi) + \log(2\pi) . \]
Then \eqref{eq_F_int_1} implies
\begin{equation} \label{eq_eH_mass_1}
 \int_{M^*} (4\pi)^{-1}e^{-H} dg^* = \int_0^\infty (4\pi)^{-1} e^{-H(r)} \, 2\pi q(r) dr = 1 
\end{equation}
and, after integration by parts, the integral in \eqref{eq_WW_in_F} becomes with $C_0 =  \tfrac12 \log(4\pi) - \log(2\pi)$
\begin{align*}
\int_0^\infty &\bigg({ - \frac{(n-2)^2}{r^2} + H(r) + (n-1) \log r - \log q(r) - C_0 - (n-1) (\log r)_+} \\
&\qquad + (H'(r))^2 + 2 H'(r) \Big( \frac{n-1}r - \frac{q'(r)}{q(r)} \Big) + \Big( \frac{n-1}{r} - \frac{q'(r)}{q(r)} \Big)^2 \bigg) (4\pi)^{-1}e^{-H(r)} \, 2\pi q(r) dr \\
&=\int_0^\infty \bigg({ - \frac{(n-2)^2}{r^2} + \Big( \frac{n-1}{r} - \frac{q'(r)}{q(r)} \Big)^2 
+ \frac{2}{q(r)}  \Big( \frac{n-1}r q(r) - q'(r) \Big)'}\bigg) (4\pi)^{-1}e^{-H(r)} \, 2\pi q(r) dr \\
&\qquad + \int_0^\infty \Big( (n-1) \log r - (n-1) \log q(r) - (n-1) (\log r)_+ \Big) (4\pi)^{-1}e^{-H(r)} \, 2\pi q(r) dr\\
&\qquad + \int_{M^*} \big( |\nabla H|^2 + H + (n-2) \log q \big) (4\pi)^{-1} e^{-H} dg^* - C(n).
\end{align*}
Note that the term in the parenthesis in the first integrand on the right-hand side vanishes for $0 < r < 1$ and equals $\frac{-(n-2)^2 + (n-1)^2 - 2(n-1)}{r^2} \geq -1$ for $r \geq 2$.
So this term is bounded from below by a uniform constant for all $r > 0$.
Similarly, the term in the parenthesis of the second integral on the right-hand side vanishes on $(0,1] \cup [2, \infty)$, so it is uniformly bounded.
Combining this with \eqref{eq_eH_mass_1} implies that the integral in \eqref{eq_WW_in_F} can be bounded from below by
\begin{equation} \label{eq_int_H_tbb}
 \int_{M^*} \big( |\nabla H|^2 + H + (n-2) \log q \big) (4\pi)^{-1} e^{-H} dg^* - C(n). 
\end{equation}

It remains to bound the integral in \eqref{eq_int_H_tbb} from below under the assumption \eqref{eq_eH_mass_1}.
To do this, we define $u \in C^1_c (M^*)$ by
\begin{equation} \label{eq_u_eH}
 u^2 := (4\pi)^{-1} e^{-H}. 
\end{equation}
Then \eqref{eq_eH_mass_1} becomes $\int_{M^*} u^2 dg^* = 1$ and the integral in \eqref{eq_int_H_tbb} becomes
\[ \int_{M^*} \big( 4 |\nabla u|^2 - 2 u^2 \log u - u^2 \log(4\pi) + (n-2) u^2 \log q \big) dg^*.  \]
Using the Sobolev inequality \cite{Aubin_1976}, the H\"older inequality  and the fact that $\log u \leq u$, this integral can be bounded from below by
\begin{align*}
 c \bigg(& \int_{M^*} u^6 dg^* \bigg)^{1/3} 
- C \int_{M^*} u^2 dg^*
- 2 \int_{M^*} u^3 dg^*
- (n-2) \bigg( \int_{M^*} u^4 dg^* \bigg)^{1/2} \bigg( \int_{M^*} (\log q)^2 dg^* \bigg)^{1/2} \\
&\geq c \bigg( \int_{M^*} u^6 dg^* \bigg)^{1/3}  - C - 2 \bigg( \int_{M^*} u^6 dg^* \bigg)^{1/4} \bigg( \int_{M^*} u^2 dg^* \bigg)^{3/4} - C(n-2) \bigg( \int_{M^*} u^4 dg^* \bigg)^{1/2} \\
&\geq c \bigg( \int_{M^*} u^6 dg^* \bigg)^{1/3}  - C - 2 \bigg( \int_{M^*} u^6 dg^* \bigg)^{1/4} - C(n-2) \bigg( \int_{M^*} u^6 dg^* \bigg)^{1/4}\bigg( \int_{M^*} u^2 dg^* \bigg)^{1/4} \\
&\geq c \bigg( \int_{M^*} u^6 dg^* \bigg)^{1/3}  - C - 2 \bigg( \int_{M^*} u^6 dg^* \bigg)^{1/4} - C(n-2) \bigg( \int_{M^*} u^6 dg^* \bigg)^{1/4}
\geq - C(n).
\end{align*}
Here $c, C$ denote generic constants.
This finishes the proof of \eqref{eq_nu_bound_inf}.
The statement involving \eqref{eq_nu_bound_simplified} also follows via Lemma~\ref{Lem_mu_bounded_geometry}.

Now suppose that $\lambda[h] < n-2$ and choose $f' \in C^\infty(N)$ such that $\int_N (4\pi)^{-(n-1)/2} e^{-f'} dh = 1$ and $\mathcal{F}[h,f'] < (n-2 - \delta)(4\pi)^{-(n-1)/2}$ for some $\delta > 0$.
Set $S := \int_N f' \, (4\pi)^{-(n-1)/2} e^{-f'} dh$.
Let $F : \IR_+ \to (-\infty,\infty]$ be a function, which we will choose later, and set $f := F + f'$.
Then \eqref{eq_def_F} holds by definition and \eqref{eq_ovM_int_1} is equivalent to the second identity in \eqref{eq_F_int_1}; assume from now on that $F$ is chosen such that these identities hold.
The first equality in \eqref{eq_WW_cone_1} implies that 
\begin{multline*} \WW[\gamma, f, 1] \leq \int_0^\infty \Big(  \frac{n-2-\delta}{r^2} - \frac{(n-1)(n-2)}{r^{2}} + (F'(r))^2 + F(r) + S  - n \Big) (4\pi)^{-1/2} r^{n-1} e^{-F(r)}  dr. 
\end{multline*}
A similar computation as before shows that for $H(r) := F(r) - (n-2) \log r  -\frac12 \log(4\pi) + \log(2\pi)$ we have (set $q(r) = r$)
\[ \WW[\gamma, f, 1] \leq \int_0^\infty \Big({ - \frac{\delta}{r^2} + (H'(r))^2 + H(r) + (n-2) \log r }\Big) (4\pi)^{-1} e^{-H(r)} 2\pi r \, dr + C(n,S), \]
while \eqref{eq_ovM_int_1} is equivalent to
\[ \int_0^\infty (4\pi)^{-1} e^{-H(r)} 2\pi r \, dr = 1. \]

As before, using the radially symmetric function $u \in C^1_c(\IR^2 \setminus \{ 0 \})$ from \eqref{eq_u_eH}, this reduces our problem to showing that the integral
\begin{equation} \label{eq_R2_integral}
 \int_{\IR^2} \Big( -\frac{\delta}{r^2} u^2 + 4 |\nabla u|^2 - 2 u^2 \log u - u^2 \log(4\pi) + (n-2) u^2 \log r \Big) dg_{\eucl} 
\end{equation}
can be made arbitrarily small while keeping the integral
\[ \int_{\IR^2} u^2 \, dg_{\eucl} = 1. \]
To see this, fix a sequence of compactly supported, radial functions $u_i \in C^\infty_c(\IR^2 \setminus \{ 0 \})$ such that for some uniform $A < \infty$ we have
\[ \int_{\IR^2}  u_i^2 \,  dg_{\eucl} = 1, \qquad \sup u_i + \int_0^\infty |\nabla u_i|^2  \, dg_{\eucl} \leq A, \qquad \supp u_i \subset B(\vec 0, A) \]
and such that we have local uniform convergence $u_i \to a > 0$ for on $(0 , r_0)$ for some $r_0 > 0$.
Then
\[ \int_{\IR^2} \frac{\delta}{r^2} u^2 \, dg_{\eucl} \xrightarrow[i \to \infty]{}  \infty. \]
Since $u^2 \log u \geq - C$, this shows that \eqref{eq_R2_integral}, for $u = u_i$ and $i \to \infty$, diverges to $-\infty$.
This finishes the proof of the lemma.
\end{proof}
\bigskip

The next lemma shows that the $\nu$-entropy of an expanding soliton is bounded from below by the $\nu$-entropy of its asymptotic cone. 
The idea underlying this result is that the asymptotic cone can be thought of as the initial condition of the associated Ricci flow, so the bound essentially follows from the monotonicity of the $\nu$-functional.

\begin{Lemma} \label{Lem_nu_cone_exp}
In the setting of Proposition~\ref{Prop_NLC_bound} we have
\[ 0 \geq \nu[g]  \geq \nu[\gamma]. \]
\end{Lemma}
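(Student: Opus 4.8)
The upper bound $\nu[g] \le 0$ is immediate: $g$ is the time-$1$ slice of the associated expanding Ricci flow $(g_t = t\phi_t^* g)_{t>0}$, which by Lemma~\ref{Lem_iota} (applied with the asymptotic data coming from \eqref{eq_psi_eps_char}) smooths out at $t\searrow 0$ to the cone metric $\gamma$ on the regular part, so $(M,g)$ is a non-flat ancient-in-reverse solution emanating from a metric cone; more elementarily, the standard argument that $\nu$ of any non-Einstein expanding soliton is negative (or zero in the Gaussian case) applies, using that $R\ge 0$ and that the soliton is not Ricci-flat unless it is the quotient Gaussian soliton, in which case $\nu[g]=0$. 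So the substance is the inequality $\nu[g]\ge\nu[\gamma]$.

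The plan for $\nu[g]\ge\nu[\gamma]$ is to exploit the monotonicity of Perelman's $\nu$-functional along the associated Ricci flow $(g_t)_{t\in(0,1]}$, together with the fact that $(g_t)$ converges to $\gamma$ as $t\searrow 0$. First I would fix a test function: given $\eps>0$, choose $f_0$ with $e^{-f_0}\in C^\infty_c$ (or with sufficient decay on the end, as permitted in the extended definition of $\mu$ recalled after Proposition~\ref{Prop_NLC_bound}) and $\tau_0>0$ such that $\mathcal{W}[\gamma,f_0,\tau_0] \le \nu[\gamma] + \eps$ and $\int_M (4\pi\tau_0)^{-n/2}e^{-f_0}\,d\gamma = 1$; such a pair exists by the definition of $\nu[\gamma]$ (and the finiteness granted by Lemma~\ref{Lem_some_lower_nu_bound} when $\nu[\gamma]>-\infty$; if $\nu[\gamma]=-\infty$ there is nothing to prove). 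Since $g_t\to\gamma$ in $C^{k^*-2}_{\loc}$ as $t\searrow 0$ by Lemma~\ref{Lem_iota} (Assertion~\ref{Lem_iota_c}, equation~\eqref{eq_gtg0_Cksm2}), and $f_0$ is supported (essentially) in a region where the convergence is uniform, one has $\mathcal{W}[g_t, f_0(\cdot,t), \tau_0] \to \mathcal{W}[\gamma,f_0,\tau_0]$ as $t\searrow 0$ after a harmless renormalization of $f_0$ and $\tau_0$ to keep the constraint $\int (4\pi\tau)^{-n/2}e^{-f}\,dg_t = 1$. Hence $\mu[g_t,\tau_0] \le \nu[\gamma] + 2\eps$ for small $t$.

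Next I would run Perelman's monotonicity. Solve the conjugate heat equation $-\partial_t u = \triangle_{g_t} u - R_{g_t} u$ backward from a small time $t_1>0$ with $u(\cdot,t_1) = (4\pi\tau_0)^{-n/2} e^{-f_0(\cdot,t_1)}$, writing $u = (4\pi(\tau_0 + (1-t)))^{-n/2} e^{-f}$ with the time shift chosen so that $\tau(t) := \tau_0 + (1-t)$ plays the role of the backward time parameter. The entropy $\mathcal{W}[g_t, f(\cdot,t), \tau(t)]$ is non-decreasing in $t$ on the interval $[t_1,1]$; the required justification of the boundary terms and the non-compactness issues follows exactly as in the proof of Theorem~\ref{Thm_soliton_PSC} (uniform curvature bounds from Lemma~\ref{Lem_iota} and super-exponential decay of the conjugate heat kernel on the Euclidean-volume-growth end, cf.\ \cite[Theorems~24.40, 26.25]{Chow_RF_book_3}). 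Evaluating at $t=1$ gives $\nu[g] \le \mathcal{W}[g_1, f(\cdot,1), \tau_0 + 0] = \mathcal{W}[g_1,f(\cdot,1),\tau_0]$ — wait, this is the wrong direction, so instead I invoke monotonicity the other way: since $\mu[g_t,\tau]$ is non-decreasing along the flow in the scaled sense, $\mu[g_1,\tau_0]\ge$ the limiting value as $t\searrow0$, and $\nu[g] = \inf_\tau \mu[g_1,\tau]$; passing to the infimum over $\tau_0$ of the bound $\nu[\gamma] + 2\eps \ge \mu[g_t,\tau_0]\searrow$ (as $t\to0$) $\ge$ a lower value controlled by $\nu[g_t]$, and using scale-invariance $\nu[g_t] = \nu[g]$... the clean way is: by monotonicity $\nu[g_{t_1}] \le \mathcal{W}[g_1, f(\cdot,1), \tau(1)]$, but $\nu[g_{t_1}]=\nu[g]$ by scaling and $\mathcal{W}[g_1,f(\cdot,1),\tau_0]$ can be made $\le \nu[\gamma]+3\eps$ by the above; letting $\eps\to 0$ gives $\nu[g]\le\nu[\gamma]$, which is the opposite of what we want — so the correct use is that $\mathcal W$ increases along the flow \emph{forward} in $t$, hence decreases as $t\searrow 0$, so $\mathcal{W}[g_1, f(\cdot,1),\tau_0]\ge \lim_{t\searrow 0}\mathcal{W}[g_t,f(\cdot,t),\tau(t)] = \mathcal{W}[\gamma,f_0,\tau_0]\ge \nu[\gamma]$; taking the infimum over all admissible $(f_0,\tau_0)$, i.e.\ over all conjugate-heat-flow test data, yields $\nu[g]\ge\nu[\gamma]$.

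\textbf{The main obstacle} I anticipate is the analytic justification in the non-compact, low-regularity setting: (i) that the conjugate heat kernel started near $t=1$ has enough spatial decay on the asymptotically conical end so that all integrations by parts in Perelman's monotonicity computation are valid — this should follow from the Euclidean volume growth and bounded curvature of the end via the Gaussian-type upper bounds already cited in Theorem~\ref{Thm_soliton_PSC}; and (ii) that the family $(g_t)_{t\in(0,1]}$ genuinely converges to $\gamma$ in a strong enough topology to pass $\mathcal{W}$ to the limit — this is exactly the content of Lemma~\ref{Lem_iota}\ref{Lem_iota_c}, in particular \eqref{eq_gtg0_Cksm2} and the asymptotics \eqref{eq_ovg_asymp_1}, which also control $R_{g_t}$ near the end uniformly (using $R\ge 0$ on both $g$ and $\gamma$) so that no mass or entropy escapes to infinity in the limit. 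Once these two points are in place the inequality is a formal consequence of monotonicity plus the density of compactly supported test functions in the variational characterization of $\nu$.
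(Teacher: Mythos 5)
Your overall instinct — use Perelman monotonicity along the associated flow $(g_t)_{t\in(0,1]}$ together with convergence to the cone as $t\searrow 0$ — is the same as the paper's, but the logical structure of your argument for $\nu[g]\geq\nu[\gamma]$ does not work as written. To bound $\nu[g]$ from below you must show that \emph{every} admissible pair $(f_1,\tau_1)$ at time $1$ satisfies $\WW[g,f_1,\tau_1]\geq\nu[\gamma]$. The correct way to do this (and what the paper does) is to start from \emph{arbitrary} data at $t=1$, solve the conjugate heat equation backward to small $t$ (this is the well-posed direction), use monotonicity to get $\WW[g,f_1,\tau_1]\geq\WW[g_t,f_t,\tau_t]$, and then prove $\limsup_{t\searrow 0}\WW[g_t,f_t,\tau_t]\geq\nu[\gamma]$ by transplanting the (unknown) evolved test function onto the cone via $\psi_\eps$. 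Your proposal instead prescribes a near-minimizer $f_0$ for the cone near $t=0$ and tries to propagate it to $t=1$ by the conjugate heat equation; that equation is backward parabolic, so it cannot be solved forward from data at $t_1$ to $t=1$, and even granting such a solution, the resulting inequality would only constrain the special time-$1$ test functions arising this way, not the infimum defining $\nu[g]$. Your mid-paragraph course correction does not repair this: in the final version you still equate $\lim_{t\searrow0}\WW[g_t,f(\cdot,t),\tau(t)]$ with $\WW[\gamma,f_0,\tau_0]$ for a \emph{chosen} $f_0$, which presupposes you control what the conjugate flow looks like at $t\approx 0$, precisely the thing that must be proved.

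This points to the substantive analytic step your plan omits: when the data at $t=1$ is arbitrary, one needs a uniform bound on the conjugate heat density near $t=0$ — in the paper, the maximum principle applied to $\square^*u=0$ together with $R_{g_t}\geq 0$ gives $\inf_M f_t\geq -C_0$ uniformly — before one can cut off, compare $\WW[g,f''_t,\tau'_t]$ with $\WW[\gamma,\ov f_t,\tau'_t]$ using only the weak closeness \eqref{eq_psi_eps_char}, and absorb the cutoff and renormalization errors as $\tau'_t\to\infty$. This is exactly where the non-negative scalar curvature hypothesis enters (cf.\ Remark~\ref{Rmk_scal_bound_necessary}), and your proposal never identifies or uses it; the decay of the conjugate heat kernel and the convergence $g_t\to\gamma$ alone do not rule out the entropy degenerating through the $f\,u$ term or through mass concentrating. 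Finally, note that Proposition~\ref{Prop_NLC_bound} only assumes $C^2$ regularity and the bilipschitz/scalar closeness \eqref{eq_psi_eps_char}, so the strong $C^{k^*-2}_{\loc}$ convergence \eqref{eq_gtg0_Cksm2} from Lemma~\ref{Lem_iota} that you invoke is not available in this generality; the paper's proof deliberately works only with \eqref{eq_psi_eps_char} and a cutoff near infinity.
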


\begin{Remark} 
In fact \cite[Corollary 1.7]{Ozuch} also implies that $\nu[g]\leq\nu[\gamma]$, so the second inequality above is actually an equality.
\end{Remark}

\begin{Remark} \label{Rmk_scal_bound_necessary}
Lemma~\ref{Lem_nu_cone_exp} is also the only place where the non-negative scalar curvature assumption on $\gamma$ and $g$ from Proposition~\ref{Prop_NLC_bound} is essential. (It is also used in the proof of Proposition~\ref{Prop_curv_diam_bound}, but could be removed by a modification of the argument.)
It is an interesting question whether Lemma~\ref{Lem_nu_cone_exp} also holds under the more general assumption that $\lambda[h] \geq n-2$.
This would potentially allow us to extend our degree theory from $\CONEgeq(N)$ to the larger subspace of $\CONE(N)$ consisting of cones whose link metrics $h$ satisfy $\lambda[h] \geq 2$.
\end{Remark}

\begin{proof}[Proof of Lemma~\ref{Lem_nu_cone_exp}.]
Consider the expanding soliton $(M,g,V)$.
By Lemma~\ref{Lem_soliton_smooth} we may assume that $g$ and $V$ are smooth.
The upper bound always holds on a Riemannian orbifold.
To see the lower bound, consider the associated Ricci flow $(M, (g_t = t \phi_t^* g)_{t > 0})$, where $(\phi_t : M \to M)_{t > 0}$ is the flow of the time-dependent vector field $t^{-1} V$, as explained in Subsection~\ref{subsec_basic_identities}, and fix some $\tau_1 > 0$ and $e^{-f_1} \in C^\infty_c (M)$ with
\[ \int_M (4\pi \tau_1)^{-n/2} e^{-f_1} dg_1 = 1. \]
We need to show that
\[ \WW[g, f_1, \tau_1] \geq \nu[\gamma]. \]
To this end let $\tau_t := \tau_1 + 1 - t$ and consider the solution to the conjugate heat equation
\begin{equation} \label{eq_conj_HE_on_expander}
 \square^* \big( (4\pi \tau_t)^{-n/2} e^{-f_t} \big)
=
(-\partial_t - \triangle + R) \big( (4\pi \tau_t)^{-n/2} e^{-f_t} \big) = 0 
\end{equation}
with initial condition $(4\pi \tau_1)^{-n/2} e^{-f_1}$.
Set $f'_t := f_t \circ \phi_t^{-1}$ and $\tau'_t := \frac{\tau_t}t$; note that $\lim_{t \to \infty} \tau'_t = \infty$.
Then for any $t \in (0,1]$ we have
\[ \int_M (4\pi\tau'_t)^{-n/2} e^{-f'_t} dg
= \int_M (4\pi \tau_t)^{-n/2} e^{-f_t} dg_t = 1, \qquad
 \WW[g, f_1, \tau_1] \geq \WW[g_t, f_t, \tau_t] = \WW [g, f'_t , \tau'_t ]. \]
So it suffices to prove that
\begin{equation} \label{eq_need_show_lim_WW_Cone}
 \limsup_{t \searrow 0} \WW [g, f'_t , \tau'_t ] \geq \nu[\gamma]. 
\end{equation}
Applying the maximum principle to the evolution equation \eqref{eq_conj_HE_on_expander} and using the fact that $g_t$ has non-negative scalar curvature, implies
\[ \sup_M (4\pi\tau_t)^{-n/2} e^{-f_t} \leq \sup_M (4\pi\tau_1)^{-n/2} e^{-f_1}, \]
so
\begin{equation} \label{eq_inff_C0}
 \inf_M f_t \geq - C_0 
\end{equation}
for some constant $C_0 < \infty$, which is independent of $t$.

Fix some small constant $\eps > 0$, whose value we will later send to zero, and consider the map $\psi_\eps : (D_\eps,\infty) \times N \to M$ from \eqref{eq_psi_eps_char}.
Suppose without loss of generality that $D_\eps \geq 1$.
Choose a cutoff function $\eta \in C^\infty(M)$, $0 \leq \eta \leq 1$, with support in $\Image \psi_\eps$ that is $\equiv 1$ on the complement of a compact subset $S \subset M$.
For any $t \in (0,1]$ we define $e^{-f''_t} = \eta^2 e^{-f'_t}$.
Then
\begin{equation} \label{eq_fpp_int_1}
 \bigg| \int_{M} (4\pi \tau'_t)^{-n/2} e^{-f''_t} dg - 1 \bigg|
\leq  \int_{S} (4\pi \tau'_t)^{-n/2} e^{-f} dg
\leq  (4\pi\tau'_t)^{-n/2} |S|_g e^{C_0} 
\end{equation}
and, using the shorthand $\tau = \tau'_t$, $f = f'_t$, we have for some constant $C_\eps < \infty$, which may depend on $\eps$ and $\eta$, but is independent of $t$,
\begin{align}
\WW[g,f'_t,\tau'_t]- \WW[g,f''_t,\tau'_t]
&= \int_{S} \big( \tau (R + |\nabla f|^2 ) + f - n \big) (4\pi \tau)^{-n/2} (1-\eta^2) e^{-f} dg \notag \\
&\qquad - \int_{S} \big( \tau ( - 4 \eta^{-1} \nabla \eta \cdot \nabla f + 4\eta^{-2} |\nabla \eta|^2)  - 2\log \eta \big) (4\pi \tau)^{-n/2} \eta^2 e^{-f} dg \notag \\
&= \int_{S} \big( \tau (R + |\nabla f|^2 ) + f - n \big) (4\pi \tau)^{-n/2} (1-\eta^2) e^{-f} dg \notag \\
&\qquad - \int_{S} \big(  - 4\tau \eta \triangle \eta   - 2\eta^2 \log \eta \big) (4\pi \tau)^{-n/2}  e^{-f} dg \notag \\
&\geq - (  C_0 + n \big) (4\pi \tau'_t)^{-n/2}  e^{C_0} |S|_g
- C_\eps \tau'_t   (4\pi \tau'_t)^{-n/2}  e^{C_0} |S|_g. \label{eq_fpp_WW}
\end{align}
Note that the right-hand sides of \eqref{eq_fpp_int_1}, \eqref{eq_fpp_WW} go to zero as $t \searrow 0$.

Define $\ov f_t :  \IR_+ \times N \to (-\infty,\infty]$ by $\ov f_{t} := f''_t \circ \psi_\eps$ on $(D_\eps,\infty) \times N$ and $\ov f_{t} := \infty$ on $(0,D_\eps] \times N$.
Then by \eqref{eq_fpp_int_1} and \eqref{eq_psi_eps_char} we have for sufficiently small $t$, depending on $\eps$,
\begin{equation} \label{eq_a_t_close_1}
 a_{t} := \int_{ \IR_+ \times N} (4\pi \tau'_t)^{-n/2} e^{-\ov f_{t}} d\gamma  \in [e^{-2n\eps}, e^{2n\eps}]. 
\end{equation}
Moreover, using \eqref{eq_psi_eps_char}, \eqref{eq_inff_C0}, \eqref{eq_a_t_close_1}, and letting $r$ be the coordinate of the first factor of $\IR_+ \times N$, we get for sufficiently small $t > 0$
\begin{align*}
 e^{-(n+2)\eps} \WW[\gamma, \ov f_t, \tau'_t]& - \WW[g, f''_t, \tau'_t] \\
 &\leq \int_{ \IR_+ \times N} \big( \tau'_t (e^{-(n+2)\eps} R_\gamma - (R_g \circ \psi_\eps) |\det d\psi_\eps| ) \\
 &\qquad\qquad +  e^{-(n+2)\eps} (\ov f_t - n) - (\ov f_t - n) |\det d\psi_\eps| \big) (4 \pi \tau'_t)^{-n/2} e^{-\ov f_t} d\gamma \\
 &\leq \int_{ \IR_+ \times N} \big(\eps \tau'_t r^{-2} + (C_0 + n) (e^{n\eps} - e^{-(n+2)\eps}) \big)  (4\pi\tau'_t)^{-n/2} e^{-\ov f_t} d\gamma \\
 &\leq \int_{(0,(\tau'_t)^{1/2}) \times N} \eps \tau'_t r^{-2} \,   (4\pi\tau'_t)^{-n/2} e^{-\ov f_t} d\gamma \\
 &\qquad\qquad + e^{2n\eps} \big( \eps + (C_0+n)(e^{n\eps} - e^{-(n+2)\eps}) \big)  \\
 &\leq \vol (N,h) \int_0^{(\tau'_t)^{1/2}} \eps \tau'_t r^{n-1-2}  \,   (4\pi\tau'_t)^{-n/2} e^{C_0} dr\\
 &\qquad\qquad   + e^{2n\eps} \big( \eps + (C_0+n)(e^{n\eps} - e^{-(n+2)\eps}) \big) \\
 &\leq \tfrac1{n-2}  \vol(N,h)(4\pi)^{-n/2} e^{C_0} \eps
  + e^{2n\eps} \big( \eps + (C_0+n)(e^{n\eps} - e^{-(n+2)\eps}) \big)  =: \Psi (\eps),
\end{align*}
Note that in the first inequality above, the $e^{-(n+2)\epsilon}$ factor allows us to control the difference between the  $|\nabla^\gamma \ov f_t|_\gamma^2$ and $|\nabla^g f''_t|^2_g$ terms by accounting for the distortion from $\gamma$ to $g$ in their respective norms and volume forms.
So for sufficiently small $t$, depending on $\eps$, we have by \eqref{eq_fpp_WW}
\begin{equation} \label{eq_WW_s_p}
 e^{-(n+2)\eps} \WW[\gamma, \ov f_t, \tau'_t] - \WW[g, f'_t, \tau'_t] \leq 2\Psi(\eps). 
\end{equation}
Note that $\lim_{\eps \to 0} \Psi(\eps) =0$.
Set now $\ov f^{*}_{t} := \ov f_{t} + \log a_{t}$.
Then
\[ \int_{ \IR_+ \times N} (4\pi\tau'_t)^{-n/2} e^{-f^{**}_{t}} d\gamma = 1 \]
and
\begin{equation} \label{eq_nu_cone_WW_s}
 \nu [\gamma] \leq \WW[\gamma, \ov f^*_{t}, \tau'_t] = \frac1{a_{t}} \WW[\gamma, \ov f_{t} , \tau'_t] + \log a_{t}. 
\end{equation}
We now obtain \eqref{eq_need_show_lim_WW_Cone} by combining \eqref{eq_a_t_close_1}, \eqref{eq_WW_s_p} and \eqref{eq_nu_cone_WW_s} and letting $\eps \to 0$.
\end{proof}
\bigskip

\begin{proof}[Proof of Proposition~\ref{Prop_NLC_bound}.]
The first statement follows directly from Lemmas~\ref{Lem_some_lower_nu_bound}, \ref{Lem_nu_cone_exp}.
The last statement is a restatement of an improvement of Perelman's No Local Collapsing Theorem \cite{Perelman1}; see \cite[Remark~13.13]{Kleiner_Lott_notes}.
\end{proof}
\bigskip

\subsection{A uniform curvature and diameter bound}
Next, we use the non-collapsing statement from Proposition~\ref{Prop_NLC_bound} to derive geometric bounds for gradient expanding solitons with non-negative scalar curvature, which only depend on geometric bounds on their asymptotic cones.

\begin{Proposition} \label{Prop_curv_diam_bound}
Let $(M,N,\iota)$ be an ensemble and consider a representative $(g,V =\nabla^g f,\gamma)$ of an element in $\MMgeqgrad^6(M,N,\iota)$, where we write $\gamma = dr^2 + r^2 h$ for some Riemannian metric $h$ on $N$ of regularity $C^{6}$.
Suppose that for some $A < \infty$ we have
\begin{equation} \label{eq_bounds_on_link}
 |{\Rm_h}| \leq A, \qquad \inj(N,h) \geq A^{-1}, \qquad \mu[h,1] \geq - A. 
\end{equation}
If $\td\iota : \IR_+ \times N \to M$ denotes the embedding from Lemma~\ref{Lem_MM_regular_rep}, then the following is true:
\begin{enumerate}[label=(\alph*)]
\item \label{Prop_curv_diam_bound_a} $|{\Rm_g}| \leq C(A)$ on $M$.
\item \label{Prop_curv_diam_bound_b} $|\nabla f|\leq C(A) (r + 1)$ on $M \setminus \td\iota((r,\infty) \times N)$ for all $r > 0$.
\item \label{Prop_curv_diam_bound_c} For any $y \in M$ and $0 < r < 1$ we have the volume bound $|B(y,r)| \geq c(A) r^4$.
\item \label{Prop_curv_diam_bound_d} If in addition $\vol(N,h) \leq A$, then for all $D \geq 1$ we have
\[ \diam (M \setminus \td\iota((D,\infty) \times N) \leq C(A,D). \]
\end{enumerate}
\end{Proposition}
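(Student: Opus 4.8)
\textbf{Plan of proof for Proposition~\ref{Prop_curv_diam_bound}.}

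The overall strategy is a contradiction/blow-up argument combined with the non-collapsing estimate of Proposition~\ref{Prop_NLC_bound} and the topological assumption on $M$ encoded in Definition~\ref{Def_ensemble}\ref{Def_ensemble_3}. I would treat the four assertions roughly in the order \ref{Prop_curv_diam_bound_a} $\Rightarrow$ \ref{Prop_curv_diam_bound_c} $\Rightarrow$ \ref{Prop_curv_diam_bound_b} $\Rightarrow$ \ref{Prop_curv_diam_bound_d}, since the curvature bound is the backbone of everything else. For \ref{Prop_curv_diam_bound_a}, suppose not: take a sequence of solitons $(M_i, g_i, f_i, \gamma_i)$ with $\gamma_i = dr^2 + r^2 h_i$ satisfying \eqref{eq_bounds_on_link} uniformly, and points $p_i$ with $|{\Rm_{g_i}}|(p_i) = Q_i \to \infty$; one may assume $Q_i$ is (almost) the maximum on a large compact region. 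Since Lemma~\ref{Lem_iota} already gives $|{\Rm}| \le C d^{-2}(p,\cdot)$ on the conical end with constants controlled by $A$, the concentration points $p_i$ must stay in a region at bounded $\td\iota$-radius, i.e. away from infinity; in particular $r(p_i)$ is bounded. Rescale $\td g_i := Q_i g_i$. By Proposition~\ref{Prop_NLC_bound}, $\nu[g_i] \ge -C(A)$, and $\nu$ is scale-invariant, so $\nu[\td g_i] \ge -C(A)$; this gives uniform non-collapsing at unit scale for $\td g_i$ at $p_i$ (via the no-local-collapsing part of Proposition~\ref{Prop_NLC_bound}). Together with the curvature bound $|{\Rm_{\td g_i}}| \le 1 + o(1)$ on large balls (after the usual point-picking to ensure bounded curvature on the rescaled balls), Hamilton--Cheeger--Gromov compactness yields a limit $(M_\infty, g_\infty, p_\infty)$ which is a complete Ricci-flat (the $\tfrac12 g$ and $\tfrac12\LL_V g$ terms scale away) ALE manifold or orbifold with $|{\Rm_{g_\infty}}|(p_\infty) = 1$, hence nonflat; since $\Ric_{g_\infty}\equiv 0$ in dimension $4$ it is in fact a hyperkähler ALE space. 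The ALE structure follows because the rescaled annular regions converge to flat cones (the original metric was asymptotically conical and we are rescaling up). This is the step I expect to be the \textbf{main obstacle}: one must carefully track that the blow-up limit is genuinely ALE (Euclidean volume growth, cone at infinity) rather than merely complete, which is where the non-collapsing bound and the asymptotically conical control from Lemma~\ref{Lem_iota} must be combined, and one must also handle the orbifold points.

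Once a nonflat Ricci-flat ALE orbifold limit $M_\infty$ is produced, I would derive a contradiction with the topological hypothesis. A nonflat hyperkähler ALE $4$-orbifold carries a nonzero $L^2$ harmonic (anti-)self-dual $2$-form, hence has $b_2^- > 0$ (or, if one works on the manifold cover, nontrivial second homology); by Kronheimer's classification, or more elementarily by the fact that such spaces contain a compact holomorphic curve of negative self-intersection, $H_2$ is nontrivial with torsion-free part detected by intersection form. Pulling this back through the convergence, for $i$ large the region of $M_i$ where the curvature concentrates contains an embedded surface generating a nontrivial class in $H_2(M_i;\IZ)$ (or in $H_1$), which after passing to the orbifold cover $\hat M_i$ contradicts $H_2(\hat M_i;\IZ) = 0$ together with torsion-freeness of $H_1(\hat M_i;\IZ)$. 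This is exactly the "excluding an instanton in the limit" step alluded to in the introduction. This gives \ref{Prop_curv_diam_bound_a}.

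Given \ref{Prop_curv_diam_bound_a}, part \ref{Prop_curv_diam_bound_c} is immediate from the no-local-collapsing statement in Proposition~\ref{Prop_NLC_bound}: with $|{\Rm}|\le C(A)$ everywhere, on any ball $B(y,r)$ with $r<1$ we have $R \le C(A) \le (c' r)^{-2}$ after rescaling, so $\vol(B(y,c'r)) \ge c(A)(c'r)^4$, and a covering argument upgrades this to all $r<1$. For \ref{Prop_curv_diam_bound_b}, I would use the soliton identity $R + |\nabla f|^2 + f \equiv \mathrm{const}$ together with the normalization and the asymptotics \eqref{eq_f_asymptotics_iota} from Lemma~\ref{Lem_MM_regular_rep}\ref{Lem_MM_regular_rep_b}, which gives $|f + \tfrac14 r^2| \le C(A)$ on the conical end; combined with $R \ge 0$ and $R \le C(A)$, this yields $|\nabla f|^2 = \mathrm{const} - R - f \le C(A)(r+1)^2$ on $M \setminus \td\iota((r,\infty)\times N)$ once one checks, via $\triangle_f f = f - \tfrac{n}{2}\cdot\tfrac12\cdot 2$-type identities (the standard $R + \triangle f + \tfrac n2 = 0$), that $f$ cannot be much smaller than $-C(A)(r+1)^2$ there either — a maximum-principle comparison with the radial model does this. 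Finally, for \ref{Prop_curv_diam_bound_d}, assume additionally $\vol(N,h)\le A$. Using $R\ge 0$, Lemma~\ref{Lem_Im_iota_dense} gives $|M\setminus\td\iota((D,\infty)\times N)|_g \le D^4 |(0,1)\times N|_\gamma \le C(A,D)$. A compact region of bounded volume, bounded curvature $\le C(A)$ (by \ref{Prop_curv_diam_bound_a}), and uniform local volume lower bound $\ge c(A)r^4$ for $r<1$ (by \ref{Prop_curv_diam_bound_c}) automatically has bounded diameter: a maximal $\tfrac12$-separated set of points in it has cardinality $\le C(A,D)$ by the volume bounds, so the region is covered by $C(A,D)$ unit balls and is connected (as $M\setminus\td\iota((D,\infty)\times N)$ is connected for a connected ensemble), giving $\diam \le C(A,D)$. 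I would note that if $M$ is disconnected the statement is applied componentwise, and the bound in \ref{Prop_curv_diam_bound_d} is for the component containing the relevant piece.
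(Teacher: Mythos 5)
Your proofs of (b), (c) and (d) are essentially the paper's arguments (soliton identity plus maximum principle; Perelman no-local-collapsing from Proposition~\ref{Prop_NLC_bound}; volume bound from Lemma~\ref{Lem_Im_iota_dense} plus a ball-packing/graph argument), but your treatment of (a) has two genuine gaps. First, an ordering problem: you run the blow-up before establishing any uniform bound on $R$ or on $\nabla f$, and then assert that the terms $\tfrac12 g$ and $\tfrac12\LL_V g$ ``scale away''. They do not, a priori: under $\td g_i = Q_i g_i$ one has $R_{\td g_i} = Q_i^{-1}R_{g_i}$ and $|\nabla^{2}f_i|_{g_i} = |{\Ric_{g_i}}+\tfrac12 g_i|_{g_i}$, both of which are only bounded by $CQ_i$ from the point-picking, and $|\nabla f_i|$ is not controlled at all at this stage; so the blow-up limit is a priori only some steady-soliton-type structure, not Ricci flat, and even extracting a limit of the potentials/vector fields is problematic. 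The paper avoids this by proving the scalar curvature bound and assertion (b) \emph{first}, by the elementary observation that $\triangle(-f) = R + 2 > 0$ makes $-f = R + |\nabla f|^2$ subharmonic, so the maximum principle over the compact core (with boundary data controlled by Lemma~\ref{Lem_iota}) gives $R + |\nabla f|^2 \le C(A)$ there; only then does $R_{\td g_i}\to 0$ force the blow-up limit of the associated Ricci flows to be Ricci flat.

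Second, your topological contradiction relies on ``non-flat Ricci-flat ALE $\Rightarrow$ hyperk\"ahler $\Rightarrow$ nontrivial $H_2$ (Kronheimer / a $-2$-curve)''. The first implication is not a theorem: it is an open problem whether every Ricci-flat ALE $4$-manifold is (a quotient of) a hyperk\"ahler one, and Kronheimer's classification only applies in the hyperk\"ahler case. The paper instead invokes Lemma~\ref{Lem_no_instanton} (following Anderson and Zhang): a purely topological statement that the interior of a compact smooth domain in a $4$-manifold with $H_2(\cdot;\IZ)=0$ and torsion-free $H_1$ cannot be diffeomorphic to a non-trivial Ricci-flat ALE space, where non-triviality is detected by the end $S^3/\Gamma$, $\Gamma\neq 1$, supplied by the Cheeger--Naber structure theory. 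Note also that the paper performs the blow-up on the manifold covers $\hat M_i$ from Definition~\ref{Def_ensemble}\ref{Def_ensemble_3} rather than on $M_i$, which simultaneously disposes of the orbifold points and puts the homological hypotheses in place; your plan of transplanting a surface from $M_i$ to $\hat M_i$ after the fact requires an extra lifting step. To repair your argument you would need to (i) establish the scalar curvature/potential bounds before blowing up, as above, and (ii) replace the hyperk\"ahler step by an instanton-exclusion result of the type of Lemma~\ref{Lem_no_instanton}.
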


\begin{proof}
By Lemma~\ref{Lem_soliton_smooth} we may work with a smooth structure on $M$ with respect to which $M,\nabla f$, and therefore $f$, are smooth.

We first prove a uniform scalar curvature bound.
By Lemmas~\ref{Lem_MM_regular_rep}\ref{Lem_MM_regular_rep_b} and \ref{Lem_iota} we obtain a uniform $1 < R_0 (A) < \infty$ such that on $[R_0,\infty) \times N$ we have
\begin{equation} \label{eq_control_on_R0}
 0.9 \gamma \leq \td\iota^* \td g \leq 1.1 \gamma, \qquad
 0.49 r \leq |\nabla f| \circ \td\iota \leq 0.51 r, \qquad |{\Rm}| \circ \td\iota \leq 2A r^{-2}. 
\end{equation}
So, using the soliton equation $R + |\nabla f|^2 = - f$, we obtain that
\[   -f \leq - C(A) \qquad \text{on} \quad \td\iota (\{ R_0\}\times N). \]
On the other hand, since by tracing the soliton equation we have $\triangle(- f )=  R + 2 > 0$, we obtain via the maximum principle applied to $M \setminus \td\iota([R_0,\infty) \times N)$
\[ R + |\nabla f|^2 = -f \leq C(A) \qquad \text{on} \quad M \setminus \td\iota ((R_0,\infty)\times N). \]
This proves Assertion~\ref{Prop_curv_diam_bound_b} and establishes a uniform scalar curvature bound on $M$.
Combining this scalar curvature bound with Proposition~\ref{Prop_NLC_bound} implies Assertion~\ref{Prop_curv_diam_bound_c}.

Next, we prove Assertion~\ref{Prop_curv_diam_bound_a} by contradiction.
So consider a sequence of gradient expanding solitons $(g_i, V_i = \nabla^{g_i} f_i, \gamma_i)$ corresponding to  ensembles $(M_i, N_i, \iota_i)$ whose cone metrics $\gamma_i = dr^2 + r^2 h_i$ satisfy \eqref{eq_bounds_on_link} for some uniform $A$, but suppose that there are points $y_i \in M_i$ such that $Q_i := |{\Rm}_{g_i}| (y_i) \to \infty$.
Without loss of generality, we may assume that $y_i$ has been chosen at the maximum of each $|{\Rm}_{g_i}|$.
Consider the orbifold covers $\pi_i : \hat M_i \to M_i$ such that $\hat M_i$ is a smooth manifold, $H_2(\hat M_i;\IZ) = 0$ and $H_1(\hat M_i; \IZ)$ is torsion-free (see Property~\ref{Def_ensemble_3} in Definition~\ref{Def_ensemble}), set $\hat g_i := \pi_i^* g_i$ and choose lifts $\hat y_i$, $\pi_i(\hat y_i) = y_i$.

Due to Shi's derivative estimates for the associated Ricci flows and Assertion~\ref{Prop_curv_diam_bound_c}, we find that after passing to a subsequence we have smooth Cheeger-Gromov convergence
\[ (\hat M_i,Q_i \hat g_i, \hat y_i) \xrightarrow[i\to\infty]{\text{CG}} (M_\infty, g_\infty, y_\infty) \]
to a non-flat pointed Riemannian manifold with positive asymptotic volume ratio.
Due to the uniform scalar curvature bound on the original metrics $\hat g_i$, we obtain that $g_\infty$ is Ricci flat, so by \cite{Cheeger_Naber_codim4} it is a non-trivial ALE space and there is a compact domain $\Omega_\infty \subset M_\infty$ such that $\Int \Omega_\infty \approx M_\infty$ and $\partial \Omega_\infty \approx S^3/\Gamma$ for some non-trivial $\Gamma \subset SO(4)$.
This implies that for large $i$ there are compact smooth domains $\Omega_i \subset \hat M_i$ with $\Omega_i \approx \Omega_\infty$.
Using Lemma~\ref{Lem_no_instanton} below, we obtain a contradiction, which finishes the proof of Assertion~\ref{Prop_curv_diam_bound_a}.

Lastly, in order to prove Assertion~\ref{Prop_curv_diam_bound_d}, note that it suffices to consider the case $D=R_0$.
Set $K := M \setminus \td\iota ((R_0, \infty) \times N)$.
Then by Lemma~\ref{Lem_Im_iota_dense} we have the volume bound 
\[ |K|_g \leq R_0^4 \big| (0,1) \times N \big|_\gamma \leq C(A). \]
We can now derive a diameter bound on $K$ using Assertion~\ref{Prop_curv_diam_bound_c} via a ball packing argument.
Let $m$ be maximal such that we can find points $y_1, \ldots, y_m \in K$ such that the balls $B(y_j,1)$ are pairwise disjoint.
Using Assertion~\ref{Prop_curv_diam_bound_c} and the fact that we can bound the volume of a 1-tubular neighborhood of $K$ due to \eqref{eq_control_on_R0}, we obtain a bound of the form $m \leq c^{-1}(A)  (|K|_g + C(A)) \leq C(A)$.
By the maximal choice of $m$, the balls $B(y_j,2)$ cover $K$.
Consider the graph consisting of the vertices $\{ y_1, \ldots, y_m \}$ with the property that $y_{i}, y_j$ are connected by an edge if and only if $B(y_i,2)$ and $B(y_j,2)$ intersect.
Since $K$ is connected, this graph must be connected as well.
So we have $d(y_i,y_j) \leq 4(m-1)$ for any $i,j=1,\ldots,m$, which implies $\diam(K) \leq 2+4(m-1)+2 = 4m \leq C(A)$.
\end{proof}

The following lemma, which is a slight modification of \cite[Theorem~6.1]{Zhang_ALE_exclusion_2019}, goes back to an idea of Anderson \cite[Proposition~3.10]{Anderson_2008}, \cite{Anderson_1989, Anderson_2010}.

\begin{Lemma} \label{Lem_no_instanton}
Let $X$ be a (possibly non-compact) smooth 4-manifold such that $H_2(X;\IZ) = 0$ and $H_1(M;\IZ)$ is torsion-free.
Consider a compact smooth domain $\Omega \subset X$.
Then $\Int \Omega$ is not diffeomorphic to a non-trivial Ricci flat ALE space.
\end{Lemma}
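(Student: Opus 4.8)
The idea is to combine two classical facts: (i) a non-trivial Ricci-flat ALE 4-manifold carries a non-zero $L^2$ self-dual or anti-self-dual harmonic $2$-form (equivalently, the intersection form on the compactly supported cohomology is non-trivial and definite on one summand), and (ii) the topological hypotheses on $X$ force the intersection pairing on $\Int\Omega$, viewed through $X$, to vanish. So the strategy is to assume for contradiction that $\Int\Omega$ is diffeomorphic to a non-trivial Ricci-flat ALE space $(M_\infty,g_\infty)$, and derive a contradiction from the topology of $X$.

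First I would recall the structure of Ricci-flat ALE $4$-manifolds: a Ricci-flat ALE $4$-manifold has, by Hodge theory on ALE spaces, $\dim \mathcal{H}^2_{L^2} = b_2(\Int\Omega)$, where harmonic $2$-forms split into self-dual and anti-self-dual parts; the Weitzenb\"ock formula with $\Ric=0$ shows such a form is parallel, and if $b_2 = 0$ then the curvature operator restricted to $\Lambda^\pm$ has no parallel sections -- pushing this further (this is the content of Anderson's argument, as in \cite{Anderson_2008, Anderson_1989} and \cite{Zhang_ALE_exclusion_2019}) shows that a Ricci-flat ALE $4$-manifold with $b_2 = 0$ must be flat, i.e. trivial. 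Hence $b_2(\Int\Omega) \geq 1$, and moreover the intersection form $H_2(\Int\Omega;\IZ) \times H_2(\Int\Omega;\IZ) \to \IZ$ (between ordinary and compactly-supported homology, or equivalently on the image of $H_2^c \to H_2$) is non-trivial.

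Next I would push this incompatible topological data into $X$. Since $\Omega$ is a compact smooth domain in $X$, any class in the image of $H_2(\Int\Omega;\IZ) = H_2(\Omega;\IZ) \to H_2(X;\IZ)$ would have to vanish because $H_2(X;\IZ) = 0$. One then analyzes the long exact sequence of the pair $(X, X\setminus \Int\Omega)$ together with excision, which identifies $H_*(X, X\setminus\Int\Omega)$ with $H_*(\Omega,\partial\Omega)$; combined with Poincar\'e--Lefschetz duality on $\Omega$ this relates the intersection form of $\Omega$ to the maps $H_2(\partial\Omega) \to H_2(\Omega) \to H_2(X)$. The vanishing of $H_2(X;\IZ)$ and the torsion-freeness of $H_1(X;\IZ)$ (which controls the relevant $\mathrm{Ext}$/torsion terms so that the duality argument goes through integrally) then force every element of $H_2(\Omega;\IZ)$ coming from a compactly supported class to lie in the kernel of the intersection pairing, i.e. the pairing on the image of $H_2^c(\Int\Omega) \to H_2(\Int\Omega)$ is trivial -- contradicting the previous paragraph.

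The main obstacle is making the second step rigorous with \emph{integer} coefficients: one must carefully track the long exact sequences of $(X, X\setminus\Int\Omega)$ and of $(\Omega,\partial\Omega)$, apply excision, and invoke Poincar\'e--Lefschetz duality for the manifold-with-boundary $\Omega$, and the torsion-free hypothesis on $H_1(X;\IZ)$ is exactly what is needed to kill the $\mathrm{Ext}$ contributions in the universal coefficient theorem so that "self-intersections vanish" is a genuinely integral statement rather than merely a rational one. The rest -- that a non-trivial Ricci-flat ALE $4$-manifold has definite, non-zero intersection form on one chirality -- is standard and can be cited from \cite{Anderson_2008, Zhang_ALE_exclusion_2019}; I would quote it as a black box and concentrate the write-up on the topological incompatibility.
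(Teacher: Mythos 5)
There is a genuine gap, and it sits exactly in the part you propose to treat as a black box. The claim that every non-trivial Ricci-flat ALE $4$-manifold carries a non-zero $L^2$ harmonic $2$-form, i.e.\ has $b_2\geq 1$ and a non-trivial (definite) intersection form, is not a citable theorem, and neither \cite{Anderson_2008} nor \cite{Zhang_ALE_exclusion_2019} proves it. Your sketch of it is also not sound: on $2$-forms in dimension $4$ the Weitzenb\"ock formula on $\Lambda^{\pm}$ reads $\triangle=\nabla^*\nabla-2W^{\pm}+\tfrac{R}{3}$, so Ricci-flatness does \emph{not} make harmonic $L^2$ forms parallel (on Eguchi--Hanson the $L^2$ harmonic form is anti-self-dual and $W^-\neq0$), and the step ``$b_2=0\Rightarrow$ flat'' is precisely the kind of statement that is unknown for general Ricci-flat ALE spaces: these spaces are not classified outside the hyperk\"ahler case, and free quotients of hyperk\"ahler ALE spaces can kill $H_2$ over $\mathbb{Q}$ while remaining non-flat, so no ``$b_2\geq1$'' black box is available. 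A structural warning sign inside the lemma itself: if your black box were true, the hypothesis that $H_1(X;\IZ)$ is torsion-free would be superfluous, because Mayer--Vietoris for $X=\Omega\cup_{\partial\Omega}(X\setminus\Int\Omega)$ together with $H_2(S^3/\Gamma;\IZ)=0$ and $H_2(X;\IZ)=0$ already forces $H_2(\Omega;\IZ)=0$, which annihilates any rational intersection pairing without ever invoking duality, $\Ext$ terms, or torsion-freeness. The second hypothesis is there precisely because the genuine exclusion argument must handle ALE spaces whose relevant topology is torsion (in $H_1$, linking-form type information), which your outline never engages with.

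The paper's own proof takes a different and much lighter route: it invokes \cite[Theorem~6.1]{Zhang_ALE_exclusion_2019} wholesale and only explains how to remove the compactness assumption there. In Zhang's argument compactness enters solely in the first step, to conclude $H^2(X;\IZ)=0$ from $H_2(X;\IZ)=0$ via Poincar\'e duality (which then gives torsion-free $H_1$ by universal coefficients); since the lemma \emph{assumes} $H_1(X;\IZ)$ torsion-free, the Universal Coefficient Theorem gives $H^2(X;\IZ)\cong\Hom(H_2(X;\IZ),\IZ)\oplus\Ext(H_1(X;\IZ),\IZ)=0$ directly, and the rest of Zhang's proof is unchanged. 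So the geometric core you wanted to quote as standard is in fact the entire content of the cited theorem, and it is not the statement you quoted. To repair your write-up you would either have to reproduce Zhang's actual argument (which uses the vanishing of $H^2(X;\IZ)$, not merely a vanishing rational intersection form on $\Int\Omega$) or follow the paper and reduce to that theorem as a black box; the topological bookkeeping you describe, while mostly harmless, does not substitute for it.
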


\begin{proof}
This follows by a modification of the proof of
\cite[Theorem~6.1]{Zhang_ALE_exclusion_2019}. 
In this proof the compactness was used in the first step of \cite[Lemma~6.2]{Zhang_ALE_exclusion_2019} to show that $H^2(X;\IZ) = 0$ (due to Poincar\'e duality), which implied that $H_1(X;\IZ)$ is torsion-free (due to the Universal Coefficient Theorem).
Since we have assumed that $H_1(X;\IZ)$ is torsion-free, we obtain that $H^2(X;\IZ) = 0$, so we can skip the first step.
\end{proof}
\bigskip

\subsection{Proof of Proposition~\ref{Prop_properness}}

\begin{proof}[Proof of Proposition~\ref{Prop_properness}.]
Consider a sequence 
\[ p_i  \in \ \MMgeqgrad^{k^*}(M, \lb N, \lb \iota).  \]
For each $i$ let $(g_i, V_i = \nabla f_i, \gamma_i)$ be a $C^{k^*-2}$-regular representative of $p_i$ and $\td\iota_i : \IR_+ \times N \to M$ the map supplied by  Lemma~\ref{Lem_MM_regular_rep}\ref{Lem_MM_regular_rep_a}; recall that $\td\iota_i^* V_i = - \tfrac12 r \partial_r$.
Set $y_i := \td\iota_i(1,z_0)$ for some fixed $z_0 \in N$.
Note that by Lemma~\ref{Lem_soliton_smooth} there is a smooth structure on $M$, for each $i$, with respect to which $g_i, f_i$ are smooth.

Suppose that $\gamma_i \to \gamma_\infty \in \CONEgeq^{k^*}(N)$.
Our goal is to show that for a subsequence $p_i \to p_\infty \in \MMgeqgrad^{k^*}(M, \lb N, \lb \iota)$.
To do this, it suffices we verify the assumptions of Lemma~\ref{Lem_good_reps_technical}.
Assumption~\ref{Lem_good_reps_technical_i} holds trivially and Assumption~\ref{Lem_good_reps_technical_iii} follows from Proposition~\ref{Prop_curv_diam_bound}.
Assumption~\ref{Lem_good_reps_technical_ii} follows from the curvature, potential and non-collapsing bounds of Proposition~\ref{Prop_curv_diam_bound}, Shi's estimates and Cheeger-Gromov compactness for orbifolds with isolated singularities (see again \cite{Fukaya_1986, Lu_2001} for a discussion of Cheeger-Gromov compactness for orbifolds, as in the proof of Proposition~\ref{Prop_converging_representatives}).
So by Lemma~\ref{Lem_good_reps_technical} we have $p_i \to p_\infty \in \MM^{k^*}(M,N,\iota)$ and after possibly choosing different representatives via Proposition~\ref{Prop_converging_representatives}, we even have local $C^2_{\loc}$ convergence of $g_i,V_i$, which implies that $p_\infty \in \MMgeqgrad^{k^*}(M,N,\iota)$.
\end{proof}
\bigskip

\subsection{Proof of Proposition~\ref{Prop_eucl_case}}

\begin{proof}[Proof of Proposition~\ref{Prop_eucl_case}.]
Without loss of generality, we may pass to a finite orbifold cover and assume that $N = S^3$ and that $\gamma_{\eucl}$ is the standard Euclidean cone, which corresponds to the standard round metric on $S^3$.
Let $(g,V,\gamma_{\eucl})$ be a representative of an element of $\Pi^{-1}(\gamma_{\eucl})$.
We will show that then $(M,N,\iota)$ isomorphic to $(\IR^4, S^3, \iota_{\{ 0\}})$ and that $(g,V,\gamma_{\eucl})$ is equivalent to $g_{\eucl}, V_{\eucl}, \gamma_{\eucl})$.

By Lemma~\ref{Lem_nu_cone_exp}, we have $\nu[g] = 0$.
So if we consider the associated Ricci flow $(g_t)_{t > 0}$, then $\mu[g_t,\tau] = 0$ for all $t, \tau > 0$. Considering the $\WW$-functional with respect to a conjugate heat kernel concentrating at some space-time point of $(M, g_t)$ then shows that $(M,g=g_1)$ must also be a shrinking Ricci soliton, so it must be isometric to Euclidean space by \cite{Kotschwar_Wang}.
So $\LL_V g = - g$.
Let $\td\iota : \IR_+ \times S^3 \to M$ be the embedding from Lemma~\ref{Lem_MM_regular_rep}.
Then 
\[ \LL_{\frac12 r \partial_r} \td\iota^* g 
= \LL_{\td\iota^* V} \td\iota^* g 
= \iota^* \big( \LL_V g \big)
= \td\iota^* g. \]
Since $\td\iota^*g$ is asymptotic to $\gamma_{\eucl}$, this implies that $\td\iota^* g = \gamma_{\eucl}$ .
So if we identify $\IR_+ \times S^3$ with $\IR^4 \setminus \{ 0 \}$, then $\td\iota$ extends to the inverse map of an isometry $\phi :  (M,g) \to (\IR^4, g_{\eucl})$.
Since $\td\iota = \iota$ on $(r_0,\infty) \times N$ for some $r_0 > 1$, this shows that $(M,N,\iota)$ is isomorphic to $(\IR^4,S^3,\iota_{\{ 1 \}})$ and that that $(g,V,\gamma_{\eucl})$ is equivalent to $g_{\eucl}, V_{\eucl}, \gamma_{\eucl})$.
\end{proof}
\bigskip

\section{Elliptic estimates} \label{sec_elliptic}
Most of the analysis in this paper relies on estimates for solutions to elliptic equations of the form $\triangle_V u = \triangle u -  \nabla_V u = h$, where $V$ denotes a vector field on a Riemannian orbifold, which may be unbounded, but satisfies $|\nabla V| \leq C < \infty$, and $u, h$ are tensor fields.
Usually, we will have $V = \nabla f$, in which case we write $\triangle_f u = \triangle_{\nabla f}$.
We will need both Schauder-type estimates, as well as $L^2$-type spectral estimates (the latter will only be important in the degree theory with \emph{integer} coefficients).
The Schauder-type estimates follow from an approach of Lunardi \cite{Lunardi_1998}; see also \cite{Deruelle} for an adaptation to the manifold and weighted case.
As we need slightly more general estimates than what is available in these references, we have included a full and somewhat streamlined discussion of these estimates.
They can be found in Subsection~\ref{subsec_Lunardi_unweighted} (the main estimates in the unweighted case) and Subsection~\ref{subsec_Lunardi_weighted} (the weighted case).
We have prefaced our discussion with Subsection~\ref{subsec_list_Holder}, which contains a list of all H\"older norms that will be used in the paper, for easy reference.
Lastly, in Subsection~\ref{subsec_spectral_theory} we discuss the $L^2$ and spectral theory associated with these equations.

\subsection{List of H\"older norms}\label{subsec_list_Holder}
Let $(M,g)$ be a complete Riemannian orbifold 
with bounded curvature and $E$ a tensor bundle over $E$, i.e., a bundle of the form $E = T^{d}_{d^*} M = (TM)^{\otimes d} \otimes (T^* M)^{\otimes d^*}$.
Recall that $E$ inherits a natural inner product structure, norm $| \cdot |_g$ and connection $\nabla^g$ from $TM$.
Let $k \geq 0$ be an integer and $\alpha \in (0,1)$.

\begin{Definition} \label{Def_Ck}
The {\bf $C^k$ and $C^{k,\alpha}$-norms} of a section $u \in C^k_{\loc} (M;E)$ are defined by
\[ \Vert u \Vert_{C^k_g} := \sum_{l=0}^m \sup_M |\nabla^{l,g} u|_g, \qquad
\Vert u \Vert_{C^{k,\alpha}_g} := \Vert u \Vert_{C^k_g} + \sup_{\substack{\gamma : [0,r] \to M \\ 0 < r < 1}} \frac{|P^{\gamma,g} ( \nabla^k u(\gamma(0))) - \nabla^k u(\gamma(r))|_g}{r^\alpha},
 \]
where $P^\gamma$ denotes the parallel transport along $\gamma$ from $\gamma(0)$ to $\gamma(r)$ and $\gamma$ is assumed to be a unit-speed geodesic.
The spaces $C^k_{g} (M;E), C^{k,\alpha}_{g} (M;E) \subset C^k_{\loc} (M;E)$ are subspaces on which the corresponding norms are finite.
We will often drop the ``$g$'' or ``$\loc$''-subscripts if there is no chance of confusion.
\end{Definition}

\begin{Remark}
Given a cover of $M$ by suitable coordinate charts, the norms from Definition~\ref{Def_Ck} are equivalent to standard $C^k$ and $C^{k,\alpha}$-norms in these coordinates.
See \eqref{eq_sup_equivalent} in Lemma~\ref{Lem_Holder_norms_properties} below for more details.
\end{Remark}

Next, let $w \in C^\infty_{\loc} (M)$, $w > 0$, be a weight function and $V \in C^\infty_{\loc} (M; TM)$ a vector field.

\begin{Definition} \label{Def_Ckw}
The {\bf weighted $C^k_{g,w}$ and $C^{k,\alpha}_{g,w}$-norms} are defined by
\[ \Vert u \Vert_{C^k_{g,w}} := \big\Vert w^{-1} u \big\Vert_{C^k_g}, \qquad
\Vert u \Vert_{C^{k,\alpha}_{g,w}} := \big\Vert w^{-1} u \big\Vert_{C^{k,\alpha}_g}. \]
Moreover, if $k \geq 2$, then we define the {\bf weighted $C^k_{w,V}$ and $C^{k,\alpha}_{w,V}$-norms} by
\[ \Vert u \Vert_{C^k_{g,w,V}} := \Vert u \Vert_{ C^k_{g,w}} + \Vert \nabla_V u \Vert_{C^{k-2}_{g,w}}, \qquad
\Vert u \Vert_{C^{k,\alpha}_{g,w,V}} := \Vert  u \Vert_{C^{k,\alpha}_{g,w}}+ \Vert \nabla_V u \Vert_{C^{k-2,\alpha}_{g,w}}. \]
Again, we denote the corresponding Banach spaces by $C^k_{g,w}, C^{k,\alpha}_{g,w}, C^k_{g,w,V}, C^{k,\alpha}_{g,w,V}$ and we will drop the ``$g$''-subscripts if there is no chance of confusion.
\end{Definition}

In this paper, the weight $w$ near infinity will often be of the form $r^{-a}$, for $a > 0$, and the vector field $V$ will often grow at most linearly such that $\nabla V$ is uniformly bounded.

\begin{Remark}
Bounds on the $C^k_{g,w}$ and $C^{k,\alpha}_{g,w}$-norms do not imply faster decay bounds for higher derivatives.
For example, if $(M,g)$ is Euclidean space and $w = (1+r^2)^{-a/2}$ for some $a > 0$ and the radial distance function $r : \IR^n \to \IR$, then the bound $\Vert u \Vert_{C^k_{g,w}} < \infty$ is equivalent to $|\nabla^m u| \leq C r^{-a}$ for all $m = 0, \ldots, k$ (see Lemma~\ref{Lem_Holder_norms_properties} below).
\end{Remark}

In this paper we will only apply the norms from Definitions~\ref{Def_Ck}, \ref{Def_Ckw} when $(M,g)$ is the Riemannian orbifold associated with an asymptotically conical,  gradient expanding soliton $(M,g,f)$.
In this case we will use the potential function $f$ to define a weight function with decay $O(r^{-a})$.
More specifically, recall that on asymptotically conical solitons we have $f = - \frac14 r^2 + O(1)$ (see \eqref{eq_f_asymptotics_iota}).
So if we set $S := \sup_M f$, then for any $a \geq 0$ the function $w := (S - f + 1)^{-a}$ satisfies $w \leq 1$ and $w \sim r^{-a})$ near infinity.
Note that the choice of the weight depending on the potential function is mainly for convenience, as the potential function possesses good derivative bounds.
Using the potential function, we will introduce the following conventions, which will  simplify our notation.

\begin{Definition}
If $(M,g,f)$ is a gradient expanding soliton such that $g,f$ are of regularity $C^2$, $S := \sup_M f < \infty$ and $a \geq 0$, then using the unique smooth structure on $M$ supplied by Lemma~\ref{Lem_soliton_smooth}, we set for any $u \in C^k_{\loc}(M;E)$
\[ \Vert u \Vert_{C^k_{g,-a}} := \Vert u \Vert_{C^k_{g,(S-f+1)^{-a/2}}}, \qquad \Vert u \Vert_{C^{k,\alpha}_{g,-a}} := \Vert u \Vert_{C^{k,\alpha}_{g,(S-f+1)^{-a/2}}} \]
and
\begin{equation} \label{eq_CkaV}
 \Vert u \Vert_{C^k_{g,-a,V}} := \Vert u \Vert_{C^k_{g,(S-f+1)^{-a/2},V}}, \qquad \Vert u \Vert_{C^{k,\alpha}_{g,-a,V}} := \Vert u \Vert_{C^{k,\alpha}_{g,(S-f+1)^{-a/2},V}}. 
\end{equation}
As before, we will often drop the ``$g$''-subscripts if there is no chance of confusion.
\end{Definition}

We remark that the vector field $V$ in \eqref{eq_CkaV} will often be taken to be $\nabla f$.

The following lemma discusses some basic properties of these H\"older norms.
For example, it shows that a bound of the form $\Vert u \Vert_{C^k_{-a}} < \infty$ is equivalent to $|\nabla^m u| \leq C (S-f+1)^{-a/2} \lesssim r^{-a}$ for $m = 0,\ldots, k$.

\begin{Lemma} \label{Lem_Holder_norms_properties}
Suppose that $(M,g,f)$ is an $n$-dimensional gradient expanding soliton on an orbifold with isolated singularities,
$k \geq 0$ is an integer and $\alpha \in [0,1)$ and assume that  $f$ attains a maximum $S$ and
\[ |{\Rm}| \leq A,   \qquad n, k, \rank E \leq A. \]
Then for any $u \in C^{k,\alpha}_{-a} (M;E)$, $0 \leq a^* \leq a$, $k' \geq 0$, $\alpha' \in [0,1)$, we have $u \in C^{k,\alpha} (M; E)$, $\nabla u \in C^{k-1,\alpha}_{-a} (M; T^* M \otimes E)$ and
\begin{align}
\Vert  u \Vert_{C^{k,\alpha}_{-a}} &= \Vert  u \Vert_{C^{k,\alpha}} \qquad\;\, \text{if} \quad a= 0, \label{eq_a_is_0} \\
\Vert u \Vert_{C^{k,\alpha}_{-a}} &\leq \Vert u \Vert_{C^{k',\alpha'}_{-a}} \qquad \text{if} \quad k+\alpha \leq k'+\alpha', \label{eq_ka_kpap} \\
  \Vert  u \Vert_{C^{k,\alpha}_{-a^*}} &\leq C(A,a,\alpha) \Vert u \Vert_{C^{k,\alpha}_{-a}},  \label{eq_Cka_Ckaa_bound} \\
\Vert \nabla u \Vert_{C^{k-1,\alpha}_{-a}} &\leq C(A,a,\alpha) \Vert u \Vert_{C^{k,\alpha}_{-a}},\label{eq_Ckm1a_Cka_bound} \\
c(A,a,\alpha)\Vert u \Vert_{C^{k,\alpha}_{-a}}
&\leq  \bigg( \sum_{m=0}^{k-1} \Vert \nabla^m u \Vert_{C^0_{-a}} + \Vert \nabla^{k} u \Vert_{C^{0,\alpha}_{-a}} \bigg) \leq C(A,a,\alpha) \Vert u \Vert_{C^{k,\alpha}_{-a}}, \label{eq_equivalent_norms}
\end{align}
where in the case $\alpha = 0$ the norms $C^{k,\alpha}_{-a}$ can be replaced with $C^k_{-a}$.
Moreover, suppose that $\{ \vec x_p : U_p \to B_{2r} \}_{p \in M}$ denotes a collection of orbifold-coordinate charts, where $B_r := B(0, r) \subset \IR^n$, $r > 0$ such that $\vec x_p (p) = \vec 0$.
If $p$ is a singular point, then $U_p$ denotes the corresponding finite orbifold cover of an open neighborhood of $p$ and $\vec x_p$ is understood to be equivariant with respect to the group describing the singularity.
Suppose that in each local coordinate chart we have
\[ |\partial^m (g_{ij} - \delta_{ij}) | \leq A \quad \text{on} \quad B_{2r}, \qquad m = 0, \ldots, k+1. \]
Then
\begin{equation} \label{eq_sup_equivalent}
 c(A,a, \alpha,r) \Vert u \Vert_{C^{k,\alpha}_{-a}}
\leq \sup_{p \in M} (S-f(p)+1)^{a/2} \big\Vert (\vec x_p)_* u \big\Vert_{C^{k,\alpha}_{\eucl}( B_r )} \leq C(A,a,\alpha,r) \Vert u \Vert_{C^{k,\alpha}_{-a}}, 
\end{equation}
where $\Vert (\vec x_p)_* u \Vert_{C^{k,\alpha}_{\eucl}( B_r )}$  denotes the standard Euclidean H\"older norm of the tensor coefficients of $u$ in the coordinates $\vec x_p$, restricted to $B_r \subset \IR^n$.

Lastly, for any section $u' \in C^{k,\alpha}_{-a'}(M;E')$, $a' \geq 0$, of some tensor bundle $E'$ and any global bilinear operation ``$*$'' we have
\begin{equation} \label{eq_u_star_up}
 \Vert u * u' \Vert_{C^{k,\alpha}_{-(a+a')}} \leq C \Vert u \Vert_{C^{k,\alpha}_{-a}} \Vert u' \Vert_{C^{k,\alpha}_{-a'}}. 
\end{equation}
\end{Lemma}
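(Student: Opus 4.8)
The plan is to prove Lemma~\ref{Lem_Holder_norms_properties} by establishing the assertions essentially in the order stated, since each relies on the weight function $w := (S - f + 1)^{-a/2}$ having good derivative bounds, which follow from the soliton identities. First I would record the key analytic input: on a gradient expanding soliton with $|{\Rm}| \leq A$ one has the soliton identity $\Ric_g + \nabla^2 f + \tfrac12 g = 0$, hence $|\nabla^2 f| \leq |{\Ric}| + \tfrac12 \leq C(A)$, and also $|\nabla f|^2 = -f + R + \const$, so that $S - f + 1$ is comparable to $1 + |\nabla f|^2$ and in particular $\nabla (S-f) = -\nabla f$ has $|\nabla(S-f+1)| \leq C\,(S-f+1)^{1/2}$. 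Differentiating once more and using Shi-type estimates (or rather, bounded curvature plus elliptic regularity for the equation $\triangle f = -R - \tfrac n2$) gives $|\nabla^m f| \leq C(A,m)$ for $m \geq 2$. From these one computes, for the weight $w = (S-f+1)^{-a/2}$, the pointwise bounds $|\nabla^m w| \leq C(A,a,m)\, w$ for all $m \leq k+1$; this is the workhorse estimate. Indeed $\nabla w = \tfrac a2 (S-f+1)^{-a/2-1} \nabla f$, and since $|\nabla f| \lesssim (S-f+1)^{1/2}$ we get $|\nabla w| \lesssim (S-f+1)^{-a/2-1/2} \lesssim w$; higher derivatives follow inductively because each extra derivative either hits $f$ (bounded, $m\geq 2$) or produces another factor of $(S-f+1)^{-1/2}$.

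With the bound $|\nabla^m w| \leq C\, w$ in hand, the assertions follow by routine manipulations. Equation \eqref{eq_a_is_0} is immediate since $w \equiv 1$ when $a = 0$. For \eqref{eq_ka_kpap} one notes $w \leq 1$ (as $f \leq S$), so lowering $k+\alpha$ only drops terms; one must be slightly careful that the H\"older seminorm at order $k$ is controlled by the $C^{k+1}$ (or $C^{k',\alpha'}$) norm, using that geodesics of length $<1$ and $|\nabla^{k+1} u| \leq \Vert u\Vert_{C^{k'}}$ give the H\"older bound via the mean value estimate for parallel transport. Inequality \eqref{eq_Cka_Ckaa_bound} follows because $(S-f+1)^{-a^*/2} = (S-f+1)^{(a-a^*)/2}\cdot(S-f+1)^{-a/2} \leq (S-f+1)^{-a/2}$ combined with the product rule and the weight derivative bounds. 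For \eqref{eq_Ckm1a_Cka_bound}: $\nabla u = \nabla(w \cdot w^{-1}u) = (\nabla w)(w^{-1}u) + w\,\nabla(w^{-1}u)$, so $w^{-1}\nabla u = (w^{-1}\nabla w)(w^{-1}u) + \nabla(w^{-1}u)$, and $|w^{-1}\nabla w| \leq C$ by the workhorse bound; iterating handles all derivatives up to order $k-1$ and the H\"older term. The norm equivalence \eqref{eq_equivalent_norms} is the statement that $\Vert w^{-1}u\Vert_{C^{k,\alpha}}$ is comparable to $\sum_{m<k}\Vert w^{-1}\nabla^m u\Vert_{C^0} + \Vert w^{-1}\nabla^k u\Vert_{C^{0,\alpha}}$; expanding $\nabla^m(w^{-1}u)$ by Leibniz into terms $\nabla^i(w^{-1}) * \nabla^{m-i}u$ and using $|\nabla^i(w^{-1})| \leq C\,w^{-1}$ (same computation as for $w$, since $w^{-1} = (S-f+1)^{a/2}$ and $a \geq 0$ plays symmetrically) gives both inequalities.

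For the coordinate estimate \eqref{eq_sup_equivalent}, I would invoke the standard fact (following from bounded curvature and the assumed control $|\partial^m(g_{ij}-\delta_{ij})| \leq A$ on $B_{2r}$) that the intrinsic norms $\Vert \cdot \Vert_{C^{k,\alpha}_g}$ of Definition~\ref{Def_Ck} are uniformly equivalent, over all $p$, to the Euclidean coordinate norms $\Vert (\vec x_p)_* (\cdot)\Vert_{C^{k,\alpha}_{\eucl}(B_r)}$ on the charts; this is because covariant derivatives differ from coordinate derivatives by Christoffel symbols and their derivatives, all bounded by the hypothesis, and parallel transport along short geodesics is $C^{k,\alpha}$-close to the coordinate identification. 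Then one inserts the weight: since $S - f$ varies by a bounded amount on each chart $B_r$ (as $|\nabla f| \leq C(1+\dist(p,\cdot))$ and the chart has bounded diameter, actually one needs $|\nabla f|$ could be large, so more carefully: $|f(q) - f(p)| \leq \diam \cdot \sup_{B_r}|\nabla f|$, and $\sup_{B_r}|\nabla f| \leq |\nabla f(p)| + C r \lesssim (S-f(p)+1)^{1/2}+C$, so $(S-f(q)+1)/(S-f(p)+1)$ is bounded above and below by constants depending on $A,r$ — this uses $S - f(p) + 1 \geq 1$ crucially), the factor $(S-f(p)+1)^{a/2}$ is comparable to $w^{-1}(q)$ throughout $B_r$, giving \eqref{eq_sup_equivalent}. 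Finally \eqref{eq_u_star_up} is immediate from the multiplicativity of the weight, $w_{a+a'} = w_a w_{a'}$ where $w_a := (S-f+1)^{-a/2}$, together with the Leibniz-rule estimate $\Vert v * v'\Vert_{C^{k,\alpha}} \leq C\Vert v\Vert_{C^{k,\alpha}}\Vert v'\Vert_{C^{k,\alpha}}$ for the unweighted norms applied to $v = w_a^{-1}u$, $v' = w_{a'}^{-1}u'$.

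The main obstacle I anticipate is not any single deep step but the bookkeeping in \eqref{eq_sup_equivalent}: one must verify that $S - f$ oscillates by a bounded \emph{multiplicative} factor on each unit-size coordinate ball, uniformly in $p$, and this genuinely uses the normalization $S - f + 1 \geq 1$ (so that additive errors of bounded size become multiplicative errors of bounded size) together with the a priori bound $|\nabla f| \lesssim (S-f+1)^{1/2}$ coming from $R + |\nabla f|^2 + f = \const$ and $R \geq -\tfrac n2$ (Theorem~\ref{Thm_soliton_PSC}). Everything else is a mechanical consequence of the single estimate $|\nabla^m w^{\pm 1}| \leq C(A,a,m)\,w^{\pm 1}$, which itself reduces to controlling derivatives of the potential $f$ — bounded from the second derivative onward by the soliton equation and elliptic regularity, and growing like $(S-f+1)^{1/2}$ at first order.
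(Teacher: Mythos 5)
Your proposal is correct and takes essentially the same route as the paper: the soliton identities together with Shi-type estimates give $|\nabla f|\leq C(A)(S-f+1)^{1/2}$ and $|\nabla^m f|\leq C(A,m)$ for $m\geq 2$, hence $|\nabla^m w^{\pm1}|\leq C\,w^{\pm1}$ for the weight $w=(S-f+1)^{-a/2}$, after which \eqref{eq_a_is_0}--\eqref{eq_u_star_up} are product-rule manipulations and \eqref{eq_sup_equivalent} follows by comparing covariant with coordinate derivatives and using the bounded multiplicative oscillation of $S-f+1$ on each chart, exactly as in the paper. One cosmetic slip: in your justification of \eqref{eq_Cka_Ckaa_bound} the bounded factor should be $(S-f+1)^{-(a-a^*)/2}\leq 1$ multiplying $(S-f+1)^{a/2}u$ (the inverse weights entering the norm), not the pointwise inequality you wrote for the weights themselves, but the intended product-rule argument is the right one.
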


\begin{proof}
The identities \eqref{eq_a_is_0} and \eqref{eq_ka_kpap} are direct consequences of the definition.
To see the remaining bounds, we may for simplicity adjust $f$ by a bounded constant such that $S = \sup_M f = -1$; so $(S-f+1)^{-a/2}=(-f)^{-a/2}$.
Then, evaluating $|\nabla f|^2 + R + f \equiv const$ at the maximum of $f$ implies that
\begin{equation} \label{eq_const_is_CA}
 \big| |\nabla f|^2 + R + f  \big| \leq C(A). 
\end{equation}
Using Shi's estimates and the soliton equation implies that for $m \geq 0$
\begin{equation} \label{eq_nab_higher_f}
 |\nabla^{m+2} f| = |\nabla^{m} {\Ric}| \leq C(m,A). 
\end{equation}
So for any $b \geq 0$ and $m \geq 0$ we have by \eqref{eq_const_is_CA} and \eqref{eq_nab_higher_f}
\begin{equation} \label{eq_nabmfmb}
 \big\Vert\nabla \big( \log(-f)\big) \big\Vert_{C^m}, \;\big\Vert (-f)^{-b} \big\Vert_{C^m} \leq C(m,b,A). 
\end{equation}

To see \eqref{eq_Cka_Ckaa_bound} note that, using \eqref{eq_nabmfmb},
\begin{multline*}
  \Vert  u \Vert_{C^{k,\alpha}_{-a^*}}
= \big\Vert (-f)^{a^*/2} u \big\Vert_{C^{k,\alpha}}
= \big\Vert (-f)^{-(a-a^*)/2} (-f)^{a/2} u \big\Vert_{C^{k,\alpha}} \\
= \big\Vert (-f)^{-(a-a^*)/2} \big\Vert_{C^{k,\alpha}} \big\Vert (-f)^{a/2} u \big\Vert_{C^{k,\alpha}}
\leq C(A,a,\alpha) \big\Vert u \big\Vert_{C^{k,\alpha}_{-a}}. 
\end{multline*}
For \eqref{eq_Ckm1a_Cka_bound} note that, again using \eqref{eq_nabmfmb},
\begin{multline*}
 \Vert \nabla u \Vert_{C^{k-1,\alpha}_{-a}}
= \big\Vert (-f)^{a/2} \nabla u \big\Vert_{C^{k-1,\alpha}_{-a}}
\leq \big\Vert \nabla \big( (-f)^{a/2} u \big) \big\Vert_{C^{k-1,\alpha}} + \frac{a}2 \bigg\Vert \frac{\nabla f}{f} \otimes \big( (-f)^{a/2}  u \big)  \bigg\Vert_{C^{k-1,\alpha}} \\
\leq \big\Vert  (-f)^{a/2} u \big\Vert_{C^{k,\alpha}} + \frac{a}2 \big\Vert \nabla (\log(-f)) \big\Vert_{C^{k-1,\alpha}} \big\Vert (-f)^{a/2}    u  \big\Vert_{C^{k-1,\alpha}} \leq C(A,a,\alpha) \Vert u \Vert_{C^{k,\alpha}_{-a}}.
\end{multline*}
The second inequality in \eqref{eq_equivalent_norms} follows by repeatedly applying \eqref{eq_Ckm1a_Cka_bound} and \eqref{eq_ka_kpap}.
The first inequality follows by repeated application of
\begin{multline*}
 \Vert u \Vert_{C^{k,\alpha}_{-a}}
= \big\Vert (-f)^{a/2} u \big\Vert_{C^{k,\alpha}}
\leq \big\Vert (-f)^{a/2} u \big\Vert_{C^{0}} + \big\Vert \nabla ( (-f)^{a/2} u) \big\Vert_{C^{k-1,\alpha}} \\
\leq  \Vert  u \Vert_{C^{0}_{-a}} + \Vert  \nabla u \Vert_{C^{k-1,\alpha}_{-a}} + \frac{a}2 \big\Vert \nabla (\log(-f)) \big\Vert_{C^{k-1,\alpha}} \big\Vert (-f)^{a/2} u \big\Vert_{C^{k-1,\alpha}} \\
\leq C(A,a,\alpha)\big(\Vert  u \Vert_{C^{k-1,\alpha}_{-a}} + \Vert \nabla u \Vert_{C^{k-1,\alpha}_{-a}} \big).
\end{multline*}

To see \eqref{eq_sup_equivalent}, we first consider the case $a = 0$. 
In local coordinates $\vec x_p$ the differences $\nabla^m u - \partial^m u$, $m = 0, \ldots, k$, are bounded in terms of $u, \ldots, \partial^{m-1} u$ and metric derivatives up to order $k$.
Moreover, due to the derivative bound on $g_{ij}$, we obtain that the parallel transport map in the same coordinates satisfies $|P^{\gamma,g} - \id_{\IR^n} | \leq C(A) r'$, where $\gamma :[0,r'] \to U_p$ is a unit speed geodesic with $0 < r' < r$.
Combining these two bounds implies \eqref{eq_sup_equivalent} for $a = 0$.
Suppose now that $a > 0$ and write $Q_1 \sim Q_2$ if $c(A,a,\alpha,r) Q_1 \leq Q_2 \leq C(A,a,\alpha,r) Q_1$.
Then
\begin{multline*}
 \Vert u \Vert_{C^{k,\alpha}_{-a}} 
= \Vert (-f)^{a/2} u \Vert_{C^{k,\alpha}} 
\sim \sup_{p \in M}  \big\Vert (\vec x_p)_* ((-f)^{a/2}u) \big\Vert_{C^{k,\alpha}_{\eucl}(B_r)} \\
= \sup_{p \in M}(-f(p))^{a/2} \bigg\Vert (\vec x_p)_* \Big( \frac{-f}{-f(p))} \Big)^{a/2} \bigg\Vert_{C^{k,\alpha}_{\eucl}(B_r)}\big\Vert (\vec x_p)_* u \big\Vert_{C^{k,\alpha}_{\eucl}(B_r)}.
\end{multline*}
Due to \eqref{eq_const_is_CA} and \eqref{eq_nab_higher_f}, the second last H\"older norm is bounded from above and below by a constant of the form $C(A,\alpha,r)$.

To see \eqref{eq_u_star_up} note that
\begin{multline*}
 \Vert u * u' \Vert_{C^{k,\alpha}_{-(a+a')}}
= \big\Vert (-f)^{-(a+a')/2} u * u' \big\Vert_{C^{k,\alpha}} \\
\leq  C \big\Vert (-f)^{-a/2} u \big\Vert_{C^{k,\alpha}} \big\Vert (-f)^{-a'/2} u' \big\Vert_{C^{k,\alpha}} 
=  C \Vert u \Vert_{C^{k,\alpha}_{-a}} \Vert u' \Vert_{C^{k,\alpha}_{-a'}}. \qedhere
\end{multline*}
\end{proof}
\bigskip

\subsection{Unweighted estimates} \label{subsec_Lunardi_unweighted}
The following proposition is a generalization of the estimates in \cite{Lunardi_1998,Deruelle}.
The key point of the proposition is that it does not depend on an $L^\infty$-bound on $V$ and $V$ may even be unbounded.

\begin{Proposition} \label{Prop_Lunardi}
Let $(M,g)$ be a complete, $n$-dimensional Riemannian
orbifold, $k \geq 0$ and $\alpha \in (0,1)$.
Let $V \in C^\infty (M;TM)$ be a vector field on $M$, $E$ a tensor bundle over $M$ and $b_1, b_0$ sections of the bundles $TM \otimes \End(E)$, $\End(E)$, respectively.
Consider the following equation for sections $u, v$ of $E$
\begin{equation} \label{eq_Lu_is_f}
 L u = \triangle u + \nabla_V u + b_1 * \nabla u + b_0 * u = v, 
\end{equation}
where ``$*$'' denotes the appropriate contraction.
Suppose that there is a $\lambda > 0$ such that at any point $p \in M$ and any $e \in E_p$ we have
\begin{equation} \label{eq_b1b0_la_bound}
 \tfrac14 |(b_1 * \cdot) \cdot e|^2 +  (b_0 * e) \cdot e \leq - \lambda |e|^2, 
\end{equation}
and suppose that
\begin{multline} \label{eq_b1b0_A_bound}
 \Vert {\Rm} \Vert_{C^{k+5}}, 
 \Vert \nabla V \Vert_{C^{k+3}},
 \Vert b_1 \Vert_{C^{k+3}}, 
 \Vert b_0 \Vert_{C^{k+3}}, \lambda^{-1}, \\
 n,
 \rank E,
k,
\alpha^{-1}, (1-\alpha)^{-1},
(\inj (M,g))^{-1}
  \leq A < \infty 
\end{multline}

Then for every $v \in C^{k,\alpha}(M;E)$ the equation \eqref{eq_Lu_is_f} has a bounded solution $u \in C^{k+2,\alpha}_{0,V}(M;E)$, which is unique among all bounded sections in $C^2_{\loc}(M;E)$.
Moreover, we have
\begin{equation} \label{eq_u_f_bound_m_alph}
 \Vert u \Vert_{C^{k+2,\alpha}_{0,V}} \leq C(A)  \Vert v \Vert_{C^{k,\alpha}} 
\end{equation}
and
\begin{equation} \label{eq_u_f_bound_L_inf}
 \Vert u \Vert_{L^\infty} \leq \frac1{\lambda}  \Vert v \Vert_{L^\infty}. 
\end{equation}
\end{Proposition}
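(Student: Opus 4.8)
The plan is to follow Lunardi \cite{Lunardi_1998} and its manifold adaptation by Deruelle \cite{Deruelle}, splitting the proof into four parts: (i) the pointwise bound \eqref{eq_u_f_bound_L_inf} and uniqueness, (ii) existence by a compact exhaustion, (iii) the interior Schauder estimate contained in \eqref{eq_u_f_bound_m_alph}, and (iv) reading off the $\nabla_V u$-part of the norm from the equation. Throughout, the bounds \eqref{eq_b1b0_A_bound} provide uniform bounded geometry, so after passing to local orbifold covers we may work in a uniform family of $\Gamma$-equivariant geodesic charts of a definite size in which $g$ and all coefficients are uniformly $C^{k+\cdot,\alpha}$; that $u,v$ are sections of a tensor bundle $E$ with diagonal principal part $\triangle$ introduces no new difficulty over the scalar case.

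For (i), let $u \in C^2_{\loc}(M;E)$ be bounded with $Lu = v$ and set $\phi := |u|^2$. From $\triangle\phi + \nabla_V\phi = 2\langle \triangle u + \nabla_V u, u\rangle + 2|\nabla u|^2$ and $\triangle u + \nabla_V u = v - b_1 * \nabla u - b_0 * u$, completing the square in $\nabla u$ and invoking \eqref{eq_b1b0_la_bound} yields the differential inequality
\[ \triangle\phi + \nabla_V\phi \;\geq\; 2\langle v, u\rangle + 2\lambda\phi \;\geq\; -2\Vert v\Vert_{L^\infty}\sqrt{\phi} + 2\lambda\phi . \]
Since $M$ is complete with bounded curvature and $|\nabla V| \leq A$, there is a proper $W \in C^\infty(M)$ with $W \geq 1$ and $(\triangle + \nabla_V)W \leq C\,W$ — for instance a smoothing of $1 + d^2(p_0,\cdot)$, estimated via Laplacian comparison and the linear growth of $V$. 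Fixing $\eps > 0$ with $\eps C < \lambda$, one has $(\triangle + \nabla_V)W^{\eps} \leq \eps C\, W^{\eps}$; applying the maximum principle to $\phi - \delta W^{\eps}$ and letting $\delta \searrow 0$ gives $\sup_M \sqrt{\phi} \leq \lambda^{-1}\Vert v\Vert_{L^\infty}$, i.e. \eqref{eq_u_f_bound_L_inf}. Taking $v = 0$ shows that the zero solution is the only bounded one, hence uniqueness.

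For (ii), fix an exhaustion of $M$ by smooth relatively compact domains $\Omega_1 \subset \Omega_2 \subset \cdots$. On each $\Omega_j$ the coefficients of $L$ are bounded in $C^{k,\alpha}(\Omega_j)$ and \eqref{eq_b1b0_la_bound} makes $L$ satisfy the maximum principle, so the Dirichlet problem $Lu_j = v$ on $\Omega_j$, $u_j = 0$ on $\partial\Omega_j$, has a unique solution $u_j \in C^{k+2,\alpha}(\overline{\Omega_j};E)$ by standard Schauder theory for systems with diagonal principal part. Running the argument of (i) on $\Omega_j$, where non-compactness is a non-issue, gives $\Vert u_j\Vert_{L^\infty(\Omega_j)} \leq \lambda^{-1}\Vert v\Vert_{L^\infty}$. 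Granting the uniform interior estimate of (iii), the $u_j$ are bounded in $C^{k+2,\alpha}$ uniformly on compact subsets, so after a diagonal extraction $u_j \to u$ in $C^{k+2,\alpha}_{\loc}$; the limit solves $Lu = v$, is bounded, lies in $C^{k+2,\alpha}(M;E)$ by uniformity of the estimate, and is the unique such solution by (i).

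The crux is (iii): the scale-invariant, base-point-independent interior estimate $\Vert u\Vert_{C^{k+2,\alpha}} \leq C(A)(\Vert u\Vert_{L^\infty} + \Vert v\Vert_{C^{k,\alpha}})$, whose difficulty is entirely the unboundedness of $V$, since in a unit ball about $p$ the drift has $C^{k,\alpha}$-norm comparable to $|V(p)|$. Following Lunardi, one localizes at the scale $r_p := \min\{1, |V(p)|^{-1}\}$ about $p$ (with $|V(p)| \lesssim 1$ being the harmless bounded-drift case) and rescales $B(p,r_p)$ to the unit ball via $g \mapsto r_p^{-2}g$, $V \mapsto r_p V$: the rescaled drift, lower-order coefficients and curvature then all have $C^{k,\alpha}$-norm $\leq C(A)$ and the injectivity radius stays $\geq A^{-1}$, so classical interior Schauder applies uniformly on the rescaled ball. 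The technical heart of \cite{Lunardi_1998} — and the step I expect to be the main obstacle — is to run this over a covering of $M$ at the varying scales $r_p$ and interpolate, so that the powers of $|V(p)|$ produced when undoing the rescalings are absorbed and one recovers the estimate with the plain sup-norms and constant $C(A)$, and then to bootstrap from $C^{2,\alpha}$ to $C^{k+2,\alpha}$. Finally, for (iv), rearranging \eqref{eq_Lu_is_f} gives $\nabla_V u = v - \triangle u - b_1 * \nabla u - b_0 * u$, where every term on the right lies in $C^{k,\alpha}$ with norm $\leq C(A)(\Vert u\Vert_{C^{k+2,\alpha}} + \Vert v\Vert_{C^{k,\alpha}}) \leq C(A)\Vert v\Vert_{C^{k,\alpha}}$ by \eqref{eq_b1b0_A_bound} and (iii); this yields $\Vert \nabla_V u\Vert_{C^{k,\alpha}} \leq C(A)\Vert v\Vert_{C^{k,\alpha}}$ and hence the full estimate \eqref{eq_u_f_bound_m_alph}.
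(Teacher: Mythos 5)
Your steps (i), (ii) and (iv) are essentially sound and parallel the paper: the $L^\infty$ bound and uniqueness via the maximum principle with a proper barrier $W^\eps$ (the paper uses a linearly growing Greene--Wu function $w\geq c|V|$ rather than a smoothing of $1+d^2$, but the mechanism is the same), an approximation of the non-compact problem (the paper uses a doubling construction producing closed orbifolds and cutoffs of $V,b_1,b_0,v$ rather than Dirichlet problems on an exhaustion, a harmless difference), and the observation that the $\nabla_V u$-part of the $C^{k+2,\alpha}_{0,V}$-norm is recovered by rearranging the equation.

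The gap is in your step (iii), which is the crux. The uniform estimate $\Vert u\Vert_{C^{k+2,\alpha}}\leq C(A)(\Vert u\Vert_{L^\infty}+\Vert v\Vert_{C^{k,\alpha}})$ cannot be obtained by localizing at scale $r_p=\min\{1,|V(p)|^{-1}\}$, rescaling, and ``absorbing the powers of $|V(p)|$ by a covering and interpolation'': if you rescale $B(p,r_p)$ by $\hat g=r_p^{-2}g$, the rescaled drift $r_p^2V$, the coefficients $r_pb_1,\ r_p^2b_0$ and the datum $r_p^2v$ are indeed uniformly controlled, but classical interior Schauder then yields only $r_p|\nabla u|(p)+r_p^{2}|\nabla^2u|(p)\leq C(A)\bigl(\Vert u\Vert_{L^\infty}+\Vert v\Vert_{C^{0,\alpha}}\bigr)$, i.e.\ a bound on $|\nabla^2u|(p)$ degenerating like $|V(p)|^{2}$ at infinity. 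No covering or interpolation argument removes these factors; this is precisely the obstruction that makes operators with unbounded drift non-trivial, and it is not how Lunardi's proof proceeds. The paper (following Lunardi/Deruelle) instead argues parabolically: it constructs the semigroup $P_t$ of $L+\lambda$ and proves the smoothing estimate $\Vert P_tu\Vert_{C^{2,\beta_2}}\leq C(A)\bigl(t^{-1-(\beta_2-\beta_1)/2}+1\bigr)\Vert u\Vert_{C^{0,\beta_1}}$ by Bernstein-type maximum-principle arguments applied to quantities such as $|u_t|^2+t|u^{(1)}_t|^2+\tfrac12 t^2|u^{(2)}_t|^2+\tfrac13 t^3|u^{(3)}_t|^2$ (here only the \emph{global} bounds on $\nabla V$, $\Rm$, $b_1$, $b_0$ enter, never a local bound on $V$ itself), then represents the solution as $u=\int_0^\infty e^{-\lambda t}P_tv\,dt$, splits the integral at a parameter $\xi$ and recovers $C^{2,\alpha}$ by interpolation between $C^{2,\alpha-\delta}$ and $C^{2,\alpha+\delta}$. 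Moreover, the case $k\geq1$ is not handled by a local bootstrap (which would meet the same drift problem and also the extra term $R(\cdot,V)u$): the paper reduces $k$ to $0$ beforehand by coupling $u$ with $c\nabla u$ into a single section of $E\oplus(T^*M\otimes E)$ satisfying an equation of the same structure, for which the sign condition \eqref{eq_b1b0_la_bound} persists for small $c(A)$ because $\Rm(\cdot,V,\cdot,\cdot)$ is controlled through $\nabla^2V$. Without replacing your step (iii) by an argument of this type, the proof does not go through.
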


The most commonly used consequence of this proposition is the following:

\begin{Corollary} \label{Cor_Lundardi}
Let $(M,g,f)$ be an $n$-dimensional gradient expanding soliton with bounded curvature such that $f$ attains a maximum, let $E$ be a tensor bundle over $M$ and suppose that $k \geq 0$ and $\alpha \in (0,1)$.
Then for any $b > 0$ the operator
\[ L := \triangle_f - b : C^{k+2,\alpha}_{0,\nabla f} (M;E) \longrightarrow C^{k,\alpha} (M;E) \]
is well defined and invertible.
Here $\triangle_f = \triangle_{\nabla f} = \triangle - \nabla_{\nabla f}$.
Moreover, the operator norm of $L^{-1}$ can be bounded depending only on bounds on $|{\Rm}|$, $\inj(M,g)$, $n$, $\rank E$, $k$, $\alpha$ and $b$.
\end{Corollary}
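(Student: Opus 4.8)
The plan is to derive Corollary~\ref{Cor_Lundardi} from Proposition~\ref{Prop_Lunardi} by checking that the operator $\triangle_f - b$ fits into the framework of \eqref{eq_Lu_is_f}. First I would set $V = \nabla f$, $b_1 = 0$, and $b_0 = -b \cdot \id_E$, so that $Lu = \triangle u + \nabla_V u + b_1 * \nabla u + b_0 * u = \triangle_f u - bu$ after noting the sign convention: indeed $\nabla_V u = \nabla_{\nabla f} u$ and $\triangle_f = \triangle - \nabla_{\nabla f}$, so one must be careful to write the equation as $\triangle u + \nabla_{-\nabla f} u - bu = v$, i.e.\ apply the proposition with $V = -\nabla f$. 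The structure condition \eqref{eq_b1b0_la_bound} becomes $(b_0 * e)\cdot e = -b|e|^2 \le -\lambda|e|^2$, which holds with $\lambda = b > 0$. This already pins down $\lambda^{-1} = b^{-1}$.

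Next I would verify the remaining hypotheses of \eqref{eq_b1b0_A_bound}. The terms $\Vert b_1\Vert_{C^{k+3}} = 0$ and $\Vert b_0 \Vert_{C^{k+3}} = b$ are trivially controlled. For $\Vert\nabla V\Vert_{C^{k+3}} = \Vert \nabla^2 f\Vert_{C^{k+3}}$, I would invoke the soliton equation $\nabla^2 f = -\Ric - \tfrac12 g$ together with Shi-type estimates (as in \eqref{eq_nab_higher_f} of Lemma~\ref{Lem_Holder_norms_properties}): on a gradient expanding soliton with bounded curvature, all covariant derivatives $|\nabla^m \Rm|$ are bounded in terms of $|\Rm|$, $n$, and $m$, hence $\Vert\nabla^2 f\Vert_{C^{k+3}}$ is bounded in terms of $|\Rm|$, $n$, $k$. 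Similarly $\Vert\Rm\Vert_{C^{k+5}}$ is controlled. The quantities $n$, $\rank E$, $k$, $\alpha^{-1}$, $(1-\alpha)^{-1}$, $(\inj(M,g))^{-1}$ are all among the data we are allowed to bound $\Vert L^{-1}\Vert$ by. So there is a constant $A < \infty$ depending only on bounds for $|\Rm|$, $\inj(M,g)$, $n$, $\rank E$, $k$, $\alpha$, $b$ such that \eqref{eq_b1b0_A_bound} holds.

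Then I would apply Proposition~\ref{Prop_Lunardi}: for every $v \in C^{k,\alpha}(M;E)$ there is a unique bounded solution $u \in C^{k+2,\alpha}_{0,\nabla f}(M;E)$ (uniqueness among bounded $C^2_{\loc}$ sections; note $C^{k+2,\alpha}_{0,\nabla f}$-sections are in particular bounded) with $\Vert u\Vert_{C^{k+2,\alpha}_{0,\nabla f}} \le C(A)\Vert v\Vert_{C^{k,\alpha}}$. This says exactly that $L : C^{k+2,\alpha}_{0,\nabla f}(M;E) \to C^{k,\alpha}(M;E)$ is a bounded bijection whose inverse is bounded with operator norm $\le C(A)$; and $A$ was chosen to depend only on the stated quantities. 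That $L$ is well defined and bounded as a map on these spaces is immediate from the definitions of the norms (Definition~\ref{Def_Ckw}, \ref{Def_Ck}) together with the same derivative bounds on $f$ and $\Rm$.

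I do not expect a serious obstacle here, since this is essentially an application of the main proposition of the subsection; the only points requiring care are (i) correctly matching the sign conventions between $\triangle_f = \triangle - \nabla_{\nabla f}$ and the form $\triangle u + \nabla_V u$ in \eqref{eq_Lu_is_f}, which forces $V = -\nabla f$, and (ii) confirming that the smoothness/boundedness of the coefficients $V$, $b_1$, $b_0$ required in \eqref{eq_b1b0_A_bound} follows from the soliton equation and Shi's estimates rather than being assumed. Both are routine given Lemma~\ref{Lem_Holder_norms_properties} and the basic soliton identities recalled in Subsection~\ref{subsec_basic_identities}.
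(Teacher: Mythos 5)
Your proposal is correct and follows exactly the paper's own argument: the paper's proof of Corollary~\ref{Cor_Lundardi} is precisely the application of Proposition~\ref{Prop_Lunardi} with $V=-\nabla f$ and $b_0=-b\,\id$ (so $\lambda=b$), and your verification of the hypotheses via the soliton equation and Shi's estimates is the routine bookkeeping the paper leaves implicit.
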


\begin{proof}
This follows by applying Proposition~\ref{Prop_Lunardi} for $V = -\nabla f$ and $b_0 = -b \id$.
\end{proof}
\bigskip

\begin{proof}[Proof of Proposition~\ref{Prop_Lunardi}.]
Let us first establish \eqref{eq_u_f_bound_L_inf}.
The equation \eqref{eq_Lu_is_f}, the bound \eqref{eq_b1b0_la_bound} and Young's inequality imply that
\begin{multline} \label{eq_Lu_cdot_u_computation}
 2 v \cdot u =  2(Lu) \cdot u = \triangle |u|^2 - 2|\nabla u|^2 + \nabla_V |u|^2 + 2(b_1 * \nabla u) \cdot u + 2(b_0 * u) \cdot u \\
\leq \triangle |u|^2 - 2 |\nabla V|^2 + \nabla_V |u|^2 - 2\lambda |u|^2.
\end{multline}
So if $M$ is compact, then the maximum principle implies that $\lambda \max |u|^2 \leq \max (- v \cdot u)$, which implies \eqref{eq_u_f_bound_L_inf}.
In order to apply the maximum principle in the non-compact case, we need to assume that $|u|$ is bounded. 
Choose a positive function $w \in C^\infty (M)$ with uniformly bounded first and second derivatives that grows linearly at infinity \cite{Greene_Wu_1976}.
So $w \geq c |V|$ for some $c > 0$ and for sufficiently small $\eps > 0$ we have
\[ \triangle w^\eps + \nabla_V w^\eps - 2 \lambda w^\eps < 0. \]
We may then apply the maximum principle to $|u|^2 - \delta w^\eps$ and let $\delta \searrow 0$.
Thus \eqref{eq_u_f_bound_L_inf}, and consequently the uniqueness statement, also hold in the non-compact case.

Next, we reduce the proposition to the case in which $M$ is compact.
Fix some $v \in C^{k,\alpha}(M;E)$, a point $p \in M$ and choose a sequence of bump functions $\eta_i \in C^\infty(M)$ such that $\supp \eta_i \subset B(p,2i)$, $\eta_i \equiv 1$ on $B(p,i)$, $0 \leq \eta_i \leq 1$ and $\Vert \nabla \eta_i\Vert_{C^{k+3,\alpha}} \leq C(n,A) i^{-1}$; for example, one may choose local smoothings of radial functions $\td\eta_i$ with $|\nabla\td\eta_i| \leq C i^{-1}$.
Using a doubling construction (for example, using \cite{Cheeger_Gromov_chopping}), we can represent $(M,g,p)$ as a smooth pointed Cheeger-Gromov limit of a sequence of compact, pointed Riemannian orbifolds $(M_i, g_i, p_i)$, which satisfy a similar curvature bound as in \eqref{eq_b1b0_A_bound}.
So, after possibly  passing to a subsequence, there are diffeomorphisms $\psi_i : U_i := B(p,2i) \to \psi_i(U_i) \subset M_i$ such that $\psi_i^* g_i = g$ on $U_i$.
Define the following tensor fields on $M$
\[  v'_i := v \eta_i, \qquad V'_i := V \eta_i, \qquad b'_{1,i} := b_{1,i} \eta_i, \qquad b'_{0,i} := (b_0 + \lambda \id) \eta_i - \lambda \id. \]
Define $v_i := (\psi_i)_* v'_i$ on $\psi_i (U_i)$ and $v_i := 0$ on $M_i \setminus \psi_i(U_i)$.
Define $V_i, b_{1,i}, b_{0,i}$ similarly, with the exception that $b_{0,i} := - \lambda \id$ outside $\psi_i (U_i)$.
Then the equation (for some unknown solution $u_i$)
\[ \triangle u_i + \nabla_{V_i} u_i + b_{1,i} * \nabla u_i + b_{0,i} * u_i = v_i \]
satisfies \eqref{eq_b1b0_la_bound}, \eqref{eq_b1b0_A_bound} for large $i$, after adjusting $A$ in a controlled way.
So, assuming that the proposition is true in the compact case, we obtain solutions $u_i \in C^{k+2,\alpha}(M;E)$ that satisfy a bound of the form \eqref{eq_u_f_bound_m_alph}, where $\Vert v_i \Vert_{C^{k,\alpha}} \leq C(A) \Vert v \Vert_{C^{k,\alpha}}$.
So, after passing to a subsequence, we have $\psi_i^* u_i \to u \in C^{k+2,\alpha}(M;E)$ in $C^{k+2}_{\loc}$ and the limit $u$ satisfies \eqref{eq_u_f_bound_m_alph}.

So let us assume from now on that $M$ is compact.
In the following we show the formally weaker bound
\begin{equation} \label{eq_u_f_bound_m_alph_weaker}
 \Vert u \Vert_{C^{k+2,\alpha}} \leq C(A)  \Vert v \Vert_{C^{k,\alpha}} .
\end{equation}
This bound implies \eqref{eq_u_f_bound_m_alph}, because
\begin{multline*}
 \Vert u \Vert_{C^{k+2,\alpha}_V} 
= \Vert u \Vert_{C^{k+2,\alpha}} + \Vert \nabla_V u \Vert_{C^{k,\alpha}}
\leq \Vert u \Vert_{C^{k+2,\alpha}} + \Vert v - \triangle u - b_1 * \nabla u + b_0 *u \Vert_{C^{k,\alpha}} \\
\leq C(A) \Vert u \Vert_{C^{k+2,\alpha}} +  \Vert v \Vert_{C^{k,\alpha}}
\leq C(A)  \Vert v \Vert_{C^{k,\alpha}} .
\end{multline*}

By applying a covariant derivative on both sides of \eqref{eq_Lu_is_f}, we find that $\nabla u$ satisfies an equation of the form
\begin{multline} \label{eq_commutation_nab_u}
 \triangle \nabla u + \nabla_V \nabla u + b_1 * \nabla^2 u + b_0 * \nabla u + R(\cdot, V) u \\
+ \Rm * \nabla u + \nabla \Rm * u + \nabla V * \nabla u + \nabla b_1 * \nabla u + \nabla b_0 * u = \nabla v. 
\end{multline}
So if we set $E' := E \oplus (T^*M \otimes E)$ and consider the associated sections $u' = (u, c \nabla u), v' = (v, c \nabla v) \in C^{m+1,\alpha}(M; E')$ for some $c > 0$, which we will determine later, then equations \eqref{eq_Lu_is_f} and \eqref{eq_commutation_nab_u} can be expressed as an equation of the same form 
\begin{equation} \label{eq_uprime_main}
 \triangle u' + \nabla_V u' + b'_1 * \nabla u' + b'_0 * u' = v', 
\end{equation}
where
\[ b'_0 = \left[\begin{array}{cc}b_0 & 0 \\c \nabla \Rm + c R(\cdot, V) + c\nabla b_0 & b_0\end{array}\right], \qquad
b'_1 = \left[\begin{array}{cc}b_1 & 0 \\c  \Rm + c \nabla V + c \nabla b_1  & b_1\end{array}\right]. \]
So for some sufficiently small $c (A) > 0$, the coefficient functions of equation \eqref{eq_uprime_main} satisfy bounds of the form \eqref{eq_b1b0_la_bound}, \eqref{eq_b1b0_A_bound} if we replace $k$ with $k-1$, $\lambda$ with $\frac12 \lambda$ and adjust $A$; note that the curvature operator $R(\cdot, V)$, viewed as a section of $\End E$, is bounded in $C^{k+2}$, because $\Rm(\cdot, V, \cdot, \cdot) = \Rm (\cdot, \cdot, V, \cdot)$ is a linear expression of $\nabla^2 V$.
After successively reducing $k$ via this construction, we may assume in the following that $k = 0$.

\begin{Claim} \label{Cl_heat_flow_Lunardi}
There is a family of operators $(P_t : C^0(M;E) \to C^0(M;E))_{t > 0}$ such that for any $u \in C^0(M;E)$ the family $(u_t := P_t u)_{t > 0}$ is a solution to the parabolic equation
\begin{equation} \label{eq_heat_eq_u_E}
 \partial_t u_t = (L+\lambda) u_t, \qquad u_t \xrightarrow[t \searrow 0]{} u \quad \text{in} \quad C^0(M;E) 
\end{equation}
Moreover, for any $0 \leq \beta_1 \leq \beta_2 \leq 1$ we have
\begin{equation} \label{eq_P_t_bound}
 \Vert P_t u \Vert_{C^{2,\beta_2}} \leq C(A) \Big( \frac{1}{t^{1+(\beta_2 - \beta_1)/2}} + 1 \Big) \Vert u \Vert_{C^{0,\beta_1}}. 
\end{equation}
\end{Claim}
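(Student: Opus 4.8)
\textbf{Proof plan for Claim~\ref{Cl_heat_flow_Lunardi}.}

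The plan is to construct the semigroup $(P_t)_{t>0}$ by solving the linear parabolic equation \eqref{eq_heat_eq_u_E} via standard parabolic theory on the compact orbifold $M$, and then to derive the quantitative smoothing estimate \eqref{eq_P_t_bound} by a combination of the maximum principle (to get an $L^\infty$-bound) and interior parabolic Schauder estimates applied in the rescaled local coordinate charts supplied by Lemma~\ref{Lem_Holder_norms_properties}.

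First I would address the existence of the solution operator. Since $M$ is compact and the operator $L+\lambda = \triangle + \nabla_V + b_1 * \nabla + (b_0 + \lambda\,\id)$ is a (non-self-adjoint) second-order strongly elliptic operator acting on sections of the tensor bundle $E$ with smooth coefficients satisfying the bounds \eqref{eq_b1b0_A_bound}, linear parabolic theory (e.g.\ \cite{Lunardi_1998} or classical results for parabolic systems on compact manifolds) yields a well-defined analytic semigroup $(P_t)_{t>0}$ on $C^0(M;E)$ solving \eqref{eq_heat_eq_u_E}, with $P_t u \to u$ in $C^0$ as $t\searrow 0$ for every $u \in C^0(M;E)$; for $t>0$ the function $u_t = P_t u$ is smooth by parabolic regularity. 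The key structural input is that the zeroth-order term of $L$, namely $b_0$, is controlled by \eqref{eq_b1b0_la_bound}: the same computation as in \eqref{eq_Lu_cdot_u_computation}, applied to $\partial_t |u_t|^2 = 2 u_t \cdot (L+\lambda) u_t$, shows $\partial_t |u_t|^2 \le \triangle |u_t|^2 + \nabla_V |u_t|^2$, so the maximum principle gives the contraction estimate $\Vert P_t u \Vert_{L^\infty} \le \Vert u \Vert_{L^\infty}$ for all $t>0$. (This is the parabolic analogue of \eqref{eq_u_f_bound_L_inf} and uses $\lambda>0$ only to absorb the $b_0$-term.)

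Next I would prove \eqref{eq_P_t_bound}. The $L^\infty$-contraction just established handles the ``$+1$'' term and the case $\beta_1 = \beta_2$ is immediate from it combined with the $\beta_1=\beta_2=0$ endpoint; the content is the smoothing from $C^{0,\beta_1}$ into $C^{2,\beta_2}$ for $t$ small, with the singular factor $t^{-1-(\beta_2-\beta_1)/2}$. For this, fix a point $p\in M$ and work in a coordinate chart $\vec x_p : U_p \to B_{2r}$ as in Lemma~\ref{Lem_Holder_norms_properties}, in which $g$ and the coefficients $V, b_1, b_0$ are uniformly bounded in $C^{k+3}$ (hence here in $C^{3}$, since we have reduced to $k=0$) in the Euclidean sense. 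Rescale parabolically: for $t \in (0,1]$ set $\lambda_t := t^{-1/2}$ and consider $\tilde u(x,s) := u_{ts}(\vec x_p^{-1}(t^{1/2} x))$ on $B_{r/\sqrt t} \times (1/2, 1]$, which solves a uniformly parabolic equation whose coefficients are $t^{1/2}V$, $t^{1/2} b_1$, $t\, b_0$ — all uniformly bounded for $t \le 1$. Interior parabolic Schauder estimates on the unit parabolic cylinder then give
\[
\Vert \tilde u(\cdot, 1) \Vert_{C^{2,\beta_2}(B_1)} \le C(A)\, \Vert \tilde u \Vert_{C^{0,\beta_1}(B_2 \times (1/2,1])}.
\]
Undoing the rescaling converts the left side to $t^{1+\beta_2/2}$ times a $C^{2,\beta_2}$-norm of $u_t$ near $p$ (each spatial derivative costs a factor $t^{-1/2}$, and the $\beta_2$-Hölder seminorm costs $t^{-\beta_2/2}$), while the right side is bounded, using the $L^\infty$-contraction for the $C^0$-part and a separate argument for the spatial $C^{0,\beta_1}$-seminorm of $u_s$ for $s \in (1/2,1]\cdot t$ — here one uses that $P_s$ maps $C^{0,\beta_1}$ to itself boundedly on bounded time intervals, which follows from the analyticity of the semigroup together with \eqref{eq_b1b0_A_bound}. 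Taking the supremum over $p \in M$ and invoking the norm-equivalence \eqref{eq_sup_equivalent} (with weight $a=0$) gives \eqref{eq_P_t_bound}.

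The main obstacle I anticipate is the bookkeeping in the last step: one must correctly track how the three separate ingredients of the target norm $\Vert P_t u\Vert_{C^{2,\beta_2}}$ (the $C^0$-part, the $\nabla, \nabla^2$-parts, and the $\beta_2$-Hölder seminorm of $\nabla^2$) each pick up the right power of $t$ under parabolic rescaling, and one must handle the case $t$ small (where the singular factor dominates) uniformly in $p$, with all constants depending only on $A$. A secondary subtlety is that, since we allow $\beta_1 = 0$, the ``input'' regularity is only continuity, so for the right-hand side of the rescaled Schauder estimate one should, strictly speaking, first mollify or use the semigroup property $P_t = P_{t/2} \circ P_{t/2}$ to gain an intermediate amount of regularity at time $t/2$ before applying the interior estimate on $(t/2, t]$; this costs only an extra harmless factor of $2$ in the constants. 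Everything else is routine once the rescaling is set up, and the hypothesis $\inj(M,g)^{-1} \le A$ guarantees the coordinate charts $\vec x_p$ exist with uniform control.
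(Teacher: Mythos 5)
Your existence statement and the $L^\infty$-contraction via the maximum principle coincide with the paper's argument, but the core of your proof of \eqref{eq_P_t_bound} has a genuine gap. You claim that after parabolic rescaling in a chart the drift coefficient $t^{1/2}V$ is ``uniformly bounded for $t\le 1$'', and you then invoke interior parabolic Schauder estimates with constants depending only on $A$. However, the hypotheses \eqref{eq_b1b0_A_bound} bound only $\Vert \nabla V \Vert_{C^{k+3}}$, not $\sup_M |V|$: in the intended application (and already in the compact approximations used to reduce to the compact case) $|V|$ grows roughly linearly and is not controlled by $A$. Since the constant in any interior Schauder estimate depends on the size of the drift coefficient in the cylinder, your argument only yields a constant of the form $C(A,\sup|V|)$, which is exactly what the proposition is designed to avoid (this is emphasized at the start of Subsection~5.2: the whole point is independence from an $L^\infty$-bound on $V$). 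A secondary, related circularity is your appeal to ``analyticity of the semigroup'' to get that $P_s$ maps $C^{0,\beta_1}$ to itself with $A$-controlled norm on bounded time intervals: with an uncontrolled drift, that mapping property with constants depending only on $A$ is itself part of what must be proved.

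The paper avoids local Schauder theory altogether and instead runs a global Bernstein-type maximum-principle argument, in which the dangerous drift enters only through terms of the form $\nabla_V(\text{scalar})$, which vanish at interior maxima and hence never require a bound on $|V|$. Concretely, one differentiates the equation as in \eqref{eq_commutation_nab_u}; the commutator terms involve only $\nabla V$, $\nabla b_i$, $\Rm$, and $R(\cdot,V)$, the last being controlled by $\nabla^2 V$, so the augmented system for $(u,\nabla u,\nabla^2 u,\nabla^3 u)$ again satisfies \eqref{eq_b1b0_la_bound}--\eqref{eq_b1b0_A_bound} with adjusted constants. Each component obeys an inequality like \eqref{eq_evolution_norm_u}, and the quantity $z=|u_t|^2+t|u^{\prime,1}_t|^2+\tfrac12 t^2|u^{\prime,2}_t|^2+\tfrac13 t^3|u^{\prime,3}_t|^2$ satisfies $\partial_t z\le \triangle z+\nabla_V z$, so the maximum principle gives $\Vert u_t\Vert_{C^3}\le C(A)t^{-3/2}\Vert u\Vert_{C^0}$, i.e.\ the endpoint $(\beta_1,\beta_2)=(0,1)$; the remaining endpoints are handled the same way, interpolation gives $\beta_2\in[0,1]$, and fractional $\beta_1$ is handled by splitting $u=u'+u''$ with $\Vert u'\Vert_{C^0}\le Ct^{\beta_1/2}\Vert u\Vert_{C^{0,\beta_1}}$ and $\Vert u''\Vert_{C^1}\le Ct^{(\beta_1-1)/2}\Vert u\Vert_{C^{0,\beta_1}}$. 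If you want to salvage a localization approach, you would have to first neutralize the drift, e.g.\ by working in coordinates moving along the flow of $V$ (as in Lunardi's treatment), rather than by rescaling alone.
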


\begin{proof}
The first part of the claim follows from standard parabolic theory.
To see \eqref{eq_P_t_bound}, note that a computation of the form \eqref{eq_Lu_cdot_u_computation} implies that for $u_t := P_t u$
\begin{equation} \label{eq_evolution_norm_u}
 \partial_t |u_t|^2 \leq \triangle |u_t|^2 - 2 |\nabla u|^2 + \nabla_V |u_t|^2 
\end{equation}
So by the maximum principle, we have $\Vert u_t \Vert_{C^0} \leq \Vert u \Vert_{C^0}$.
Thus, if the claim is known to be true for $t < 1$, then the case $t \geq 1$ follows by applying \eqref{eq_P_t_bound} to $u = u_{t - \frac12}$ for $t = \frac12$ and $\beta_1 = 0$.
So assume for the remainder of the proof that $t < 1$.

Next, we establish \eqref{eq_P_t_bound} in the case $(\beta_1, \beta_2) = (0,1)$.
To do this, we repeat the construction involving \eqref{eq_commutation_nab_u} three times and denote the corresponding parabolic solutions to \eqref{eq_heat_eq_u_E} by $u_t^{\prime,1}, u_t^{\prime,2}, u_t^{\prime,3}$.
Each of these time-dependent tensors fields satisfies a bound of the form \eqref{eq_evolution_norm_u}, so if we set
\[ z := |u_t|^2 + t |u^{\prime,1}_t|^2 + \tfrac12 t^2 |u^{\prime,2}_t|^2 + \tfrac13 t^3 |u^{\prime,3}_t|^2, \]
then
\[ \partial_t z_t \leq \triangle z_t + \nabla_V z_t. \]
The bound \eqref{eq_P_t_bound} now follows by applying the maximum principle to this equation and noticing that $\Vert u_t \Vert_{C^{2,1}} \leq \Vert u_t \Vert_{C^{3}} \leq C(A) \Vert u^{\prime,3}_t \Vert_{C^0}$.

We may establish \eqref{eq_P_t_bound} in the cases in which $\beta_1, \beta_2 \in \{ 0,1 \}$ in a similar way.
Using the interpolation inequality, we obtain the case in which $\beta_1 \in \{ 0,1 \}$ and $\beta_2 \in [0,1]$.
To establish \eqref{eq_P_t_bound} if $\beta_1 \in (0,1)$, choose a decomposition $u = u' + u''$ (for example, using \cite[Proposition~2.8]{Deruelle}) such that
\[ \Vert u' \Vert_{C^0} \leq C(A) t^{\beta_1/2} \Vert u \Vert_{C^{0,\beta_1}}, \qquad \Vert u'' \Vert_{C^1} \leq C(A) t^{(\beta_1 - 1)/2} \Vert u \Vert_{C^{0,\beta_1}}. \]
Then
\[ \Vert P_t u \Vert_{C^{2,\beta_2}}
\leq \Vert P_t u' \Vert_{C^{2,\beta_2}} + \Vert P_t u'' \Vert_{C^{2,\beta_2}}
\leq \frac{C(A)}{t^{1+\beta_2/2}} \Vert u' \Vert_{C^{0}} + \frac{C(A)}{t^{1+(\beta_2-1)/2}} \Vert u'' \Vert_{C^{1}} 
\leq \frac{C(A)}{t^{1+(\beta_2 - \beta_1)/2}} \Vert u \Vert_{C^{0,\beta_1}}, \]
which finishes the proof of the claim.
\end{proof}

We can now construct a solution $u$ to \eqref{eq_Lu_is_f} via the representation (for any $\xi > 0$)
\[ u = \int_0^\infty (P_t v) e^{-\lambda t} dt = \int_0^\xi (P_t v) e^{-\lambda t} dt + \int_\xi^\infty (P_t v) e^{-\lambda t} dt =: u'_\xi + u''_\xi. \]
Choose $\delta > 0$ such that $0 < \alpha' := \alpha - \delta < \alpha < \alpha'' := \alpha + \delta < 1$.
Then any $\xi \in (0,1]$ we have by the Claim
\[ \Vert u'_\xi \Vert_{C^{2,\alpha'}}
\leq \int_0^\xi \Vert P_t v \Vert_{C^{2,\alpha'}} e^{-\lambda t} dt
\leq \int_0^\xi \frac{C(A)}{t^{1 - \delta}} \Vert v \Vert_{C^{0,\alpha}} dt \leq C \xi^\delta \Vert v \Vert_{C^{0,\alpha}} \]
\[ \Vert  u''_\xi \Vert_{C^{2,\alpha''}}
\leq \int_\xi^\infty \Vert P_t v \Vert_{C^{2,\alpha''}} e^{-\lambda t} dt
\leq \int_\xi^\infty \frac{C(A)}{t^{1 + \delta}} \Vert v \Vert_{C^{0,\alpha}} dt \leq C \xi^{-\delta}\Vert v \Vert_{C^{0,\alpha}} \]
This implies \eqref{eq_u_f_bound_m_alph_weaker} via \cite[\S 2.7.2, Thm. 1]{Triebel_1995_book} or \cite[Proposition~2.8]{Deruelle} (the second reference suffices in our case).
\end{proof}

\subsection{Weighted estimate} \label{subsec_Lunardi_weighted}
The following estimates, which hold on gradient expanding solitons, are analogous to \cite[Corollary~2.4]{Deruelle}.
We refer to Subsection~\ref{subsec_list_Holder} for a definition of the H\"older norms used here.
We recall that $\triangle_f = \triangle - \nabla_{\nabla f}$.

\begin{Proposition} \label{Prop_Lundardi_weighted}
Let $(M,g,f)$ be an $n$-dimensional gradient expanding soliton with bounded curvature on an orbifold with isolated singularities.
Assume that $f$ attains a maximum, let $E$ be a tensor bundle over $M$ and suppose that $k \geq 0$, $\alpha \in (0,1)$ and $a, a' > 0$.
Then for any $\lambda \geq 0$ and $b \in C^{k,\alpha}_{-a'}(M;\End E)$ the operator
\[ L := \triangle_f - \lambda + b : C^{k+2,\alpha}_{-a,\nabla f} (M;E) \longrightarrow C^{k,\alpha}_{-a} (M;E) \]
is well defined and bounded and has Fredholm index $0$.
If $b = 0$, then $L$ is invertible and for every $v \in C^{k,\alpha}_{-a} (M; E)$ the equation $L u = v$ has a unique solution $u \in C^{k+2,\alpha}_{-a} (M; E)$ among all $C^2_{\loc}$ solutions for which $|u| \to 0$ at infinity.
\end{Proposition}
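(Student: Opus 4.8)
The plan is to reduce Proposition~\ref{Prop_Lundardi_weighted} to the unweighted estimate of Proposition~\ref{Prop_Lunardi} by conjugating the operator $L$ with the weight function $w := (S-f+1)^{-a/2}$, and then to obtain the Fredholm statement by a perturbation argument. First I would set up the conjugation: for $u \in C^{k+2,\alpha}_{-a,\nabla f}(M;E)$ write $u = w \td u$ with $\td u = w^{-1} u \in C^{k+2,\alpha}_{0,\nabla f}(M;E)$ (the equivalence of these norms under multiplication by $w$ and $w^{-1}$ is exactly \eqref{eq_u_star_up} together with the derivative bounds \eqref{eq_nabmfmb} on $w^{\pm 1}$ from the proof of Lemma~\ref{Lem_Holder_norms_properties}). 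A direct computation gives
\[ w^{-1}\triangle_f (w \td u) = \triangle_f \td u + 2 \, \nabla(\log w)\cdot \nabla \td u + \big( w^{-1}\triangle_f w \big) \td u, \]
so that $\td L \td u := w^{-1} L(w\td u)$ has the form $\triangle_f \td u + \td b_1 * \nabla \td u + \td b_0 * \td u$ with $\td b_1 = 2\nabla(\log w)$ and $\td b_0 = w^{-1}\triangle_f w - \lambda + b$. The key numerical point is that since $w = (S-f+1)^{-a/2}$ and $\triangle f = R + \tfrac n2$, $|\nabla f|^2 = -f + \mathrm{const} + O(1)$ on an asymptotically conical expander, one has $w^{-1}\triangle_f w = -\tfrac a2 \, \frac{\triangle_f f}{S-f+1} + \tfrac{a}2\big(\tfrac a2+1\big)\frac{|\nabla f|^2}{(S-f+1)^2} \to 0$ at infinity, and $\nabla(\log w) = \tfrac a2 \frac{\nabla f}{S-f+1} \to 0$ at infinity as well; all these quantities and their derivatives up to order $k+3$ are uniformly bounded by \eqref{eq_nabmfmb}, and $b \in C^{k,\alpha}_{-a'}$ decays. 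Hence for the case $b=0$ and $\lambda > 0$: the coefficient $\td b_0 = -\lambda + o(1)$ satisfies the strong dissipativity condition \eqref{eq_b1b0_la_bound} with $\lambda' = \lambda/2$ outside a compact set, and $\td b_1 = o(1)$ is small there, so Proposition~\ref{Prop_Lunardi} applies directly on the end; near the compact part one absorbs the bounded-but-not-dissipative contributions of $w^{-1}\triangle_f w$ by a standard cutoff/doubling argument exactly as in the reduction to the compact case inside the proof of Proposition~\ref{Prop_Lunardi}. This yields existence, uniqueness among $C^2_{\loc}$ solutions with $|u|\to 0$, and the weighted bound $\Vert u\Vert_{C^{k+2,\alpha}_{-a}} \leq C \Vert v\Vert_{C^{k,\alpha}_{-a}}$, which is the invertibility claim.

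Next I would handle $\lambda = 0$ in the $b=0$ case. Here the zeroth-order coefficient of $\td L$ is $\td b_0 = w^{-1}\triangle_f w = o(1)$, which is not strictly negative, so Proposition~\ref{Prop_Lunardi} does not apply as stated. The fix is to note that on the end the dominant term of $w^{-1}\triangle_f w$ is actually strictly negative of order $(S-f+1)^{-1}\sim r^{-2}$: from $\triangle_f f = f - \mathrm{const} + o(1)$ and the fact that $f\to-\infty$ we get $-\tfrac a2 \frac{\triangle_f f}{S-f+1} = -\tfrac a2 + o(1)$ — wait, this is order $1$, not decaying — so in fact $\td b_0 \to -\tfrac a2 < 0$ at infinity. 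This is the point: even with $\lambda = 0$, conjugation by the weight produces an effective strictly negative zeroth-order term $-\tfrac a2 + o(1)$ at infinity (this is precisely why positive-order polynomial weights are ``allowed'' — they eat a Laplacian-of-potential term), so \eqref{eq_b1b0_la_bound} holds on the end with $\lambda' = a/4$, and the compact-region argument proceeds as before. (One must check the analogous dissipativity after the $E \to E' = E\oplus (T^*M\otimes E)$ reduction used in Proposition~\ref{Prop_Lunardi}, but that reduction only perturbs the zeroth-order coefficient by terms controlled by $c(A)$, so the margin $a/4$ survives.) This gives invertibility of $L = \triangle_f - a/2\text{-effectively}$ for all $\lambda \geq 0$ when $b = 0$.

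For the general Fredholm statement, I would argue that $L = L_0 + b$ where $L_0 := \triangle_f - \lambda : C^{k+2,\alpha}_{-a,\nabla f}\to C^{k,\alpha}_{-a}$ is invertible by the previous step, and that multiplication by $b \in C^{k,\alpha}_{-a'}$, viewed as a map $C^{k+2,\alpha}_{-a,\nabla f}(M;E)\to C^{k,\alpha}_{-a}(M;E)$, is compact. Compactness follows from the decay $a' > 0$: on the end $\td\iota((r,\infty)\times N)$ the operator norm of $b\cdot$ into $C^{k,\alpha}_{-a}$ restricted to that end is $O(r^{-a'}) \to 0$ (using \eqref{eq_u_star_up}), while on the fixed compact complement the inclusion $C^{k+2,\alpha}\hookrightarrow C^{k,\alpha}$ is compact by Arzel\`a--Ascoli; a diagonal/truncation argument then shows $b\cdot$ is a norm-limit of finite-rank-like compact operators, hence compact. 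Therefore $L = L_0 + b = L_0(\mathrm{Id} + L_0^{-1}b)$ is a compact perturbation of an invertible operator, so it is Fredholm of index $0$. The main obstacle I anticipate is not any single estimate but bookkeeping: making the conjugation computation of $w^{-1}\triangle_f w$ and $\nabla \log w$ rigorous with the correct constants, verifying that the resulting $\td b_0$ genuinely has a strictly negative limit (the sign of $-\tfrac a2$ is what makes $\lambda = 0$ work and must be double-checked against the sign conventions in \eqref{eq_gradient_sol_eq} and the normalization $R + |\nabla f|^2 + f \equiv \mathrm{const}$), and confirming that all coefficient bounds needed in \eqref{eq_b1b0_A_bound} are supplied by Lemma~\ref{Lem_Holder_norms_properties} and Shi's estimates after passing to the smooth structure from Lemma~\ref{Lem_soliton_smooth}. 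The compactness of $b\cdot$ is routine once the weighted function spaces are understood.
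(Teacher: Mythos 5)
The key invertibility step in your proposal has a genuine gap. The dissipativity hypothesis \eqref{eq_b1b0_la_bound} of Proposition~\ref{Prop_Lunardi} is a \emph{global} pointwise condition, and your conjugated zeroth-order coefficient $\td b_0 = w^{-1}\triangle_f w - \lambda + b$ is only negative with a definite margin outside a compact set: one has $w^{-1}\triangle_f w = \tfrac a2\,\tfrac{\triangle_f f}{S-f+1} + \tfrac a2\bigl(\tfrac a2+1\bigr)\tfrac{|\nabla f|^2}{(S-f+1)^2}$, and in the region where $S-f+1$ is of moderate size the positive second term can dominate (for instance, for large $a$ this coefficient is positive and large on an intermediate annulus), so for $\lambda=0$, or any small $\lambda>0$, condition \eqref{eq_b1b0_la_bound} fails on a compact set. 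Your proposed remedy -- apply Proposition~\ref{Prop_Lunardi} ``on the end'' and absorb the compact part ``by the cutoff/doubling argument from its proof'' -- does not work: the doubling construction there still requires \eqref{eq_b1b0_la_bound} to hold everywhere (it enters the maximum-principle bound \eqref{eq_u_f_bound_L_inf} and the $e^{-\lambda t}$ decay in the representation $u=\int_0^\infty (P_t v)e^{-\lambda t}dt$), and it is not a device for neutralizing a wrong-sign zeroth-order term on a compact set; for a drift Laplacian plus a potential that is positive on a compact region, invertibility on decaying sections can genuinely fail, so some input specific to $\triangle_f$ is unavoidable. The paper circumvents exactly this by inserting the explicit decaying potential $b_0$ of \eqref{eq_b0_def}, chosen so that conjugation by $(-f)^{a/2}$ turns $\triangle_f + b_0 - \lambda$ into $\triangle_{f+a\log(-f)} - (\tfrac a2+\lambda)$, whose zeroth-order coefficient is a \emph{constant} negative number, so Proposition~\ref{Prop_Lunardi} applies globally (Lemma~\ref{Lem_delta_f_ell_bounds}); the inconvenient term $b-b_0$ (in particular $-b_0$ when $b=0$) is then removed as a compact perturbation, and injectivity of $\triangle_f-\lambda$ on decaying sections is obtained from a separate maximum-principle computation, which combined with Fredholm index $0$ gives invertibility.

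Two further remarks. Your compact-perturbation step is sound and essentially identical to the paper's (Arzel\`a--Ascoli on a compact set plus the decay $a'>0$ on the end), so once the unperturbed operator is handled correctly -- either via the $b_0$-trick above, or by proving Fredholm index $0$ first and then injectivity by the maximum principle -- the Fredholm statement follows as you describe. However, your write-up contains no maximum-principle argument at all, and it is needed twice: for the injectivity just mentioned, and for the final uniqueness claim. Indeed, even if Proposition~\ref{Prop_Lunardi} applied to the conjugated operator, its uniqueness statement concerns bounded $\td u = w^{-1}u$, i.e.\ solutions $u=O(r^{-a})$; uniqueness among all $C^2_{\loc}$ solutions with merely $|u|\to0$ at infinity, as asserted in the Proposition, requires the additional maximum-principle argument on the unconjugated operator, exactly as in the last paragraph of the paper's proof.
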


Proposition~\ref{Prop_Lundardi_weighted} is a consequence of the following lemma, which is essentially the same as \cite[Thm.~2.3]{Deruelle}.

\begin{Lemma} \label{Lem_delta_f_ell_bounds}
Let $(M,g,f)$ be an $n$-dimensional gradient expanding soliton on an orbifold with isolated singularities, let $E$ be a tensor bundle over $M$ and suppose that $k \geq 0$, $\alpha \in (0,1)$, $\lambda \geq 0$, $a > 0$ and $\max_M f = -1$.
Consider the following equation for sections $u$, $v$ of $E$:
\begin{equation} \label{eq_Lap_f_u_v}
 \triangle_f u + (b_0 - \lambda) u = v, 
\end{equation}
where, for $W := |\nabla f|^2 + R + f \equiv const$,
\begin{equation} \label{eq_b0_def}
 b_0 :=  \frac{a}2 \cdot \frac{\frac{n}2 + W}{-f} 
-   \Big( \frac{a^2}{4} + \frac{a}2 \Big) \frac{|\nabla f|^2}{(-f)^2}. 
\end{equation}
Suppose that for some $A > 0$
\[ \Vert {\Rm} \Vert_{C^0}, \big| |\nabla f|^2 + R + f \big|, \lambda, n, \rank E, k, |a| \leq A, \qquad  a \leq - A^{-1}, \qquad \alpha \in [A^{-1}, 1-A^{-1}]. \]
Then for every $v \in C^{k,\alpha}_{-a} (M; E)$ the equation \eqref{eq_Lap_f_u_v} has a solution $u \in C^{k+2,\alpha}_{-a} (M;E)$ and this solution is unique among $C^2_{\loc}$ solutions with the property that $(-f)^{a/2} |u|$ is uniformly bounded.
Moreover
\[ \Vert u \Vert_{C^{k+2,\alpha}_{-a,\nabla f}} \leq C(A) \Vert v \Vert_{C^{k,\alpha}_{-a}}. \]
\end{Lemma}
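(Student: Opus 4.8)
The plan is to reduce this weighted estimate to the unweighted Proposition~\ref{Prop_Lunardi} by conjugating the operator $\triangle_f$ with the weight function $w := (-f)^{-a/2}$. Concretely, given a solution $u$ of \eqref{eq_Lap_f_u_v}, set $\td u := w^{-1} u = (-f)^{a/2} u$, so that boundedness of $\td u$ (in $C^{k+2,\alpha}_{0,\nabla f}$) is exactly the statement $u \in C^{k+2,\alpha}_{-a,\nabla f}$. The key computation is to work out the operator $\td L$ defined by $\td L \td u := w^{-1} L(w \td u)$, i.e. to commute $\triangle_f$ past the multiplication by $w$. Writing $w = e^\phi$ with $\phi = \tfrac a2 \log(-f)$, one has the standard identity
\[
 w^{-1} \triangle_f (w \td u) = \triangle_f \td u + 2 \nabla \phi \cdot \nabla \td u + \big( \triangle_f \phi + |\nabla \phi|^2 \big) \td u.
\]
So $\td L$ has the form $\triangle u + \nabla_{-\nabla f} u + b_1 * \nabla u + b_0' * u$ with $b_1 = 2\nabla\phi$ acting as a first-order coefficient and zeroth-order term $b_0' = \triangle_f\phi + |\nabla\phi|^2 + b_0 - \lambda$. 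The whole point of the specific choice of $b_0$ in \eqref{eq_b0_def} is, I expect, precisely that the "dangerous" positive contributions coming from $\triangle_f \phi + |\nabla\phi|^2$ are cancelled, leaving $\td L$ with a coefficient structure satisfying the dissipativity hypothesis \eqref{eq_b1b0_la_bound} of Proposition~\ref{Prop_Lunardi} with some uniform $\lambda' > 0$.

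First I would carry out this conjugation explicitly. Using the soliton identities $\triangle f = -R - \tfrac n2$ (so $\triangle_f f = \triangle f - |\nabla f|^2 = -R - \tfrac n2 - |\nabla f|^2$) and $|\nabla f|^2 + R + f = W \equiv \mathrm{const}$, I would compute
\[
 \triangle_f \phi = \frac a2 \, \frac{\triangle_f(-f)}{-f} + \frac a2 \, \frac{|\nabla f|^2}{(-f)^2} = \frac a2 \cdot \frac{R + \tfrac n2 + |\nabla f|^2}{-f} + \frac a2 \cdot \frac{|\nabla f|^2}{(-f)^2},
\]
and $|\nabla\phi|^2 = \tfrac{a^2}{4}\,\tfrac{|\nabla f|^2}{(-f)^2}$. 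Substituting $R + |\nabla f|^2 = W - f$ gives $\triangle_f\phi = \tfrac a2 \cdot \tfrac{\tfrac n2 + W - f}{-f} + \tfrac a2\cdot\tfrac{|\nabla f|^2}{(-f)^2} = -\tfrac a2 + \tfrac a2\cdot\tfrac{\tfrac n2 + W}{-f} + \tfrac a2\cdot\tfrac{|\nabla f|^2}{(-f)^2}$. Adding $|\nabla\phi|^2 + b_0$ with $b_0$ as in \eqref{eq_b0_def}, the $(-f)^{-1}$ and $(-f)^{-2}$ terms cancel and one is left with $\triangle_f\phi + |\nabla\phi|^2 + b_0 = -\tfrac a2$. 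Hence $b_0' = -\tfrac a2 - \lambda$, a negative constant since $a \le -A^{-1} < 0$ means $-a/2 \ge (2A)^{-1} > 0$ — wait, one should be careful about the sign convention for $a$ here; in Lemma~\ref{Lem_delta_f_ell_bounds} the hypothesis is $a \le -A^{-1}$, so $w = (-f)^{-a/2}$ is a \emph{growing} weight, and then $-a/2 > 0$ makes $b_0' = -a/2 - \lambda$ strictly negative. Thus the dissipativity condition \eqref{eq_b1b0_la_bound} holds with $\lambda' := \tfrac{|a|}{2} + \lambda - \tfrac14\|b_1\|_{L^\infty}^2$; I would need the bound $|\nabla\phi| \to 0$ at infinity (which follows from $|\nabla f| \lesssim r$ and $-f \sim r^2$, so $|\nabla\phi| = \tfrac{|a|}{2}\tfrac{|\nabla f|}{-f} \lesssim r^{-1}$), plus the fact that near the compact core everything is bounded, to get a uniform $\lambda' > 0$ after possibly shrinking — actually $b_1$ need not be small, but since $b_0'$ is a \emph{constant} $-|a|/2 - \lambda$ one instead verifies \eqref{eq_b1b0_la_bound} directly: $\tfrac14|b_1 * e|^2 + (b_0' * e)\cdot e \le \tfrac14 |b_1|^2 |e|^2 - (\tfrac{|a|}2 + \lambda)|e|^2$, which is $\le -\lambda'|e|^2$ only where $|b_1|^2 < 2(|a|/2+\lambda)$. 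Where $|b_1|$ is large (the compact core) one must absorb it differently; here I would use that on a compact region the operator is uniformly elliptic with bounded coefficients and combine a local interior estimate with the estimate on the end via a standard cutoff/patching argument, exactly as in \cite[Cor.~2.4]{Deruelle}.

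With $\td L$ shown to satisfy the hypotheses of Proposition~\ref{Prop_Lunardi} (on the end, supplemented by compact-core ellipticity), that proposition gives, for $\td v := w^{-1} v \in C^{k,\alpha}(M;E)$, a unique bounded solution $\td u \in C^{k+2,\alpha}_{0,-\nabla f}(M;E)$ of $\td L \td u = \td v$ with $\|\td u\|_{C^{k+2,\alpha}_{0,\nabla f}} \le C(A)\|\td v\|_{C^{k,\alpha}}$. Translating back through $u = w\td u$ and using the norm-equivalence properties \eqref{eq_equivalent_norms}, \eqref{eq_Cka_Ckaa_bound}, \eqref{eq_sup_equivalent} of Lemma~\ref{Lem_Holder_norms_properties} (which also show $w^{-1}$ and $w$ are bounded multipliers between the relevant spaces, with constants controlled by $A$), this yields existence of $u \in C^{k+2,\alpha}_{-a}(M;E)$ with $\|u\|_{C^{k+2,\alpha}_{-a,\nabla f}} \le C(A)\|v\|_{C^{k,\alpha}_{-a}}$, which is the desired estimate. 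For uniqueness: if $u$ is a $C^2_{\loc}$ solution of the homogeneous equation with $(-f)^{a/2}|u|$ bounded, then $\td u = (-f)^{a/2} u$ is a bounded $C^2_{\loc}$ solution of $\td L \td u = 0$, and the uniqueness clause of Proposition~\ref{Prop_Lunardi} forces $\td u \equiv 0$. I expect the main obstacle to be the bookkeeping needed to verify \eqref{eq_b1b0_la_bound} and \eqref{eq_b1b0_A_bound} uniformly — in particular handling the region where the first-order coefficient $b_1 = 2\nabla\phi$ is not small, which requires care near the compact part of $M$ and near any orbifold singularities — together with checking that all the constants produced depend only on the quantities listed ($\|\Rm\|_{C^0}$, $|W|$, $\lambda$, $n$, $\rank E$, $k$, $|a|$, $\alpha$), and not, say, on $\inj(M,g)$ globally; this last point is handled because the hypotheses of Proposition~\ref{Prop_Lunardi} include an injectivity radius lower bound, and on a complete gradient expanding soliton with bounded curvature one has a uniform positive injectivity radius lower bound away from the singularities by Proposition~\ref{Prop_NLC_bound}'s non-collapsing, while the singular orbifold points are handled by passing to local covers as in the conventions of Section~\ref{sec_preliminaries}.
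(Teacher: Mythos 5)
Your overall strategy (conjugate by the weight and invoke Proposition~\ref{Prop_Lunardi}) is the same as the paper's, but as written the proposal has two concrete problems. First, the sign bookkeeping in the conjugation is inconsistent, and the claimed cancellation depends on it. You set $\td u=(-f)^{a/2}u$, so the conjugated operator is $e^{-\psi}\triangle_f(e^{\psi}\,\cdot)$ with $\psi=-\tfrac a2\log(-f)$; but you then apply the identity with $\phi=+\tfrac a2\log(-f)$. With that choice the $(-f)^{-1}$- and $(-f)^{-2}$-terms do \emph{not} cancel against $b_0$ — they double (one finds $\triangle_f\phi+|\nabla\phi|^2+b_0=a\tfrac{W+\frac n2}{-f}+\tfrac a2-a\tfrac{|\nabla f|^2}{f^2}$), and your final ``$-\tfrac a2$'' only comes out because of a compensating slip in the constant term. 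The correct computation, with $\psi=-\tfrac a2\log(-f)$, gives $\triangle_f\psi+|\nabla\psi|^2+b_0=-\tfrac a2$, i.e.\ the equation becomes $\triangle_{f+a\log(-f)}u'-(\tfrac a2+\lambda)u'=v'$. Relatedly, your resolution of the sign of $a$ is untenable: the ``$a\le-A^{-1}$'' in the statement clashes with the declared $a>0$, with the definition of $C^{k,\alpha}_{-a}$ (which needs $a\ge0$), and with the use of the lemma in Proposition~\ref{Prop_Lundardi_weighted}; the intended hypothesis is a lower bound $a\ge A^{-1}$, which yields the uniform negativity $-(\tfrac a2+\lambda)\le-(2A)^{-1}$. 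Your own deduction — ``$-a/2>0$ makes $b_0'=-a/2-\lambda$ strictly negative'' — does not follow (with $\lambda\ge0$ and $-a/2>0$ that quantity can be positive), so under your reading the dissipativity hypothesis \eqref{eq_b1b0_la_bound} is not actually established.

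The more substantive gap is the treatment of the first-order term. If you insist on viewing $2\nabla\psi$ as a coefficient $b_1$, then \eqref{eq_b1b0_la_bound} indeed fails wherever $\tfrac14|b_1|^2$ exceeds $\tfrac a2+\lambda$ (which can happen on the compact core), and your proposed remedy — ``combine a local interior estimate with the estimate on the end via a standard cutoff/patching argument'' — is not an argument: patching local solutions does not produce a global solution, and supplying the missing global existence mechanism amounts to reproving Proposition~\ref{Prop_Lunardi}. The correct move, and the one the paper makes, is to absorb the gradient term into the drift: $\triangle_f\td u+2\nabla\psi\cdot\nabla\td u=\triangle_{f+a\log(-f)}\td u$, so the conjugated equation falls directly under Proposition~\ref{Prop_Lunardi} with drift $V=-\nabla\big(f+a\log(-f)\big)$, constant zeroth-order coefficient $-(\tfrac a2+\lambda)$, and no $b_1$ at all; that proposition imposes no bound on $|V|$, only on $\Vert\nabla V\Vert_{C^{k+3}}$, which follows from the soliton equation, Shi's estimates ($|\nabla^{m+2}f|=|\nabla^m{\Ric}|\le C(A)$) and the bounds \eqref{eq_nabmfmb} for the $\log(-f)$ correction. (Also, your appeal to Proposition~\ref{Prop_NLC_bound} for an injectivity radius bound is not available here, since that result assumes an asymptotically conical structure with nonnegative scalar curvature, neither of which is hypothesized in this lemma; orbifold points are handled by local covers as in the paper's conventions.) Once the drift absorption is made and the sign of $a$ is fixed, the remainder of your reduction — transferring norms via Lemma~\ref{Lem_Holder_norms_properties} and uniqueness via the maximum-principle clause of Proposition~\ref{Prop_Lunardi} — is exactly the paper's proof.
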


\begin{proof}
The proof is the same as that of \cite[Thm.~2.3]{Deruelle}. 
Setting $u' := (-f)^{a/2} u$, $v' := (-f)^{a/2}v$, and applying the soliton equation, we obtain that equation \eqref{eq_Lap_f_u_v} is equivalent to
\[ \triangle_{f+a\log (-f)} u' - \Big( \frac{a}2 + \lambda \Big)  u' = v'. \]
We can then apply Proposition~\ref{Prop_Lunardi} along with the bounds $|\nabla f|^2 \leq f + C(A)$, $|\nabla^{k+2} f| = |\nabla^k {\Ric}| \leq C(A)$ via Shi's estimates to finish the proof.
\end{proof}
\bigskip

\begin{proof}[Proof of Proposition~\ref{Prop_Lundardi_weighted}.]
To see that $L$ is well defined and bounded, note that by Lemma~\ref{Lem_Holder_norms_properties} we have for any $u \in C^{k+2,\alpha}_{-a,\nabla f}(M;E)$
\[ \Vert L u \Vert_{C^{k,\alpha}_{-a}} 
\leq \Vert \triangle u - \nabla_{\nabla f} u \Vert_{C^{k,\alpha}_{-a}} + 
\Vert b u \Vert_{C^{k,\alpha}_{-a}}   
\leq C\Vert u \Vert_{C^{k+2,\alpha}_{-a,\nabla f}} + \Vert b \Vert_{C^{k,\alpha}} \Vert u \Vert_{C^{k,\alpha}_{-a}} \leq C\Vert u \Vert_{C^{k+2,\alpha}_{-a,\nabla f}}. \]
If $b = b_0$, where $b_0$ is chosen as in \eqref{eq_b0_def}, then $L$ is invertible by Lemma~\ref{Lem_delta_f_ell_bounds}.

Let now $b \in C^{k,\alpha}_{-a'}(M;\End E)$ be arbitrary.
To show that $L$ has Fredholm index $0$, we write $L = (\triangle_f + b_0) + K$, where $K u = (b-b_0)u$.
We need to show that $K : C^{k+2,\alpha}_{-a,\nabla f} (M;E) \to C^{k,\alpha}_{-a} (M;E)$ is compact.
To see this, let $u_i \in C^{k+2,\alpha}_{-a,\nabla f} (M;E)$ with $\Vert u_i \Vert_{C^{k+2,\alpha}_{-a, \nabla f}} \leq 1$.
Using Arzela-Ascoli, we can pass to a subsequence such that $u_i \to u_\infty \in C^{k+1}_{-a}(M;E)$ in $C^{k+1}_{\loc}$.
Choose $0 < a'' < a$ such that $a'' + a' \geq a$.
We claim that $u_i \to u_\infty$ in $C^{k+1}_{-a''}$.
Indeed, for any $p \in M$, $r > 0$ and $m = 0, \ldots, k+1$ we have
\begin{align*}
 \sup_M &\big| (-f)^{a''/2} \nabla^m (u_i - u_\infty) \big| \\
&\leq \sup_{B(p,r)} \big|(-f)^{a''/2} \nabla^m (u_i - u_\infty) \big| \\
&\qquad\qquad + \sup_{M \setminus B(p,r)} \big|(-f)^{a''/2} \nabla^m u_i  \big|  +  \sup_{M \setminus B(p,r)} \big|(-f)^{a''/2} \nabla^m u_\infty \big| \\
&\leq \sup_{B(p,r)} \big|(-f)^{a''/2} \nabla^m (u_i - u_\infty) \big| + C \sup_{M \setminus B(p,r)}  (-f)^{a''
/2-a/2}.
\end{align*}
The first term on the right-hand side goes to zero as $i \to \infty$ and the second term goes to zero as $r \to \infty$.
We now obtain, using Lemma~\ref{Lem_Holder_norms_properties}, that
\[ \Vert Ku_i - Ku_\infty \Vert_{C^{k,\alpha}_{-a}}
\leq \Vert b \Vert_{C^{k,\alpha}_{-a'}} \Vert u_i - u_\infty \Vert_{C^{k,\alpha}_{-a''}}  \longrightarrow 0. \]
This proves that $K$ is compact and hence that $L$ has Fredholm index $0$.

To see the last statement, suppose that $b = 0$ and suppose that we had $Lu =0$ for some non-zero $u \in  C^{k+2,\alpha}_{-a,\nabla f} (M;E)$.
A computation similar to \eqref{eq_Lu_cdot_u_computation} shows that $0 \leq \triangle |u|^2 + \nabla_V |u|^2 - 2\lambda |u|^2$.
Since $|u| \to 0$ near infinity, we obtain a contradiction by the maximum principle.
So $L$ is injective and thus also surjective.
The fact that $u$ is unique among all $C^2_{\loc}$-solutions that decay at infinity also follows from the maximum principle.
\end{proof}

\subsection{Spectral theory} \label{subsec_spectral_theory}
In this subsection we establish a spectral theory for self-adjoint elliptic operators on a gradient expanding soliton equipped with an inverse Gaussian measure with density $\sim e^{r^2/4}$ and we define the index of such operators.
This definition will be crucial for the definition of the \emph{integer} expander degree.
We remark that the following discussion is not used in  the $\IZ_2$-degree theory; so a reader who is only interested in this theory may skip this subsection.

The following theory builds on earlier work of Deruelle \cite{Deruelle_2015}.
Let us first define the spaces for the $L^2$-theory.

\begin{Definition}
Let $(M,g)$ be a complete Riemannian orbifold with isolated singularities, $f \in C^0(M)$ a potential function and let $E$ be a Euclidean vector bundle equipped with a metric connection.
We define
\[ L^2_{g,f}(M;E) := \left\{ u \in L^2_{\loc}(M;E) \;\; : \;\; \Vert u \Vert_{L^2_{g,f}} := \int_M |u|^2 e^{-f} dg < \infty \right\} \]
and for $u_1, u_2 \in L^2_{g,f}(M;E)$ we define the inner product
\[ \langle u_1, u_2 \rangle_{g,f} := \int_M (u_1 \cdot u_2) e^{-f} dg. \]
Note that $L^2_{g,f}(M;E)$ together with this inner product is a Hilbert space.
We also define $H^1_{g,f} (M;E) \lb \subset \lb L^2_{g,f}(M;E)$ as the subspace of $H^1_{\loc}$-regular sections $u$ such that $\Vert u \Vert_{H^1_{g,f}}^2  = \langle u, u \rangle_{H^1_{g,f}}  < \infty$, where 
\[ \langle u_1, u_2 \rangle_{H^1_{g,f}} := \int_M \big( (u_1 \cdot u_2) + (\nabla u_1 \cdot \nabla u_2) \big) e^{-f} dg . \]
As before, we will sometimes drop the metric ``$g$'' in the subscript if there is no chance of confusion.
\end{Definition}

The following proposition is the main result of this subsection.
Note that we will be specifically interested in the case in which $L = \triangle_f + 2 \Rm$ is the Einstein operator on a gradient expanding soliton, which is defined on the vector bundle of symmetric $(0,2)$-tensors.
Note also that in the setting of the following proposition, the density function behaves like $e^{-f} \sim e^{r^2/4}$.

\begin{Proposition} \label{Prop_L2_theory}
Consider an $n$-dimensional gradient expanding soliton $(M,g,f)$ with bounded curvature on an orbifold with isolated singularities and a tensor bundle $E$ over $M$.
Suppose that $f$ is bounded from above and proper. 
Let $b \in C^\infty_{\loc}(M; \End E)$ be a self-adjoint endomorphism field with $b \to 0$ at infinity and consider the operator $L = \triangle_f + b = \triangle - \nabla_{\nabla f} + b$.
Then the following is true:
\begin{enumerate}[label=(\alph*)]
\item \label{Prop_L2_theory_a} For any $u_1, u_2 \in H^1_f (M; E)$ with the property that $u_1 \in H^2_{\loc} (M; E)$ and $Lu_1 \in L^2_f(M; E)$ we have 
\[  -\langle L u_1, u_2 \rangle_f  = \int_M \big( (\nabla u_1 \cdot \nabla u_2) - b(u_1, u_2) \big) e^{-f} dg =: I_f (u_1, u_2), \]
where we view $b$ as a symmetric $(0,2)$-tensor on $E$.
\item \label{Prop_L2_theory_b} For any $u \in L^2_{f}(M;E)$ or $u \in C^{2,\alpha}_{-a,\nabla f} (M; E)$, $\alpha \in (0,1)$, $a > 0$, with the property that $L u \in L^2_f (M; E)$ we have $u \in H^1_f (M; E)$.
\item \label{Prop_L2_theory_c} There are eigenvalues $\lambda_0 < \lambda_1 < \ldots$ of $-L$ such that $\lambda_i \to \infty$ and an orthogonal decomposition
\[ L^2_f(M;E) = \ov{ \bigoplus_{i=1}^\infty \EE_{\lambda_i}}, \]
where 
\[ \EE_{\lambda} = \big\{ u \in L^2_f(M;E) \cap C^{2}_{\loc}(M;E) \;\; : \;\; -Lu = \lambda u \big\}. \]
Moreover, the eigenspaces $\EE_{\lambda_i}$ are finite dimensional, non-trivial and contained in $H^1_f (M; \lb E) \lb \cap \lb C^\infty_{\loc}(M;E)$.
\item \label{Prop_L2_theory_d} Suppose that $\sup_M f = -1$ and that we have the  bounds $\inj(M,g) > A^{-1} > 0$ and $|{\Rm}|, n \leq A$ and $|R|, |b| \leq A(-f)^{-1}$ for some $A < \infty$.
Then for any $u \in \EE_\lambda$ with $|\lambda| \leq A$ and $\Vert u \Vert_{L^2_f} = 1$ and any $\eps > 0$ we have
\[ |u| \leq C(A,\eps) (-f)^{-n/2+\la+\eps} e^{f}. \]
\end{enumerate}
\end{Proposition}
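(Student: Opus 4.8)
\textbf{Proof plan for Proposition~\ref{Prop_L2_theory}.}

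The plan is to establish parts \ref{Prop_L2_theory_a}--\ref{Prop_L2_theory_c} by standard Hilbert-space spectral theory for the unbounded operator $-L$ on $L^2_f(M;E)$, using the weighted Sobolev space $H^1_f$ as form domain, and then to derive the pointwise decay estimate in \ref{Prop_L2_theory_d} separately by an ODE/barrier argument at infinity that exploits the conical asymptotics of the soliton.

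First I would address \ref{Prop_L2_theory_a}: since $dg_f := e^{-f}dg$ is the natural measure making $\triangle_f$ formally self-adjoint (because $\DIV(e^{-f}\nabla\cdot) = e^{-f}\triangle_f\cdot$), the integration-by-parts identity $-\langle L u_1,u_2\rangle_f = I_f(u_1,u_2)$ follows from the divergence theorem, once one justifies that the boundary terms vanish. The justification uses a cutoff $\eta_j$ supported in $\{-f \le j\}$, which is compact since $f$ is proper, with $|\nabla\eta_j|$ controlled; the error terms are bounded by Cauchy--Schwarz using $u_i,\nabla u_i \in L^2_f$ and $b$ bounded. For \ref{Prop_L2_theory_b}, the statement that $u, Lu \in L^2_f$ (or $u \in C^{2,\alpha}_{-a,\nabla f}$ with the weighted decay, which gives more than enough integrability against $e^{-f}dg$) forces $u \in H^1_f$: one pairs $Lu$ against $\eta_j^2 u$, integrates by parts, and uses Young's inequality to absorb $\int \eta_j^2|\nabla u|^2 e^{-f}$, obtaining a uniform-in-$j$ bound on $\|\nabla u\|_{L^2_f}$; the $C^{2,\alpha}_{-a,\nabla f}$ case is even easier since such $u$ decays polynomially while $e^{-f}dg$ grows only like $e^{r^2/4}r^{n-1}$ times a polynomial, so $u,\nabla u$ are manifestly in $L^2_f$ — wait, that is not automatic since $e^{-f}$ grows; one must instead observe that polynomial decay of $u$ of any fixed rate is \emph{not} enough against $e^{r^2/4}$, so the right argument is the same cutoff-and-absorb argument using $Lu \in L^2_f$. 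For \ref{Prop_L2_theory_c}, the bilinear form $I_f(u_1,u_2) + \Lambda\langle u_1,u_2\rangle_f$ is, for $\Lambda$ large enough (since $b$ is bounded), coercive and continuous on $H^1_f$, so it defines via Lax--Milgram/Friedrichs a self-adjoint extension of $-L+\Lambda$ with compact resolvent: compactness of the embedding $H^1_f \hookrightarrow L^2_f$ follows because $f$ proper makes sublevel sets compact and the weight $e^{-f}$ blows up at infinity, so mass cannot escape — this is the weighted analogue of Rellich and is exactly the kind of statement in Deruelle~\cite{Deruelle_2015}. The spectral theorem then gives the eigenvalue sequence $\lambda_i \to \infty$, finite-dimensional eigenspaces, the orthogonal decomposition, and elliptic regularity upgrades eigenfunctions to $C^\infty_{\loc}$ (and into $H^1_f$ by \ref{Prop_L2_theory_b}).

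The hard part is \ref{Prop_L2_theory_d}, the pointwise bound $|u| \le C(A,\eps)(-f)^{-n/2+\lambda+\eps}e^{f}$. The strategy is a maximum-principle/barrier argument for the scalar function $|u|$ (or $|u|^2$), which satisfies $\triangle_f |u|^2 \ge 2\lambda|u|^2 - C(-f)^{-1}|u|^2 - 2|\nabla u|^2$ using the equation $-Lu = \lambda u$ and $|b|,|R| \le A(-f)^{-1}$; dropping the good gradient term one gets a differential inequality $\triangle_f w \ge (\lambda - C(-f)^{-1})w$ for $w=|u|$ in the barrier sense. Then I would test against the candidate barrier $\phi := (-f)^{-n/2+\lambda+\eps}e^{f}$, computing $\triangle_f \phi$ using the soliton identities $\triangle_f f = f - R - n/2 \sim f$, $|\nabla f|^2 \sim -f$, and the asymptotics $f \sim -r^2/4$ from \eqref{eq_f_asymptotics_iota}: the dominant balance gives $\triangle_f\phi \approx (\lambda + \tfrac{\eps}{2}\cdot\text{(positive)} + \ldots)\phi$ — more precisely the exponential factor contributes $|\nabla f|^2 + \triangle_f(\text{stuff})$ in a way that produces an extra positive margin of order $(-f)^{-1}$ times the exponent $\eps$, beating the error $C(-f)^{-1}$ for $r$ large. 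Hence $w/\phi$ satisfies a maximum principle outside a compact set, is bounded on the compact boundary by the $L^2_f$-normalization together with local elliptic estimates (interior $L^2$-to-$L^\infty$ bounds using $\inj \ge A^{-1}$, $|{\Rm}|\le A$), and $w/\phi \to 0$ at infinity because $u \in L^2_f$ forces $|u|$ to decay faster than any power of $(-f)$ times $e^{f}$ in an integrated sense, which bootstraps to pointwise decay via mean-value inequalities on unit balls. The main obstacle, and where care is needed, is getting the barrier computation to close with the \emph{correct} exponent $-n/2+\lambda$ (not merely $\lambda$): this requires tracking the subleading term in $\triangle_f\phi$ coming from the power $(-f)^{-n/2+\lambda}$ interacting with $\triangle f \sim f$, which is precisely the term that produces the $-n/2$ shift — this is the same phenomenon as in the asymptotics of Deruelle--Schulze~\cite{Deruelle_Schulze_2021}, and I would borrow their computation. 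The $\eps$ is the slack needed to dominate the $(-f)^{-1}$-order error terms uniformly.
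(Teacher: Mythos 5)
Your treatment of (a), of the $L^2_f$-case of (b), and of (c) follows essentially the paper's route (cutoff integration by parts; coercivity of the shifted form plus a weighted Rellich-type compactness of $H^1_f\hookrightarrow L^2_f$; Friedrichs/Lax--Milgram and elliptic regularity) -- with the small caveat that the compactness really rests on the weighted Poincar\'e inequality $\int_M|\nabla f|^2|u|^2e^{-f}dg\le 4\int_M|\nabla u|^2e^{-f}dg$ (Lemma~\ref{Lem_L2f_Poincare}), not merely on the blow-up of the weight. The first genuine gap is in the second case of (b): for $u\in C^{2,\alpha}_{-a,\nabla f}$ you fall back on ``the same cutoff-and-absorb argument,'' but that argument presupposes $u\in L^2_f$ -- pairing $Lu$ with $\eta_j^2u$ produces $\Vert u\Vert_{L^2_f}$ and $\int|\nabla\eta_j|^2|u|^2e^{-f}dg$, and with only polynomial decay of $u$ against the measure $e^{-f}dg\sim e^{r^2/4}$ these are exactly the quantities you cannot control, for any choice of cutoff. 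The entire content of this case is to \emph{prove} $u\in L^2_f$, and the paper does so by a different mechanism: it modifies $L$ to $\hat L=L-\beta$ with $\beta$ compactly supported, represents $u=\int_0^\infty\hat P_t(\hat Lu)\,dt$ via the associated heat semigroup, proves exponential-in-$t$ decay of $\hat P_t(\hat Lu)$ both in the weighted $C^0$ sense (a maximum-principle argument as in Lemma~\ref{Lem_delta_f_ell_bounds}) and in $L^2_f$, and identifies the integral with $u$ through the uniqueness statement of Proposition~\ref{Prop_Lundardi_weighted}; only then does the cutoff argument apply.

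In (d) your barrier $\phi=(-f)^{-n/2+\lambda+\eps}e^{f}$ and the exponent bookkeeping via $\triangle_f f=f-\tfrac n2$, $|\nabla f|^2=-f-R$ are the right ingredients, but the step ``$w/\phi\to0$ at infinity because $u\in L^2_f$ forces $|u|$ to decay faster than any power of $(-f)$ times $e^{f}$'' is false: the $L^2_f$-normalization together with interior elliptic estimates on balls of radius $(-f(p))^{-1/2}$ only yields $|u|\le C(-f)^{n/4}e^{f/2}$, which is vastly \emph{larger} than $\phi$ at infinity, so your maximum principle has no usable condition at infinity and does not close -- one cannot assume the conclusion to supply the boundary behavior. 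The paper circumvents this with a two-barrier comparison: it takes the least $\delta\ge0$ with $|u|\le H(-f)^{-n/2+\lambda+\eps}e^{f}+\delta(-f)^{-a}$, where the auxiliary term $(-f)^{-a}$ dominates $e^{f}$ near infinity (so a touching point exists), shows by the $\triangle_f$-computation that no touching point can occur where $-f$ is large, chooses $H$ from the preliminary bound on the remaining compact region, and concludes $\delta=0$. (Minor point: with $-Lu=\lambda u$ the Kato inequality gives $\triangle_f|u|\ge-\lambda|u|-C(-f)^{-1}|u|$; your sign on $\lambda$ is reversed, which matters when balancing against $\triangle_f\phi\le(-\eps-\lambda+C(-f)^{-1})\phi$.)
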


Proposition~\ref{Prop_L2_theory} allows us to define the index and nullity of $L$:

\begin{Definition} \label{Def_idx_null}
In the setting of Proposition~\ref{Prop_L2_theory}, we define the {\bf index} and {\bf nullity} of $-L$ by
\[ \Index (-L) := \sum_{\lambda_i < 0} \dim \EE_{\lambda_i}, \qquad \Null(-L) := \dim \EE_0 = \dim \ker L. \]
\end{Definition}

We obtain the following characterization of the index.

\begin{Corollary} \label{Cor_index}
In the setting of Proposition~\ref{Prop_L2_theory} the following holds:
\begin{enumerate}[label=(\alph*)]
\item \label{Cor_index_a} There is an orthogonal decomposition
\[ L^2_{f}(M;E) = N_{f} \oplus K_{f} \oplus P_{f} \]
such that the following is true: $\dim N_{f} = \Index(-L) < \infty$ and $\dim K_{f} = \Null(-L) < \infty$ and $N_f, K_f$ are contained in $H^1_{f}(M; E) \cap C^\infty_{\loc}(M;E)$ and $P_{f} \cap H^1_{f} (M; E)$ is dense in $P_{f}$.
Moreover, the bilinear form $I_f$ is diagonal with respect to this decomposition, negative definite on $N_{f}$, vanishes on $K_{f}$ and positive definite on $P_{f} \cap H^1_{f} (M; E)$.
\item \label{Cor_index_b} Suppose there is a (not necessarily orthogonal) decomposition
\[ L^2_{f}(M;E) = N' \oplus P' \]
such that the following holds: $N'$ is finite dimensional and contained in $H^1_{f}(M; E)$ and $I_f$ is negative definite on $N'$ and positive semidefinite on $P' \cap H^1_{f} (M; E)$.
Then $\Index (-L) = \dim N'$.
\end{enumerate}
\end{Corollary}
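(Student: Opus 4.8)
\textbf{Proof proposal for Corollary~\ref{Cor_index}.}

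The plan is to deduce both parts from the spectral decomposition in Proposition~\ref{Prop_L2_theory}\ref{Prop_L2_theory_c} together with the quadratic-form identity in Proposition~\ref{Prop_L2_theory}\ref{Prop_L2_theory_a}. First I would set
\[
 N_f := \ov{\bigoplus_{\lambda_i < 0} \EE_{\lambda_i}}, \qquad K_f := \EE_0, \qquad P_f := \ov{\bigoplus_{\lambda_i > 0} \EE_{\lambda_i}},
\]
where the closures are taken in $L^2_f(M;E)$. Since the $\lambda_i$ accumulate only at $+\infty$, there are only finitely many $\lambda_i < 0$, each with finite-dimensional eigenspace contained in $H^1_f(M;E) \cap C^\infty_{\loc}(M;E)$ by Proposition~\ref{Prop_L2_theory}\ref{Prop_L2_theory_c}; hence $N_f$ is the genuine (finite) direct sum and $\dim N_f = \Index(-L)$, and likewise $\dim K_f = \Null(-L)$, with both subspaces lying in $H^1_f \cap C^\infty_{\loc}$. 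For $P_f$, the dense subspace $P_f \cap H^1_f(M;E)$ contains the algebraic span $\bigoplus_{\lambda_i>0}\EE_{\lambda_i}$, which is dense in $P_f$ by construction. To see that $I_f$ is diagonal with the claimed signature, I would apply Proposition~\ref{Prop_L2_theory}\ref{Prop_L2_theory_a} with $u_1 \in \EE_{\lambda_i}$, $u_2 \in \EE_{\lambda_j}$ to get $I_f(u_1,u_2) = -\langle Lu_1, u_2\rangle_f = \lambda_i \langle u_1, u_2\rangle_f$, which vanishes for $i \neq j$ by orthogonality and equals $\lambda_i \Vert u_1\Vert_{L^2_f}^2$ for $i = j$; extending by bilinearity and continuity (using that $I_f$ is continuous on $H^1_f$) gives that $I_f$ is negative definite on $N_f$, zero on $K_f$, and positive definite on the span of the positive eigenspaces, hence on $P_f \cap H^1_f(M;E)$. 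This proves \ref{Cor_index_a}.

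For \ref{Cor_index_b} I would run the standard min-max comparison argument. Suppose $L^2_f(M;E) = N' \oplus P'$ with $N'$ finite-dimensional, $N' \subset H^1_f(M;E)$, $I_f < 0$ on $N' \setminus \{0\}$ and $I_f \geq 0$ on $P' \cap H^1_f(M;E)$. On the one hand, $N' \cap (K_f \oplus P_f) = \{0\}$: any nonzero element $u$ of this intersection would satisfy $I_f(u,u) < 0$ by the hypothesis on $N'$, but decomposing $u = u_K + u_P$ along $K_f \oplus P_f$ and using part \ref{Cor_index_a} gives $I_f(u,u) = I_f(u_P,u_P) \geq 0$ — here one must first check $u \in H^1_f(M;E)$, which holds since $u \in N' \subset H^1_f$, and that the decomposition respects $H^1_f$ and $I_f$, which follows from the spectral picture. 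Hence $\dim N' \leq \dim N_f = \Index(-L)$. On the other hand, $N_f \cap P' = \{0\}$: a nonzero $u \in N_f \cap P'$ lies in $H^1_f(M;E)$ (since $N_f \subset H^1_f$), so $u \in P' \cap H^1_f$, forcing $I_f(u,u) \geq 0$, contradicting $I_f(u,u) < 0$ on $N_f \setminus \{0\}$. Since $L^2_f = N' \oplus P'$, the map $N_f \hookrightarrow L^2_f \twoheadrightarrow L^2_f/P' \cong N'$ is injective, giving $\dim N_f \leq \dim N'$. Combining the two inequalities yields $\dim N' = \Index(-L)$.

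The main obstacle I anticipate is the bookkeeping around the infinite-dimensional subspace $P_f$: several of the quadratic-form manipulations (continuity of $I_f$, the fact that the $L^2_f$-orthogonal projection onto $K_f \oplus P_f$ of an element of $H^1_f$ again lies in $H^1_f$, and that $I_f$ restricted to $P_f \cap H^1_f$ really is positive \emph{definite} rather than merely semidefinite) require the elliptic regularity and decay estimates from Proposition~\ref{Prop_L2_theory}\ref{Prop_L2_theory_b},\ref{Prop_L2_theory_d} to be invoked carefully, since $L$ has unbounded coefficients and $M$ is noncompact. Once one knows that finite linear combinations of eigensections are dense in $H^1_f$ in the $H^1_f$-norm — which should follow from the spectral theorem for the associated self-adjoint operator together with the compact embedding $H^1_f \hookrightarrow L^2_f$ implicit in part \ref{Prop_L2_theory_c} — the positivity of $I_f$ on $P_f \cap H^1_f$ and the comparison argument in \ref{Cor_index_b} become routine. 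I would state this density as a lemma, or extract it from the proof of Proposition~\ref{Prop_L2_theory}, before assembling the corollary.
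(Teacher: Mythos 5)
Your proof is correct and takes essentially the same route as the paper: for part \ref{Cor_index_a} you make exactly the paper's choice $N_f = \oplus_{\lambda_i<0}\EE_{\lambda_i}$, $K_f = \EE_0$, $P_f = \ov{\oplus_{\lambda_i>0}\EE_{\lambda_i}}$ from Proposition~\ref{Prop_L2_theory}\ref{Prop_L2_theory_c}, and for part \ref{Cor_index_b} your two injectivity statements ($N'\cap(K_f\oplus P_f)=\{0\}$ and $N_f\cap P'=\{0\}$) are just the paper's contradiction/dimension-count argument rephrased via quotient maps. The one point you defer to a lemma — strict positivity of $I_f$ on $P_f\cap H^1_f$ — does go through as you indicate, using the smallest positive eigenvalue as a uniform lower bound together with $H^1_f$-convergence of the eigenfunction expansion (extracted from the proof of Proposition~\ref{Prop_L2_theory}); the paper omits this verification altogether.
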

\bigskip

The proof of Proposition~\ref{Prop_L2_theory} relies on the following Poincar\'e inequality (compare with \cite[Lemma 5.1]{Munteanu_Wang} and \cite[Lemma~3.2]{Deruelle_Schulze_2021}):

\begin{Lemma} \label{Lem_L2f_Poincare}
Consider the setting of Proposition~\ref{Prop_L2_theory}.
Then for any $u \in H^1_{f}(M;E)$ we have
\[ \int_M |\nabla f|^2 |u|^2 e^{-f} dg \leq 4 \int_M |\nabla u|^2 e^{-f}dg. \]
\end{Lemma}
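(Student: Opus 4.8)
The statement to prove is the weighted Poincaré inequality
\[ \int_M |\nabla f|^2 |u|^2 e^{-f} dg \leq 4 \int_M |\nabla u|^2 e^{-f}dg \qquad \text{for all } u \in H^1_f(M;E). \]
The plan is the standard integration-by-parts trick that exploits the fact that $e^{-f}$ is, up to lower-order terms, a Gaussian density and that $f$ solves the soliton identities. First I would treat the case where $u \in C^\infty_c(M;E)$ (or more generally a compactly supported $H^1$ section), and then pass to general $u \in H^1_f(M;E)$ by a cutoff/density argument at the end. For compactly supported $u$, consider the quantity $\int_M \DIV(e^{-f} \nabla f) |u|^2 dg$. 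Expanding the divergence gives $\DIV(e^{-f}\nabla f) = e^{-f}(\triangle f - |\nabla f|^2)$, and the traced soliton equation $R + \triangle f + \tfrac{n}{2} = 0$ turns $\triangle f$ into $-R - \tfrac{n}{2}$. Meanwhile, integrating by parts moves the divergence onto $|u|^2$:
\[ \int_M e^{-f}(\triangle f - |\nabla f|^2)|u|^2 dg = -\int_M e^{-f} \nabla f \cdot \nabla(|u|^2)\, dg = -2\int_M e^{-f} \langle \nabla_{\nabla f} u, u\rangle\, dg. \]
Applying Cauchy–Schwarz to the right-hand side, $\big| 2\langle \nabla_{\nabla f} u, u \rangle \big| \leq 2 |\nabla f|\,|\nabla u|\,|u|$, and then Young's inequality in the form $2|\nabla f|\,|\nabla u|\,|u| \leq \tfrac12 |\nabla f|^2 |u|^2 + 2|\nabla u|^2$, I obtain
\[ \int_M |\nabla f|^2 |u|^2 e^{-f} dg \leq \int_M \big(-R - \tfrac{n}{2}\big)|u|^2 e^{-f} dg + \tfrac12 \int_M |\nabla f|^2 |u|^2 e^{-f} dg + 2\int_M |\nabla u|^2 e^{-f} dg. \]
Absorbing the middle term and using $-R - \tfrac{n}{2} \leq 0$ (which holds since $R \geq -\tfrac{n}{2}$ by the first part of Theorem~\ref{Thm_soliton_PSC}, or directly since $R \geq 0$ in our applications) gives $\tfrac12 \int_M |\nabla f|^2 |u|^2 e^{-f} dg \leq 2 \int_M |\nabla u|^2 e^{-f} dg$, which is exactly the claimed inequality after multiplying by $2$. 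The bound $R \geq -\tfrac{n}{2}$ is the one ingredient that requires the soliton structure beyond the algebra; since $g$ has bounded curvature this follows from the cited theorem (and on orbifolds one works on a smooth cover via Lemma~\ref{Lem_soliton_smooth}).

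To promote this from $C^\infty_c$ to general $u \in H^1_f(M;E)$, I would insert a cutoff $\eta_j$ equal to $1$ on $B(p,j)$, supported in $B(p,2j)$, with $|\nabla \eta_j| \leq C/j$, apply the compactly supported estimate to $\eta_j u$, and control the error terms. The cross terms produced by $\nabla(\eta_j u) = \eta_j \nabla u + u\,\nabla \eta_j$ contribute quantities bounded by $C j^{-2}\int_{B(p,2j)} |u|^2 e^{-f} dg$ plus cross terms handled by Cauchy–Schwarz; since $u, \nabla u \in L^2_f$, all error terms vanish as $j\to\infty$ by dominated convergence, and the left-hand side converges to the full integral by monotone convergence. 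The main obstacle — though it is mild — is making the integration by parts and the cutoff limit rigorous in the non-compact (and orbifold) setting: one must know a priori that the relevant integrals are finite (guaranteed by $u \in H^1_f$ together with the asymptotics $f \sim -\tfrac14 r^2$ and $|\nabla f| \sim \tfrac12 r$ from Lemma~\ref{Lem_iota}, so that $|\nabla f|^2 e^{-f}$ decays) and that no boundary contribution at the orbifold singularities appears, which is automatic since the singular set has codimension $4$ and one integrates against smooth data on the cover. I do not anticipate this causing real difficulty; the computation above is the heart of the matter and is essentially a two-line estimate once the soliton identity is invoked.
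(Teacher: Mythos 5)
Your proposal is correct and follows essentially the same route as the paper: reduce to compactly supported sections, integrate by parts against $e^{-f}\nabla f$, use the traced soliton identity together with $R\geq -\tfrac{n}{2}$ (Theorem~\ref{Thm_soliton_PSC}) to discard the $\triangle f$ term, and absorb via Cauchy–Schwarz/Young with the same constants. The paper simply invokes density of $C^1_c$ in $H^1_f$ where you spell out a cutoff argument, but this is an immaterial difference.
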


\begin{proof}
Since $C^1_c(M;E) \subset H^1_f(M;E)$ is dense, it suffices to establish the inequality for $u \in C^1_c(M; E)$.
By Theorem~\ref{Thm_soliton_PSC} we have $\triangle f = - R - \frac{n}2 \leq 0$.
So integration by parts gives
\begin{multline*}
\int |\nabla f|^2 |u|^2 e^{-f} dg =  \int_M \big(\nabla f \cdot \nabla |u|^2 + (\triangle f )  |u|^2 \big) e^{-f} dg \\
\leq \frac12 \int_M |\nabla f|^2 |u|^2 e^{-f} dg + 2\int_M |\nabla u|^2 e^{-f} dg  
\end{multline*}
This proves the lemma.
\end{proof}
\bigskip

\begin{proof}[Proof of Proposition~\ref{Prop_L2_theory}.]
Note first that the set of compactly supported sections, $C^\infty_c (M;E)$, is dense in both $L^2_f (M; E)$ and $H^1_f (M; E)$.
Due to integration by parts, Assertion~\ref{Prop_L2_theory_a} holds if $u_2 \in C^\infty_c (M;E)$; the assertion follows for general $u_2$ by approximation.

Next, let $\beta : M \to [0,\infty)$ be a smooth and bounded function, which we will choose later and define $\hat L := L - \beta$ and
\[ \hat I_f (u_1, u_2) := \int_M \big( (\nabla u_1 \cdot \nabla u_2) - b(u_1, u_2) + \beta (u_1 \cdot u_2) \big) e^{-f} dg.  \]
Note that Assertion~\ref{Prop_L2_theory_a} continues to hold if we replace $L, I_f$ with $\hat L, \hat I_f$, respectively.
Next we will prove Assertion~\ref{Prop_L2_theory_c}.

\begin{Claim} \label{Cl_hatI_inner_prod}
There is a smooth, compactly supported function $\beta_0 : M \to [0,\infty)$ such that if $\beta \geq \beta_0$, then $\hat I_f$ is positive definite and the associated norm $\hat I_f^{1/2}$ is equivalent to $\Vert \cdot \Vert_{H^1_f}$.
Moreover, there is a bounded operator $\hat L^{-1} : L^2_f (M;E) \to H^1_f(M;E)$ such that for any $v \in L^2_f(M;E)$ we have
\[ - \langle v, w \rangle_f = \hat I_f (\hat L^{-1} v, w ) \qquad \text{for all} \quad w \in H^1_f(M;E). \]
So $\hat L^{-1} v$ is a weak solution to $\hat L u = v$.
\end{Claim}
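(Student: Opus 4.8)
\textbf{Proof plan for Claim~\ref{Cl_hatI_inner_prod}.}
The plan is to use the Poincar\'e inequality of Lemma~\ref{Lem_L2f_Poincare} to dominate the $L^2_f$-norm by a combination of the gradient term and a \emph{compactly supported} potential term, and then apply Lax--Milgram.

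First I would choose $\beta_0$. Since $f$ is proper and bounded above and $|\nabla f|^2 = -f - R + W$ with $R \to 0$ at infinity (by Theorem~\ref{Thm_soliton_PSC} and the asymptotics, $R$ is bounded; in fact $|\nabla f|^2 \to \infty$ since $-f \to \infty$), there is a compact set $K_0 \subset M$ such that $\tfrac14 |\nabla f|^2 \geq 1 + |b|$ on $M \setminus K_0$ (here $|b| \to 0$ is used). Set $\beta_0 := (1 + |b|)\cdot \chi$ for a smooth cutoff $\chi$ equal to $1$ on a neighbourhood of $K_0$ and supported in a slightly larger compact set; then $\beta_0$ is smooth, compactly supported, non-negative, and satisfies $\tfrac14|\nabla f|^2 + \beta_0 \geq 1 + |b|$ everywhere. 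For any $\beta \geq \beta_0$ and any $u \in H^1_f(M;E)$, using Lemma~\ref{Lem_L2f_Poincare},
\begin{align*}
\hat I_f(u,u) &= \int_M \big( |\nabla u|^2 - b(u,u) + \beta |u|^2 \big) e^{-f} dg \\
&\geq \int_M \Big( \tfrac12 |\nabla u|^2 + \tfrac18 |\nabla f|^2 |u|^2 - |b|\,|u|^2 + \beta_0 |u|^2 \Big) e^{-f} dg \\
&\geq \tfrac12 \int_M \big( |\nabla u|^2 + |u|^2 \big) e^{-f} dg = \tfrac12 \Vert u \Vert_{H^1_f}^2,
\end{align*}
where in the second line I split $\int |\nabla u|^2 = \tfrac12 \int |\nabla u|^2 + \tfrac12\int|\nabla u|^2$ and bounded $\tfrac12 \int |\nabla u|^2 \geq \tfrac18 \int |\nabla f|^2|u|^2$. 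Combined with the trivial upper bound $\hat I_f(u,u) \leq C \Vert u \Vert_{H^1_f}^2$ (finite because $\beta$ and $b$ are bounded), this shows $\hat I_f$ is an inner product on $H^1_f(M;E)$ whose norm $\hat I_f^{1/2}$ is equivalent to $\Vert \cdot \Vert_{H^1_f}$.

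Next, for fixed $v \in L^2_f(M;E)$ the map $w \mapsto -\langle v, w\rangle_f$ is a bounded linear functional on $H^1_f(M;E)$ (by Cauchy--Schwarz and $\Vert w \Vert_{L^2_f} \leq \Vert w \Vert_{H^1_f}$). Since $(H^1_f(M;E), \hat I_f)$ is a Hilbert space, the Riesz representation theorem yields a unique $\hat L^{-1} v \in H^1_f(M;E)$ with $-\langle v, w\rangle_f = \hat I_f(\hat L^{-1}v, w)$ for all $w \in H^1_f(M;E)$, and $\Vert \hat L^{-1} v \Vert_{H^1_f} \leq C \Vert v \Vert_{L^2_f}$, so $\hat L^{-1}$ is bounded as an operator $L^2_f(M;E) \to H^1_f(M;E)$; linearity is immediate from uniqueness. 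Taking $w \in C^\infty_c(M;E)$ and integrating by parts (Assertion~\ref{Prop_L2_theory_a} applied with $\hat L, \hat I_f$), this identity says exactly that $\hat L(\hat L^{-1}v) = v$ weakly. I expect the only genuinely delicate point to be verifying that $\tfrac14|\nabla f|^2$ eventually dominates $1 + |b|$, which rests on the soliton identity $|\nabla f|^2 + R + f \equiv \mathrm{const}$ together with the boundedness of $R$ (from the bounded-curvature hypothesis) and the properness of $f$; everything else is a routine Poincar\'e-plus-Lax--Milgram argument.
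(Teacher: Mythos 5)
Your argument is essentially the paper's proof: coercivity of $\hat I_f$ via the Poincar\'e inequality of Lemma~\ref{Lem_L2f_Poincare} together with the properness of $|\nabla f|^2 = C - R - f$, then Riesz representation on $(H^1_f(M;E), \hat I_f)$ and the same estimate for the operator norm of $\hat L^{-1}$. The only blemish is a constant slip in the choice of $\beta_0$: requiring $\tfrac14|\nabla f|^2 \geq 1+|b|$ outside $K_0$ only yields $\tfrac18|\nabla f|^2 + \beta_0 - |b| \geq \tfrac12 - \tfrac12|b|$ on the region where $\chi<1$, so you should instead choose $K_0$ with $\tfrac18|\nabla f|^2 \geq \tfrac12 + |b|$ outside it (possible since $|\nabla f|^2$ is proper and $b$ is bounded), after which your displayed coercivity chain holds verbatim.
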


\begin{proof}
The first statement is a direct consequence of Lemma~\ref{Lem_L2f_Poincare}; note that $|\nabla f|^2 = C  - R - f$ is proper.
The existence of $\hat L^{-1}$ follows from Riesz representation applied to the inner product $\hat I_f$ on $H^1_f$.
To see that $\hat L^{-1}$ is bounded, note that for $u := \hat L^{-1} v$
\[ c \Vert u \Vert^2_{H^1_f} \leq \hat I_f (u, u) = - \langle v, u \rangle_f 
\leq \Vert v \Vert_{L^2_f} \Vert u \Vert_{L^2_f}
\leq \Vert v \Vert_{L^2_f} \Vert u \Vert_{H^1_f}, \]
so $c\Vert u \Vert_{H^1_f} \leq \Vert v \Vert_{L^2_f}$.
\end{proof}

\begin{Claim} \label{Cl_Rellich}
The embedding $L^2_f(M;E) \to H^1_f(M;E)$ is compact.
\end{Claim}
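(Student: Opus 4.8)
\textbf{Proof proposal for Claim~\ref{Cl_Rellich}.}

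The plan is to reduce the statement to the classical Rellich--Kondrachov theorem on bounded domains together with a tail estimate that exploits the growth of the weight $e^{-f}\sim e^{r^2/4}$ at infinity, which is controlled by the Poincar\'e inequality of Lemma~\ref{Lem_L2f_Poincare}. Let $(u_i)$ be a bounded sequence in $H^1_f(M;E)$, say $\Vert u_i\Vert_{H^1_f}\le 1$. First I would note that since $f$ is proper and bounded above (normalize $\sup_M f=-1$), the sublevel sets $\Omega_s:=\{-f<s\}$ are relatively compact and exhaust $M$. On each fixed $\Omega_s$ the weights $e^{-f}$ and the volume form are comparable to the unweighted ones (with constants depending on $s$), and the $H^1_f$-norm restricted to $\Omega_s$ dominates the unweighted $H^1(\Omega_s)$-norm; hence $(u_i)$ is bounded in $H^1(\Omega_s;E)$. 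By the usual Rellich--Kondrachov theorem on the relatively compact domain $\Omega_s$ (applied in local orbifold charts, using that all orbifolds here have manifold covers, cf.\ Definition~\ref{Def_ensemble}\ref{Def_ensemble_3}), a subsequence converges in $L^2(\Omega_s;E)$, and therefore in $L^2_f(\Omega_s;E)$.

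Next I would run a diagonal argument: choose $s_j\to\infty$, extract nested subsequences converging in $L^2_f(\Omega_{s_j};E)$ for each $j$, and pass to the diagonal subsequence, still denoted $(u_i)$, which is Cauchy in $L^2_f(\Omega_{s_j};E)$ for every fixed $j$. It remains to control the tails uniformly. Here is where Lemma~\ref{Lem_L2f_Poincare} enters: for any $u\in H^1_f(M;E)$ and any $s>0$,
\[
\int_{M\setminus\Omega_s} |u|^2 e^{-f}\,dg
\;\le\; \frac{1}{\inf_{M\setminus\Omega_s}|\nabla f|^2}\int_{M\setminus\Omega_s} |\nabla f|^2|u|^2 e^{-f}\,dg
\;\le\; \frac{4}{\inf_{M\setminus\Omega_s}|\nabla f|^2}\,\Vert u\Vert_{H^1_f}^2 .
\]
Since $|\nabla f|^2 = C - R - f \ge -f - C(A)$ by the soliton identity and the curvature bound, we have $\inf_{M\setminus\Omega_s}|\nabla f|^2 \ge s - C(A) \to\infty$ as $s\to\infty$. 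Therefore, given $\eps>0$, choose $s$ so large that $\int_{M\setminus\Omega_s}|u_i|^2 e^{-f}\,dg \le \eps/16$ uniformly in $i$; then for $i,i'$ large (depending on this $s$),
\[
\Vert u_i - u_{i'}\Vert_{L^2_f}^2
\le \int_{\Omega_s}|u_i-u_{i'}|^2 e^{-f}\,dg + 2\!\int_{M\setminus\Omega_s}\!\!|u_i|^2 e^{-f}\,dg + 2\!\int_{M\setminus\Omega_s}\!\!|u_{i'}|^2 e^{-f}\,dg
\le \eps/2 + \eps/4 + \eps/4,
\]
using the $\Omega_s$-Cauchy property from the diagonal extraction. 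Hence $(u_i)$ is Cauchy in $L^2_f(M;E)$, which proves compactness of the embedding.

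The main obstacle is purely technical rather than conceptual: making the local Rellich step rigorous on an orbifold with isolated conical singularities, i.e.\ verifying that Rellich--Kondrachov holds on relatively compact orbifold domains. This is handled by passing to finite orbifold covers near the singular points (available since the orbifolds are good, cf.\ the Conventions subsection and Definition~\ref{Def_ensemble}\ref{Def_ensemble_3}), applying the classical theorem equivariantly, and descending; the curvature and injectivity-radius control away from singularities gives uniform constants. Everything else is the standard diagonal-plus-tail argument, with the tail estimate supplied cleanly by Lemma~\ref{Lem_L2f_Poincare}.
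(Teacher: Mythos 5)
Your proof is correct and follows essentially the same route as the paper: the tail estimate on the region where $|\nabla f|$ (equivalently $-f$) is large comes from the Poincar\'e inequality of Lemma~\ref{Lem_L2f_Poincare}, and the convergence on the compact exhaustion comes from local Rellich compactness, exactly as in the paper's argument. Your extra detail on the diagonal extraction and on handling the orbifold singularities via finite local covers is fine but not a different method, so nothing further is needed (note the claim's statement has the arrow reversed as a typo; like the paper, you correctly prove compactness of $H^1_f\hookrightarrow L^2_f$).
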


\begin{proof}
Consider a bounded sequence $u_i \in H^1_f(M;E)$.
By Lemma~\ref{Lem_L2f_Poincare} there is a constant $C < \infty$ such that for any $A > 1$
\begin{equation} \label{eq_superlevelset_bound}
 \int_{\{|\nabla f| > A \}} |u_i|^2 e^{-f} dg \leq \frac{C}{A^2} 
\end{equation}
Since $\{|\nabla f| \leq A \}$ is compact, we can pass to a subsequence, such that for some $u_\infty \in L^2_f(M;E)$ and sequence $A_i \to \infty$ we have
\[ \int_{\{ |\nabla f| \leq A_i \}} |u_i - u_\infty|^2 e^{-f} dg \to 0. \]
So $u_\infty$ also satisfies \eqref{eq_superlevelset_bound} and thus
\[ \int_{\{|\nabla f| > A_i \}}|u_i - u_\infty|^2 e^{-f} dg \leq \frac{4C}{A_i^2} \to 0. \qedhere\]
\end{proof}
\medskip

Combining Claims~\ref{Cl_hatI_inner_prod}, \ref{Cl_Rellich} implies that $-\hat L^{-1} : L^2_f(M;E) \to L^2_f(M;E)$ is compact, self-adjoint and positive definite.
Therefore it has a discrete spectrum and finite dimensional eigenspaces, which implies Assertion~\ref{Prop_L2_theory_c} for $\hat L$.
We obtain the statement for $L$ if we choose $\beta$ to be constant and adjust the eigenvalues accordingly.
The regularity statement follows from standard local elliptic regularity theory.

Next, let us prove Assertion~\ref{Prop_L2_theory_b} in the (easier) case in which $u \in L^2_f(M;E)$.
We obtain that $\triangle_f u =  L u - b(u) \in L^2(M;E)$.
Let $\eta \in C^\infty_c(M)$, $0 \leq \eta \leq 1$ be a cutoff function with $0 \leq \eta \leq 1$ and $|\nabla \eta| \leq 1$.
Integration by parts yields
\begin{multline*}
  \int_M \eta^2 |\nabla u|^2 e^{-f} dg 
  = \int_M -2\eta \nabla \eta \cdot u \cdot \nabla u \,e^{-f} dg + \int_M \eta^2 \triangle_f u \cdot u \, e^{-f} dg  \\
  \leq \frac12 \int \eta^2 |\nabla u|^2 dg + 2 \int |\nabla \eta|^2 |u|^2 e^{-f} dg +  \Vert \triangle_f u \Vert_{L^2_f} \Vert u \Vert_{L^2_f}.   
\end{multline*}
So
\[ \int \eta^2 |\nabla u|^2 e^{-f} dg \leq 2\Vert u \Vert_{L^2_f}^2 + 2\Vert \triangle_f u \Vert_{L^2_f} \Vert u \Vert_{L^2_f}. \]
Letting the support of $\eta$ go to $M$ implies that $u \in H^1_f(M;E)$, as desired.

Let us now prove Assertion~\ref{Prop_L2_theory_b} in the (harder) case in which $u \in C^{2,\alpha}_{-a,\nabla f}(M;E)$.
By the previous argument, it suffices to check that $u \in L^2_f(M;E)$.
We will achieve this by representing $u$ as a time-integral of the heat flow starting from $\hat L u$, as in the proof of Proposition~\ref{Prop_Lunardi}.
We will then bound this heat flow in terms of $L^2_f$.
As a preparation for our arguments, suppose without loss of generality that $\sup f = -1$ and suppose that $\beta$ is compactly supported.
Then $\hat L u = L u - \beta u \in L^2_f(M;E)$.
As in Claim~\ref{Cl_heat_flow_Lunardi} in the proof of Proposition~\ref{Prop_Lunardi}, there is a family of bounded operators $(\hat P_t : C^0(M; E) \to C^0(M; E))_{t > 0}$ such that for any $u \in C^0(M; E)$ the family $(u_t := \hat P_t u \in C^2_{\loc}(M;E))_{t > 0}$ is a solution to the parabolic equation
\begin{equation*} %
 \partial_t u_t = \hat L u_t, \qquad u_t \xrightarrow[t \searrow 0]{} u \quad \text{in} \quad C^0(M;E) 
\end{equation*}
For example, $\hat P_t$ can be obtained by approximating $\hat L$ by operators with bounded coefficients.

The following claim is, at its heart, a restatement of Lemma~\ref{Lem_delta_f_ell_bounds}.

\begin{Claim} \label{Cl_C0_decay}
For any $a > 0$ there is a smooth, bounded and compactly supported function $\beta'_{a} : M \to [0,\infty)$ such that if $\beta \geq \beta'_a$, then the following holds.
If $u$ is continuous and $(-f)^{a/2}|u|$ is bounded, then for some $c_a > 0$ the quantity
\begin{equation} \label{eq_C0_monotonicity}
 e^{c_a t} \max_M (-f)^{a/2} |u_t| 
\end{equation}
is non-increasing in $t$.
Moreover, $v := \int_0^\infty u_t \, dt$ exists and $(-f)^{a/2} |v|$ is bounded.
If $u$ is of regularity $C^{0,\alpha}_{\loc}$, then $v$ is of regularity $C^{2,\alpha}_{\loc}$ and we have $-\hat Lv = u$.
\end{Claim}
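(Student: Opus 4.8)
\textbf{Proof plan for Claim~\ref{Cl_C0_decay}.}
The plan is to run the same maximum-principle argument that underlies Lemma~\ref{Lem_delta_f_ell_bounds}, but now for the parabolic operator $\partial_t - \hat L$ rather than for the elliptic operator $\hat L$ itself. First I would substitute $u'_t := (-f)^{a/2} u_t$ and compute the equation satisfied by $u'_t$. Using the soliton identities $\triangle f = -R - \tfrac n2$ and $|\nabla f|^2 + R + f \equiv W = \const$, together with Shi's estimates $|\nabla^{k+2} f| = |\nabla^k \Ric| \le C(A)$, one finds that $u'_t$ solves
\[
\partial_t u'_t = \triangle u'_t - \nabla_{\nabla f + a \nabla \log(-f)} u'_t + \big( b + \beta_0^{\text{eff}} \big) u'_t - \beta u'_t,
\]
where $\beta_0^{\text{eff}}$ is the zeroth-order term produced by conjugating by $(-f)^{a/2}$; its leading behaviour near infinity is $\tfrac a2 \cdot \tfrac{n/2 + W}{-f} - (\tfrac{a^2}{4} + \tfrac a2) \tfrac{|\nabla f|^2}{(-f)^2}$, exactly the quantity $b_0$ in \eqref{eq_b0_def}, which by the proof of Lemma~\ref{Lem_delta_f_ell_bounds} is $\le -\tfrac a4 < 0$ outside a compact set (recall $a < 0$ is forbidden there — we have $a > 0$ here). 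Thus there is a constant $c_a > 0$ and a compact set $K$ outside which the full zeroth-order coefficient of $u'_t$, including $b \to 0$ and $-\beta \le 0$, is $\le -c_a$. Choosing the compactly supported bump $\beta'_a \ge 0$ large enough on $K$ so that $b + \beta_0^{\text{eff}} - \beta'_a \le -c_a$ on all of $M$ then makes the zeroth-order coefficient $\le -c_a$ everywhere whenever $\beta \ge \beta'_a$.

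Granting that, the monotonicity of \eqref{eq_C0_monotonicity} follows from the scalar maximum principle applied to $|u'_t|^2$: a Bochner-type computation gives $\partial_t |u'_t|^2 \le \triangle |u'_t|^2 - \nabla_{\nabla f + a\nabla\log(-f)} |u'_t|^2 - 2 c_a |u'_t|^2$, and since $(-f)^{a/2}|u|$ is assumed bounded we may apply the maximum principle in the non-compact setting using the same linearly growing barrier function $w$ as in the proof of Proposition~\ref{Prop_Lunardi} (multiplying $|u'_t|^2 - \delta w^\eps e^{-c_a t}$ and letting $\delta \searrow 0$). This yields $e^{c_a t}\max_M(-f)^{a/2}|u_t| \le \max_M (-f)^{a/2}|u|$ and more generally non-increasingness in $t$. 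The exponential decay in $t$ then makes the time integral $v = \int_0^\infty u_t\,dt$ converge in the $C^0_{-a}$-sense, with $(-f)^{a/2}|v|$ bounded by $c_a^{-1}\max_M(-f)^{a/2}|u|$. For the regularity and the identity $-\hat L v = u$, I would argue exactly as at the end of the proof of Proposition~\ref{Prop_Lunardi}: if $u \in C^{0,\alpha}_{\loc}$, then by the interior Schauder estimates of Proposition~\ref{Prop_Lunardi} (applied on balls) the maps $\hat P_t$ gain two derivatives with the parabolic scaling bound $\Vert \hat P_t u\Vert_{C^{2,\alpha'}} \le C t^{-1+\delta}\Vert u\Vert_{C^{0,\alpha}}$ for small $t$, so splitting $v = \int_0^\xi + \int_\xi^\infty$ and interpolating (via \cite[\S 2.7.2, Thm.~1]{Triebel_1995_book} or \cite[Proposition~2.8]{Deruelle}) shows $v \in C^{2,\alpha}_{\loc}$; then $-\hat L v = -\int_0^\infty \hat L u_t\,dt = -\int_0^\infty \partial_t u_t\,dt = u_0 - \lim_{t\to\infty} u_t = u$, the last limit vanishing by the decay of \eqref{eq_C0_monotonicity}.

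The main obstacle is making the sign of the zeroth-order coefficient precise and uniform in the non-compact, weighted setting: one has to check that after conjugation by $(-f)^{a/2}$ the genuinely bad term $-(\tfrac{a^2}{4}+\tfrac a2)\tfrac{|\nabla f|^2}{(-f)^2}$ has the favourable sign for $a>0$ (here it does, since $a>0$ makes this negative, unlike the situation in Lemma~\ref{Lem_delta_f_ell_bounds} where $a<0$ was needed because of a different conjugation convention) and that the error terms coming from $\nabla^m f$-bounds and from $b \to 0$ are dominated by $c_a$ outside a compact set, so that a \emph{compactly supported} $\beta'_a$ suffices. Once the coefficient sign is under control, the rest is a routine application of the maximum principle with a barrier and the parabolic Schauder machinery already set up in the proof of Proposition~\ref{Prop_Lunardi}.
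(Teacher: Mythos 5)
Your overall route is the same as the paper's: pass to the weighted quantity $(-f)^{a/2}u_t$, show that the resulting zeroth-order coefficient is bounded above by a negative constant outside a compact set using the soliton identities, absorb the compact region with a compactly supported $\beta'_a$, run the maximum principle with the linearly growing barrier from the proof of Proposition~\ref{Prop_Lunardi}, and then obtain $v$ by integrating in time, with the regularity and $-\hat L v = u$ coming from local parabolic (Schauder) theory applied separately to the small-time and large-time pieces of the integral. The paper does exactly this (it works with $\td u_t = (-f)^{a/2}|u_t|$ in the viscosity sense and splits $v = \int_0^1 + \int_1^\infty$, but these are cosmetic differences).

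However, the justification of the decisive step — the uniform negativity of the conjugated zeroth-order coefficient at infinity — is wrong as you state it. Conjugating $\triangle_f$ by $(-f)^{a/2}$ produces, besides the drift $a\frac{\nabla f}{-f}\cdot\nabla$, the zeroth-order term
\[
\frac a2\,\frac{\triangle_f f}{-f} + \Big(\frac{a^2}{4}+\frac a2\Big)\frac{|\nabla f|^2}{(-f)^2}
\;=\; -\frac a2 \;-\; b_0,
\]
where $b_0$ is the quantity in \eqref{eq_b0_def}; here one uses $\triangle_f f = f - \tfrac n2 - W$. Note that $b_0 = O\big((-f)^{-1}\big)$, so it tends to $0$ at infinity: your claim that the conjugation produces (to leading order) $b_0$ and that $b_0 \le -\tfrac a4$ outside a compact set is false, and likewise the term $\big(\tfrac{a^2}{4}+\tfrac a2\big)\tfrac{|\nabla f|^2}{(-f)^2}$, which you identify as the source of the favourable sign, actually enters with a \emph{plus} sign for $a>0$ and in any case decays like $(-f)^{-1}$, so it cannot supply a uniform $-c_a$. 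The uniform negativity comes entirely from the constant $-\tfrac a2$ arising from $\tfrac a2\,\tfrac{\triangle_f f}{-f}\to -\tfrac a2$; the remaining terms (including $b\to 0$) decay, so the full coefficient is $\le -\tfrac a4$ outside a compact set, and a compactly supported $\beta'_a$ handles the rest — which is the conclusion you in fact use afterwards. With this computation corrected, the rest of your argument (Kato/Bochner on $|u'_t|^2$, the barrier maximum principle, the $\int_0^\xi+\int_\xi^\infty$ splitting with Schauder estimates, and the fundamental-theorem-of-calculus identity $-\hat L v = u$, where the interchange of $\hat L$ with the time integral is justified by the local parabolic estimates together with the exponential decay of the weighted sup) goes through and coincides with the paper's proof.
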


\begin{proof}
Let $\td u_t := (-f)^{a/2} |u_t|$ and note that in the viscosity sense
\begin{multline} \label{eq_sqtdufau}
 (\partial_t - \triangle_f) \td u_t
\leq -(\triangle_f (-f)^{a/2}) |u_t| + a (-f)^{a/2} \frac{\nabla f}{-f} \cdot \nabla |u_t| +(-f)^{a/2}  \frac{u_t}{|u_t|} \cdot (\partial_t - \triangle_f )u_t \\
= \left(\frac{a}2 \, \frac{\triangle_f f}{-f} - \frac{a(a-2)}{4} \, \frac{ |\nabla f|^2}{(-f)^2}  \right) \td u_t + a \frac{\nabla f}{-f} \cdot \nabla \td u_t + \frac{a^2}{2} \frac{|\nabla f|^2}{(-f)^2} \td u_t + (|b| - \beta) \td u_t.
\end{multline}
Since $\triangle f = -R - \frac{n}2$ and $|\nabla f|^2 = C - f - R$ and since $f$ is proper, we find that $\frac{|\nabla f|^2}{-f} \to 1$ at infinity and that all other terms on the right-hand side of \eqref{eq_sqtdufau} decay.
So for an appropriate choice of $\beta$ we have
\[ \left(\partial_t - \triangle_f -  a \frac{\nabla f}{-f} \cdot \nabla \right) \td u_t \leq - \frac{a}{4} \td u_t. \]
The monotonicity of \eqref{eq_C0_monotonicity} follows using the maximum principle; the fact that $M$ may be non-compact can be dealt with as explained in the beginning of the proof of Proposition~\ref{Prop_Lunardi}.
To see the last statement, write $v = v' + v'' := \int_0^1 u_t \, dt + \int_1^\infty u_t \, dt$.
Standard local parabolic theory implies that $v'$ is of regularity $C^{2,\alpha}$ and that $- \hat Lv' = u -u_1$.
The exponential decay of $(-f)^{a/2} |u_t|$ combined with local parabolic derivative estimates implies local exponential decay of higher derivatives and thus $v''$ is also $C^{2,\alpha}$ and satisfies $-\hat L v'' = u_1$.
\end{proof}

In the next claim we establish decay of $u_t$ in $L^2_f$.

\begin{Claim}
There is a smooth, compactly supported function $\beta'' : M \to [0,\infty)$ such that if $\beta \geq \beta''$, then the following holds.
If $u \in L^2_f(M;E) \cap C^0(M;E)$, then $u_t \in L^2_f (M; E)$ for all $t \geq 0$ and there is a $c > 0$ such that we have for $t \geq 0$
\[ \Vert u_t \Vert_{L^2_f} \leq e^{-ct} \Vert u \Vert_{L^2_f}. \]
\end{Claim}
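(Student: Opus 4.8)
The plan is to mimic the standard energy-decay argument for the heat flow generated by a Schauder-realization of $\hat{L}$, using the Poincar\'e inequality from Lemma~\ref{Lem_L2f_Poincare} to absorb the drift term and produce a spectral gap. First I would argue that $u_t \in L^2_f(M;E)$ for all $t \geq 0$. This requires care because $\hat{P}_t$ was constructed only as a $C^0$-semigroup, so I would approximate $u$ by compactly supported sections and $\hat{L}$ by operators with bounded coefficients (as in Claim~\ref{Cl_heat_flow_Lunardi}), for which finite-speed-of-propagation-type estimates or direct $L^2$ bounds are available, and then pass to the limit using the $C^0$-convergence together with a uniform $L^2_f$ bound obtained in the next step. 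Alternatively, since $u \in C^0(M;E) \cap L^2_f(M;E)$ and (by taking $a$ large enough in Claim~\ref{Cl_C0_decay}, after first decomposing $u$ appropriately) we also control the pointwise growth of $u_t$ against the weight $(-f)^{a/2}$, I would conclude $u_t \in L^2_f$ by interpolating the pointwise bound with the $L^1_f$-type control coming from the maximum principle applied to $|u_t|$ against a suitable supersolution.

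Next I would compute the time derivative of $\Vert u_t \Vert_{L^2_f}^2$. Formally, using Assertion~\ref{Prop_L2_theory_a} applied to $\hat{L}$ and $\hat{I}_f$,
\begin{equation*}
 \frac{d}{dt} \Vert u_t \Vert_{L^2_f}^2 = 2 \langle \hat{L} u_t, u_t \rangle_f = - 2 \hat{I}_f(u_t, u_t) = - 2 \int_M \big( |\nabla u_t|^2 - b(u_t,u_t) + \beta |u_t|^2 \big) e^{-f} dg.
\end{equation*}
By Lemma~\ref{Lem_L2f_Poincare}, $\int_M |\nabla u_t|^2 e^{-f} dg \geq \tfrac14 \int_M |\nabla f|^2 |u_t|^2 e^{-f} dg$, and since $|\nabla f|^2 = C - f - R$ is proper (it tends to $+\infty$), for any fixed $\Lambda$ the region $\{|\nabla f|^2 < 4\Lambda\}$ is compact; choosing $\beta'' \geq 0$ smooth, compactly supported, and large enough on that region so that $\tfrac14 |\nabla f|^2 + \beta'' - |b| \geq c$ everywhere for some $c > 0$ (using that $b \to 0$ at infinity), I get $\hat{I}_f(u_t,u_t) \geq c \Vert u_t \Vert_{L^2_f}^2$. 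Hence $\tfrac{d}{dt}\Vert u_t\Vert_{L^2_f}^2 \leq -2c \Vert u_t\Vert_{L^2_f}^2$, and Gr\"onwall's inequality gives $\Vert u_t \Vert_{L^2_f} \leq e^{-ct} \Vert u \Vert_{L^2_f}$.

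To make the differentiation rigorous I would work with the regularized flow: for the approximating operators with bounded coefficients the semigroup is genuinely analytic on $L^2_f$ and the energy identity above holds classically; the estimate $\hat{I}_f \geq c\Vert\cdot\Vert_{L^2_f}^2$ is uniform under the approximation since it only uses the Poincar\'e inequality (which survives the limit) and the choice of $\beta''$; passing to the limit yields the stated bound for $\hat{P}_t$. I expect the main obstacle to be precisely this approximation/justification step — namely showing that $u_t$ genuinely lies in $L^2_f$ and that the formal integration by parts in the energy identity is legitimate in the non-compact, unbounded-drift setting — rather than the spectral-gap estimate itself, which is an immediate consequence of Lemma~\ref{Lem_L2f_Poincare} once $\beta''$ is chosen. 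A clean way around it is to first establish, via Claim~\ref{Cl_C0_decay}, that $(-f)^{a/2}|u_t|$ is bounded uniformly in $t$ (taking $a > n$ so that $(-f)^{-a} e^{-f} \in L^1$), which forces $u_t \in L^2_f$ with an a priori bound, and then run the energy estimate against cutoff functions $\eta$ exhausting $M$, controlling the error terms $\int \eta \nabla\eta \cdot u_t \cdot \nabla u_t \, e^{-f}$ exactly as in the proof of Assertion~\ref{Prop_L2_theory_b} and letting $\supp\eta \to M$.
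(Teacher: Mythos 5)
Your spectral-gap computation is fine as far as it goes: with $\beta''$ compactly supported and large on the region where $\tfrac14|\nabla f|^2-|b|$ is small, Lemma~\ref{Lem_L2f_Poincare} does give $\hat I_f(u,u)\geq c\Vert u\Vert_{L^2_f}^2$, and combined with Gr\"onwall this would yield the decay. But the crux of the claim --- the step you yourself isolate --- is showing that $u_t\in L^2_f$ (indeed in $H^1_f$) at all, and there your proposal has a genuine gap. The fallback route via Claim~\ref{Cl_C0_decay} cannot work: first, a general $u\in L^2_f(M;E)\cap C^0(M;E)$ need not satisfy the hypothesis of that claim that $(-f)^{a/2}|u|$ is bounded; second, and more fundamentally, the density here is $e^{-f}\sim e^{r^2/4}$ (an \emph{inverse} Gaussian weight, as the paper emphasizes), so $(-f)^{-a}e^{-f}\sim r^{-2a}e^{r^2/4}$ is not integrable for \emph{any} $a$ --- you appear to have the sign of the weight backwards. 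Polynomial decay of $u_t$ says nothing about membership in $L^2_f$, which requires decay on the order of $e^{f/2}\sim e^{-r^2/8}$ (compare Assertion~\ref{Prop_L2_theory_d}). The primary route is not secured either: $\hat P_t$ is only a $C^0$-semigroup, and if you approximate $\hat L$ by bounded-coefficient operators (cutting off the drift $\nabla f$), the approximants are no longer symmetric with respect to $e^{-f}dg$, so neither the energy identity nor the Poincar\'e-based gap holds for them with uniform constants; without a uniform $L^2_f$ bound for the approximations, $C^0_{\loc}$-convergence plus Fatou gives nothing. Likewise, running the cutoff energy estimate as in the proof of Assertion~\ref{Prop_L2_theory_b} presupposes $u_t\in L^2_f$ and control of the boundary terms at spatial infinity, which is exactly what is in question, so the argument is circular.

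The paper sidesteps this by a conjugation rather than a weighted energy estimate: setting $\td u_t:=|u_t|e^{-f/2}$, one has $\Vert u_t\Vert_{L^2_f}=\Vert\td u_t\Vert_{L^2}$ (unweighted), and a pointwise computation using the soliton identities gives, in the viscosity sense,
\[ (\partial_t-\triangle)\,\td u_t\;\leq\;\Big(|b|-\beta+\tfrac14\big(-2R-n-|\nabla f|^2\big)\Big)\td u_t\;\leq\;-c\,\td u_t \]
once $\beta\geq\beta''$ is chosen large on a compact set --- the same mechanism as your spectral gap ($\tfrac14|\nabla f|^2\to\infty$), but implemented pointwise. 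This reduces the claim to a subsolution of the \emph{ordinary} heat operator with initial data $\td u_0\in L^2$, where the non-compactness is handled by standard comparison/energy arguments, and both the membership $u_t\in L^2_f$ and the exponential decay come out of the same estimate. If you want to keep your $L^2_f$ energy argument, you must first supply an a priori reason that $u_t\in H^1_f$ for $t>0$ with sufficient decay to justify the integration by parts (for instance by proving the pointwise inequality above and integrating it), not the polynomial-weight argument.
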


\begin{proof}
Let $\td u_t :=  |u_t|e^{-f/2}$ and note that in the viscosity sense
\begin{multline*}
 (\partial_t - \triangle ) \td u_t 
=    (\partial_t - \triangle) |u_t|e^{-f/2} + \nabla f \cdot \nabla |u_t| e^{-f/2} -  |u_t|\triangle e^{-f/2} \\
= \big( (\partial_t - \triangle_f)|u_t| \big) e^{-f/2} + \tfrac14 (2\triangle f - |\nabla f|^2 ) \td u 
\leq \left( |b| - \beta  + \tfrac14 \big( -2R - n  - |\nabla f|^2 ) \right) \td u.
\end{multline*}
By Theorem~\ref{Thm_soliton_PSC} and since, again, $|\nabla f|^2 = C - f  - R$ and $f$ is proper, we may choose $\beta$ such that the term in the parentheses is $\leq -c < 0$.
This implies that the time-derivative of $e^{2ct} \Vert u_t \Vert^2_{L^2_f} = e^{2ct}\Vert \td u_t \Vert^2_{L^2}$ is non-positive.
\end{proof}

Suppose now that $\beta \geq \max \{ \beta'_a, \beta'' \}$.
Applying Claim~\ref{Cl_C0_decay} and the uniqueness statement of Proposition~\ref{Prop_Lundardi_weighted}, yields that 
\[ u = \int_0^\infty \hat P_t(\hat Lu) dt \in L^2_f (M;E), \]
which finishes the proof of Assertion~\ref{Prop_L2_theory_b}.

Lastly, we prove Assertion~\ref{Prop_L2_theory_d}.
Let $u \in \EE_\lambda$ and $A$ as in the statement of this assertion.
We first establish a weaker decay bound.

\begin{Claim} \label{Cl_prelim_decay}
We have the bound
\[ |u| \leq C(A) (-f)^{n/4} e^{f/2}. \]
\end{Claim}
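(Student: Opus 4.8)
\textbf{Plan for Claim~\ref{Cl_prelim_decay}.} The goal is a preliminary pointwise decay bound on an eigenfunction $u \in \EE_\lambda$ with $\Vert u\Vert_{L^2_f}=1$ and $|\lambda|\le A$, which will later be bootstrapped to the sharp exponent $-n/2+\lambda+\eps$ in Assertion~\ref{Prop_L2_theory_d}. The natural approach is a mixture of an integral estimate (exploiting the Gaussian weight $e^{-f}\sim e^{r^2/4}$ and the Poincar\'e inequality of Lemma~\ref{Lem_L2f_Poincare}) and local elliptic regularity to pass from an $L^2_f$ bound on annuli to the stated pointwise bound.

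\emph{Step 1: a weighted $L^2$ decay estimate.} Set $\td u := |u|\,e^{-f/2}$ (as in the proof of the $L^2$-decay claim above), so that $\Vert u\Vert_{L^2_f}=\Vert \td u\Vert_{L^2}$ and, in the viscosity sense,
\[
(\partial_t-\triangle)\text{-type computation} \quad\longrightarrow\quad \triangle \td u \ge \big(-\lambda - |b| + \tfrac14(|\nabla f|^2 + 2R + n)\big)\td u .
\]
Since $|\nabla f|^2 = C - f - R \sim -f$ is proper while $\lambda$, $|b|$, $R$ are all bounded (indeed $|b|,|R|\le A(-f)^{-1}$), the coefficient on the right is $\ge \tfrac15(-f)$ outside a compact set $K=K(A)$. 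Testing the differential inequality $\triangle\td u \ge \tfrac15(-f)\td u$ against cutoffs supported in annular regions $\{(-f)\sim s\}$, using $\int_M \td u^2 = 1$ and integration by parts, gives an exponential-in-$(-f)$ decay of $\int_{\{(-f)>s\}}\td u^2\,dg$, hence $\int_{\{(-f)>s\}} |u|^2 e^{-f}\,dg \le C(A)\,e^{-cs}$ for $s\ge s_0(A)$. Combined with the volume growth of sublevel sets of $-f$ (which is polynomial in $s$, since $-f\sim r^2/4$ and the soliton has bounded geometry by Lemma~\ref{Lem_iota}), this yields $\int_{\{(-f)\in[s,2s]\}}|u|^2\,dg \le C(A)\, s^{n/2}\, e^{s}\, e^{-cs}$; but the crude bound we actually need is simply $\int_{\{(-f)\in[s,2s]\}}|u|^2\,dg \le C(A)\, s^{n/2}\, e^{s}$, which follows directly from $\int_M |u|^2 e^{-f}\le 1$ together with $e^{-f}\gtrsim e^{c's}$ on that region.

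\emph{Step 2: from $L^2$ to $L^\infty$ via local elliptic estimates.} On the soliton $(M,g,f)$ the geometry is uniformly controlled at unit scale by Lemmas~\ref{Lem_iota} and \ref{Lem_MM_regular_rep}: bounded curvature (with all derivatives, after Shi) and a positive lower injectivity radius bound on the region where $-f$ is large. The equation $-Lu = \lambda u$, i.e. $\triangle u = \nabla_{\nabla f} u - b(u) - \lambda u$, is uniformly elliptic at unit scale with coefficients bounded in terms of $A$ and $|\nabla f|\lesssim (-f)^{1/2}\lesssim s^{1/2}$ on $\{(-f)\sim s\}$. Rescaling a unit ball around a point $p$ with $(-f)(p)\sim s$ by the factor $s^{-1/2}$ (so $\nabla f$ becomes $O(1)$) and applying interior $L^2\to L^\infty$ elliptic estimates (e.g. De Giorgi--Nash--Moser or $W^{2,p}$ plus Sobolev) gives
\[
\sup_{B(p,1)}|u|^2 \le C(A)\, s^{n/2}\int_{B(p,2)}|u|^2\,dg \le C(A)\, s^{n/2}\, e^{s}.
\]
Hence $|u|(p) \le C(A)\, s^{n/4}\, e^{s/2} \le C(A)\,(-f(p))^{n/4}\, e^{f(p)/2}$, after absorbing the factor $e^{s/2}=e^{-f(p)/2}\cdot$const; more precisely one uses $-f(p)\in[\tfrac12 s, 2s]$ so that $e^{s/2}\le C e^{-f(p)/2}$ and $s^{n/4}\le C(-f(p))^{n/4}$. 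On the compact complement $\{(-f)\le s_0(A)\}$ the bound is trivial since $|u|$ is bounded there by local elliptic estimates (again using $\int_M|u|^2e^{-f}=1$) and $(-f)^{n/4}e^{f/2}$ is bounded below by a positive constant. This is exactly the claimed inequality.

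\emph{Main obstacle.} The delicate point is the uniformity of the local elliptic estimate at scale $s^{-1/2}$ where $-f\sim s$: one must check that after rescaling, the first-order coefficient $\nabla_{\nabla f}$ and the zeroth-order terms $b,\lambda$ stay bounded independently of $s$, which uses precisely that $|\nabla f|^2\sim -f$ and $|b|,|\lambda|$ are bounded (not merely $(-f)$-bounded) — this is why the hypotheses $|{\Rm}|,n\le A$, $|R|,|b|\le A(-f)^{-1}$, $\inj\ge A^{-1}$ and $|\lambda|\le A$ all enter. Everything else is a routine combination of the Poincar\'e inequality from Lemma~\ref{Lem_L2f_Poincare} and standard interior estimates; no new ideas beyond careful scaling bookkeeping are needed for this preliminary bound, and the genuine work is deferred to the subsequent sharpening to the exponent $-n/2+\lambda+\eps$.
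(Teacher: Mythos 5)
Your overall strategy is the same as the paper's: localize at a point $p$ at the scale $r_p=(-f(p))^{-1/2}$, use $|\nabla f|^2\le -f$ to see that $|f-f(p)|\le C(A)$ on $B_g(p,r_p)$, rescale so that the eigenvalue equation has uniformly bounded coefficients (the drift term becomes $r_p^2\nabla^g f$ with $|r_p^2\nabla^g f|_{g'_p}\le C(A)$), and then apply interior $L^2\to L^\infty$ elliptic estimates. The paper does exactly this, and your ``main obstacle'' paragraph correctly identifies where the hypotheses enter. Also, your Step 1 Agmon-type decay is not needed (and, as sketched with cutoffs, would only give polynomial rather than exponential decay); the paper does not use anything of this kind here, and you discard it anyway.

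However, as written your argument does not prove the claim, because the $L^2$ input feeding the elliptic estimate has the exponential in the wrong direction. On the region where $-f\sim s$ we have $e^{-f}\ge c(A)\,e^{s}$, so $\int_M|u|^2e^{-f}\,dg\le 1$ gives
\[
\int_{B_g(p,2r_p)}|u|^2\,dg\;\le\;C(A)\,e^{f(p)}\;\sim\;C(A)\,e^{-s},
\]
not the bound $\le C(A)s^{n/2}e^{s}$ you state (nor the $\le e^{s}$ you use in Step 2). Propagating your bound, the chain yields $|u|(p)\le C(A)(-f(p))^{n/4}e^{-f(p)/2}$, a \emph{growing} bound, and your final inequality ``$s^{n/4}e^{s/2}\le C(-f(p))^{n/4}e^{f(p)/2}$'' is false, since $e^{f(p)/2}\approx e^{-s/2}$ while $e^{s/2}\approx e^{-f(p)/2}$; so the last line ``this is exactly the claimed inequality'' does not hold. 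The fix is exactly the corrected display above: after rescaling, $\Vert u\Vert^2_{L^2_{g'_p}(B_{g'_p}(p,1))}\le C(A)\,r_p^{-n}e^{f(p)}=C(A)(-f(p))^{n/2}e^{f(p)}$, and then the interior estimate gives $|u(p)|^2\le C(A)(-f(p))^{n/2}e^{f(p)}$, i.e.\ the claim. With that substitution your proof coincides with the paper's; as submitted, the sign slip is a genuine error in the stated chain of inequalities rather than merely a typo, since it is repeated consistently through Steps 1 and 2.
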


\begin{proof}
We argue similarly as in \cite{Deruelle_2015}.
Fix $p \in M$ and choose $r_p := (-f(p))^{-1/2} \leq 1$.
Due to the equation $|\nabla f |^2 + R = - f$ we have 
\begin{equation} \label{eq_ffp_diff}
|f - f(p) | \leq C(A) \qquad \text{on} \quad B(p,r_p).
\end{equation}
Consider the rescaled metric $g'_p := r_p^{-2} g$.
Then $B_{g'_p} (p,1) = B_g(p,r_p)$ and we can choose local coordinates around $p$ in which the metric coefficients of $g'_p$ and their derivatives are uniformly bounded.
Moreover, the eigenvalue equation becomes
\begin{equation} \label{eq_rescaled_elliptic_eq}
 -\triangle_{g'_p} u +  \nabla_{r_p^{2} \nabla^g f} u - r_p^2 b(u) = r_p^2 \lambda u. 
\end{equation}
Note that on $B_{g'_p}(p,1)$ we have $|r_p^{2} \nabla^g f|^2_{g'_p} = r^2_p |\nabla^g f|_g^2 \leq r^2_p (-f + R) \leq C(A)$, so the coefficients of \eqref{eq_rescaled_elliptic_eq} are uniformly bounded.
Moreover, we have due to \eqref{eq_ffp_diff}
\[ \Vert u \Vert_{L^2_{g'_p}(B(p,r))}^2 
\leq C(A) r_p^{-n} e^{f(p)} \Vert u \Vert_{L^2_{g,f}(B_g(p,r))}^2 
= C(A)(-f(p))^{n/2} e^{f(p)}. \]
The claim now follows from standard elliptic estimates applied to \eqref{eq_rescaled_elliptic_eq}.
\end{proof}

Let $a > 0$ and $H < \infty$ be constants whose values we will determine later and let $\delta \geq 0$ be minimal with the property that
\begin{equation} \label{eq_u_bounded_H_delta}
 |u| \leq H (-f)^{-n/2 + \la +\eps} e^f + \delta (-f)^{-a}. 
\end{equation}
Note that due to Claim~\ref{Cl_prelim_decay}, such a $\delta$ exists and if $\delta > 0$, then we have equality in \eqref{eq_u_bounded_H_delta} at some point.

\begin{Claim} \label{Cl_no_equality_far}
There is a constant $F(A,\eps) < \infty$ such that if $\lambda + \frac\eps{2}  \leq \frac{a}2$, then we have strict inequality in \eqref{eq_u_bounded_H_delta} on $\{ f \leq - F \}$.
\end{Claim}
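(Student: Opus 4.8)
The plan is to run a maximum-principle argument on the far region $\{f \le -F\}$, using the bounds in \eqref{eq_u_bounded_H_delta} and the eigenvalue equation $-Lu = \lambda u$, i.e.\ $\triangle_f u = -\lambda u + b(u)$. Assume for contradiction that equality holds in \eqref{eq_u_bounded_H_delta} at a point $p$ with $f(p) \le -F$; by the minimality of $\delta$ (so $\delta > 0$, otherwise there is nothing to prove) and the properness of $f$, such a point exists if equality is not excluded on $\{f \le -F\}$. At $p$ we would have, with $\phi := H(-f)^{-n/2+\lambda+\eps}e^f + \delta(-f)^{-a}$,
\[ |u|(p) = \phi(p), \qquad \triangle_f |u| \ge \triangle_f \phi \quad\text{at } p \]
in the barrier/viscosity sense. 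So the first step is to compute $\triangle_f$ of each of the two comparison terms.

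First I would record the elementary identities on a gradient expanding soliton normalized by $\sup f = -1$ and $|\nabla f|^2 + R + f \equiv \mathrm{const}$: from $\triangle f = -R-\frac n2$ and Theorem~\ref{Thm_soliton_PSC} one gets $\triangle_f f = \triangle f - |\nabla f|^2 = -R-\frac n2 - (C-f-R) = f - \frac n2 - C$, and $\triangle_f(\log(-f)) = -\frac{\triangle_f f}{-f} - \frac{|\nabla f|^2}{f^2}$. Then for a power $w = (-f)^{b}$ one computes $\triangle_f w = b\,\triangle_f f\,(-f)^{b-1} + b(b-1)|\nabla f|^2(-f)^{b-2}$, and for $(-f)^{b}e^{f}$ the extra factor $e^f$ contributes $|\nabla f|^2 + \triangle_f f$ plus cross terms. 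Carrying this out, both comparison terms are \emph{supersolutions} of $\triangle_f \cdot = -\lambda\,\cdot\,$ in the far region up to lower-order decaying errors: for $(-f)^{-n/2+\lambda+\eps}e^f$ the leading behaviour as $f\to-\infty$ is governed by the fact that $e^{f}$ "is" (up to scaling) the kernel of $\triangle_f$ with eigenvalue $-\frac n2$... more precisely $\triangle_f\big((-f)^{c}e^{f}\big) = \big(-\tfrac n2 + \text{(terms of order }(-f)^{-1})\big)(-f)^{c}e^{f}$, so choosing the exponent $-n/2+\lambda+\eps$ makes the effective eigenvalue strictly below $-\lambda$ once $-f$ is large; and for $\delta(-f)^{-a}$ one finds $\triangle_f\big((-f)^{-a}\big) = \big(a + o(1)\big)(-f)^{-a}$, which combined with the hypothesis $\lambda + \frac\eps2 \le \frac a2$ dominates $-\lambda(-f)^{-a}$ for $-f$ large. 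The perturbation term $b(u)$ with $|b| \le A(-f)^{-1}$ is absorbed into these $o(1)$ errors. Thus there is $F = F(A,\eps)$ so that on $\{f \le -F\}$ we have $\triangle_f \phi < -\lambda \phi \le \triangle_f|u|$ wherever $|u| = \phi$, contradicting $\triangle_f|u| \ge \triangle_f\phi$ at the contact point $p$. (The non-compactness at infinity is handled exactly as in the proof of Proposition~\ref{Prop_Lunardi}, by subtracting a small multiple of a slowly growing supersolution and letting it go to zero; near the singular set of the orbifold one passes to local covers as usual, and $|u|$ is only Lipschitz so the inequality at $p$ is interpreted in the viscosity sense, which the maximum principle tolerates.)

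I expect the main obstacle to be bookkeeping the signs and the precise form of the lower-order error terms in $\triangle_f$ of the two comparison functions — in particular making sure that the threshold exponent $-n/2+\lambda+\eps$ and the inequality $\lambda+\frac\eps2\le\frac a2$ are exactly what is needed so that \emph{both} comparison terms are strict supersolutions simultaneously on a common far region with $F$ depending only on $A$ and $\eps$ (and not on $H$ or $\delta$). Once Claim~\ref{Cl_no_equality_far} is in place, the argument is completed (presumably in the subsequent claim, not part of what I am asked to prove) by noting that equality in \eqref{eq_u_bounded_H_delta} can then only occur on the compact set $\{f \ge -F\}$, where choosing $H$ large forces $\delta = 0$, yielding the stated bound $|u| \le C(A,\eps)(-f)^{-n/2+\lambda+\eps}e^{f}$.
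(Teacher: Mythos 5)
Your overall strategy is exactly the paper's: assume equality in \eqref{eq_u_bounded_H_delta} at a contact point $p$ in the far region, bound $\triangle_f|u|$ from below via Kato and the eigenvalue equation (absorbing $|b|\le A(-f)^{-1}$), compute $\triangle_f$ of the two barrier terms using $\triangle_f f = f-\tfrac n2$ and $|\nabla f|^2=-f-R$, and use $\la+\tfrac\eps2\le\tfrac a2$ to force a contradiction once $-f(p)$ is large. However, as written the argument does not close, because of two sign errors. First, the touching-point inequality is backwards: since $|u|\le\phi$ everywhere with equality at $p$ (and $u\neq 0$ at $p$, so $|u|$ is as regular as $u$ there), the function $|u|-\phi$ has an interior maximum at $p$, hence $\triangle_f|u|\le\triangle_f\phi$ at $p$ --- not $\ge$ as you assert (this is \eqref{eq_Lap_ineq} in the paper). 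With your version, the strict inequality $\triangle_f\phi<\triangle_f|u|$ that you derive from the differential inequalities is not a contradiction at all; it only contradicts the correctly oriented touching inequality.

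Second, your computation of the perturbation term has the wrong sign: one gets $\triangle_f\big((-f)^{-a}\big)=\big(-a+o(1)\big)(-f)^{-a}$, not $\big(a+o(1)\big)(-f)^{-a}$ (compare \eqref{eq_lap_pert_term}). This sign is exactly where the hypothesis $\la+\tfrac\eps2\le\tfrac a2$ enters: the effective eigenvalue $-a$ of $(-f)^{-a}$ must lie below $-\la$ with enough margin to absorb the $O((-f)^{-1})$ errors coming from $b$, $R$ and the soliton identities (note also that at the contact point Kato only gives $\triangle_f|u|\ge-\la\phi-C(A)(-f)^{-1}\phi$, so this margin is really needed). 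With a positive coefficient $+a$ the term $\delta(-f)^{-a}$ would be a subsolution rather than a supersolution and the step would genuinely fail. Both issues are pure sign bookkeeping; once corrected, your argument coincides with the paper's proof. A minor remark: since the contradiction is obtained locally at the contact point, no device for non-compactness (the slowly growing auxiliary supersolution you invoke from Proposition~\ref{Prop_Lunardi}) is needed for this claim.
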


\begin{proof}
Let $p \in M$ be a point where equality in \eqref{eq_u_bounded_H_delta} holds.
We will derive a contradiction assuming that $-f(p)$ is large enough.
At $p$ we have
\begin{equation} \label{eq_Lap_ineq}
 \triangle_f |u| \leq \triangle_f \big( H (-f)^{-n/2+\lambda +\eps} e^f + \delta (-f)^{-a}\big). 
\end{equation}
We compute that
\[ |u|  \triangle_f |u| + |\nabla |u| |^2 
= \tfrac12 \triangle_f |u|^2 
= u \cdot \triangle_f u +   |\nabla u |^2 . \]
So, using Kato's inequality,
\begin{equation} \label{eq_Lap_u_bound}
 \triangle_f |u|  
\geq \frac{u}{|u|} \cdot \triangle_f u 
= \frac{u}{|u|} \cdot  (Lu - b(u))
= - \lambda |u| - \frac{u \cdot b(u)}{|u|}
\geq - \lambda |u| - C(A) (-f)^{-1} |u|. 
\end{equation}
On the other hand,
\begin{align*}
 \triangle_f \big( (-f)^{-n/2 +\la+ \eps} e^f  \big)
&= \big( \triangle_f (-f)^{-n/2+\la+\eps} \big) e^f - 2( -\tfrac{n}{2} +\la+ \eps ) \frac{ |\nabla f|^2}{-f} (-f)^{-n/2+\la+\eps} e^f \\
&\qquad + (-f)^{-n/2+ \la + \eps} \triangle_f e^f \\
&= \left(- (-\tfrac{n}2 +\la + \eps) \frac{\triangle_f f}{-f} + (-\tfrac{n}2 + \la + \eps) (-\tfrac{n}2 + \la -1+\eps) \frac{|\nabla f|^2}{(-f)^2} \right. \\
&\qquad \left. - (-n + 2\la + 2\eps) \frac{|\nabla f|^2}{-f} + \triangle_f f + |\nabla f|^2 \right)(-f)^{-n/2 + \la + \eps} e^f.
\end{align*}
Since $|\nabla f|^2 = -f-R$ and
\[ \triangle_f f = \triangle f - |\nabla f|^2 = -R - \frac{n}2 - |\nabla f|^2 = f - \frac{n}2, \]
this implies that
\begin{multline} \label{eq_lap_main_term}
 \triangle_f \big( (-f)^{n/2 + \la + \eps} e^f  \big)
\leq \left( - \frac{n}2 + \la + \eps + n - 2\la - 2\eps - \frac{n}2  + C(A) (-f)^{-1} \right)(-f)^{-n/2 + \la + \eps} e^f \\
=\left( - \eps - \la + C(A) (-f)^{-1} \right)(-f)^{-n/2 + \la + \eps} e^f. 
\end{multline}
Similarly,
\begin{equation} \label{eq_lap_pert_term}
 \triangle_f (-f)^{-a} 
=  a(\triangle_f f) (-f)^{-a-1}  -a(a+1) |\nabla f|^2 (-f)^{-a-2}
\leq -a(-f)^{-a} + C(A) (-f)^{-a-1} . 
\end{equation}
Combining \eqref{eq_Lap_ineq}, \eqref{eq_Lap_u_bound}, \eqref{eq_lap_main_term}, \eqref{eq_lap_pert_term} implies that at $p$ we have
\begin{multline*}
 - \lambda |u| - C(A) (-f)^{-1} |u| \\
\leq \left(-(\la +\eps) H + C(A)H (-f)^{-1}  \right)(-f)^{-n/2 + \la + \eps} e^f - \delta a (-f)^{-a} + C(A)\delta (-f)^{-a-1}. 
\end{multline*}
If $-f(p)$ is sufficiently large, depending on $A,\eps$, then this implies that at $p$
\[ - (\lambda+\tfrac{\eps}2) |u| 
< - (\la+\tfrac{\eps}2) H(-f)^{-n/2 + \la + \eps} e^f - \delta \tfrac{a}2 (-f)^{-a}. \]
Plugging in the fact that we have equality in \eqref{eq_u_bounded_H_delta} at $p$ and using the bound $\la + \frac{\eps}2 \leq \frac{a}2$ yields a contradiction.
So $f(p)$ must be bounded from below.
\end{proof}

By Claim~\ref{Cl_prelim_decay}, we can choose $H(A,\eps)$ large enough such that \eqref{eq_u_bounded_H_delta} holds with $\delta = 0$ on $\{ f \geq - F \}$.
Let now $\delta \geq 0$ be minimal such that \eqref{eq_u_bounded_H_delta} holds on all of $M$.
By Claim~\ref{Cl_no_equality_far}, we must have $\delta =0$, because we cannot have strict inequality anywhere.
\end{proof}
\bigskip

\begin{proof}[Proof of Corollary~\ref{Cor_index}.]
Assertion~\ref{Cor_index_a} follows by setting $N_f := \oplus_{\lambda_i < 0} \EE_{\lambda_i}$, $K_f := \EE_0$ and letting $P_f$ be the closure of $\oplus_{\lambda_i > 0} \EE_{\lambda_i}$.

For Assertion~\ref{Cor_index_b} suppose first that by contradiction $\dim N' > \dim N_f$.
Then there is an element $u \in N' \cap (K_f \oplus P_f) \subset H^1_f(M;E)$.
However, this would imply that $I_f(u,u) < 0$ and $I_f (u,u) \geq 0$.
On the other hand, suppose that $\dim N' < \dim N_f$.
Then there is an element $u \in N_f \cap P' \subset H^1_f(M;E)$ and we obtain a contradiction as before.
\end{proof}
\bigskip

\section{Preservation of the gradient condition}\label{sec_gradientness}
\subsection{Main result}
In this section we show that the gradient property of an asymptotically conical expanding soliton remains preserved under $C^1$-deformations, provided that all solitons in this deformations remain asymptotic to a cone (not only a generalized cone!).
Our main result is the following:

\begin{Proposition}\label{Prop_gradientness}
Let $M$ be a smooth, $n$-dimensional orbifold with isolated singularities, $N$ a closed, smooth $(n-1)$-dimensional manifold and let $\iota : (1,\infty) \times N \to M$ an embedding such that $M \setminus \iota ((r,\infty) \times N)$ is compact for all $r \geq 1$.
Consider a family $(M,g_s, V_s)_{s \in (-\eps, \eps)}$, $\eps > 0$,  of expanding solitons on $M$, where $g_s$ and $V_s$ have regularity $C^{11}$ for all $s \in (-\eps, \eps)$.
Suppose that the following is true for some constant $C < \infty$:
\begin{enumerate}[label=(\roman*)]
\item \label{Prop_gradientness_i}  The dependence of the tensor fields $g_s, V_s$ on the parameter $s$ is of regularity $C^1$ in the $C^{11}_{\loc}$-sense (i.e., derivatives of the form $\partial_s \partial^m g_s$ and $\partial_s \partial^m V_s$ exist for $0 \leq m \leq 11$ and are continuous on $M \times (-\eps,\eps)$).
\item \label{Prop_gradientness_ii} For any $s \in (-\eps,\eps)$ the expanding soliton $(M, g_s)$ is asymptotic to a conical metric in the following sense.
There is a continuous family of conical metrics $(\gamma_s \in \CONE^{11}(N))_{s \in (-\eps,\eps)}$ 
on $\IR_+ \times N$ such that for all $s \in (-\eps, \eps)$ we have on $(1, \infty) \times N$:
\begin{align*}
 |\nabla^{m, \gamma_s} (\iota^* g_s - \gamma_s)|_{\gamma_s} &\leq  C r^{-1} \qquad \text{for all} \quad m = 0,1,2,\ldots,11,  \\
  |\iota^* V_s + \tfrac12 r \partial_r|_{\gamma_s} &\leq C.
\end{align*}
Here $r$ and $\partial_r$ denote the coordinate function and standard vector field of the $\IR_+$-factor.
\item \label{Prop_gradientness_iii} For all $s \in (-\eps,\eps)$ we have 
\begin{alignat*}{2}
 |\nabla^{m,g_s} V_s |_{g_s} &\leq C \qquad &\text{for all} \quad m &= 1,\ldots,8, \\
 |\nabla^{m,g_s} (\partial_s V_s) |_{g_s} &\leq Cr^{-1} \qquad &\text{for all} \quad m &= 0,\ldots,5,  \\
 | \partial_s g_s |_{g_s} \leq C  , 
\qquad |\nabla^{m,g_s} (\partial_s g_s) |_{g_s} &\leq C r^{-1} \qquad &\text{for all} \quad m &= 1,\ldots,6
\end{alignat*}
and
\begin{equation} \label{eq_psgV_bounded}
 | (\partial_s g_s) (V_s)|_{g_s} \leq C, \qquad 
\end{equation}
Here $r : M \to \IR_+$ denotes a smooth function that agrees with $r \circ \iota^{-1}$ on $\iota((2,\infty) \times N)$.
\item \label{Prop_gradientness_iv} $(M, g_0, V_0)$ is gradient, i.e., we have $V_0 = \nabla^{g_0} f_0$ for some $f_0 \in C^{1}(M)$.
\end{enumerate}
Then $(M,g_s, V_s)$ is gradient for all $s \in (-\eps,\eps)$ and $V_s = \nabla^{g_s} f_s$ for some family $(f_s \in C^{12}(M))_{s \in (-\eps,\eps)}$ with $C^1$-dependence on $s$ in the $C^{12}_{\loc}$-sense that extends $f_0$.
Moreover, if we assume that $g_s$ and $V_s$ are of regularity $C^k$ and depend on $s$ in the $C^k_{\loc}$-sense for some $k \geq 11$, then the potentials $f_s$ are of regularity $C^{k+1}$ and depend on $s$ in the $C^{k+1}_{\loc}$-sense.
\end{Proposition}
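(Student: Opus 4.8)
The idea is to run a continuity argument in the parameter $s$. Let $I \subset (-\eps,\eps)$ be the set of $s$ for which $(M,g_s,V_s)$ is gradient. By hypothesis~\ref{Prop_gradientness_iv} we have $0 \in I$; since the soliton equation is diffeomorphism-invariant and closedness of $V_s^{\flat,g_s}$ is a closed condition in the $C^1_{\loc}$-topology (and the family depends $C^1$-continuously on $s$ by \ref{Prop_gradientness_i}), the set $I$ is closed in $(-\eps,\eps)$. The main work is to show $I$ is open, which we do by a priori estimates on the \emph{one-form} $\omega_s := V_s^{\flat,g_s}$ and its failure to be exact, combined with an a posteriori regularity bootstrap once exactness is established.

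\textbf{Key steps.} First I would set up the infinitesimal version. Differentiating the soliton equation $\Ric_{g_s} + \tfrac12 \LL_{V_s} g_s + \tfrac12 g_s = 0$ in $s$ shows that $h_s := \partial_s g_s$ is an infinitesimal solution of the linearized equation, which (after accounting for the variation of $V_s$) can be recast, via the gauge identity from Subsection~\ref{subsec_the_elliptic_problem}, in terms of a De Turck-type linearization $L_{g_s}$ applied to a suitably gauge-modified variation. From the overview's Bochner-type computation one deduces that the relevant divergence quantity $\DIV_{g_s, V_s}(h_s) := \DIV_{g_s}(h_s) - h_s(V_s)$ (appropriately symmetrized with the $-\tfrac12 d\tr$ term) satisfies an elliptic inequality of the form $\triangle_{V_s} w - \tfrac12 w = (\text{error})$, where the error involves the deviation of $V_s$ from being gradient. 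Under hypothesis~\ref{Prop_gradientness_ii}, because $\gamma_s \in \CONE^{11}(N)$ is a genuine cone (so $\beta \equiv 0$, cf.\ Remark~\ref{Rmk_gradient_implies_cone}), the model one-form $-\tfrac12 r\,dr$ is exact at infinity, so $\omega_s + \tfrac12 d(r^2/4)$ decays; the bounds in \ref{Prop_gradientness_ii}, \ref{Prop_gradientness_iii} (including \eqref{eq_psgV_bounded}) are exactly what is needed to make the error term decay at infinity and to apply the maximum principle (Proposition~\ref{Prop_Lunardi}/Corollary~\ref{Cor_Lundardi}) to conclude that $d\omega_s$ evolves by a linear equation whose only decaying solution is zero. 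This forces $d\omega_s \equiv 0$ for $s$ near any $s_0 \in I$: first order preservation becomes exact preservation via a Gr\"onwall-type bound on $\|d\omega_s\|$ in an appropriate weighted $C^0$-norm.

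\textbf{From closed to exact, and the regularity upgrade.} Once $\omega_s$ is closed, one must produce a potential $f_s$ with $df_s = \omega_s$ extending $f_0$. Since $M \setminus \iota((r,\infty)\times N)$ is compact and $N$ is connected with $\omega_s$ asymptotic to the exact form $-\tfrac12 r\,dr$, the periods of $\omega_s$ over loops vanish (loops can be pushed to infinity where $\omega_s$ is approximately exact, or one uses the topological hypothesis that $H_1$ has no relevant obstruction); integrating $\omega_s$ along paths from a fixed basepoint gives a well-defined $f_s \in C^{12}(M)$, normalized so that $f_s|_{s=0} = f_0$, with $C^1$-dependence on $s$ in $C^{12}_{\loc}$ inherited from that of $\omega_s$. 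Finally, the a posteriori regularity: with $V_s = \nabla^{g_s} f_s$ the soliton $(M,g_s,V_s)$ is now \emph{gradient}, so Lemma~\ref{Lem_soliton_smooth} applies and $f_s$ gains one derivative over $g_s, V_s$; if $g_s, V_s \in C^k$ with $C^k_{\loc}$-dependence on $s$ for $k \geq 11$, then differentiating the Poisson-type equation $\triangle_{g_s} f_s = -R_{g_s} - \tfrac{n}{2}$ in $s$ and applying elliptic regularity (parameter-dependent Schauder estimates) yields $f_s \in C^{k+1}$ with $C^{k+1}_{\loc}$-dependence on $s$. The step I expect to be the main obstacle is the passage from infinitesimal to macroscopic preservation of the gradient condition --- i.e., upgrading ``$\tfrac{d}{ds}d\omega_s = 0$ at points of $I$'' to ``$d\omega_s \equiv 0$ on a neighborhood'' --- since this requires a genuine differential inequality for $\|d\omega_s\|$ in a weighted norm, which in turn demands carefully tracking how all the error terms depend on the (a priori unknown) size of $d\omega_s$, and keeping the weights compatible with the maximum-principle estimates of Section~\ref{sec_elliptic}.
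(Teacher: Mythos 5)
Your core mechanism is essentially the paper's: put the family into the infinitesimal DeTurck gauge, observe that the divergence quantity $\beta_s=(\DIV_{\td g_s,\td V_s}\td h_s)^\flat$ satisfies an elliptic equation whose zeroth-order term is $-\tfrac12$ plus small corrections and whose source is controlled by $d\xi_s$ (Lemma~\ref{Lem_gauged_div_equation}), apply the Lunardi-type estimate of Proposition~\ref{Prop_Lunardi} — which is exactly where \eqref{eq_psgV_bounded} enters — and close a Gronwall argument for $\Vert d\xi_s\Vert_{C^{1,\alpha}}$ using $\partial_s d\xi_s=-d\beta_s$. Two caveats, one minor and one serious. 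Minor: your phrase ``suitably gauge-modified variation'' hides a real step — one must actually construct a family of diffeomorphisms (Lemma~\ref{Lem_gauged_variation}) by solving \eqref{eq_gauge_condition_Y} with a weighted bound of the form $\Vert rY_s\Vert_{C^{6}}\leq C$; without this gauging the identity $\partial_s d\xi_s=-d\beta_s$ and the clean elliptic equation for $\beta_s$ are simply not available, so the Gronwall scheme cannot even be set up.

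The serious gap is your passage from ``$d\omega_s=0$'' to the existence of a potential. Proposition~\ref{Prop_gradientness} makes \emph{no} topological assumption on $M$, so ``the topological hypothesis that $H_1$ has no relevant obstruction'' is not available (the $H_1,H_2$ conditions in Definition~\ref{Def_ensemble} are not hypotheses here). The push-loops-to-infinity argument also does not work as stated: first, loops in $M$ need not be homotopic into the end at all (take $M$ the interior of a compact $4$-manifold with boundary $S^3$ and $H_1\neq 0$, obtained by deleting a ball from a closed manifold with nontrivial $H_1$); second, even for loops contained in the end, the available bound $|\iota^*\omega_s+\tfrac12 r\,dr|\leq C$ — or $O(r^{-1})$ after gauging at infinity — does not make periods vanish, since a loop at radius $r$ has length comparable to $r$, so the period estimate stays bounded rather than tending to zero; one would need a much finer asymptotic expansion to argue this way. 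The paper sidesteps the issue entirely: once $\beta_s\equiv 0$, the gauge relation \eqref{eq_dsxi_beta_dtr} reads $\partial_s\xi_s=\tfrac12\, d\tr_{\td g_s}\td h_s$, so integrating $\partial_s f'_s=\tfrac12\tr_{\td g_s}\td h_s$ with $f'_0=f_0$ yields $\xi_s=df'_s$ directly — the potential is produced by integrating in $s$, with no cohomological input. You should replace your period argument with this observation; the rest of your outline (closedness of the gradient condition, and the regularity/$C^1$-dependence upgrade for $f_s$) is fine.
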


\begin{Remark} \label{Rmk_nogencone_asspt}
The assumption that all metrics $g_s$ are asymptotic to cones (not only generalized cones!) is key here.
Otherwise, we cannot guarantee the preservation of the gradient condition.
In fact, Theorem~\ref{Thm_many_deformations} implies that in dimension 4, \emph{any} asymptotically conical gradient expanding soliton admits infinitely many deformations through expanding solitons that are asymptotic to non-trivial generalized cones.
Such deformations \emph{cannot} be gradient as discussed in Remark~\ref{Rmk_gradient_implies_cone}.
The assumption that all metrics $g_s$ are asymptotic to cones is expressed in Assumption~\ref{Prop_gradientness_ii} and also in Assumption~\ref{Prop_gradientness_iii} via the condition that $|\partial_s g_s(V_s)| \leq C$, which can be understood as the corresponding infinitesimal statement.
As it turns out, our proof depends on the latter assumption in a crucial way, while Assumption~\ref{Prop_gradientness_ii} will only be used to guarantee basic asymptotic bounds.
\end{Remark}

The following corollary, which is stated in a way that will be most convenient for us, is an immediate consequence of Proposition~\ref{Prop_gradientness}.

\begin{Corollary} \label{Cor_pres_gradientness}
Let $(M,N,\iota)$ be an ensemble, $11 \leq k \leq k^* - 2$ be integers, $\alpha \in (0,1)$ and fix some $C^{k^*-2}$-regular representative $(g,V = \nabla^g f,\gamma)$ of an element of $\MM_{\grad}^{k^*} (M,N,\iota)$.
We will use the gradient soliton $(M,g,f)$ to define the weighted H\"older spaces as in Subsection~\ref{subsec_list_Holder}; note that by Lemma~\ref{Lem_soliton_smooth} there is a smooth structure on $M$ with respect to which $g,f$ are smooth.

Let $U \subset X$ be an open neighborhood of the origin of some Banach space $X$ that is star-shaped with respect to $0$.
Suppose there are $C^1$-regular families
\begin{alignat*}{2}
 U &\longrightarrow \CONE^{k^*}(N), \qquad & x &\longmapsto \gamma_x, \\
 U &\longrightarrow C^{k,\alpha}_{-1}(M; S^2 T^*M), \qquad &x &\longmapsto  h_x 
\end{alignat*}
such $\gamma_0 = \gamma$ and $h_0 = 0$ and such that for every $x \in U$ the metric ($T$ denotes the map from Subsection~\ref{subsec_map_T})
\[ g_x := g + h_x + T(\gamma_x) - T(\gamma) \]
and the vector field
\[ V_x := \nabla^g f - \DIV_g (g_x) + \tfrac12 \nabla^{g} \tr_g (g_x) \]
form an expanding soliton $(M, g_x, V_x)$.
Then for every $x \in U$ the soliton $(M, g_x, V_x)$ is a gradient expanding soliton.
\end{Corollary}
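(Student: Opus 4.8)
\textbf{Plan of proof for Corollary~\ref{Cor_pres_gradientness}.}
The strategy is to deduce the corollary from Proposition~\ref{Prop_gradientness} by verifying its hypotheses along each ray in $U$. Fix $x \in U$; since $U$ is star-shaped with respect to $0$, the segment $s \mapsto sx$ lies in $U$ for $s \in [0,1]$, and we may extend this slightly to an open interval $s \in (-\eps, 1+\eps)$ (shrinking to stay in $U$, or equivalently reparametrizing). Set $g_s := g_{sx}$, $V_s := V_{sx}$, and let $\gamma_s := \gamma_{sx}$ be the associated cone metrics. By construction $(M, g_s, V_s)$ is an expanding soliton for each $s$, and $(M, g_0, V_0) = (M, g, \nabla^g f)$ is gradient, giving Assumption~\ref{Prop_gradientness_iv}. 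The $C^1$-regularity of the families $x \mapsto \gamma_x$ and $x \mapsto h_x$, together with the linearity and boundedness properties of the map $T$ (Definition~\ref{Def_T} and the remarks following it) and the fact that $\DIV_g$, $\nabla^g \tr_g$ are bounded linear differential operators in the relevant weighted H\"older spaces (Lemma~\ref{Lem_Holder_norms_properties}), gives the $C^1$-dependence in $C^{k}_{\loc}$ required in Assumption~\ref{Prop_gradientness_i}; here we use $k \geq 11$ so that $C^{11}_{\loc}$-regularity is implied.

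The bulk of the verification consists of checking the quantitative asymptotic bounds in Assumptions~\ref{Prop_gradientness_ii} and \ref{Prop_gradientness_iii}. For Assumption~\ref{Prop_gradientness_ii}: the expanding soliton $(M, g_s, V_s)$ is asymptotic to the cone $\gamma_s$, and $\gamma$ together with the estimate $h_x \in C^{k,\alpha}_{-1}$ and the structure of $T(\gamma_x) - T(\gamma) = \iota_*(\eta(r)(\gamma_x - \gamma))$ yields $|\nabla^{m,\gamma_s}(\iota^* g_s - \gamma_s)|_{\gamma_s} \leq C r^{-1}$ for $m = 0, \ldots, 11$ (using $k^* - 2 \geq k \geq 11$ and that $\gamma_x - \gamma \in T\CONE^{k^*}(N)$ is a homogeneous degree-$2$ tensor, so $|\nabla^{m,\gamma}(T(\gamma_x) - T(\gamma))|_\gamma \leq C r^{-m}\Vert \gamma_x - \gamma\Vert$ on the conical end, which is $\leq Cr^{-1}$ once $r$ is large, and bounded on the compact part). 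The asymptotic estimate on $V_s = \nabla^g f - \DIV_g(g_s) + \frac12 \nabla^g \tr_g(g_s)$ follows from $\iota^* V_0 = -\frac12 r \partial_r$ on the end (Lemma~\ref{Lem_MM_regular_rep}) together with the decay of $\DIV_g(g_s - g)$ and $\nabla^g\tr_g(g_s - g)$, which are $O(r^{-1})$ in pointwise norm by Lemma~\ref{Lem_nabf_Tgamma} and the weighted bound on $h_x$. For Assumption~\ref{Prop_gradientness_iii}: the bounds on $\nabla^{m,g_s} V_s$ follow from the elliptic identity $\triangle V_s + \Ric(V_s) = 0$ (as in the proof of Proposition~\ref{Prop_converging_representatives}) combined with the uniform bounds on $g_s$; the bounds on $\partial_s V_s$ and $\partial_s g_s$ follow by differentiating the defining formulas in $s$ and using $\partial_s g_s = x$-directional derivative of $h_{sx} + T(\gamma_{sx})$, which lies in $C^{k,\alpha}_{-1}$ with uniformly bounded norm for $s$ in a compact interval, hence decays like $O(r^{-1})$ for $m \geq 1$ and is bounded for $m = 0$; the bound on $\nabla^{m,g_s}(\partial_s V_s)$ then follows from $\partial_s V_s = -\DIV_g(\partial_s g_s) + \frac12 \nabla^g\tr_g(\partial_s g_s)$ and the weighted estimates.

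The one genuinely delicate point—and the main obstacle—is verifying \eqref{eq_psgV_bounded}, i.e., $|(\partial_s g_s)(V_s)|_{g_s} \leq C$. As emphasized in Remark~\ref{Rmk_nogencone_asspt}, this is the infinitesimal form of the requirement that all $g_s$ remain asymptotic to genuine cones (not generalized cones), and it is precisely where the hypothesis $\gamma_x \in \CONE^{k^*}(N)$ (rather than $\GenCONE^{k^*}(N)$) enters. On the conical end, $V_s \approx -\frac12 r \partial_r$ has norm $\approx \frac12 r$ (growing), so the naive bound $|(\partial_s g_s)(V_s)| \leq |\partial_s g_s| \, |V_s| \lesssim r^{-1} \cdot r = O(1)$ would suffice—\emph{provided} the $O(r^{-1})$ decay of $\partial_s g_s$ holds in the relevant directions. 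This is where one must be careful: the component of $\partial_s g_s$ coming from $\partial_s T(\gamma_s)$ is, on the end, $\eta(r) \partial_s \gamma_s$ where $\partial_s \gamma_s \in T\CONE^{k^*}(N)$, so it has the form $r^2 \pi^* \dot h_s$ with $\dot h_s$ a tensor on $N$; contracting with $V_s \approx -\frac12 r\partial_r$ uses $\gamma_s(\partial_r, \cdot) = dr$ (the defining property of $\CONE$, not $\GenCONE$) to conclude that $(r^2 \pi^* \dot h_s)(r\partial_r, \cdot)$ has no $\partial_r$-component issues and the contraction is controlled; the $h_x$-contribution is $O(r^{-1}) \cdot O(r) = O(1)$ directly. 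One should carry this out by writing everything in the $(r, z)$-coordinates on the end via $\iota$, using the block structure of $\gamma_s$ from \eqref{eq_rep_gamma} with $\beta = 0$, and estimating the three pieces ($\partial_s$ of $g$, of $h_x$, and of $T(\gamma_x)$) separately. Once all hypotheses are checked, Proposition~\ref{Prop_gradientness} yields that $(M, g_s, V_s)$ is gradient for all $s \in (-\eps, 1+\eps)$; taking $s = 1$ gives that $(M, g_x, V_x)$ is a gradient expanding soliton, and since $x \in U$ was arbitrary the corollary follows.
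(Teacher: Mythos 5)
Your proposal is correct and follows the paper's intended route: the paper offers no separate argument, deriving the corollary as an "immediate consequence" of Proposition~\ref{Prop_gradientness}, and your verification of its hypotheses along the rays $s \mapsto sx$ — in particular isolating the condition \eqref{eq_psgV_bounded}, which holds precisely because $\partial_s \gamma_{sx} \in T\CONE^{k^*}(N)$ has vanishing radial part, while the $h$-contribution gives $O(r^{-1})\cdot O(r)=O(1)$ — is exactly the intended content. The only cosmetic point is that Proposition~\ref{Prop_gradientness} is stated on a symmetric interval centered at the gradient base point, so to reach $s=1$ you should recenter it at each gradient parameter and run the standard open--closed argument on $[0,1]$ (closedness of the gradient condition under $C^1_{\loc}$-limits is already noted in its proof), rather than invoking it verbatim on $(-\eps,1+\eps)$.
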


The proof of Proposition~\ref{Prop_gradientness} is carried out in two steps.
First, we need to modify the family $(g_s,V_s)_{s \in (-\eps,\eps)}$ by a family of diffeomorphisms $(\chi_s : M \to M)_{s \in (-\eps,\eps)}$ such that the families of pullbacks $(\td g_s := \chi^*_s g_s, \td V_s := \chi^*_s V_s)$ satisfies the DeTurck gauge condition
\begin{equation} \label{eq_inf_dt}
 \partial_s \td V_s = - \DIV_{\td g_s}(\partial_s \td g_s) + \frac12 \nabla^{\td g_s} \tr_{\td g_s} (\partial_s \td g_s). 
\end{equation}
Note that this is the infinitesimal version of the standard DeTurck gauge condition in the sense that if it \emph{were} true that for some fixed $s$ the pair $(\td g_{s'}, \td V_{s'})$ is in the DeTurck gauge with respect to $(\td g_{s}, \td V_{s})$ for $s'$ near $s$, then \eqref{eq_inf_dt} would hold by differentiating this condition in $s'$ for $s' = s$.
The advantage of working in the infinitesimal DeTurck gauge \eqref{eq_inf_dt} is that the infinitesimal variations $\td h_s := \partial_s \td g_s$ satisfy an elliptic equation of the following form (all covariant derivatives and curvature terms are taken with respect to the metric $\td g_s$ some fixed parameter $s$, which we will frequently drop in the index):
\[ \triangle_{\td V} \td h  + 2 \Rm(\td h) + \nabla \td V * \td h = 0, \]
where the last term vanishes if $\td V$ is a gradient vector field.
We refer to Subsection~\ref{subsec_inf_DT} for more details.
We will moreover see that
\begin{equation} \label{eq_ddstdV}
   \frac{d}{ds} \td V_s^{\flat} = - \DIV_{\td V_s} \td h_s + \tfrac12 d \tr \td h_s. 
\end{equation}
So the infinitesimal deviation from being a gradient vector field is caused by the term $\DIV_{\td V} \td h$.
If $\td V$ happens to be gradient, then this term satisfies the following identity for $\DIV_{\td V} \td h = \DIV \td h - \td h (\td V)$
\begin{equation} \label{eq_trtdV}
 \triangle_{\td V} \DIV_{\td V} \td h - \tfrac12 \DIV_{\td V} \td h  = 0, 
\end{equation}
which implies via the maximum principle that $\DIV_{\td V} \td h = 0$.
So in this case the gradient condition of $\td V$ is preserved infinitesimally.
In the general case in which $\td V$ is \emph{not} gradient, we obtain an additional term of the form
\[  d\td V^\flat * \DIV_{\td V} \td h + \nabla d\td V^\flat * \td h + d\td V^\flat * \nabla \td h  \]
on the right-hand side of \eqref{eq_trtdV}; note that $d\td V^\flat = 0$ if $\td V$ is gradient.
Using the theory from Section~\ref{sec_properness}, we will then bound the $C^{2,\alpha}$-norm of $\DIV_{\td V} \td h$ in terms of the $C^{1,\alpha}$-norm of $d \td V^\flat$.
Taking the exterior derivative on both sides of \eqref{eq_ddstdV} implies that $\frac{d}{ds} d \td V^\flat = - d \DIV_{\td V} \td h$.
So we can bound the $C^{1,\alpha}$-norm of $\frac{d}{ds} d \td V^\flat$ in terms of the $C^{1,\alpha}$-norm of $d \td V^\flat$.
An application of Gronwall's Lemma then shows that $d \td V^\flat_s = 0$ for all $s$, so $\td V_s$ is a gradient vector field.
\bigskip

\subsection{The infinitesimal DeTurck gauge and other important equations} \label{subsec_inf_DT}
For the sake of the following discussion, consider a family of expanding solitons $(M,g_s,V_s)_{s \in (-\eps,\eps)}$ of high enough regularity and let $g = g_0$ and $V = V_0$.
All geometric quantities will be taken with respect to $g$.
We will write $h := \partial_s |_{s=0} g_s$ and $\dot V := \partial_s |_{s=0} V_s$.
Differentiating the soliton equation at $s = 0$ implies that
\[  \triangle h + 2 \Rm (h) - h (\cdot, \Ric(\cdot)) - h (\Ric(\cdot), \cdot) - \LL_{\DIV h - \frac12 \nabla \tr h} g - \LL_{\dot V} g - \LL_V h - h = 0 \]
Using $(\LL_{V} h)_{ij} = \nabla_V h_{ij} + h_{is} \nabla_j V^s + h_{js} \nabla_i V^s$ (here coefficients are taken with respect to a local orthonormal frame and we use the Einstein summation convention) and the soliton equation, this can be simplified to
\begin{equation} \label{eq_first_variation}
  \triangle_V h + 2 \Rm (h)   - \frac12 \big( (dV^\flat)_{is} \,h_{sj} + (dV^\flat)_{js} \, h_{si} \big) =  \LL_{\dot V + \DIV h - \frac12 \nabla \tr h} g  ,
\end{equation}
where $d V^\flat_{ij} = g_{js} \nabla_i V^s - g_{is} \nabla_j V^s$.
Note that if $V$ is gradient, then $d V^\flat = 0$.
We call an infinitesimal variation $(h, \dot V)$ {\bf gauged} if the term in the Lie-derivative vanishes:
\begin{equation} \label{eq_inf_gauge_condition}
 \dot V + \DIV h - \frac12 \nabla \tr h = 0. 
\end{equation}
Note that this is equivalent to
\[ \partial_s|_{s=0} V^\flat_s = - \DIV_V h + \frac12 \nabla \tr h. \]
For a gauged variation $(h,\dot V)$ the equation \eqref{eq_first_variation} simplifies to
\begin{equation} \label{eq_sol_gauged_variation}
     \triangle_V h + 2 \Rm (h)    - \frac12 \big( (dV^\flat)_{is} \,h_{sj} + (dV^\flat)_{js} \, h_{si} \big) = 0 . 
\end{equation}
The following lemma shows that any infinitesimal variation can be gauged by solving an elliptic equation.

\begin{Lemma} \label{Lem_inf_gauge}
Let $(M^n,g,V)$ be an expanding soliton, $h$ a symmetric $(0,2)$-tensor field and $\dot V, Y$ vector fields on $M$.
Then
\begin{equation} \label{eq_gauge_condition_Y}
 \triangle_V Y - \frac12 Y + \frac12 \big( d V^\flat  (Y, \cdot) \big)^\# = U := \dot V + \DIV h - \frac12 \nabla \tr h, 
\end{equation}
is equivalent to the condition that $(h - \LL_Y g, \dot V - \LL_Y V)$ is gauged.
\end{Lemma}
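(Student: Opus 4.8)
The statement is a purely pointwise tensorial identity: it asserts the equivalence of the equation \eqref{eq_gauge_condition_Y} for $Y$ with the statement that the pair $(h-\LL_Y g,\ \dot V-\LL_Y V)$ is gauged, i.e. satisfies \eqref{eq_inf_gauge_condition}. So the proof is just a direct computation unwinding both sides. The plan is to start from the definition of ``gauged'' applied to $(h-\LL_Y g,\ \dot V-\LL_Y V)$, namely
\[
 (\dot V - \LL_Y V) + \DIV_g(h - \LL_Y g) - \tfrac12 \nabla \tr_g(h - \LL_Y g) = 0,
\]
and to show by a linearity argument that this is equivalent to
\[
 U - \Big( \LL_Y V + \DIV_g(\LL_Y g) - \tfrac12 \nabla \tr_g(\LL_Y g) \Big) = 0
\]
with $U$ as in \eqref{eq_gauge_condition_Y}, and then to identify the bracketed operator applied to $Y$ with $\triangle_V Y - \tfrac12 Y + \tfrac12 ( dV^\flat(Y,\cdot))^\#$.

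\textbf{Key steps.} First I would record the elementary identities $\DIV_g(\LL_Y g) = \triangle Y + \Ric(Y) + \nabla \DIV_g Y$ (the standard Bochner-type formula for the divergence of the Lie derivative of the metric) and $\tr_g(\LL_Y g) = 2\DIV_g Y$, hence $-\tfrac12 \nabla\tr_g(\LL_Y g) = -\nabla\DIV_g Y$; adding these gives $\DIV_g(\LL_Y g) - \tfrac12\nabla\tr_g(\LL_Y g) = \triangle Y + \Ric(Y)$. Second, I would compute $\LL_Y V$: in coordinates $(\LL_Y V)^i = Y^s\nabla_s V^i - V^s\nabla_s Y^i = \nabla_Y V - \nabla_V Y$. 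So the bracketed operator becomes
\[
 \LL_Y V + \DIV_g(\LL_Y g) - \tfrac12 \nabla\tr_g(\LL_Y g) = \triangle Y - \nabla_V Y + \nabla_Y V + \Ric(Y).
\]
Third, I would use the soliton equation $\Ric + \tfrac12\LL_V g + \tfrac12 g = 0$, i.e. $\Ric_{ij} = -\tfrac12(\nabla_i V^\flat_j + \nabla_j V^\flat_i) - \tfrac12 g_{ij}$, to rewrite $(\nabla_Y V + \Ric(Y))$. Writing $(\nabla_Y V)_j = Y^i \nabla_i V^\flat_j$ and $\Ric(Y)_j = -\tfrac12 Y^i(\nabla_i V^\flat_j + \nabla_j V^\flat_i) - \tfrac12 Y_j$, we get
\[
 (\nabla_Y V + \Ric(Y))_j = \tfrac12 Y^i \nabla_i V^\flat_j - \tfrac12 Y^i \nabla_j V^\flat_i - \tfrac12 Y_j = \tfrac12 Y^i (dV^\flat)_{ij} - \tfrac12 Y_j,
\]
using $(dV^\flat)_{ij} = \nabla_i V^\flat_j - \nabla_j V^\flat_i$ (consistent with the sign convention $dV^\flat_{ij} = g_{js}\nabla_i V^s - g_{is}\nabla_j V^s$ stated after \eqref{eq_first_variation}). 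Therefore
\[
 \LL_Y V + \DIV_g(\LL_Y g) - \tfrac12\nabla\tr_g(\LL_Y g) = \triangle_V Y - \tfrac12 Y + \tfrac12\big(dV^\flat(Y,\cdot)\big)^\#,
\]
which is exactly the left-hand side of \eqref{eq_gauge_condition_Y}. Comparing with $U$ finishes the proof, since the original ``gauged'' condition rearranges precisely to ``$U$ equals this operator applied to $Y$''.

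\textbf{Main obstacle.} There is no real analytic difficulty here; the only thing to be careful about is bookkeeping of sign and index conventions — in particular the sign in $dV^\flat$, the convention in the soliton equation ($+\tfrac12 g$ rather than $-\tfrac12 g$, as flagged in Subsection~\ref{subsec_basic_identities}), and the placement of the $-\tfrac12 Y$ term, which comes entirely from the $+\tfrac12 g$ in the soliton equation. I would double-check the Bochner identity $\DIV_g(\LL_Y g) = \triangle Y + \Ric(Y) + \nabla\DIV_g Y$ by a one-line coordinate computation (commuting derivatives produces the $\Ric$ term), since getting that right is the crux of matching the second-order terms. Modulo these conventions, the identity is forced and the proof is a two- or three-line contraction argument.
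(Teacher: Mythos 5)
Your proof is correct and follows essentially the same route as the paper: rearrange the gauge condition for $(h-\LL_Y g,\dot V-\LL_Y V)$ to $U=\LL_Y V+\DIV(\LL_Y g)-\tfrac12\nabla\tr(\LL_Y g)$, use $\DIV(\LL_Y g)=\triangle Y+\Ric(Y)+\nabla\DIV Y$, $\tr(\LL_Y g)=2\DIV Y$ and $\LL_Y V=\nabla_Y V-\nabla_V Y$ to get $U=\triangle_V Y+\nabla_Y V+\Ric(Y)$, and then convert $\nabla_Y V+\Ric(Y)$ into $-\tfrac12 Y+\tfrac12(dV^\flat(Y,\cdot))^\#$ via the soliton equation, exactly as in the paper. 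The sign conventions you use for $dV^\flat$ and the soliton equation match the paper's, so no changes are needed.
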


\begin{proof}
The gauge condition is equivalent to
\begin{multline*}
 U 
= \LL_Y V + \DIV \LL_Y g - \frac12 \nabla \tr \LL_Y g
= [Y,V] + \triangle Y + \nabla \DIV Y + \Ric(Y) - \nabla \DIV Y \\
= \nabla_Y V + \triangle_V Y + \Ric(Y) . 
\end{multline*}
This is equivalent to \eqref{eq_gauge_condition_Y} via the soliton equation.
\end{proof}

The next lemma provides an identity for the divergence $\DIV_V h$ of a gauged variation.

\begin{Lemma} \label{Lem_gauged_div_equation}
Let $(M,g,V)$ be an expanding soliton and $h$ a symmetric $(0,2)$-tensor that satisfies \eqref{eq_sol_gauged_variation}.
Then
\begin{equation*}
 \big( \triangle_V\DIV_V h-\tfrac12 \DIV_V h \big)_i %
=   \tfrac12(dV^\flat)_{is} (\DIV_V h)_s -   (\nabla_u dV^\flat)_{ti}  h_{tu} - (dV^\flat)_{js}  \nabla_j h_{si}. %
\end{equation*}
Note that the right-hand side only depends on $V$ via its derivatives or via $\DIV_V h$.
Moreover, if $V = \nabla f$ is a gradient vector field, then (recall that $\DIV_f h = \DIV_{\nabla f} h$)
\[ \triangle_f\DIV_f h-\tfrac12 \DIV_f h =0. \]
\end{Lemma}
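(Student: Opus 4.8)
\textbf{Proof plan for Lemma~\ref{Lem_gauged_div_equation}.}

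The plan is to take the divergence of the gauged variation equation \eqref{eq_sol_gauged_variation} and carefully commute covariant derivatives, using the contracted second Bianchi identity together with the soliton equation to replace the resulting Ricci and curvature terms. First I would write \eqref{eq_sol_gauged_variation} as $\triangle_V h_{ij} + 2\Rm_{isjt}h_{st} - \tfrac12\big((dV^\flat)_{is}h_{sj} + (dV^\flat)_{js}h_{si}\big) = 0$ and apply $\nabla_j$, contracting. The term $\nabla_j\triangle_V h_{ij}$ must be rearranged into $\triangle_V(\DIV h)_i$ plus commutator terms; the commutator $[\nabla_j,\triangle]h_{ij}$ produces curvature contractions of the form $\Rm*\nabla h$ and $\nabla\Rm * h$, and the commutator $[\nabla_j,\nabla_V]h_{ij}$ produces a $\nabla V * \nabla h$ term plus, after using the soliton equation $\Ric = -\tfrac12\LL_V g - \tfrac12 g$, terms involving $dV^\flat$ and lower-order pieces. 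The key structural input is that $\nabla_j\big(2\Rm_{isjt}h_{st}\big)$ combines with the curvature commutators: here one uses the second Bianchi identity $\nabla_j\Rm_{isjt} = \nabla_i\Ric_{st} - \nabla_s\Ric_{it}$ and then substitutes $\nabla\Ric$ via the soliton equation, which feeds covariant derivatives of $dV^\flat$ into the computation. The $-\tfrac12 h$-type and scalar terms should organize into the $-\tfrac12\DIV_V h$ on the left-hand side.

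The second, easier half of the bookkeeping is to track the $dV^\flat$-terms already present in \eqref{eq_sol_gauged_variation}. Differentiating $-\tfrac12\big((dV^\flat)_{is}h_{sj} + (dV^\flat)_{js}h_{si}\big)$ and contracting in $j$ gives one term $-\tfrac12(\nabla_j dV^\flat)_{is}h_{sj} - \tfrac12(dV^\flat)_{is}\nabla_j h_{sj} - \tfrac12(\nabla_j dV^\flat)_{js}h_{si} - \tfrac12(dV^\flat)_{js}\nabla_j h_{si}$. The term $-\tfrac12(dV^\flat)_{is}(\DIV h)_{s}$ must be converted to $(\DIV_V h)_s$ using $\DIV_V h = \DIV h - h(V,\cdot)$ and the antisymmetry of $dV^\flat$, which absorbs the extra $h(V,\cdot)$ contribution into other $dV^\flat * h$ terms; the identity $(\nabla_j dV^\flat)_{js} = \triangle V_s^\flat - \nabla_s\DIV V^\flat + \Ric_{st}V^t$ (a divergence of a 2-form) can then be simplified again via the soliton equation. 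One also needs the soliton identity $\DIV V^\flat = \tr(\nabla V) = -\tfrac12\tr(\LL_V g) = R + \tfrac n2$-type relations, plus $\Ric(V) = -\tfrac12\nabla|V|^2 - \tfrac12 V$-type identities, all standard consequences of Subsection~\ref{subsec_basic_identities}, to collapse the lower-order debris. The goal is that everything not of the form $(dV^\flat)*(\DIV_V h)$, $(\nabla dV^\flat)*h$, or $(dV^\flat)*(\nabla h)$ cancels.

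The main obstacle I expect is precisely this cancellation: after commuting derivatives there will be several genuinely curvature-quadratic or $\nabla\Rm$ terms (e.g. $(\nabla_i R)$, $(\nabla_s\Ric_{it})h_{st}$, $\Rm_{isjt}\nabla_j h_{st}$ versus $\Rm_{isjt}\nabla_s h_{jt}$), and showing that all of them either vanish by symmetry of $\Rm$ or reassemble into a single $dV^\flat$-expression requires using the soliton equation twice — once to trade $\nabla\Ric$ for $\nabla\nabla V$ (hence $\nabla dV^\flat$ modulo symmetrization) and once to trade $\Ric$ for $\nabla V$. The gradient case is then immediate: when $V = \nabla f$ we have $dV^\flat = d(df) = 0$, so the entire right-hand side vanishes and we are left with $\triangle_f\DIV_f h - \tfrac12\DIV_f h = 0$; I would state this last line as a one-sentence corollary of the general identity. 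I would organize the writeup so that the general identity is proved by the commutation computation above, and the gradient case follows by inspection, matching the structure of the Bochner-type argument sketched in Subsection~\ref{subsec_Overview}.
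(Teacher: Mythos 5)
Your plan is essentially the paper's proof: take the divergence of the gauged equation \eqref{eq_sol_gauged_variation}, commute derivatives via a Bochner-type identity (the paper isolates this as Lemma~\ref{Lem_Bochner}), substitute the second Bianchi identity and the divergence of the soliton equation to trade $\nabla\Ric$ and $\Ric$ for $\nabla dV^\flat$ and $\nabla V$, and collect the $dV^\flat$-terms coming from the right-hand side of the gauged equation, with the gradient case following since $dV^\flat=0$. Aside from a couple of harmless sign slips in the sketched auxiliary identities, the strategy and key cancellations match the paper's argument.
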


The proof of Lemma~\ref{Lem_gauged_div_equation} uses the following commutator identity.

\begin{Lemma} \label{Lem_Bochner}
For any tensor field $T$ and vector field $V$ on a Riemannian orbifold $(M^n,g)$ we have
\[ (\triangle \nabla T)(V) - \nabla_V \triangle T
= \sum_{i=1}^n \big( (\nabla_{e_i} R)(e_i, V) T + 2 R(e_i, V) \nabla_{e_i} T \big) + \nabla_{\Ric(V)} T , \]
where $\{ e_i \}_{i=1}^n$ denotes a local orthonormal frame.
\end{Lemma}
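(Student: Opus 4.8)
\textbf{Proof plan for Lemma~\ref{Lem_Bochner}.}

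The statement is a standard Bochner-type commutator identity, so the plan is to derive it by a direct computation in a local orthonormal frame, being careful with the orbifold setting only insofar as one passes to a local orbifold cover (so that everything reduces to the manifold case and the formula descends). First I would fix a point $p$ and choose the frame $\{e_i\}_{i=1}^n$ to be normal at $p$, so that $\nabla_{e_i} e_j = 0$ at $p$; this lets me compute all covariant derivatives as if they were ordinary directional derivatives, at the cost of only keeping track of curvature terms that arise from recommuting derivatives.

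The core of the argument is to expand $(\triangle \nabla T)(V) = \sum_i \nabla_{e_i}\nabla_{e_i}(\nabla T)$ evaluated on $V$, and compare with $\nabla_V \triangle T = \nabla_V \sum_i \nabla_{e_i}\nabla_{e_i} T$. I would write $(\nabla T)(V,\cdot) = \nabla_V T$ (viewing $\nabla T$ as a tensor with one extra covariant slot), so that $\big((\triangle\nabla T)(V)\big) = \sum_i \nabla_{e_i}\nabla_{e_i}\nabla_V T$ modulo terms where the $e_i$-derivatives hit the $V$-slot; since the frame is normal at $p$ those extra terms vanish at $p$. Then the problem reduces to commuting the pair $\nabla_{e_i}\nabla_{e_i}$ past $\nabla_V$. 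The key step is the commutation
\[
\nabla_{e_i}\nabla_{e_i}\nabla_V T - \nabla_V \nabla_{e_i}\nabla_{e_i} T
= \nabla_{e_i}\big(\nabla_{e_i}\nabla_V - \nabla_V\nabla_{e_i}\big)T + \big(\nabla_{e_i}\nabla_V - \nabla_V\nabla_{e_i}\big)\nabla_{e_i} T,
\]
valid at $p$ because $[e_i,V]$ and $\nabla_{e_i}e_i$ can be arranged to vanish there. Using the curvature operator identity $\nabla_{e_i}\nabla_V - \nabla_V\nabla_{e_i} = R(e_i,V) + \nabla_{[e_i,V]}$ and summing over $i$, the two resulting terms produce exactly $\sum_i (\nabla_{e_i}R)(e_i,V)T$ (from differentiating the curvature operator in the first term) plus $2\sum_i R(e_i,V)\nabla_{e_i}T$ (one copy from each term) plus a term $\sum_i \nabla_{\nabla_{e_i}[e_i,V]}T$ which, using $\nabla_{e_i}e_i = 0$ at $p$ and the definition of Ricci curvature $\sum_i R(e_i,V)e_i = \Ric(V)$ after lowering the Lie bracket correctly, collapses to $\nabla_{\Ric(V)}T$. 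Tracking signs via the sign convention for $\Rm$ used in the paper (fixed by $(\Rm(h))_{ij} = g^{st}g^{kl}\Rm_{iskj}h_{tl}$, cf.\ \eqref{eq_DQg}) gives the stated formula.

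The main obstacle here is purely bookkeeping: getting the coefficient $2$ on the $R(e_i,V)\nabla_{e_i}T$ term and the sign of the $\nabla_{\Ric(V)}T$ term exactly right, since these depend on the curvature sign convention and on carefully distinguishing the Lie-bracket contributions from the genuine curvature contributions. I expect no conceptual difficulty — the identity is classical — but I would double-check the final normalization by testing it on a scalar function $T = u$, where $\nabla T = du$, $\triangle\nabla T = \nabla\triangle u + \Ric(\nabla u,\cdot)$ (the Bochner identity for $1$-forms), and verify this is consistent with the formula when $V$ is arbitrary. The orbifold aspect requires no extra work beyond noting that the identity is local, holds on the smooth locus, and extends across the isolated singularities by passing to a local uniformizing chart.
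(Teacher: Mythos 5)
Your overall strategy is the same as the paper's: a pointwise computation in a frame normalized at the point, two applications of the curvature commutation identity, with the $(\nabla_{e_i}R)(e_i,V)T$ term coming from differentiating the commutator and the Ricci term coming from $\sum_i R(e_i,V)e_i$. The paper assumes $\nabla e_i=\nabla V=0$ at the point, writes the left-hand side as $\sum_i\big(\nabla^3_{e_i,e_i,V}T-\nabla^3_{V,e_i,e_i}T\big)$, and performs the two slot swaps $\nabla^3_{e_i,e_i,V}T-\nabla^3_{e_i,V,e_i}T=\nabla_{e_i}\big(R(e_i,V)T\big)$ and $\nabla^3_{e_i,V,e_i}T-\nabla^3_{V,e_i,e_i}T=(R(e_i,V)\nabla T)(e_i)=R(e_i,V)\nabla_{e_i}T-\nabla_{R(e_i,V)e_i}T$, the last identity being where the Ricci term originates.

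However, your reduction step is not justified as stated and is exactly where the bookkeeping would go wrong. You replace $(\triangle\nabla T)(V)$ by $\sum_i\nabla_{e_i}\nabla_{e_i}(\nabla_V T)$ and claim the discrepancy vanishes at $p$ "since the frame is normal"; in fact the discrepancy at $p$ is $2\sum_i(\nabla_{e_i}\nabla T)(\nabla_{e_i}V)+(\nabla T)\big(\sum_i\nabla_{e_i}\nabla_{e_i}V\big)$, which involves first and second derivatives of $V$ that normality of the frame does not control. Similarly, $\nabla_V\triangle T$ differs from $\sum_i\nabla_V\nabla_{e_i}\nabla_{e_i}T$ at $p$ by $\sum_i(\nabla T)(\nabla_V\nabla_{e_i}e_i)$, a second derivative of the frame, which for a merely normal frame is a curvature-sized term — i.e. of exactly the same order as the terms in the conclusion. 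Note also that to "arrange $[e_i,V](p)=0$" you must first observe that both sides of the identity are tensorial (zeroth order) in $V$, so that $V$ may be replaced near $p$ by an extension with $\nabla V(p)=0$; for a given vector field this cannot simply be assumed. Once you work with iterated operators, the term $\sum_i\nabla_{\nabla_{e_i}[e_i,V]}T$ to which you attribute the Ricci contribution mixes with the dropped second-derivative terms above, so the coefficient of $\nabla_{\Ric(V)}T$ is not reliably obtained this way. The clean repair — which is what the paper does — is to phrase everything in terms of the multi-slot covariant derivatives $\nabla^2_{X,Y}T$ and $\nabla^3_{X,Y,Z}T$, for which $(\triangle\nabla T)(V)=\sum_i\nabla^3_{e_i,e_i,V}T$ holds identically and only the first-order normalizations $\nabla e_i(p)=\nabla V(p)=0$ are needed; the Ricci term then falls out of the curvature endomorphism acting on the extra slot of $\nabla T$, namely $(R(e_i,V)\nabla T)(e_i)=R(e_i,V)\nabla_{e_i}T-\nabla_{R(e_i,V)e_i}T$.
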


\begin{proof}
We may assume that $\nabla e_i = \nabla V = 0$ at the point in question.
Then
\begin{multline*}
 \nabla^3_{e_i, e_i, V} T - \nabla^3_{e_i, V, e_i} T
= \nabla_{e_i} ( \nabla^2_{e_i, V} T - \nabla^2_{V,e_i} T ) \\
= \nabla_{e_i} ( R (e_i, V)T )
= (\nabla_{e_i} R) (e_i, V)T + R(e_i, V) \nabla_{e_i} T 
\end{multline*}
and
\begin{equation*}
 \nabla^3_{e_i, V, e_i} T - \nabla^3_{V, e_i, e_i} T
= (R(e_i, V) \nabla T)(e_i) 
= R(e_i, V) \nabla_{e_i} T - \nabla_{R(e_i, V)e_i} T. 
\end{equation*}
Combining both equations and summing over $i$ implies the lemma.
\end{proof}

\begin{proof}[Proof of Lemma~\ref{Lem_gauged_div_equation}.]
Using Lemma~\ref{Lem_Bochner} and viewing $h$ as a $(1,1)$-tensor, we compute
\begin{align*}
\DIV \triangle h - \triangle \DIV h
&=  \sum_{s,t=1}^n \left( - \big((\nabla_{e_s} R)(e_s, e_t) h\big)(e_t) - 2 \big( R(e_s, e_t) \nabla_{e_s} h \big)(e_t) \right) - \sum_{t=1}^n (\nabla_{\Ric(e_t)} h)(e_t)  \\
&=  \sum_{s,t=1}^n \left( - (\nabla_{e_s} R)(e_s, e_t) (h(e_t)) + h \big( (\nabla_{e_s} R)(e_s, e_t)e_t \big) \right. \\
&\qquad \left. - 2 R(e_s, e_t) \big( (\nabla_{e_s} h) (e_t) \big) 
+ 2 (\nabla_{e_s} h) ( R(e_s, e_t) e_t )\right) - \sum_{t=1}^n(\nabla_{\Ric(e_t)} h)(e_t) 
 \\
&=  - \sum_{s,t=1}^n  (\nabla_{e_s} R)(e_s, e_t) (h(e_t)) + \tfrac12 h ( \nabla R )  \\
&\qquad  - 2 \sum_{s,t=1}^n R(e_s, e_t) \big( (\nabla_{e_s} h) (e_t) \big) 
+ 2 \sum_{s=1}^n (\nabla_{e_s} h) ( \Ric(e_s) ) - \sum_{t=1}^n (\nabla_{\Ric(e_t)} h)(e_t) ,
\end{align*}
so, expressed in index notation and viewing $h$ as a $(0,2)$-tensor,
\begin{equation} \label{eq_commute_1}
 (\DIV \triangle h - \triangle \DIV h)_i =  - \nabla_s R_{stui} h_{tu} + \tfrac12 h_{iu} \nabla_u R - 2 R_{stui} \nabla_s h_{tu} + R_{st} \nabla_s h_{ti}.  
\end{equation}
Next, we have
\begin{multline*} -\DIV \nabla_V h + \nabla_V \DIV h
= - (\nabla^2_{e_t, V} h)(e_t) - (\nabla_{\nabla_{e_t} V} h)(e_t) + (\nabla^2_{V, e_t} h)(e_t) \\
= \big( R(V, e_t) h\big) (e_t)  - (\nabla_{\nabla_{e_t} V} h)(e_t)
= R(V,e_t)( h(e_t)) - h(\Ric(V))- (\nabla_{\nabla_{e_t} V} h)(e_t),  
\end{multline*}
which gives
\begin{equation} \label{eq_commute_2}
 (-\DIV \nabla_V h + \nabla_V \DIV h)_i 
= R_{stui} h_{tu} V^s - R_{su} h_{ui} V^s  - \nabla_s h_{ti} \nabla_t V^s. 
\end{equation}
The following identities can be checked easily:
\begin{align}
 \big( -(\triangle h)(V) + \triangle (h(V)) \big)_i
 &= 2 \nabla_s h_{ti} \nabla_s V^t + \big( h(\triangle V) \big)_i  \label{eq_commute_3} \\
(\nabla_V h)(V)-\nabla_V(h(V)) &= -h(\nabla_V V)  \label{eq_commute_4} \\
  \big(\DIV(2\Rm(h) \big)_i &= 2\nabla_t R_{tsui} h_{su} + 2 R_{tsui} \nabla_t h_{su}   \label{eq_commute_5} \\
  \big( - 2(\Rm(h))(V) \big)_i &= -2 R_{tsui} h_{su} V^t  \label{eq_commute_6}
\end{align}
 Combining equations (\ref{eq_commute_1})--(\ref{eq_commute_6}) and applying the soliton equation gives
\begin{multline} \label{eq_commute_intermediate}
 \big( \DIV_V ( \triangle_V h + 2 \Rm(h) ) - \triangle_V \DIV_V h \big)_i \\
  = \big( -\tfrac12 \DIV_V h + h \big( \tfrac12 \nabla R - \Ric(V) + \triangle_V V- \tfrac12 V \big) \big)_i \\
  + \tfrac32 \nabla_s h_{ti} (\nabla_s V^t - \nabla_t V^s) 
  + \nabla_s R_{stui} h_{tu} - R_{tsui} h_{su} V^t.
\end{multline}
Taking the divergence of the soliton equation and plugging in $V$ gives
\begin{align}
\tfrac12 \nabla_j R+\triangle V_j+\tfrac{1}{2}\nabla_s(\nabla_j V^s-\nabla_s V^j) &= 0, \label{eq_sol_id_1} \\
-\Ric(V)_j-(\nabla_V V)_j+\tfrac12 (\nabla_s V^j-\nabla_j V^s) V^s -\tfrac12 V_j &= 0.  \label{eq_sol_id_2}
\end{align}
In addition, the following identity follows from the second Bianchi identity and the soliton equation
\begin{multline} \label{eq_sol_id_3}
 \nabla_s R_{stui} h_{tu} - R_{tsui} h_{su} V^t = 
 (\nabla_i R_{tu} - \nabla_u R_{ti}) h_{tu} - R_{tsui} h_{su} V^t \\
 = \tfrac12 \big(-\nabla_i \nabla_t V^u - \nabla_i \nabla_u V^t + \nabla_u \nabla_t V^i + \nabla_u \nabla_i V^t \big) h_{tu}  - R_{tsui} h_{su} V^t 
 = \tfrac12 \nabla_u ( \nabla_t V^i - \nabla_i V^t ) h_{tu}. 
 \end{multline}
Combining (\ref{eq_commute_intermediate}) with (\ref{eq_sol_id_1})--(\ref{eq_sol_id_3}) implies
\begin{multline} \label{eq_final_commute}
 2\big( \DIV_V ( \triangle_V h + 2 \Rm(h) ) - \triangle_V \DIV_V h + \tfrac12 \DIV_V h \big)_i 
= -  (\nabla_s V^j - \nabla_j V^s) V^s h_{ij} \\
-  \nabla_s (\nabla_j V^s - \nabla_s V^j) h_{ij} 
+ 3(\nabla_s V^t - \nabla_t V^s) \nabla_s h_{ti}
+  \nabla_u ( \nabla_t V^i - \nabla_i V^t ) h_{tu}.
\end{multline}

Equation \eqref{eq_sol_gauged_variation} implies
\begin{align}
 2\big( \DIV_V ( \triangle_V h + 2 \Rm(h) ) \big)_i &= \nabla_j \big( (\nabla_i V^s - \nabla_s V^i) h_{sj} + (\nabla_j V^s - \nabla_s V^j ) h_{si} \big) \notag \\
  &\qquad -  (\nabla_i V^s - \nabla_s V^i) V^j h_{sj} - (\nabla_j V^s - \nabla_s V^j ) V^j h_{si} \notag\\
&= \nabla_j (\nabla_i V^s - \nabla_s V^i) h_{sj} + \nabla_j (\nabla_j V^s - \nabla_s V^j ) h_{si} \notag \\
&\qquad +(\nabla_i V^s - \nabla_s V^i) \nabla_j h_{sj}+ (\nabla_j V^s - \nabla_s V^j ) \nabla_j h_{si} \notag \\
&\qquad -  (\nabla_i V^s - \nabla_s V^i) V^j h_{sj}  - (\nabla_j V^s - \nabla_s V^j ) V^j h_{si} . \label{eq_div_dVflat_terms}
\end{align}
Combining \eqref{eq_final_commute} and \eqref{eq_div_dVflat_terms} therefore yields
\begin{align}
2\big(- \triangle_V \DIV_V h + \tfrac12 \DIV_V h\big)_i&=2\nabla_u ( \nabla_t V^i - \nabla_i V^t ) h_{tu}\notag\\
&\qquad -(\nabla_i V^s - \nabla_s V^i) \nabla_j h_{sj}+2 (\nabla_j V^s - \nabla_s V^j ) \nabla_j h_{si}\notag\\
&\qquad+(\nabla_i V^s - \nabla_s V^i) V^j h_{sj}.\notag
\end{align}
Note, in particular, that the terms of the form $(\nabla_j V^s-\nabla^s V^j) V^j h_{si}$ canceled out.
\end{proof}
\bigskip

\subsection{Gauging the family of solitons}
The following lemma shows that we can modify the family of solitons from Proposition~\ref{Prop_gradientness} by a family of diffeomorphisms such that the infinitesimal gauge condition \eqref{eq_inf_gauge_condition} holds.

\begin{Lemma} \label{Lem_gauged_variation}
Consider the setting of Proposition~\ref{Prop_gradientness}.
After possibly shrinking $\eps$, there is a continuous family $(\chi_s: M \to M)_{s \in (-\eps,\eps)}$ of $C^6$-diffeomorphisms of uniform finite displacement such that $(\td g_s := \chi_s^* g_s)_{s \in (-\eps,\eps)}$ and $(\td V_s := \chi_s^* V_s)_{s \in (-\eps,\eps)}$ are of regularity $C^1$ in the $C^{5}_{\loc}$-sense and such that $\td h_s := \partial_s \td g_s$ satisfies the gauge condition:
\begin{equation} \label{eq_gauge_condition_V}
 \partial_s \td V_s + \DIV_{\td g_s} \td h_s - \frac12 \nabla^{\td g_s} \tr_{\td g_s}  \td h_s = 0.
\end{equation}
Moreover we have for some uniform constant $C' < \infty$ \begin{equation} \label{eq_tdhtdV_bounds}
 |\td h_s|, |\nabla^{2,\td g_s} \td h_s|, |\nabla^{3,\td g_s} \td h_s| \leq C', \qquad |\nabla^{\td g_s} \td h_s | \leq C' r^{-1}, \qquad |\td h_s (\td V_s)| \leq C' 
\end{equation}
and
\begin{equation} \label{eq_tdg_bounds}
|\nabla^{m,g_0} (\td g_s - g_0) |_{g_0} \leq C'|s| \qquad \text{for all} \quad m = 0,\ldots, 3.
\end{equation}

\end{Lemma}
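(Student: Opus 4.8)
The goal is to produce the family of diffeomorphisms $\chi_s$ by solving, for each $s$, the linear elliptic equation \eqref{eq_gauge_condition_Y} from Lemma~\ref{Lem_inf_gauge} for a time-dependent vector field $Y_s$, and then defining $\chi_s$ to be the flow generated by the $Y_s$. First I would set $g = g_0$, $V = V_0 = \nabla^{g_0} f_0$ and work with the weighted H\"older spaces of Subsection~\ref{subsec_list_Holder} associated to the gradient soliton $(M, g_0, f_0)$; by Lemma~\ref{Lem_soliton_smooth} we may assume $g_0, f_0$ are smooth. The right-hand side of \eqref{eq_gauge_condition_Y}, namely $U_s := \partial_s V_s + \DIV_{g_s}(\partial_s g_s) - \tfrac12\nabla^{g_s}\tr_{g_s}(\partial_s g_s)$, is controlled in a decaying weighted norm: Assumption~\ref{Prop_gradientness_iii} gives $|\partial_s V_s|, |\partial_s g_s| \leq C$ with all higher covariant derivatives decaying like $r^{-1}$, and crucially $|(\partial_s g_s)(V_s)| \leq C$, so that after the contraction with $V_s$ the top-order terms in $\DIV_{g_s}(\partial_s g_s)$ remain bounded; I expect to show $\|U_s\|_{C^{3,\alpha}_{-1}} \leq C$ uniformly in $s$ (possibly absorbing a small loss of derivatives against the $C^{11}$-regularity). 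Then I would invoke Corollary~\ref{Cor_Lundardi} (or rather its analogue for the vector-bundle operator $\triangle_{V_s} - \tfrac12 + \tfrac12\, dV_s^\flat(\cdot\,,\cdot)^\#$, which satisfies the hypotheses of Proposition~\ref{Prop_Lunardi} once $s$ is small enough that $dV_s^\flat$ is small --- at $s=0$ it vanishes since $V_0$ is gradient) to solve \eqref{eq_gauge_condition_Y} uniquely for $Y_s \in C^{5,\alpha}_{-1, V_s}(M; TM)$ with $\|Y_s\|_{C^{5,\alpha}_{-1,V_s}} \leq C\|U_s\|_{C^{3,\alpha}_{-1}} \leq C'$. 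In particular $Y_s$ decays like $r^{-1}$ and has bounded covariant derivatives, so it is an integrable (complete) vector field of finite displacement.

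Next I would define $\chi_s$ as the time-$s$ flow of the (non-autonomous) vector field $s \mapsto Y_s$, i.e.\ $\partial_s \chi_s = Y_s \circ \chi_s$, $\chi_0 = \id_M$; after shrinking $\eps$ this is well defined on $(-\eps,\eps)$, each $\chi_s$ is a $C^6$-diffeomorphism (one derivative better than $Y_s$), of uniform finite displacement because $\sup_M |Y_s|$ is bounded uniformly, and the dependence on $s$ has the claimed regularity because $(g_s, V_s)$ and hence $U_s$ and $Y_s$ depend on $s$ in the $C^1$ sense in $C^{5}_{\loc}$. Setting $\td g_s := \chi_s^* g_s$, $\td V_s := \chi_s^* V_s$, one computes $\td h_s = \partial_s \td g_s = \chi_s^*\big(\partial_s g_s + \LL_{Y_s} g_s\big)$ and $\partial_s \td V_s = \chi_s^*\big(\partial_s V_s + \LL_{Y_s} V_s\big)$; by the equivalence in Lemma~\ref{Lem_inf_gauge}, the defining property \eqref{eq_gauge_condition_Y} of $Y_s$ is exactly the statement that the pair $\big(\partial_s g_s - \LL_{Y_s} g_s,\ \partial_s V_s - \LL_{Y_s} V_s\big)$ --- wait, with the correct sign, $\big(\partial_s g_s + \LL_{Y_s}g_s,\ \partial_s V_s + \LL_{Y_s}V_s\big)$ --- is gauged in the sense of \eqref{eq_inf_gauge_condition}, and pulling back by $\chi_s$ (which commutes with $\DIV$, $\nabla$, $\tr$ taken with respect to the pulled-back metric) yields precisely \eqref{eq_gauge_condition_V}. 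Here I would need to double-check the sign conventions in Lemma~\ref{Lem_inf_gauge} against the flow equation, which is the one genuinely fiddly bookkeeping point.

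Finally, the estimates \eqref{eq_tdhtdV_bounds} and \eqref{eq_tdg_bounds}. Since $\td h_s = \chi_s^*(\partial_s g_s + \LL_{Y_s} g_s)$ and $\chi_s$ has uniformly controlled $C^6$-geometry, it suffices to bound $\partial_s g_s + \LL_{Y_s} g_s$ and its covariant derivatives (with respect to $g_s$) in the corresponding weighted norms, then transport; the bounds on $\partial_s g_s$ come from Assumption~\ref{Prop_gradientness_iii} and those on $\LL_{Y_s} g_s = \nabla Y_s^\flat + (\nabla Y_s^\flat)^T$ (up to lower order) from the weighted estimate on $Y_s$ established above, giving $|\td h_s| \leq C'$, $|\nabla \td h_s| \leq C' r^{-1}$, and bounded covariant derivatives up to order $3$. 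For $|\td h_s(\td V_s)| \leq C'$ one uses $(\partial_s g_s)(V_s)$ bounded (Assumption~\ref{Prop_gradientness_iii}, \eqref{eq_psgV_bounded}) together with $(\LL_{Y_s} g_s)(V_s) = \nabla_{Y_s} V_s^\flat + \ldots$, which is bounded since $Y_s$ decays and $\nabla V_s$ is bounded; this is where the asymptotically-conical hypothesis enters essentially, as emphasized in Remark~\ref{Rmk_nogencone_asspt}. Finally \eqref{eq_tdg_bounds} follows by integrating $\partial_s \td g_s = \td h_s$ from $0$ to $s$: $\|\td g_s - g_0\|_{C^3_{g_0}} \leq \int_0^{|s|} \|\td h_\sigma\|_{C^3_{g_0}}\, d\sigma \leq C'|s|$, using that for small $s$ the norms $\|\cdot\|_{C^3_{\td g_\sigma}}$ and $\|\cdot\|_{C^3_{g_0}}$ are uniformly comparable (again since $\chi_\sigma$ is $C^6$-close to $\id$ for small $\sigma$). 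The main obstacle I anticipate is verifying that the linear operator in \eqref{eq_gauge_condition_Y} is invertible on the weighted space uniformly in $s$ --- i.e.\ checking the sign condition \eqref{eq_b1b0_la_bound} of Proposition~\ref{Prop_Lunardi} for the zeroth-order term $-\tfrac12\id + \tfrac12 dV_s^\flat$; at $s=0$ it holds with $\lambda = \tfrac12$ since $dV_0^\flat = 0$, and for small $s$ the perturbation $dV_s^\flat$ is controlled by the $C^1$-continuity of the family together with Assumption~\ref{Prop_gradientness_ii}, so the sign condition survives with $\lambda = \tfrac14$ after shrinking $\eps$ --- but making this quantitative in the weighted norms (so that $dV_s^\flat$ is small in $C^{k}$, not merely pointwise) requires the decay estimates in Assumption~\ref{Prop_gradientness_iii} on $\partial_s V_s$ and a short Gr\"onwall argument for $dV_s^\flat$ itself.
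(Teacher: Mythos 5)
Your overall strategy is the same as the paper's: solve the linear equation \eqref{eq_gauge_condition_Y} for a vector field $Y_s$, integrate the resulting time-dependent vector field to get $\chi_s$, use Lemma~\ref{Lem_inf_gauge} to see that the pullbacks satisfy \eqref{eq_gauge_condition_V}, and then read off \eqref{eq_tdhtdV_bounds}, \eqref{eq_tdg_bounds} from the bounds on $\partial_s g_s$, $\partial_s V_s$ and $Y_s$ (with \eqref{eq_psgV_bounded} entering exactly where you say it does). However, the central analytic step is not actually supplied. You claim a solution $Y_s\in C^{5,\alpha}_{-1,V_s}$ with a weighted estimate by invoking ``Corollary~\ref{Cor_Lundardi} or its analogue,'' but Corollary~\ref{Cor_Lundardi} and Proposition~\ref{Prop_Lundardi_weighted} are only available for \emph{gradient} drift $V=\nabla f$, and Proposition~\ref{Prop_Lunardi} by itself only produces \emph{bounded} solutions in $C^{k+2,\alpha}_{0,V}$ with no decay. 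Since $V_s$ is not gradient for $s\neq 0$ (that is the whole point of Proposition~\ref{Prop_gradientness}), the decay $|Y_s|\lesssim r^{-1}$ --- which you genuinely need, e.g.\ to get $|\nabla Y_s|\lesssim r^{-1}$, hence $|(\LL_{Y_s}g_s)(V_s)|\leq C$ and the key bound $|\td h_s(\td V_s)|\leq C'$, as well as $|\nabla \td h_s|\leq C'r^{-1}$ --- does not follow from anything you cite. The paper closes this gap by conjugating: it rewrites \eqref{eq_gauge_condition_Y} as an equation for $wY$ with the asymptotically linear weight $w=r+A$, checks that for $A$ large the new zeroth-order coefficient is uniformly negative (using $\iota^*V_s\approx -\tfrac12 r\partial_r$ from Assumption~\ref{Prop_gradientness_ii}), and only then applies the unweighted Proposition~\ref{Prop_Lunardi}. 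Relatedly, your worry about the sign condition \eqref{eq_b1b0_la_bound} is misplaced: $dV_s^\flat$ is antisymmetric, so $\tfrac12\big(dV_s^\flat(e,\cdot)\big)^\sharp\cdot e=0$ and the condition holds uniformly in $s$ with no smallness of $dV_s^\flat$, no Gr\"onwall argument, and no shrinking of $\eps$ needed for solvability.

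Two further slips, both fixable but currently wrong as written. First, the sign: Lemma~\ref{Lem_inf_gauge} says that \eqref{eq_gauge_condition_Y} makes $(h_s-\LL_{Y_s}g_s,\ \partial_sV_s-\LL_{Y_s}V_s)$ gauged, so $\chi_s$ must be the flow of $-Y_s$, not of $+Y_s$; with your convention $\td h_s=\chi_s^*(\partial_s g_s+\LL_{Y_s}g_s)$ the gauge condition \eqref{eq_gauge_condition_V} would fail. You flag this as ``fiddly bookkeeping'' but leave it unresolved. Second, the flow of a $C^{5,\alpha}$ vector field is only $C^{5}$ in space --- flows do not gain a derivative --- so to obtain the $C^6$-diffeomorphisms required by the lemma you must solve for $Y_s$ at regularity $C^{6,\alpha}$ (which the data permit: Assumption~\ref{Prop_gradientness_iii} controls $U_s$ through five derivatives, and the paper applies Proposition~\ref{Prop_Lunardi} with $k=4$ to get $\|wY_s\|_{C^{6,\alpha}}\leq C$). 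Finally, the continuity of $s\mapsto Y_s$ should be argued (the paper does this via uniform local $C^{6,\alpha}$-bounds, Arzel\`a--Ascoli, and the uniqueness statement of Proposition~\ref{Prop_Lunardi}) rather than asserted from the $C^1$-dependence of the data.
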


\begin{proof}
For any $s \in (-\eps,\eps)$ set $h_s := \partial_s g_s$ and define, as in \eqref{eq_gauge_condition_Y},
\[ U_s := \partial_s V_s + \DIV_{g_s} h_s - \frac12 \nabla^{g_s} \tr_{g_s} h_s. \]
Recall that, as mentioned in the statement of Proposition~\ref{Prop_gradientness}, we view $r \in C^\infty(M)$ as a positive function, which agrees with the radial coordinate on $\iota((2,\infty) \times N)$.
The next claim shows that we can solve the equation \eqref{eq_gauge_condition_Y} with the necessary bounds on the solution.

\begin{Claim} \label{Cl_solve_Y_eq}
For any $s \in (-\eps,\eps)$ the equation \eqref{eq_gauge_condition_Y} has a unique bounded and $C^2$-regular solution $Y_s$.
Moreover, we have $\Vert r Y_s \Vert_{C^{6}_{g_s}} \leq C$ (see Subsection~\ref{subsec_list_Holder}) for some uniform constant $C <\infty$ and the dependence $s \mapsto  Y_s$ is continuous in the $C^{6}_{\loc}$-sense.
\end{Claim}

\begin{proof}
Adapting the computation of Lemma~\ref{Lem_delta_f_ell_bounds} to our setting, we rewrite \eqref{eq_gauge_condition_Y} using the asymptotically linear weight of the form $w := r + A$, where $A > 0$ is a constant whose value we will determine later.
Fix some $s \in (-\eps,\eps)$ for a moment and take all covariant derivatives with respect to $g_s$.
Since
\[ \triangle_{V_s} ( w Y ) -  \tfrac12 (w Y)
= w  \triangle_{V_s} Y 
+ \left( \frac{\triangle_{V_s} w}{w}  - \frac12 \right)  (w Y) + \nabla_{2\frac{\nabla w}w} (w Y) - 2 \frac{|\nabla w|^2}{w^2} (wY), \]
the equation \eqref{eq_gauge_condition_Y} is equivalent to
\begin{equation} \label{eq_gauge_condition_Y_times_w}
 \triangle_{V_s+2\frac{\nabla w}w} (wY) 
 - \left(1+   \left( \frac{\triangle_{V_s} w}{w}  - \frac12 \right)  - 2 \frac{|\nabla w|^2}{w^2} \right) (wY) + \frac12 \big( dV^\flat (wY, \cdot) \big)^{\sharp} = w U 
\end{equation}
Note that if we consider the push-forward of the field $\partial_r$ via $\iota$, then using Assumption~\ref{Prop_gradientness_ii} of Proposition~\ref{Prop_gradientness},
\[
 \frac{\triangle_{V_s} w}{w}  - \frac12 
= \frac{\triangle r}{r + A} - \frac{dr (V_s)}{r + A} - \frac12 
= \frac{\triangle r}{r + A} - \frac{dr(V_s + \frac12 r \partial_r)}{r + A} 
+ \frac{r }{2(r + A)} - \frac12 .
\]
So we can fix a sufficiently large $A$, independently of $s$, such that
\begin{equation} \label{eq_zeroth_order_term}
 1 + \frac{\triangle_{V_s} w}{w}  - \frac12 - 2 \frac{|\nabla w|^2}{w^2} > \frac1{10}. 
\end{equation}
Moreover, we have uniform bounds on the left-hand side of \eqref{eq_zeroth_order_term}, as well as on its derivatives up to order $7$.
We seek to apply Proposition~\ref{Prop_Lunardi} with $k=4$, so we also need the bounds
\[ \bigg\Vert \nabla \bigg( V + 2 \frac{\nabla w}{w} \bigg)\bigg\Vert_{C^{7}}, \;\; \Vert {\Rm} \Vert_{C^{9}}, \;\; \Vert d V^\flat \Vert_{C^{7}} \leq C.\]
These follow from the assumptions of Proposition~\ref{Prop_gradientness}. Note also that by these assumptions we have for some generic constant $C$, which is independent of $s$, that
\[ |\nabla^m U_s| \leq Cr^{-1} \quad \text{and} \quad |\nabla^m w | \leq C r \qquad \text{for all} \quad m = 0,\ldots, 5, \]
so by the product rule $\Vert w U_s \Vert_{C^5} \leq C$.
We can now apply Proposition~\ref{Prop_Lunardi} with $k=4$,
and obtain solutions $w Y_s$ to \eqref{eq_gauge_condition_Y_times_w} such that $\Vert w Y_s \Vert_{C^{6,\alpha}_{g_s}} \leq C$ for some constants $C$ and $\alpha \in (0,1)$ that are independent of $s$.

To see that $Y_s$ depends continuously on $s$ in the $C^{6}_{\loc}$-sense, consider a sequence $s_i \to s_\infty \in (-\eps,\eps)$.
By a similar argument as in the proof of Lemma~\ref{Lem_Holder_norms_properties}, we have local uniform $C^{6,\alpha}$-bounds on $Y_{s_i}$.
So using Arzel\'a-Ascoli, we can pass to a subsequence such that $w Y_{s_i}$ converges to some $w Y'$ in $C^{6}_{\loc}$, which is bounded and solves \eqref{eq_gauge_condition_Y_times_w} for $s = s_\infty$.
Thus $Y' = Y_{s_\infty}$ by the uniqueness statement of Proposition~\ref{Prop_Lunardi}.
\end{proof}

Let $(\chi_{s,s_0})_{s,s_0 \in (-\eps,\eps)}$ be the flow of the time-dependent vector field $-Y_s$, meaning that
\[ \partial_s \chi_{s, s_0} = -Y_s \circ \chi_{s,s_0}, \qquad \chi_{s_0,s_0} = \id_M. \]
After possibly shrinking $\eps$, we may assume that all maps $\chi_{s,s_0}$ are $C^6$-diffeomorphisms.
Set $\chi_s := \chi_{s,0}$ and note that since $\partial_s \chi_s = - Y_s \circ \chi_s$, Claim~\ref{Cl_solve_Y_eq} implies that the map $s \mapsto \chi_s$ is of regularity $C^1$ in the $C^6_{\loc}$-sense.
Now for any $s_0 \in (-\eps,\eps)$
\[  \td h_{s_0} = \frac{d}{ds} \bigg|_{s=s_0} (\chi_s^* g_s) 
= \frac{d}{ds} \bigg|_{s=s_0} (\chi_{s_0,0}^*\chi_{s,s_0}^* g_s)
= \chi_{s_0}^* \left(h_{s_0} - \LL_{Y_{s_0}} g_{s_0}  \right). \]
Similarly,
\[ \partial_s \td V_{s_0} =  \frac{d}{ds} \bigg|_{s=s_0} (\chi_s^* V_s) 
= \chi_{s_0}^* \left(\partial_s V_{s_0} - \LL_{Y_{s_0}} V_s   \right). \]
It follows from Lemma~\ref{Lem_inf_gauge} that $( h_{s_0} - \LL_{Y_{s_0}} g_{s_0},   \partial_s V_{s_0} - \LL_{Y_{s_0}} V_{s_0})$ is gauged with respect to $(M, \lb  g_{s_0}, \lb V_{s_0})$, and thus its pullback via $\chi_{s_0}$, satisfies the gauging condition \eqref{eq_gauge_condition_V} with respect to $(M,  \td g_{s_0}, \td V_{s_0})$.

It remains to verify the bounds on $\td h_s$, $\td V_s$ and $\td g_s$ from \eqref{eq_tdhtdV_bounds} and \eqref{eq_tdg_bounds}.
Since $(\chi_{s})_* \td h_s = h_s - \LL_{Y_s} g_s$ and $(\chi_{s})_* \partial_s \td V_s = V_s - \LL_{Y_s} V_s$, the bounds \eqref{eq_tdhtdV_bounds} are implied by uniform bounds of the form
\begin{multline*}
|\LL_{Y_s} g_s|_{g_s}, |\nabla^{2,g_s} (\LL_{Y_s} g_s)|_{g_s}, |\nabla^{3,g_s} (\LL_{Y_s} g_s)|_{g_s} \leq C, \qquad 
|\nabla^{g_s} (\LL_{Y_s} g_s)|_{g_s} \leq Cr^{-1}, \\ \qquad  |(\LL_{Y_s} g_s)(V_s)|_{g_s}, |h_s (\LL_{Y_s} V_s)|_{g_s} \leq C.
\end{multline*}
These bounds follow immediately from the assumptions of the lemma together with the result of Claim~\ref{Cl_solve_Y_eq}.
To see the bounds \eqref{eq_tdg_bounds}, we integrate the bounds on $\partial_s \td g_s = \td h_s$ from \eqref{eq_tdhtdV_bounds}.
We obtain first that the metrics $\td g_s$ are uniformly $(1+C'|s|)$-bilipschitz and then by induction that for $m = 1, \ldots, 3$ and for some generic constant $C < \infty$, which does not depend on $s$,
\[ |\partial_s \nabla^{m,g_0} \td g_s|_{g_0} \leq |\nabla^{m,g_0} \td h_s |_{g_0} \leq   C \bigg( \sum_{i=0}^{m} |\nabla^{i,g_0} \td g_s|_{g_0} \bigg) \bigg( \sum_{i=0}^{m}|\nabla^{i,\td g_s}  \td h_s|_{g_0} \bigg)   \leq C + C | \nabla^{m,g_0}\td g_s|_{g_0}. \]
This shows that $|\partial_s \nabla^{m,g_0} \td g_s|_{g_0} \leq C$, which implies the desired bound.
\end{proof}
\medskip

\subsection{Proof of Proposition~\ref{Prop_gradientness}}
\begin{proof}[Proof  of Proposition~\ref{Prop_gradientness}.]
Note first that it suffices to show that for all $s \in (-\eps, \eps)$ the vector field $V_s$ is a gradient vector field with respect to $g_s$ for some potential $f_s \in C^1(M)$.
The higher regularity and $C^1$-dependence follows by the corresponding assumptions on $g_s$ and $V_s = \nabla^{g_s} f_s$. 
In addition, since the condition of being a gradient vector field is closed under local $C^1$-convergence, it remains to show the openness of this condition.
So without loss of generality, we are free to shrink $\eps$ in the following discussion.

Consider the gauged family $(M, \td g_s, \td V_s)$ from Lemma~\ref{Lem_gauged_variation} and recall that $\td h_s := \partial_s \td g_s$.
By diffeomorphism invariance, it suffices to show that for $s \in (-\eps,\eps)$ the vector field $\td V_s$ is a gradient vector field with respect to $\td g_s$.
Define the $1$-forms
\[  \xi_s := \td V_s^{\flat, \td g_s}, \qquad \beta_s := \big({\DIV_{\td g_s,\td V_s} \td h_s }\big)^{\flat, \td g_s} . \]
We need to show that $\xi_s$ is exact.
Expressing \eqref{eq_gauge_condition_V} in these 1-forms yields
\begin{equation} \label{eq_dsxi_beta_dtr}
 \partial_s \xi_s = - \beta_s + \tfrac12 d \tr_{\td g_s} \td h_s . 
\end{equation}
So
\begin{equation} \label{eq_nongradient_evolution}
\partial_s d\xi_s = - d \beta_s.
\end{equation}
Due to Lemmas \ref{Lem_gauged_div_equation} and \ref{Lem_gauged_variation} we have for any $s \in (-\eps, \eps)$
\begin{equation} \label{eq_beta_equation}
   \triangle_{\td V_s} \beta_s  -\tfrac12 \beta_s - \tfrac12 d\xi_s (\beta_s^{\sharp}, \cdot) = \nabla d\xi_s\ast  \td h_s + d\xi_s \ast \nabla \td h_s, 
\end{equation}
where the Laplacian, gradient, and the musical operator are taken with respect to $\td g_s$.

Our goal will be to apply Proposition~\ref{Prop_Lunardi} with $k=0$ to \eqref{eq_beta_equation}.
To do this, note first that by \eqref{eq_tdhtdV_bounds} we have
\[ |\beta_s| \leq C |\nabla \td h_s| + |\td h_s(\td V_s)| \leq C, \]
where $C$ is a generic constant that does not depend on $s$ and all covariant derivatives and norms are taken with respect to $\td g_s$.
Note that here we have used the condition \eqref{eq_psgV_bounded}; in fact, it is the only place where we have used the critical assumption that $\td g_s$ is asymptotic to a cone (see also Remark~\ref{Rmk_nogencone_asspt}).
Next, we need to ensure that the zeroth order term in \eqref{eq_beta_equation} is bounded by a uniform negative constant.
To do this, note that by \eqref{eq_tdhtdV_bounds} and Assumptions~\ref{Prop_gradientness_ii}, \ref{Prop_gradientness_iii} the right-hand side of \eqref{eq_nongradient_evolution} is bounded by 
\[ |\nabla \beta_s| \leq C|\nabla^2 \td h_s | + C|\nabla \td h_s| \, |\td V_s| + C |\td h_s | \, |\nabla \td V_s | \leq C, \]
where we used that $|\td V_s|_{\td g_s} = |V_s|_{g_s} \circ \chi_s \leq Cr$ and $|\nabla^{\td g_s} \td V_s|_{\td g_s} = |\nabla^{g_s} V_s|_{g_s} \circ \chi_s \leq C$ due to diffeomorphism invariance.
So since by Assumption~\ref{Prop_gradientness_iv} we have $d\xi_0 = 0$, we obtain that for some uniform $C < \infty$
\[ |d\xi_s| \leq C |s|. \]
Therefore after shrinking $\eps$, we may assume that the zeroth order term in \eqref{eq_beta_equation} satisfies
\[ \Big( - \frac12 \beta_s - d\xi_s^\flat (\beta_s^{\sharp}, \cdot) \Big) \cdot \beta_s \leq - \frac14 |\beta_s|^2. \]
In addition, we have the bounds
\[ \Vert {\Rm_{\td g_s}} \Vert_{C^5_{\td g_s}}, \;\; 
\Vert \nabla^{\td g_s} \td V_s \Vert_{C^3_{\td g_s}}, \;\; \Vert d\xi_s\Vert_{C^3_{\td g_s}} \leq C, \]
which follow again by the diffeomorphism invariance from Assumptions~\ref{Prop_gradientness_ii}, \ref{Prop_gradientness_iii} for the original metric $g_s$ and vector field $V_s$.
Thus all conditions of Proposition~\ref{Prop_Lunardi} are satisfied, and we obtain that for some generic constant $C < \infty$, which does not depend on $s$
\[ 
\big\Vert \beta_s \big\Vert_{C^{2,\alpha}_{\td g_s}}
\leq C \big\Vert \nabla d\xi_s\ast  \td h_s + d\xi_s \ast \nabla \td h_s \big\Vert_{C^{0,\alpha}_{\td g_s}} \leq C \big\Vert d\xi_s \big\Vert_{C^{1,\alpha}_{\td g_s}} \big\Vert \td h_s \big\Vert_{C^{1,\alpha}_{\td g_s}} 
\leq C \big\Vert d\xi_s \big\Vert_{C^{1,\alpha}_{\td g_s}},\]
where the last bound follows from \eqref{eq_tdhtdV_bounds}.
Using Lemma~\ref{Lem_Holder_norms_properties} and \eqref{eq_tdg_bounds}, this bound can be expressed in terms of H\"older norms with respect to a \emph{fixed} background metric $g_0$:
\begin{equation} \label{eq_beta_bound_dxi}
 \big\Vert \beta_s \big\Vert_{C^{2,\alpha}_{g_0}}
\leq  C \big\Vert d\xi_s \big\Vert_{C^{1,\alpha}_{ g_0}}. 
\end{equation}

Due to \eqref{eq_nongradient_evolution} we have for any $s_1, s_2 \in (-\eps, \eps)$
\[  \Big| \big\Vert d\xi_{s_2} \big\Vert_{C^{1,\alpha}_{g_0}} -  \big\Vert d\xi_{s_1} \big\Vert_{C^{1,\alpha}_{g_0}} \Big|
\leq \big\Vert d\xi_{s_2} - d\xi_{s_1} \big\Vert_{C^{1,\alpha}_{g_0}}
= \bigg\Vert \int_{s_1}^{s_2} d\beta_s \, ds \bigg\Vert_{C^{1,\alpha}_{g_0}}
\leq \int_{s_1}^{s_2} \big\Vert d\beta_s \big\Vert_{C^{1,\alpha}_{g_0}} ds. \]
So $s \mapsto \big\Vert d\xi_{s} \big\Vert_{C^{1,\alpha}_{g_0}}$ is Lipschitz and for almost all $s$ we have by \eqref{eq_beta_bound_dxi}
\[ \frac{d}{ds} \big\Vert d\xi_s \big\Vert_{C^{1,\alpha}_{g_0}} \leq  \big\Vert d \beta_s \big\Vert_{C^{1,\alpha}_{g_0}} 
\leq C \big\Vert \beta_s \big\Vert_{C^{2,\alpha}_{g_0}} 
\leq C \big\Vert d\xi_s \big\Vert_{C^{1,\alpha}_{g_0}}. \]
Combined with $d\xi_0 = 0$, this implies $d\xi_s = 0$ and $\beta_s = 0$ for all $s$.
If $H^1(M;\IR) = 0$, then this finishes the proof.
Otherwise, recall that by \eqref{eq_dsxi_beta_dtr} we have $\partial_s \xi_s = \tfrac12 d \tr_{\td g_s} \td h_s$. 
So if we integrate $\partial_s f'_s = \frac12 \tr_{\td g_s} \td h_s$ with initial condition $f'_0 = f_0$, then $\xi_s = df'_s$.
\end{proof}
\bigskip

\section{Finding the DeTurck gauge} \label{sec_DT_gauge}

\subsection{Main result}
Consider an expanding gradient soliton $(M,g,f)$ and another expanding soliton metric $g'$ with soliton vector field $V'$.
As in Subsection~\ref{subsec_the_elliptic_problem}, we say that $(g',V')$ is in the {\bf DeTurck gauge} if
\[ P_g(g',V'):= V' - \nabla^g f + \DIV_g (g') - \tfrac12 \nabla^g \tr_g (g') = 0. \]
Recall from this subsection that if $(g',V')$ is in the DeTurck gauge, then the expanding soliton equation for $(g',V')$ is equivalent to the equation $Q_g(g') = 0$.

Suppose now that $(g',V')$, close to $(g,V := \nabla^g f)$, is given such that $(M,g',V')$ is an expanding soliton.
Our goal will be to find a diffeomorphism $\phi : M \to M$ such that $(g'',V'') = (\phi^* g', \phi^* V')$ is in the DeTurck gauge, i.e., $P_g(g'',V'') =0$, and therefore $Q_g(g'') = 0$.
Our main result of this section is the following  proposition.

\begin{Proposition} \label{Prop_gauge}
Let $(M,N,\iota)$ be an ensemble, $0 \leq k \leq k^* -10$ integers and $\alpha \in (0,1)$.
Fix a $C^{k^*-2}$-regular representative $(g,V=\nabla^g f, \gamma)$ of an element of $\MM^{k^*}_{\grad}(M,N,\iota)$.
All H\"older norms will be taken with respect to $(g,f)$ (see Subsection~\ref{subsec_list_Holder}).

Then there is a constant $\eps > 0$ with the following properties:

\begin{enumerate}[label=(\alph*)]
\item  \label{Prop_gauge_a}
For any $C^{k+2,\alpha}$-metric $g' = g+h' + T(\gamma'-\gamma)$, $\gamma' \in \GenCONE^{k^*}(N)$, and any $C^{k+1,\alpha}$-regular vector field $V'$ on $M$ with 
\begin{equation} \label{eq_hpgpVp_small}
 \eps' := \Vert h' \Vert_{C^{k+3,\alpha}_{-1}} + \Vert \gamma'-\gamma  \Vert + \Vert V'-\nabla^g f \Vert_{C^{k+2,\alpha}_{-1}} \leq \eps 
\end{equation}
there is an orientation preserving $C^{k+2,\alpha}$-diffeomorphism $\phi : M \to M$ such that 
\begin{equation} \label{eq_P_is_0}
  P_g (\phi^* g', \phi^* V') = \phi^* V' - \nabla^g f + \DIV_g (\phi^* g') - \tfrac12 \nabla^g \tr_g (\phi^* g') = 0 .
\end{equation}
Moreover, we have
\begin{equation*} 
 \sup_{x \in M} d_g (\phi(x),x), \, \Vert \phi^*g - g \Vert_{C^{k+1,\alpha}}, \, \Vert \phi^* g' - g - T(\gamma'-\gamma ) \Vert_{C^{k+1,\alpha}_{-1}} \leq C\eps'. 
\end{equation*}
\item  \label{Prop_gauge_b}
If $k \geq 2$, then $\phi$ is uniquely determined by \eqref{eq_P_is_0}, among all $C^3$-diffeomorphisms satisfying 
\begin{equation} \label{eq_phi_close_id}
 \sup_{x \in M} d_g (\phi(x),x) \leq \eps, \qquad \Vert \phi^*g - g \Vert_{C^{2}} \leq \eps. 
\end{equation}
\item  \label{Prop_gauge_c}
The assignment $(h',\gamma', V'') \mapsto h''$,  where $V''=V'-\nabla^g f$ and $\phi^* g' = g + h'' + T(\gamma' -\gamma)$, induces a $C^{1,\alpha}$-map of the form
\[ \qquad\qquad C^{k+3,\alpha}_{-1}(M; S^2 T^* M) \times \GenCONE^{k^*} (N) \times C^{k+2,\alpha}_{-1}(M;TM) \supset U \longrightarrow C^{k+1,\alpha}_{-1}(M; S^2 T^* M), \]
where $U$ is a neighborhood of $(0,\gamma,0)$.
Its differential at the origin equals
\[ (h',\gamma',V'') \longmapsto h' + \LL_Y g, \]
where $Y \in C^2_{-1,\nabla f} (M ; TM)$ is the unique vector field satisfying
\begin{equation} \label{eq_gauging_differential}
 \triangle_f Y - \tfrac12 Y = - V' - \DIV_g (h'+T(\gamma')) + \tfrac12 \nabla \tr_g (h' + T(\gamma')). 
\end{equation}
\end{enumerate}
\end{Proposition}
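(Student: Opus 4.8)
\textbf{Plan of proof of Proposition~\ref{Prop_gauge}.}
The proof is a standard application of the Implicit Function Theorem, but some care is required to keep track of the asymptotics at infinity. Since $P_g(\phi^* g', \phi^* V')$ is covariant under diffeomorphisms, and since a $C^{k+2,\alpha}$-diffeomorphism close to the identity can be written as $\phi = \exp^g_Y$ for a $C^{k+2,\alpha}$-regular vector field $Y$ with small norm, it suffices to solve the equation $P_g((\exp^g_Y)^* g', (\exp^g_Y)^* V') = 0$ for $Y$ in terms of the data $(h',\gamma',V'')$. The key observation is that the solution $Y$ must be allowed to grow linearly at infinity — in fact $Y = O(1)$ relative to the cone, i.e.\ $Y \in C^{k+2,\alpha}_{-1,\nabla f}$ after division by the weight $r$ — because the linearization of $Y \mapsto P_g((\exp^g_Y)^* g, (\exp^g_Y)^* \nabla^g f)$ at $Y=0$ is, by Lemma~\ref{Lem_inf_gauge} and the soliton equation,
\[
 Y \longmapsto \triangle_f Y - \tfrac12 Y,
\]
which, by Proposition~\ref{Prop_Lundardi_weighted} (with $E = TM$, $b=0$), is an isomorphism $C^{k+2,\alpha}_{-a,\nabla f}(M;TM) \to C^{k,\alpha}_{-a}(M;TM)$ for any $a>0$. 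Here the correct weight is dictated by the right-hand side of \eqref{eq_gauging_differential}: the terms $\DIV_g(h') - \tfrac12\nabla\tr_g(h')$ and $V''$ decay like $r^{-1}$ by \eqref{eq_hpgpVp_small} and Lemma~\ref{Lem_Holder_norms_properties}, while the term $\DIV_g(T(\gamma')) - \tfrac12 \nabla \tr_g(T(\gamma'))$ decays like $r^{-2}$ by Lemma~\ref{Lem_nabf_Tgamma} — so the right-hand side lies in $C^{k,\alpha}_{-1}$, and we choose $a=1$.

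Concretely, I would set up the $C^1$ (indeed real-analytic, since $Q_g$ and the exponential map depend polynomially/analytically on the arguments) map
\[
 \mathcal{G} : (Y; h',\gamma',V'') \longmapsto (\exp^g_Y)^* \big( g + h' + T(\gamma'-\gamma) \big) \text{ and } \ldots
\]
more precisely the map sending $(Y; h', \gamma', V'')$ to $P_g\big((\exp^g_Y)^*g', (\exp^g_Y)^*V'\big)$, viewed as taking values in $C^{k+1,\alpha}_{-1}(M; T^*M)$. One checks that $\mathcal{G}(0;0,\gamma,0) = P_g(g,\nabla^g f) = 0$, that $\mathcal{G}$ is $C^{1,\alpha}$ in a neighborhood of this point with the target and source spaces as indicated (this is where one verifies that pulling back by $\exp^g_Y$ does not destroy the weighted H\"older regularity — a computation using Lemma~\ref{Lem_Holder_norms_properties}\eqref{eq_u_star_up} and the derivative bounds on $f$), and that the partial differential $D_Y\mathcal{G}|_{(0;0,\gamma,0)} = \triangle_f - \tfrac12$ is an isomorphism onto $C^{k+1,\alpha}_{-1}$ by Proposition~\ref{Prop_Lundardi_weighted}. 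The Implicit Function Theorem on $C^{1,\alpha}$-Banach manifolds then produces, for $\eps'$ small, a unique small $Y = Y(h',\gamma',V'') \in C^{k+2,\alpha}_{-1,\nabla f}(M;TM)$ solving $\mathcal{G} = 0$, depending $C^{1,\alpha}$-smoothly on the data, with $\Vert Y \Vert_{C^{k+2,\alpha}_{-1,\nabla f}} \leq C\eps'$; this proves \ref{Prop_gauge_a} (with $\phi = \exp^g_Y$; the estimates on $d_g(\phi(x),x)$, $\Vert \phi^*g - g\Vert_{C^{k+1,\alpha}}$ and $\Vert \phi^*g' - g - T(\gamma'-\gamma)\Vert_{C^{k+1,\alpha}_{-1}}$ follow from $\Vert Y\Vert \leq C\eps'$ together with the asymptotics of $T(\gamma'-\gamma)$ from Lemma~\ref{Lem_nabf_Tgamma}, and $\phi$ is orientation-preserving since it is $C^0$-close to the identity). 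For \ref{Prop_gauge_c}, the map $(h',\gamma',V'') \mapsto h''$ is the composition of $(h',\gamma',V'')\mapsto Y$ with $Y \mapsto (\exp^g_Y)^*g' - g - T(\gamma'-\gamma)$, hence $C^{1,\alpha}$, and its differential at the origin is computed by linearizing: $h'' \mapsto h' + \LL_Y g$ with $Y$ as in \eqref{eq_gauging_differential} (one checks $DY|_0$ solves the linearization of $\mathcal{G}=0$, which upon using Lemma~\ref{Lem_inf_gauge} is exactly \eqref{eq_gauging_differential}).

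For the uniqueness statement \ref{Prop_gauge_b}: given any $C^3$-diffeomorphism $\phi$ with \eqref{eq_P_is_0} and \eqref{eq_phi_close_id}, first observe that $\phi^*g$ being $C^2$-close to $g$ and $\phi$ having bounded displacement force $\phi$ to be asymptotic to the identity — using that $g$ (and hence $g'$) is asymptotically conical, one shows $\phi^*g - g$ and $\phi^*g' - g - T(\gamma'-\gamma)$ in fact decay at infinity, so that writing $\phi = \exp^g_Y$ we get $Y \in C^{2}_{-1}(M;TM)$ (this is the point where the $C^2$-smallness is promoted to a weighted bound, via the structure of the cone and a barrier/maximum-principle argument, or simply by re-running the IFT once we know $Y$ is small and decaying). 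Since $Y$ then solves the same equation $\mathcal{G}(Y;h',\gamma',V'')=0$ in the ball where the IFT gives a unique solution, it must coincide with the $Y$ constructed above, hence $\phi$ is unique.

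\textbf{Main obstacle.} The routine parts are the IFT bookkeeping and the identification of the differential; the genuinely delicate point is verifying that all the relevant quantities land in the correct \emph{weighted} H\"older spaces — in particular (i) that $\DIV_g(T(\gamma')) - \tfrac12\nabla\tr_g(T(\gamma'))$ really decays like $r^{-2}$ and not merely $r^{-1}$, so that the right-hand side of \eqref{eq_gauging_differential} is $C^{k,\alpha}_{-1}$ (this is exactly Lemma~\ref{Lem_nabf_Tgamma}, and the $r^{-2}$ decay is what makes the weight $a=1$ consistent on both sides); and (ii) that pulling back a metric of the form $g + h' + T(\gamma'-\gamma)$ by $\exp^g_Y$ with $Y = O(r)$ preserves the decomposition "$g$ + ($C^{k+1,\alpha}_{-1}$-tensor) + $T$(cone deformation)" — i.e.\ the non-decaying part of $\phi^*g'$ is still exactly $T(\gamma'-\gamma)$, with the correction absorbed into $h''$. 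Establishing (ii) requires understanding how $\exp^g_Y$ acts on the conical end: since $Y$ is asymptotically a vector field on the cone of linear growth and $P_g$ forces its leading behavior to be tangential/divergence-free in the appropriate sense, the conical part of $\phi^*g'$ is unchanged. I expect this — keeping careful track of the "cone part" versus "decaying part" under the gauge diffeomorphism — to be the step that consumes most of the technical work.
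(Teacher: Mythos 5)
For parts \ref{Prop_gauge_a} and \ref{Prop_gauge_c} your proposal is essentially the paper's argument: the paper writes $\phi=\phi_Z=\exp(Z)$, sets up the map $(Z,h',\gamma',V')\mapsto P_g(\phi_Z^*(g+h'+T(\gamma')),\phi_Z^*(\nabla^g f+V'))$ from $C^{k+2,\alpha}_{-1,\nabla f}(M;TM)\times C^{k+3,\alpha}_{-1}\times T\GenCONE^{k^*}(N)\times C^{k+2,\alpha}_{-1}$ into $C^{k,\alpha}_{-1}(M;TM)$, verifies $C^{1,\alpha}$-regularity of this map (this is the content of Lemma~\ref{Lem_composition}, the pullback-regularity lemma, which is exactly your "delicate point (ii)"), and applies the Implicit Function Theorem using that the linearization $Z\mapsto\triangle_f Z-\tfrac12 Z$ is invertible by Proposition~\ref{Prop_Lundardi_weighted}; the formula for the differential in \ref{Prop_gauge_c} follows as you say. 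Two small corrections to your asymptotic bookkeeping: the gauge field $Z$ lies in $C^{k+2,\alpha}_{-1,\nabla f}$, i.e.\ it \emph{decays} like $r^{-1}$ (it does not grow linearly), and this decay — not any tangentiality/divergence-free mechanism — is why $\phi_Z^*T(\gamma')-T(\gamma')$ is a decaying tensor that can be absorbed into $h''$; also Lemma~\ref{Lem_nabf_Tgamma} gives $|\nabla T(\gamma')|\le Cr^{-1}$ (the $r^{-2-m}$ rate there is for $\nabla_V T(\gamma')$), which is all that is needed to place the right-hand side of \eqref{eq_gauging_differential} in $C^{k,\alpha}_{-1}$. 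Your target space $C^{k+1,\alpha}_{-1}$ for the gauge map is also one derivative too optimistic ($P_g\circ\phi_Z^*$ costs two derivatives of $Z$), but this does not affect the structure of the argument.

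The genuine gap is in the uniqueness statement \ref{Prop_gauge_b}. You propose to "promote" the hypotheses \eqref{eq_phi_close_id} — which are only an \emph{unweighted} $C^2$-closeness of $\phi^*g$ to $g$ and a bounded-displacement bound — to a weighted, decaying bound on $Y$, and then invoke the uniqueness from the weighted IFT of part \ref{Prop_gauge_a}. No argument is given for this promotion: nothing in \eqref{eq_phi_close_id} forces $\phi^*g-g$ to decay at infinity, the barrier/maximum-principle hint has no equation to act on until one already knows $Y$ lies in a space where the elliptic theory applies, and "re-running the IFT once we know $Y$ is small and decaying" is circular. The paper avoids this issue entirely: it sets up a \emph{second}, unweighted version of the gauge map, $\td P_0$, defined on a ball in $C^{2,\alpha}_{0,\nabla f}(M;TM)\times\cdots$ with target $C^{0,\alpha}$, whose linearization $\triangle_f-\tfrac12$ is invertible on \emph{unweighted} spaces by Corollary~\ref{Cor_Lundardi}; given any $\phi=\phi_Z$ satisfying \eqref{eq_P_is_0} and \eqref{eq_phi_close_id}, the equation itself is used to bound $\Vert\nabla_{\nabla^g f}Z\Vert_{C^{0,\alpha}}\le C\eps$ (via the decomposition $0=(\id+\nabla Z)^{-1}(\nabla_{\nabla^g f}Z)+\td P_0'(\ldots)$ with $\td P_0'$ controlled in unweighted norms), which places $Z$ inside the ball where the unweighted IFT gives a unique solution. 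So uniqueness is obtained without ever proving decay of $Z$, which is precisely the point your sketch leaves unproved.
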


\subsection{Proof}
Let $(M,g)$ be a complete Riemannian manifold with uniformly bounded curvature and curvature derivatives.
For any continuous vector field $Z \in C^0(M; TM)$ define the continuous map $\phi_Z : M \to M$ by 
\[ \phi_Z (p) := \exp_p (Z_p). \]
Note that if $\Vert Z \Vert_{C^1}$ is sufficiently small, depending on curvature and injectivity radius bounds, then $\phi_Z$ is an orientation-preserving diffeomorphism.
Suppose now that $(M,g)$ is a Riemannian orbifold with isolated singularities.
Then any continuous vector field $Z \in C^0(M; TM)$ must vanish at each singular point $p \in M$, because its neighborhood can be modeled on a quotient of the form $\IR^n/\Gamma$, where the origin is the only fixed point of $\Gamma$.
Consequently, we can set $\phi_Z(p) = p$ for all singular points $p \in M$.
Equivalently, we can define $\phi_Z$ near any singular point by passing to a smooth  local orbifold cover.

The following lemma characterizes the analytic properties of the map $Z \mapsto \phi_Z$.

\begin{Lemma} \label{Lem_composition}
For any $A < \infty$ there is an $\eps > 0$ such that the following holds.

Suppose that $k,a,b \geq 0$ are integers, $d_1, d_2 \geq 0$ are real numbers, $\alpha \in (0,1)$.
Let $(M,g,f)$ be a complete, $n$-dimensional gradient expanding soliton on an orbifold with isolated singularities, where we suppose that $g$ and $f$ are smooth, and let $V \in C^{k+10}_{\loc}(M;TM)$ be a vector field such that:
\begin{enumerate}[label=(\roman*)]
\item $k,a,b, n \leq A$ and $d_1, d_2 \leq A$ and $A^{-1} \leq \alpha \leq 1 - A^{-1}$.
\item $f$ attains a maximum and $\max_M f = -1$.
\item \label{Lem_composition_iii} We have for all $m = 0, \ldots, k+10$ and $p \in M$
\[ 
|\nabla^m {\Rm}| \leq A (-f)^{-(2+m)/2}, \qquad \inj(p) \geq A^{-1} (-f)^{1/2}. \]
\item \label{Lem_composition_iv} We have $|V| \leq A (-f)^{1/2}$ and 
\[  |\nabla^m V| \leq A \qquad \text{for} \quad m = 1, \ldots, k+10 \]
\end{enumerate}
Denote by $T^a_b M$ the bundle of $(a,b)$-tensors
and consider the weighted H\"older norms with respect to $(M,g,f)$ (see Subsection~\ref{subsec_list_Holder}).
For any Banach space $X$ we denote by $[X]_\eps := B(0,\eps) \subset X$ the $\eps$-ball around the origin.
Then the maps
\begin{align*}
 H_1 : \big[ C_{-d_1}^{k+1,\alpha}(M;TM) \times C^{k+2,\alpha}_{-d_2} (M; T^a_b M) \big]_\eps &\longrightarrow C^{k,\alpha}_{-d_1-d_2}(M; T^a_b M), \\ (Z,u) &\longmapsto \phi_Z^* u - u, \\
 H_2 : \big[ C_{-d_1}^{k+1,\alpha}(M;TM) \big]_\eps &\longrightarrow C^{k,\alpha}_{-d_1}(M; T M), \\ Z &\longmapsto \phi_Z^* V - V - (\id + \nabla Z)^{-1} (\nabla_{V} Z ),
\end{align*}
where $(\id + \nabla Z)^{-1}$ denotes the inverse of the endomorphism $v \mapsto v + \nabla_v Z$,
are defined and of regularity $C^{1,\alpha}$.
Moreover their $C^{1,\alpha}$-norms are bounded by a constant of the form $C(A)$.
\end{Lemma}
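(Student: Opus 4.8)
\textbf{Proof plan for Lemma~\ref{Lem_composition}.}
The plan is to reduce everything to local coordinate estimates for the exponential map and then bootstrap with the chain and product rules. First I would cover $M$ by the orbifold-coordinate charts $\vec x_p : U_p \to B_{2r_0}$ from Lemma~\ref{Lem_Holder_norms_properties}, where the scale is $r_0 \sim (-f(p))^{1/2}$, as dictated by the curvature-decay and injectivity-radius hypotheses \ref{Lem_composition_iii}. In these charts the metric coefficients $g_{ij}$, the Christoffel symbols, and the components of the exponential map $(q,v)\mapsto \exp_q(v)$ are all controlled in $C^{k+9}$ (say) after rescaling, uniformly in $p$; this is where hypothesis \ref{Lem_composition_iii} enters, via Shi-type estimates and the fact that the soliton equation plus Lemma~\ref{Lem_Holder_norms_properties} turn the stated bounds on $\nabla^m\Rm$ into bounds on $g_{ij}-\delta_{ij}$ and its derivatives on $B_r$. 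Fixing $\eps$ small enough (depending only on $A$) guarantees that $\phi_Z$ restricted to each chart is a diffeomorphism onto a set contained in a fixed multiple of the chart, so that $\phi_Z^* u$ and $\phi_Z^* V$ make sense and can be written, in the coordinates of $\vec x_p$, as universal smooth functions of $Z$, $\nabla Z$, the tensor components of $u$ (resp.\ $V$) evaluated at the shifted point $\exp_q(Z_q)$, and the background metric data.

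Next I would carry out the weighted bookkeeping. By Lemma~\ref{Lem_Holder_norms_properties}, in particular \eqref{eq_sup_equivalent} and \eqref{eq_u_star_up}, the weighted H\"older norms $\|\cdot\|_{C^{k,\alpha}_{-d}}$ are equivalent to suprema over $p$ of rescaled Euclidean H\"older norms on $B_{r_0}$ weighted by $(-f(p))^{d/2}$, and products of weighted objects add weights. So it suffices to prove, for each fixed $p$ and in the rescaled chart, a $C^{k,\alpha}$-estimate on $B_{r_0}$ with the correct power of $(-f(p))$ out front; because the charts have uniformly bounded geometry after rescaling, all constants will be of the form $C(A)$. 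For $H_1$: writing $(\phi_Z^* u)(q) - u(q)$ in coordinates, the leading term is $u$ evaluated along the path $t\mapsto \exp_q(tZ_q)$ minus $u(q)$, which by the fundamental theorem of calculus is $\int_0^1 (\nabla u)(\exp_q(tZ_q))[\ldots]\,dt$ contracted with $Z$ and derivatives of the exponential map; this gives one factor of $Z$ (hence weight $(-f)^{-d_1/2}$) times one factor of $\nabla u$ (hence weight $(-f)^{-d_2/2}$, using \eqref{eq_Ckm1a_Cka_bound}), plus the transformation-of-frame terms which involve $\nabla Z$ linearly and $u$ itself. All such terms carry total weight $(-f)^{-(d_1+d_2)/2}$ and are $C^{k,\alpha}$ because composition with $\exp$ costs one derivative of the map and one derivative of $Z$, which we have since $Z\in C^{k+1,\alpha}$ and $u\in C^{k+2,\alpha}$. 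For $H_2$: the subtraction of $(\id+\nabla Z)^{-1}(\nabla_V Z)$ is exactly the first-order Taylor term of $\phi_Z^* V$ in the ``shift'' direction, so that $H_2(Z)$ is quadratically small in $Z$ in the appropriate sense; the point is that the remaining terms in $\phi_Z^* V - V$ involve $V$ composed with $\exp_q(Z_q)$ evaluated against second derivatives of $\exp$ and $\nabla Z$, or $\nabla V$ contracted with $Z\otimes Z$-type expressions — each such term has either an explicit factor $Z$ times $\nabla V$ (bounded by $A$ pointwise by \ref{Lem_composition_iv}, weight $0$) or $V$ times curvature-of-$\exp$ times $Z$, where the factor $V$ contributes $(-f)^{1/2}$ but is absorbed by the chart rescaling since the chart radius is also $\sim(-f)^{1/2}$. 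So $H_2$ lands in $C^{k,\alpha}_{-d_1}$ with the claimed bound. The inverse $(\id+\nabla Z)^{-1}$ is handled by the Neumann series, which converges in $C^{k+1,\alpha}$ for $\|\nabla Z\|$ small and depends real-analytically on $Z$, hence is $C^{1,\alpha}$ with controlled norm.

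For the $C^{1,\alpha}$-regularity and norm bounds on $H_1,H_2$ themselves, I would invoke the standard principle that a map given by a fixed smooth ``Nemytskii-type'' expression — substitution into $\exp$, composition, algebraic operations, and the Neumann-series inverse — between weighted H\"older spaces is as regular as the number of spare derivatives allows; here we built in a buffer of about $10$ derivatives (the $k+10$ in the hypotheses) precisely so that differentiating $H_i$ once in $(Z,u)$ still lands in a space that embeds into $C^{k,\alpha}$, and the resulting differential is again of the same structural type, hence $C^{0,\alpha}$, which gives $H_i\in C^{1,\alpha}$. The differentials are computed by the usual Leibniz/chain rules and one reads off that $DH_i$ at the origin is the linearization written above; the $C^{1,\alpha}$-norm of $DH_i$ is bounded by a product of sup-norms of finitely many derivatives of $\exp$ and of the background data, all $\le C(A)$. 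The main obstacle, and the part requiring genuine care rather than routine computation, is the precise tracking of the $(-f)$-weights through the substitution $q\mapsto \exp_q(Z_q)$: one must check that shifting the base point by $Z_q$ (of size $\lesssim \eps(-f(q))^{1/2}$) changes $-f$ and all the weight functions only by a bounded factor, so that weights at $q$ and at $\exp_q(Z_q)$ are interchangeable up to $C(A)$ — this uses $|\nabla f|^2 \le -f + C(A)$ from the soliton identity \eqref{eq_const_is_CA} and the choice of $\eps$ small — and that the chart-rescaling by $r_0\sim(-f)^{1/2}$ exactly matches the linear growth of $V$ in hypothesis \ref{Lem_composition_iv}, so that no stray positive power of $(-f)$ survives. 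Once that matching is pinned down, the rest is the bounded-geometry Nemytskii-operator machinery.
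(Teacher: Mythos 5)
Your proposal is correct and follows essentially the same route as the paper's proof: localization to charts at scale $\rho_p\sim(-f(p))^{1/2}$ together with the norm equivalence \eqref{eq_sup_equivalent} of Lemma~\ref{Lem_Holder_norms_properties}, an expansion of the exponential map in these charts, absorption of the linear growth of $V$ by the chart scale (the paper's $\rho_p^{-1}V$ bookkeeping and the subtraction $V^j-V^j_{\vec 0}$, matching your ``absorbed by the chart rescaling'' step), and finally the regularity of the composition operator --- the only real difference being that the paper proves this last ``Nemytskii'' step (the $C^{1,\alpha}$-dependence of $(\phi,u)\mapsto u\circ\phi$, via an induction on $k$ and the elementary H\"older estimate $\Vert u\circ\phi_1-u\circ\phi_2\Vert_{C^0}\le\Vert u\Vert_{C^{0,\alpha}}\Vert\phi_1-\phi_2\Vert_{C^0}^{\alpha}$) rather than citing it as standard. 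One small inaccuracy that does not affect the argument: since $d_1\ge 0$ and $-f\ge 1$, an element $Z\in\big[C^{k+1,\alpha}_{-d_1}\big]_{\eps}$ satisfies $|Z_q|\le\eps$, not merely $|Z_q|\lesssim\eps(-f(q))^{1/2}$, so the base-point displacement is uniformly small and the weight comparison you flag as the main obstacle is immediate.
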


\begin{proof}
For any $r > 0$ we set $B_r := B(\vec 0, r) \subset \IR^n$.
We first establish a basic local and scalar case of the lemma.

\begin{Claim} \label{Cl_scalar_pullback}
Denote by $C^{k}(B_1; B_2) \subset C^{k}(B_1; \IR^n)$ the subset of vector-valued functions that take values in $B_2$.
Then the map
\begin{equation*} 
H : C^{k}(B_1; B_2) \times C^{k+1,\alpha}(B_2) \longrightarrow C^{k}(B_1), \qquad (\phi, u) \longmapsto u \circ \phi 
\end{equation*}
is of regularity $C^{1,\alpha}$.
\end{Claim}

\begin{proof}
We first use induction to reduce the claim to the case $k=0$.
So suppose that $k \geq 1$ and that the claim is true for $k$ replaced with $k-1$.
Then all maps
\begin{alignat*}{2}
 C^{k} (B_1; B_2) \times C^{k+1,\alpha}(B_2) &\longrightarrow C^{k-1}(B_1), &\qquad 
(\phi,u) &\longmapsto u \circ \phi  \\
 C^{k} (B_1; B_2) \times C^{k+1,\alpha}(B_2) &\longrightarrow C^{k-1}(B_1), &\qquad 
(\phi,u) &\longmapsto \frac{\partial}{\partial x^i} (u \circ \phi)  = \Big( \frac{\partial u}{\partial x^j} \circ \phi \Big) \, \frac{\partial \phi^j}{\partial x^i} \bigg|_{B_1} 
\end{alignat*}
have the desired regularity property, hence $H$ does as well.

So assume now that $k = 0$.
It suffices to show that the map
\[ C^{0} (B_1; B_2)  \times C^{1,\alpha}(B_2) \times C^{0}(B_1; \IR^n) \times C^{1,\alpha}(B_2) \rightarrow C^{0}(B_1), \qquad (\phi, u, \dot\phi, \dot u) \mapsto DH_{(\phi, u)}(\dot\phi, \dot u) \]
is $C^{0,\alpha}$.
Note that
\[ (DH)_{(\phi, u)} (\dot\phi, \dot u) =  \dot u \circ \phi + du_{\phi} (\dot \phi) . \]
It suffices to establish the desired continuity property on each summand.
This reduces the claim to showing that the map
\[ H' : C^{0} (B_1; B_2) \times C^{0,\alpha}(B_2) \longrightarrow C^{0}(B_1), \qquad (\phi, u) \longmapsto u \circ \phi \]
is of regularity $C^{0,\alpha}$.
To see this note that for any two $(\phi_i, u_i) \in C^{0} (B_1; B_2) \times C^{0,\alpha}(B_2)$, $i = 1,2$, we have
\begin{multline*}
 \Vert u_1 \circ \phi_1 - u_2 \circ \phi_2 \Vert_{C^{0}} 
\leq \Vert (u_1 - u_2) \circ \phi_1 \Vert_{C^{0}}  + \Vert u_2 \circ \phi_1 - u_2 \circ \phi_2 \Vert_{C^{0}} \\
\leq \Vert u_1 - u_2 \Vert_{C^0} + \Vert u_2 \Vert_{C^{0,\alpha}} \Vert \phi_1 - \phi_2 \Vert_{C^0}^\alpha,
\end{multline*}
which finishes the proof of the claim.
\end{proof}
\medskip

Consider now the maps $H_1$ and $H_2$ from the statement of the lemma.
Let $\eps > 0$ be a generic constant, which we assume to be small, depending on $A$, in the course of the proof.

We first argue that it suffices to consider a localized problem.
Set for simplicity $\rho := (-f)^{1/2}$ and $\rho_p := \rho(p)$ for $p \in M$.
By the decay assumptions on the curvature derivatives there is a constant $c(A) > 0$ such that 
around every point $p \in M$ we can find a smooth coordinate chart $\vec x_p : U_{p,2 c \rho_p} \to \vec x_p (U_{p,2c \rho_p}) = B_{2c \rho_p} \subset \IR^n$ (where $U_{p,2c\rho_p}$ may denote a local orbifold cover if $p$ is near a singular point) within which we have
\begin{equation} \label{eq_gij_bounds}
 |\partial^m (g_{ij} - \delta_{ij})| \leq c^{-1} \rho_p^{-m} \leq c^{-1} \qquad \text{for} \qquad m = 0, \ldots, k+10. 
\end{equation}
For any $r \in (0,2 c\rho_p ]$ set $U_{p,r} := \vec x_p^{-1} (B_r)$.
Fix $p \in M$ and identify $B_{2c \rho_p}$ with $U_{2c \rho_p}$.
Assuming $\eps \leq \ov\eps(A)$, we may consider the maps (note that here the norms are only taken over $B_c, B_{2c} \subset B_{2c\rho_p}$)
\begin{align*}
 H'_1 : \big[ C^{k+1,\alpha} (B_c; \IR^n) \times C^{k+2,\alpha} (B_{2c} ; T^a_b\IR^n) \big]_\eps &\longrightarrow C^{k,\alpha} (B_{c}; T^a_b \IR^n), \\ 
 (Z,u) &\longmapsto \rho_p^{d_1+d_2} \big( \phi^*_{\rho_p^{-d_1} Z} (\rho_p^{-d_2} u) - (\rho_p^{-d_2} u) \big), \\
 H'_2 : \big[ C^{k+1,\alpha}(B_c;\IR^n) \big]_\eps &\longrightarrow C^{k,\alpha}(B_c; \IR^n), \\
  Z &\longmapsto \rho_p^{d_1} \big( \phi_{\rho_p^{-d_1} Z}^* V - V \\ &\qquad\qquad\qquad - (\id + \nabla (\rho_p^{-d_1} Z))^{-1}( \nabla_V (\rho_p^{-d_1} Z)) \big), 
\end{align*}
where maps $\phi_Z : B_c \to B_{2c}$ are defined via the exponential map with respect to the metric $g_{ij}$ on $B_{3c}$, the covariant derivative is taken with respect to the metric $g_{ij}$ and the $C^{k,\alpha}$, $C^{k+1,\alpha}$-norms are defined with respect to the Euclidean background structure.
Due to \eqref{eq_gij_bounds} and \eqref{eq_sup_equivalent} in Lemma~\ref{Lem_Holder_norms_properties}, the lemma is implied by the following claim.

\begin{Claim}
If $\eps \leq \ov\eps(A)$, then the maps $H'_1, H'_2$ are of regularity $C^{1,\alpha}$ and their $C^{1,\alpha}$-norms are bounded by a constant of the form $C(A)$, which is independent of $p$.
\end{Claim}

\begin{proof}
We may assume that $d_2 = 0$ by linearity and $d_1 = 0$ since $H'_1(0,0) = 0$ and $\rho_p \geq 1$ and since the map $(Z,u) \mapsto \rho_p^{d_1} H'_1(\rho_p^{-d_1} Z,u)$ has smaller derivative bounds than $H'_1$ (the analogous statement holds for $H'_2$).
So after adding the identity map to $H'_1$, it suffices to consider the following maps instead of $H'_1, H'_2$:
\begin{align*}
 H''_1 : \big[ C^{k+1,\alpha} (B_c; \IR^n) \times C^{k+2,\alpha} (B_{2c} ; T^a_b \IR^n) \big]_\eps &\longrightarrow C^{k,\alpha} (B_{c}; T^a_b \IR^n), \\ (Z,u) &\longmapsto   \phi^*_{ Z}  u,  \\
 H''_2 : \big[ C^{k+1,\alpha}(B_c;\IR^n) \big]_\eps &\longrightarrow C^{k,\alpha}(B_c; \IR^n), \\ Z &\longmapsto   \phi_{ Z}^* V - V - (\id + \nabla Z)^{-1} (\nabla_V Z).
\end{align*}

For $\delta \leq \ov\delta (A)$ the exponential map restricted to vectors on $U_{p, c\rho_p}$ of length $< \delta \rho_p$ can be expressed in the coordinates $\vec x_p$ as a map of the form
\[ B_{c \rho_p} \times B_{\delta \rho_p} \ni (\vec q, \vec v)  \longmapsto \vec q + \vec v + \rho_p Y_p ( \rho_p^{-1} \vec q, \rho_p^{-1}  \vec v) \in B_{2c \rho_p} \]
for some smooth $Y_p : B_{c} \times B_{\delta} \to B_{c}$ whose derivatives up to order $k+5$ are uniformly bounded and which satisfies
\begin{equation} \label{eq_Y_vanish}
 Y(\vec q, \vec 0)= d_2 Y (\vec q , \vec 0) = \vec 0 \qquad \text{for all} \quad \vec q \in B_{c}. 
\end{equation}
Thus if $\eps \leq \ov\eps(\delta)$, then
\begin{equation} \label{eq_dphiij_formula}
 (d\phi_Z)_i^j = \delta_i^j + \partial_i Z^j + \partial_i Y_p^j (\rho_p^{-1} \vec q,  \rho_p^{-1} Z(\vec q)) + \partial_{l+n} Y_p^j ( \rho_p^{-1} \vec q, \rho_p^{-1}  Z(\vec q)) \partial_{i}Z^l. 
\end{equation}
It follows that if $\delta \leq \ov\delta (A)$, then the maps
\begin{alignat*}{2}
 C^{k+1,\alpha} (B_c; B_\delta) &\longrightarrow C^{k+1,\alpha} (B_c; B_{2c}), & \qquad Z &\mapsto \phi_Z |_{B_c}, \\
 C^{k+1,\alpha} (B_c; B_\delta) &\longrightarrow C^{k,\alpha} (B_c), & \qquad Z &\mapsto (d\phi_Z)_i^j \quad \text{and} \quad Z \mapsto  ((d\phi_Z)^{-1})_i^j  
\end{alignat*}
are of regularity $C^2$ with uniformly bounded first and second derivatives.

Since in the coordinates $\vec x_p$ we have
\[ (\phi_Z^* u)_{j_1 \ldots j_b}^{i_1 \ldots i_a} = (d\phi_{Z})_{i'_1}^{i_1} \cdots (d\phi_{Z})_{i'_a}^{i_a} ((d\phi_{Z})^{-1})_{j_1}^{j'_1} \cdots ((d\phi_{Z})^{-1})_{j_b}^{j'_b} \big(u_{j'_1 \ldots j'_b}^{i'_1 \ldots i'_a} \circ \phi_Z \big), \]
and since $C^{1,\alpha}$-regularity and bounds are preserved under sums and products, we obtain the $C^{1,\alpha}$-property of $H''_1$ using Claim~\ref{Cl_scalar_pullback} (where we need to substitute $k$ with $k+1$ and use the embeddings $C^{k+1,\alpha} \to C^{k+1}$, $C^{k+1} \to C^{k,\alpha}$).

Let us now consider the map $H''_2$.
By Assumption~\ref{Lem_composition_iv} and since by \eqref{eq_gij_bounds} the Christ\"offel symbols satisfy 
\begin{equation} \label{eq_Gamma_bounds}
 |\partial^m \Gamma| \leq C(A) \rho^{-1-m}, \qquad \text{for all} \quad  m = 0, \ldots, k+9,
\end{equation}
we obtain uniform $C^{k+3}$-bounds on the derivatives $\partial_j V^i$ of the coefficient functions $V^i$.
Thus 
\begin{equation} \label{eq_V_diff_rho_bounds}
 \Vert V^i - V_{\vec 0}^i\Vert_{C^{k+2,\alpha}}, \qquad \Vert \rho_p^{-1} V^i \Vert_{C^{k+2,\alpha}} \leq C(A). 
\end{equation}
Next, we rewrite the component functions $H^{\prime\prime,1}_2, \ldots, H^{\prime\prime,n}_2$ of $H''_2$ as
\[ H^{\prime\prime,i}_2 (Z) = ((d\phi_Z)^{-1})^i_j \big( H^j_3(Z) + H^j_4(Z) + H^j_5(Z) \big), \]
where
\begin{align*}
H^j_3, H^j_4, H^j_5 : \big[ C^{k+1,\alpha}(B_c;B_\delta) \big]_\eps  &\longrightarrow C^{k,\alpha}(B_c), \\
H^j_3 : Z &\longmapsto V^j \circ \phi_Z - V^j, \\
H^j_4 : Z &\longmapsto V^j - (d\phi_Z)^j_s V^s + \partial_l Z^j \, V^l, \\
H^j_5 : Z &\longmapsto -\partial_l Z^j \, V^l +(d\phi_Z)^j_s \big((\id + \nabla Z)^{-1} (\nabla_V Z) \big)^s. 
\end{align*}
It remains to verify the $C^{1,\alpha}$-regularity for each of the maps $H^j_3, H^j_4, H^j_5$ separately.

The $C^{1,\alpha}$-regularity of the maps $H^j_3$ is a direct consequence of Claim~\ref{Cl_scalar_pullback} and \eqref{eq_V_diff_rho_bounds} since
\[ H^j_3(Z) = (V^j - V^j_{\vec 0} )\circ \phi_Z - (V^j - V^j_{\vec 0} ). \]
To analyze the maps $H^j_4, H^j_5$, we combine \eqref{eq_Y_vanish}, \eqref{eq_dphiij_formula} to write
\[ (d\phi_Z)_s^j = \delta^j_s + \partial_s Z^j + \rho_p^{-1} F^j_{sl} (\cdot, \ldots, \cdot, Z^1, \ldots, Z^n) \, Z^l + \rho_p^{-1} G^j_{ml} (\cdot, \ldots, \cdot, Z^1, \ldots, Z^n) \, Z^m \, \partial_s Z^l, \]
where the coefficient functions $F^j_{sl}$, $G^j_{ml}$, which arise from the function $Y_p$, are smooth with uniformly bounded $C^{k+3}$-norm and the omitted arguments denote spatial dependence.
It follows that
\[ H^j_4(Z) = - F^j_{sl} (\cdot, \ldots, \cdot, Z^1, \ldots, Z^n) \, Z^l \, (\rho_p^{-1} V^s) - G^j_{ml} (\cdot, \ldots, \cdot, Z^1, \ldots, Z^n) \, Z^m \, \partial_s Z^l \, (\rho_p^{-1} V^s) \]
and
\begin{align*}
 H^j_5(Z) &= -\partial_l Z^j \, V^l
+ \big(\delta_s^j + (\nabla Z)_s^j\big) \big((\id + \nabla Z)^{-1} (\nabla_V Z) \big)^s \\
&\qquad + \rho_p^{-1} \big( - \rho_p \Gamma_{si}^j Z^i + F^j_{sl} (\cdot, \ldots, \cdot, Z^1, \ldots, Z^n) \, Z^l \\
&\qquad\qquad\qquad +  G^j_{ml} (\cdot, \ldots, \cdot, Z^1, \ldots, Z^n) \, Z^m \, \partial_s Z^l \big) \big((\id + \nabla Z)^{-1} (\nabla_V Z) \big)^s \\
&= \rho_p \Gamma_{il}^j \, (\rho_p^{-1} V^i) \, Z^l + \big( - \rho_p \Gamma_{si}^j Z^i + F^j_{sl} (\cdot, \ldots, \cdot, Z^1, \ldots, Z^n) \, Z^l \\
&\qquad\qquad\qquad +  G^j_{ml} (\cdot, \ldots, \cdot, Z^1, \ldots, Z^n) \, Z^m \, \partial_s Z^l \big) \big((\id + \nabla Z)^{-1} (\nabla_{\rho_p^{-1} V} Z) \big)^s
\end{align*}
which implies the desired $C^{1,\alpha}$-property of $H^j_4, H^j_5$ via \eqref{eq_Gamma_bounds}, \eqref{eq_V_diff_rho_bounds}.
This concludes the proof of the claim.
\end{proof}

\end{proof}
\medskip

We can now prove Proposition~\ref{Prop_gauge}.

\begin{proof}[Proof of Proposition~\ref{Prop_gauge}.]
We use the same notation as in the statement of Lemma~\ref{Lem_composition}.
Our first goal will be to show that for sufficiently small $\eps > 0$ the following map is well defined and $C^{1,\alpha}$:
\begin{multline*}
 \td P : \big[ C^{k+2,\alpha}_{-1,\nabla f} (M; TM) \times  C^{k+3,\alpha}_{-1} (M; S^2 T^*M) \times T\GenCONE^{k^*}(N)  \times  C^{k+2,\alpha}_{-1} (M;TM) \big]_\eps \\ \longrightarrow C^{k,\alpha}_{-1} (M; TM) 
\end{multline*}
\begin{equation} \label{eq_tdPZhpgpVp}
 (Z,  h', \gamma',V') \longmapsto P(\phi_Z^* ( g+ h'+T(\gamma')), \phi_Z^*  (\nabla^g f + V') ). 
\end{equation}
(Note in contrast to the statement of the proposition we reparameterize $\gamma'$ and $V'$ as $\gamma + \gamma'$ and $\nabla^g f + V'$, respectively.)

To see this, we may temporarily work with the smooth structure on $M$ supplied by Lemma~\ref{Lem_soliton_smooth}, with respect to which $g, V$ are smooth.
Since $g$ is $C^{k^*-2}$ with respect to both smooth structures, we obtain that both smooth structures induce the same $C^{k^*-1}$-atlas on $M$.
Let us now express $\td P(Z, h',\gamma', V')$ as a sum of the following terms:
\begin{align}
 \phi^*_Z V' =& (\phi_Z^* V' - V') + V', \label{eq_tdP_term_1} \\
 \phi^*_Z \nabla^g f - \nabla^g f =&  \phi_Z^* \nabla^g f - \nabla^g f - (\id + \nabla Z)^{-1} (\nabla_{\nabla^g f} Z) , \notag \\
&+(\id + \nabla Z)^{-1} (\nabla_{\nabla^g f} Z),    \label{eq_tdP_term_2} \\
\DIV_g(\phi^*_Z g ) - \tfrac12 \nabla^g \tr_g(\phi^*_Z g) =&
\DIV_g(\phi^*_Z g -g) - \tfrac12 \nabla^g \tr_g(\phi^*_Z g - g), \label{eq_tdP_term_3} \\
\DIV_g (\phi^*_Z h') - \tfrac12 \nabla^g \tr_g (\phi_Z^* h') =& \DIV_g (\phi^*_Z h'-h') + \DIV_g (h') \notag \\
&  - \tfrac12 \nabla^g \tr_g (\phi_Z^* h'-h') - \tfrac12 \nabla^g \tr_g (h') , \label{eq_tdP_term_4} \\
\DIV_g (\phi^*_Z T(\gamma')) - \tfrac12 \nabla^g \tr_g (\phi^*_Z T(\gamma')) =& \DIV_g (\phi^*_Z T(\gamma') - T(\gamma')) + \DIV_g (T(\gamma'))  \notag \\
&- \tfrac12 \nabla^g \tr_g (\phi^*_Z T(\gamma') - T(\gamma'))- \tfrac12 \nabla^g \tr_g ( T(\gamma')). \label{eq_tdP_term_5} 
\end{align}
We will check that each of the terms \eqref{eq_tdP_term_1}--\eqref{eq_tdP_term_5} depends on $(Z,h',\gamma',V')$ in the $C^{1,\alpha}$-sense, where the Banach norms are to be taken as in \eqref{eq_tdPZhpgpVp}.
The $C^{1,\alpha}$-regularity of \eqref{eq_tdP_term_1}, \eqref{eq_tdP_term_3}, \eqref{eq_tdP_term_4} follows directly from Lemma~\ref{Lem_composition} and the fact that the natural embedding $C^{k+2,\alpha}_{-1,\nabla f} (M; TM) \to C^{k+2,\alpha}_{-1}(M;TM)$ is bounded and linear.
For the $C^{1,\alpha}$-regularity of  \eqref{eq_tdP_term_2} we also need to use the fact that the map $C^{k+2,\alpha}_{-1,\nabla f} (M; TM) \to C^{k,\alpha}_{-1}(M;TM)$, $Z \mapsto \nabla_{\nabla^g f} Z$ is bounded and linear.
For \eqref{eq_tdP_term_5} we use the fact that the maps
$T : T\GenCONE^{k^*} (N) \to C^{k+2,\alpha}(M;S^2 T^*N)$ and $T\GenCONE^{k^*} (N) \to C^{k,\alpha}_{-1}(M;S^2 T^*N)$, $\gamma' \mapsto \nabla (T(\gamma'))$ are bounded and linear; see Lemma~\ref{Lem_nabf_Tgamma}.

Note that $\td P(0,0,0,0) = 0$.
The first component of the differential of $\td P$ at the origin is given by
\[ C^{k+2,\alpha}_{-1,\nabla f} (M; TM) \longrightarrow C^{k,\alpha}_{-1}(M;TM), \]
\[ Z \longmapsto \frac{d}{dt} \bigg|_{t = 0}  P(\phi_{tZ}^*g, \phi_{tZ}^* \nabla^g f)
= \LL_Z \nabla^g f + \DIV_g (\LL_Z g) - \tfrac12 \nabla^g \tr_g (\LL_Z g) 
= \triangle_f Z - \tfrac12 Z, \]
which is invertible by Proposition~\ref{Prop_Lundardi_weighted}.
So by the Implicit Function Theorem we find that for small $\eps > 0$ there is a $C^{1,\alpha}$-map
\begin{equation*}
 \td Z : \big[   C^{k+3,\alpha}_{-1} (M; S^2 T^*M) \times T\GenCONE^{k^*}(N) \times  C^{k+2,\alpha}_{-1} (M;TM) \big]_\eps  \longrightarrow C^{k+2,\alpha}_{-1,\nabla f} (M; TM) 
\end{equation*}
such that for $(h', \gamma', V')$ in the domain of $\td Z$ we have
\[ P\big(\phi_{\td Z(h',\gamma', V')}^* ( g+h'+T(\gamma')), \phi_{\td Z(h',\gamma', V')}^*  (\nabla^g f + V') \big) = 0. \]

Next, by Lemma~\ref{Lem_composition} the maps
\[ \big[   C^{k+3,\alpha}_{-1} (M; S^2 T^*M) \times T\GenCONE^{k^*}(N)  \times  C^{k+2,\alpha}_{-1} (M;TM) \big]_\eps  \longrightarrow C^{k+1,\alpha}_{-1} (M; S^2T^*M) \]
\begin{multline*}
 (h',\gamma',V') \longmapsto \phi_{\td Z(h',\gamma', V')}^* (g+h'+T(\gamma'))  - g - T(\gamma') \\
= \big(\phi_{\td Z(h',\gamma', V')}^* (g+T(\gamma')) - (g+T(\gamma')) \big) + \big(\phi_{\td Z(h',\gamma', V')}^* h' - h' \big) + h' 
\end{multline*}
and
\begin{align*}
 \big[   C^{k+3,\alpha}_{-1} (M; S^2 T^*M) \times T\GenCONE^{k^*}(N)  \times  C^{k+2,\alpha}_{-1} (M;TM) \big]_\eps  &\longrightarrow C^{k+1,\alpha}_{-1} (M; S^2T^*M) \\
 (h',\gamma',V') &\longmapsto \phi_{\td Z(h',\gamma', V')}^* g - g 
\end{align*}
are well defined and $C^{1,\alpha}$.
This concludes the proof of Assertions~\ref{Prop_gauge_a}, \ref{Prop_gauge_c}.

It remains to establish the uniqueness statement of Assertion~\ref{Prop_gauge_b}.
For this, consider the following modified map 
for some small $\eps_0 > 0$ (note the change of the first and last H\"older space):
\begin{multline*}
 \td P_0 : \big[ C^{2,\alpha}_{0,\nabla f} (M; TM) \times  C^{3,\alpha}_{-1} (M; S^2 T^*M) \times T\GenCONE^{k^*}(N)  \times  C^{2,\alpha}_{-1} (M;TM) \big]_{\eps_0} \\ \longrightarrow C^{0,\alpha}_0 (M; TM) 
\end{multline*}
\[ (Z,  h', \gamma',V') \longmapsto P(\phi_Z^* ( g+ h'+T(\gamma')), \phi_Z^*  (\nabla^g f + V') ). \]
By the same reasoning as above, $\td P_0$ is well defined and $C^{1,\alpha}$ for small enough $\eps_0$.
Moreover, we have $\td P_0 (0,0,0,0) = 0$ and the first component of its differential at the origin is invertible (here we use Corollary~\ref{Cor_Lundardi} instead of Proposition~\ref{Prop_Lundardi_weighted}).
So, by the Implicit Function Theorem and after reducing $\eps_0$, we may assume that $\td P_0 (\cdot,  h',  \gamma',  V') = 0$ has a unique solution in $[ C^{2,\alpha}_{0,\nabla f} (M; TM) ]_{\eps_0}$.

Now fix some $h', \gamma', V'$ satisfying \eqref{eq_hpgpVp_small} and suppose that $\phi : M \to M$ satisfies \eqref{eq_phi_close_id} and $P(\phi^* g',\phi^* V') = 0$.
Assuming $\eps$ to be sufficiently small, we may find a vector field $Z \in C^3(M;TM)$ with $\phi = \phi_Z$.
The condition \eqref{eq_phi_close_id} implies that $\Vert Z \Vert_{C^{2,\alpha}} \leq C \eps$ for some uniform $C < \infty$.
By our previous discussion involving \eqref{eq_tdP_term_1}--\eqref{eq_tdP_term_5}, if we plug $(Z,h',\gamma',V')$ into the formula defining $\td P_0$ (note that $(Z,h',\gamma',V')$ may a priori not be in the domain of $\td P_0$), then we obtain
\begin{equation} \label{eq_nab_f_Z_0}
 0 = \td P_0(Z,h',\gamma',V')
= (\id + \nabla Z)^{-1} (\nabla_{\nabla^g f} Z) + \td P'_0 (Z,h',\gamma',V'), 
\end{equation}
where $\td P'_0$ is a $C^{1,\alpha}$-map of the form
\begin{multline*}
 \td P'_0 : \big[ C^{2,\alpha}_{0} (M; TM) \times  C^{3,\alpha}_{-1} (M; S^2 T^*M) \times T\GenCONE^{k^*}(N)  \times  C^{2,\alpha}_{-1} (M;TM) \big]_{\eps_0} \\ \longrightarrow C^{0,\alpha} (M; TM) 
\end{multline*}
with $\td P'_0(0,0,0,0) = 0$ (note the change in the first H\"older space).
So for small enough $\eps > 0$ we obtain that $\Vert \td P'_0 (Z,h',\gamma',V') \Vert_{C^{0,\alpha}} \leq C \eps$ for some uniform $C < \infty$.
Combining this with \eqref{eq_nab_f_Z_0} yields
\[ \Vert Z \Vert_{C^{2,\alpha}_{0,\nabla f}} = \Vert Z \Vert_{C^{2,\alpha}} + \Vert \nabla_{\nabla^g f} Z \Vert_{C^{0,\alpha}} \leq C \eps. \]
So $(Z,h',\gamma',V')$ is indeed contained in the domain of $\td P_0$ for sufficiently small $\eps$, which implies the uniqueness of $Z$.
\end{proof}

\section{Smooth dependence} \label{sec_smooth_dependence}
\subsection{Introduction}
Let $(M,N,\iota)$ be an emsemble and $k^* \geq 30$ an integer and consider the spaces $\MM = \MM^{k^*} (M, N, \iota)$ and $\MM_{\grad} = \MM_{\grad}^{k^*} (M, N, \iota)$.
The goal of this section is to show that a neighborhood of $\MM_{\grad}$ within $\MM$ can be equipped with a canonical structure of a $C^{1,\alpha}$-Banach manifold such that the projection 
\[ \Pi : \MM^{k^*} (M,N, \iota) \lto \GenCONE^{k^*}(N) \] is $C^{1,\alpha}$ near points in $\MM_{\grad}$ and that is of a certain standard form in appropriate local charts.

\subsection{Gauging families of expanding solitons at infinity}
The main result of this subsection, Proposition~\ref{Prop_gauge_infinity_families}, is a rather technical result, which allows us to gauge continuous families of solitons $(M,g_x,V_x)_{x \in X}$ at infinity, i.e., it allows us to find diffeomorphisms $\psi_x : M \to M$ such that $\iota^* \psi_x^* V_x = - \frac12 r \partial_r$ on $(r_0,\infty) \times N$ for some $r_0 > 1$.
This will later be used to conclude that $(\psi^*_x g_x, \psi^*_x V_x)$ represents an element $p_x \in \MM$, which varies continuously in $x$.

As a preparation we establish the following lemma.

\begin{Lemma} \label{Lem_construct_iota_x}
Let $(M,N,\iota)$ be an ensemble, $k^* \geq 10$ and consider a $C^{k^*-2}$-regular representative $(g,V = \nabla^g f,\gamma)$ of an element of $\MM_{\grad}(M,N,\iota)$. 
Consider a family of vector fields $(V_x)_{x \in X}$ on $M$, where $X$ is a topological space.
Suppose that $V_{x_0} = V$ for some $x_0 \in X$ and that for any $x \in X$ we have $V_x - V_{x_0} \in C^1_{-1}(M;TM)$ (where $C^1_{-1}$ is defined using the soliton structure $(M,g,f)$; see Subsection~\ref{subsec_list_Holder}) such that the map
\[ X \lto C^1_{-1}(M;TM), \qquad x \longmapsto V_x - V_{x_0} \]
is continuous.
Then there is a neighborhood $x_0 \in U \subset X$ and a  family of maps $(\iota_x : \IR_+ \times N \to M)_{x \in U}$ such that the following is true:
\begin{enumerate}[label=(\alph*)]
\item \label{Lem_construct_iota_x_a} $\iota_{x_0}|_{(1,\infty) \times N} = \iota$.
\item \label{Lem_construct_iota_x_b} For any $x \in U$ the map $s \mapsto \iota_x (e^{s/2}, z)$ is a trajectory of $-V_x$. 
\item \label{Lem_construct_iota_x_c} For any $x_1, x_2 \in U$ we have
\begin{equation} \label{eq_iotax1x2_finite_dist}
 \sup_{r > 0, z \in N} d_g(\iota_{x_1} (r,z), \iota_{x_2}(r,z)) < \infty. 
\end{equation}
\item \label{Lem_construct_iota_x_d} For any $r_0 > 0$ and any convergent sequence $x_i \to x_\infty$ in $U$ we have
\[ \sup_{r \geq r_0,z \in N} d_g(\iota_{x_i} (r,z), \iota_{x_\infty}(r,z)) \xrightarrow[i\to\infty]{} 0. \]
\item \label{Lem_construct_iota_x_e} For any $x \in U$ the map $\iota_x$ is uniquely characterized by Assertion~\ref{Lem_construct_iota_x_b} and the bound \eqref{eq_iotax1x2_finite_dist} for $x_1 = x$ and $x_2 = x_0$.
\end{enumerate}
\end{Lemma}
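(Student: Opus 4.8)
The plan is to construct $\iota_x$ by flowing back along $-V_x$ from the existing coordinate system $\iota$. First I would fix, for each $z \in N$, the point $p_z := \iota(r_1, z) \in M$ for some large $r_1 > 1$, and observe that since $|V|$ is proper and grows linearly (Lemma~\ref{Lem_iota}), the $-V$-trajectory through $p_z$ escapes every compact subset. The assumption $V_x - V_{x_0} \in C^1_{-1}(M;TM)$, hence decaying like $O(r^{-1})$, ensures that $V_x$ is a small perturbation of $V = V_{x_0}$ near infinity; in particular $|V_x|$ is still proper with linear growth and $V_x$ has the same coarse radial escape behavior, for $x$ in a sufficiently small neighborhood $U$ of $x_0$. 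For each such $x$ I would define $\iota_x(e^{s/2}, z)$ to be the $-V_x$-trajectory starting at the appropriate parametrization so that it agrees with $\iota(e^{s/2},z)$ for $x = x_0$; more precisely, I would first set $\iota_x$ on a large sphere $\{r = r_1\} \times N$ using the flow of $-V_x$ to reach back from $\iota(r_1, \cdot)$ along the $V_x$-direction, then extend by the $-V_x$-flow for $r \leq r_1$ and the $V_x$-flow for $r \geq r_1$. The key point making this well-defined globally is completeness of $V_x$ (which holds because $(M, g_x := g, V_x)$ — or rather the soliton with vector field $V_x$ — has complete trajectories; but here $g$ is fixed and we only need the flow of $-V_x$ to exist, which follows from the growth bound $|V_x| \leq C d(p,\cdot) + C$ via a Gronwall estimate on $\frac{d}{dt} d(p, \cdot)$). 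Assertion~\ref{Lem_construct_iota_x_a} is then immediate from the construction, and Assertion~\ref{Lem_construct_iota_x_b} holds by definition.

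For Assertion~\ref{Lem_construct_iota_x_c}, I would estimate $d_g(\iota_{x_1}(r,z), \iota_{x_2}(r,z))$ by comparing the two trajectories: both start (at $s = s_1 := 2\log r_1$) at points a bounded $g$-distance apart (since near the sphere $\{r = r_1\}$ the perturbation $V_{x_i} - V$ is $O(r^{-1})$, and the relevant comparison is over a compact region), and then the difference of the flows of $-V_{x_1}$ and $-V_{x_2}$ is controlled by $\int |V_{x_1} - V_{x_2}|$ along the trajectory together with a Gronwall factor coming from $\|\nabla V_{x_i}\| \leq C$ (uniformly bounded by Lemma~\ref{Lem_iota}, since $g$ is a soliton metric). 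Crucially, along the radial escape direction $|V_{x_1} - V_{x_2}| \lesssim \|V_{x_1} - V_{x_2}\|_{C^0_{-1}} r^{-1}$, and $r$ grows like $e^{|s|/2}$, so the integral $\int_{s_1}^{\infty} e^{-(\text{const})|s|}\, ds$ converges; the Gronwall factor grows at most polynomially (in fact, the drift term $\nabla_V Z$ for $Z = V_{x} - V_{x_0}$ is the relevant linearization, and the weighted estimates of Section~\ref{sec_elliptic}, or just a direct ODE comparison, bound it). This yields a uniform bound on \eqref{eq_iotax1x2_finite_dist}. Assertion~\ref{Lem_construct_iota_x_d} follows by the same estimate, now observing that for $x_i \to x_\infty$ in $U$ we have $\|V_{x_i} - V_{x_\infty}\|_{C^1_{-1}} \to 0$ by the continuity hypothesis, so the distance bound tends to $0$ uniformly in $r \geq r_0$, $z \in N$ (the restriction $r \geq r_0$ is needed only to handle the compact region $r \leq r_0$, where one uses instead local uniform continuity of the flow in the parameter $x$; for $r \geq r_0$ one gets it uniformly).

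Finally, Assertion~\ref{Lem_construct_iota_x_e} is a uniqueness statement: if $\iota_x'$ is another map satisfying Assertion~\ref{Lem_construct_iota_x_b} and $\sup_{r,z} d_g(\iota_x'(r,z), \iota_{x_0}(r,z)) < \infty$, then $\iota_x'$ and $\iota_x$ both send $s \mapsto (e^{s/2} r, z)$-lines to $-V_x$-trajectories and both stay a bounded distance from $\iota_{x_0}$; hence $\iota_x' \circ \iota_x^{-1}$ (defined on the image, which is dense by Lemma~\ref{Lem_Im_iota_dense}) maps each $-V_x$-trajectory to itself, so it is of the form $(r,z) \mapsto (u(z)r, \psi(z))$, and the bounded-distance condition forces $\psi = \mathrm{id}$ and $u \equiv 1$ — this is exactly the argument used in the uniqueness part of Lemma~\ref{Lem_iota}. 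I expect the main obstacle to be the quantitative trajectory-comparison estimate in Assertions~\ref{Lem_construct_iota_x_c}, \ref{Lem_construct_iota_x_d}: one must carefully track that the exponentially growing Gronwall factor from $\|\nabla V_x\| \leq C$ is beaten by the decay $|V_{x_1} - V_{x_2}| = O(r^{-1}) = O(e^{-|s|/2})$ along the escaping trajectory, which requires using the \emph{specific} structure $V_x \approx -\tfrac12 r\partial_r$ (so that the relevant Lie derivative $\mathcal{L}_{-\frac12 r\partial_r}$ contributes a favorable $-\frac12$, not just a crude bound $\|\nabla V_x\|_\infty$). Everything else is a routine application of ODE theory, properness of $|V_x|$, and the density statement of Lemma~\ref{Lem_Im_iota_dense}.
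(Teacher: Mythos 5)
There is a genuine gap, and it sits exactly at the point you flagged as the ``main obstacle.'' Your construction anchors $\iota_x$ at a fixed sphere, $\iota_x(r_1,\cdot) := \iota(r_1,\cdot)$, and then flows outward along $-V_x$. But in the escape direction the linearized dynamics of $-V_x \approx \tfrac12 r\partial_r$ is \emph{expanding} at rate $+\tfrac12$ (the cone dilation stretches transverse distances like $e^{s/2}\sim r$), not the ``favorable $-\tfrac12$'' you invoke; that sign is only available when integrating \emph{inward}. Consequently the Duhamel/Gronwall estimate for the deviation $\delta(s)$ between the anchored $-V_x$-trajectory and the radial line reads $\delta' \le \tfrac12\delta + C e^{-s/2}\Vert V_x - V_{x_0}\Vert_{C^0_{-1}}$, whose solution with $\delta(s_1)=0$ is of size $\sim e^{s/2}\,e^{-s_1}\Vert V_x-V_{x_0}\Vert$, i.e.\ it grows linearly in $r$: the $O(r^{-1})$ decay of the perturbation is exactly cancelled by the expansion rate, and boundedness would require the non-generic cancellation $\int_{s_1}^\infty e^{-\sigma/2}\epsilon(\sigma)\,d\sigma=0$ of the expanding mode. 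So your $\iota_x$ generically violates Assertion (c) (the supremum over all $r>0$ is infinite), hence also (d) and the characterization in (e), and the claim that ``the Gronwall factor grows at most polynomially'' is false.

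The paper's proof takes the opposite route: writing $\iota^*V_x = -\tfrac12 r\partial_r + \td V_x$ with $\Vert\td V_x\Vert_{C^1_{\gamma,r^{-1}}}$ small, it proves a differential inequality $\tfrac{d}{ds}d_\gamma(\td\sigma_1,\td\sigma_2) \ge (\tfrac12 - Cr^{-1}\Vert\td V_{x_1}\Vert)\,d_\gamma - Cr^{-1}\Vert\td V_{x_1}-\td V_{x_2}\Vert$, which propagates the bound $d_\gamma \le C_0\Vert\td V_{x_1}-\td V_{x_2}\Vert$ \emph{backward} in $s$. For each radial line one then shoots from points $s_j\to\infty$ (matching the two trajectories at $\td\sigma_1(s_j)$) and passes to a limit, obtaining the unique $-V_x$-trajectory that stays a bounded distance from that radial line; uniqueness is forward expansiveness (set $x_1=x_2$ in the same inequality), and the quantitative bound $d_\gamma\le C_0\Vert\td V_{x_1}-\td V_{x_2}\Vert$ gives (c) and (d) directly, while (e) follows from uniqueness of this bounded companion rather than from the $(u(z)r,\psi(z))$-type argument of Lemma~\ref{Lem_iota}, which needs asymptotic metric information you do not have here. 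To repair your proof you would have to replace the fixed-sphere anchoring by this selection of the bounded companion trajectory (a stable-manifold/shooting argument), after which your continuity and uniqueness statements go through.
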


\begin{proof}
Choose a neighborhood $x_0 \in U \subset X$ such that the radial component of $-\iota^* V_x$ is everywhere positive for all $x \in U$; so if $s \mapsto \iota(r(s), z(s))$ is a trajectory of $-V_x$, then $r'(s) > 0$.
Moreover, for any trajectory $\sigma : \IR \to M$ of any $-V_x$, $x \in U$, the parameters $s$ for which $ \sigma(s) \in \iota((1,\infty) \times N)$ is either empty or an interval of the form $(s_0, \infty)$ for some $s_0 \in \IR$.

Write $\iota^* V_x := -\frac12 r \partial_r + \td V_x$, where $(\td V_x)_{x \in U}$ is a family of vector fields on $(1,\infty) \times N$ that depends continuously on $x$ in the $C^1_{\gamma,r^{-1}}$-norm.
Note that $\td V_{x_0} = 0$ and the trajectories $\sigma(s)$ of $-V_x$ are of the form $\iota(\td\sigma(s))$ for trajectories $\td\sigma(s)$ of $\frac12 r \partial_r - \td V_x$.

\begin{Claim} \label{Cl_traj_bounds}
After possibly shrinking $U$ there is a constant $C < \infty$ such that the following is true.
Consider two $x_1, x_2 \in U$ and let $\td\sigma_i : (\underline s_i, \infty) \to (1,\infty) \times N$, $i=1,2$, be trajectories of $\frac12 r \partial_r - \td V_{x_i}$.
Let $r : (1,\infty) \times N \to (1,\infty)$ be the radial coordinate.
Then whenever
\begin{equation} \label{eq_10_conditions}
 d_{\gamma}(\td\sigma_1(s),\td\sigma_2(s)) \leq \tfrac1{10} r (\td\sigma_1(s)) \qquad \text{and} \qquad r (\td\sigma_1(s)) \geq 10,
\end{equation}
we have
\begin{equation} \label{eq_dds_dsigma12}
 \frac{d}{ds} d_{\gamma}(\td\sigma_1(s),\td\sigma_2(s))
\geq  \bigg( \frac12 - \frac{C}{r(\td\sigma_1(s))} \Vert \td V_{x_1} \Vert_{C^1_{\gamma, r^{-1}}} \bigg) d_{\gamma}(\td\sigma_1(s),\td\sigma_2(s))  - \frac{C}{r(\td\sigma_1(s))} \big\Vert \td V_{x_1} - \td V_{x_2} \big\Vert_{C^1_{\gamma,r^{-1}}}. 
\end{equation}
Moreover, there is a uniform constant $C_0 < \infty$ such that whenever the bound 
\begin{equation} \label{eq_dsigma12_lessC0}
  d_{\gamma}(\td\sigma_1(s),\td\sigma_2(s))  \leq C_0 \Vert \td V_{x_1} - \td V_{x_2} \Vert_{C^1_{\gamma,r^{-1}}}.  
\end{equation}
holds for $s = s_0$ for some $s_0 > \max \{ \underline{s}_1, \underline{s}_2 \}$ with $r (\td\sigma_1(s_0)) \geq 10$, then the same bound holds for all $s \in (\max \{ \underline{s}_1, \underline{s}_2 \}, s_0]$ with the property that $r (\td\sigma_1(s)) \geq 10$.
\end{Claim}

\begin{proof}
The bound \eqref{eq_dds_dsigma12} follows from a direct computation.
The condition \eqref{eq_10_conditions} ensures that the minimizing geodesic between $\td\sigma_1(s), \td\sigma_2(s)$ lies within $(1,\infty) \times N$.

To see the second statement, notice that we can choose $C_0$ large enough such that, after shrinking $U$, the bound \eqref{eq_dds_dsigma12} and the equality case \eqref{eq_dsigma12_lessC0} imply that
\[ \frac{d}{ds} d_{\gamma}(\td\sigma_1(s),\td\sigma_2(s)) > 0. \]
So if we have equality in \eqref{eq_dsigma12_lessC0} for some $s \in (\underline{s}_i, s_0]$ with the property that $r (\td\sigma_1(s)) > 10$, then the left-hand side of \eqref{eq_dsigma12_lessC0} is strictly increasing at $s$, which implies the preservation of \eqref{eq_dsigma12_lessC0}.
\end{proof}

\begin{Claim} \label{Cl_trajectory}
For any $x_1,x_2 \in U$ and any trajectory $\td\sigma_1 : (\underline s_1,\infty) \to (1,\infty) \times N$ of $\frac12 r \partial_r - \td V_{x_1}$ there is a unique trajectory $\td\sigma_2 : (\underline s_2,\infty) \to (1,\infty) \times N$ of $\frac12 r \partial_r - \td V_{x_2}$ such that
\begin{equation*} 
 \limsup_{s \to \infty} d_{\gamma}(\td\sigma_1(s), \td\sigma_2(s)) < \infty. 
\end{equation*}
Moreover, we have 
\[ \sup_{s > \max \{ \underline{s}_1, \underline{s}_2 \}, r (\td\sigma_1(s)) \geq 10} d_{\gamma}(\td\sigma_1(s), \td\sigma_2(s)) \leq C_0 \Vert V_{x_1} - V_{x_2} \Vert_{C^1_{\gamma,r^{-1}}}. \]
\end{Claim}

\begin{proof}
The uniqueness statement of the claim follows by setting $x_1 = x_2$ in \eqref{eq_dds_dsigma12}.
To see the existence, choose a sequence $s_j \to \infty$.
For each $j$ let $\td\sigma_{2,j} : (\underline{s}_{2,j},\infty) \to M$ be the trajectory of $\frac12 r \partial_r - \td V_{x_2}$ with $\td\sigma_{2,j}(s_j) = \td\sigma_1(s_j)$.
Then by the second statement in Claim~\ref{Cl_traj_bounds} we have
\[ d_{\gamma} (\td\sigma_1(s),\td\sigma_{2,j}(s) ) \leq C_0 \big\Vert \td V_{x_1} - \td V_{x_2} \big\Vert_{C^1_{\gamma,r^{-1}}} \]
for all $s > \max \{ \underline{s}_1, \underline{s}_{2,j} \}$ with $r(\td\sigma_1(s)) \geq 10$.
Since $r (\td\sigma_1(s)) \leq 2$ for $s > \underline{s}_{2,j}$ close to $\underline{s}_{2,j}$, it follows that $\underline{s}_{2,j}$ remains bounded and therefore, after passing to a subsequence, the trajectories $\td\sigma_{2,j}$ converge to the desired trajectory $\td\sigma_2$.
\end{proof}

We now define the maps $\iota_x$ as follows.
For any $x \in U$ and any $z \in N$ consider the trajectory $\td\sigma_1 : s \mapsto (e^{s/2},z)$ of $\frac12 r \partial_r = \tfrac12 r \partial_r - \td V_{x_0}$ and use Claim~\ref{Cl_trajectory} for $x_1 = x_0$ and $x_2 = x$ to determine the maximal trajectory $\td\sigma_2$ of $\tfrac12 r \partial_r - \td V_{x}$.
Set $\iota_x(e^{s/2},z) := \iota(\td\sigma_2(s))$, which is defined as long as $e^{s/2} > 20$, after possibly shrinking $U$.
Note that $s \mapsto \iota_x(e^{s/2},z)$ is a trajectory of $-V_x$, so it can be extended to a maximal trajectory in $M$.
Assertions~\ref{Lem_construct_iota_x_a}--\ref{Lem_construct_iota_x_c}, \ref{Lem_construct_iota_x_e} and Assertion~\ref{Lem_construct_iota_x_d} for $r_0 \geq 20$ follow immediately from Claim~\ref{Cl_trajectory}.
The case $r_0 < 20$ in Assertion~\ref{Lem_construct_iota_x_d} follows by continuous dependence on the initial conditions.
\end{proof}

The following proposition is the main result of this subsection.

\begin{Proposition} \label{Prop_gauge_infinity_families}
Fix an ensemble $(M,N,\iota)$ and integers $2 \leq k \leq k^*-10$ and set $\MM = \MM^{k^*}(M,N,\iota)$ and $\MM_{\grad} = \MM_{\grad}^{k^*}(M,N,\iota)$.
Let $(g,V = \nabla f,\gamma)$ be a $C^{k^*-2}$-regular representative of an element of $\MM_{\grad}$. 
Denote by $X$ a metrizable topological space and consider a family $(g_x, V_x, \gamma_x)_{x \in X}$ where $g_x \in C^{k}_{\loc} (M; S^2T^*M)$ is a Riemannian metric, $V_x \in C^k_{\loc}(M; TM)$ is a vector field and $\gamma_x \in \GenCONE^{k^*}(N)$.
Suppose that:
\begin{enumerate}[label=(\roman*)]
\item We have $(g_{x_0}, V_{x_0}, \gamma_{x_0}) = (g,V,\gamma)$ for some $x_0 \in X$.
\item For any $x \in X$ the soliton equation holds:
\[ \Ric_{g_x} + \tfrac12 \LL_{V_x} g_x + \tfrac12 g_x = 0 \]
and the metric $g_x$ is asymptotic to the generalized cone metric $\gamma_x$ in the sense that $\iota^* g - \gamma_x \to 0$ uniformly at infinity.
\item \label{Prop_gauge_infinity_families_iii} For any $x \in X$ we have $V_x - V_{x_0} \in C^k_{-1}(M;TM)$ and $g_x - g_{x_0} \in C^k_{-1}(M;S^2T^*M)$ and the maps (H\"older norms are taken with respect to $(M,g,f)$; see Subsection~\ref{subsec_list_Holder})
\begin{alignat*}{2}
 X &\lto C^{k}_{-1}(M;TM), & \qquad x &\longmapsto V_x - V_{x_0}, \\
 X &\lto C^k_{-2}(M;S^2T^*M), & \qquad x &\longmapsto g_x - g_{x_0} - T(\gamma_x - \gamma_{x_0}), \\
 X &\lto \GenCONE^{k^*}(N), &\qquad x &\longmapsto \gamma_x
\end{alignat*}
are continuous.
\end{enumerate}
Then there is a family of $C^{k+1}$-embeddings $(\iota_x : \IR_+ \times N \to M)_{x \in U}$, a family of $C^{k+1}$-diffeomorphisms $(\psi_x : M \to M)_{x \in U}$ and a constant $R_0 < \infty$ such that the following is true:
\begin{enumerate}[label=(\alph*)]
\item \label{Prop_gauge_infinity_families_a} $\iota_{x_0}|_{(1,\infty) \times N} = \iota$.
\item \label{Prop_gauge_infinity_families_b} $\iota_x^* V_x =  -\frac12 r \partial_r$  for all $x \in U$.
\item \label{Prop_gauge_infinity_families_c} 
For any $r_0 > 0$ and any convergent sequence $x_i \to x_\infty$ in $U$ we have
\begin{equation} \label{eq_dg_iota_x1x2}
\sup_{r \geq r_0,z \in N} d_g(\iota_{x_i} (r,z), \iota_{x_\infty}(r,z)) \xrightarrow[i\to\infty]{} 0
\end{equation}
and
\begin{equation} \label{eq_dg_psi_x1x2}
  \sup_{y \in M} d_g(\psi_{x_i} (y), \psi_{x_\infty}(y)) \xrightarrow[i\to\infty]{} 0. 
\end{equation}
\item \label{Prop_gauge_infinity_families_h} $\iota_x$ is uniquely determined by Assertion~\ref{Prop_gauge_infinity_families_b} and the fact that 
\[ \sup_{r > 1, z \in N} d_g (\iota_x(r,z),\iota(r,z)) < \infty. \]
\item \label{Prop_gauge_infinity_families_ee} $\psi_{x_0} = \id_M$.
\item \label{Prop_gauge_infinity_families_e} $\iota_x = \psi_x \circ \iota$ on $(R_0,\infty) \times N$ for all $x \in U$.
\item \label{Prop_gauge_infinity_families_f} $\iota^* \psi^*_x V_x = - \frac12 r \partial_r$ on $(R_0,\infty) \times N$ for all $x \in U$.
\item \label{Prop_gauge_infinity_families_d} There is a uniform constant $C < \infty$ such that for all $x \in X$ we have for all $0 \leq m \leq k^*-4$
\[ |\nabla^m (\iota_{x}^* g_x - \gamma_{x} ) | \leq Cr^{-2-m} \qquad \text{on} \quad (R_0,\infty) \times N. \]
Here the covariant derivatives are taken with respect to $\gamma_{x}$.
\item \label{Prop_gauge_infinity_families_g} $\psi_x$ has finite displacement for all $x \in U$, i.e., $\sup_{y \in M} d_g (\psi_x(y),y) < \infty$.
\end{enumerate}
Note that Assertions~\ref{Prop_gauge_infinity_families_f},  \ref{Prop_gauge_infinity_families_d} imply that $(\psi_x^* g_x, \psi_x^* V_x, \gamma_x)$ represents an element of $\MM$.
Moreover, due to Assertions~\ref{Prop_gauge_infinity_families_b}, \ref{Prop_gauge_infinity_families_e}, if $\td\iota_x : \IR_+ \times N \to M$ denotes the map from Lemma~\ref{Lem_MM_regular_rep}\ref{Lem_MM_regular_rep_c}, then we have $\iota_x = \psi_x \circ \td\iota_x$.
\end{Proposition}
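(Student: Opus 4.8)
The plan is to construct the maps $\iota_x$ first, using Lemma~\ref{Lem_construct_iota_x}, then build $\psi_x$ by interpolating between $\iota$ and $\iota_x$ near infinity, extending it by the identity further in, and finally verify the gauge and decay assertions using the already-established tools (Lemma~\ref{Lem_iota}, Lemma~\ref{Lem_MM_regular_rep}, Corollary~\ref{Cor_conv_weighted}).

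First I would apply Lemma~\ref{Lem_construct_iota_x} to the family $(V_x)_{x \in X}$. Assumption~\ref{Prop_gauge_infinity_families_iii} guarantees the continuity of $x \mapsto V_x - V_{x_0}$ in $C^1_{-1}(M;TM)$ (note $C^k_{-1}$ embeds continuously into $C^1_{-1}$ by Lemma~\ref{Lem_Holder_norms_properties}), so the lemma produces a neighborhood $x_0 \in U \subset X$ and maps $\iota_x : \IR_+ \times N \to M$ satisfying Assertions~\ref{Prop_gauge_infinity_families_a}, \ref{Prop_gauge_infinity_families_b}, \ref{Prop_gauge_infinity_families_h}, and the convergence \eqref{eq_dg_iota_x1x2} of Assertion~\ref{Prop_gauge_infinity_families_c}. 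Since the convergence $x_i \to x_\infty$ in $C^k_{-1}$ gives a fortiori $C^k_{\loc}$-convergence of $V_{x_i}$, and since the $\iota_x$ are obtained by flowing along $-V_x$, standard ODE dependence on coefficients upgrades \eqref{eq_dg_iota_x1x2} to $C^{k+1}_{\loc}$-convergence of the maps $\iota_{x_i}$; combined with the fact that $V_x$ has regularity $C^k$, the trajectory maps $\iota_x$ have regularity $C^{k+1}$, and after possibly shrinking $U$ they are embeddings (their differentials are close to that of $\iota$ on any fixed compact annulus, and the radial-component positivity from the proof of Lemma~\ref{Lem_construct_iota_x} handles the noncompact direction).

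Next I would construct $\psi_x$. Fix $R_0 < \infty$ large; by \eqref{eq_dg_iota_x1x2} and continuity we may arrange, after shrinking $U$, that $\iota_x$ restricted to $(R_0 - 1, \infty) \times N$ is $C^1$-close to $\iota$ and in particular an embedding with image containing $\iota((R_0,\infty) \times N)$. Then $\iota_x \circ \iota^{-1}$ is defined on a neighborhood of $\iota([R_0,\infty) \times N)$ and close to the identity there, so using a fixed cutoff in the radial variable one interpolates between $\iota_x \circ \iota^{-1}$ (near infinity) and $\id_M$ (on the complement of $\iota((R_0-1,\infty) \times N)$) to obtain a $C^{k+1}$-diffeomorphism $\psi_x : M \to M$ with $\psi_x = \iota_x \circ \iota^{-1}$ on $\iota((R_0,\infty) \times N)$, i.e. Assertion~\ref{Prop_gauge_infinity_families_e}, and $\psi_{x_0} = \id_M$, i.e. Assertion~\ref{Prop_gauge_infinity_families_ee}. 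The interpolation can be done so that $\psi_{x_i} \to \psi_{x_\infty}$ in $C^0_{\loc}$ (indeed $C^{k}_{\loc}$) when $x_i \to x_\infty$, which together with the finite displacement statement below gives \eqref{eq_dg_psi_x1x2}. Assertion~\ref{Prop_gauge_infinity_families_f} is immediate from \ref{Prop_gauge_infinity_families_e} and \ref{Prop_gauge_infinity_families_b}: on $(R_0,\infty) \times N$ we have $\iota^* \psi_x^* V_x = (\psi_x \circ \iota)^* V_x = \iota_x^* V_x = -\tfrac12 r \partial_r$. Assertion~\ref{Prop_gauge_infinity_families_g}, finite displacement of $\psi_x$, follows because $\psi_x$ agrees with the identity outside a $g$-bounded region ($M \setminus \iota((R_0-1,\infty)\times N)$ together with a fixed annulus is compact) and on the region near infinity $\psi_x = \iota_x \circ \iota^{-1}$ moves points a bounded $g$-distance by Assertion~\ref{Lem_construct_iota_x_c} of Lemma~\ref{Lem_construct_iota_x}.

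For Assertion~\ref{Prop_gauge_infinity_families_d} I would argue as follows. The triple $(\psi_x^* g_x, \psi_x^* V_x, \gamma_x)$ satisfies the soliton equation (diffeomorphism invariance), has $\iota^*(\psi_x^* V_x) = -\tfrac12 r \partial_r$ near infinity by \ref{Prop_gauge_infinity_families_f}, and $\iota^*(\psi_x^* g_x)$ is asymptotic to $\gamma_x$ (since $\psi_x^* g_x$ is bilipschitz-close to $g_x$, which is asymptotic to $\gamma_x$). Hence $(\psi_x^* g_x, \psi_x^* V_x)$ falls under the hypotheses of Lemma~\ref{Lem_iota} applied with the map $\iota$; the quantitative asymptotics \eqref{eq_ovg_asymp_1} of that lemma (with $m$ up to $k^*-4$) give $|\nabla^{\gamma_x,m}(\iota^*(\psi_x^* g_x) - \gamma_x)| \leq C_m r^{-2-m}$, and the constants $C_m$ depend only on uniform bounds on the geometry of the cone metrics $\gamma_x$, which hold because $x \mapsto \gamma_x$ is continuous into $\GenCONE^{k^*}(N)$ (so on the compact parameter sets that matter the $\gamma_x$ vary in a precompact family; for the full statement over all $x \in X$ one restricts to $U$ as in the rest of the proposition, or invokes that the bounds are scale-invariant in $r$). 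Since $\iota_x = \psi_x \circ \iota$ on $(R_0,\infty) \times N$, this is exactly the claimed bound for $\iota_x^* g_x$. The main obstacle is the bookkeeping in this last step — ensuring that the asymptotic constants in Lemma~\ref{Lem_iota} are genuinely uniform in $x$ and that the low regularity of $g_x, V_x$ (only $C^k$) is compatible with the regularity drop in that lemma; this is handled by the hypothesis $k \leq k^* - 10$, which leaves ample room, and by the fact that Lemma~\ref{Lem_iota} only requires $C^k/C^1$ regularity of the soliton data while extracting $\approx k^*$ derivatives of asymptotic control from the regularity of $\gamma_x$. The remaining observations in the concluding sentence of the proposition — that $(\psi_x^* g_x, \psi_x^* V_x, \gamma_x)$ represents an element of $\MM$, and that $\iota_x = \psi_x \circ \td\iota_x$ — follow directly by combining Assertions~\ref{Prop_gauge_infinity_families_f} and \ref{Prop_gauge_infinity_families_d} with Definition~\ref{Def_MM} and the uniqueness clause in Lemma~\ref{Lem_MM_regular_rep}\ref{Lem_MM_regular_rep_c}.
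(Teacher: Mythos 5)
Your proposal is correct and follows essentially the same route as the paper: construct $\iota_x$ via Lemma~\ref{Lem_construct_iota_x}, identify them (via the uniqueness clause) with the maps of Lemma~\ref{Lem_iota} to get the $C^{k+1}$-embedding property and the uniform decay, and build $\psi_x$ by interpolating between $\iota_x\circ\iota^{-1}$ (equivalently $\phi_x=\iota^{-1}\circ\iota_x$ in cone coordinates) and the identity. The only difference is ordering: the paper establishes the decay bound of Assertion~\ref{Prop_gauge_infinity_families_d} for $\iota_x$ \emph{before} the interpolation and uses it (together with Assumption~\ref{Prop_gauge_infinity_families_iii}) to get the uniform smallness of $\phi_x-\id$ needed there, while you defer it to after constructing $\psi_x$; since $\iota_x=\psi_x\circ\iota$ outside a compact set and your interpolation takes place on a compact annulus, this reordering is harmless, though your appeal to "ODE dependence" for $C^1$-closeness is where the paper instead invokes the quantitative asymptotics.
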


\begin{proof}
Let $(\iota_x : \IR_+ \times N \to M)_{x \in U}$ be the family of maps from Lemma~\ref{Lem_construct_iota_x}.
Comparing these maps with those of Lemma~\ref{Lem_iota} implies that, in fact, each map $\iota_x$ is an embedding of regularity $C^{k+1}$ and that Assertions~\ref{Prop_gauge_infinity_families_a}, \ref{Prop_gauge_infinity_families_b}, \ref{Prop_gauge_infinity_families_d} and the continuity property \eqref{eq_dg_iota_x1x2} in Assertion~\ref{Prop_gauge_infinity_families_c} hold. 

Next, we need to construct the maps $\psi_x$.
Fix some $x \in U$ for a moment and consider the map 
\[ \phi_{x} := \iota^{-1} \circ \iota_{x} \big|_{(1,\infty) \times N} : (1,\infty) \times N \lto (1,\infty) \times N \]
Due to Lemma~\ref{Lem_construct_iota_x}\ref{Lem_construct_iota_x_c} and \ref{Lem_construct_iota_x_d}, we find that, given some $\eps > 0$, we can shrink $U$ such that $d_\gamma (\phi_x(r,z), (r,z)) < \eps$ for all $(r,z) \in (2,\infty) \times N$.
Moreover,
\[ \phi_{x}^* ( \iota^* g_x) = \iota^*_{x}g_x \]
and since both metrics $\iota^* g_x$, $\iota^*_{x}g_x$ are sufficiently controlled on $(R_0, \infty)\times N$, by Assumption~\ref{Prop_gauge_infinity_families_iii} and Assertions~\ref{Prop_gauge_infinity_families_d}, respectively, we obtain $C^{k+1}_{\loc}$-bounds on $\phi_{x}$ with respect to $\gamma$ on $(R_0+1, \infty) \times N$, which go to zero as $\eps \to 0$.
So if $\eps$ is chosen sufficiently small, we can perform an interpolation construction between the identity map and $\phi_x$.
As a result, we obtain a family of $C^{k+1}$-diffeomorphism $(\ov\phi_{x} : \IR_+ \times N \to \IR_+ \times N)_{x \in U}$ such that $\ov\phi_{x} = \id$ on $(0, 2) \times N$ and $\ov\phi_{x} = \phi_{x}$ on $(R_0+2, \infty) \times N$.
Thus $\iota \circ \ov\phi_{x} \circ \iota^{-1}$ can be extended smoothly by the identity wherever it is not defined.
Let $\psi_{x} : M \to M$ be the resulting map.
Then Assertions~\ref{Prop_gauge_infinity_families_e}, \ref{Prop_gauge_infinity_families_f}, \ref{Prop_gauge_infinity_families_g} follow immediately, after adjusting $R_0$.
Assertion~\ref{Prop_gauge_infinity_families_ee} and the continuity property \eqref{eq_dg_psi_x1x2}  in Assertion~\ref{Prop_gauge_infinity_families_c} can be ensured in the construction process of the maps $\psi_x$.
Lastly, Assertion~\ref{Prop_gauge_infinity_families_h} follows from Lemma~\ref{Lem_construct_iota_x}\ref{Lem_construct_iota_x_e}.
\end{proof}
\bigskip

\subsection{Linear theory}
We will first establish some crucial facts of linear nature, which concern the Einstein operator $L_g = \triangle_f + 2\Rm$ on a fixed gradient expanding soliton $(M,g,f)$.

\begin{Lemma} \label{Lem_linear_identities}
Let $(M,N,\iota)$ be an ensemble and $k^* \geq 20$.
Fix a $C^{k^*-2}$-regular representative $(g,V = \nabla^g f,\gamma)$ of an element in $\MM_{\grad}(M,N,\iota)$.
In the following all weighted H\"older spaces are taken with respect to the soliton structure $(M,g,f)$ (see Subsection~\ref{subsec_list_Holder} for further details). %
Denote by $K \subset C^{2}_{-1} (M; \lb S^2 T^*M)$ the kernel of the Einstein operator $L = L_g = \triangle_f + 2 \Rm$ on symmetric $(0,2)$-tensors.
Then the following is true for any $k \geq 0$ and $\alpha \in (0,1)$:
\begin{enumerate}[label=(\alph*)]
\item \label{Lem_linear_identities_a} $\dim K < \infty$.
\item \label{Lem_linear_identities_b} $K$ is equal to the kernel of the following map, which has Fredholm index $0$, \[ L : C^{k+2,\alpha}_{-2,\nabla f} (M;S^2 T^*M) \lto C^{k,\alpha}_{-2} (M;S^2 T^*M). \] 
\item \label{Lem_linear_identities_c} There are unique 
closed subspaces $K_k^\perp \subset C^{k,\alpha}_{-2} (M;S^2 T^*M)$ and $K_{k,\nabla f}^\perp \subset C^{k,\alpha}_{-2, \nabla f} (M;S^2 T^*M)$  such that we have the following splittings
\begin{equation} \label{eq_K_Kperp_splitting}
 C^{k,\alpha}_{-2} (M;S^2 T^*M)  = K \oplus K_k^\perp, \qquad
 C^{k,\alpha}_{-2,\nabla f} (M;S^2 T^*M)  = K \oplus K_{k,\nabla f}^\perp 
\end{equation}
which are orthogonal with respect to the natural pairing $\langle h_1, h_2 \rangle_f := \int_M ( h_1 \cdot h_2 ) e^{-f} dg$.
That is, for any $h_1 \in K$ and $h_2 \in K_k^\perp$ the integrand defining this pairing is integrable and the integral equals zero; a similar statement holds for the second splitting.
Moreover, we have $K^\perp_0 \supset K^\perp_1 \supset \ldots$ and $K^\perp_{0,\nabla f} \supset K^\perp_{1,\nabla f} \supset \ldots$ and $K^\perp_k \supset K^\perp_{k,\nabla f}$.
\item \label{Lem_linear_identities_d} We have $L (K_{k+2,\nabla f}^\perp) \subset K_k^\perp$
and the map
\[ K_{k+2,\nabla f}^\perp \lto K_k^\perp, \qquad h \mapsto Lh \]
is invertible with bounded inverse.
\item \label{Lem_linear_identities_e} The map
\[ T\GenCONE^{k^*} (N) \lto K, \qquad \gamma' \longmapsto \proj_K(L(T(\gamma')) \]
is surjective.
Here $\proj_K$ denotes the projection onto the first factor of \eqref{eq_K_Kperp_splitting} for any $k = 0, \ldots, k^* - 6$.
\item \label{Lem_linear_identities_f} Consider a tensor $h \in C^2_{\loc} (M; S^2 T^* M)$ with the property that for some $\delta > 0$ and $C < \infty$ we have the asymptotic bounds
\begin{equation} \label{eq_h_decay_bounds}
 |h|, \; |Lh|  \leq C |f|^{-\delta}, \qquad |\nabla h| \leq C. 
\end{equation}
Then $\langle \kappa, L h \rangle_f = 0$ for any $\kappa \in K$.
\end{enumerate}
\end{Lemma}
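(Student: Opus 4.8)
\textbf{Proof proposal for Lemma~\ref{Lem_linear_identities}\ref{Lem_linear_identities_f}.}
The statement is an integration-by-parts (self-adjointness) identity for the Einstein operator $L = \triangle_f + 2\Rm$ with respect to the weighted inner product $\langle \cdot,\cdot\rangle_f$ whose density is $e^{-f} \sim e^{r^2/4}$, but carried out under only the weak decay assumptions \eqref{eq_h_decay_bounds} on $h$ (rather than $h \in K_{k,\nabla f}^\perp$ or $h$ having full decay). The plan is to exploit that $\kappa \in K$ decays extremely fast: by Lemma~\ref{Lem_iota} (equation \eqref{eq_ovg_asymp_1}) the metric is asymptotically conical, so the results of Section~\ref{sec_elliptic} apply, and in particular the asymptotic kernel estimate of Deruelle--Schulze (used in the overview and recorded in Proposition~\ref{Prop_L2_theory}\ref{Prop_L2_theory_d}) gives $|\kappa| \lesssim (-f)^{-n/2+\lambda+\eps} e^f$ with $\lambda = 0$ here (since $\kappa \in \EE_0$), hence $|\kappa| \lesssim (-f)^{-n/2+\eps} e^f$, and similarly $|\nabla\kappa|$ has comparable decay after rescaling. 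Thus the products $|\kappa|\,|h|$, $|\nabla\kappa|\,|h|$, $|\kappa|\,|\nabla h|$, $|\kappa|\,|Lh|$ all decay like $(-f)^{-n/2 + \text{something}} e^f$, i.e. faster than any negative power of $r$ times $e^{-r^2/4}$, which is integrable against $e^{-f} dg$ on a space with at most Euclidean volume growth. This makes all the integrals below absolutely convergent.

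First I would fix a cutoff $\eta_\rho \in C_c^\infty(M)$, $0 \le \eta_\rho \le 1$, with $\eta_\rho \equiv 1$ on $\{ -f \le \rho \}$, $\eta_\rho \equiv 0$ on $\{ -f \ge 2\rho \}$, and $|\nabla \eta_\rho| \le C\rho^{-1/2}$, $|\nabla^2 \eta_\rho| \le C\rho^{-1}$ (possible since $|\nabla f| \sim (-f)^{1/2}$ and $|\nabla^2 f|$ is bounded). Then I would write out
\[
\int_M \big( \eta_\rho\, \kappa \cdot L h - \eta_\rho\, (L\kappa) \cdot h \big) e^{-f} dg,
\]
which equals $\int_M \eta_\rho\, \kappa \cdot L h\, e^{-f} dg$ since $L\kappa = 0$. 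Using that $\triangle_f$ is formally self-adjoint with respect to $e^{-f} dg$ (integration by parts: $\int (\triangle_f u)\, v\, e^{-f} = -\int \nabla u \cdot \nabla v\, e^{-f} = \int u\, (\triangle_f v)\, e^{-f}$ for compactly supported factors) and that $\Rm$ acts as a pointwise symmetric endomorphism, the difference of the two terms, before inserting $\eta_\rho$, would vanish; with $\eta_\rho$ inserted it produces boundary/commutator terms supported on the annulus $A_\rho := \{ \rho \le -f \le 2\rho \}$, of the schematic form
\[
\int_{A_\rho} \big( \nabla \eta_\rho * \nabla \kappa * h + \nabla \eta_\rho * \kappa * \nabla h + \nabla^2 \eta_\rho * \kappa * h \big) e^{-f} dg.
\]
The main step is then to show each of these annular integrals tends to $0$ as $\rho \to \infty$. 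Using the decay bounds on $\kappa$ and $\nabla\kappa$ from Proposition~\ref{Prop_L2_theory}\ref{Prop_L2_theory_d} (combined with local parabolic/elliptic derivative estimates for $\nabla\kappa$, after rescaling by $(-f)^{-1/2}$ as in Claim~\ref{Cl_prelim_decay}), together with $|h|, |Lh| \le C(-f)^{-\delta}$, $|\nabla h| \le C$, and the cutoff gradient bounds, the integrand on $A_\rho$ is bounded by $C\rho^{-1/2 - n/2 + \eps - \delta'}\, e^{-\rho} \cdot$(something polynomial), while $\vol(A_\rho) \le C\rho^{n/2} e^{\rho}$ — wait, more carefully: $e^{-f} dg$-volume of $A_\rho$ is controlled since $\kappa$'s bound already contains the compensating $e^f$, so really one estimates $\int_{A_\rho} |\nabla\eta_\rho|\,|\nabla\kappa|\,|h|\, e^{-f} dg \le C\rho^{-1/2} \cdot \rho^{-n/2+\eps} \cdot \rho^{-\delta} \cdot \vol_g(A_\rho)$, and since $\vol_g(A_\rho) \le \vol_g(\{-f \le 2\rho\}) \le C\rho^{n/2}$ by the Euclidean volume growth (Lemma~\ref{Lem_Im_iota_dense} or the asymptotically conical structure), this is $\le C\rho^{-1/2 + \eps - \delta} \to 0$ for $\eps$ small. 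The $\nabla^2\eta_\rho$ term gives $\le C\rho^{-1 + \eps - \delta}\cdot$(polynomial) $\to 0$ similarly. Taking $\rho \to \infty$ in the displayed identity and using dominated convergence on the left (justified by the integrability noted above) yields $\langle \kappa, Lh\rangle_f = 0$.

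The main obstacle is bookkeeping the weights correctly: one must verify that $\kappa$ genuinely lies in the eigenspace $\EE_0$ so that Proposition~\ref{Prop_L2_theory}\ref{Prop_L2_theory_d} applies with $\lambda = 0$ — this requires checking $K = \ker(L$ on $C^2_{-1})$ is contained in $L^2_f$, which follows from Proposition~\ref{Prop_L2_theory}\ref{Prop_L2_theory_b} since $\kappa \in C^{2,\alpha}_{-a,\nabla f}$ (any $a \le 1$, after the regularity upgrade via Shi/elliptic estimates) and $L\kappa = 0 \in L^2_f$ — and that the hypotheses of that proposition (bounded curvature, $\sup f = -1$, $|R|,|b| \le A(-f)^{-1}$ with $b = 2\Rm$, which needs $|\Rm| \le A(-f)^{-1} \sim A r^{-2}$, supplied by Lemma~\ref{Lem_iota}\ref{Lem_iota_c}) are met. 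A minor subtlety is obtaining the gradient estimate on $\nabla\kappa$ with the right weight; this is a standard rescaling argument (rescale the metric by $(-f)^{-1}$ on unit balls, apply interior elliptic estimates to the eigenvalue equation as in the proof of Claim~\ref{Cl_prelim_decay}) and I would only sketch it. Everything else is routine integration by parts and estimation of annular integrals.
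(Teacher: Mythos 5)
Your argument is correct, but it takes a genuinely different route from the paper. The paper proves assertion \ref{Lem_linear_identities_f} as a special case of a unified claim (Claim~\ref{Cl_kLh_int}): it controls $\kappa$ at infinity only through the Deruelle--Schulze asymptotics of Claim~\ref{Cl_DS_kappa_bound} (convergence of $\hat\kappa=\mathbf{r}^n e^{\mathbf{r}^2/4}\kappa$ to a radially parallel limit $\kappa_\infty$ in $L^2_{\loc}$ under blow-down), performs the same cutoff integration by parts, and then identifies the limit of the annular terms as the link integral $\tfrac12\int_{\{1\}\times N}(\kappa_\infty\cdot\gamma')\,d\gamma$ via rescaled convergence of $g$, $h$ and $\hat\kappa$; in the decaying case this integral vanishes because the rescalings of $h$ tend to zero. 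You instead derive a \emph{pointwise} Gaussian bound $|\kappa|\lesssim(-f)^{-n/2+\eps}e^{f}$ from Proposition~\ref{Prop_L2_theory}\ref{Prop_L2_theory_d} (after checking $\kappa\in L^2_f$ via assertion \ref{Lem_linear_identities_b} of the present lemma together with Proposition~\ref{Prop_L2_theory}\ref{Prop_L2_theory_b}, and the curvature decay $|{\Rm}|\le C(-f)^{-1}$ from Lemma~\ref{Lem_MM_regular_rep}\ref{Lem_MM_regular_rep_b}), and then kill the boundary terms by brute force using the decay of $h$ and $Lh$. Both are legitimate; your route is more self-contained within the spectral theory of Section~\ref{sec_elliptic} and avoids the refined $L^2$-asymptotics for this particular assertion, whereas the paper's blow-down computation buys more: the same claim also treats $h=T(\gamma')$ (non-decaying), producing the explicit boundary pairing that is the engine behind assertion \ref{Lem_linear_identities_e}; your estimates could not recover that case, since there the annular terms do not vanish. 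Two bookkeeping points in your write-up: the commutator of $\triangle_f$ with the cutoff contains the drift term $\nabla_{\nabla f}\eta_\rho\,(\kappa\cdot h)$, which is only $O(1)$ rather than $O(\rho^{-1})$, but it is still harmless because $|h|\le C(-f)^{-\delta}$ and you may take $\eps<\delta$ (or avoid second derivatives of the cutoff altogether by integrating by parts symmetrically); and the rescaled interior estimate for $\nabla\kappa$ costs an extra factor $(-f)^{1/2}$ relative to the bound on $\kappa$, which is likewise absorbed by the same margin.
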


\begin{proof}
Assertions~\ref{Lem_linear_identities_a} and \ref{Lem_linear_identities_b} follow from Proposition~\ref{Prop_Lundardi_weighted} (applied with the smooth structure on $M$ supplied by Lemma~\ref{Lem_soliton_smooth}) and the fact that we have $\Rm(h) \in C^{0,\alpha}_{-2}(M; S^2 T^*M)$ for any  $h \in C^2_{-1}(M; S^2 T^*M)$.

After possibly adjusting $f$ by an additive constant, we may assume that $f < 0$.
Set $\mathbf{r} := 2\sqrt{-f} \in C^2(M)$.
We need the following claim.

\begin{Claim} \label{Cl_DS_kappa_bound}
For any kernel element $\kappa \in K$ the following is true.
Set
\[ \hat\kappa := \mathbf{r}^n e^{\mathbf{r}^2/4} \kappa. \]
Then 
\[ \lim_{\mathbf{r} \to \infty} \hat\kappa = \kappa_\infty, \]
where the limit is to be understood in the $L^2(N)$-sense along trajectories of $\nabla^g f$.
More specifically, consider the dilations for $\lambda > 0$
\begin{equation} \label{eq_omega_la}
 \omega_\lambda : \IR_+ \times N \longrightarrow \IR_+ \times N, \qquad (r,z) \longmapsto (\lambda r, z). 
\end{equation}
Then we have convergence
\begin{equation} \label{eq_L2_loc_to_k_inf}
 \hat\kappa_\lambda := \frac1{\lambda^2} \omega_\lambda^* ( \iota^*  \hat\kappa )  \xrightarrow[\lambda \to \infty]{\quad L^2_{\loc} \quad } \kappa_\infty, 
\end{equation}
where $\kappa_\infty$ is a symmetric $(0,2)$-tensor field on $\IR_+ \times N$ of regularity $L^2_{\loc}$ with the property that $\frac1{\lambda^2} \omega_\lambda^* \kappa_\infty = \kappa_\infty$ for all $\lambda > 0$, i.e., $\kappa_\infty$ is parallel along radial lines.
Moreover, if $\kappa \neq 0$, then $\kappa_\infty$ does not vanish in $L^2$.
\end{Claim}

\begin{proof}
This is essentially a consequence of \cite[Theorem~7.1]{Deruelle_Schulze_2021}, but it requires some justification.
Let us apply this theorem for $(h, \mathcal{B}) = (\kappa, 0)$.
Then equations (2.18)--(2.20) and (4.1) of \cite{Deruelle_Schulze_2021} hold as required.
Note that equation (2.11) of \cite{Deruelle_Schulze_2021} may not be true, however, this equation is not part of the assumptions of \cite[Theorem~7.1]{Deruelle_Schulze_2021} and does not enter its proof.

Next, observe that the assertion of \cite[Theorem~7.1]{Deruelle_Schulze_2021} defines convergence via parallel transport along trajectories of $\nabla^g f$ instead of convergence of pullbacks under dilations.
However, this difference is negligible as the metric $g$ is asymptotically conical and the difference between the Levi-Civita connection and the connection induced by the dilation process is of order $O(r^{-3})$.
Alternatively, one may derive the convergence \eqref{eq_L2_loc_to_k_inf} directly from equation (7.6) in \cite{Deruelle_Schulze_2021}.
In this case the connection in $\nabla_{\mathbf{N}} \hat h$ needs to be replaced by
\[ \frac1{\mathbf{r}} \left( \mathcal{L}_{2\nabla f} \hat h - 2\hat h \right) = \frac1{\mathbf{r}} \left(  \nabla_{2\nabla f} \hat h + 2\Ric * \hat h \right)
= \frac{2|\nabla f|}{\mathbf{r}}  \nabla_{\mathbf{N}} \hat h  + O(\mathbf{r}^{-3}) \hat h; \]
note that $\frac{2|\nabla f|}{\mathbf{r}} \to 1$.

Let us now prove the last statement.
Suppose by contradiction that $\kappa \neq 0$, but $\kappa_\infty = 0$.
Then \cite[Theorem~7.1]{Deruelle_Schulze_2021} implies that $\kappa$ has compact support.
Consider a point $y \in M$ in the boundary of the support. By \cite{Bando_analytic_87, Kotschwar_analytic_13} we can choose coordinates around $y$ in which the coefficients of $g$ are real-analytic.
Due to the soliton equation, we obtain that the second derivatives $\partial_{ij}f$ in these coordinates are real-analytic, so $f$ is real-analytic as well.
The equation $L \kappa = 0$ can therefore be locally expressed using real analytic coefficients, so by \cite{Morrey_book_58} the coefficients of $\kappa$ in these local coordinates are real-analytic.
Since all derivatives of these coefficients vanish outside their support, they must be zero at the origin, which corresponds to the point $y$.
Therefore $\kappa$ must vanish in a neighborhood of $y$, which contradicts the choice of $y$.

Lastly, note that the work \cite{Deruelle_Schulze_2021} assumes that $(M,g)$ is asymptotic to a \emph{smooth} cone metric, whereas here the cone metric has regularity $C^{k^*}$ for $k^* \geq 20$.
This is, however, enough to carry out the arguments in \cite{Deruelle_Schulze_2021}.
\end{proof}

Claim~\ref{Cl_DS_kappa_bound} implies that for any $\kappa \in K$ the form
\[ C^{k,\alpha}_{-2} (M;S^2 T^*M) \longrightarrow \IR, \qquad h \longmapsto \langle \kappa, h \rangle_f \]
is well defined and bounded.
So we can define $K_k^\perp$ as the kernel of the bounded map
\[ C^{k,\alpha}_{-2} (M;S^2 T^*M) \longrightarrow K^*, \qquad h \longmapsto \langle \cdot, h \rangle_f, \]
and define $K_{k,\nabla f}^\perp$ similarly, which proves Assertion~\ref{Lem_linear_identities_c}.

For the remaining assertions we need the following claim, which allows us to perform integration by parts.
This may involve picking up a term at infinity.

\begin{Claim} \label{Cl_kLh_int}
Let $h \in C^2_{\loc}(M; S^2 T^*M)$ such that one of the following is true:
\begin{enumerate}[label=(\roman*)]
\item $h$ satisfies the bounds \eqref{eq_h_decay_bounds} for some $0 < \delta \leq 1$ and $C < \infty$.
\item There is some $\gamma' \in T\GenCONE^2(N)$ such that we have $h = T(\gamma')$ on the complement of some compact subset.
\end{enumerate}
Then for any $\kappa \in K$ we have
\[ \langle \kappa, Lh \rangle_f = \begin{cases} 0 &\text{in Case (i)} \\ \frac12 \int_{\{ 1 \} \times N } (\kappa_\infty \cdot \gamma') d\gamma|_{\{ 1 \} \times N} & \text{in Case (ii)} \end{cases} \]
In particular, Assertion~\ref{Lem_linear_identities_f} holds.
\end{Claim}

\begin{proof}
Let $\eta : \IR \to [0,1]$ be some smooth cutoff function with $\eta \equiv 1$ on $(-\infty, 1]$ and $\eta \equiv 0 $ on $[2, \infty)$ and set $\eta_\la (r) := \eta(r/\la)$.
By Claim~\ref{Cl_DS_kappa_bound} and the fact that $Lh = O(\mathbf{r}^{-2\delta})$ by assumption in Case~(i) and due to Lemma~\ref{Lem_nabf_Tgamma} in Case~(ii), we obtain that
\begin{equation} \label{eq_kLh_lim}
  \langle \kappa, Lh \rangle_f  = \lim_{\la \to \infty} \big\langle (\eta_\la \circ  \mathbf{r}) Lh , \kappa \big\rangle_f. 
\end{equation}
Using integration by parts, we obtain that
\begin{align*}
 \langle (\eta_\la \circ  \mathbf{r}) L h, \kappa \rangle_f 
&= \langle (\eta_\la \circ  \mathbf{r}) Lh, \kappa \rangle_f - \langle (\eta_\la \circ  \mathbf{r})  h, L  \kappa \rangle_f 
= \langle (\eta_\la \circ  \mathbf{r}) L h,  \kappa \rangle_f -  \langle L ( (\eta_\la \circ  \mathbf{r}) h),  \kappa \rangle_f 
\\
&=- \int_M \big( \big( \triangle_f (\eta_\la \circ  \mathbf{r}) \big) (h \cdot \hat\kappa)
+ 2 \nabla_{\nabla (\eta_\la \circ  \mathbf{r})} h \cdot \hat\kappa \big) \mathbf{r}^{-n} dg. \\
&= -\int_M \big( ( \triangle (\eta_\la \circ  \mathbf{r}) ) (h \cdot \hat\kappa) + \tfrac12 \mathbf{r} ( \nabla \mathbf{r} \cdot \nabla (\eta_\la \circ  \mathbf{r}) ) (h \cdot \hat\kappa)
+ 2 \nabla_{\nabla (\eta_\la \circ  \mathbf{r})} h \cdot \hat\kappa \big) \mathbf{r}^{-n} dg.
\end{align*}
Set $\gamma' := 0$ in Case (i).
Recall the dilations $\omega_\la$ from \eqref{eq_omega_la}.
Since we have the following convergences
\begin{alignat*}{2}
 g_\la := \la^{-2} \omega_\la^* \iota^* g &\xrightarrow[\la\to\infty]{\quad C^2_{\loc} \quad} \gamma, \qquad & 
h_\la := \la^{-2} \omega_\la^* \iota^*  h &\xrightarrow[\la\to\infty]{\quad C^0_{\loc} \quad} \gamma', \qquad \\
\hat\kappa_\la := \la^{-2} \omega_\la^* \iota^*  \hat\kappa &\xrightarrow[\la\to\infty]{\quad L^2_{\loc} \quad} \kappa_\infty , \qquad &
\mathbf{r}_\la := \la^{-1} (\mathbf{r} \circ \iota \circ \omega_\la)  &\xrightarrow[\la\to\infty]{\quad C^2_{\loc} \quad} r,
\end{alignat*}
and since
\[ \big| \nabla^{g_\la} h_\la |_{g_\la} \leq C \la, \qquad | \nabla^{g_\la} (\eta \circ \mathbf{r}_\la) | \leq C, \]
we obtain
\begin{align*}
 \langle (\eta_\la \circ  \mathbf{r}) L h, \kappa \rangle_f
&= -\int_{(1,2) \times N} \Big( \la^{-2} ( \triangle_{g_\la} (\eta \circ  \mathbf{r}_\la) ) (h_\la \cdot_{g_\la} \hat\kappa_\la) + \tfrac12 \mathbf{r}_\la (\nabla^{g_\la} \mathbf{r}_\la \cdot_{g_\la} \nabla^{g_\la} (\eta \circ  \mathbf{r}_\la) ) (h_\la \cdot_{g_\la} \hat\kappa_\la) \\
&\hspace{80mm} + 2 \la^{-2} \nabla^{g_\la}_{\nabla^{g_\la} (\eta \circ  \mathbf{r}_\la)} h_\la \cdot_{g_\la} \hat\kappa_\la \Big) \mathbf{r}_\la^{-n} dg_\la \\
&\xrightarrow[\la \to \infty]{} -\frac12 \int_{(1,2) \times N}  r \, \eta'(r) \, (\gamma' \cdot \kappa_\infty) r^{-n} d\gamma =  \frac12 \int_{\{ 1 \} \times N} (\gamma' \cdot \kappa_\infty) d\gamma|_{\{ 1 \} \times N}.
\end{align*}
Combining this with \eqref{eq_kLh_lim} proves the claim.
\end{proof}

To prove Assertion~\ref{Lem_linear_identities_d}, note first that due to Claim~\ref{Cl_kLh_int} for $h \in K_{k+2,\nabla f}^\perp$ and for any $\kappa \in K$ we have $ \langle \kappa, Lh \rangle_f = 0$, so $Lh \in K_{k}^\perp$.
In addition, by definition $L$ restricted to $K_{k+2,\nabla f}^\perp$ is injective.
So since $L$ has Fredholm index $0$, we obtain that $L : K_{k+2,\nabla f}^\perp \to K_k^\perp$ is invertible with bounded inverse.

To see Assertion~\ref{Lem_linear_identities_e}, note first that $L (T(\gamma')) \in C^{k^*-5}_{-2}(M;S^2 T^*M)$ due to Lemmas~\ref{Lem_nabf_Tgamma}, \ref{Lem_Holder_norms_properties}, so $\proj_K (L (T(\gamma')))$ is defined.
To see the surjectivity, we argue by contradiction.
Suppose that there is a non-zero $\kappa \in K$ such that we have for any $\gamma' \in T\GenCONE^{k^*}(N)$, using Claim~\ref{Cl_kLh_int},
\begin{equation*} %
0 = \big\langle \kappa, \proj_K( L (T(\gamma'))) \big\rangle_f
= \langle \kappa,  L T(\gamma')) \rangle_f
=  \frac12 \int_{\{ 1 \} \times N} (\kappa_\infty \cdot \gamma') d\gamma|_{\{ 1 \} \times N}. 
\end{equation*}
This implies that $\kappa_\infty$ is of the following form
\begin{equation} \label{eq_kappa_inf_only_radial}
 \kappa_\infty = u \, dr^2, \qquad  u \in L^2 (N). 
\end{equation}

The following claim yields the desired contradiction.

\begin{Claim} \label{Cl_kappa_tangential}
If $\kappa_\infty$ is of the form \eqref{eq_kappa_inf_only_radial}, then $\kappa_\infty = 0$, hence $\kappa = 0$ by Claim~\ref{Cl_DS_kappa_bound}.
\end{Claim}

\begin{proof}
By 
Lemma~\ref{Lem_gauged_div_equation}) we have
\[ \triangle_f \DIV_f \kappa - \tfrac12 \DIV_f \kappa = 0. \]
Thus
\[ \triangle_f |\DIV_f \kappa|^2 - |\DIV_f \kappa |^2 \geq 0, \]
and an application of the maximum principle implies
\begin{equation} \label{eq_DIF_kappa_0}
 \DIV_f \kappa \equiv 0. 
\end{equation}
Our goal will be to pass this identity to the limit and to deduce a divergence-free condition on $\kappa_\infty$ in the weak sense.

Fix some compactly supported vector field of the form $Y = Y_0  \partial_r$ on $\IR_+ \times N$ for $Y_0 \in C^\infty_c (\IR_+ \times N)$.
For large $\lambda$ we can find a compactly supported vector field $Y_\lambda$ on $M$ such that $\omega_\lambda^* \iota^* Y_\lambda = Y$.
Due to \eqref{eq_DIF_kappa_0}, \eqref{eq_L2_loc_to_k_inf} and the following $C^1_{\loc}$-convergences for $\la \to \infty$
\[  g_\lambda := \lambda^{-2} \omega_\lambda^* \iota^* g \lto \gamma, \qquad \lambda^{-2} \omega_\lambda^* \iota^* (\LL_{Y_\lambda} g) \lto \LL_Y \gamma, \qquad \mathbf{r}_\lambda := \lambda^{-1} (\mathbf{r} \circ \iota \circ \omega_\lambda) \lto r, \]
it follows that
\begin{multline*} %
 \int_{\IR_+ \times N} \big( (\LL_Y \gamma) \cdot_{\gamma} \kappa_\infty \big) r^{-n}\, d\gamma 
 = \lim_{\lambda \to \infty} \int_{\IR_+ \times N} \big( \lambda^{-2} \omega_\lambda^* \iota^* (\LL_{Y_\lambda} g) \cdot_{g_\lambda}   \hat\kappa_\lambda \big)  \mathbf{r}_\lambda^{-n}\, dg_\lambda \\
= \lim_{\lambda \to \infty} \int_M (\LL_{Y_\lambda} g) \cdot   e^{\mathbf{r}^2/4} \kappa \, dg
= \lim_{\lambda \to \infty} \int_M  2Y_{\lambda} \cdot  (\DIV_f \kappa) e^{-f} dg = 0. 
\end{multline*}
Since $(\LL_{Y} \gamma)(\partial_r, \partial_r) = 2\partial_r Y_0$, it follows that
\[ 0 = \int_0^\infty \int_N 2 \partial_r Y_0 (r,z) u(z) r^{-1} d\gamma|_{\{1\} \times N}(z) dr
= \int_N \int_0^\infty 2Y_0 (r,z) u(z) r^{-2} dr d\gamma|_{\{1\} \times N}(z)   \]
for any $Y_0 \in C^\infty_c (\IR_+ \times N)$.
This implies that $u = 0$ in $L^2$.
\end{proof}

This finishes the proof.
\end{proof}
\bigskip

\subsection{Local description via the Implicit Function Theorem}
Our next goal will be to apply the Implicit Function Theorem to the expanding soliton equation in the DeTurck gauge.
As a result, we will obtain a local characterization of the space of expanding solitons in the DeTurck gauge in a neighborhood of any asymptotically conical \emph{gradient} expanding soliton.
The following proposition can be viewed as the analog of \cite[3.2]{White_1987}.

\begin{Proposition}\label{prop_smooth_dependence}
Let $(M,N,\iota)$ be an ensemble, $2 \leq k \leq k^*-20$ be integers, $\alpha \in (0,1)$ and fix some $C^{k^*-2}$-regular representative $(g,V = \nabla^g f,\gamma)$ of an element of $\MM_{\grad}^{k^*} (M,N,\iota)$.
We will use the gradient soliton $(M,g,f)$ to define the weighted H\"older spaces as in Subsection~\ref{subsec_list_Holder}.
Denote by $K \subset C^{2,\alpha}_{-2} (M; \lb S^2 T^*M)$ the kernel of the Einstein operator $L = L_g = \triangle_f + 2 \Rm$ (see also Lemma~\ref{Lem_linear_identities}).
Then there is an open neighborhood of $(\gamma,0)$
\[ U = U_1 \times U_2 \subset \GenCONE^{k^*}(N) \times K \]
and there are real-analytic maps
\[ F : U \longrightarrow C^{k,\alpha}_{-2,\nabla f}(M;S^2 T^*M), \qquad G : U \longrightarrow K \]
such that the following is true for any $(\gamma', \kappa') \in U$:
\begin{enumerate}[label=(\alph*)]
\item \label{prop_smooth_dependence_a} $F(\gamma,0) = 0$.
\item \label{prop_smooth_dependence_b} Using the Ricci DeTurck operator $Q_g$ from \eqref{eq_def_Qg}, we have 
\[ G(\gamma', \kappa') = Q_g(g + F(\gamma', \kappa') + T(\gamma'-\gamma) ) \in K. \]
\item \label{prop_smooth_dependence_c} $(D_2 F)_{(\gamma,0)} : K \to C^{k,\alpha}_{-2,\nabla f}(M;S^2 T^*M)$ is the inclusion map.
\item \label{prop_smooth_dependence_d} For any $\gamma'' \in T\GenCONE^{k^*}(N)$ we have $L (T(\gamma'')) \in C^{k-2,\alpha}_{-2}(M;S^2 T^*M)$ and
\[ (D_1 G)_{(\gamma,0)}(\gamma'') = \proj_K (L(T(\gamma''))), \]
as in Lemma~\ref{Lem_linear_identities}\ref{Lem_linear_identities_e}.
Moreover, the derivative $(D_1 G)_{(\gamma',\kappa')}$ has full rank for all $(\gamma', \kappa') \in U$ and $(D_2 G)_{(\gamma,0)} = 0$.
So $G^{-1}(0) \subset U$ is a real-analytic Banach submanifold of codimension $\dim K$.
It passes through the point $(\gamma, 0)$ and its tangent space at this point contains $\{0 \} \times K$.
\item \label{prop_smooth_dependence_e} There is a neighborhood of the origin $W \subset C^{3}_{-1} (M;S^2T^*M)$ such that the following is true.
If $(\gamma',h) \in U_1 \times W$ satisfies $Q_g (g+h+ T(\gamma'-\gamma)) = 0$, then there is a $\kappa' \in U_2$ such that $h = F(\gamma',\kappa')$.
A similar statement holds for families of solutions.
More specifically, if $U' \subset X$ is an open subset of a Banach space and $H_1 : U' \to U_1$, $H_3 : U' \to W$ are $C^{m,\beta}$-regular $(0 \leq m \leq \infty, \beta \in [0,1))$ maps with the property that $Q_g(g + H_3(x) + T(H_1(x)-\gamma)) = 0$ for all $x \in U'$, then there is a $C^{m,\beta}$-regular map $H_2 : U' \to U_2$ such that for all $x \in U'$ we have
\[  F(H_1(x),H_2(x)) = H_3(x), \qquad G(H_1(x), H_2(x)) = 0. \]
So the image of $H_3$ even lies in $W \cap C^{k,\alpha}_{-2,\nabla f}(M;S^2T^*M)$ and we even have $C^{m,\beta}$-regularity of $H_3$ with respect to the $C^{k,\alpha}_{-2,\nabla f}$-norm in the codomain.
\end{enumerate}
\end{Proposition}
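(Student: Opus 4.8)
\textbf{Proof plan for Proposition~\ref{prop_smooth_dependence}.}
The strategy is a standard Lyapunov--Schmidt reduction applied to the Ricci DeTurck operator $Q_g$ at the gradient soliton $(M,g,f)$, using the linear theory of Lemma~\ref{Lem_linear_identities}. First I would pass to the smooth structure supplied by Lemma~\ref{Lem_soliton_smooth} and record that, by \eqref{eq_Q_expansion_intro} and \eqref{eq_DQg}, the map
\[
 \widetilde Q : C^{k+2,\alpha}_{-2,\nabla f}(M;S^2T^*M) \times \GenCONE^{k^*}(N) \longrightarrow C^{k,\alpha}_{-2}(M;S^2T^*M), \qquad (h,\gamma') \longmapsto Q_g(g + h + T(\gamma'-\gamma))
\]
is real-analytic near $(0,\gamma)$ with $\widetilde Q(0,\gamma)=0$, where real-analyticity follows because $Q_g$ is a polynomial expression in $g'$, $(g')^{-1}$ and their first two covariant derivatives (with $(g')^{-1}$ analytic in $g'$ by Cramer's rule), and because the map $T$ and the multiplication/contraction operations are bounded and multilinear in the relevant weighted H\"older spaces by Lemma~\ref{Lem_nabf_Tgamma} and \eqref{eq_u_star_up}. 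The partial differential $D_1\widetilde Q_{(0,\gamma)} = L : C^{k+2,\alpha}_{-2,\nabla f} \to C^{k,\alpha}_{-2}$ is Fredholm of index $0$ with kernel $K$ by Lemma~\ref{Lem_linear_identities}\ref{Lem_linear_identities_b}, and its restriction to the complement $K^\perp_{k+2,\nabla f}$ maps isomorphically onto $K^\perp_k$ by Lemma~\ref{Lem_linear_identities}\ref{Lem_linear_identities_d}.

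The reduction then proceeds as follows. Write the unknown $h = \kappa' + h^\perp$ with $\kappa' \in K$ and $h^\perp \in K^\perp_{k+2,\nabla f}$, and project the equation $\widetilde Q(h,\gamma') = 0$ onto $K^\perp_k$ using the splitting \eqref{eq_K_Kperp_splitting}. Applying the Implicit Function Theorem to $\mathrm{proj}_{K^\perp_k}\widetilde Q$ as a function of $h^\perp$, with parameters $(\gamma',\kappa')$, gives a real-analytic map $h^\perp = H^\perp(\gamma',\kappa')$ on a neighborhood $U = U_1\times U_2$ of $(\gamma,0)$ solving the "perpendicular equation", with $H^\perp(\gamma,0)=0$. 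I would then set
\[
 F(\gamma',\kappa') := \kappa' + H^\perp(\gamma',\kappa'), \qquad G(\gamma',\kappa') := \mathrm{proj}_K\big(\widetilde Q(F(\gamma',\kappa'),\gamma')\big) = \mathrm{proj}_K\big(Q_g(g+F(\gamma',\kappa')+T(\gamma'-\gamma))\big),
\]
which immediately gives \ref{prop_smooth_dependence_a}, \ref{prop_smooth_dependence_b}, and (differentiating the identity $\mathrm{proj}_{K^\perp_k}\widetilde Q(F,\cdot)\equiv 0$ in $\kappa'$ at $(\gamma,0)$, using $DH^\perp_{(\gamma,0)}|_K = 0$ because $L$ is injective on $K^\perp$) \ref{prop_smooth_dependence_c}. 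Since $Q_g(g+F+T(\gamma'-\gamma))$ always lies in $K^\perp_k \oplus K$ by construction and its $K^\perp_k$-component vanishes, $G$ indeed takes values in $K$, and $G=0$ is equivalent to $\widetilde Q = 0$. The derivative formulas in \ref{prop_smooth_dependence_d} follow by the chain rule: $(D_2 G)_{(\gamma,0)} = \mathrm{proj}_K \circ L \circ (D_2 F)_{(\gamma,0)} = \mathrm{proj}_K \circ L|_K = 0$, and $(D_1 G)_{(\gamma,0)}(\gamma'') = \mathrm{proj}_K(L(T(\gamma'')))$ because $(D_1 F)_{(\gamma,0)}(\gamma'')$ lies in $K^\perp_{k+2,\nabla f}$ (so $L$ of it is in $K^\perp_k$, killed by $\mathrm{proj}_K$) and the $T(\gamma')-\gamma$ term in $\widetilde Q$ contributes $L(T(\gamma''))$ at first order; the facts that $L(T(\gamma''))\in C^{k-2,\alpha}_{-2}$ and that $\mathrm{proj}_K(L(T(\gamma'')))$ is surjective onto $K$ come from Lemmas~\ref{Lem_nabf_Tgamma} and \ref{Lem_linear_identities}\ref{Lem_linear_identities_e}. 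Full rank of $(D_1 G)_{(\gamma',\kappa')}$ on all of $U$ is then an open condition, so it holds after shrinking $U$, and the submanifold statement for $G^{-1}(0)$ is the regular value theorem together with $(D_2 G)_{(\gamma,0)}=0$.

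For part \ref{prop_smooth_dependence_e}, the key point is to upgrade a solution $h$ which is only assumed to lie in a low-regularity neighborhood $W \subset C^3_{-1}(M;S^2T^*M)$ to the conclusion that $h = F(\gamma',\kappa')$ for some $\kappa'\in U_2$ --- in particular that $h$ automatically has the decay $O(|f|^{-1}) = O(r^{-2})$ and the extra $\nabla_{\nabla f}$-regularity built into $C^{k,\alpha}_{-2,\nabla f}$. Here I would use that $Q_g(g+h+T(\gamma'-\gamma))=0$ is a strongly elliptic quasilinear equation for $h$ (with leading operator $\triangle + \nabla_{\nabla f} + \cdots$ after accounting for the sign in \eqref{eq_def_Qg}), so interior Schauder estimates bootstrap $h$ from $C^3_{-1}$ to $C^{k+2}_{\loc}$, and then the weighted estimates of Section~\ref{sec_elliptic} --- specifically Proposition~\ref{Prop_Lundardi_weighted} / Lemma~\ref{Lem_delta_f_ell_bounds} applied to the linear equation satisfied by $h$, rewriting $Q_g(g+h+\cdots)=0$ as $Lh = (\text{quadratic error in }h) + L(T(\gamma'-\gamma))$ with the error in $C^{k,\alpha}_{-2}$ by \eqref{eq_u_star_up} --- force $h \in C^{k+2,\alpha}_{-2,\nabla f}(M;S^2T^*M)$ once $h$ is small and decays; the decay $|h|\to 0$ is part of the definition of $W$ being small in $C^3_{-1}$, and the asymptotics in Lemma~\ref{Lem_linear_identities}\ref{Lem_linear_identities_f} / Claim~\ref{Cl_kLh_int} together with uniqueness in Proposition~\ref{Prop_Lundardi_weighted} pin down the weighted space. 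Having placed $h$ in $C^{k+2,\alpha}_{-2,\nabla f}$, decompose $h = \kappa' + h^\perp$; then $h^\perp$ solves the perpendicular equation with parameters $(\gamma',\kappa')$, so by the uniqueness clause of the Implicit Function Theorem (after shrinking $W$) we get $h^\perp = H^\perp(\gamma',\kappa')$, i.e. $h = F(\gamma',\kappa')$. The family version is the parametrized Implicit Function Theorem: $H_2(x) := \mathrm{proj}_K(H_3(x))$ is $C^{m,\beta}$ because $H_3$ is and the projection is bounded linear, and the identities follow from the pointwise statement; the final $C^{m,\beta}$-regularity of $H_3$ in the $C^{k,\alpha}_{-2,\nabla f}$-norm holds because $H_3 = F\circ(H_1,H_2)$ with $F$ real-analytic into that space. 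The main obstacle I anticipate is precisely this regularity/decay bootstrap in \ref{prop_smooth_dependence_e}: showing that an a priori weakly-regular small solution lands in the weighted space $C^{k,\alpha}_{-2,\nabla f}$ rather than merely in $C^3_{-1}$, which requires carefully combining the elliptic regularity of $Q_g$, the weighted Schauder theory of Section~\ref{sec_elliptic}, and the kernel asymptotics of Lemma~\ref{Lem_linear_identities}, and is the only step where the structure of the soliton at infinity is genuinely used.
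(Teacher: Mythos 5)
Your Lyapunov--Schmidt reduction for parts \ref{prop_smooth_dependence_a}--\ref{prop_smooth_dependence_d} is essentially the paper's argument: the paper packages the same reduction as an invertible augmented map $\Delta(\gamma',\kappa',h') = (\gamma',\kappa',\proj_{K^\perp}(\td Q(h'+\kappa',\gamma')))$ and inverts it, which is the same as your parametrized Implicit Function Theorem for the perpendicular equation, and your differential computations for (c), (d) match the paper's.

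The genuine gap is in part \ref{prop_smooth_dependence_e}, precisely at the step you flag as the main obstacle. You rewrite $Q_g(g+h+T(\gamma'-\gamma))=0$ as $Lh = (\text{quadratic error in } h) + L(T(\gamma'-\gamma))$ and assert the error lies in $C^{k,\alpha}_{-2}$ by \eqref{eq_u_star_up}. This is false under the hypotheses: the expansion \eqref{eq_Q_expansion} produces cross terms of the schematic form $T(\gamma'-\gamma) * \nabla^2 h$ and $(g')^{-1} * T(\gamma'-\gamma) * g^{-1} * \nabla^2 h$, and since $T(\gamma'-\gamma)$ is merely bounded (it does not decay), these terms decay only as fast as $\nabla^2 h$, i.e.\ like $r^{-1}$ when $h$ is only known to lie in $C^3_{-1}$; so the decay upgrade from weight $-1$ to $-2$ cannot be obtained by simply placing the nonlinearity on the right-hand side. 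Moreover, your appeal to ``uniqueness in Proposition~\ref{Prop_Lundardi_weighted}'' for the operator $L = \triangle_f + 2\Rm$ does not apply, since $L$ has the nontrivial kernel $K$ (that is the whole reason for the reduction); the uniqueness clause there is only for $\triangle_f - \lambda$ with no zeroth-order perturbation. The paper's fix is exactly to exploit the smallness of $\Vert\gamma'-\gamma\Vert$ as an \emph{operator} smallness rather than a decay gain: it writes $\triangle_f h = R(\gamma'',h) + S(\gamma'',h)$, where $R$ collects all genuinely $O(r^{-2})$ terms (including $2\Rm(h)$, $L(T(\gamma''))$, and the quadratic terms in $h$) and $S(\gamma'',\cdot)$ is the linear map $h \mapsto T(\gamma'')*\nabla^2 h + \cdots$ with operator norm $\leq C\Vert\gamma''\Vert$ in both the $-1$ and $-2$ weighted scales; it then uses invertibility of $\triangle_f$ (not $L$) in $C^{2,\alpha}_{-2,\nabla f}$ to produce a solution $h'$ of $\triangle_f h' = R(\gamma'',h) + S(\gamma'',h')$ with the desired decay, and finally uses invertibility of $\triangle_f$ in the weaker $-1$ scale plus $\Vert h'-h\Vert_{C^{2,\alpha}_{-1,\nabla f}} \leq C\Vert\gamma''\Vert\,\Vert h'-h\Vert_{C^{2,\alpha}_{-1,\nabla f}}$ to conclude $h'=h$ after shrinking $U_1$. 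Without this (or an equivalent perturbation/contraction argument handling the non-decaying coefficient), the bootstrap into $C^{k,\alpha}_{-2,\nabla f}$, and hence the identification $h = F(\gamma',\kappa')$ and the quantitative smallness needed to invoke the uniqueness clause of your IFT, does not go through. The rest of your part (e) (choosing $H_2 = \proj_K\circ H_3$ and recovering the strong-norm regularity of $H_3$ from $H_3 = F\circ(H_1,H_2)$) agrees with the paper once this step is repaired.
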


\begin{proof}
We first establish the following claim.

\begin{Claim} \label{eq_Q_td_real_analytic}
There is an open neighborhood 
\[ \td U \subset  C^{k,\alpha}_{-2,\nabla f} (M; S^2 T^* M) \times \GenCONE^{k^*}(N)  . \]
of $(0,\gamma)$ such that
\[ \td Q :  \td U  \longrightarrow C^{k-2,\alpha}_{-2}(M; S^2 T^*M) \qquad (h',\gamma') \longmapsto Q_g(g+h'+T(\gamma'-\gamma)), \]
defines a real-analytic map between Banach spaces.
\end{Claim}

\begin{proof}
First recall from \eqref{eq_Q_expansion_intro} that
\begin{multline} \label{eq_Q_expansion}
 Q_g(g') = - \nabla_{\nabla f} (g'-g) + \Ric * (g'-g) \\ + (g')^{-1} * \nabla^2 g'  + g' * \nabla^2 g' + (g')^{-1} * (g')^{-1} * \nabla g' * \nabla g'  + \nabla g' * \nabla g',
\end{multline}
where all covariant derivatives are taken with respect to $g$, $(g')^{-1}$ denotes the inverse of $g'$ viewed as a $(2,0)$-tensor and ``$*$'' denotes a multilinear contraction of the listed tensors, possibly after raising or lowering indices with respect to $g$.

Next note that, due to Lemma~\ref{Lem_Holder_norms_properties}, tensorial contractions of the following form are real-analytic:
\begin{multline*}
 C^{k-2,\alpha} (M; T^{a_1}_{b_1} M) \times \ldots \times C^{k-2,\alpha} (M; T^{a_l}_{b_l} M) \times C^{k-2,\alpha}_{-2} (M; T^{a_{l+1}}_{b_{l+1}} M) \longrightarrow C^{k-2,\alpha}_{-2} (M; T^{a_0}_{b_0} M M), \qquad \\
(h_1, \ldots, h_l, h_{l+1}) \longmapsto h_1 * \ldots * h_l * h_{l+1}, 
\end{multline*}
\begin{multline*}
 C^{k-2,\alpha} (M; T^{a_1}_{b_1} M) \times \ldots \times C^{k-2,\alpha}_{-1} (M; T^{a_{l+1}}_{b_{l+1}} M) \times C^{k-2,\alpha}_{-1} (M; T^{a_{l+2}}_{b_{l+2}} M) \longrightarrow C^{k-2,\alpha}_{-2} (M; T^{a_0}_{b_0} M M), \qquad \\
(h_1, \ldots, h_{l+1}, h_{l+2}) \longmapsto h_1 * \ldots * h_{l+1} * h_{l+2}. 
\end{multline*}
So, in particular, the map
\begin{multline*}
 C^{k,\alpha}(M; S^2 T^* M) \times C^{k,\alpha}(M; S^2 TM) \lto C^{k,\alpha}(M; S^2 T^*M) \times C^{k,\alpha}(M; T^1_1 TM), \\
 (g',g'') \longmapsto g' * g'',
\end{multline*}
where $(g'*g'')_i^j = g'_{il} (g'')_{lj}$, is real-analytic.
Since its differential at $(g,g^{-1})$ is invertible, it is locally invertible and its inverse is of the form $(g',\delta) \mapsto (g', (g')^{-1})$, where $\delta \in C^{k,\alpha}(M; T^1_1 M)$ denotes the identity section with components $\delta_i^j$.
So the map $g' \mapsto (g')^{-1} \in C^{k,\alpha}(M; S^2 T^* M)$ is defined in a neighborhood of $g \in C^{k,\alpha}(M; S^2 T^* M)$ and real-analytic.

The claim can therefore be reduced to showing that the following linear operators are bounded:
\begin{alignat*}{2}
 C^{k,\alpha}_{-2,\nabla f} (M; S^2 T^* M) \times T\GenCONE^{k^*}(N)  &\to  C^{k-2,\alpha}_{-2} (M; S^2 T^*M), &\qquad (h',\gamma') &\mapsto \nabla_{\nabla f} (h'+T(\gamma')), \\
 C^{k,\alpha}_{-2,\nabla f} (M; S^2 T^* M) \times T\GenCONE^{k^*}(N)  &\to  C^{k,\alpha}(M; S^2 T^*M), &\qquad (h',\gamma') &\mapsto g+h'+T(\gamma'),  \\
 C^{k-2,\alpha}(M; S^2 T^*M) &\to C^{k-2,\alpha}_{-2}(M; S^2 T^*M), &\qquad h' &\mapsto \Ric * h', \\
 C^{k,\alpha}_{-2,\nabla f} (M; S^2 T^* M) \times T\GenCONE^{k^*}(N)  &\to  C^{k-2,\alpha}_{-1} (M; S^2 T^*M), &\qquad (h',\gamma') &\mapsto \nabla (h'+T(\gamma')), \\ 
 C^{k,\alpha}_{-2,\nabla f} (M; S^2 T^* M) \times T\GenCONE^{k^*}(N) &\to  C^{k-2,\alpha}_{-2} (M; S^2 T^*M), &\qquad (h',\gamma',\kappa') &\mapsto \nabla^2 (h'+T(\gamma')).
\end{alignat*}
This follows using Lemmas~\ref{Lem_nabf_Tgamma}, \ref{Lem_Holder_norms_properties}.
\end{proof}

Using \eqref{eq_DQg}, we find that the differential of $\td Q$ at $(\gamma,0)$ is given by
\begin{multline} \label{eq_DtdQ}
 (D \td Q)_{(\gamma,0)} : C^{2,\alpha}_{-2,\nabla f} (M; S^2 T^* M) \times T\GenCONE^{k^*}(N)  \longrightarrow C^{0,\alpha}_{-2} (M; S^2 T^* M), \qquad \\
(h',\gamma') \longmapsto L ( h' + T(\gamma')), 
\end{multline}
Where $L h := L_g h = \triangle_f h + 2 \Rm(h)$.

Consider now the splittings
\[ C^{k-2,\alpha}_{-2} (M;S^2 T^*M)  = K \oplus K_{k-2}^\perp, \qquad
C^{k,\alpha}_{-2,\nabla f} (M;S^2 T^*M)  = K \oplus K_{k,\nabla f}^\perp \]
from Lemma~\ref{Lem_linear_identities}, together with the projection operators $\proj_K, \proj_{K^\perp_{k-2}}$, $\proj_{K_{k,\nabla f}^\perp}$, and define the real-analytic map
\begin{align*}
 \Delta : \GenCONE^{k^*}(N) \times K \times K^\perp_{k,\nabla f} \longrightarrow \GenCONE^{k^*}(N) \times K \times K^\perp_{k-2},
\\
 (\gamma', \kappa', h') \longmapsto (\gamma',\kappa', \proj_{K^\perp_{k-2}} \big(\td Q (h'+\kappa',\gamma')) \big).
\end{align*}
 Its differential at $(\gamma, 0,0)$ is given by 
\begin{equation} \label{eq_derivative_Delta}
  (\gamma', \kappa', h') \longmapsto \big(\gamma', \kappa', \proj_{K_{k-2}^\perp} ( L (h'+T(\gamma') )) \big). 
\end{equation}
 Since $K^\perp_{k,\nabla f} \to K_{k-2}^\perp$, $h \mapsto \proj_{K_{k-2}^\perp} ( L  h) = L h$ is invertible by Lemma~\ref{Lem_linear_identities}\ref{Lem_linear_identities_d}, this implies that $\Delta$ is invertible near the origin.
 So we can find open neighborhoods:
 \[ (\gamma,0,0) \in U_1 \times U_2 \times U_3 \subset  \GenCONE^{k^*}(N) \times K \times K^\perp_{k-2} \]
 \[  (\gamma,0,0) \in U_1 \times U_2 \times V \subset \GenCONE^{k^*}(N) \times K \times K^\perp_{k,\nabla f} \]
 such that $\Delta |_{U_1 \times U_2 \times V} : U_1 \times U_2 \times V \to U_1 \times U_2 \times U_3$ is invertible and its inverse is of the form
 \[ (\Delta |_{U_1 \times U_2 \times V})^{-1} : U_1 \times U_2 \times U_3 \longrightarrow U_1 \times U_2 \times V, \qquad (\gamma', \kappa', h) \longmapsto (\gamma', \kappa',  F_0(\gamma', \kappa', h)) \]
 for some real-analytic $F_0 : U_1 \times U_2 \times U_3 \to V$ with $F_0 (\gamma, 0,0) = 0$.
 Set
 \begin{alignat*}{2}
  F : U = U_1 \times U_2 &\longrightarrow C^{k,\alpha}_{-2,\nabla f} (M;S^2 T^*M), &\qquad (\gamma',\kappa') &\longmapsto F_0 (\gamma', \kappa',0)+\kappa', \\
  G :  U = U_1 \times U_2 &\longrightarrow K, &\qquad (\gamma',\kappa') &\longmapsto \proj_K \big( \td Q(F(\gamma',\kappa') ,\gamma') \big) . 
\end{alignat*}
Then, by construction,
\[ \proj_{K^\perp_{k-2}} \big(\td Q (F(\gamma',\kappa'),\gamma') \big) 
=\proj_{K^\perp_{k-2}} \big(\td Q (F_0(\gamma',\kappa',0)+\kappa',\gamma') \big) 
=0 \]
and Assertions \ref{prop_smooth_dependence_a}, \ref{prop_smooth_dependence_b} follow immediately.
Assertion~\ref{prop_smooth_dependence_c} follows from the fact that $(D_2F_0)_{(\gamma,0,0)} = 0$, which follows from inverting \eqref{eq_derivative_Delta}.
For Assertion~\ref{prop_smooth_dependence_d}, we compute, using~\eqref{eq_DtdQ}, that for any $\gamma' \in T\GenCONE^{k^*}(N)$
\begin{multline*}
 (D_1 G)_{(\gamma,0)} (\gamma') = \proj_{K} \big( L \big( (D_1 F)_{(\gamma,0)} (\gamma') + T(\gamma')\big) \big) \\
= \proj_{K} \big( L \big( (D_1 F_0)_{(\gamma,0)} (\gamma') + T(\gamma')\big) \big) = \proj_{K} \big(L(T(\gamma')) \big).  
\end{multline*}
The last equality holds since the codomain of $F_0$ is $K_{k,\nabla f}^\perp$ and $L( K_{k,\nabla f}^\perp ) \subset K_{k-2}^\perp$ by Lemma~\ref{Lem_linear_identities}\ref{Lem_linear_identities_d}.
Lemma~\ref{Lem_linear_identities}\ref{Lem_linear_identities_e} implies that $(D_1G)_{\gamma,0}$ is surjective.
So after possibly shrinking $U_1, U_2$, we obtain that $(D_1G)_{(\gamma', \kappa')}$ is surjective for all $(\gamma', \kappa') \in U_1 \times U_2$.
We also verify using \eqref{eq_derivative_Delta} that for any $\kappa' \in K$ we have
\[ (D_2 G)_{(\gamma,0)} (\kappa') = \proj_K \big( L \big( (D_2 F)_{(\gamma,0)} (\kappa') \big) \big)
= \proj_K \big( L \big( (D_2 F_0)_{(\gamma,0)} (\kappa') + \kappa' \big) \big) = 0, \]
which finishes the proof of Assertion~\ref{prop_smooth_dependence_d}.

For Assertion~\ref{prop_smooth_dependence_e} let $W \subset C^3_{-1}(M; S^2 T^*M) \subset C^{2,\alpha}_{-1}(M; S^2 T^*M)$ be a small neighborhood of the origin, which we will determine in the course of the proof.
Suppose first that for some $h \in W $ we have
\begin{equation} \label{eq_Qgghtgpg}
   Q_g (g+h+T(\gamma'-\gamma))= 0. 
\end{equation}
Our goal will be to show that, in fact, $h \in C^{2,\alpha}_{-2,\nabla f}(M; S^2 T^*M)$, after possibly shrinking $U_1$.
To do this, we set $\gamma'' := \gamma' - \gamma$ and expand \eqref{eq_Q_expansion} as follows:
\begin{align}
 \triangle_f h &= - \triangle T(\gamma'') + \nabla_{\nabla f} T(\gamma'') + \Ric * h + \Ric * T(\gamma'')  \notag \\&\qquad + \big( (g')^{-1} - g^{-1} \big) * \nabla^2 (h + T(\gamma''))  + \big( h + T(\gamma'') \big) * \nabla^2 (h + T(\gamma'')) \notag \\ 
 &\qquad + (g')^{-1} * (g')^{-1} * \nabla h * \nabla h 
+ (g')^{-1} * (g')^{-1} * \nabla (T(\gamma'')) * \nabla (T(\gamma'')) \notag \\
&\qquad + (g')^{-1} * (g')^{-1} * h* \nabla (T(\gamma''))
+  \nabla h * \nabla h \notag \\
&\qquad +  \nabla (T(\gamma'')) * \nabla (T(\gamma''))
+  h* \nabla T(\gamma'') . \label{eq_lapfh_form}
\end{align}
Using the fact that
\[ (g')^{-1} - g^{-1} = - (g')^{-1} * (g' - g) * g^{-1} = - (g')^{-1} * (h + T(\gamma'')) * g^{-1}, \]
and the arguments from the proof of Claim~\ref{eq_Q_td_real_analytic}, this allows us to express \eqref{eq_lapfh_form} in the abbreviated form
\begin{equation} \label{eq_lapf_RS}
  \triangle_f h = R(\gamma'',h) + S(\gamma'',h), 
\end{equation}
where
\[ R : U_1 \times W' \longrightarrow C^{0,\alpha}_{-2} (M ; S^2 T^*M) \]
is real analytic
and
\[ S(\gamma'', h) = (g + h + T(\gamma''))^{-1} * T(\gamma'') * g^{-1} * \nabla^2 h + T(\gamma'') * \nabla^2 h \]
is linear in the second argument and real-analytic as a map of the following two forms:
\begin{alignat*}{1}
 S : U_1 \times W' &\lto C^{0,\alpha}_{-1}(M; S^2 T^*M), \\
 S : U_1 \times C^{2,\alpha}_{-2}(M; S^2 T^*M) &\lto C^{0,\alpha}_{-2}(M; S^2 T^*M). 
\end{alignat*}
By Proposition~\ref{Prop_Lundardi_weighted} (here we briefly use the smooth structure on $M$ supplied by Lemma~\ref{Lem_soliton_smooth}), the operator $\triangle_f$ is invertible as a map of both of the following forms:
\begin{alignat}{2}
 \triangle_f &: &C^{2,\alpha}_{-1,\nabla f}(M; S^2 T^*M) &\to C^{0,\alpha}_{-1}(M; S^2 T^*M), \label{eq_lap_f_inv_1} \\
\triangle_f &: & C^{2,\alpha}_{-2,\nabla f}(M; S^2 T^*M) &\to C^{0,\alpha}_{-2}(M; S^2 T^*M). \label{eq_lap_f_inv_2}
\end{alignat}
Consider now the real-analytic map
\begin{multline} \label{eq_U1WC}
U_1 \times W \times C^{2,\alpha}_{-2,\nabla f}(M; S^2 T^*M) \lto U_1 \times W \times C^{0,\alpha}_{-2}(M; S^2 T^*M) \\
 (\gamma'',h,h') \longmapsto  \big( \gamma'', h, R(\gamma'',h)  +  S(\gamma'',h')  - \triangle_f h'  \big). 
\end{multline}
Since its differential at $(0,0,0)$ is invertible, it has a real-analytic inverse near the origin (here we have used \eqref{eq_lap_f_inv_2}), which maps $(\gamma'',h,0)$ to an element of the form $(\gamma'',h,h')$ with
\begin{equation} \label{eq_lapf_RS_p}
   \triangle_f h' = R(\gamma'',h) + S(\gamma'',h'), 
\end{equation}
We claim that $h' = h$ if $\gamma''$ is sufficiently close to the origin.
To see this, we combine \eqref{eq_lapf_RS_p} with \eqref{eq_lapf_RS} to conclude
\[ \triangle_f (h'-h) = S(\gamma'', h'-h). \]
Using the invertibility of $\triangle_f$ as an operator of the form \eqref{eq_lap_f_inv_1}, this implies
\[ \Vert h' - h \Vert_{C^{2,\alpha}_{-1,\nabla f} }
\leq C \Vert S(\gamma'',h'-h) \Vert_{C^{0,\alpha}_{-1} }
\leq C \Vert \gamma'' \Vert \, \Vert h' -h \Vert_{C^{2,\alpha}_{-1,\nabla f} }. \]
So for sufficiently small $\Vert \gamma'' \Vert$, which can be enforced by shrinking $U_1$, we must have $h' = h$.
In addition, 

In summary, we have shown that after shrinking $U_1$ and choosing $W$ appropriately small, any solution $h \in W$ of \eqref{eq_Qgghtgpg} satisfies $C^{2,\alpha}_{-2,\nabla f}(M, \lb S^2 T^*M)$.
Moreover, $h$, as an element of $C^{2,\alpha}_{-2,\nabla f}(M, \lb S^2 T^*M)$, can be recovered as the image of the inverse of the real-analytic map \eqref{eq_U1WC}. 
In addition, our construction of the maps $F$ and $G$, involving the map $\Delta$, in the case $k = 2$ then implies that $h = F(\gamma',\kappa')$ for $\kappa' = \proj_K (h) \in U_2$.
Since the identity $h = F(\gamma', \kappa')$ is independent of $k$, we obtain the same statement for $k > 2$, after possibly shrinking $U_1$ and $W$.

Let us now consider the more general setting of Assertion~\ref{prop_smooth_dependence_d}, involving the maps $H_1$ and $H_3$.
Our previous discussion implies that, in fact $H_3$ is also a $C^{m,\beta}$-regular map of the form $H_3:U' \to C^{2,\alpha}_{-2,\nabla f}(M, S^2 T^*M)$.
As explained in the previous paragraph, we can moreover choose $H_2 := \proj_K \circ H_3$, which inherits the regularity $C^{m,\beta}$.
Lastly, since the identity $H_3(x) = F(H_1(x), H_2(x))$ is independent of $k$, we obtain that $H_3$ is even $C^{m,\beta}$-regular as a map of the form $H_3:U' \to C^{k,\alpha}_{-2,\nabla f}(M, S^2 T^*M)$.
This finishes the proof of Assertion~\ref{prop_smooth_dependence_d}.
\end{proof}

\bigskip

\subsection{Local structure of $\MM$}
We will now use the local characterization from the previous subsection and deduce a local characterization of the space $\MM$ near $\MMgrad$.
In doing so, we will obtain the desired $C^{1,\alpha}$-Banach manifold structure on a neighborhood of $\MMgrad$ in $\MM$, see Corollary~\ref{Cor_MM_is_Banach_manifold} below.
The topology of this Banach manifold structure agrees with the topology induced by the metric on $\MM$, as defined in Subsection~\ref{subsec_metric_on_MM}.

\begin{Proposition} \label{Prop_Banach_manifold}
Let $(M,N,\iota)$ be an ensemble, $4 \leq k \leq k^*-20$ be an integer, $\alpha \in (0,1)$ and write $\MM := \MM^{k^*} (M,N,\iota)$.
Fix a $C^{k^*-2}$-regular representative $(g,V=\nabla^g f,\gamma)$ of an element $p \in \MMgrad := \MM_{\grad}^{k^*}(M,N,\iota)$ and denote by $K_p = K$ the kernel of $L_g$ as in Proposition~\ref{prop_smooth_dependence}.
Then there is a Banach space $X_p$, an open neighborhood of the origin $U_p \subset X_p$ and a map
\[ \Phi_p : U_p \longrightarrow \MM \]
with the following properties (as in Proposition~\ref{prop_smooth_dependence}, all weighted H\"older norms are taken with respect to the gradient soliton $(M,g,f)$, unless mentioned otherwise):
\begin{enumerate}[label=(\alph*)]
\item \label{Prop_Banach_manifold_a} $\Phi_p(0) = p$.
\item \label{Prop_Banach_manifold_b} $\Phi_p(U_p) \subset \MM$ is open and $\Phi_p : U_p \to \Phi_p(U_p)$ is a homeomorphism.
\item \label{Prop_Banach_manifold_c} The map $\Pi \circ \Phi_p : U_p \to \GenCONE^{k^*}(N)$ is real-analytic and its differential at the origin has Fredholm index $0$ and nullity equal to the nullity of $L_g$.
\item \label{Prop_Banach_manifold_d} There is a real-analytic map $H_p : U_p \to C^{k,\alpha}_{-2,\nabla f}(M; S^2 T^*M)$ such that for any $x \in U_p$ the image $\Phi_p(x)$ is represented by $(\psi_x^* g_x, \psi_x^* V_x, \gamma_x)$, where $\psi_x : M \to M$ is a $C^2$-diffeomorphism of finite displacement and
\[ g_x := g + H_p(x) + T(\Pi(\Phi_p(x))) -T(\gamma), \qquad V_x = \nabla^g f - \DIV_g (g_x) + \tfrac12 \nabla^g \tr_g (g_x). \]
Moreover, the metric $g_x$ satisfies $Q_g (g_x) = 0$ for all $x \in U_p$.
\item \label{Prop_Banach_manifold_e} Suppose that $\td U \subset \td X$ is an open subset of a Banach space $\td X$ and $\td\Phi : \td U \to \Phi_p(U_p) \subset \MM$ is a continuous map such that the following is true:
\begin{itemize}
\item The map $\Pi \circ \td\Phi : \td U \to \GenCONE^{k^*}(N)$ is $C^{1,\alpha}$; set $\td\gamma_{\td x} := \Pi (\td\Phi(\td x))$ for all $\td x \in \td U$.
\item There is a $C^{k^*-2}$-regular representative $(\td g, \td V = \nabla^{\td g} \td f, \td \gamma )$ of an element of $\MMgrad$ and a $C^{1,\alpha}$-regular map 
\[ \td H : \td U \longrightarrow C^{k,\alpha}_{ \td g, -2}(M; S^2 T^*M) \] (where the $\td g$-subscript indicates that the H\"older space is taken with respect to the gradient soliton $(M, \td g, \td f)$) such that for any $\td x \in \td U$ the image $\td\Phi(\td x)$ is represented by $(\td\psi_{\td x}^* \td g_{\td x}, \td\psi_{\td x}^* \td V_{\td x}, \td\gamma_{\td x})$, where $\td\psi_{\td x} : M \to M$ is a $C^2$-diffeomorphism of finite displacement and
\[\qquad\qquad  \td g_{\td x} = \td g + \td H(\td x) + T(\Pi(\td\Phi(\td x))) - T(\td\gamma), \qquad \td V_{\td x} = \nabla^{\td g} \td f - \DIV_{\td g} (\td g_{\td x}) + \tfrac12 \nabla^{\td g} \tr_{\td g} (\td g_{\td x}).  \]
\end{itemize}
Then the map $\Phi_p^{-1} \circ \td\Phi : \td U \to X_p$ is of regularity $C^{1,\alpha}$.
Moreover, if $\td\Phi(\td x_0) = \Phi_p(0)$ for some $\td x_0 \in \td U$, then the differential of $\Phi_p^{-1} \circ \td\Phi$ at $\td x_0$ can be characterized as follows.
Suppose that we have $(M,\td\psi_{\td x_0}^* \td g_{\td x_0},\td\psi_{\td x_0}^* \td V_{\td x_0}) = (M,g,V)$, let $\td v \in \td X$ and set
\[ v := d(\Phi_p^{-1} \circ \td\Phi)_{\td x_0} (\td v), \qquad \dot\gamma := d(\Pi\circ\td\Phi)_{\td x_0} (\td v) \]
and
\begin{equation} \label{eq_infinitesimal_version}
 \dot g := (d H_p)_{0}(v) + T(\dot\gamma), \qquad 
\dot{\td g} := (d\td H_p)_{\td x_0}(\td v) + T(\dot\gamma). 
\end{equation}
Then we have
\[ \dot g = \td\psi_{\td x_0}^*  \dot{\td g} + \LL_Y g, \]
where $Y \in C^2_{g,-1,\nabla f}(M; TM)$ is the unique vector field satisfying
\[ \qquad\qquad \triangle_{g,f} Y - \tfrac12 Y = \DIV_{\td\psi_{\td x_0}^* \td g} (\td\psi_{\td x_0}^* \dot{\td g}) - \tfrac12 \nabla^{\td\psi_{\td x_0}^* \td g} \tr_{\td\psi_{\td x_0}^* \td g}  (\td\psi_{\td x_0}^* \dot{\td g}) - \DIV_g (\td\psi_{\td x_0}^* \dot{\td g}) + \tfrac12 \nabla^g \tr_g (\td\psi_{\td x_0}^* \dot{\td g}). \]
\item \label{Prop_Banach_manifold_f} Consider the setting of Proposition~\ref{prop_smooth_dependence}.
There is a real-analytic map $\Xi_p : U_p \to K$ such that the map
\[ U_p \to \GenCONE^{k^*} (N) \times K, \qquad x \mapsto (\Pi(\Phi_p(x)), \Xi_p(x)) \]
has image in $\{ G = 0 \} \subset U_1 \times U_2$ and is a diffeomorphism onto its image between Banach manifolds.
Moreover, for all $x \in U_p$ we have
\[ H_p (x) = F(\Pi(\Phi_p(x)), \Xi_p(x)). \]
\end{enumerate}
\end{Proposition}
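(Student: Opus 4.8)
\textbf{Plan of proof for Proposition~\ref{Prop_Banach_manifold}.}
The strategy is to use Proposition~\ref{prop_smooth_dependence} to build a local model for $\MM$ near $p$ out of the real-analytic solution manifold $\{G = 0\} \subset U_1 \times U_2$, then to define $X_p$, $U_p$ and $\Phi_p$ by composing this model with the gauging-at-infinity procedure of Proposition~\ref{Prop_gauge_infinity_families}. Concretely, I would first set $X_p$ to be the tangent space of $\{G=0\}$ at $(\gamma,0)$, or more simply work directly with a small neighborhood $U_p \subset \{G=0\}$ (which by Proposition~\ref{prop_smooth_dependence}\ref{prop_smooth_dependence_d} is a real-analytic Banach submanifold of $U_1 \times U_2$ through $(\gamma,0)$). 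For $x = (\gamma', \kappa') \in U_p$ put $g_x := g + F(\gamma',\kappa') + T(\gamma'-\gamma)$ and $V_x := \nabla^g f - \DIV_g(g_x) + \tfrac12 \nabla^g \tr_g(g_x)$; by Proposition~\ref{prop_smooth_dependence}\ref{prop_smooth_dependence_b} and \eqref{eq_Q_soliton_equivalence}, since $G(x) = 0$, the pair $(g_x, V_x)$ is an expanding soliton in the DeTurck gauge. This gives a continuous (in fact real-analytic in the $C^{k,\alpha}_{-2,\nabla f}$-topology) family, with $V_x - \nabla^g f \in C^{k-1,\alpha}_{-1}$ and $g_x - g - T(\gamma'-\gamma) \in C^{k,\alpha}_{-2}$, so the hypotheses of Proposition~\ref{Prop_gauge_infinity_families} are met. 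Applying that proposition produces $C^{k+1}$-diffeomorphisms $\psi_x$ of finite displacement such that $(\psi_x^* g_x, \psi_x^* V_x, \gamma_x)$ (with $\gamma_x = \gamma'$) represents an element of $\MM$; I then define $\Phi_p(x)$ to be this element, $H_p := F(\Pi\circ\Phi_p, \Xi_p)$ with $\Xi_p := \mathrm{pr}_{U_2}$, and $\Xi_p$ itself is just the second coordinate of the embedding $U_p \hookrightarrow \{G=0\}$. This immediately yields \ref{Prop_Banach_manifold_a}, \ref{Prop_Banach_manifold_d}, \ref{Prop_Banach_manifold_f}, and \ref{Prop_Banach_manifold_c} (the real-analyticity and Fredholm statements for $\Pi\circ\Phi_p$ follow from Proposition~\ref{prop_smooth_dependence}\ref{prop_smooth_dependence_d} and Lemma~\ref{Lem_linear_identities}\ref{Lem_linear_identities_b},\ref{Lem_linear_identities_e}, since the differential of $\gamma' \mapsto \proj_K(L(T\gamma'))$ has kernel of dimension $\dim K - \dim(\text{image})$; matching nullity with $\Null(L_g)$ uses that $\dim K = \Null(L_g)$ together with surjectivity of that map onto $K$).

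The two substantive points are \ref{Prop_Banach_manifold_b} (that $\Phi_p$ is a homeomorphism onto an open subset, with the Banach-manifold topology agreeing with the metric topology of Subsection~\ref{subsec_metric_on_MM}) and \ref{Prop_Banach_manifold_e} (the $C^{1,\alpha}$-compatibility of two such charts). For \ref{Prop_Banach_manifold_b}, injectivity of $\Phi_p$ follows from the uniqueness clauses: if $\Phi_p(x_1) = \Phi_p(x_2)$ then the DeTurck representatives $g_{x_1}, g_{x_2}$ are related by a diffeomorphism close to the identity, and Proposition~\ref{Prop_gauge}\ref{Prop_gauge_b} forces this diffeomorphism to be trivial, hence $g_{x_1} = g_{x_2}$ and then $\gamma' = \Pi(\Phi_p(x_j))$ and $\kappa' = \proj_K(g_{x_j} - g - T(\gamma'-\gamma))$ agree. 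Continuity of $\Phi_p$ in the metric of Subsection~\ref{subsec_metric_on_MM} follows from the continuity of $x \mapsto (g_x, V_x)$ in $C^{k,\alpha}_{\loc}$ (hence in $C^0_{\loc}$) together with the continuity properties of the maps $\iota_x, \psi_x$ in Proposition~\ref{Prop_gauge_infinity_families}\ref{Prop_gauge_infinity_families_c}, which control the quantities $d'_j$ and $\max|\Rm|$ entering Definition~\ref{Def_MM_metric}. For openness and continuity of $\Phi_p^{-1}$, I would use Proposition~\ref{Prop_converging_representatives} (equivalently Lemma~\ref{Lem_good_reps_technical}): given $q \in \MM$ close to $p$, pick $C^{k^*-5}$-regular representatives $(g', V', \gamma')$ converging in $C^{k^*-5}_{\loc}$ to $(g,V,\gamma)$, apply Proposition~\ref{Prop_gauge}\ref{Prop_gauge_a} to bring $(g', V')$ into the DeTurck gauge with respect to $(g, f)$ with small $C^{k+1,\alpha}_{-1}$-error, then invoke Proposition~\ref{prop_smooth_dependence}\ref{prop_smooth_dependence_e} (with $U'$ a point, or a curve) to conclude $h = g' - g - T(\gamma'-\gamma)$ lies in the image of $F$, i.e. $q = \Phi_p(\gamma', \proj_K(h))$; the uniqueness built into Proposition~\ref{prop_smooth_dependence}\ref{prop_smooth_dependence_e} shows this inverse is well defined, and the quantitative bounds in Proposition~\ref{Prop_gauge}\ref{Prop_gauge_a} show it is continuous. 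This is the step I expect to be the main obstacle: one must carefully match three different gauges (gauge at infinity, DeTurck gauge, and the DeTurck gauge of the base point), keeping track of the weighted H\"older norms and the finite-displacement condition throughout, and verify that the compositions of all the diffeomorphisms involved stay close to the identity so that the uniqueness statements apply.

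For \ref{Prop_Banach_manifold_e}, the chart-compatibility, the idea is that both $\td\Phi(\td x)$ and $\Phi_p(\Phi_p^{-1}(\td\Phi(\td x)))$ represent the same element of $\MM$, so their DeTurck representatives $\td\psi_{\td x}^* \td g_{\td x}$ and $g_{x}$ (for $x = \Phi_p^{-1}(\td\Phi(\td x))$) differ by a diffeomorphism. Applying Proposition~\ref{Prop_gauge}\ref{Prop_gauge_a},\ref{Prop_gauge_c} to the family $\td g_{\td x}$ (viewed relative to the base soliton $(g,f)$) produces, for each $\td x$, a diffeomorphism $\phi_{\td x}$ with $(g,f)$-DeTurck representative $\phi_{\td x}^* \td g_{\td x} = g + h''_{\td x} + T(\td\gamma_{\td x} - \gamma)$, and the assignment $\td x \mapsto h''_{\td x}$ is $C^{1,\alpha}$ by Proposition~\ref{Prop_gauge}\ref{Prop_gauge_c} (precomposed with the $C^{1,\alpha}$-maps $\td x \mapsto \td H(\td x)$, $\td x \mapsto \td\gamma_{\td x}$ and a change-of-base-point which is real-analytic by the arguments of Claim~\ref{eq_Q_td_real_analytic}). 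By the uniqueness in Proposition~\ref{prop_smooth_dependence}\ref{prop_smooth_dependence_e}, $h''_{\td x} = F(\td\gamma_{\td x}, \proj_K(h''_{\td x}))$, so $\Phi_p^{-1}\circ\td\Phi = (\td\gamma_{(\cdot)}, \proj_K(h''_{(\cdot)}))$ as a map into $\{G=0\}$, which is therefore $C^{1,\alpha}$. The formula for the differential is obtained by differentiating the relation $\phi_{\td x}^* \td g_{\td x} = g_x$ at $\td x_0$: the variation of the left side is $\td\psi_{\td x_0}^* \dot{\td g} + \LL_Y g$ where $Y$ is the infinitesimal DeTurck gauge-fixing vector field, which by the computation at the origin in Proposition~\ref{Prop_gauge}\ref{Prop_gauge_c} (see \eqref{eq_gauging_differential}, adapted to the present gauge difference) is the unique solution in $C^2_{g,-1,\nabla f}$ of $\triangle_{g,f} Y - \tfrac12 Y$ equal to the displayed right-hand side, while the variation of the right side is $\dot g$ by \eqref{eq_infinitesimal_version}. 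Equating the two gives the stated identity. Finally, \ref{Prop_Banach_manifold_c} for general charts, i.e. that the Fredholm index and nullity are chart-independent, follows from \ref{Prop_Banach_manifold_e} since the transition maps are $C^{1,\alpha}$-diffeomorphisms.
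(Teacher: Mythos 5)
Your plan follows the paper's proof almost verbatim: you build the chart from the solution set $\{G=0\}$ of Proposition~\ref{prop_smooth_dependence}, gauge the resulting DeTurck family at infinity via Proposition~\ref{Prop_gauge_infinity_families} to define $\Phi_p$, prove injectivity through the uniqueness clause of Proposition~\ref{Prop_gauge}, obtain openness by combining Corollary~\ref{Cor_conv_weighted}/Proposition~\ref{Prop_converging_representatives} with Proposition~\ref{Prop_gauge}\ref{Prop_gauge_a} and Proposition~\ref{prop_smooth_dependence}\ref{prop_smooth_dependence_e}, and handle chart compatibility by re-gauging the second family into the $(g,f)$-DeTurck gauge and invoking Proposition~\ref{Prop_gauge}\ref{Prop_gauge_c}. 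The only structural difference is cosmetic: the paper parameterizes $\{G=0\}$ as a graph over $\ker DG_{(\gamma,0)}$ via the Implicit Function Theorem (the map $Y_p$), whereas you work with the submanifold directly; this changes nothing essential.

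There is, however, one step you assert without justification, and it is exactly the step the paper spends the most effort on. In your argument for Assertion~\ref{Prop_Banach_manifold_e} you conclude from $h''_{\td x}=F(\td\gamma_{\td x},\proj_K(h''_{\td x}))$ that $\Phi_p^{-1}\circ\td\Phi=(\td\gamma_{(\cdot)},\proj_K(h''_{(\cdot)}))$. What this actually requires is the identity $\Phi_p\big(\td\gamma_{\td x},\proj_K(h''_{\td x})\big)=\td\Phi(\td x)$ \emph{in} $\MM$, i.e.\ that the metric $g+h''_{\td x}+T(\td\gamma_{\td x}-\gamma)$, after being gauged at infinity, represents the same equivalence class as the given representative $\td\psi^*_{\td x}\td g_{\td x}$; a priori the various gauging diffeomorphisms could compose to a diffeomorphism that is not the identity outside a compact set, in which case the two triples would not be equivalent under Definition~\ref{Def_MM}. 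In the paper this is the $\Theta=\Theta'$ argument: one forms the diffeomorphism $\omega$ relating the two DeTurck representatives, uses the uniqueness of the map $\iota_x$ from Proposition~\ref{Prop_gauge_infinity_families}\ref{Prop_gauge_infinity_families_h} (finite displacement is essential here) to conclude $\omega\circ\iota_{\Theta'(\td x)}=\iota_{\Theta(\td x)}$, deduces that the conjugated map $\omega'$ is the identity outside a compact set, and only then applies the injectivity of $\Phi_p$. The same verification is also needed at the end of your openness argument (your ``$q=\Phi_p(\gamma',\proj_K(h))$''), which is the ``$p_i=\Phi_p(x_i)$'' step in the paper's proof. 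You flag gauge-matching as the expected obstacle, so the missing ingredient is compatible with your setup, but as written the identification of the transition map (and of the inverse chart) is not proved; supplying the $\iota$-uniqueness/finite-displacement argument closes the gap.
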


As a consequence, we obtain

\begin{Corollary} \label{Cor_MM_is_Banach_manifold}
Let $(M,N,\iota)$ be an ensemble, $2 \leq k \leq k^*-20$ be an integer, $\alpha \in (0,1)$ and write $\MM := \MM^{k^*} (M,N,\iota)$, $\MMgrad := \MMgrad^{k^*} (M,N,\iota)$.
Then the inverses of the maps
$\{ \Phi_p : U_p \to \MM \}_{p \in \MMgrad}$
form a $C^{1,\alpha}$-Banach manifold atlas on the neighborhood $$\MMgrad \subset \MM':= \bigcup_{p \in \MMgrad}\Phi_p(U_p) \subset \MM$$ that is compatible with the subspace topology induced by the metric $\MM$ (see Subsection~\ref{subsec_metric_on_MM}).
Moreover, the map $\Pi : \MM' \to \GenCONE^{k^*}(N)$ is of regularity $C^{1,\alpha}$ with respect to this atlas and the maps $\Pi \circ \Phi_p$ are real-analytic.
\end{Corollary}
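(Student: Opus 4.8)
\textbf{Plan of proof for Corollary~\ref{Cor_MM_is_Banach_manifold}.}
The plan is to assemble the statement directly from Proposition~\ref{Prop_Banach_manifold}. First I would verify that the collection $\{\Phi_p^{-1}\}_{p \in \MMgrad}$ is a genuine atlas: by Assertion~\ref{Prop_Banach_manifold_b} each $\Phi_p : U_p \to \Phi_p(U_p)$ is a homeomorphism onto an open subset of $\MM$, and $\MMgrad \subset \bigcup_p \Phi_p(U_p) = \MM'$ since $\Phi_p(0) = p$ by Assertion~\ref{Prop_Banach_manifold_a}. So the $\Phi_p(U_p)$ form an open cover of $\MM'$ containing $\MMgrad$, and each chart domain is homeomorphic to an open subset $U_p$ of a Banach space $X_p$. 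The only nontrivial atlas axiom is compatibility of overlaps: for $p, q \in \MMgrad$ I must show that the transition map
\[
\Phi_q^{-1} \circ \Phi_p \big|_{\Phi_p^{-1}(\Phi_p(U_p) \cap \Phi_q(U_q))} : \; \Phi_p^{-1}\big(\Phi_p(U_p) \cap \Phi_q(U_q)\big) \lto X_q
\]
is of regularity $C^{1,\alpha}$ on the (open) overlap. This is exactly what Assertion~\ref{Prop_Banach_manifold_e} gives, applied with $\td\Phi := \Phi_p$ restricted to the overlap and $\td X := X_p$, $\Phi_p := \Phi_q$: one only needs to check that $\Phi_p$ satisfies the two bulleted hypotheses of that assertion, namely that $\Pi \circ \Phi_p$ is $C^{1,\alpha}$ (indeed real-analytic, by Assertion~\ref{Prop_Banach_manifold_c}) and that $\Phi_p$ admits the structural description with a $C^{1,\alpha}$ map $H_p$ into a weighted H\"older space of $(0,2)$-tensors and diffeomorphisms $\psi_x$ of finite displacement — which is precisely Assertion~\ref{Prop_Banach_manifold_d}. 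Thus $\Phi_q^{-1} \circ \Phi_p$ is $C^{1,\alpha}$, and by symmetry so is $\Phi_p^{-1} \circ \Phi_q$, establishing that the atlas is $C^{1,\alpha}$.

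Next I would address regularity of $\Pi$ on $\MM'$. In the chart $\Phi_p$ the composition $\Pi \circ \Phi_p : U_p \to \GenCONE^{k^*}(N)$ is real-analytic by Assertion~\ref{Prop_Banach_manifold_c}, in particular $C^{1,\alpha}$; since this holds in every chart of the atlas and the atlas is $C^{1,\alpha}$-compatible, $\Pi|_{\MM'}$ is a well-defined $C^{1,\alpha}$ map into the Banach space ambient to $\GenCONE^{k^*}(N)$ (recall $\GenCONE^{k^*}(N)$ is an open subset of $T\GenCONE^{k^*}(N)$ after fixing a basepoint, so "real-analytic" and "$C^{1,\alpha}$" make sense), and in the charts $\Phi_p$ it is even real-analytic. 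Finally, compatibility of the Banach-manifold topology with the metric topology on $\MM$ from Subsection~\ref{subsec_metric_on_MM}: each $\Phi_p(U_p)$ is open in $(\MM, d)$ and $\Phi_p$ is a homeomorphism onto it for the subspace metric topology (Assertion~\ref{Prop_Banach_manifold_b}), so the manifold topology on $\MM'$ generated by the charts coincides with the subspace topology it inherits from $(\MM,d)$; in particular $\MM'$ is metrizable, hence the Banach-manifold structure is of the type considered throughout the paper (paracompact, metrizable).

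One small point of care: the statement of the corollary allows $2 \leq k \leq k^* - 20$, whereas Proposition~\ref{Prop_Banach_manifold} is stated for $4 \leq k \leq k^*-20$. For $k \in \{2,3\}$ one applies Proposition~\ref{Prop_Banach_manifold} with $k$ replaced by, say, $4$ (which is permissible since $k^* \geq 30$), obtaining the atlas and all regularity assertions for that larger value; the resulting Banach-manifold structure and the $C^{1,\alpha}$/real-analyticity of $\Pi$ are independent of the auxiliary choice of $k$ used in the construction, since the charts $\Phi_p$ and their transition maps are intrinsically defined via the soliton equation in the DeTurck gauge (cf.\ Proposition~\ref{prop_smooth_dependence} and Assertions~\ref{Prop_Banach_manifold_d}, \ref{Prop_Banach_manifold_f}). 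I do not expect a genuine obstacle here — the corollary is essentially a repackaging of Proposition~\ref{Prop_Banach_manifold} — but the step requiring the most attention is the verification that $\Phi_p$ (restricted to an overlap) literally fits the hypotheses of Assertion~\ref{Prop_Banach_manifold_e}, i.e.\ checking that the data $(H_p, \psi_x, g_x, V_x)$ supplied by Assertion~\ref{Prop_Banach_manifold_d} is of exactly the form demanded there, including the finite-displacement property of the diffeomorphisms and the weighted-H\"older codomain of $H_p$ with respect to the gradient soliton attached to $p$.
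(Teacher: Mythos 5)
Your proposal is correct and follows essentially the same route as the paper: the paper's proof also reduces everything to checking the transition maps $\Phi_q^{-1}\circ\Phi_p$ on overlaps, whose $C^{1,\alpha}$-regularity is obtained exactly as you do, by feeding the structural data of Proposition~\ref{Prop_Banach_manifold}\ref{Prop_Banach_manifold_d} into Assertion~\ref{Prop_Banach_manifold_e}, with the topology statement and the regularity of $\Pi$ coming from Assertions~\ref{Prop_Banach_manifold_b} and \ref{Prop_Banach_manifold_c}. Your extra remark about the range $2\leq k\leq k^*-20$ versus $4\leq k\leq k^*-20$ is a sensible clarification of a point the paper dismisses via the remark that the choice of $k$ is inessential.
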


\begin{Remark}
The choice of $k$ and $\alpha$ is not essential.
An inspection of the proof of Proposition~\ref{prop_smooth_dependence} shows that different choices of $k$, $\alpha$ lead to Banach manifold atlases whose charts arise by restrictions to smaller subsets.
This fact is, however, not important in the context of this paper, so it suffices to imagine the parameters $k^*,k,\alpha$ as fixed.
\end{Remark}

\begin{proof}[Proof of Corollary~\ref{Cor_MM_is_Banach_manifold}.]
For any $p_1, p_2 \in \MM$ consider the transition map 
\[ \Phi_{p_2}^{-1} \circ \Phi_{p_1}|_{\Phi_{p_1}^{-1} (\Phi_{p_1}(U_{p_1}) \cap \Phi_{p_2} (U_{p_2}))} : \Phi_{p_1}^{-1} (\Phi_{p_1}(U_{p_1}) \cap \Phi_{p_2} (U_{p_2})) \longrightarrow \Phi_{p_2}^{-1} (\Phi_{p_1}(U_{p_1}) \cap \Phi_{p_2} (U_{p_2})) . \]
This map is of regularity $C^{1,\alpha}$ by Proposition~\ref{Prop_Banach_manifold}\ref{Prop_Banach_manifold_d} and \ref{Prop_Banach_manifold_e}.
\end{proof}
\medskip

\begin{proof}[Proof of Proposition~\ref{Prop_Banach_manifold}]
Let $F : U = U_1 \times U_2 \to C^{k,\alpha}_{-2,\nabla f}(M;S^2T^*M)$ and $G : U \to K$ be the real-analytic maps from Proposition~\ref{prop_smooth_dependence} applied to $(M,\nabla f, \gamma)$.
Let $X_p \subset T\GenCONE^{k^*}(N) \times K$ be the kernel of the differential $DG_{(\gamma,0)}$.
By Proposition~\ref{prop_smooth_dependence}\ref{prop_smooth_dependence_d} we have $X_p = X'_p \times K$ for some closed subspace $X'_p \subset T\GenCONE^{k^*}(N)$ whose codimension equals $\dim K$; let $X''_p \subset T\GenCONE^{k^*}(N)$ be a complementary subspace and write 
\begin{equation} \label{eq_GenCone_splitting}
\gamma = (\gamma'_0, \gamma''_0) \in \GenCONE^{k^*}(N) \subset X'_p \times X''_p.
\end{equation}
By the Implicit Function Theorem, we can find an open neighborhood of the origin $U_p \subset X_p = X'_p \times K$ and a real-analytic map $Y_p : U_p \to X''_p$ such that $Y_p(0,0) = \gamma''_0$ and $(D Y_p)_{(0,0)} = 0$ and such that for any $(\gamma',\kappa') \in U_p$ we have $G(\gamma'_0 + \gamma' + Y_p(\gamma',\kappa'),\kappa') = 0$.
Set
\[ H_p : U_p \to C^{k,\alpha}_{-2,\nabla f}(M; S^2 T^*M), \qquad x \mapsto F (\gamma'_0 + \gamma' + Y_p(\gamma',\kappa'),\kappa') \]
and
\[ h_{(\gamma',\kappa')} := F_p (\gamma'_0 + \gamma' + Y_p(\gamma',\kappa'),\kappa').  \]
So for any $(\gamma',\kappa') \in U_p$ we have
\begin{equation} \label{eq_Q_h_gp_kp}
 Q_g\big(g_x \big) = 0,
\end{equation}
where
\[ g_x := g + h_x + T(\gamma'+ Y_p(\gamma',\kappa')). \]
In the following we will often denote the elements of $X_p$ by $x$ and forget that these denote pairs of the form $(\gamma',\kappa')$.

The identity \eqref{eq_Q_h_gp_kp} implies that for each $x \in U_p$ the metric $g_x$ satisfies the expanding soliton equation
\[ \Ric_{g_x} + \tfrac12 \LL_{V_x} g_x + \tfrac12 g_x = 0, \]
where
\begin{equation} \label{eq_Vx_def}
 V_{x} := \nabla^g f - \DIV_g(g_{x}) + \tfrac12 \nabla^g \tr_g (g_{x}). 
\end{equation}
Note that this implies that the following map is smooth:
\[ U_p \to C^{k-1,\alpha}_{-2}(M; TM), \qquad x \mapsto V_x - V_0 = V_x- \nabla^g f. \]
We can now apply Proposition~\ref{Prop_gauge_infinity_families} to obtain the $C^k$-embeddings $(\iota_x : \IR_+ \times N \to M)_{x \in U_p}$ and a family of $C^k$-diffeomorphisms $(\psi_x : M \to M)_{x \in U_p}$, after possibly shrinking $U_p$, such that the assertions of this proposition hold for some uniform constant $R_0 < \infty$.
So
\[ (g'_x, V'_x, \gamma_x) := (\psi^*_x g_x, \psi^*_x V_x, \gamma_x) \]
represents an element $\Phi_p(x) \in \MM$ and the maps $\td\iota_x : \IR_+ \times N \to M$ from Lemma~\ref{Lem_MM_regular_rep}\ref{Lem_MM_regular_rep_c} satisfy
\begin{equation} \label{eq_iotax_tdiotax}
 \iota_x = \psi_x \circ \td\iota_x.  
\end{equation}
Since $\Pi ( \Phi_p (x)) = \gamma_{x}$, this implies Assertions~\ref{Prop_Banach_manifold_a}, \ref{Prop_Banach_manifold_c}, \ref{Prop_Banach_manifold_d}, \ref{Prop_Banach_manifold_f}.

Let us now prove Assertion~\ref{Prop_Banach_manifold_b}.

\begin{Claim} \label{Cl_Phi_injective}
After possibly shrinking $U_p$, the map $\Phi_p : U_p \to \MM$ is injective.
\end{Claim}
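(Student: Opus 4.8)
The plan is to prove injectivity of $\Phi_p$ near the origin by a contradiction argument using the metric structure on $\MM$ from Subsection~\ref{subsec_metric_on_MM} together with the uniqueness statements already established for the DeTurck gauge and the gauge at infinity. Suppose no such neighborhood exists; then there are sequences $x_i \neq x_i'$ in $X_p$ with $x_i, x_i' \to 0$ and $\Phi_p(x_i) = \Phi_p(x_i')$. The first step is to record what $\Phi_p(x_i) = \Phi_p(x_i')$ means at the level of representatives: by Assertion~\ref{Prop_Banach_manifold_d} (already proved above), the triples $(\psi_{x_i}^* g_{x_i}, \psi_{x_i}^* V_{x_i}, \gamma_{x_i})$ and $(\psi_{x_i'}^* g_{x_i'}, \psi_{x_i'}^* V_{x_i'}, \gamma_{x_i'})$ are equivalent in $\MM$, so in particular $\gamma_{x_i} = \gamma_{x_i'}$ and there is a $C^1$-diffeomorphism $\chi_i : M \to M$ equal to the identity outside a compact set with $\chi_i^* (\psi_{x_i}^* g_{x_i}) = \psi_{x_i'}^* g_{x_i'}$ and similarly for the vector fields. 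Undoing the gauging-at-infinity diffeomorphisms $\psi_{x_i}, \psi_{x_i'}$, this produces a diffeomorphism $\theta_i := \psi_{x_i} \circ \chi_i \circ \psi_{x_i'}^{-1}$ with $\theta_i^* g_{x_i} = g_{x_i'}$ and $\theta_i^* V_{x_i} = V_{x_i'}$; since $\psi_{x_i}, \psi_{x_i'}$ have uniformly finite displacement (Proposition~\ref{Prop_gauge_infinity_families}\ref{Prop_gauge_infinity_families_g}) and $\chi_i$ is the identity near infinity, $\theta_i$ has finite displacement and is the identity outside a compact set after composing with the interpolation maps — more precisely one gets $\sup_{x\in M} d_g(\theta_i(x),x) < \infty$.

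The second step is to gain quantitative control. Since $x_i, x_i' \to 0$, the real-analyticity of $H_p$ (Assertion~\ref{Prop_Banach_manifold_d}) and of $x \mapsto V_x - \nabla^g f$ gives $\Vert g_{x_i} - g \Vert_{C^{k+1,\alpha}_{-1}} \to 0$, $\Vert g_{x_i'} - g \Vert_{C^{k+1,\alpha}_{-1}} \to 0$ (after accounting for the $T(\gamma_{x_i}-\gamma)$ term, which also goes to zero in the relevant norm because $\gamma_{x_i} \to \gamma$ in $T\GenCONE^{k^*}(N)$). Now $\theta_i$ is a diffeomorphism taking one near-$g$ metric to another near-$g$ metric; standard elliptic estimates for the identity $\theta_i^* g_{x_i} = g_{x_i'}$ (together with finite displacement) force $\Vert \theta_i^* g - g \Vert_{C^2} \to 0$ and $\sup_M d_g(\theta_i(x),x) \to 0$. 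This is the step where I would invoke, in essentially the same form as in the proof of Proposition~\ref{Prop_gauge}\ref{Prop_gauge_b}, that a diffeomorphism close to the identity intertwining two metrics close to $g$ is itself close to the identity in $C^{2}$, with quantitative smallness controlled by the $C^0$-closeness of the metrics.

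The third step applies the uniqueness part of Proposition~\ref{Prop_gauge}\ref{Prop_gauge_b} (the DeTurck gauge uniqueness). Both $g_{x_i}$ and $g_{x_i'}$ satisfy $Q_g(\cdot) = 0$ by \eqref{eq_Q_h_gp_kp}, i.e., they are both already in the DeTurck gauge with respect to $(g,\nabla^g f)$. On the other hand $\theta_i^* g_{x_i} = g_{x_i'}$ and $\theta_i^* V_{x_i} = V_{x_i'}$, so $(g_{x_i'}, V_{x_i'})$ is obtained from $(g_{x_i}, V_{x_i})$ by the diffeomorphism $\theta_i$, and it is also in DeTurck gauge. By the uniqueness statement of Proposition~\ref{Prop_gauge}\ref{Prop_gauge_b}, applied with $g' = g_{x_i}$, $V' = V_{x_i}$ and the candidate gauge-fixing diffeomorphism $\theta_i$ versus the identity (both of which put $g'$ into DeTurck gauge once $i$ is large enough that \eqref{eq_phi_close_id} holds), we conclude $\theta_i = \id_M$ for all large $i$. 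Hence $g_{x_i} = g_{x_i'}$ as tensor fields, and subtracting off $g$ and $T(\gamma_{x_i}-\gamma) = T(\gamma_{x_i'}-\gamma)$ gives $H_p(x_i) = H_p(x_i')$ and $\Pi(\Phi_p(x_i)) = \Pi(\Phi_p(x_i'))$, which by Assertion~\ref{Prop_Banach_manifold_f} (the map $x \mapsto (\Pi(\Phi_p(x)), \Xi_p(x))$ being a diffeomorphism onto its image) forces $x_i = x_i'$, a contradiction.

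The main obstacle I anticipate is the bookkeeping in the second step: turning the abstract equivalence in $\MM$ into a diffeomorphism $\theta_i$ that is genuinely close to the identity in $C^2$ and of finite displacement, and verifying it lies in the admissible class \eqref{eq_phi_close_id} of Proposition~\ref{Prop_gauge}\ref{Prop_gauge_b}. This requires carefully composing the gauging-at-infinity maps $\psi_{x_i}$ (whose finite displacement is uniform but whose $C^2$-norms must be controlled) with the compactly-supported $\chi_i$, and using that $g_{x_i}, g_{x_i'}$ are both $C^{k+1,\alpha}_{-1}$-close to $g$ — the decay weight is what makes the elliptic estimate for $\theta_i$ work uniformly out to infinity. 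Once that closeness is in hand, the DeTurck uniqueness does all the remaining work and the contradiction is immediate.
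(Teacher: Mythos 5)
Your overall strategy is the same as the paper's: argue by contradiction, transfer the abstract equivalence in $\MM$ into a diffeomorphism $\theta_i=\psi_{x_i}\circ\chi_i\circ\psi_{x_i'}^{-1}$ intertwining $(g_{x_i},V_{x_i})$ and $(g_{x_i'},V_{x_i'})$, and then use the DeTurck uniqueness of Proposition~\ref{Prop_gauge}\ref{Prop_gauge_b} (both metrics already satisfy $Q_g=0$) to force $\theta_i=\id_M$ and hence $x_i=x_i'$. The final step and the reduction are fine. The gap is in your Step 2, which is exactly the step you flag as the ``main obstacle'': you claim that ``standard elliptic estimates for $\theta_i^*g_{x_i}=g_{x_i'}$ together with finite displacement'' force $\sup_M d_g(\theta_i(x),x)\to 0$ and $\Vert\theta_i^*g-g\Vert_{C^2}\to 0$. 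That is not a valid argument. Finite displacement plus closeness of the two metrics to $g$ does not make $\theta_i$ close to the identity: on the Gaussian expander $(\IR^4,g_{\eucl},-\tfrac14 r^2)$ a translation by a fixed vector has finite, non-small displacement and satisfies $\theta^*g_{\eucl}=g_{\eucl}$ exactly, so no elliptic estimate on the metric identity alone can yield smallness of the displacement. Moreover, citing the proof of Proposition~\ref{Prop_gauge}\ref{Prop_gauge_b} here is misplaced: the smallness conditions \eqref{eq_phi_close_id} are a \emph{hypothesis} of that uniqueness statement, not something its proof produces.

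What closes this gap — and what the paper actually does — is to use the vector-field identity $\theta_i^*V_{x_i}=V_{x_i'}$, which your Step 2 never invokes (translations do not preserve a field asymptotic to $-\tfrac12 r\partial_r$, which is why the counterexample above is excluded). From $\iota_x^*V_x=-\tfrac12 r\partial_r$ and $\iota_x=\psi_x\circ\iota$ near infinity one first gets $\theta_i\circ\iota_{x_{i'}}=\iota_{x_i}$ on an end, and then on all of $\IR_+\times N$ by uniqueness of trajectories (equivalently the uniqueness assertion \ref{Prop_gauge_infinity_families_h} of Proposition~\ref{Prop_gauge_infinity_families}). Combining this with the density of $\Image\iota_x$ (Lemma~\ref{Lem_Im_iota_dense}) and the continuity of $x\mapsto\iota_x$ from Proposition~\ref{Prop_gauge_infinity_families}\ref{Prop_gauge_infinity_families_c} yields the uniform convergence $\theta_i\to\id_M$, and local convergence of the metrics then gives the $C^2$-closeness needed for \eqref{eq_phi_close_id}. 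With that input your Step 3 goes through verbatim; without it, the application of Proposition~\ref{Prop_gauge}\ref{Prop_gauge_b} is unjustified.
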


\begin{proof}
Suppose, by contradiction, that $\Phi(x_{1,j}) = \Phi(x_{2,j})$ for two sequences $x_{1,j}, x_{2,j} \in U_p$ with $x_{1,j} \neq x_{2,j}$ such that $x_{1,j}, x_{2,j} \to 0$.
So there is a sequence of $C^1$-diffeomorphisms $\chi'_j : M \to M$ that each equal the identity outside a compact subset such that $(\chi'_j)^* g'_{x_{2,j}} = g'_{x_{1,j}}$ and $(\chi'_j)^* V'_{x_{2,j}} = V'_{x_{1,j}}$.
Therefore, the $C^1$-diffeomorphisms
\[ \chi_j := \psi_{x_{2,j}} \circ \chi'_j \circ \psi_{x_{1,j}}^{-1} : M \longrightarrow M \]
satisfy
\begin{equation*} %
 \chi_j^* g_{x_{2,j}} = g_{x_{1,j}}, \qquad \chi_j^* V_{x_{2,j}} = V_{x_{1,j}} 
\end{equation*}
and on subsets of the form $(r_{0,j},\infty) \times N$, $r_{0,j} > R_0$, we have
\begin{equation} \label{eq_chi_iotax1x2}
 \chi_j \circ \iota_{x_{1,j}} = \chi_j \circ \psi_{x_{1,j}} \circ \iota = \psi_{x_{2,j}} \circ \chi'_j \circ \iota = \psi_{x_{2,j}} \circ \iota = \iota_{x_{2,j}}. 
\end{equation}
Since
\[ (\chi_j \circ \iota_{x_{1,j}})^* V_{x_{2,j}}
= \iota_{x_{1,j}}^* \chi_j^* V_{x_{2,j}} = \iota_{x_{1,j}}^* V_{x_{1,j}} = - \tfrac12 r \partial_r = \iota_{x_{2,j}}^* V_{x_{2,j}}, \]
we conclude that \eqref{eq_chi_iotax1x2} even holds on all of $\IR_+ \times N$.
Since the image of $\iota_{x_{i,j}}= \psi_{x_{i,j}} \circ \td\iota_{x_{i,j}}$ is dense (see Lemma~\ref{Lem_Im_iota_dense}) and since $g_{x_{1,j}}-g_{x_{2,j}} \to 0$ in $C^k_{\loc}$, we can use Assertion~\ref{Prop_gauge_infinity_families_c} of Proposition~\ref{Prop_gauge_infinity_families} to conclude that we have uniform convergence $\chi_j \to \id_M$.
Therefore, we may apply Proposition~\ref{Prop_gauge} for large $j$ and conclude that $\chi_j = \id_M$ since both metrics $g_{x_{i,j}}$ and vector fields $V_{x_{i,j}}$, $i=1,2$, are already in the DeTurck gauge due to \eqref{eq_Q_h_gp_kp} and \eqref{eq_Vx_def}.
Therefore $g_{1,j} = g_{2,j}$ for large $j$, which implies $x_{1,j} = x_{2,j}$ for large $j$ by construction.
\end{proof}

\begin{Claim} \label{Cl_continuous}
The map $\Phi_p : U_p \to \MM$ is continuous.
\end{Claim}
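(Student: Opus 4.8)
# Proof proposal for Claim~\ref{Cl_continuous} (continuity of $\Phi_p$)

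The plan is to reduce the statement to the continuous dependence of the building blocks already established: the real-analytic dependence of $H_p$, $Y_p$ and hence of the metrics $g_x$ and vector fields $V_x$ on $x$, together with the continuity properties of the gauging-at-infinity maps $\psi_x$ from Proposition~\ref{Prop_gauge_infinity_families}\ref{Prop_gauge_infinity_families_c}. Concretely, fix $x_\infty \in U_p$ and a sequence $x_i \to x_\infty$ in $U_p$; we must show $\Phi_p(x_i) \to \Phi_p(x_\infty)$ with respect to the metric $d$ on $\MM$ from Definition~\ref{Def_MM_metric}. First I would observe that, since $H_p$ is real-analytic with values in $C^{k,\alpha}_{-2,\nabla f}(M;S^2T^*M)$ and $Y_p$ is real-analytic with values in $X''_p \subset T\GenCONE^{k^*}(N)$, and since $\gamma'\mapsto T(\gamma')$ is a bounded linear map (Lemma~\ref{Lem_nabf_Tgamma}), we get $g_{x_i} \to g_{x_\infty}$ in $C^k_{\loc}$ (in fact in $C^{k,\alpha}_{-2}$ modulo the $T$-part) and $V_{x_i} - \nabla^g f \to V_{x_\infty} - \nabla^g f$ in $C^{k-1,\alpha}_{-2}(M;TM)$, in particular in the $C^{k-1}_{\loc}$-sense. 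Also $\gamma_{x_i} = \Pi(\Phi_p(x_i)) \to \gamma_{x_\infty}$ in $\GenCONE^{k^*}(N)$ by Proposition~\ref{Prop_Banach_manifold}\ref{Prop_Banach_manifold_c}, which we have already proved.

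Next I would feed this into Proposition~\ref{Prop_gauge_infinity_families}: the family $(g_x, V_x, \gamma_x)_{x\in U_p}$ satisfies hypotheses (i)--(iii) of that proposition (with $X = U_p$), so the associated maps $\iota_x$ and $\psi_x$ satisfy the continuity estimates \eqref{eq_dg_iota_x1x2} and \eqref{eq_dg_psi_x1x2}: $\sup_{r\geq r_0,z} d_g(\iota_{x_i}(r,z),\iota_{x_\infty}(r,z)) \to 0$ for every $r_0 > 0$, and $\sup_{y} d_g(\psi_{x_i}(y),\psi_{x_\infty}(y)) \to 0$. The representatives of $\Phi_p(x_i)$ are $(g'_{x_i}, V'_{x_i}, \gamma_{x_i}) = (\psi_{x_i}^* g_{x_i}, \psi_{x_i}^* V_{x_i}, \gamma_{x_i})$, and by \eqref{eq_iotax_tdiotax} the maps $\td\iota_{x_i}$ from Lemma~\ref{Lem_MM_regular_rep}\ref{Lem_MM_regular_rep_c} associated to these representatives satisfy $\iota_{x_i} = \psi_{x_i}\circ\td\iota_{x_i}$, so $\td\iota_{x_i} = \psi_{x_i}^{-1}\circ\iota_{x_i}$. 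Then for $x,y\in[j^{-1},j]\times N$ the quantity
\[
d_{g'_{x_i}}(\td\iota_{x_i}(x),\td\iota_{x_i}(y)) = d_{\psi_{x_i}^* g_{x_i}}(\td\iota_{x_i}(x),\td\iota_{x_i}(y)) = d_{g_{x_i}}(\iota_{x_i}(x),\iota_{x_i}(y))
\]
converges to $d_{g_{x_\infty}}(\iota_{x_\infty}(x),\iota_{x_\infty}(y))$ uniformly on $[j^{-1},j]\times N$: the distances are taken with respect to $g_{x_i}$, which converges in $C^k_{\loc}$ (hence in $C^0_{\loc}$, giving uniform convergence of distances on compact sets once one has a uniform bound on the relevant region, provided by Lemma~\ref{Lem_iota}), and the points $\iota_{x_i}(x)$ converge to $\iota_{x_\infty}(x)$ by \eqref{eq_dg_iota_x1x2}. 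This shows $d'_j(\Phi_p(x_i),\Phi_p(x_\infty)) \to 0$ for every $j$.

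Finally I would assemble these pieces into convergence with respect to $d$ itself. The three terms in Definition~\ref{Def_MM_metric} are handled as follows: the series $\sum_j 2^{-j} F(d'_j(\cdot,\cdot))$ converges to $0$ by the previous paragraph together with dominated convergence (each term is bounded by $2^{-j}$, and each tends to $0$); the term $|\max|\Rm_{g'_{x_i}}|_{g'_{x_i}} - \max|\Rm_{g'_{x_\infty}}|_{g'_{x_\infty}}|$ tends to $0$ because $\max|\Rm_{g'_{x_i}}|_{g'_{x_i}} = \max|\Rm_{g_{x_i}}|_{g_{x_i}}$ is diffeomorphism-invariant and $g_{x_i}\to g_{x_\infty}$ in $C^k_{\loc}$ while the asymptotic bounds of Lemma~\ref{Lem_iota}\ref{Lem_iota_c} (valid uniformly by Proposition~\ref{Prop_gauge_infinity_families}\ref{Prop_gauge_infinity_families_d}) force the maximum to be attained in a fixed compact region; and $\Vert\gamma_{x_i}-\gamma_{x_\infty}\Vert \to 0$ by the continuity of $\Pi\circ\Phi_p$. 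The main obstacle, and the step deserving the most care, is upgrading the $C^k_{\loc}$-convergence of $g_{x_i}$ and the pointwise convergence $\iota_{x_i}\to\iota_{x_\infty}$ to genuinely \emph{uniform} convergence of the distances $d_{g_{x_i}}(\iota_{x_i}(x),\iota_{x_i}(y))$ over $[j^{-1},j]\times N$, and to rule out the curvature maximum escaping to infinity; both require invoking the uniform asymptotic control of Proposition~\ref{Prop_gauge_infinity_families}\ref{Prop_gauge_infinity_families_d} (equivalently Lemma~\ref{Lem_iota}) so that all the relevant geometry is confined to a compact set independent of $i$, after which Arzel\`a--Ascoli-type arguments close the estimate.
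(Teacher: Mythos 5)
Your argument is correct and follows essentially the same route as the paper: reduce the $d'_j$-terms of the metric on $\MM$ to the distances $d_{g_x}(\iota_x(\cdot),\iota_x(\cdot))$ via the identity $\iota_x=\psi_x\circ\td\iota_x$ and diffeomorphism invariance, and then invoke the continuity statement of Proposition~\ref{Prop_gauge_infinity_families}\ref{Prop_gauge_infinity_families_c} together with the continuous (indeed real-analytic) dependence of $g_x$, $V_x$, $\gamma_x$ on $x$. The paper's proof is just a terser version of this; your additional treatment of the curvature-maximum and $\Vert\gamma_{x_i}-\gamma_{x_\infty}\Vert$ terms fills in details the paper leaves implicit.
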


\begin{proof}
We need to compare the pullbacks of the length metrics of $g'_x$ via $\td\iota_x$, which equal the pullbacks of the length metrics of $g_x$ via $\iota_x$ due to \eqref{eq_iotax_tdiotax}.
These depend continuously on $x$ due to Assertion~\ref{Prop_gauge_infinity_families_c} of Proposition~\ref{Prop_gauge_infinity_families}.
\end{proof}

\begin{Claim} \label{Cl_U_star}
Let $U^* \subset U_p$ be a neighborhood of the origin.
Then $\Phi_p (U^*)$ is a neighborhood of $p$.
\end{Claim}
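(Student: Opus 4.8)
\textbf{Plan for the proof of Claim~\ref{Cl_U_star}.}

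The plan is to argue by contradiction. Suppose $\Phi_p(U^*)$ is not a neighborhood of $p$; then there is a sequence $p_i \to p$ in $\MM$ with $p_i \notin \Phi_p(U^*)$ for all $i$. The first step is to apply Proposition~\ref{Prop_converging_representatives} (combined with Corollary~\ref{Cor_conv_weighted}) to produce $C^{k^*-5}$-regular representatives $(g_i, V_i, \gamma_i)$ of $p_i$ that converge to a fixed representative of $p$ in $C^{k^*-5}_{\loc}$, with $\iota^* V_i = -\tfrac12 r \partial_r$ near infinity and with the weighted convergence $\Vert g_i - g - T(\gamma_i - \gamma) \Vert_{C^{k^*-5}_{-2+\eps}} \to 0$. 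After possibly composing with the diffeomorphisms from Proposition~\ref{Prop_gauge}\ref{Prop_gauge_a}, I would bring each $(g_i, V_i)$ into the DeTurck gauge with respect to $(g,V)$: since the quantity $\eps'_i = \Vert g_i - g - T(\gamma_i - \gamma)\Vert_{C^{k+3,\alpha}_{-1}} + \Vert \gamma_i - \gamma \Vert + \Vert V_i - \nabla^g f\Vert_{C^{k+2,\alpha}_{-1}}$ tends to $0$ (using that weighted $C^{k^*-5}_{-2+\eps}$-smallness dominates $C^{k+3,\alpha}_{-1}$-smallness for $k$ in the allowed range), Proposition~\ref{Prop_gauge} yields orientation-preserving diffeomorphisms $\phi_i$ with $P_g(\phi_i^* g_i, \phi_i^* V_i) = 0$ and $\Vert \phi_i^* g_i - g - T(\gamma_i - \gamma)\Vert_{C^{k+1,\alpha}_{-1}} \le C\eps'_i \to 0$. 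Writing $h_i := \phi_i^* g_i - g - T(\gamma_i - \gamma) \in C^{k+1,\alpha}_{-1}(M;S^2T^*M)$, we have $Q_g(g + h_i + T(\gamma_i - \gamma)) = 0$ by \eqref{eq_Q_soliton_equivalence}, and $h_i \to 0$ in $C^{k+1,\alpha}_{-1} \subset C^3_{-1}$.

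The second step is to invoke Proposition~\ref{prop_smooth_dependence}\ref{prop_smooth_dependence_e}: for $i$ large, $(\gamma_i, h_i) \in U_1 \times W$ and $Q_g(g + h_i + T(\gamma_i - \gamma)) = 0$, so there is $\kappa'_i \in U_2$ with $h_i = F(\gamma_i, \kappa'_i)$; moreover $\kappa'_i = \proj_K(h_i) \to 0$ in $K$. Combining $G(\gamma_i, \kappa'_i) = Q_g(g + F(\gamma_i,\kappa'_i) + T(\gamma_i - \gamma)) = 0$ with the construction of $U_p$ and the implicit-function map $Y_p$ (recall $G^{-1}(0)$ near $(\gamma,0)$ is parameterized by $x = (\gamma', \kappa') \mapsto (\gamma'_0 + \gamma' + Y_p(\gamma',\kappa'), \kappa')$ via the splitting \eqref{eq_GenCone_splitting}), I would extract from $(\gamma_i, \kappa'_i)$ a point $x_i = (\gamma'_i, \kappa'_i) \in U_p$ with $x_i \to 0$ and $H_p(x_i) = h_i$, hence $g_{x_i} = g + h_i + T(\gamma_i - \gamma)$ and $V_{x_i}$ given by \eqref{eq_Vx_def} equals $\phi_i^* V_i$ (both are the DeTurck vector field of the same metric). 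Then $(\psi_{x_i}^* g_{x_i}, \psi_{x_i}^* V_{x_i}, \gamma_i)$ represents $\Phi_p(x_i)$ by definition, and I must check it represents the same element of $\MM$ as $(g_i, V_i, \gamma_i)$, i.e. $p_i$. This requires matching $\psi_{x_i}^*(\cdot)$ with $\phi_i^*(\cdot)$ up to a compactly-supported diffeomorphism: both $(\phi_i^* g_i, \phi_i^* V_i)$ and $(g_{x_i}, V_{x_i})$ are solutions of $Q_g = 0$ in DeTurck gauge that are $C^3_{-1}$-close to $(g,V)$ and equal as tensor fields, while $\psi_{x_i}$ differs from the identity by a finite-displacement $C^k$-map gauging $V_{x_i}$ at infinity; since $\iota^* V_i = -\tfrac12 r\partial_r$ already, a uniqueness argument in the spirit of Claim~\ref{Cl_Phi_injective} (using Proposition~\ref{Prop_gauge}\ref{Prop_gauge_b} and Assertion~\ref{Prop_gauge_infinity_families_c} of Proposition~\ref{Prop_gauge_infinity_families}) shows the relevant diffeomorphism is compactly supported, so $\Phi_p(x_i) = p_i$ for large $i$. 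For large $i$ we have $x_i \in U^*$, contradicting $p_i \notin \Phi_p(U^*)$.

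\textbf{Main obstacle.} The delicate point is the bookkeeping of gauges in the second step: ensuring that the diffeomorphism relating the representative of $p_i$ produced via Proposition~\ref{prop_smooth_dependence} (namely $(\psi_{x_i}^* g_{x_i}, \psi_{x_i}^* V_{x_i})$) to the original DeTurck-gauged representative $(\phi_i^* g_i, \phi_i^* V_i)$ is genuinely compactly supported, rather than merely finite-displacement. This is where the two different gauges — the ``gauge at infinity'' built into the definition of $\MM$ and the DeTurck gauge used in the local model — must be reconciled, and the argument is the same kind of uniqueness argument used in Claim~\ref{Cl_Phi_injective}: since $\iota^* V_i = -\tfrac12 r \partial_r$ holds for the chosen representatives, the map $\iota_{x_i}^{-1}$ composed with the relevant transition is forced to be the identity near infinity by the uniqueness clause Assertion~\ref{Prop_gauge_infinity_families_h}, and then Proposition~\ref{Prop_gauge}\ref{Prop_gauge_b} upgrades this to the identity on a compact set as well. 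Everything else is a routine assembly of Propositions~\ref{Prop_converging_representatives}, \ref{Prop_gauge}, \ref{prop_smooth_dependence}, and \ref{Prop_gauge_infinity_families}.
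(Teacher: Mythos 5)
Your proposal is correct and follows essentially the same route as the paper: argue by contradiction, use Proposition~\ref{Prop_converging_representatives}/Corollary~\ref{Cor_conv_weighted} to get well-converging representatives, bring them into the DeTurck gauge via Proposition~\ref{Prop_gauge}, identify them with $g_{x_i}$ for some $x_i \to 0$ via Proposition~\ref{prop_smooth_dependence}\ref{prop_smooth_dependence_e}, and finally show $p_i = \Phi_p(x_i)$ using the uniqueness clause Assertion~\ref{Prop_gauge_infinity_families_h} together with $\iota_{x_i} = \psi_{x_i} \circ \td\iota_{x_i}$. One small remark: in the last step your appeal to Proposition~\ref{Prop_gauge}\ref{Prop_gauge_b} is unnecessary (and ``identity on a compact set as well'' is not needed, nor true in general) — the equivalence in $\MM$ only requires the transition diffeomorphism to equal the identity outside a compact set, which already follows from Assertion~\ref{Prop_gauge_infinity_families_h}, since $\td\iota_{x_i} = \td\iota_i = \iota$ on a tail $(r'_i,\infty)\times N$ whose complement is compact.
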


\begin{proof}
Suppose not, so we can find a sequence $p_i  \in \MM \setminus \Phi_p(U^*)$ with $p_i \to p$.
Then $\gamma_i := \Pi(p_i) \to \gamma$.
By Corollary~\ref{Cor_conv_weighted}, we can find $C^{k^*-5}$-regular representatives $(g_i,V_i,\gamma_i)$ of $p_i$ such that
\[ \big\Vert g_i - g - T(\gamma_i - \gamma) \big\Vert_{C^{k^*-5}_{-1}} \to 0, \qquad V_i \xrightarrow[i \to \infty]{\quad C^{k^*-5}_{\loc} \quad} V \]
and $V_i = V$ on $\iota((r_0,\infty) \times N)$ for some uniform $r_0 > 2$.
We can now apply Proposition~\ref{Prop_gauge} and obtain that for large $i$ there are $C^2$-diffeomorphisms $\chi_i : M \to M$ of finite displacement such that
\[ Q_g(\chi_i^* g_i) = 0,  \qquad 
\chi_i^* V_i = \nabla^g f - \DIV_g(\chi_i^* g_i ) + \tfrac12 \nabla^g \tr_g (\chi_i^* g) \]
and
\[ \big\Vert \chi_i^* g_i - g - T(\gamma_i - \gamma) \big\Vert_{C^{k,\alpha}_{-1}} \to 0. \]
So by Assertion~\ref{prop_smooth_dependence_e} of Proposition~\ref{prop_smooth_dependence} we have $\chi_i^* g_i = g_{x_i}$ and $\chi_i^* V_i = V_{x_i}$ for some $x_i \in X_p$ for large $i$.
Reviewing the construction of $g_{x_i}$ implies that $x_i \to 0$, so $x_i \in U^*$ for large $i$.

It remains to show that $p_i = \Phi_p(x_i)$ for large $i$.
To see this note first that
\[ (\chi_i \circ \psi_{x_i})^* g_i  = \psi_{x_i}^* g_{x_i} = g'_{x_i}, \qquad 
(\chi_i \circ \psi_{x_i})^* V_i  = \psi_{x_i}^* g_{x_i} = V'_{x_i}.  \]
Denote by $\td\iota_i : \IR_+ \times N \to M$ the map from Lemma~\ref{Lem_MM_regular_rep}\ref{Lem_MM_regular_rep_c} with $\td\iota_i^* V_i = - \tfrac12 r \partial_r$ and $\td\iota_i = \iota$ on $\iota((r_i, \infty) \times N)$ for some $r_i > R_0$.
Then
\[ ( \chi_i^{-1} \circ \td\iota_i )^* V_{x_i}
= - \tfrac12 r \partial_r. \]
So since $\chi_i^{-1}$ has finite displacement, we obtain, using Assertion~\ref{Prop_gauge_infinity_families_h} of Proposition~\ref{Prop_gauge_infinity_families}, that $\iota_{x_i} =  \chi_i^{-1} \circ \td\iota_i$.
Combining this with \eqref{eq_iotax_tdiotax} implies that on $(r'_i, \infty) \times N$, for large enough $r'_i > r_i$,
\[ (\chi_i \circ \psi_{x_i}) \circ \iota
= (\chi_i \circ \psi_{x_i}) \circ \td\iota_{x_i} = \chi_i \circ \iota_{x_i} = \td\iota_i = \iota.  \]
This implies that $\chi_i \circ \psi_{x_i} : M \to M$ equals the identity on the complement of some compact subset.
Therefore, $p_i = [(g_i,V_i, \gamma_i)] = [(g'_i, V'_i, \gamma_i)] = \Phi(x_i)$, which yields the desired contradiction.
\end{proof}

\begin{Claim}
Assertion~\ref{Prop_Banach_manifold_b} holds after possibly shrinking $U_p$.
\end{Claim}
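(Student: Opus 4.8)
The plan is to combine the three preceding claims (injectivity, continuity, and the open-neighborhood property) with an abstract point-set argument. We have already shown in Claim~\ref{Cl_Phi_injective} that $\Phi_p$ is injective on $U_p$ (after shrinking), in Claim~\ref{Cl_continuous} that $\Phi_p$ is continuous, and in Claim~\ref{Cl_U_star} that $\Phi_p$ maps every neighborhood of the origin to a neighborhood of $p$. What remains for Assertion~\ref{Prop_Banach_manifold_b} is to show that $\Phi_p(U_p) \subset \MM$ is open and that $\Phi_p : U_p \to \Phi_p(U_p)$ is a homeomorphism, i.e., that $\Phi_p^{-1}$ is continuous.

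First I would prove openness of $\Phi_p(U_p)$. Let $q = \Phi_p(x_0) \in \Phi_p(U_p)$ with $x_0 \in U_p$. I want to find a neighborhood of $q$ in $\MM$ contained in $\Phi_p(U_p)$. The difficulty is that Claim~\ref{Cl_U_star} is centered at the point $p$ (i.e.\ at $x_0 = 0$), not at an arbitrary $x_0$. However, $\Phi_p(x_0)$ again lies in $\MMgrad$ only if $x_0$ corresponds to a cone — in general $\Phi_p(x_0)$ is merely a (possibly non-gradient) soliton in $\MM$, so we cannot directly re-run the construction based at $\Phi_p(x_0)$. Instead, the clean way is: given a convergent sequence $q_i \to q$ in $\MM$ with $q_i \notin \Phi_p(U_p)$, apply the same argument as in the proof of Claim~\ref{Cl_U_star} but comparing to the soliton representing $q$ rather than to $(g,V,\gamma)$. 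Concretely, fix a $C^{k^*-5}$-regular representative of $q$; by Corollary~\ref{Cor_conv_weighted} choose representatives $(g_i,V_i,\gamma_i)$ of $q_i$ converging to it in the appropriate weighted norm with $V_i$ already gauged at infinity; apply Proposition~\ref{Prop_gauge} to put the metrics of $q_i$ in the DeTurck gauge relative to the (already gauged) representative of $q = \Phi_p(x_0)$; and then invoke Assertion~\ref{prop_smooth_dependence_e} of Proposition~\ref{prop_smooth_dependence}. The subtlety is that Proposition~\ref{prop_smooth_dependence} requires a \emph{gradient} base soliton, whereas the representative $(g_{x_0}, V_{x_0})$ need not be gradient. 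The way around this: the representative $(g_{x_0}, V_{x_0})$ is of the form $g_{x_0} = g + h_{x_0} + T(\gamma' + Y_p(x_0))$ with $Q_g(g_{x_0}) = 0$, so the DeTurck equation for nearby metrics relative to $(g, V)$ — not relative to $(g_{x_0}, V_{x_0})$ — still applies, and Assertion~\ref{prop_smooth_dependence_e} gives $\chi_i^* g_i = g_{x_i}$, $\chi_i^* V_i = V_{x_i}$ for some $x_i \to x_0$; hence $x_i \in U_p$ for large $i$. Then, exactly as in the final paragraph of the proof of Claim~\ref{Cl_U_star}, one checks using Assertions~\ref{Prop_gauge_infinity_families_h} of Proposition~\ref{Prop_gauge_infinity_families} and \eqref{eq_iotax_tdiotax} that $\chi_i \circ \psi_{x_i}$ equals the identity outside a compact set, so $q_i = \Phi_p(x_i) \in \Phi_p(U_p)$, a contradiction. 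This proves openness.

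Finally I would prove continuity of $\Phi_p^{-1}$, i.e.\ that if $\Phi_p(x_i) \to \Phi_p(x_\infty)$ in $\MM$ with $x_i, x_\infty \in U_p$, then $x_i \to x_\infty$ in $X_p$. By the openness argument just given applied along the sequence, for large $i$ there exist $x_i' \in U_p$ with $x_i' \to x_\infty$ and $\Phi_p(x_i') = \Phi_p(x_i)$; by the injectivity from Claim~\ref{Cl_Phi_injective} (after shrinking $U_p$) we get $x_i' = x_i$, hence $x_i \to x_\infty$. This establishes that $\Phi_p : U_p \to \Phi_p(U_p)$ is a homeomorphism, completing Assertion~\ref{Prop_Banach_manifold_b}. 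The remaining Assertion~\ref{Prop_Banach_manifold_e} (the one needed for the Banach-manifold transition maps) is then proved by feeding both charts into Proposition~\ref{prop_smooth_dependence}\ref{prop_smooth_dependence_e}: the composition $\Phi_p^{-1} \circ \td\Phi$ is, after applying Proposition~\ref{Prop_gauge} to convert $\td\Phi$'s representatives into the DeTurck gauge relative to $(g, V)$, identified via the map $H_2 = \proj_K \circ H_3$ of that proposition with a $C^{1,\alpha}$ map, and its differential is computed by linearizing the DeTurck gauge-change equation, which is exactly equation \eqref{eq_gauging_differential} / the displayed formula in Assertion~\ref{Prop_Banach_manifold_e} with $Y$ solving $\triangle_{g,f}Y - \tfrac12 Y = \ldots$, invertible by Proposition~\ref{Prop_Lundardi_weighted}. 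The main obstacle throughout is bookkeeping the two layers of gauge-fixing — the DeTurck gauge (handled by Propositions~\ref{Prop_gauge} and \ref{prop_smooth_dependence}) and the gauge at infinity (handled by Proposition~\ref{Prop_gauge_infinity_families}) — and in particular verifying that the composite diffeomorphisms $\chi_i \circ \psi_{x_i}$ reduce to the identity near infinity so that the resulting solitons genuinely represent the same class in $\MM$.
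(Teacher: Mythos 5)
Your proposal does not follow the paper's argument, and the route you choose leaves the hardest step unproved. The paper's own proof of this claim is a short point-set argument that uses only the three preceding claims: one first picks $U'_p \subset U_p$ with $\ov{U}'_p \subset U_p$, uses Claim~\ref{Cl_U_star} to get an open neighborhood $p \in W' \subset \Phi_p(U'_p)$ and Claim~\ref{Cl_continuous} to replace $U'_p$ by $\Phi_p^{-1}(W')$, so that the image is \emph{by construction} the open set $W'$ (openness never has to be checked at points other than $p$, which is exactly why the statement is ``after possibly shrinking $U_p$''); continuity of the inverse is then obtained by writing $x_i = (\gamma'_{x_i}, \kappa_{x_i})$ in the splitting \eqref{eq_GenCone_splitting}, noting that $\gamma'_{x_i} \to \gamma'_{x_\infty}$ because $\Pi$ is continuous on $\MM$, extracting a convergent subsequence of $\kappa_{x_i}$ in the \emph{finite-dimensional} space $K$ (using $\ov{U}'_p \subset U_p$), and identifying the limit via Claims~\ref{Cl_continuous} and \ref{Cl_Phi_injective}. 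You do not use either of these two devices (the preimage trick and the finite-dimensionality of $K$), and instead try to prove a strengthened ``local surjectivity with convergence'' statement at every $x_0 \in U_p$ by re-running the analysis of Claim~\ref{Cl_U_star} based at $q = \Phi_p(x_0)$.

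That is where the genuine gap lies: the step you dismiss with ``exactly as in the final paragraph of the proof of Claim~\ref{Cl_U_star}'' does not transplant verbatim to an arbitrary base point. The proof of Claim~\ref{Cl_U_star} is anchored at the gradient soliton $(g,\nabla^g f,\gamma)$: the representatives of $p_i$ converge to $(g,V)$ itself, so the smallness hypothesis \eqref{eq_hpgpVp_small} of Proposition~\ref{Prop_gauge} and the membership $h_i \in W$ required by Proposition~\ref{prop_smooth_dependence}\ref{prop_smooth_dependence_e} are immediate, and $x_i \to 0$ follows because $h_i \to 0$. For $q = \Phi_p(x_0)$ with $x_0 \neq 0$ you must instead show, uniformly in $x_0 \in U_p$, that the representatives of $q_i$ produced by Proposition~\ref{Prop_converging_representatives}/Corollary~\ref{Cor_conv_weighted} are $\eps$-close to $(g,\nabla^g f)$ in the weighted norms $C^{k+3,\alpha}_{-1}$, $C^{k+2,\alpha}_{-1}$, and that after gauging the resulting $h_i$ lie in $W$ and converge to $H_p(x_0)$ so that $x_i \to x_0$ (which your inverse-continuity step needs). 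This requires quantitative control of $\psi_{x_0}^* g_{x_0} - g - T(\gamma_{x_0}-\gamma)$ in these weighted spaces, i.e.\ estimates on pullbacks by the non-compactly-supported gauging-at-infinity diffeomorphism $\psi_{x_0}$ of the kind the paper only establishes later (Claim~\ref{Cl_tdg0}, in the proof of Assertion~\ref{Prop_Banach_manifold_e}), and none of this is available ``for free'' at this stage; there is also a regularity mismatch, since Proposition~\ref{Prop_converging_representatives} asks for a $C^{k^*-5}$-regular limit representative while the natural representative $\psi_{x_0}^* g_{x_0}$ of $q$ only has regularity of order $C^{k^*-20}$, and $q$ need not lie in $\MMgrad$, so the potential-weighted convergence statement \eqref{eq_Rmk_conv_giginfty} cannot be invoked directly either. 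So the approach could probably be completed, but only after supplying this missing quantitative step; as written it is a gap, and it is entirely avoidable by the paper's shorter argument. (Your closing discussion of Assertion~\ref{Prop_Banach_manifold_e} is outside the scope of this claim, which concerns Assertion~\ref{Prop_Banach_manifold_b} only.)
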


\begin{proof}
Choose an open neighborhood of the origin $U'_p \subset U_p \subset X_p$ whose closure is contained in $U_p$.
By Claim~\ref{Cl_U_star} we know that there is an open neighborhood $p \in W' \subset \Phi_p(U'_p)$ and by Claim~\ref{Cl_continuous} we know that $\Phi_p^{-1}(W')$ is open.
So after replacing $U'_p$ by $\Phi_p^{-1}(W')$, we may assume that both $U'_p$ and $\Phi_p(U'_p)$ are open and that 
\begin{equation} \label{eq_closureUppinUp}
\ov{U}'_p \subset U_p.
\end{equation}

It remains to show that $\Phi_p |_{U'_p} : U'_p \to \Phi_p (U'_p)$ is a homeomorphism.
Consider a sequence $x_i \in U'_p$, $i \leq \infty$ such that $\Phi_p(x_i) \to \Phi_p(x_\infty)$ in $\MM$.
Recall that $x_i = (\gamma'_{x_i}, \kappa_{x_i})$, where $\gamma_{x_i} = (\gamma'_{x_i},\gamma''_{x_i})$ in the splitting \eqref{eq_GenCone_splitting} and $\kappa_{x_i} \in K$.
Since by construction
\[ \gamma_i =\Pi(\Phi_p(x_i)) \to \Pi(\Phi_p(x_\infty)) = \gamma_\infty = (\gamma'_\infty, \gamma''_\infty), \]
it remains to show that $\kappa_{x_i} \to \kappa_{x_\infty}$.
Suppose not.
Since $K$ is finite dimensional, we may pass to a subsequence such that $\kappa_{x_i} \to \kappa'_\infty \in K$, where $\kappa'_\infty \neq \kappa_{x_\infty}$.
Due to \eqref{eq_closureUppinUp} we have $x_i \to x'_\infty := (\gamma'_\infty,\kappa'_\infty) \in U_p$.
By Claim~\ref{Cl_continuous} we have $\Phi_p(x_i) \to \Phi_p(x'_\infty)$, so $\Phi_p(x'_\infty) = \Phi_p(x_\infty)$.
Claim~\ref{Cl_Phi_injective} implies that $x'_\infty = x_\infty$, giving us a contradiction.
\end{proof}

It remains to verify Assertion~\ref{Prop_Banach_manifold_e}.
Consider the maps $\td\Phi, \td H$ and the metric $\td g$ and write $\td h_{\td x} := \td H (\td x)$ for $\td x \in \td U$.
By Assertion~\ref{Prop_Banach_manifold_b}, we know that
\[ \Theta := \Phi_p^{-1} \circ \td\Phi : \td U \to U_p \]
is continuous.
Note that $\td\gamma_{\td x} = \gamma_{\Theta(\td x)}$.
Our goal will be to establish its $C^{1,\alpha}$-regularity.
For any $\td x \in \td U$ the metric and vector field representing $\td\Phi(\td x) = \Phi_p ( \Theta(\td x))$ is isometric to both $\td g_{\td x},\td V_{\td x}$ and $g_{\Theta(\td x)}, V_{\Theta(\td x)}$ via a $C^2$-diffeomorphism of finite displacement.
By composing one of these diffeomorphisms with the inverse of the other for each $\td x \in \td U$, we obtain a family of $C^2$-diffeomorphisms $(\chi_{\td x} : M \to M)_{\td x \in \td U}$ with finite displacement such that
\begin{equation} \label{eq_chi_g_V}
 \chi_{\td x}^* \td g_{\td x} = g_{\Theta(\td x)}, \qquad
\chi^*_{\td x} \td V_{\td x} = V_{\Theta(\td x)}. 
\end{equation}

Fix some $\td x_0 \in \td U$ and set $x_0 := \Theta(\td x_0) \in U_p$.
Note that we are not yet assuming that $x_0=0\in U_p$.
In the following we will establish the $C^{1,\alpha}$-regularity of $\Theta$ near $\td x_0$.
We will need the following technical statement.

\begin{Claim} \label{Cl_tdg0}
For any $\td x \in \td U$ we define $\td g^0_{\td x}, \td h^0_{\td x}, \td V^0_{\td x}$ via
\begin{equation} \label{eq_chi_H_bar_p}
  \chi_{\td x_0}^* \td g_{\td x} = \td g^0_{\td x} = g + \td h^0_{\td x} + T( \td\gamma_{\td x}) - T(\gamma), \qquad
  \chi_{\td x_0}^* \td V_{\td x} = \td V^0_{\td x}.
\end{equation}
Then
\begin{equation} \label{eq_H_bar_H}
\td{g}^0_{\td x_0} = g_{x_0}, \qquad  \td{h}^0_{\td x_0} = h_{x_0}, \qquad \td{V}^0_{\td x_0} = V_{x_0}.
\end{equation}
Moreover, the maps (note that the following norms are taken with respect to $(M,g,f)$)
\begin{alignat*}{2}
 \td U &\longrightarrow C^{k}_{g,-1}(M; S^2 T^*M), \qquad& \td x &\longmapsto \td h^0_{\td x}, \\
 \td U &\longrightarrow C^{k-1}_{g,-1}(M; TM), \qquad& \td x &\longmapsto \td V^0_{\td x} - V_{x_0} 
\end{alignat*}
take values in the respective codomains and are of regularity $C^{1,\alpha}$.
\end{Claim}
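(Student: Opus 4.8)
\textbf{Plan of proof for Claim~\ref{Cl_tdg0}.}
The identities \eqref{eq_H_bar_H} are immediate: at $\td x_0$ we have $\chi_{\td x_0}$ itself as the diffeomorphism, so \eqref{eq_chi_H_bar_p} reduces to the defining relations \eqref{eq_chi_g_V}, and comparing $\td g^0_{\td x_0} = g + \td h^0_{\td x_0} + T(\td\gamma_{\td x_0}) - T(\gamma) = g_{x_0} = g + h_{x_0} + T(\gamma_{x_0}) - T(\gamma)$ together with $\td\gamma_{\td x_0} = \gamma_{x_0}$ gives $\td h^0_{\td x_0} = h_{x_0}$ and likewise $\td V^0_{\td x_0} = V_{x_0}$. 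The substance of the claim is the $C^{1,\alpha}$-regularity of the two displayed maps. The starting point is that $\td\Phi(\td x)$ is represented, by hypothesis, via $\td g + \td h_{\td x} + T(\td\gamma_{\td x}) - T(\td\gamma)$ and $\td V_{\td x}$ gauged at infinity through $\td\psi_{\td x}$, where $\td x \mapsto \td h_{\td x}$ is $C^{1,\alpha}$ into $C^{k}_{\td g, -2}(M;S^2T^*M)$ and $\td x \mapsto \td\gamma_{\td x}$ is $C^{1,\alpha}$ into $\GenCONE^{k^*}(N)$.

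First I would fix the diffeomorphism $\chi_{\td x_0} : M \to M$ (of finite displacement). Since $\td g_{\td x_0} = \chi_{\td x_0 *}(\td\psi_{\td x_0 *}\,\text{(metric rep.)})$ and $g_{x_0} = \psi_{x_0 *}(\cdots)$ are both $C^{k^*-2}$-regular asymptotically conical metrics that are diffeomorphic by a finite-displacement map, and since both are in the DeTurck gauge relative to $(g,f)$ (the one via \eqref{eq_Q_h_gp_kp}, \eqref{eq_Vx_def}; the other because it is isometric to $\Phi_p(x_0)$, which sits in $\Phi_p(U_p)$), the uniqueness statement of Proposition~\ref{Prop_gauge}\ref{Prop_gauge_b} forces $\chi_{\td x_0}$ to be a specific $C^{k^*-1}$-regular diffeomorphism; in particular it is fixed, independent of $\td x$. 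The key point is then that $\chi_{\td x_0}$ does not depend on $\td x$, so the map $\td x \mapsto \chi_{\td x_0}^* \td g_{\td x}$ is just the fixed pullback operator $\chi_{\td x_0}^*$ post-composed with $\td x \mapsto \td g_{\td x}$.

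Next I would invoke Lemma~\ref{Lem_composition} (pullback by a fixed diffeomorphism, applied with $Z$ the vector field generating $\chi_{\td x_0}$, which is bounded because $\chi_{\td x_0}$ has finite displacement, or more directly: $\chi_{\td x_0}^*$ is a bounded linear operator on the relevant weighted H\"older spaces because $\chi_{\td x_0}$ is a $C^{k^*-1}$-diffeomorphism of finite displacement comparing the $(M,\td g,\td f)$-weights with the $(M,g,f)$-weights, which are equivalent up to a bounded factor by \eqref{eq_f_asymptotics_iota}). This shows that $\chi_{\td x_0}^*$ is bounded and linear from $C^{k}_{\td g,-2} \to C^{k'}_{g,-2}$ for some $k' \le k$ (a bounded loss of derivatives is acceptable since we have room $k \le k^*-20$) and similarly for vector-field-valued spaces. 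Then $\td h^0_{\td x} = \chi_{\td x_0}^* \td g_{\td x} - g - T(\td\gamma_{\td x}) + T(\gamma)$ is $C^{1,\alpha}$ into $C^{k'}_{g,-1}(M;S^2T^*M)$ as a sum of three $C^{1,\alpha}$-terms: the pullback term (composition of $C^{1,\alpha}$ with the bounded linear $\chi_{\td x_0}^*$), the $T$-term (composition of $C^{1,\alpha}$ with the bounded linear $T$ via Definition~\ref{Def_T} and Lemma~\ref{Lem_nabf_Tgamma}), and constants; here I use that $C^{k'}_{g,-2} \hookrightarrow C^{k'}_{g,-1}$ by \eqref{eq_Cka_Ckaa_bound}, although in fact $\td h^0_{\td x}$ even lies in the $-2$-weighted space. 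For the vector-field map, from \eqref{eq_chi_H_bar_p} and \eqref{eq_Vx_def} applied to $\td g_{\td x}$ I would express $\td V^0_{\td x}$ in terms of $\td g^0_{\td x}$ via the DeTurck-type formula pulled back — more precisely, from $\chi_{\td x_0}^* \td V_{\td x} = \td V^0_{\td x}$ and $\td V_{\td x} = \nabla^{\td g}\td f - \DIV_{\td g}(\td g_{\td x}) + \tfrac12\nabla^{\td g}\tr_{\td g}(\td g_{\td x})$, noting $\chi_{\td x_0}$ is fixed, so $\td V^0_{\td x} - V_{x_0}$ is a $C^{1,\alpha}$-function of $\td h_{\td x}$ and $\td\gamma_{\td x}$ via differential operators of order $\le 1$ composed with bounded linear maps; the relevant mapping properties are exactly those established in the proof of Claim~\ref{eq_Q_td_real_analytic} (linearity/boundedness of $h' \mapsto \nabla h'$, $\gamma' \mapsto \nabla T(\gamma')$, etc.).

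The main obstacle, and the step requiring the most care, is pinning down that $\chi_{\td x_0}$ is genuinely independent of $\td x$ and is the canonical DeTurck-gauge diffeomorphism — i.e., correctly applying the uniqueness in Proposition~\ref{Prop_gauge}\ref{Prop_gauge_b} to identify $\chi_{\td x_0}$. One must check the hypotheses: both representatives are close to $(g,V)$ in $C^2$ and at bounded $g$-distance from the identity after the appropriate gauging-at-infinity diffeomorphisms are composed in, which follows from $\td g_{\td x_0}$ and $g_{x_0}$ both being $C^{k^*-2}$-close to $g$ (by shrinking $\td U$ and $U_p$) together with the finite-displacement property. Once $\chi_{\td x_0}$ is fixed, the rest is the routine functional-analytic bookkeeping sketched above: composing $C^{1,\alpha}$-maps with bounded linear operators ($\chi_{\td x_0}^*$, $T$, $\DIV_g$, $\nabla$, inclusions between weighted H\"older spaces), all of which is licensed by Lemma~\ref{Lem_composition} and Lemma~\ref{Lem_Holder_norms_properties}. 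A minor additional point is comparing the H\"older norms defined via $(M,\td g,\td f)$ with those defined via $(M,g,f)$; since $\td g$ and $g$ are uniformly bilipschitz and $\td f, f$ differ from $-\tfrac14 r^2$ by bounded amounts via \eqref{eq_f_asymptotics_iota}, these norms are equivalent, so no loss occurs.
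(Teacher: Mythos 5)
There is a genuine gap in your treatment of the terms involving the map $T$, and it is precisely where the real work of the paper's proof lies. Your decomposition of $\td h^0_{\td x} = \chi_{\td x_0}^*\td g_{\td x} - g - T(\td\gamma_{\td x}) + T(\gamma)$ into "a sum of three $C^{1,\alpha}$-terms" cannot work as stated, because neither $\chi_{\td x_0}^*\td g_{\td x} - g$ nor $T(\td\gamma_{\td x}) - T(\gamma)$ lies in the decaying space $C^k_{g,-1}$: by Lemma~\ref{Lem_nabf_Tgamma}, $T(\gamma')$ is only bounded, not decaying, and likewise the difference of the two background metrics contains non-decaying generalized-cone pieces. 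The required decay of $\td h^0_{\td x}$ arises only from cancellations, and the paper's proof makes these explicit by first rearranging (using \eqref{eq_H_bar_H}, i.e.\ $\chi_{\td x_0}^*\td g_{\td x_0}=g_{x_0}$) so that $\td h^0_{\td x}= \chi_{\td x_0}^*(\td h_{\td x}-\td h_{\td x_0}) + \bigl[\chi_{\td x_0}^*\bigl(T(\td\gamma_{\td x})-T(\gamma_{x_0})\bigr)-\bigl(T(\td\gamma_{\td x})-T(\gamma_{x_0})\bigr)\bigr] + h_{x_0}$. The bracketed term is then handled by the boundedness of the linear operator $\gamma'\mapsto \chi_{\td x_0}^*(T(\gamma'))-T(\gamma')$ into $C^k_{g,-1}$, and this is the delicate step you are missing: it requires the estimate $|d\chi_{\td x_0}-\id|\le Cr^{-1}$ of Lemma~\ref{Lem_dchi_small}, whose hypotheses are verified using the decay $\chi_{\td x_0}^*\td g_{\td x_0}-\td g_{\td x_0}=g_{x_0}-\td g_{\td x_0}=O(r^{-2})$ coming from Lemma~\ref{Lem_MM_regular_rep}\ref{Lem_MM_regular_rep_b}. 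Without this, the map simply does not take values in the stated codomain, no matter how the $C^{1,\alpha}$-dependence on $\td x$ is organized.

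Two further points. First, your identification of $\chi_{\td x_0}$ via the uniqueness in Proposition~\ref{Prop_gauge}\ref{Prop_gauge_b} is both unnecessary and based on a wrong premise: $\td g_{\td x_0}$ is in the DeTurck gauge relative to $(\td g,\td f)$, not relative to $(g,f)$, so the two representatives are not in the same gauge and that uniqueness statement does not pin down $\chi_{\td x_0}$; the claim only needs that $\chi_{\td x_0}$ is some fixed finite-displacement diffeomorphism satisfying \eqref{eq_chi_g_V} at $\td x_0$, which exists by construction. Second, the boundedness of the pullback operators $h\mapsto\chi_{\td x_0}^*h$ between weighted H\"older spaces based on the two different soliton structures is not a minor bilipschitz remark and is not covered by Lemma~\ref{Lem_composition} (which concerns $\phi_Z$ for small $Z$ and the difference map $\phi_Z^*u-u$); moreover your willingness to lose derivatives ($k'\le k$) would not prove the claim, whose codomain is $C^k_{g,-1}$. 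The paper avoids any derivative bounds on $\chi_{\td x_0}$ altogether via diffeomorphism invariance, $\Vert\chi_{\td x_0}^*h\Vert_{C^k_{\chi_{\td x_0}^*\td g_{\td x_0},-1}}=\Vert h\Vert_{C^k_{\td g_{\td x_0},-1}}$, combined with repeated use of Lemma~\ref{Lem_change_background_metric} to pass between the metrics $g$, $g_{x_0}=\chi_{\td x_0}^*\td g_{\td x_0}$, $\td g_{\td x_0}$ and $\td g$; some version of this norm-transfer argument is needed to make your "bounded linear pullback" step rigorous at the stated regularity.
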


\begin{proof}
Identity \eqref{eq_H_bar_H} is a direct consequence of \eqref{eq_chi_g_V}.
Rearranging \eqref{eq_chi_H_bar_p} yields
\begin{align*}
 \td h^0_{\td x} &= \chi_{\td x_0}^* \big( \td g_{\td x} - \td g_{\td x_0} \big) + \chi_{\td x_0}^*  \td g_{\td x_0}
- g - T(\td\gamma_{\td x}) + T(\gamma) \\
 &= \chi_{\td x_0}^* \big( \td h_{\td x} - \td h_{\td x_0} \big) + \chi_{\td x_0}^* \big( T(\td\gamma_{\td x}) - T(\td\gamma_{\td x_0}) \big)  + g_{x_0}
- g - T(\td\gamma_{\td x}) + T(\gamma) \\
 &=   \chi_{\td x_0}^* \big( \td h_{\td x} - \td h_{\td x_0} \big) + \chi_{\td x_0}^* \big( T(\td\gamma_{\td x}) - T(\gamma_{x_0})  \big) 
 - \big(   
   T(\td\gamma_{\td x}) -  T(\gamma_{x_0}) \big)+ h_{x_0}.   
\end{align*}
Moreover,
\[ \td V^0_{\td x} - V_{x_0} = \chi^*_{\td x_0} \td V_{\td x} - \chi^*_{x_0} \td V_{\td x_0} = \chi^*_{\td x_0} \big( \td V_{\td x} - \td V_{\td x_0} \big). \]
So by composition of functions, we are left with showing that the following linear operators are bounded (note the different background metrics):
\begin{alignat}{2}
 C^{k}_{\td g,-1}(M; S^2 T^*M) &\longrightarrow C^{k}_{ g,-1}(M; S^2 T^*M), &\qquad  h &\longmapsto \chi_{\td x_0}^* h \label{eq_map_1} \\
 C^{k}_{\td g,-1}(M; TM) &\longrightarrow C^{k}_{ g,-1}(M;  TM), &\qquad  W &\longmapsto \chi_{\td x_0}^* W \label{eq_map_2} \\
 T\GenCONE^{k^*}(N) &\longrightarrow C^{k}_{g,-1}(M; S^2 T^*M), &\qquad  \gamma' &\longmapsto \chi_{\td x_0}^*(T(\gamma')) - T(\gamma') \label{eq_map_3}
\end{alignat}
For the map \eqref{eq_map_1} this follows using repeated application of Lemma~\ref{Lem_change_background_metric} from the fact that $g_{x_0}, g, \td g_{\td x_0}, \td g$ are bilipschitz and
\[ \Vert g_{x_0} - g \Vert_{C^k_g}, \;
 \Vert \td g_{\td x_0} - \td g \Vert_{C^k_{\td g}} \;
 < \infty, \]
via
\[ \Vert \chi_{\td x_0}^* h \Vert_{C^{k}_{-1,g}}
\leq C \Vert \chi_{\td x_0}^* h \Vert_{C^{k}_{  g_{x_0}, -1}}
= C \Vert \chi_{\td x_0}^* h \Vert_{C^{k}_{\chi_{\td x_0}^* \td g_{\td x_0},-1}}
= C \Vert  h \Vert_{C^{k}_{\td g_{\td x_0},-1}} 
\leq C \Vert  h \Vert_{C^{k}_{\td g},-1}.  \]
The boundedness of \eqref{eq_map_2} follows analogously.
An application of Lemma~\ref{Lem_change_background_metric} also implies that it suffices to show the boundedness of \eqref{eq_map_3} as a map into $C^k_{-1,g_{x_0}}$.
We can then estimate, again using Lemmas~\ref{Lem_change_background_metric}, \ref{Lem_nabf_Tgamma},
\begin{multline*}
  \big\Vert \nabla^{g_{x_0}} (\chi_{\td x_0}^*(T(\gamma'))  ) \big\Vert_{C^{k-1}_{g_{x_0},-1}} 
=  \big\Vert \nabla^{\chi_{\td x_0}^*\td g_{\td x_0}} (\chi_{\td x_0}^*(T(\gamma')) ) \big\Vert_{C^{k-1}_{ \chi_{\td x_0}^*\td g_{\td x_0},-1}}
=  \big\Vert \nabla^{\td g_{\td x_0}} (T(\gamma') ) \big\Vert_{C^{k-1}_{\td g_{\td x_0},-1}}\\
\leq C \big\Vert \nabla^{\td g_{\td x_0}} (T(\gamma') ) \big\Vert_{C^{k-1}_{\td g,-1}}
\leq C \big\Vert \nabla^{\td g} (T(\gamma') ) \big\Vert_{C^{k-1}_{-1, \td g}} + C \big\Vert \nabla^{\td g} \td g_{\td x_0} \big\Vert_{C^{k-1}_{ \td g,-1}}  \big\Vert T(\gamma')  \big\Vert_{C^{k-1}_{ \td g}} \leq C \Vert \gamma' \Vert .
\end{multline*}
and
\[ \Vert \nabla^{g_{x_0}} (T(\gamma')) \Vert_{C^{k-1}_{g,-1}}
\leq C\Vert \nabla^g (T(\gamma')) \Vert_{C^{k-1}_{g,-1}} + C\Vert \nabla^g g_{x_0} \Vert_{C^{k-1}_{g,-1}} \Vert T(\gamma') \Vert_{C^{k-1}_{g}}
 \leq C \Vert \gamma' \Vert, \]
so
\[ \Vert \nabla^{g_{x_0}} (\chi_{\td x_0}^*(T(\gamma')) - T(\gamma'))\Vert_{C^{k-1}_{g_{x_0},-1}} \leq C \Vert \gamma' \Vert. \]
Thus it remains to show that the following linear operator is bounded
\[ T\GenCONE^{k^*}(N) \longrightarrow C^{0}_{ \td g_{\td x_0},-1}(M; S^2 T^*M), \qquad  \gamma'  \longmapsto \chi_{\td x_0}^*(T(\gamma')) - T(\gamma'). \]
This follows from the fact that $|\nabla T(\gamma')| \leq C(-f)^{-1/2}|\gamma'|$ and from Lemma~\ref{Lem_dchi_small} since
\begin{multline*}
 \chi^*_{\td x_0} \td{g}_{\td x_0} - \td g_{\td x_0}
= g_{x_0} - \td g_{\td x_0}
= g + h_{x_0} + T(\gamma_{x_0}) - T(\gamma) - \big(\td g + \td h_{\td x_0} + T(\td\gamma_{\td x_0}) - T(\td\gamma)\big) \\
= (g- T(\gamma)) + h_{x_0} - (\td g - T(\td\gamma)) - \td h_{\td x_0},
\end{multline*}
which implies that $\Vert \chi^*_{\td x_0} \td{g}_{\td x_0} - \td g_{\td x_0} \Vert_{C^{1}_{\td g_{\td x_0},-2}} < \infty$ via Lemmas~\ref{Lem_MM_regular_rep}\ref{Lem_MM_regular_rep_b}, \ref{Lem_change_background_metric}.
\end{proof}

Set now for any $\td x \in \td U$
\[ \chi^0_{\td x} := \chi^{-1}_{\td x_0} \circ \chi_{\td x}. \]
Then $\chi^0_{\td x_0} = \id_M$ and by \eqref{eq_chi_g_V}, \eqref{eq_chi_H_bar_p}
\[ g_{\Theta(\td x)} = (\chi^0_{\td x})^* \td g^0_{\td x}, \qquad V_{\Theta(\td x)} = (\chi^0_{\td x})^* \td V^0_{\td x}. \]
Assuming $x_0$ to be sufficiently small (which can be achieved by shrinking $U_p$), we may use Claim~\ref{Cl_tdg0} to apply Proposition~\ref{Prop_gauge} to $\td g^0_{\td x}, \td V^0_{\td x}$ for $\td x$ close to $\td x_0$.
We obtain that there is a neighborhood $\td U_0 \subset \td U$ of $\td x_0$ and a family of $C^3$-diffeomorphisms $(\chi'_{\td x} : M \to M)_{\td x \in \td U_0}$ of finite displacement such that $P_g ((\chi'_{\td x})^* \td g^0_{\td x}, (\chi'_{\td x})^* \td V^0_{\td x}) = 0$. 
Combining Assertion~\ref{Prop_gauge_c} of Proposition~\ref{Prop_gauge} with Assertion~\ref{prop_smooth_dependence_e} of Proposition~\ref{prop_smooth_dependence} implies, again assuming $x_0$ to be small and after possibly shrinking $\td U_0$, that there is a $C^{1,\alpha}$-regular map $\Theta' : \td U_0 \to U_p$ such that
\[ g_{\Theta'(\td x)} = (\chi'_{\td x})^* \td g^0_{\td x}, \qquad V_{\Theta'(\td x)} = (\chi'_{\td x})^* \td V^0_{\td x}. \]

We will now show that $\Theta' = \Theta |_{\td U_0}$, which will establish the $C^{1,\alpha}$-regularity of $\Theta$.
Fix some $\td x \in \td U_0$ and observe that $\omega := (\chi^0_{\td x})^{-1} \circ \chi'_{\td x}$ is a $C^1$-diffeomorphism of finite displacement with
\[ \omega^* g_{\Theta(\td x)} = g_{\Theta'(\td x)}, \qquad \omega^* V_{\Theta(\td x)} = V_{\Theta'(\td x)}. \]
Since 
\[ (\omega \circ \iota_{\Theta'(\td x)})^* V_{\Theta(\td x)} 
=  \iota_{\Theta'(\td x)}^* V_{\Theta'(\td x)} 
= -\tfrac12 r \partial_r
= \iota_{\Theta(\td x)}^* V_{\Theta(\td x)} , \] 
we can apply the uniqueness property of $\iota_{\Theta(\td x)}$ from Proposition~\ref{Prop_gauge_infinity_families}\ref{Prop_gauge_infinity_families_h} and obtain that
\[ \omega \circ \iota_{\Theta'(\td x)} = \iota_{\Theta(\td x)}. \]
So if we set
\[ \omega' :=  \psi^{-1}_{\Theta(\td x)} \circ \omega \circ \psi_{\Theta'(\td x)},  \]
then by \eqref{eq_iotax_tdiotax}
\[ \omega' \circ \td\iota_{\Theta'(\td x)}
= \psi^{-1}_{\Theta(\td x)} \circ \omega \circ \iota_{\Theta'(\td x)}
= \psi^{-1}_{\Theta(\td x)} \circ \iota_{\Theta(\td x)}
= \td\iota_{\Theta(\td x)}, \]
so $\omega'$ equals the identity outside a compact subset and
\[ (\omega')^* g'_{\Theta(\td x)} = g'_{\Theta'(\td x)},  \qquad  (\omega')^* V'_{\Theta(\td x)} = V'_{\Theta'(\td x)}. \]
So $(g'_{\Theta(\td x)}, V'_{\Theta(\td x)}, \td\gamma_{\td x})$ and $(g'_{\Theta'(\td x)}, V'_{\Theta'(\td x)}, \td\gamma_{\td x})$ represent the same class in $\MM$.
Therefore $\Phi_p(\Theta(\td x)) = \Phi_p (\Theta'(\td x))$ and thus by Claim~\ref{Cl_Phi_injective} we have $\Theta(\td x) = \Theta'(\td x)$.

To see the statement involving \eqref{eq_infinitesimal_version}, note that in this setting we have $x_0 = \Theta (\td x_0) = 0$, so
\[ \td g^0_{\td x_0}  = g, \qquad \td V_{\td x_0}^0 = \nabla^g f \]
and we may take $\td\chi_{\td x_0} = \td\psi_{\td x_0}$.
Moreover, $v = d\Theta_{\td x_0} (\td v) = d\Theta'_{\td x_0} (\td v)$, which implies that
\[ \partial_s \big|_{s=0} g_{s v} = \dot g , \qquad 
\partial_s \big|_{s=0}\td g^0_{\td x_0 + s \td v} = \td\psi^*_{\td x_0} \dot{\td g}  \]
and
\begin{multline*}
 \partial_s \big|_{s=0}\td V^0_{\td x_0 + s \td v} 
= \td\psi^*_{\td x_0} \big( \partial_s \big|_{s=0}\td V_{\td x_0 + s \td v} \big)
= \td\psi^*_{\td x_0} \big( - \DIV_{\td g} \big(\partial_{s} \big|_{s=0} \td g_{\td x_0 + s \td v} \big) + \tfrac12 \nabla^{\td g} \tr_{\td g} \big(\partial_{s} \big|_{s=0} \td g_{\td x_0 + s \td v} \big) \big) \\
= \td\psi^*_{\td x_0} \big( - \DIV_{\td g} \dot{\td g} + \tfrac12 \nabla^{\td g} \tr_{\td g} \dot{\td g} \big) 
=  - \DIV_{ \td\psi^*_{\td x_0}\td g} \td\psi^*_{\td x_0}\dot{\td g} + \tfrac12 \nabla^{\td\psi^*_{\td x_0}\td g} \tr_{\td\psi^*_{\td x_0}\td g} \td\psi^*_{\td x_0}\dot{\td g} . 
\end{multline*}
The identity \eqref{eq_infinitesimal_version} now follows from \eqref{eq_gauging_differential} in Proposition~\ref{Prop_gauge}. 
\end{proof}
\bigskip

\section{Change of index formula for the Einstein operator} \label{sec_change_of_index}
In this section we establish an identity (see Proposition \ref{prop_index_formula}) relating the index of the Einstein operator $L_{g'}$ of a soliton metric $g'$ constructed via Proposition~\ref{prop_smooth_dependence} to the index of the Einstein operator $L_g$ of the base soliton metric $g$ and the index of the derivative $D_2 G_{(\gamma',\kappa')}$.
This index formula will be used in the proof of Proposition~\ref{Prop_summary} to define an orientation on the space of expanding solitons over a finite dimensional submanifold of cone metrics, which in turn will be key to define an integer degree.
We also establish various other rather technical results that will be used in this process.

The following results will not be needed for the definition of the $\IZ_2$-degree, so a reader who is only interested in this definition may skip this section.

\subsection{Comparing nearby expanding soliton metrics}
We establish some technical estimates comparing spaces of function spaces defined via the gradient expanding soliton  $(M,g,f)$ and a nearby gradient expanding soliton $(M,g',f')$.

\begin{Lemma} \label{Lem_nearby_bounds}
Consider the setting of Proposition~\ref{prop_smooth_dependence} with $k \geq 3$ and let $\delta > 0$.
Then for $(\gamma',\kappa') \in \{ G =0 \} \subset U_1 \times U_2$ sufficiently close to $(\gamma,0)$ the following is true.
Let
\begin{equation} \label{eq_gp_g}
 g' := g + F(\gamma',\kappa') + T(\gamma'-\gamma), \qquad V' := \nabla^g f - \DIV_g g' + \tfrac12 \nabla^g \tr_g g' 
\end{equation}
and suppose that $\gamma' \in \CONE^{k^*}(N)$ and $V' = \nabla^{g'} f'$ for some potential $f' \in C^1(M)$.
Suppose moreover the normalizing assumptions
\[ |\nabla^g f|_g^2 + R_g = - f, \qquad |\nabla^{g'} f'|_{g'} + R_{g'} = - f'. \]
Then the following holds:
\begin{enumerate}[label=(\alph*)]
\item  \label{Lem_nearby_bounds_a} We have the bounds
\begin{equation} \label{eq_gpg_fpf_bounds}
 \Vert g' - g \Vert_{C^0_g} \leq \delta, \qquad
\Vert \nabla^g g'  \Vert_{C^3_{-1,g}} \leq \delta, \qquad \Vert f' - f \Vert_{C^1_g} \leq \delta, 
\end{equation}
Moreover, we have the following bound on the difference of the Levi-Civita connections in the radial direction
\begin{equation} \label{eq_gpg_fpf_bounds_2}
\Vert \nabla^{g'}_{\nabla^g f} - \nabla^g_{\nabla^g f} \Vert_{C^0_{g,-1}} \leq \delta. 
\end{equation}
\item  \label{Lem_nearby_bounds_b} We have $C^{m}_{g,-1,\nabla^g f} (M;TM) = C^{m}_{g',-1,\nabla^{g'} f'} (M;TM)$ for $m=0,1,2,3$ (note that the second space uses the potential $f'$ for the definition of the weight) and the corresponding norms are equivalent.
The same is true if we replace the tensor bundle $TM$ with $S^2 T^*M$.
\item  \label{Lem_nearby_bounds_c} We have $L^2_{g,f}(M;S^2 T^*M) = L^2_{g',f'}(M;S^2 T^*M)$ and for any $\td\kappa_1, \td\kappa_2 \in L^2_{g,f}(M;S^2 T^*M)$
\[ (1-\delta) \Vert \td\kappa_1 \Vert_{L^2_{g,f}}
\leq \Vert \td\kappa_1 \Vert_{L^2_{g',f'}}
\leq (1+\delta) \Vert \td\kappa_1 \Vert_{L^2_{g,f}} \]
and
\[ \big| \langle \td\kappa_1, \td\kappa_2 \rangle_{g,f} - \langle \td\kappa_1, \td\kappa_2 \rangle_{g',f'} \big| \leq \delta \Vert \td\kappa_1 \Vert_{L^2_{g,f}} \Vert \td\kappa_2 \Vert_{L^2_{g,f}} . \]
Similar bounds hold for the norms and inner products of $H^1_{g,f}$ and $H^1_{g',f'}$.
\end{enumerate}
\end{Lemma}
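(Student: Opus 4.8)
The strategy is to reduce everything to a few quantitative closeness statements about $g'$ relative to $g$, all of which follow from Proposition~\ref{prop_smooth_dependence} and the continuity of $F$ together with Lemma~\ref{Lem_nabf_Tgamma} and Lemma~\ref{Lem_Holder_norms_properties}, and then propagate these to the potential functions, the connections, the weighted H\"older spaces, and finally the $L^2$ and $H^1$ spaces. For Assertion~\ref{Lem_nearby_bounds_a}: since $G(\gamma',\kappa')=0$ we have $g' = g + F(\gamma',\kappa') + T(\gamma'-\gamma)$ with $F$ real-analytic and $F(\gamma,0)=0$, so $\Vert F(\gamma',\kappa')\Vert_{C^{k,\alpha}_{-2,\nabla f}}$ is small once $(\gamma',\kappa')$ is close to $(\gamma,0)$; combined with $\Vert T(\gamma'-\gamma)\Vert_{C^{k,\alpha}}\to 0$ (using $\Vert T(\gamma'')\Vert_{C^{k^*}}\le C\Vert\gamma''\Vert$ and, for the first-derivative decay, Lemma~\ref{Lem_nabf_Tgamma}) this gives the first two bounds in \eqref{eq_gpg_fpf_bounds}, using $k\ge 3$ so that $\nabla^g g'$ is controlled in $C^3_{-1,g}$. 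The bound on $f'-f$ is obtained from the normalizations $|\nabla^g f|^2_g + R_g = -f$ and $|\nabla^{g'}f'|^2_{g'}+R_{g'}=-f'$: subtracting these and using the asymptotics $f = -\tfrac14 r^2 + O(1)$, $f'=-\tfrac14 r^2 + O(1)$ from Lemma~\ref{Lem_iota}\ref{Lem_iota_c} (via Lemma~\ref{Lem_MM_regular_rep}\ref{Lem_MM_regular_rep_b}) shows $f'-f$ is bounded, and then the scalar curvature difference $R_{g'}-R_g = O(\delta)$ in $C^0_g$ (from the smallness of $\nabla^g g'$, $\nabla^{2,g}g'$ and $g'-g$) and the gradient-norm difference control $f'-f$ and $\nabla(f'-f)$ in $C^0_g$; a short bootstrap using the elliptic equation $\triangle(-f') = R_{g'}+\tfrac n2$ versus $\triangle(-f)=R_g+\tfrac n2$ upgrades this to $C^1_g$. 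For \eqref{eq_gpg_fpf_bounds_2}, write $\nabla^{g'} - \nabla^g = g^{-1}\ast \nabla^g g' \ast(\cdots)$ (the usual Christoffel difference tensor, now for the perturbed metric), contract with $\nabla^g f$ which has $|\nabla^g f|\le C r \sim C(-f)^{1/2}$, and use $\Vert\nabla^g g'\Vert_{C^0_{-1,g}}\le\delta$; since $(-f)^{1/2}\cdot(-f)^{-1/2}=1$ the weights cancel to give a bound in $C^0_{g,-1}$.

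For Assertion~\ref{Lem_nearby_bounds_b}, the weight functions $(S-f+1)^{-1/2}$ and $(S'-f'+1)^{-1/2}$ are uniformly comparable (and comparable to $r^{-1}$) by the $f'-f$ bound just established, so the unweighted parts $C^m_g = C^m_{g'}$ with equivalent norms follows from $g,g'$ being bilipschitz with $\Vert g'-g\Vert_{C^m_g}$ small (apply Lemma~\ref{Lem_change_background_metric} from the appendix, for $m\le 3$ which needs $k\ge 3$). The extra term in the $C^m_{w,V}$-norm is $\Vert\nabla_{\nabla f} u\Vert_{C^{m-2}_{w}}$ versus $\Vert\nabla^{g'}_{\nabla^{g'} f'} u\Vert_{C^{m-2}_{w'}}$; the difference is controlled by \eqref{eq_gpg_fpf_bounds_2} together with $\Vert\nabla(f'-f)\Vert_{C^0_g}\le\delta$ and the connection-difference estimate, all of which contribute terms with the correct weight $r^{-1}$ that vanish as $\delta\to 0$, so the two norms are equivalent; the case of $S^2T^*M$ is identical. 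For Assertion~\ref{Lem_nearby_bounds_c}, the volume forms satisfy $dg' = (1+O(\delta))dg$ pointwise (from $\Vert g'-g\Vert_{C^0_g}\le\delta$), the fiber metrics on $S^2T^*M$ induced by $g$ and $g'$ are $(1+O(\delta))$-comparable, and the density factors $e^{-f}$, $e^{-f'}$ differ by $e^{f-f'}=1+O(\delta)$ since $\Vert f'-f\Vert_{C^0_g}\le\delta$; multiplying these three comparisons gives the stated two-sided bound on $\Vert\cdot\Vert_{L^2_{g',f'}}$ versus $\Vert\cdot\Vert_{L^2_{g,f}}$, and hence $L^2_{g',f'} = L^2_{g,f}$ as sets. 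The inner-product estimate follows by polarization (write $\langle\td\kappa_1,\td\kappa_2\rangle_{g',f'} - \langle\td\kappa_1,\td\kappa_2\rangle_{g,f} = \int (\td\kappa_1\cdot_{g'}\td\kappa_2)e^{-f'}dg' - (\td\kappa_1\cdot_g\td\kappa_2)e^{-f}dg$ and bound the integrand pointwise by $\delta|\td\kappa_1||\td\kappa_2|e^{-f}$ using the three comparisons above, then Cauchy--Schwarz). For $H^1$ one additionally compares $\nabla^{g'}\td\kappa$ with $\nabla^g\td\kappa$; the difference is a contraction of the Christoffel difference tensor (small in $C^0_{g,-1}$) with $\td\kappa$, but this term carries a weight $r^{-1}$ which is \emph{not} $L^2$-bounded against a generic $H^1$ element. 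The fix is that $H^1_{g,f}$ controls $\int|\nabla\td\kappa|^2 e^{-f}$ and, crucially, via the Poincar\'e inequality Lemma~\ref{Lem_L2f_Poincare} it also controls $\int|\nabla f|^2|\td\kappa|^2 e^{-f}$, hence $\int r^2|\td\kappa|^2 e^{-f}$; so the $r^{-1}$-weighted cross term $\int (\nabla^{g'}g')\ast\td\kappa\ast\nabla\td\kappa\, e^{-f}$ is estimated by $\delta\big(\int r^{-2}|\td\kappa|^2 e^{-f}\big)^{1/2}\big(\int|\nabla\td\kappa|^2e^{-f}\big)^{1/2}\le \delta\Vert\td\kappa\Vert_{H^1_{g,f}}^2$ after absorbing $r^{-2}\le 1$. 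This is the step I expect to be the main obstacle: one has to be careful that every use of the connection difference in the $H^1$ comparison comes with a genuine decaying weight (at least $r^{-1}$), which forces one to invoke Lemma~\ref{Lem_L2f_Poincare} rather than just bilipschitz equivalence, and one must also check that the eigen-type estimates are not needed here — only the inner-product and norm comparisons are asserted, so the argument stays elementary once the weighted bookkeeping is done correctly.
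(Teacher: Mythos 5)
Your overall architecture matches the paper's (continuity of $F$ and $T$ for the metric bounds, the normalized soliton identities for $f'-f$, Lemma~\ref{Lem_change_background_metric} for \ref{Lem_nearby_bounds_b}, and measure comparison for \ref{Lem_nearby_bounds_c}), but Assertion~\ref{Lem_nearby_bounds_a} contains two genuine gaps, both at the point where the hypothesis $\gamma'\in\CONE^{k^*}(N)$ (rather than merely $\GenCONE$) must enter — and your argument never uses it. First, for \eqref{eq_gpg_fpf_bounds_2} your weight bookkeeping is wrong: the Christoffel difference satisfies $|\nabla^{g'}-\nabla^g|\lesssim |\nabla^g g'|\lesssim \delta\,(-f)^{-1/2}$, and contracting with $\nabla^g f$, whose length is $\sim(-f)^{1/2}$, the weights cancel to give a bound in $C^0_g$, \emph{not} in $C^0_{g,-1}$; the claimed estimate requires an extra factor of $(-f)^{-1/2}$ of decay that the naive contraction does not provide. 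The paper obtains it structurally: it splits $g'-g=F(\gamma',\kappa')+T(\gamma'-\gamma)$, uses that the radial components of $T(\gamma'-\gamma)$ vanish (this is where $\gamma'\in\CONE$ is used), uses the improved radial-derivative decay $|\nabla_{\nabla f}T(\gamma'-\gamma)|\lesssim r^{-2}$ from Lemma~\ref{Lem_nabf_Tgamma}, and for the terms $(\nabla^g_Y T(\gamma'-\gamma))(\nabla^g f,Z)$ it invokes the soliton identity $\nabla^2 f=-\Ric-\tfrac12 g$ so that, after antisymmetrizing in $Y,Z$, the dangerous $O(1)$ contributions cancel and only $T(\gamma'-\gamma)\ast\Ric_g=O(r^{-2})$ survives; the $F$-part is harmless since $|\nabla F|\,|\nabla f|\lesssim\delta\,(-f)^{-1/2}$.

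Second, your bound on $f'-f$ is incomplete for the same reason: subtracting the normalizations produces the term $|\nabla^{g'}f'|^2_{g'}-|\nabla^g f|^2_g$, which contains $(g'-g)(\nabla^g f,\nabla^g f)$; the naive estimate for this is $\delta\,|\nabla^g f|^2\sim\delta r^2$, which is unbounded, and the asymptotics $f,f'=-\tfrac14 r^2+O(1)$ only give boundedness of $f'-f$, not the required smallness $\le\delta$. The paper handles this by noting that $(T(\gamma'-\gamma))(\nabla^g f,\cdot)\equiv 0$ outside a compact set (again $\gamma'\in\CONE$) so only the $F$-part contributes, and $|F|\,|\nabla^g f|^2\lesssim\delta$ because $F$ carries the weight $(-f)^{-1}$; the same observation is needed for $\nabla^g f-\nabla^{g'}f=(g')^{-1}\big((g'-g)(\nabla^g f)\big)$ in the derivative bound, which your elliptic bootstrap does not supply since it presupposes the $C^0$ smallness. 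As a minor remark, in \ref{Lem_nearby_bounds_c} your worry about the connection-difference term in the $H^1$ comparison is misplaced: that term carries a \emph{decaying} factor $\sim\delta\,(-f)^{-1/2}\le\delta$, so the Poincar\'e inequality detour is unnecessary (your final absorption using $r^{-2}\le 1$ is what actually does the work), and the paper indeed treats \ref{Lem_nearby_bounds_c} as a direct consequence of \ref{Lem_nearby_bounds_a} via closeness of the measures $e^{-f}dg$ and $e^{-f'}dg'$.
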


\begin{proof}
The first two bounds of \eqref{eq_gpg_fpf_bounds} in Assertion~\ref{Lem_nearby_bounds_a} hold due to the continuity of $F$ and $T$; see also Lemma~\ref{Lem_nabf_Tgamma}.
For the last bound in \eqref{eq_gpg_fpf_bounds}, we use the normalizing conditions to conclude
\begin{multline*}
 |f'-f| \leq |R_{g'}-R_g| + \big| |\nabla^{g'} f'|_{g'}^2 - |\nabla^g f|_{g}^2 \big| \leq C \delta +  \big| |\nabla^{g'} f'|_{g'}^2 - |\nabla^{g} f|_{g'}^2 \big| + \big| |\nabla^{g} f|_{g'}^2 - |\nabla^g f|_g^2 \big| \\
 \leq C\delta + \big| \nabla^{g'} f' - \nabla^g f\big|_{g'} \, |\nabla^{g'} f' + \nabla^g f \big| + \big|(g' - g)(\nabla^g f, \nabla^g f)\big|
\end{multline*}
The second term on the right-hand side is $\leq C\delta$, because $\nabla^{g'} f' - \nabla^g f = -\DIV_g g' + \tfrac12\nabla^g \tr_g g'$.
The third term is bounded by $|F(\gamma',\kappa')| \, |\nabla^g f|_g^2 \leq C \delta$ outside a compact subset, because $T(\gamma'-\gamma)$ has no components in the $\nabla^g f$-direction.
For the derivative bound on $f'-f$, it suffices to show smallness of $\nabla^{g'} (f'-f) = (\nabla^{g'} f' - \nabla^g f) + (\nabla^g f - \nabla^{g'} f)$.
The smallness of the first term follows again by
\[ \nabla^{g'} f' - \nabla^g f = - \DIV_g g' + \tfrac12 \nabla^g \tr_g g'. \]
For the second term, note that
\begin{equation*} %
   (\nabla^g f - \nabla^{g'} f) 
= g^{\prime, -1} \big( (g' - g ) (\nabla^g f) \big),
\end{equation*}
which can be bounded by $C \delta$ as before.
So \eqref{eq_gpg_fpf_bounds} follows after possibly assuming $(\gamma',\kappa')$ to be closer to $(\gamma,0)$.

For the bound \eqref{eq_gpg_fpf_bounds_2} note that for any vector fields $Y,Z$ the following is true at a point at which $\nabla^g Y = \nabla^g Z = 0$ and $|Y|_g = |Z|_g = 1$
\begin{multline*}
  2 \big( \nabla^{g'}_{\nabla^g f} Y \cdot_{g'} Z - \nabla^g_{\nabla^g f} Y  \cdot_{g'} Z \big)
= (\nabla^g_{\nabla^g f} g')(Y,Z) + (\nabla^g_{Y} g')(\nabla^g f,Z) - (\nabla^g_{Z} g')(\nabla^g f,Y)
\\
= \big(\nabla^g_{\nabla^g f} T(\gamma'-\gamma)\big)(Y,Z) + \big(\nabla^g_{Y} T(\gamma'-\gamma)\big)(\nabla^g f,Z) - \big(\nabla^g_{Z} T(\gamma'-\gamma)\big)(\nabla^g f,Y) \\ + \nabla^g F(\gamma',\kappa') * \nabla^g f * Y * Z.
\end{multline*}
By Lemma~\ref{Lem_nabf_Tgamma}  we can bound the first term on the right-hand side and since $\Vert F(\gamma',\kappa') \Vert_{C^0_{-2,g}}$ can be made arbitrarily small, for $(\gamma',\kappa')$ sufficiently close to $(\gamma,0)$, we can also bound the last term.
To bound the remaining terms, notice that
\begin{multline*}
 \big(\nabla^g_{Y} T(\gamma'-\gamma)\big)(\nabla^g f,Z)
= Y \big( \big(  T(\gamma'-\gamma)\big) (\nabla^g f, Z) \big) - \big(  T(\gamma'-\gamma)\big) (\nabla^g_Y \nabla^g f, Z) \\
= 0 + \tfrac12 \big(  T(\gamma'-\gamma)\big) (Y , Z) + \big( T(\gamma'-\gamma) * \Ric_g \big)(Y,Z) , 
\end{multline*}
where in the last equality we have used the soliton identity and the fact that $\gamma' - \gamma \in T\CONE^{k^*}(N)$, which implied the vanishing of the radial component.
Therefore
\[ \big(\nabla^g_{Y} T(\gamma'-\gamma)\big)(\nabla^g f,Z) - \big(\nabla^g_{Z} T(\gamma'-\gamma)\big)(\nabla^g f,Y)
= \big( T(\gamma'-\gamma) * \Ric_g \big)(Y,Z), \]
which can be bounded as desired.

Assertion~\ref{Lem_nearby_bounds_b} follows from Assertion~\ref{Lem_nearby_bounds_a} via Lemma~\ref{Lem_change_background_metric}.

Assertion~\ref{Lem_nearby_bounds_c} follows after possibly assuming $(\gamma',\kappa')$ to be closer to $(\gamma,0)$ again, since Assertion~\ref{Lem_nearby_bounds_a} guarantees closeness of the background measures $e^{-f}dg$ and $e^{-f'} dg'$, which are used to define the two inner products.
\end{proof}

\subsection{A self-adjoint operator}

In order to relate $\Index(-L_g)$ and $\Index(-L_{g'})$, we need to make sense of the quantity $\Index (D_2G_{(\gamma',\kappa')})$. 
Unfortunately, the differential $D_2G_{(\gamma',\kappa')}:K_g\rightarrow K_g$ may not be self-adjoint, but we will see that $D_2G_{(\gamma',\kappa')}$ behaves like $L_{g'}|_{K_g}$ when $(\gamma',\kappa')$ is close to $(\gamma,0)$.

\begin{Lemma}\label{Lem_Z_map}
Suppose that we are in the setting of Lemma~\ref{Lem_nearby_bounds}.
Then there is an injective linear map
\[ Z=Z_{(\gamma',\kappa')}: K_g \longrightarrow   C^{2,\alpha}_{g',-1,\nabla^{g'}f'}(M;S^2T^*M) \cap H^1_{g',f'}(M;S^2 T^*M) \]
such that for all $\kappa\in K_g$
\begin{equation} \label{eq_LZ_d2G}
 L_{g'} (Z(\kappa))=D_2G_{(\gamma',\kappa')}(\kappa),
\end{equation}
and such that
\begin{equation} \label{eq_definition_Z}
 Z(\kappa) = (D_2 F)_{(\gamma',\kappa')}(\kappa) + \LL_Y g', 
\end{equation}
where $Y \in C^{3,\alpha}_{g',-1,\nabla^{g'}f'}(M;TM)$ is the unique solution to the equation
\begin{equation} \label{eq_Y_identity}
 \triangle_{g',f'}Y-\tfrac{1}{2} Y=\DIV_g(\td{\kappa})-\tfrac{1}{2}\nabla^g\tr_g(\td{\kappa})-\DIV_{g'}(\td{\kappa})+\tfrac{1}{2}\nabla^{g'}\tr_{g'}(\td{\kappa}), \qquad \td{\kappa}=(D_2 F)_{(\gamma',\kappa')}(\kappa). 
\end{equation}
Moreover, given $\delta > 0$, the following is true, assuming that $(\gamma',\kappa')$ is sufficiently close to $(\gamma,0)$:
\begin{enumerate}[label=(\alph*)]
\item  \label{Lem_Z_map_a} For any $\kappa \in K_g$ we have
\begin{equation} \label{eq_Zk_L2_C0_bounds}
 \big\Vert Z_{(\gamma',\kappa')}(\kappa) - \kappa \big\Vert_{L^2_{g,f}} + \big\Vert Z_{(\gamma',\kappa')}(\kappa) - \kappa \big\Vert_{C^0_{g,-1}} \leq \delta \Vert \kappa \Vert_{L^2_{g,f}}. 
\end{equation}
\item  \label{Lem_Z_map_b} Suppose that $\psi : M \to M$ is a diffeomorphism of regularity $C^3$ and of finite displacement such that $(\psi^* g', \psi^* \nabla^{g'} f', \gamma')$ represents an element of $\MMgrad (M,N,\iota)$.
Then for any $\kappa \in \ker D_2 G_{(\gamma',\kappa')}$ and $\gamma^* \in T\GenCONE^{k^*}(N)$ we have
\begin{equation} \label{eq_ZLvskL}
 \big| \big\langle \psi^* (Z(\kappa)), L_{\psi^* g'} (T(\gamma^*)) \big\rangle_{\psi^* g',\psi^* f'} - \big\langle \kappa, L_{g} (T(\gamma^*)) \big\rangle_{g,f} \big| \leq \delta \big\Vert \gamma^* \big\Vert \, \big\Vert \kappa \big\Vert_{L^2_{g,f}} .
\end{equation}
\end{enumerate}
\end{Lemma}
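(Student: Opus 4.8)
The plan is to construct the map $Z = Z_{(\gamma',\kappa')}$ explicitly via the formula \eqref{eq_definition_Z}--\eqref{eq_Y_identity}, then verify the identity \eqref{eq_LZ_d2G} and the bounds \ref{Lem_Z_map_a}, \ref{Lem_Z_map_b}. First I would address the solvability of \eqref{eq_Y_identity}: by Lemma~\ref{Lem_nearby_bounds}\ref{Lem_nearby_bounds_b} the operator $\triangle_{g',f'} - \tfrac12$ acting on $C^{3,\alpha}_{g',-1,\nabla^{g'}f'}(M;TM)$ has the same domain (with equivalent norms) as on the corresponding $g$-based space, and by Corollary~\ref{Cor_Lundardi} (applied to $(M,g',f')$, which is a genuine gradient expanding soliton by the hypothesis $V' = \nabla^{g'} f'$) it is invertible. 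To apply it we need the right-hand side of \eqref{eq_Y_identity} to lie in $C^{1,\alpha}_{g',-1}$; since $\td\kappa = (D_2 F)_{(\gamma',\kappa')}(\kappa) \in C^{k,\alpha}_{g,-2,\nabla^g f}$ with $k \geq 3$, the difference of divergence/trace-gradient operators $\DIV_g - \DIV_{g'}$ etc.\ is a first-order operator whose coefficients involve $\nabla^g(g'-g) = \nabla^g F(\gamma',\kappa') + \nabla^g T(\gamma'-\gamma)$, which is bounded in $C^2_{g,-1}$ by Lemma~\ref{Lem_nabf_Tgamma} and the continuity of $F$; so the right-hand side lies in $C^{1,\alpha}_{g,-1} \subset C^{1,\alpha}_{g,-2} \cdot (-f)^{1/2}$, giving $Y \in C^{3,\alpha}_{g',-1,\nabla^{g'}f'}$. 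That $Y$, and hence $Z(\kappa)$, lies in $H^1_{g',f'}$ follows from Proposition~\ref{Prop_L2_theory}\ref{Prop_L2_theory_b}: $L_{g'}(Z(\kappa)) \in C^{0,\alpha}_{g',-2} \subset L^2_{g',f'}$, so the membership is automatic.

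Next I would verify \eqref{eq_LZ_d2G}. This is the same gauge-fixing computation that underlies Proposition~\ref{Prop_gauge}\ref{Prop_gauge_c}: the choice of $Y$ in \eqref{eq_Y_identity} is precisely the DeTurck-gauge-correction so that $\td\kappa + \LL_Y g'$ satisfies $P_{g'}$-gauge infinitesimally with respect to $(g',V')$. Differentiating the identity $G(\gamma',\kappa') = Q_g(g + F(\gamma',\kappa') + T(\gamma'-\gamma))$ from Proposition~\ref{prop_smooth_dependence}\ref{prop_smooth_dependence_b} in the $\kappa'$-direction, using that the differential of $Q_g$ at $g' = g + F + T(\gamma'-\gamma)$ in the gauge adapted to $g'$ is $L_{g'}$ (this is \eqref{eq_DQg} transported to the soliton $(M,g',f')$ via the gauge identity \eqref{eq_Q_soliton_equivalence}), and using that $\LL_Y g'$ is exactly the correction term converting the $g$-DeTurck linearization into the $g'$-DeTurck linearization, I get $L_{g'}(Z(\kappa)) = D_2 G_{(\gamma',\kappa')}(\kappa)$. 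I would spell out this transport carefully since it is the conceptual heart of the lemma. Injectivity of $Z$ then follows: if $Z(\kappa) = 0$ then $L_{g'}(Z(\kappa)) = 0$, so $D_2 G_{(\gamma',\kappa')}(\kappa) = 0$; but near $(\gamma,0)$, by \eqref{eq_Zk_L2_C0_bounds} the map $\kappa \mapsto Z(\kappa)$ is $C^0$-close to the identity on the finite-dimensional $K_g$, hence injective.

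For \ref{Lem_Z_map_a}, the point is that as $(\gamma',\kappa') \to (\gamma,0)$ we have $F(\gamma',\kappa') \to 0$ (Proposition~\ref{prop_smooth_dependence}\ref{prop_smooth_dependence_a}) and $T(\gamma'-\gamma) \to 0$, so $(D_2 F)_{(\gamma',\kappa')} \to (D_2 F)_{(\gamma,0)} = \id_{K_g}$ (Proposition~\ref{prop_smooth_dependence}\ref{prop_smooth_dependence_c}) in operator norm into $C^{k,\alpha}_{g,-2,\nabla^g f}$; meanwhile the right-hand side of \eqref{eq_Y_identity} has norm $O(\Vert g'-g\Vert_{C^1}) \Vert\kappa\Vert \to 0$, so $Y \to 0$ in $C^{3,\alpha}_{g,-1,\nabla^g f}$ and $\LL_Y g' \to 0$ in $C^{2,\alpha}_{g,-1}$. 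Hence $Z(\kappa) - \kappa = ((D_2 F)_{(\gamma',\kappa')}(\kappa) - \kappa) + \LL_Y g' \to 0$ in both $C^0_{g,-1}$ and, via Claim~\ref{Cl_DS_kappa_bound}-type decay together with the continuous embedding of $C^0_{g,-1}$-with-Gaussian-weight into $L^2_{g,f}$ on a space of $\kappa$'s with controlled asymptotics (one can instead argue directly: $\Vert \cdot \Vert_{L^2_{g,f}}$ restricted to the finite-dimensional kernel $K_g$ plus the bounded perturbation $Z(K_g)$ is dominated by the $C^0_{-2}$-norm times a fixed integral $\int (-f)^{-2} e^{-f} < \infty$), in $L^2_{g,f}$. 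For \ref{Lem_Z_map_b}, I would use Lemma~\ref{Lem_nearby_bounds}\ref{Lem_nearby_bounds_c} to replace $\langle\cdot,\cdot\rangle_{\psi^*g',\psi^*f'}$ by $\langle\cdot,\cdot\rangle_{g,f}$ up to error $\delta\Vert\cdot\Vert\Vert\cdot\Vert$ (diffeomorphism invariance of the pairing lets us drop $\psi^*$), then replace $L_{\psi^*g'}(T(\gamma^*))$ by $L_g(T(\gamma^*))$ using the continuity of $g' \mapsto L_{g'}$ on the relevant weighted spaces together with Lemma~\ref{Lem_nabf_Tgamma} (the difference is $O(\Vert g'-g\Vert)\cdot\Vert\gamma^*\Vert$ in $C^{0,\alpha}_{g,-2}$, pairing against $\kappa$ which decays fast gives the bound), and finally replace $Z(\kappa)$ by $\kappa$ using \eqref{eq_Zk_L2_C0_bounds}. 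The main obstacle I expect is the bookkeeping in \eqref{eq_LZ_d2G}: keeping the two DeTurck gauges (relative to $g$ versus relative to $g'$) straight and confirming that $Y$ in \eqref{eq_Y_identity} is exactly the vector field intertwining them — everything else is perturbative and follows from the already-established continuity of $F$, $G$, $T$ and the Lunardi-type invertibility results.
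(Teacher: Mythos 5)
Your construction of $Z$ and your route to \eqref{eq_LZ_d2G} -- solving \eqref{eq_Y_identity} and interpreting $Y$ as the vector field intertwining the $g$- and $g'$-DeTurck linearizations -- is essentially the paper's argument (two small points: you need Proposition~\ref{Prop_Lundardi_weighted} rather than Corollary~\ref{Cor_Lundardi}, since $Y$ must lie in the \emph{weighted} space $C^{3,\alpha}_{g',-1,\nabla^{g'}f'}$; and the $H^1_{g',f'}$-membership via Proposition~\ref{Prop_L2_theory}\ref{Prop_L2_theory_b} is correct). However, your treatment of the $L^2_{g,f}$-half of \eqref{eq_Zk_L2_C0_bounds} has a genuine gap. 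The measure $e^{-f}dg$ grows like $e^{r^2/4}$ (recall $f\sim-\tfrac14 r^2$), so $\int_M(-f)^{-2}e^{-f}\,dg=\infty$ and your ``direct'' argument is false: a polynomially weighted $C^0$-bound on $Z(\kappa)-\kappa$ cannot dominate its $L^2_{g,f}$-norm. Your alternative via Claim~\ref{Cl_DS_kappa_bound}-type Gaussian decay also does not apply, because assertion~\ref{Lem_Z_map_a} concerns \emph{all} $\kappa\in K_g$: for general $\kappa$ the tensor $Z(\kappa)$ lies in no kernel (only $L_{g'}Z(\kappa)=D_2G_{(\gamma',\kappa')}(\kappa)$), and the correction $\LL_Y g'$ has only polynomial decay a priori. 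The missing ingredient is an integral argument: bound $\Vert\kappa\Vert_{H^1_{g,f}}$ by finite-dimensionality of $K_g$, use Proposition~\ref{Prop_L2_theory}\ref{Prop_L2_theory_a} and an energy estimate to control $\int_M|\nabla^g Z(\kappa)|^2e^{-f}dg$ in terms of $\int_M(|f|+1)^{-1}|Z(\kappa)|^2e^{-f}dg$, feed this into the Poincar\'e inequality of Lemma~\ref{Lem_L2f_Poincare} to get $\int_M|Z(\kappa)-\kappa|^2|\nabla f|^2e^{-f}dg\le C$, and then split the integral into $\{|\nabla f|^2\le F\}$ (where the $C^0$-closeness applies) and its complement (where the large weight $|\nabla f|^2\ge F$ supplies the smallness). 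This is how the paper proves \ref{Lem_Z_map_a}.

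In assertion~\ref{Lem_Z_map_b} your handling of $\psi$ does not work as stated. Lemma~\ref{Lem_nearby_bounds}\ref{Lem_nearby_bounds_c} compares $\langle\cdot,\cdot\rangle_{g,f}$ with $\langle\cdot,\cdot\rangle_{g',f'}$, not with $\langle\cdot,\cdot\rangle_{\psi^*g',\psi^*f'}$; $\psi$ is only assumed to have finite displacement, so no pointwise closeness of $\psi^*g'$ to $g$ is available. Moreover, ``dropping $\psi^*$ by diffeomorphism invariance'' fails because the test tensor in \eqref{eq_ZLvskL} is $T(\gamma^*)$ and not $\psi^*(T(\gamma^*))$: invariance converts the left-hand pairing into $\langle Z(\kappa),L_{g'}(\psi_*T(\gamma^*))\rangle_{g',f'}$, and one must still show that the discrepancy coming from $\psi^*(T(\gamma^*))-T(\gamma^*)$ pairs to zero against $\psi^*(Z(\kappa))$. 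This is exactly where the hypothesis $\kappa\in\ker D_2G_{(\gamma',\kappa')}$ -- which your outline never uses -- enters: it gives $L_{g'}Z(\kappa)=0$, hence Gaussian pointwise decay of $Z(\kappa)$ (and of $\kappa$) via Proposition~\ref{Prop_L2_theory}\ref{Prop_L2_theory_d}, and then Lemma~\ref{Lem_linear_identities}\ref{Lem_linear_identities_f}, together with $|\psi^*(T(\gamma^*))-T(\gamma^*)|\lesssim r^{-1}$ (Lemma~\ref{Lem_dchi_small}), annihilates that term. The remaining comparison of $\langle Z(\kappa),L_{g'}(T(\gamma^*))\rangle_{g',f'}$ with $\langle\kappa,L_g(T(\gamma^*))\rangle_{g,f}$ must also be done by truncating to $\{|f|\le F\}$ and using the Gaussian decay together with $|L\,T(\gamma^*)|\le Cr^{-1}$ (Lemma~\ref{Lem_nabf_Tgamma} and \eqref{eq_gpg_fpf_bounds_2}) to control the tails; without this the convergence of the pairings against the growing measure is not under control.
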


\begin{proof}
Fix some $\kappa \in K_g$.
Let $\td\kappa := D_2 F_{(\gamma',\kappa')}(\kappa) \in C^3_{g,-1}(M;S^2T^*M) = C^3_{g',-1}(M;S^2T^*M)$ and let $A \in C^{2}_{g',-1}(M;TM)$ be the right-hand side in the first equation of \eqref{eq_Y_identity}.
By Proposition~\ref{Prop_Lundardi_weighted} the equation \eqref{eq_Y_identity} has a unique solution $Y\in C^{3,\alpha}_{g',-1,\nabla^{g'}f'}(M;TM)$.
It follows that $\LL_Y g' \in C^{2,\alpha}_{g',-1}(M;S^2T^*M)$.
Define $Z(\kappa) \in C^{2,\alpha}_{g',-1}(M; S^2 T^*M)$ as in \eqref{eq_definition_Z}.

We will now verify \eqref{eq_LZ_d2G}.
For this recall that for an arbitrary metric $\td g$ we have
\[Q_g(\td{g})=-2\Ric(\td{g})-\td{g}-\LL_{\nabla^g f} \td{g}+\LL_{\DIV_g(\td{g})-\frac{1}{2}\nabla^g \tr_g(\td{g})}\td{g},\]
so that
\[
Q_g(\td{g})-Q_{g'}(\td{g})=-\LL_{\nabla^g f-\nabla^{g'} f'} \td{g}+\LL_{\DIV_g(\td{g})-\frac{1}{2}\nabla^g\tr_g(\td{g})}\td{g}-\LL_{\DIV_{g'}(\td{g})-\frac{1}{2}\nabla^{g'}\tr_{g'}(\td{g})}\td{g}.
\]
Taking the differential in the $\td{g}$ argument at $\td g = g'$, we obtain, using \eqref{eq_gp_g},
\begin{align*}
(D Q_g)_{g'}(h)-L_{g'}(h)&=-\LL_{\nabla^g f-\nabla^{g'} f'}h+\LL_{\DIV_g(g')-\frac{1}{2}\nabla^g\tr_g(g')}h
\\
&\qquad+\LL_{\DIV_g(h)-\frac{1}{2}\nabla^g\tr_g(h)}g'-\LL_{\DIV_{g'}(h)-\frac{1}{2}\nabla^{g'}\tr_{g'}(h)}g'
\\
&=\LL_{\DIV_g(h)-\frac{1}{2}\nabla^g\tr_g(h)}g'-\LL_{\DIV_{g'}(h)-\frac{1}{2}\nabla^{g'}\tr_{g'}(h)}g'.
\end{align*}
Plugging in $h = \td \kappa$ and noticing that by Proposition~\ref{prop_smooth_dependence}\ref{prop_smooth_dependence_b} we have $(D_2 G)_{(\gamma',\kappa')} (\kappa) = (DQ_g)_{g'}(\td\kappa)$, 
we obtain
\[L_{g'}(\td{\kappa})=D_2G_{(\gamma',\kappa')}(\kappa)-\LL_{\DIV_g(\td{\kappa})-\frac{1}{2}\nabla^g\tr_g(\td{\kappa})}g'+\LL_{\DIV_{g'}(\td{\kappa})-\frac{1}{2}\nabla^{g'}\tr_{g'}(\td{\kappa})}g'.\]
On the other hand, from the definition of $Q_g$ we have that
\begin{align*}
L_{g'}(\LL_Y g') &= (DQ_g)_{g'}(\LL_{Y} g') \\
&=-2 (D{\Ric})_{g'}(\LL_Y g')-\LL_Y g'-\LL_{\nabla^{g'}f'}\LL_Y g'+\LL_{\DIV_{g'}\LL_Y g'-\frac{1}{2}\nabla^{g'}\tr_{g'}\LL_Y g'} g'
\\
&=-\LL_Y(2\Ric_{g'}+g'+\LL_{\nabla^{g'}f'}g')+\LL_{[Y,\nabla^{g'}f']}g'+\LL_{\DIV_{g'}\LL_Y g'-\frac{1}{2}\nabla^{g'}\tr_{g'}\LL_Y g'} g'
\\
&=\LL_{[Y,\nabla^{g'}f']}g'+\LL_{\DIV_{g'}\LL_Y g'-\frac{1}{2}\nabla^{g'}\tr_{g'}\LL_Y g'} g'.
\end{align*}
So
\begin{multline*}
 L_{g'} (Z(\kappa)) = D_2 G_{(\gamma',\kappa')}(\kappa) \\
+ \LL_{[Y,\nabla^{g'}f']}g'+\LL_{\DIV_{g'}\LL_Y g'-\frac{1}{2}\nabla^{g'}\tr_{g'}\LL_Y g'} g'
-\LL_{\DIV_g(\td{\kappa})-\frac{1}{2}\nabla^g\tr_g(\td{\kappa})}g'+\LL_{\DIV_{g'}(\td{\kappa})-\frac{1}{2}\nabla^{g'}\tr_{g'}(\td{\kappa})}g'. \
\end{multline*}
The identity \eqref{eq_LZ_d2G} now follows from \eqref{eq_Y_identity} and the fact that 
\[[Y,\nabla^{g'}f']+\DIV_{g'}\LL_Y g'-\tfrac{1}{2}\nabla^{g'}\tr_{g'}\LL_Y g'= \triangle_{f',g'}Y-\tfrac{1}{2} Y.\]
Note that $Z(\kappa) \in C^{2,\alpha}_{g',-1}(M; S^2 T^*M)$.
So by \eqref{eq_LZ_d2G} we have 
\begin{equation}\label{eq_Zkappa_elliptic}
\triangle_{f',g'} Z(\kappa) = 2 \Rm_{g'}(Z(\kappa)) + D_2G_{(\gamma',\kappa')}(\kappa) \in C^{2,\alpha}_{g',-1} (M; S^2 T^*M).
\end{equation}
It follows again from Proposition~\ref{Prop_Lundardi_weighted} that $Z(\kappa) \in C^{2,\alpha}_{g',-1,\nabla^{g'}f'} (M; S^2 T^*M)$.
The fact that $Z(\kappa) \in H^1_{g',f'} (M; S^2 T^*M)=H^1_{g,f} (M; S^2 T^*M)$ follows from Proposition~\ref{Prop_L2_theory}\ref{Prop_L2_theory_b}.

Let us now prove Assertion~\ref{Lem_Z_map_a}.
Suppose without loss of generality that $\Vert \kappa \Vert_{L^2_{g,f}}=1$.
Since $K_g$ is finite dimensional, this implies bounds of the form $\Vert \kappa \Vert_{C^{3}_{g,-1}}, \Vert \kappa \Vert_{H^1_{g,f}} \leq C$, where here and in the following $C$ denotes a generic constant that is independent of the choices of $(\kappa',\gamma')$ and $\kappa$.
So by choosing $(\gamma',\kappa')$ close enough to $(\gamma,0)$, we can ensure that the $C^2_{g',-1}$-norm of the right-hand side of \eqref{eq_Y_identity} and thus $\Vert Y \Vert_{C^{3,\alpha}_{g',-1,\nabla^{g'} f'}}$ and $\Vert \LL_{Y} g' \Vert_{C^0_{g,-1}}$ are arbitrarily small.
Combining this with the continuity of $D_2 F_{(\gamma',\kappa')}$, implies the desired bound on $\Vert Z_{(\gamma',\kappa')}(\kappa) - \kappa \Vert_{C^0_{g,-1}}$ for $(\gamma',\kappa')$ close enough to $(\gamma,0)$.

To see the $L^2_{g,f}$-bound, recall first that $\Vert \kappa \Vert_{H^1_{g,f}} \leq C$.
So, using the Poincar\'e inequality from Lemma~\ref{Lem_L2f_Poincare}, we obtain that
\begin{equation} \label{eq_kappa_poinc}
 \int_M |\kappa|_g^2 \, |\nabla^g f|_g^2 e^{-f}dg \leq C. 
\end{equation}
Similarly, using \eqref{eq_Zkappa_elliptic}, the fact that $\Vert D_2 G_{(\gamma',\kappa')}(\kappa) \Vert_{L^2_{g,f}} \leq C$, the integration by parts from Proposition~\ref{Prop_L2_theory}\ref{Prop_L2_theory_a}, and Lemma~\ref{Lem_nearby_bounds}, in order to compare $g',g$ and $f',f$, we obtain a bound of the form
\begin{multline*}
 \int_M |\nabla^g (Z(\kappa))|_g^2 e^{-f} dg 
\leq C \int_M (|f| + 1)^{-1} |Z(\kappa)|_g^2 e^{-f} dg + C \int_M |D_2G_{(\gamma',\kappa')}(\kappa)|_g \, |Z(\kappa)|_g e^{-f} dg  \\
\leq C \int_M (|f| + 1)^{-1} |Z(\kappa)|_g^2 e^{-f} dg 
+ C \int_M |D_2G_{(\gamma',\kappa')}(\kappa)|_g^2 e^{-f} dg
+ \frac12 \int_M  |Z(\kappa)|_g^2 e^{-f} dg,
\end{multline*}
so
\[  \int_M |\nabla^g (Z(\kappa))|_g^2 e^{-f} dg 
\leq  C \int_M (|f| + 1)^{-1} |Z(\kappa)|_g^2 e^{-f} dg 
+ C. \]
Combining this with Lemma~\ref{Lem_L2f_Poincare} implies
\begin{equation} \label{eq_Zkappa_poinc}
  \int_M |Z(\kappa)|_g^2 \, |\nabla^g f|^2_g e^{-f} dg 
\leq  C \int_M (|f|+1)^{-1} |Z(\kappa)|_g^2 e^{-f} dg 
+ C. 
\end{equation}
Since $|\nabla^2 f|^2_g \geq 2 C(|f|+1)^{-1}$ outside a compact subset and since we have uniform $C^0$-bounds on $Z(\kappa)$, this implies
\[ \int_M |Z(\kappa)|_g^2 \, |\nabla^g f|^2_g e^{-f} dg 
\leq  C. \]
Combining \eqref{eq_kappa_poinc}, \eqref{eq_Zkappa_poinc} yields
\[ \int_M |Z(\kappa) - \kappa|_g^2 \, |\nabla^g f|^2_g e^{-f} dg 
\leq  C. \]
So for some constant $F > 0$, whose value we will determine in a moment, we have, using the first bound in \eqref{eq_Zk_L2_C0_bounds},
\[ \Vert Z(\kappa) - \kappa \Vert_{L^2_{g,f}}^2
\leq \int_{\{ |\nabla f|^2 \leq F \}} |Z(\kappa) - \kappa|_g^2  e^{-f} dg + \frac{C}{F} \leq C(F) \delta + \frac{C}{F}. \]
Choosing $F$ large enough and then adjusting $\delta$ shows that we can make the right-hand side as small as we like as long as $(\gamma',\kappa')$ is sufficiently close to $(\gamma,0)$.

Next, we prove Assertion~\ref{Lem_Z_map_b}.
Suppose without loss of generality that $\Vert \kappa \Vert_{L^2_{g,f}} =\Vert \gamma^* \Vert =  1$.
In the following, we will denote by $r : M \to \IR_+$ a smooth function such that $r \circ \iota$ agrees with the first coordinate on $(2,\infty) \times N$.
We will also denote by $C < \infty$ a generic constant, which is independent of the choice of $(\gamma',\kappa')$, $\kappa, \gamma^*$.
Note that by assumption and \eqref{eq_LZ_d2G} we have
\[ L_{g'} (Z(\kappa)) = 0. \]
So we can apply Proposition~\ref{Prop_L2_theory}\ref{Prop_L2_theory_d} and Assertion~\ref{Lem_Z_map_a} to conclude that
\begin{equation} \label{eq_kZk_bounds}
 |\kappa| \leq C r^{-4+0.1} e^{-r^2/4}, \qquad 
|Z (\kappa)| \leq C r^{-4+0.1} e^{-r^2/4} 
\leq  \big\Vert Z(\kappa) \big\Vert_{L^2_{g,f}} \leq C r^{-4+0.1} e^{-r^2/4}.
\end{equation}
Set $g'' := \psi^* g'$, $f'' := \psi^* f'$, so $(g'',\nabla^{g''} f'', \gamma')$ represents an element in $\MMgrad(M,N,\iota)$.
Using Lemma~\ref{Lem_MM_regular_rep}\ref{Lem_MM_regular_rep_b} and Lemma~\ref{Lem_nabf_Tgamma}  we obtain bounds of the form
\begin{align}
 |L_g (T(\gamma^*))|_g &\leq |\nabla^g_{\nabla^g f} (T(\gamma^*)) |_g + C r^{-2} \leq C r^{-1},  \label{eq_LT_bound} \\
 |L_{g''} (T(\gamma^*))|_{g''} &\leq |\nabla^{g''}_{\nabla^{g''} f''} (T(\gamma^*)) |_{g''} + C r^{-2} \leq C r^{-1} \qquad \text{if} \quad r > \underline{r}(\gamma',\kappa',\psi). \label{eq_LT_bound_2}
\end{align}
Note that the lower bound on $r$ in \eqref{eq_LT_bound_2} naturally depends on $\psi$; more specifically it depends on the domain on which we have $\iota^* \nabla^{g''} f'' = - \frac12 r \partial_r$ on $(1, \infty) \times N$.
Similarly, using Lemma~\ref{Lem_nearby_bounds}\ref{Lem_nearby_bounds_a}, we have
\begin{multline} \label{eq_LgpT_bound}
|L_{g'} (T(\gamma^*)) |_g 
\leq \big|\nabla^{g'}_{\nabla^{g'} f'} T(\gamma^*) \big|_g + C r^{-1} 
\leq \big|\nabla^{g'}_{\nabla^{g} f} T(\gamma^*) \big|_g + C r^{-1} \\
\leq \big|\nabla^{g}_{\nabla^{g} f} T(\gamma^*) \big|_g + C r^{-1} \leq C r^{-1}, 
\end{multline}
where we have used $|\nabla^{g'} f' - \nabla^g f| \leq C$ in the first inequality and \eqref{eq_gpg_fpf_bounds_2} in the second.
Combining \eqref{eq_LT_bound}, \eqref{eq_LgpT_bound} with the bounds \eqref{eq_kZk_bounds}, allows use to conclude that there is a constant $F < \infty$, which is independent of the choice of $(\gamma',\kappa')$, $\kappa$, $\gamma^*$, such that
\begin{align*}
 \bigg| \big\langle Z(\kappa), L_{g'} (T(\gamma^*)) \big\rangle_{g',  f'} - \int_{\{ |f| \leq F \}} \big( Z(\kappa) \cdot_{g'} L_{g'} (T(\gamma^*)) \big) e^{-f'} dg' \bigg|  &\leq \frac{\delta}{3} \\
\bigg| \big\langle \kappa, L_{g} (T(\gamma^*)) \big\rangle_{g,  f} - \int_{\{ |f| \leq F \}} \big( \kappa \cdot_g L_{g} (T(\gamma^*)) \big) e^{-f}dg  \bigg|
&\leq \frac{\delta}{3}. 
\end{align*}
If $(\gamma',\kappa')$ is sufficiently close to $(\gamma,0)$, then Lemma~\ref{Lem_nearby_bounds}\ref{Lem_nearby_bounds_a} and Assertion~\ref{Lem_Z_map_a} imply that the integrands of both integrals are sufficiently close so that the absolute value of the difference of both integrals is bounded by $\delta/3$.
This shows
\begin{equation} \label{eq_ZLvskL_intermediate}
 \big| \big\langle Z(\kappa), L_{g'} (T(\gamma^*)) \big\rangle_{g',  f'} - \big\langle \kappa, L_{g} (T(\gamma^*)) \big\rangle_{g,  f} \big| \leq \delta \big\Vert \gamma^* \big\Vert \, \big\Vert \td\kappa \big\Vert_{L^2_{g,f}}. 
\end{equation}

Next, we claim that
\begin{equation} \label{eq_psiT_T2}
  \big\langle \psi^* (Z(\kappa)), L_{\psi^* g'} (T(\gamma^*)) \big\rangle_{\psi^* g', \psi^* f'}
= \big\langle Z(\kappa), L_{g'} ( T(\gamma^*)) \big\rangle_{g',  f'}.  
\end{equation}
To prove this, notice first that by \eqref{eq_LT_bound_2}, \eqref{eq_LgpT_bound} we have the asymptotic bounds
\begin{align*} 
|L_{g''}(T(\gamma^*))|_{g''} &\leq C r^{-1}, \\
 |L_{g''} (\psi^* (T(\gamma^*))) |_{g''} &=  |L_{\psi^* g'} (\psi^*(T(\gamma^*))) |_{\psi^* g'}
= |L_{g'} (T(\gamma^*)) |_{g'} \circ \psi \leq C r^{-1}. 
\end{align*}
Since we have bounds on $\psi^* g' - g' = g'' - g'$ and its derivatives through Lemma~\ref{Lem_iota} and since $\psi$ has finite displacement, we can apply Lemma~\ref{Lem_dchi_small} and obtain that
\[ \big|\psi^* (T(\gamma^*)) - T(\gamma^*) \big|_{g''} \leq C' r^{-1} , \]
for some constant $C' < \infty$, which may depend on $(\gamma',\kappa')$, $\gamma^*$ and $\psi$.
Moreover, since $g'' = \psi^* g'$ and since we have uniform bounds on $\nabla^{g''} g'$, we also obtain
\[ |\nabla^{g''} \psi^* (T(\gamma^*)) |_{g''}, \; |\nabla^{g''} (T(\gamma^*)) |_{g''} \leq C. \]
So since
\[  L_{g''} \big(\psi^* (Z (\kappa)) \big) = \psi^* \big( L_{g'} (Z (\kappa)  \big) = 0, \]
we have the necessary bounds to conclude, using Lemma~\ref{Lem_linear_identities}\ref{Lem_linear_identities_f}, that
\begin{equation*}
 \big\langle \psi^* (Z (\kappa)), L_{g''} \big( \psi^* (T(\gamma^*)) - T(\gamma^*) \big)\big\rangle_{g'',f''} 
= 0. 
\end{equation*}
Therefore
\begin{multline*}
  \big\langle \psi^* ( Z (\kappa)), L_{\psi^* g'} (  T(\gamma^*) )\big\rangle_{\psi^*g',\psi^*f'}
=  \big\langle \psi^* ( Z(\kappa)), L_{\psi^* g'} \big( \psi^* (T(\gamma^*))  \big)\big\rangle_{\psi^*g',\psi^*f'} \\
=  \big\langle  Z (\kappa), L_{ g'} \big(T(\gamma^*)  \big)\big\rangle_{g',f'} , 
\end{multline*}
which proves \eqref{eq_psiT_T2}.
Combining \eqref{eq_psiT_T2} with \eqref{eq_ZLvskL_intermediate} implies \eqref{eq_ZLvskL}.

\end{proof}

We now describe the behavior of $Z = Z_{(\gamma',\kappa')}$ restricted to the kernel of $(D_2 G)_{(\gamma',\kappa')}$.
Roughly speaking, the following lemma states that $Z$ maps every direction $\td\kappa$ within this kernel to a direction $\td\kappa' \in K_{g'}$ in such a way that $\td\kappa$ and $\td\kappa'$ correspond to the same direction in the tangent space of $\MM^{k^*} (M,N,\iota)$, viewed with respect to the charts given by the base metrics $g$ and $g'$.
To be more specific, recall that the charts $\Phi_p$ from Proposition~\ref{Prop_Banach_manifold}, which give $\MM^{k^*} (M,N,\iota)$ the structure of a $C^{1,\alpha}$-Banach manifold can be identified with subsets of the Banach-submanifold $\{ G = 0 \} \subset U_1 \times U_2$ from Proposition~\ref{prop_smooth_dependence}.
Here $U_2 \subset K_g$ is an open subset of the kernel of $L_g$, where $p = [(g,\nabla^g f, \gamma)]$ and the submanifold $\{ G = 0 \}$ is tangent to $\{ 0 \} \times K_g$ at the point corresponding to $p$.
Call the directions at any tangent space of $\{ G = 0 \}$ with vanishing first component \emph{vertical.} 
Consider now another point $p' = [(g',\nabla^{g'} f', \gamma')]  \in \MM^{k^*} (M,N,\iota)$ near $p$ and repeat the previous construction\footnote{Note that in the statement of the following lemma, the base metric $g'$ is actually denoted by $\psi^* g'$.} with the resulting Banach submanifold $\{ G' = 0 \} \subset U_1' \times U'_2$.
Then the tangent space of $\{ G = 0 \}$ at the point corresponding to $p'$ can be identified with the tangent space of $\{ G' = 0 \}$ at its basepoint, via the differential $D(\Phi^{-1}_{p'} \circ \Phi_p)$.
Consider now a vertical direction $\td\kappa$ of the tangent space at $\{ G = 0 \}$ that corresponds to a vertical direction $\td\kappa'$ of the tangent space at $\{ G' = 0\}$.
The following lemma then states that $Z(\td\kappa) = \td\kappa'$.

\begin{Lemma} \label{Lem_Z_is_change_of_coo}
Consider the setting of Proposition~\ref{Prop_Banach_manifold}, fix $p = [(g,V=\nabla^g f, \gamma)] \in \MM_{\grad}$ as in this lemma and consider the map $\Phi_p : U_p \to \MM$.
Fix some $x' \in U_p$, set $p' := \Phi_p(x')$, $\gamma' := \Pi(p')$, $\kappa' := \Xi_p (x')$ and let as in Assertion~\ref{Prop_Banach_manifold_d} of this lemma
\[ g' := g + H_p(x') + T(\gamma') - T(\gamma), \qquad
V' := \nabla^g f - \DIV_g (g') + \tfrac12 \nabla^g \tr_g (g'). \]
Then, by the same assertion, there is a $C^2$-diffeomorphism $\psi : M \to M$ of finite displacement such that $p'$ is represented by $(\psi^* g', \psi^* V', \gamma')$.
By Lemma~\ref{Lem_MM_regular_rep} we may assume that $\psi$ is chosen such that $(\psi^* g', \psi^* V', \gamma')$ is also $C^{k^*-2}$-regular.
Suppose that $p' = [(\psi^* g', \psi^* V', \gamma')] \in \MMgrad$, so $V' = \nabla^{g'} f'$ for some potential function $f' \in C^1(M)$ and $\gamma' \in \CONE^{k^*}(N)$.

Apply Proposition~\ref{Prop_Banach_manifold} again to $(\psi^* g', \psi^* V', \gamma')$ instead of $(g,V,\gamma)$ to obtain the map $\Phi_{p'} : X_{p'} \supset U_{p'} \to \MM$.
Combining Assertions~\ref{Prop_Banach_manifold_d}, \ref{Prop_Banach_manifold_e} of this lemma implies that the map $\Phi_{p'}^{-1} \circ \Phi_p$ is defined and differentiable near $x'$.
Let $K_g$ and $K_{g'}$ be the kernels of $L_g$ and $L_{g'}$ in $C^{2,\alpha}_{g,-1,\nabla f} (M; S^2 T^*M)$ and $C^{2,\alpha}_{g',-1,\nabla f'} (M; S^2 T^*M)$, respectively.

Consider elements $\dot\kappa \in K_g$ and $\dot\kappa' \in K_{g'}$ such that for some $v \in T_{x'} U_p = X_p$ we have
\[ \dot\kappa = (D\Xi_p)_{x'}(v), \qquad 
\psi^*\dot\kappa' = (D\Xi_{p'})_{0}(D(\Phi_{p'}^{-1} \circ \Phi_p)_{x'}(v)), \qquad
(D(\Pi \circ \Phi_p))_{x'}(v) = 0. \]
Denote by $G : U_1 \times U_2 \to K_g$ the function from Proposition~\ref{prop_smooth_dependence} applied to $(g,\nabla^g f, \gamma)$ and by $Z_{(\gamma', \kappa')}$ the map from Lemma~\ref{Lem_Z_map}.
Then we have $\dot\kappa \in \ker (D_2 G)_{(\gamma',\kappa')}$ and
\[ \dot\kappa' = Z_{(\gamma',\kappa')} (\dot\kappa). \]
\end{Lemma}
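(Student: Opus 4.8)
\textbf{Proof plan for Lemma~\ref{Lem_Z_is_change_of_coo}.}
The plan is to trace through the definitions of the charts $\Phi_p$, $\Phi_{p'}$ supplied by Proposition~\ref{Prop_Banach_manifold}, express everything in terms of the Implicit Function Theorem data $(F,G)$ and $(F',G')$ from Proposition~\ref{prop_smooth_dependence}, and then match the infinitesimal DeTurck-gauge correction computed in Proposition~\ref{Prop_Banach_manifold}\ref{Prop_Banach_manifold_e} with the vector field $Y$ appearing in the definition \eqref{eq_definition_Z} of $Z_{(\gamma',\kappa')}$. First I would record what the hypotheses give us directly: since $(D(\Pi\circ\Phi_p))_{x'}(v)=0$ and since by Proposition~\ref{Prop_Banach_manifold}\ref{Prop_Banach_manifold_f} the map $x\mapsto(\Pi(\Phi_p(x)),\Xi_p(x))$ is a diffeomorphism onto its image in $\{G=0\}\subset U_1\times U_2$, the vector $(D(\Pi\circ\Phi_p))_{x'}(v)\oplus(D\Xi_p)_{x'}(v)=0\oplus\dot\kappa$ is tangent to $\{G=0\}$ at $(\gamma',\kappa')$; this forces $(D_2G)_{(\gamma',\kappa')}(\dot\kappa)=0$, which is the first assertion. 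Then I would use Proposition~\ref{Prop_Banach_manifold}\ref{Prop_Banach_manifold_f} again (the identity $H_p(x)=F(\Pi(\Phi_p(x)),\Xi_p(x))$) to compute $\dot g:=(dH_p)_{x'}(v)+T(\dot\gamma)$ with $\dot\gamma=(D(\Pi\circ\Phi_p))_{x'}(v)=0$, obtaining $\dot g=(D_2F)_{(\gamma',\kappa')}(\dot\kappa)=\td\kappa$ in the notation of \eqref{eq_Y_identity}; similarly, working in the chart at $p'$ and using $H_{p'}(y)=F'(\Pi(\Phi_{p'}(y)),\Xi_{p'}(y))$, the corresponding variation is $\dot{\td g}:=(dH_{p'})_{0}(w)+T(\dot\gamma)=(D_2F')_{0}(\dot\kappa')$ where $w=D(\Phi_{p'}^{-1}\circ\Phi_p)_{x'}(v)$ and $F'$ is the $F$-map for the base soliton $(M,\psi^*g',\psi^*f')$; but $(D_2F')_0$ is the inclusion by Proposition~\ref{prop_smooth_dependence}\ref{prop_smooth_dependence_c}, so $\dot{\td g}=\psi^*\dot\kappa'$.

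Next I would invoke the infinitesimal transition-map formula, equation \eqref{eq_infinitesimal_version} in Proposition~\ref{Prop_Banach_manifold}\ref{Prop_Banach_manifold_e}, applied with $(\td g,\td V,\td\gamma)$ there being $(\psi^*g',\psi^*V',\gamma')$ and with the roles of the two charts being $\Phi_p$ (the ``tilde'' chart, so $\td\Phi=\Phi_p$ and $\td H=H_p$) and $\Phi_{p'}$ (the ``reference'' chart). That formula says $\dot g=\td\psi_0^*\dot{\td g}+\LL_Y g$ where $\td\psi_0$ is the diffeomorphism relating the two representatives of $p'$ — which, after unraveling, is exactly the map $\psi$ above (so $\td\psi_0^*\dot{\td g}=\psi^*(\psi^*\dot\kappa')$... here I need to be careful: $\dot{\td g}$ is already $\psi^*\dot\kappa'$ as a tensor field pulled back, and the formula gives $\dot g=\psi^*\dot{\td g}+\LL_Yg$; one must check that the composition of pullbacks is arranged so the net effect is $\dot g=\psi^{*}\dot\kappa'$ up to a Lie derivative). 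Concretely the formula pins down $Y\in C^2_{g,-1,\nabla f}(M;TM)$ as the unique solution of
\[
\triangle_{g,f}Y-\tfrac12 Y=\DIV_{\psi^*g'}(\psi^*\dot{\td g})-\tfrac12\nabla^{\psi^*g'}\tr_{\psi^*g'}(\psi^*\dot{\td g})-\DIV_g(\psi^*\dot{\td g})+\tfrac12\nabla^g\tr_g(\psi^*\dot{\td g}),
\]
and combining $\dot g=\td\kappa$ with $\dot g=\psi^*\dot{\td g}+\LL_Yg$ yields $\psi^*\dot{\td g}=\td\kappa-\LL_Yg$. Pushing this identity back by $\psi$ (i.e.\ applying $(\psi^{-1})^*$) and comparing with \eqref{eq_definition_Z} is then a matter of matching the elliptic equation for $Y$ with \eqref{eq_Y_identity}: both are the equation $\triangle_{f',g'}Y'-\tfrac12 Y'=\DIV_g(\td\kappa)-\tfrac12\nabla^g\tr_g(\td\kappa)-\DIV_{g'}(\td\kappa)+\tfrac12\nabla^{g'}\tr_{g'}(\td\kappa)$ written in the two gauges, so by the uniqueness statement in Proposition~\ref{Prop_Lundardi_weighted} the vector fields agree after the appropriate pullback, and hence $\dot\kappa'=(D_2F')_0(\dot\kappa')=(D_2F)_{(\gamma',\kappa')}(\dot\kappa)+\LL_{Y}g'=Z_{(\gamma',\kappa')}(\dot\kappa)$, where in the last step I also use \eqref{eq_LZ_d2G} together with $(D_2G)_{(\gamma',\kappa')}(\dot\kappa)=0$ to see that $Z_{(\gamma',\kappa')}(\dot\kappa)$ indeed lies in $K_{g'}$ and is the $L_{g'}$-harmonic representative, matching $\dot\kappa'\in K_{g'}$.

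The main obstacle I anticipate is bookkeeping the diffeomorphisms correctly: Proposition~\ref{Prop_Banach_manifold} introduces the gauging-at-infinity maps $\psi_x$, the DeTurck maps $\chi_x$, and the transition diffeomorphism $\td\psi_0$, and one must verify that the net diffeomorphism intertwining the two DeTurck-gauged representatives of $p'$ agrees (up to a compactly supported correction, which does not affect any of the weighted $L^2$ or H\"older identities) with the map $\psi$ of the lemma's hypothesis, and that the finite-displacement property is preserved throughout. A second, more routine point to be careful about is that the $Y$ produced by \eqref{eq_infinitesimal_version} has the regularity $C^{3,\alpha}_{g',-1,\nabla^{g'}f'}$ needed to invoke the uniqueness in Proposition~\ref{Prop_Lundardi_weighted} and to conclude $\LL_Yg'\in C^{2,\alpha}_{g',-1}$; this follows from elliptic regularity exactly as in the proof of Lemma~\ref{Lem_Z_map}, using Lemma~\ref{Lem_nearby_bounds} to compare the function spaces defined by $(g,f)$ and $(g',f')$. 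Once these identifications are in place the proof is a direct comparison of two characterizations of the same elliptic equation, so no essentially new analytic input is required beyond what has already been established.
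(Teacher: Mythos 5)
Your proposal is correct and follows essentially the same route as the paper's proof: differentiate the two chart representations of the curve through $p'$ in the direction $v$, apply the transition formula \eqref{eq_infinitesimal_version} of Proposition~\ref{Prop_Banach_manifold}\ref{Prop_Banach_manifold_e} with $\Phi_{p'}$ as the reference chart and the $\Phi_p$-family as the tilde data (choosing $\td\psi_{\td x_0}=\psi$), and then match the resulting gauge vector field with \eqref{eq_Y_identity} to identify $\dot\kappa'$ with $Z_{(\gamma',\kappa')}(\dot\kappa)$; your alternative derivation of $\dot\kappa\in\ker(D_2G)_{(\gamma',\kappa')}$ via tangency to $\{G=0\}$ is an inessential variation of the paper's appeal to \eqref{eq_LZ_d2G}. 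The only blemish is the mid-proof swap of the roles of $\dot g$ and $\dot{\td g}$ (and hence of $(g,f)$ versus $(\psi^*g',\psi^*f')$ in the elliptic equation), which you flag yourself and which disappears once one notes that \eqref{eq_infinitesimal_version} is only available at the base point of the reference chart, forcing $\Phi_{p'}$ into the reference role — exactly the "switched base metrics" step in the paper — after which pushing forward by $\psi$ gives \eqref{eq_Y_identity} and the claimed identity.
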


\begin{proof}
For $s \in (-\eps,\eps)$, where $\eps > 0$ is a small constant, define
\[ \gamma_s := \Pi(\Phi_p(x'+sv)) \]
and
\begin{alignat*}{1}
 g_s &:= g + H_p(x'+sv) + T(\gamma_s) - T(\gamma), \\
 &\qquad\qquad\qquad   V_s := \nabla^g f - \DIV_g (g_s) + \tfrac12 \nabla^g \tr_g (g_s), \\
  g'_s &:= \psi^* g' + H_{p'}((\Phi_{p'}^{-1} \circ \Phi_p)_{x'}(x'+sv)) + T(\gamma_s) - T(\gamma),  \\
  &\qquad\qquad\qquad V'_s := \psi^* \nabla^{g'} f' - \DIV_{\psi^* g'} (g'_s) + \tfrac12 \nabla^{\psi^* g'} \tr_{\psi^* g'} (g'_s). 
\end{alignat*}
Then
\begin{multline*}
 \gamma_0 = \gamma', \qquad
g_0 = g', \qquad
g'_0 = \psi^* g', \qquad \\
\partial_s|_{s=0} g_s 
= (DH_p)_{x'}(v)
= (D_2 F)_{(\gamma',\kappa')} (\dot\kappa) = : \td{\dot\kappa}, \qquad
\partial_s|_{s=0} g'_s = \psi^*\dot\kappa'. 
\end{multline*}
Let us now apply Proposition~\ref{Prop_Banach_manifold}\ref{Prop_Banach_manifold_e} for the base triple $(\psi^* g', \psi^* V', \gamma')$ and the family $\td\Phi(s) =  g_s$. (Note that we switched the role of the base metrics here!)
We obtain that
\[ \psi^*\dot\kappa' 
= \partial_s|_{s=0} g'_s 
= \psi^*(\partial_s|_{s=0}  g_s) + \LL_{Y'} (\psi^* g' )
= \psi^* \big( (D_2 F)_{(\gamma',\kappa')} (\dot\kappa) \big) + \LL_{Y'} (\psi^*g') , \]
where $Y' \in C^{2}_{\psi^* g',-1,\psi^*\nabla^{g'} f'} (M;TM)$ is the unique vector field such that
\begin{align*}
 \triangle_{\psi^*g',\psi^*f'} Y' - \tfrac12 Y' &=  
 \DIV_{\psi^* g} (\psi^* (\partial_s |_{s=0} g_s)) - \tfrac12 \nabla^{\psi^* g} \tr_{\psi^* g}(\psi^* (\partial_s|_{s=0} g_s))  \\ &\qquad -
  \DIV_{\psi^* g'} (\psi^*(\partial_s|_{s=0} g_s))+ \tfrac12 \nabla^{\psi^*g'} \tr_{\psi^*g'} (\psi^*(\partial_s|_{s=0} g_s)) \\
&= \psi^* \big(\DIV_{ g} (\td{\dot\kappa}) - \tfrac12 \nabla^{g} \tr_{g} (\td{\dot\kappa}) -\DIV_{g'} (\td{\dot\kappa}) + \tfrac12 \nabla^{g'} \tr_{g'} (\td{\dot\kappa} )
\big).
\end{align*}
So setting $Y := \psi_* Y' \in C^{2}_{g',-1,\nabla^{g'} f'} (M;TM)$, we obtain that
\[ \dot\kappa' = (D_2F)_{(\gamma',\kappa')} (\dot\kappa) + \LL_{Y} g' \]
and
\[ \triangle_{g',f'} Y - \tfrac12 Y = \DIV_g (\td{\dot\kappa}) - \tfrac12 \nabla^g \tr_g (\td{\dot\kappa}) - \DIV_{g'} (\td{\dot\kappa}) + \tfrac12 \nabla^{g'} \tr_{g'} (\td{\dot\kappa}). \]
This characterization of $Y$ agrees with \eqref{eq_Y_identity}, which finishes the proof.
The fact that $\dot \kappa \in \ker (D_2 G)_{(\gamma',\kappa')}$ follows from \eqref{eq_LZ_d2G}.
\end{proof}

\subsection{The index formula}
We can now prove Proposition \ref{prop_index_formula}, which is the main result of this section.
Note that in the following we use the index as defined in Definition~\ref{Def_idx_null}.

\begin{Proposition}\label{prop_index_formula}
Consider the setting of Proposition~\ref{prop_smooth_dependence} with $k \geq 3$.
For $(\gamma',\kappa') \in \{ G =0 \} \subset U_1 \times U_2$ sufficiently close to $(\gamma,0)$ the following is true.
Let
\[ g' := g + F(\gamma',\kappa') + T(\gamma'-\gamma), \qquad V' := \nabla^g f - \DIV_g g' + \tfrac12 \nabla^g \tr_g g' \]
and suppose that $\gamma' \in \CONE^{k^*}(N)$ and $V' = \nabla^{g'} f'$ for some potential $f' \in C^1(M)$.

Then the map $Z := Z_{(\gamma',\kappa')} : K_g \to Z(K_g)$ from Lemma~\ref{Lem_Z_map} is invertible and the linear map:
\begin{equation} \label{eq_proj_d2G}
 \proj_{g',f',Z(K_g)} \circ (D_2 G)_{(\gamma',\kappa')} \circ Z_{(\gamma',\kappa')}^{-1} = \proj_{g',f',Z(K_g)} \circ L_{g'} : Z(K_g) \lto Z(K_g),
\end{equation}
where $\proj_{g',f',Z(K_g)}$ denotes the orthogonal projection in $L^2_{g',f'}$ onto $Z(K_g)$, 
is self-adjoint with respect to the inner product $\langle \cdot, \cdot \rangle_{g',f'}$.
Moreover, we have
\[\Index(-L_{g'})=\Index(-L_g)+\Index (-D_2G_{(\gamma',\kappa')}),\]
in the sense that
\begin{equation}
\Index(-L_{g'})=\Index(-L_g)+\Index(\proj_{Z(K_g)}\circ(-L_{g'})), \label{eq_index_formula}
\end{equation}
where the last index in \eqref{eq_index_formula} denotes the number of negative eigenvalues of \eqref{eq_proj_d2G}.
\end{Proposition}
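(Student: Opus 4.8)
\textbf{Plan of the proof of Proposition~\ref{prop_index_formula}.}
The strategy is to split the $L^2_{g',f'}$-spectral problem for $-L_{g'}$ into a finite-dimensional part living on $Z(K_g)$ and a complementary part, and to identify each contribution with $\Index(-L_g)$ and the number of negative eigenvalues of the map \eqref{eq_proj_d2G}, respectively. First I would verify that $Z = Z_{(\gamma',\kappa')}$ is injective (this is part of the statement of Lemma~\ref{Lem_Z_map}) and that, by the $C^0_{g,-1}$- and $L^2_{g,f}$-estimates \eqref{eq_Zk_L2_C0_bounds}, the subspace $Z(K_g) \subset L^2_{g',f'}(M;S^2T^*M)$ is, for $(\gamma',\kappa')$ close to $(\gamma,0)$, a small perturbation of $K_g$; in particular $\dim Z(K_g) = \dim K_g$ and $Z(K_g)$ stays transverse to the closure of the sum of the eigenspaces of $-L_{g'}$ with large eigenvalue. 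Next I would establish self-adjointness of \eqref{eq_proj_d2G}: the second equality in \eqref{eq_proj_d2G} is exactly \eqref{eq_LZ_d2G} (which says $L_{g'}\circ Z = (D_2G)_{(\gamma',\kappa')}$), and self-adjointness of $\proj_{g',f',Z(K_g)}\circ L_{g'}$ on $Z(K_g)$ follows from Proposition~\ref{Prop_L2_theory}\ref{Prop_L2_theory_a}: for $\td\kappa_1,\td\kappa_2 \in Z(K_g) \subset H^1_{g',f'}\cap C^{2,\alpha}_{g',-1,\nabla^{g'}f'}$ one has $\langle L_{g'}\td\kappa_1, \td\kappa_2\rangle_{g',f'} = -I_{f'}(\td\kappa_1,\td\kappa_2) = \langle \td\kappa_1, L_{g'}\td\kappa_2\rangle_{g',f'}$, and the orthogonal projection onto $Z(K_g)$ preserves symmetry because $\langle \proj_{Z(K_g)} L_{g'}\td\kappa_1,\td\kappa_2\rangle_{g',f'} = \langle L_{g'}\td\kappa_1,\td\kappa_2\rangle_{g',f'}$ for $\td\kappa_2\in Z(K_g)$.

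For the index identity \eqref{eq_index_formula} I would use Corollary~\ref{Cor_index}\ref{Cor_index_b}, which gives $\Index(-L_{g'}) = \dim N'$ whenever $L^2_{g',f'} = N' \oplus P'$ with $N'$ finite-dimensional, $N' \subset H^1_{g',f'}$, $I_{f'}$ negative definite on $N'$ and positive semidefinite on $P'\cap H^1_{g',f'}$. The plan is to build such a decomposition as follows. Let $N'_-$ be the span of the negative eigenspaces of the self-adjoint operator \eqref{eq_proj_d2G} inside $Z(K_g)$, and let $Z(K_g) = N'_- \oplus N'_0 \oplus N'_+$ be its spectral splitting; set $N' := N''\oplus N'_-$, where $N''$ is the negative-eigenspace of $-L_{g'}$ restricted to the $\langle\cdot,\cdot\rangle_{g',f'}$-orthogonal complement $Z(K_g)^\perp$ of $Z(K_g)$, and let $P'$ be the closure of the positive part there together with $N'_0 \oplus N'_+$. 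One then has to check the three conditions of Corollary~\ref{Cor_index}\ref{Cor_index_b}; the negative-definiteness of $I_{f'}$ on $N'$ and positive-semidefiniteness on $P'\cap H^1_{g',f'}$ follow from $I_{f'}(\kappa_1,\kappa_2) = -\langle L_{g'}\kappa_1,\kappa_2\rangle_{g',f'}$ once one knows that $Z(K_g)^\perp$ and $Z(K_g)$ are "almost orthogonal" for the $I_{f'}$-form, so that the quadratic form $I_{f'}$ on $L^2_{g',f'}$ is, up to a small perturbation, block-diagonal with respect to $Z(K_g)^\perp \oplus Z(K_g)$. Finally I would show $\dim N'' = \Index(-L_g)$: this is where the smallness of the perturbation enters decisively — by Lemma~\ref{Lem_nearby_bounds}\ref{Lem_nearby_bounds_c} the Hilbert spaces $L^2_{g,f}$ and $L^2_{g',f'}$ and their $H^1$-norms are $(1\pm\delta)$-equivalent, and $L_{g'} = \triangle_{g',f'} + 2\Rm_{g'}$ is a small perturbation of $L_g$ in the relevant operator sense; combining this with the transversality of $Z(K_g)$ to the low eigenspaces of $-L_g$ (again via \eqref{eq_Zk_L2_C0_bounds}), a standard min–max / continuity-of-eigenvalues argument shows that the number of negative eigenvalues of $-L_{g'}$ on $Z(K_g)^\perp$ coincides with $\Index(-L_g)$.

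The main obstacle is the last step: making rigorous the claim that $\Index$ behaves "continuously" under the combined perturbation of the metric (which changes $\Rm$, the density $e^{-f}dg$, and the Laplacian simultaneously) \emph{and} the restriction to the moving subspace $Z(K_g)^\perp$ rather than a fixed complement of $K_g$. The clean way to handle this is to phrase everything via the quadratic form $I_{f'}$ on the fixed Hilbert space $H^1_{g',f'} = H^1_{g,f}$ (as sets, with equivalent norms), to use the variational characterisation of $\Index$ from Corollary~\ref{Cor_index}\ref{Cor_index_b}, and to exploit that the forms $I_f$ and $I_{f'}$ differ by a form that is controlled in the $L^2$-inner product times a factor $\to 0$; then the signature of $I_{f'}$ on $Z(K_g)^\perp$ equals the signature of $I_f$ on $K_g^\perp$ by a homotopy of nondegenerate forms, giving $\Index(-L_g)$, while the signature on $Z(K_g)$ is by definition the number of negative eigenvalues of \eqref{eq_proj_d2G}. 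Adding the two contributions yields \eqref{eq_index_formula}; Lemma~\ref{Lem_Z_is_change_of_coo} is not logically needed here but reassures that $Z$ is the natural change-of-coordinates identification, so that $\Index(\proj_{Z(K_g)}\circ(-L_{g'}))$ is indeed the intrinsic "vertical index" $\Index(-D_2G_{(\gamma',\kappa')})$.
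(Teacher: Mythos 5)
Your overall strategy — decompose $L^2_{g',f'}$ into $Z(K_g)$ plus a complement, verify the hypotheses of Corollary~\ref{Cor_index}\ref{Cor_index_b}, and read off the two index contributions — is the same skeleton as the paper's proof, and your self-adjointness argument and use of \eqref{eq_LZ_d2G}, \eqref{eq_Zk_L2_C0_bounds} and Lemma~\ref{Lem_nearby_bounds}\ref{Lem_nearby_bounds_c} are fine. The gap is in the choice of complement and in the step where you "add the two contributions." You take the $\langle\cdot,\cdot\rangle_{g',f'}$-orthogonal complement $Z(K_g)^\perp$ and argue that $I_{f'}$ is only \emph{almost} block-diagonal, hoping to conclude by a perturbation/homotopy or min--max argument. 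But Corollary~\ref{Cor_index}\ref{Cor_index_b} requires genuine negative definiteness on $N'$ and positive \emph{semi}definiteness on $P'\cap H^1_{f'}$, and index additivity fails in general for almost block-diagonal forms when one block is degenerate or nearly degenerate: the model $\begin{pmatrix}0&\delta\\ \delta&1\end{pmatrix}$ has one negative eigenvalue although the block-diagonal count gives none. This is exactly the regime you are in, since $(D_2G)_{(\gamma',\kappa')}$ vanishes at $(\gamma,0)$, so the compressed operator on $Z(K_g)$ has kernel directions and eigenvalues that can be comparable to, or smaller than, the size $\delta$ of your off-diagonal terms; an $O(\delta)$ cross-term estimate therefore cannot be absorbed without further structure. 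Your separate claim that the signature of $I_{f'}$ on $Z(K_g)^\perp$ equals $\Index(-L_g)$ "by a homotopy of nondegenerate forms" also needs a uniform spectral gap along the homotopy while both the form and the subspace move, which is not supplied.

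The paper's proof avoids all of this by making the cross terms vanish \emph{exactly} rather than approximately. It introduces the comparison tensor $\Lambda$ of Claim~\ref{Cl_Lambda}, defined by $\langle\Lambda(u),v\rangle_{g',f'}=\langle u,v\rangle_{g,f}$, and works with the (non-orthogonal) splitting $L^2_{g',f'}=\Lambda(N_{g,f})\oplus Z(K_g)\oplus\Lambda(P_{g,f})$ of Claim~\ref{Cl_Lambda_splitting}, where $N_{g,f}\oplus K_g\oplus P_{g,f}$ is the spectral decomposition for $-L_g$ with the $H^1$-gaps \eqref{eq_definite_lower_bound}. Then for $u_1\in N_{g,f}\oplus(P_{g,f}\cap H^1_{g,f})$ and $v_2=Z(u_2)$ one has
\begin{equation*}
I_{g',f'}\big(\Lambda(u_1),v_2\big)=-\big\langle \Lambda(u_1),L_{g'}v_2\big\rangle_{g',f'}=-\big\langle u_1,(D_2G)_{(\gamma',\kappa')}(u_2)\big\rangle_{g,f}=0,
\end{equation*}
because $(D_2G)_{(\gamma',\kappa')}(u_2)\in K_g$ and $u_1\perp_{g,f}K_g$; definiteness of $I_{g',f'}$ on $\Lambda(N_{g,f})$ and $\Lambda(P_{g,f})\cap H^1_{g',f'}$ follows from the quantitative estimate $|I_{g',f'}(\Lambda(u),\Lambda(u))-I_{g,f}(u,u)|\le C\delta\Vert u\Vert^2_{H^1_{g,f}}$ against the gap \eqref{eq_definite_lower_bound}. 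With the cross terms identically zero, taking $N'=\Lambda(N_{g,f})\oplus N^Z$ (with $N^Z$ the negative spectral part of \eqref{eq_proj_d2G}) satisfies Corollary~\ref{Cor_index}\ref{Cor_index_b} on the nose and yields \eqref{eq_index_formula}. If you want to salvage your route, you would need to replace $Z(K_g)^\perp$ by a complement on which $L_{g'}$-pairings against $Z(K_g)$ vanish exactly — which is precisely what the $\Lambda$-conjugated spectral subspaces provide.
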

\begin{proof}
Let $\delta > 0$ be a small constant whose value we will determine later.
In the following, we will frequently consider the spaces $L^2_{g,f}(M; S^2 T^*M)$, $L^2_{g',f'}(M; S^2 T^*M)$, $H^1_{g,f}(M; S^2 T^*M)$, $H^1_{g',f'}(M; S^2 T^*M)$.
For ease of notation, we will drop the parentheses and simply write $L^2_{g,f}, \lb L^2_{g',f'}, \lb H^1_{g,f}, \lb H^1_{g',f'}$.
Note that by Lemma~\ref{Lem_nearby_bounds}\ref{Lem_nearby_bounds_c} we have $L^2_{g,f} = L^2_{g',f'}, H^1_{g,f} = H^1_{g',f'}$ and the corresponding norms can be assumed to be equivalent.

The self-adjointness of the map \eqref{eq_proj_d2G} is a direct consequence of Lemma~\ref{Lem_Z_map}.
To establish the remaining statements, we need the following map, which relates the inner products $\langle \cdot, \cdot \rangle_{g,f}$ and $\langle \cdot, \cdot \rangle_{g',f'}$.

\begin{Claim} \label{Cl_Lambda}
There is a unique $(2,2)$-tensor $\Lambda$ on $M$ such that for any $u, v \in L^2_{g,f}$ we have
\begin{equation} \label{eq_Lambda_ggp}
   \langle \Lambda(u), v \rangle_{g',f'} = \langle u, v \rangle_{g,f}, 
\end{equation}where we define $(\Lambda(u))_{ij} = \Lambda^{kl}_{ij} u_{kl}$.
Moreover, we have
\begin{equation} \label{eq_Lambda_close_to_id}
   |\Lambda^{kl}_{ij} - \delta^k_i \delta^l_j | \leq C \delta, \qquad |\nabla^{g} \Lambda |, |\nabla^{g'} \Lambda | \leq C \delta, 
\end{equation}
for some constant $C < \infty$, which is independent of $(\gamma',\kappa')$ and $\Lambda$ induces the following bounded linear operators with bounded inverses:
\[ \Lambda:  L^2_{g,f} \lto L^2_{g,f}, \qquad \Lambda|_{H^1_{g,f}} : H^1_{g,f} \lto H^1_{g,f}. \]
\end{Claim}

\begin{proof}
Write $dg = \omega \, dg'$ for some $\omega \in C^1(M)$ and set
\[ \Lambda^{kl}_{ij} := \omega e^{f'-f} g'_{ia} g^{ak} g'_{jb} g^{bl}. \]
The identity \eqref{eq_Lambda_ggp} follows immediately.
The bound \eqref{eq_Lambda_close_to_id} is an immediate consequence of Lemma~\ref{Lem_nearby_bounds}\ref{Lem_nearby_bounds_a}.
The last statement of the claim is a direct consequence of \eqref{eq_Lambda_close_to_id}.
\end{proof}

By Corollary~\ref{Cor_index}\ref{Cor_index_a} there exists a $\langle\cdot,\cdot\rangle_{g,f}$-orthogonal decomposition
\begin{equation}\label{eq_L2_original_decomposition}
L^2_{g,f}(M;S^2 T^*M)=N_{g,f}\oplus K_{g,f}\oplus P_{g,f},
\end{equation}
where the spaces $N_{g,f}, K_{g,f} = K_g$ are contained in $H^1_{g,f} \cap C^\infty_{\loc}$  and finite dimensional with $\Index(-L_g) = \dim N_{g,f}$.
Moreover, $P_{g,f} \cap H^1_{g,f} \subset P_{g,f}$ is dense and the bilinear form $I_{g,f} : H^1_{g,f} \times H^1_{g,f} \to \IR$ with
\[ I_{g,f}(u_1, u_2) = \int_M \big( (\nabla u_1 \cdot_g \nabla u_2) - 2\Rm_g (u_1, u_2) \big) e^{-f} dg \]
is diagonal with respect to \eqref{eq_L2_original_decomposition}, negative definite on $N_{g,f}$, vanishes on $K_{g,f}$ and positive definite on $P_{g,f}$.
By Proposition~\ref{Prop_L2_theory}\ref{Prop_L2_theory_c}, we know that $N_{g,f}$ and $P_{g,f}$ are spanned by the negative and positive eigenvectors of $-L_g$
So there is a constant $c_0 > 0$ such that for all $u_1 \in N_{g,f}$ and $u_2 \in P_{g,f} \cap H^1_{g,f}$ we have
\begin{equation*} %
   I_{g,f} (u_1,u_1) \leq -c_0 \Vert u_1 \Vert_{L^2_{g,f}}^2, \qquad
I_{g,f} (u_2,u_2) \geq c_0 \Vert u_2 \Vert_{L^2_{g,f}}^2.  
\end{equation*}
Since, for some uniform constant $C_0 < \infty$,
\[ \Vert u_2 \Vert^2_{L^2_{g,f}} \leq I_{g,f}(u_2, u_2) + C_0 \Vert u_2 \Vert^2_{L^2_{g,f}}, \]
we obtain that
\[ I_{g,f} (u_2, u_2) \geq \frac{c_0}{C_0} \Vert u_2 \Vert^2_{H^1_{g,f}} - \frac{c_0}{C_0} I_{g,f}(u_2, u_2), \]
which implies that for some uniform constant $c'_0 > 0$
\[ I_{g,f} (u_2, u_2) \geq c'_0 \Vert u_2 \Vert^2_{H^1_{g,f}}. \]
Carrying out a similar argument for $I_{g,f}(u_1, u_1)$ (or using the fact that $N_{g,f}$ is finite dimensional) implies that there is a uniform constant $c_1 > 0$ such that for all $u_1 \in N_{g,f}$ and $u_2 \in P_{g,f} \cap H^1_{g,f}$
\begin{equation} \label{eq_definite_lower_bound}
   I_{g,f} (u_1,u_1) \leq -c_1 \Vert u_1 \Vert_{H^1_{g,f}}^2, \qquad
I_{g,f} (u_2,u_2) \geq c_1 \Vert u_2 \Vert_{H^1_{g,f}}^2.  
\end{equation}

\begin{Claim} \label{Cl_Lambda_splitting}
Assuming that $(\gamma', \kappa')$ is sufficiently close to $(\gamma,0)$,  we have the (not necessarily orthogonal) decomposition
\begin{equation}
L^2_{g',f'}(M;S^2 T^*M) =\Lambda(N_{g,f})\oplus Z(K_g)\oplus \Lambda(P_{g,f}).\label{eq_L2prime_decomposition}
\end{equation}
Moreover, the bilinear form $I_{g',f'} : H^1_{g',f'} \times H^1_{g',f'} \to \IR$ with
\[ I_{g',f'}(u_1, u_2) = \int_M \big( (\nabla u_1 \cdot_{g'} \nabla u_2) - 2\Rm_{g'} (u_1, u_2) \big) e^{-f'} dg' \]
 is negative-definite on $\Lambda(N_{g,f}) \subset H^1_{g',f'}$ and positive-definite on $\Lambda(P_{g,f})\cap H^1_{g',f'} = \Lambda(P_{g,f} \cap H^1_{g,f})$.
\end{Claim}
\begin{proof}

By Lemma \ref{Lem_nearby_bounds}\ref{Lem_nearby_bounds_c}, Lemma~\ref{Lem_Z_map}\ref{Lem_Z_map_a} and Claim~\ref{Cl_Lambda}, we may replace the summand $K_{g.f}$ in \eqref{eq_L2_original_decomposition} by $(\Lambda^{-1}\circ Z)(K_{g,f})$, though the resulting decomposition may no longer be orthogonal.
It now follows that
\[ L^2_{g',f'} = L^2_{g,f} = \Lambda( L^2_{g,f}) =  \Lambda(N_{g,f})\oplus Z( K_{g,f} ) \oplus \Lambda(P_{g,f}),\]
which proves, \eqref{eq_L2prime_decomposition}.

For the second statement, observe that by Lemma~\ref{Lem_nearby_bounds}\ref{Lem_nearby_bounds_a} and \eqref{eq_Lambda_close_to_id}
\[ \big| |\nabla^{g'} (\Lambda(u))|^2_{g'} - |\nabla^g u|^2_g \big|
\leq C \delta \big( |\nabla^{g'} u |^2_{g'} + |u|_{g'}^2 \big). \]
So we obtain, using again the factor $dg = \omega \, dg'$ from the proof of Claim~\ref{Cl_Lambda}, and a generic constant $C < \infty$,
\begin{multline*}
 \Big| I_{g',f'} (\Lambda(u),\Lambda(u)) - I_{g,f} (u,u) \Big| \\
\leq \int_M \Big( |\nabla^g u|^2_g \, \big|  e^{-f'+f} \omega^{-1} - 1 \big|  + C \big| {\Rm}_{g'} e^{-f'+f} \omega^{-1} - {\Rm}_g \big|_g |u|_g^2 \Big) e^{-f} dg + C\delta \Vert u \Vert^2_{H^1_{g',f'}}  \\
\leq C \delta \Vert u \Vert^2_{H^1_{g,f}}.
\end{multline*}
Combining this with \eqref{eq_definite_lower_bound} implies that for sufficiently small $\delta$ we have for any non-zero $u \in P_{g,f} \cap H^1_{g,f}$
\[ I_{g',f'} (\Lambda(u), \Lambda(u)) \geq  - C \delta \Vert u \Vert^2_{H^1_{g,f}} + c_1 \Vert u \Vert^2_{H^1_{g,f}} > 0. \]
So $I_{g',f'}$ is positive definite on
\[ \Lambda(P_{g,f} \cap H^1_{g,f}) = \Lambda(P_{g,f}) \cap H^1_{g,f} = \Lambda(P_{g,f}) \cap H^1_{g',f'}. \]
The same argument applies for $N_{g,f}$.
\end{proof}

We now consider the map $\proj_{g',f',Z(K_g)}\circ(-L_{g'}):Z(K_g)\rightarrow Z(K_g)$, which is self-adjoint with respect to $\langle \cdot, \cdot \rangle_{g',f'}$ and choose the $\langle \cdot, \cdot \rangle_{g',f'}$-orthogonal decomposition
\begin{equation*}
Z(K_g)=N^Z\oplus K^Z\oplus P^Z,%
\end{equation*}
where $N^Z$ and $P^Z$ are the spanned by the negative and positive spectral directions, respectively, and $N^Z$ is the kernel of 
$\proj_{g',f',Z(K_g)}\circ(-L_{g'})$.

\begin{Claim}\label{claim_Lg'_definiteness}
The  bilinear form $I_{g',f'}$ is negative definite on $(\Lambda(N_{g,f})\oplus N^Z) \subset H^1_{g',f'}$ and positive semidefinite on $( \Lambda(P_{g,f}) \oplus K^Z\oplus P^Z)\cap H^1_{g',f'}$.
\end{Claim}
\begin{proof}
Let $u_1\in N_{g,f} \oplus (P_{g,f} \cap H^1_{g',f'})$ and $v_2 = Z(u_2) \in Z(K_g)$ for $u_2 \in K_g$.
Recall that by Lemma~\ref{Lem_Z_map} we know that $v_2$ is of regularity $C^2$ and
\[ L_{g'} v_2 = L_{g'}(Z(u_2)) = (D_2G)_{(\gamma',\kappa')}(u_2) \in K_g \subset L^2_{g,f} = L^2_{g',f'}. \]
So by Proposition~\ref{Prop_L2_theory}\ref{Prop_L2_theory_a} and Claim~\ref{Cl_Lambda} we have
\[ I_{g',f'} ( \Lambda(u_1) , v_2 ) 
=  - \langle \Lambda(u_1), L_{g'} v_2 \rangle_{g',f'} 
= - \langle u_1, L_{g'} v_2 \rangle_{g,f} 
= 0 \]
So if $u_1 \in N_{g,f}$ and $v_2 \in N^Z$, then by Claim~\ref{Cl_Lambda_splitting}
\[ I_{g',f'} ( \Lambda(u_1) + v_2, \Lambda(u_1) + v_2 )
= I_{g',f'} ( \Lambda(u_1) , \Lambda(u_1)  )
+ I_{g',f'} (  v_2,  v_2 ) \leq 0, \]
with equality if and only if $u_1 = v_2 = 0$.
Similarly, if $u_1 \in P_{g,f} \cap H_{g,f}$, so that $\Lambda(u_1) \in \Lambda(P_{g,f} \cap H^1_{g,f}) = \Lambda(P_{g,f}) \cap H^1_{g',f'}$, and $v_2 = Z(u_2) \in K^Z \oplus P^Z \subset Z(K_g)$ for $u_2 \in K_g$, then
\[ I_{g',f'} ( \Lambda(u_1) + v_2, \Lambda(u_1) + v_2 )
= I_{g',f'} ( \Lambda(u_1) , \Lambda(u_1)  )
+ I_{g',f'} (  v_2,  v_2 ) \geq 0. \qedhere\]
\end{proof}
\medskip

The formula \eqref{eq_index_formula} now follows from Claim \ref{claim_Lg'_definiteness} together with Corollary \ref{Cor_index}\ref{Cor_index_b}.
\end{proof}
\bigskip

\section{The main argument} \label{sec_main_argument}
\subsection{Summary of previous results} \label{subsec_summary_main}
The following proposition summarizes the results from the previous sections.
We have added an asterisk next to the assertions that are only necessary for the analysis of the $\IZ$-degree.
Readers who are only interested in the $\IZ_2$-degree may disregard these assertions.

\begin{Proposition} \label{Prop_summary}
Fix an ensemble $(M,N,\iota)$, an integer $k^* \geq 30$ and an $\alpha \in (0,1)$ and consider the metric space $\MM := \MM^{k^*} (M,N, \iota)$ (see Definition~\ref{Def_MM_metric}) and the natural projection $\Pi : \MM \to \GenCONE^{k^*} (N)$.
Then there is an open neighborhood $\MMgrad := \MMgrad(M,N,\iota) \subset \MM' \subset \MM$ and a $C^{1,\alpha}$-Banach manifold structure on $\MM'$ such that the following is true:
\begin{enumerate}[label=(\alph*)]
\item  \label{Prop_summary_a} $\Pi$ is of regularity $C^{1,\alpha}$.
Moreover, around any $p \in \MM'$ there is a $C^{1,\alpha}$-coordinate chart with respect to which $\Pi$ is real-analytic.
\item  \label{Prop_summary_b} $\Pi |_{\MMgeqgrad} : \MMgeqgrad := \MMgeqgrad^{k^*}(M,N,\iota) \to \CONEgeq^{k^*}(N)$ is proper.
\item  \label{Prop_summary_c} For any $p \in \MM'$ the differential
\[ D\Pi_p : T_p \MM \longrightarrow T\GenCONE^{k^*}(N) \]
is Fredholm of index $0$.
\end{enumerate}
For any $C^{k^*-2}$-regular representative $(g,V=\nabla^g f,\gamma)$ of a point $p \in \MM_{\grad}$ the following is true:
\begin{enumerate}[label=(\alph*), start=4]
\item  \label{Prop_summary_d} We have $\dim \ker D\Pi_p = \dim K_g$, where $K_g$ is the kernel of
\[ L_g : C^{2,\alpha}_{-2,\nabla f}(M; S^2 T^*M) \lto C^{0,\alpha}_{-2}(M; S^2 T^*M). \]
\item \label{Prop_summary_e} There is a neighborhood $p \in U_g \subset \MM'$ and unique $C^{1,\alpha}$-maps
\[ \Xi_g : U_g \lto K_g, \qquad H_g : U_g \lto C^{k^*-20}_{-2,\nabla f}(M; S^2 T^* M), \]
where the values of $H_g$ are orthogonal to $K_g$ with respect to $\langle \cdot, \cdot \rangle_{g,f}$ and $\Xi_g (p) = H(p) = 0$ and such that for any $p' \in U_g$ the metric
\[ g_{p'} := g + \Xi_g (p') + H_g(p') + T (\Pi(p')) - T(\gamma) \]
solves the equation $Q_g (g_{p'}) = 0$ and there is a $C^{k^*-20}$-diffeomorphism $\psi_{p'} : M \to M$ with finite displacement such that
\[ \big( \psi_{p'}^* g_{p'}, \psi_{p'}^* (\nabla^g f - \DIV_g (g'_g) + \tfrac12 \nabla^g \tr_{g}( g_{p'})), \Pi(p') \big) \]
is a $C^{k^*-2}$-regular representative of $p' \in \MM$.
The map $\Xi_g$ induces the following isomorphism of vector spaces
\begin{equation} \label{eq_isom_ker_K}
 \ker (D\Pi_{p})  \xrightarrow{\quad\cong\quad} K_g, \qquad v \longmapsto (D\Xi_g)_p (v). 
\end{equation}
The neighborhood $U_g$ is also the domain of a $C^{1,\alpha}$-chart with respect to which $\Xi_g, H_g$ and $\Pi$ are real-analytic.
Moreover, using the setting of Proposition~\ref{prop_smooth_dependence} with $k=k^*-20$, the following map is a diffeomorphism onto its image
\begin{equation} \label{eq_PiXi}
 (\Pi,\Xi_g) : U_g \longrightarrow \{ G = 0 \} \subset U_1 \times U_2. 
\end{equation}
\item \label{Prop_summary_unique} Assertions~\ref{Prop_summary_a}, \ref{Prop_summary_d}, \ref{Prop_summary_e} determine the $C^{1,\alpha}$-Banach manifold structure on $\MM'$ uniquely in the following sense:
For any other open neighborhood $\MMgrad \subset \MM'' \subset \MM$ equipped with a $C^{1,\alpha}$-Banach manifold structure that satisfies these assertions, there is another open neighborhood $\MMgrad \subset \MM''' \subset \MM' \cap \MM''$ such that the restrictions of the $C^{1,\alpha}$-Banach manifold structures from $\MM'$ and $\MM''$ to $\MM'''$ agree.
Moreover, increasing the parameter $\alpha$ results in a refined $C^{1,\alpha}$-Banach manifold atlas on a neighborhood of $\MMgrad$.
\myitem[(g\,*)] \label{Prop_summary_f}  Let $\proj_{K_g} : C^0_{-1} (M; S^2 T^*M) \to K_g$ be the orthogonal projection onto $K_g$ with respect to $\langle \cdot, \cdot \rangle_{g,f}$.
Then the map, defined using the isomorphism \eqref{eq_isom_ker_K},
\[ S'_p : T \GenCONE^{k^*}(N) \longrightarrow \ker (D\Pi_p) \cong K_g, \qquad \gamma' \longmapsto \proj_{K_g} (L_g(T (\gamma'))) \]
is well defined, surjective and does not depend on the choice of the representative $(g, V, \gamma)$ of $p$ or the cutoff function $\eta$ used in the definition of the map $T$ (see Definition~\ref{Def_T}).
Moreover, $$\ker S'_p  = \Image D\Pi_p, $$ so $S'_g$ descends to an isomorphism (note that we take the quotient from the left for reasons that will become apparent later)
\[ S_p : \Image (D\Pi_p) \big\backslash T \GenCONE^{k^*}(N)   \longrightarrow \ker D\Pi_p \cong K_g. \]
\end{enumerate}
Next suppose that $Q \subset \GenCONE^{k^*}(N)$ is a finite dimensional $C^{1}$-submanifold that satisfies the transversality condition
\begin{equation} \label{eq_transversality_condition}
 \Image (D\Pi_p) + T_{\Pi(p)} Q = T\GenCONE^{k^*}(N) \qquad \text{for all} \quad p \in \Pi^{-1}(Q) \cap \MMgeqgrad. 
\end{equation}
Then there is an open neighborhood
\begin{equation} \label{eq_nbhd_U}
  \Pi^{-1}(Q) \cap \MMgeqgrad  \subset U \subset \MM' 
\end{equation}
such that:
\begin{enumerate}[label=(\alph*),start=8]
\item  \label{Prop_summary_g} $\Pi|_U$ is transverse to $Q$, so $P:=\Pi^{-1}(Q) \cap U$ is a $C^{1}$-submanifold of the same dimension $\dim Q$ and $\Pi^{-1} (Q) \cap \MMgrad \subset P$.
\item  \label{Prop_summary_h} If $Q$ is even real-analytic, then $U$ can be chosen such that
\begin{align}
 (\Pi|_P)^{-1}(\CONE^{k^*}(N)) &= \MMgrad \cap P, \label{eq_Piinv_grad_grad} \\
 (\Pi|_P)^{-1}(\CONEgeq^{k^*}(N)) &= \MMgeqgrad \cap P. \label{eq_Piinv_geqgrad_geqgrad}
\end{align}
\myitem[(j*)]  \label{Prop_summary_i} If $Q$ is oriented, then there is a unique orientation on $P$ such that for all $p \in \MM_{\grad} \cap P$ and any  $C^{k^*-2}$-regular  representative $(g,\nabla^g f, \gamma)$ of $p$, the isomorphisms 
\begin{align*}
 d(\Pi|_P)_p : T_p P \big/ \ker d(\Pi|_P)_p &\longrightarrow \Image (d(\Pi|_P)_p), \\
\qquad S_p :  \Image (d(\Pi|_P)_p) \big\backslash T_{\Pi(p)} Q  = \Image (D\Pi_p)\big\backslash  T \GenCONE^{k^*}(N)   &\longrightarrow \ker D\Pi_p = \ker (d(\Pi|_P)_p)   
\end{align*}
have the same orientation if $\idx (-L_g)$ is even and the opposite orientation if $\idx (-L_g)$ is odd.
Here we have chosen arbitrary orientations on $\ker (d(\Pi|_P)_p)$ and $\Image (d(\Pi|_P)_p)$ and the induced orientations\footnote{Our convention for the orientation of a quotient is the following: Suppose that $U \subset V$ is a subspace of a finite dimensional vector space $V$ and $\{ v_1, \ldots, v_k \} \subset V$ is a positively oriented basis of $V$.
If $\{ v_{k-l+1}, \ldots, v_{k} \}$ is a positively oriented basis of $U$, then $\{ v_1, \ldots, v_{k-l} \}$ descends to a positively oriented basis of $V/U$.
If $\{ v_{1}, \ldots, v_{l} \}$ is a positively oriented basis of $U$, then $\{ v_{l+1}, \ldots, v_{k} \}$ descends to a positively oriented basis of $U \backslash V$.

Note also that the orientation of a trivial vector space is simply given by the specification of a label `$+$' or `$-$'.
More specifically, $V/\{0\}$, $\{0\} \backslash V$ and $V$ have the same orientation if $\{0 \}$ is oriented via the label `$+$' and opposite orientation otherwise.
Moreover, if $U = V \subset V$ has the same orientation as $V$, then $V/U$ and $U\backslash V$ are oriented by the label `$+$'.
By contrast, if $U$ carries the opposite orientation, then these quotients are oriented by the label `$-$'.} on the quotient spaces (note that the statement is independent of these choices).
\end{enumerate}
\end{Proposition}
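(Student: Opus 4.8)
The proposition is a bundling of results established in Sections~\ref{sec_properness}--\ref{sec_change_of_index}; the plan is to assemble them in the right order, using the local charts $\Phi_p$ from Proposition~\ref{Prop_Banach_manifold} as the fundamental building block. First I would invoke Corollary~\ref{Cor_MM_is_Banach_manifold} (applied with $k=k^*-20$, which is $\geq 10$ since $k^*\geq 30$) to obtain the open neighborhood $\MMgrad\subset\MM'\subset\MM$, its $C^{1,\alpha}$-Banach manifold structure, and Assertion~\ref{Prop_summary_a}: the $C^{1,\alpha}$-regularity of $\Pi$ together with the real-analyticity of $\Pi\circ\Phi_p$ in the charts. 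Assertion~\ref{Prop_summary_b} is exactly Proposition~\ref{Prop_properness}, and Assertion~\ref{Prop_summary_c} follows from Proposition~\ref{Prop_Banach_manifold}\ref{Prop_Banach_manifold_c}, which identifies $D\Pi_p$ with the differential of $\Pi\circ\Phi_p$, a real-analytic Fredholm map of index $0$; Assertion~\ref{Prop_summary_d} is the nullity clause of the same statement, combined with Lemma~\ref{Lem_linear_identities}\ref{Lem_linear_identities_b} identifying the kernel in the two weighted spaces. For Assertion~\ref{Prop_summary_e}, I would take $U_g:=\Phi_p(U_p)$ and define $\Xi_g:=\Xi_p\circ\Phi_p^{-1}$, $H_g:=H_p\circ\Phi_p^{-1}$ (decomposing $H_p=F_0(\cdots)+\kappa'$ into its $K_g^\perp$ and $K_g$ parts via the splitting of Lemma~\ref{Lem_linear_identities}\ref{Lem_linear_identities_c} to get the orthogonality), the map $\psi_{p'}$ being the diffeomorphism supplied by Proposition~\ref{Prop_Banach_manifold}\ref{Prop_Banach_manifold_d}; the isomorphism \eqref{eq_isom_ker_K} and the diffeomorphism property of \eqref{eq_PiXi} come directly from Proposition~\ref{Prop_Banach_manifold}\ref{Prop_Banach_manifold_f}. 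Uniqueness, Assertion~\ref{Prop_summary_unique}, follows from Proposition~\ref{Prop_Banach_manifold}\ref{Prop_Banach_manifold_e}: any two charts satisfying \ref{Prop_summary_a},\ref{Prop_summary_d},\ref{Prop_summary_e} give a transition map that is $C^{1,\alpha}$, since the hypotheses of \ref{Prop_Banach_manifold_e} are met by the representation data in \ref{Prop_summary_e}; the refinement-under-$\alpha$ statement is the Remark after Corollary~\ref{Cor_MM_is_Banach_manifold}.

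For Assertion~\ref{Prop_summary_f}, I would define $S'_p$ via the composition $\proj_{K_g}\circ L_g\circ T$ on $T\GenCONE^{k^*}(N)$, using the isomorphism \eqref{eq_isom_ker_K} to land it in $\ker D\Pi_p$. Well-definedness and surjectivity are Lemma~\ref{Lem_linear_identities}\ref{Lem_linear_identities_e} (the value lies in $C^{k^*-5}_{-2}$ by Lemmas~\ref{Lem_nabf_Tgamma}, \ref{Lem_Holder_norms_properties}, so $\proj_{K_g}$ is defined), and independence of the cutoff $\eta$ follows from Claim~\ref{Cl_kLh_int}: two choices of $T$ differ by a tensor of the form $T(\gamma')$ supported in a compact set... wait, more carefully, they differ by a compactly supported tensor $h$, and Case~(i) of that claim with $\delta=1$ (or, since it is compactly supported, trivially) gives $\langle\kappa,Lh\rangle_f=0$, so the two $S'_p$ agree; independence of the representative $(g,V,\gamma)$ follows because a change of representative is realized by a diffeomorphism fixing the complement of a compact set, under which all quantities transform equivariantly. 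The identity $\ker S'_p=\Image D\Pi_p$ is the crucial point: the differential $(D_1G)_{(\gamma,0)}(\gamma')=\proj_{K_g}(L_g(T\gamma'))$ of Proposition~\ref{prop_smooth_dependence}\ref{prop_smooth_dependence_d}, together with the chart identification $(\Pi,\Xi_g):U_g\to\{G=0\}$ from \eqref{eq_PiXi}, shows that $\Image D\Pi_p$ is exactly the set of $\dot\gamma$ with $\proj_{K_g}(L_g(T\dot\gamma))=0$, so $S'_p$ descends to the stated isomorphism $S_p$ on the quotient.

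Finally, for Assertions~\ref{Prop_summary_g}, \ref{Prop_summary_h}, \ref{Prop_summary_i}. Given a finite-dimensional submanifold $Q$ satisfying the transversality condition \eqref{eq_transversality_condition} along $\Pi^{-1}(Q)\cap\MMgeqgrad$, I would first extend transversality to a neighborhood: since $D\Pi_p$ is Fredholm of index $0$ and \eqref{eq_transversality_condition} is an open condition, there is an open $U$ as in \eqref{eq_nbhd_U} on which $\Pi|_U\pitchfork Q$, so $P:=\Pi^{-1}(Q)\cap U$ is a $C^1$-submanifold with $\dim P=\dim Q$ (dimension count: the index-$0$ Fredholm property forces $\dim\ker D\Pi_p=\mathrm{codim}\,\Image D\Pi_p$, and transversality makes this match $\mathrm{codim}\,T_{\Pi(p)}Q$), giving \ref{Prop_summary_g}. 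For \ref{Prop_summary_h}, when $Q$ is real-analytic, $P$ is a finite-dimensional real-analytic submanifold, and $(\Pi|_P)^{-1}(\CONE^{k^*}(N))$ is a locally real-analytic subvariety of $P$ (as the zero set of the real-analytic $\beta$-component of $\Pi$); by Appendix~\ref{appx_real_analytic} its connected components are $C^1$-path connected, and Corollary~\ref{Cor_pres_gradientness} (preservation of the gradient condition along $C^1$-families asymptotic to cones) shows each such component is either contained in or disjoint from $\MMgrad$, so shrinking $U$ to exclude the non-gradient components gives \eqref{eq_Piinv_grad_grad}, and \eqref{eq_Piinv_geqgrad_geqgrad} follows by intersecting with the scalar-curvature condition and Lemma~\ref{Lem_Rgeq0_CONE_implies_MM}. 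The main obstacle, and the most delicate part, is Assertion~\ref{Prop_summary_i}: defining a consistent orientation on $P$. Here $D(\Pi|_P)_p$ may be degenerate everywhere along $\MMgrad\cap P$, so one cannot simply transport orientations through it. Instead I would use the index formula, Proposition~\ref{prop_index_formula}: for $p'\in P$ near $p\in\MMgrad\cap P$ with $p'$ corresponding to $(\gamma',\kappa')$, the relation $\Index(-L_{g'})=\Index(-L_g)+\Index(\proj_{Z(K_g)}\circ(-L_{g'}))$ and the fact (Lemma~\ref{Lem_Z_is_change_of_coo}) that $Z_{(\gamma',\kappa')}$ realizes the change-of-coordinates on vertical directions, allow one to check that the parity prescription ``$d(\Pi|_P)_p$ and $S_p$ agree iff $\idx(-L_g)$ even'' is consistent across overlapping charts: moving from the chart at $p$ to the chart at $p'$ changes $\idx(-L_{\cdot})$ by exactly the number of negative eigenvalues of the self-adjoint operator \eqref{eq_proj_d2G}, which is precisely the correction term by which the orientation-comparison isomorphisms differ. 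One must also check continuity of the orientation on the open set $P\setminus(\MMgrad\cap P)$, where $D(\Pi|_P)_p$ is an isomorphism on $\Pi^{-1}$ of regular values and the prescription reduces to the conventional one; a limiting argument using Lemma~\ref{Lem_Z_map} then glues the two pictures. This is the step I expect to require the most care, as it is where the improper, everywhere-degenerate nature of the setup must be handled by the substitute-orientation machinery of Section~\ref{sec_change_of_index}.
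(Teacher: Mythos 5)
Your plan is correct and follows essentially the same route as the paper's proof: assertions (a)--(g\,*) are assembled from Corollary~\ref{Cor_MM_is_Banach_manifold}, Proposition~\ref{Prop_Banach_manifold}, Proposition~\ref{Prop_properness}, Proposition~\ref{prop_smooth_dependence} and Lemma~\ref{Lem_linear_identities} exactly as in the paper, (h) via openness of transversality and the index-zero dimension count, (i) via the local real-analytic subvariety structure together with Proposition~\ref{Prop_variety_loc_conn} and Corollary~\ref{Cor_pres_gradientness}, and (j\,*) via Proposition~\ref{prop_index_formula} combined with Lemmas~\ref{Lem_Z_is_change_of_coo} and~\ref{Lem_Z_map}. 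The only inaccuracy is the proposed separate check on $P \setminus (\MMgrad \cap P)$ in (j\,*): $d(\Pi|_P)$ need not be an isomorphism there, and no such step is needed --- as in the paper, it suffices to show that the prescribed orientation varies continuously along $\MMgrad \cap P$ (this is where the index formula and the $Z$-map estimates do the real work) and then shrink $U$.
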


\begin{proof}
By Corollary~\ref{Cor_MM_is_Banach_manifold}, the maps $\{ \Phi_p : U_p \to \MM\}_{p \in \MM_{\grad}}$ from Proposition~\ref{Prop_Banach_manifold} form a $C^{1,\alpha}$-Banach manifold atlas on $\MM' := \bigcup_{p \in \MM_{\grad}} \Phi_p(U_p)$ that is compatible with the topology on $\MM$ and by Proposition~\ref{Prop_Banach_manifold}\ref{Prop_Banach_manifold_c} the maps $\Pi \circ \Phi_p$ are real-analytic and the Fredholm index of its differential is $0$.
This shows Assertions~\ref{Prop_summary_a}, \ref{Prop_summary_c}.
Assertion~\ref{Prop_summary_b} is a restatement of Proposition~\ref{Prop_properness}.

Assertion~\ref{Prop_summary_d} is a restatement of the last part of Proposition~\ref{Prop_Banach_manifold}\ref{Prop_Banach_manifold_c}.
Assertion~\ref{Prop_summary_e} follows from Propositions~\ref{Prop_Banach_manifold}\ref{Prop_Banach_manifold_d} and \ref{prop_smooth_dependence} by setting $H_g := H_p \circ \Phi_p^{-1}$ and $\Xi_g := \Xi_p \circ \Phi_p^{-1}$.
Assertion~\ref{Prop_summary_unique} follows from the fact that Assertion~\ref{Prop_summary_e} specifies unique charts near every point of $\MMgrad$.

For Assertion~\ref{Prop_summary_f} consider the diffeomorphism from \eqref{eq_PiXi}.
By Proposition~\ref{prop_smooth_dependence}\ref{prop_smooth_dependence_d} we have
\[ (D_1 G)_{(\gamma,0)}  = (D\Xi_g)_p \circ S'_p, \qquad (D_2 G)_{(\gamma,0)} = 0 \]
and the first differential is surjective onto $K_g$.
Since by \eqref{eq_PiXi} the map
\[ T_p \MM \lto \ker (DG)_{(\gamma,0)} = \ker (D_1 G)_{(\gamma,0)} \times K_g, \qquad v \longmapsto (D\Pi_p(v), D(\Xi_g)_p(v)) \]
is an isomorphism, we obtain that $\Image D\Pi_p = \ker (D_1 G)_{(\gamma,0)} = \ker S_p$.
To see the independence statement, consider first a different map $T' : T\GenCONE^{k^*}(N) \to C^{k^*}(M; S^2 T^*M)$; i.e. $T(\gamma') - T'(\gamma')$ has compact support.
Then for any $\kappa' \in K_g$ we have by Lemma~\ref{Lem_linear_identities}\ref{Lem_linear_identities_f}
\[ \big\langle \proj_{K_g} (L_g(T(\gamma'))), \kappa' \big\rangle_{g,f}  -\big\langle \proj_{K_g} (L_g(T'(\gamma'))), \kappa' \big\rangle_{g,f}
= \big\langle L_g(T(\gamma')- T'(\gamma')), \kappa' \big\rangle_{g,f} = 0. \]
So $\proj_{K_g} (L_g(T(\gamma'))) = \proj_{K_g} (L_g(T'(\gamma')))$.
Next, consider another $C^{k^*-2}$-representative $(g',V',\gamma)$ of $p$, i.e., there is a $C^1$-diffeomorphism $\phi : M \to M$ that agrees with the identity on the complement of a compact subset such that $g' = \phi^* g$ and $V' = \phi^* V$.
Since both metrics $g, g'$ are $C^{k^*-2}$, this implies that $\phi$ is $C^{k^*-1}$.
By Assertion~\ref{Prop_summary_e} the induced maps $\Xi_g$ and $\Xi_{g'}$ are related as follows
\[ \Xi_{g'}(v) = \phi^* \big( \Xi_g (v) \big) \qquad \text{for all} \quad v \in T_p \MM. \]
So it remains to show that for all $\gamma' \in T\GenCONE^{k^*}(N)$
\begin{equation} \label{eq_ggp_indep}
 \phi^* \big( \proj_{K_g} (L_g(T(\gamma')) \big) = \proj_{K_{g'}} (L_{g'}(T(\gamma')). 
\end{equation}
To see this, let $\kappa' \in K_{g}$ be arbitrary and observe that
\begin{align*}
 \big\langle \phi^* \big( \proj_{K_g} (L_g(T(\gamma'))) \big), \phi^*\kappa' \big\rangle_{g',f'}
&= \big\langle  \proj_{K_g} (L_g(T(\gamma'))), \kappa' \big\rangle_{g,f}
= \big\langle  L_g(T(\gamma')), \kappa' \big\rangle_{g,f} \\
 \big\langle   \proj_{K_{g'}} (L_{g'}(T(\gamma'))) \big), \phi^*\kappa' \big\rangle_{g',f'}
&= \big\langle  L_{g'}(T(\gamma')), \phi^*\kappa' \big\rangle_{g',f'} 
\end{align*}
Due to the independence of the definition of the map $T$, we may assume that $g = g'$, $V = V'$ and $\phi = \id_M$ on the support of $T(\gamma')$.
This shows that both right-hand sides are equal and therefore that \eqref{eq_ggp_indep} holds, concluding the proof of Assertion~\ref{Prop_summary_f}.

Consider now the submanifold $Q$.
\begin{Claim}\label{Cl_transverse}
For any $p\in \Pi^{-1}(Q)\cap\MMgeqgrad$ there exists an open neighborhood $U\subset\MM'$ containing $p$ such that \eqref{eq_transversality_condition} continues to hold for all $p' \in \Pi^{-1}(Q) \cap U$.
\end{Claim}
\begin{proof}
Fix a complementary subspace $A$ to $T_{\Pi(p)}Q$ in $T\GenCONE^{k^*}(N)$, and define $\proj_{T_{\Pi(p)}Q}:T\GenCONE^{k^*}(N)\rightarrow T_{\Pi(p)}Q$ as the projection with respect to this decomposition. Also let $B\subset T_{\Pi(p)}Q$ be a complementary subspace to $\Image(D\Pi_p)$ in $T\GenCONE^{k^*}(N)$. We then take a neighborhood $V$ about $\Pi(p)$ such that $\proj_{T_{\Pi(p)}Q}|_{T_{q}Q}:T_{q}Q\rightarrow T_{\Pi(p)}Q$ is an isomorphism for any $q\in Q\cap V$, and define the continuous map $E:(Q\cap V)\times B\rightarrow T\GenCONE^{k^*}(N)$ by $(q,b)\mapsto (\proj_{T_{\Pi(p)}Q}|_{T_{q}Q})^{-1}(b)$. Note that $E(\Pi(p),\cdot)$ is the identity map on $B$.

We can similarly find a neighborhood $\td U\subset \Pi^{-1}(V)\subset\MM'$ about $p$ and a continuous map $\td E:\td U\times T_p\MM'\rightarrow T\GenCONE^{k^*}(N)\times K_g$ such that $\td E(p',\cdot)$ is an isomorphism between $T_{p}\MM'$ and $T_{p'}\MM'$ for all $p'\in \td U$, such that $\td E(p,\cdot)$ is the identity on $T_p\MM'$. Let $H$ be a complementary subspace to $\ker(D\Pi_p)$ in $T_p\MM'$.

Now consider the continuous map $\hat{E}:(\Pi^{-1}(Q)\cap \td U)\times H\times B\rightarrow T\GenCONE^{k^*}(N)$ defined by $(p',x,b)\mapsto (D\Pi_{p'})(\td E(p',x))+E(\Pi(p'),b)$. Since $\hat{E}(p,\cdot,\cdot):H\times B\rightarrow T\GenCONE^{k^*}(N)$ is invertible, there exists a neighborhood $U\subset\td U$ about $p$ so that $\hat{E}(p',\cdot,\cdot)$ is also invertible for all $p'\in \Pi^{-1}(Q)\cap U$. In particular \eqref{eq_transversality_condition} holds for all $p'\in \Pi^{-1}(Q)\cap U$.
\end{proof}

Then Assertion~\ref{Prop_summary_g} follows immediately; the fact that $P$ has the same dimension as $Q$ is a consequence of the fact that $D\Pi_p$ has Fredholm index $0$ for all $p \in P$.
Note that the topology on $P$ is paracompact, because it is induced by the metric on $\MM$.

For Assertion~\ref{Prop_summary_h} we first claim that $Q \cap \CONE^{k^*}(N)$ is a real-analytic subvariety of $Q$, i.e., that it is a level set of a real-analytic function $F_0 : Q \to \IR$.
To see this, note that the non-conical directions in $\GenCONE^{k^*}(N)$ define a closed complementary subspace $V \subset \GenCONE^{k^*}(N)$ to $\CONE^{k^*}(N)$.
Let $\langle \cdot, \cdot \rangle$ be an inner product on $V$ that is continuous with respect to the induced Banach norm.
Define $F_0 : Q \to \IR$ by $F_0(\gamma) := \langle \proj_V \gamma, \proj_V \gamma \rangle$.
Then $F_0$ is real-analytic and $F_0^{-1}(0) = Q \cap \CONE^{k^*}(N)$, which shows that $Q \cap \CONE^{k^*}(N)$ is a real-analytic subvariety of $Q$.

Let now $p = [(g, V = \nabla^g f, \gamma)] \in \Pi^{-1}(Q) \cap \MMgrad$ for some $C^{k^*-2}$-regular representative $(g, V = \nabla^g f, \gamma)$, which exists by Lemma~\ref{Lem_MM_regular_rep}\ref{Lem_MM_regular_rep_a}.
Consider a local coordinate chart on a neighborhood $p \in W_p \subset U$ with respect to which $\Pi$ and the map $H_g$ from Assertion~\ref{Prop_summary_e} are real-analytic.
Then $F_p := F_0 \circ \Pi : W_p \to \IR$ is real analytic and $(\Pi|_P)^{-1} (\CONE^{k^*}(N)) \cap W_p = F_p^{-1}(0)$.
By Proposition~\ref{Prop_variety_loc_conn} there is a neighborhood $p \in W'_p \subset W_p$ such that any $p' \in (\Pi|_P)^{-1} (\CONE^{k^*}(N)) \cap W'_p$ can be connected with $p$ within $F_p^{-1}(0) \subset W_p$ via a piecewise real-analytic arc $\sigma : [0,1] \to F_p^{-1}(0)$.
Then $s \mapsto g(s) := g + H_g (\sigma(s)) + T (\Pi(\sigma(s)) - T (\gamma)$ defines a piecewise $C^1$ family of expanding soliton metrics that are all asymptotic to a cone.
Since $g(0) = g$ is the metric of a \emph{gradient} expanding soliton, we obtain, using Corollary~\ref{Cor_pres_gradientness}, that all $g(s)$ are \emph{gradient} expanding soliton metrics.
Therefore $p' \in \MM_{\grad}$.
It follows that $(\Pi|_P)^{-1} (\CONE^{k^*}(N)) \cap W'_p \subset \MM_{\grad}$.
The identity \eqref{eq_Piinv_grad_grad} in Assertion~\ref{Prop_summary_h} now follows after replacing $P$ with $P' := \bigcup_{p \in \MM_{\grad} \cap P} W'_p$ and $U$ with a neighborhood $\Pi^{-1}(Q) \cap \MM_{\grad} \subset U' \subset U$ such that $P' = P \cap U'$.
The identity \eqref{eq_Piinv_geqgrad_geqgrad} follows from \eqref{eq_Piinv_grad_grad} via Lemma~\ref{Lem_Rgeq0_CONE_implies_MM}.

It remains to prove Assertion~\ref{Prop_summary_i}.
Note first that the assertion itself defines a unique orientation at every tangent space $T_p P$ for $p \in \MM_{\grad} \cap P$.
In the following we will show that this orientation depends continuously on $p$.
Once this is achieved, the assertion follows after possibly shrinking the neighborhood $U$ in \eqref{eq_nbhd_U}.

Fix $p \in P$ and choose a $C^{k^*-2}$-regular representative $(g,V =\nabla^g f, \gamma)$,  which exists by Lem\-ma~\ref{Lem_MM_regular_rep}\ref{Lem_MM_regular_rep_a}.
Without loss of generality, we may normalize $f$ such that
\[ |\nabla^g f|_g^2 + R_g = -f. \]
Let $K_g$ be the kernel of $L_g$ and use the map $(\Pi, \Xi_g)$ of Assertion~\ref{Prop_summary_e} to identify a neighborhood of $p$ with a neighborhood $U'$ of $(\gamma,0)$ in $\{ G = 0 \} \subset U_1 \times U_2$ from Proposition~\ref{prop_smooth_dependence}.
Recall that within this identification, the map $\Pi$ is just the projection onto the first factor.
Let $p' = (\gamma',\kappa') \in U' \cap \MM_{\grad} \cap P$ be an arbitrary point, so $\gamma' \in \CONE^{k^*}(N)$, and set
\[ g' := g_{p'}, \qquad V' := \nabla^g f - \DIV_g (g') + \tfrac12 \nabla^g \tr_g (g'), \qquad \psi := \psi_{p'} \]
from Assertion~\ref{Prop_summary_e}.
Then $(\psi^* g', \psi^* V', \gamma')$ is a $C^{k^*-2}$-regular representative of $p'$ and since $p' \in \MM_{\grad}$, we have $V' := \nabla^{g'} f'$ for some potential function $f' \in C^1(M)$, which we may again normalize such that
\[ |\nabla^{g'} f'|_{g'}^2 + R_{g'} = -f'. \]
Throughout the remainder of this proof we will assume $p'$ to be sufficiently close to $p$ in order to carry out our estimates.
To be precise, let $\delta > 0$ be a small constant, whose value we will determine later, and suppose that $p'$ is chosen sufficiently close to $p$ such that the conclusions of Lemmas~\ref{Lem_nearby_bounds}, \ref{Lem_Z_map}, \ref{Lem_Z_is_change_of_coo} and Proposition~\ref{prop_index_formula} hold for $\delta$.

Let $K_{g'}$ be the kernel of the operator
\[ L_{g'} : C^{2,\alpha}_{g',-2,\nabla^{g'} f'}(M: S^2 T^*M) \to C^{0,\alpha}_{g',-2}(M: S^2 T^*M). \]
Assertion~\ref{Prop_summary_e} applied to $p' = [(\psi^* g', \psi^* V', \gamma')]$ yields an isomorphism of $\psi^* K_{g'} = K_{\psi^* g'}$ with $\ker D\Pi_{p'}$.
Since in our identification $\Pi$ is the projection of $\{ G = 0 \} \subset U_1 \times U_2$ to the first factor, we may view $\ker D\Pi_{p'}$ also as the subspace $\ker D_2 G_{(\gamma',\kappa')} \subset K_g$.
Combining this identification with the isomorphism above and applying Lemma~\ref{Lem_Z_is_change_of_coo},  yields the isomorphism 
\begin{equation*} %
 \psi^* \circ Z_0 := \psi^* \circ Z |_{ \ker D_2 G_{(\gamma',\kappa')} } : \ker D_2 G_{(\gamma',\kappa')} \longrightarrow \psi^* K_{g'} = K_{\psi^* g'},  
\end{equation*}
which is given by the map $Z:= Z_{(\gamma',p')}$ from Lemma~\ref{Lem_Z_map} restricted to $\ker D_2 G_{(\gamma',\kappa')}$ and composed with the pullback $\psi^*$.
Then the map $S'_{p'}$ from Assertion~\ref{Prop_summary_f} can be characterized by the fact that for any $\td\kappa \in \ker D_2 G_{(\gamma',\kappa')}$ and $\td\gamma \in T\GenCONE^{k^*}(N)$ we have
\begin{equation} \label{eq_Sppp}
 \big\langle S'_{p'} (\td\gamma), \psi^* (Z_0(\td\kappa)) \big\rangle_{\psi^* g', \psi^* f'}
= \big\langle  L_{\psi^* g'} (T(\td\gamma)) , \psi^* (Z(\td\kappa) \big\rangle_{\psi^* g', \psi^* f'}
\end{equation}

\begin{Claim} \label{Cl_Sp}
For any $\td\kappa \in \ker D_2 G_{(\gamma',\kappa')}$ and $\td\gamma \in T\GenCONE^{k^*}(N)$ we have
\[ \big| \langle S'_{p'} (\td\gamma), \psi^* (Z_0( \td\kappa ))\rangle_{\psi^* g',\psi^* f'} - \langle S'_{p} (\td\gamma), \td\kappa \rangle_{g,f} \big|
\leq \delta \Vert \td\gamma \Vert \, \Vert \td\kappa \Vert_{L^2_{g,f}}. \]
\end{Claim}

\begin{proof}
This follows from Lemma~\ref{Lem_Z_map}\ref{Lem_Z_map_b}, \eqref{eq_Sppp}. %
\end{proof}

Let $V_\gamma \subset T_\gamma Q$ be a complement of $\Image (d(\Pi|_P)_p) \subset T_\gamma Q$ and extend $V_\gamma$ to a continuous, oriented distribution $\{ V_{\gamma''} \subset T_{\gamma''} Q \}$ for $\gamma'' \in Q$ near $\gamma$.
Next, choose linearly independent continuous local vector fields $w_{1}, \ldots, w_{l}$ on $P$ near $p$ such that 
$\{ d(\Pi|_P)_p(w_1|_p), \ldots, d(\Pi|_P)_p(w_l|_p) \}$ is a basis of $\Image(d(\Pi|_P)_p)$.
Then for $p'$ sufficiently close to $p$ we have
\begin{align}
 T_\gamma Q &= \spann \big\{ d(\Pi|_P)_p(w_1|_p), \ldots, d(\Pi|_P)_p(w_l|_p) \big\} \oplus V_{\gamma}, \label{eq_TgQ} \\
 T_{\gamma'} Q &= \spann \big\{ d(\Pi|_P)_{p'}(w_1|_{p'}), \ldots, d(\Pi|_P)_{p'}(w_l|_{p'}) \big\} \oplus V_{\gamma'} \label{eq_TgpQ} 
\end{align}
and the set of $l$ vectors in both identities form bases of their span.

Choose a basis $\{ \td\kappa_1, \ldots, \td\kappa_k \} \subset K_{g}$ such that  we have the $\langle \cdot, \cdot \rangle_{g',f'}$-orthonormality condition
\begin{equation} \label{eq_ONB_condition_Z}
 \big\langle Z(\td\kappa_i), Z(\td\kappa_j) \big\rangle_{g',f'} = \delta_{ij}, 
\end{equation}
and such that for some $\lambda_1, \ldots, \lambda_k \in \IR$ we have
\begin{equation} \label{eq_spec_dec_Z}
 -\big\langle Z(\td\kappa_i), D_2 G_{(\gamma',\kappa')} (\td\kappa_j) \big\rangle_{g',f'} = \lambda_i \delta_{ij}. 
\end{equation}
(Recall that by Proposition~\ref{prop_index_formula} or \eqref{eq_LZ_d2G} in Lemma~\ref{Lem_Z_map} the form on the left-hand side of \eqref{eq_spec_dec_Z} is symmetric, so both symmetric forms \eqref{eq_ONB_condition_Z} and \eqref{eq_spec_dec_Z} can be simultaneously diagonalized.)
After possibly permuting indices, we may assume moreover that $\lambda_{m+1} = \ldots = \lambda_{k} = 0$ and that $\lambda_{1}, \ldots, \lambda_m \neq 0$ for some $m \in \{ 0, \ldots, k \}$.
So by Lemma~\ref{Lem_Z_map}\ref{Lem_Z_map_a} we obtain that if $\delta$ is sufficiently small, then
\[  \ker D_2 G_{(\gamma',\kappa')} = \spann \{ \td\kappa_{m+1}, \ldots, \td\kappa_{k} \}  \]
and
\[ \proj_{g',f', Z(K_g)} \big( D_2 G_{(\gamma',\kappa')} (\td\kappa_i) \big) = -\lambda_i Z(\td\kappa_i), \]
where the projection is taken with respect to the inner product $\langle \cdot, \cdot \rangle_{g',f'}$.

Next, we claim that for sufficiently small $\delta$ we can choose bases $\{ \td\gamma_{1}, \ldots, \td\gamma_k \} \subset V_{\gamma}$ and $\{ \td\gamma'_{1}, \lb \ldots,\lb \td\gamma'_k \} \subset V_{\gamma'}$ such that
\begin{equation} \label{eq_choice_td_gammai}
 S'_p (\td\gamma_i) = D_1 G_{(\gamma,0)} (\td\gamma_i) = \td\kappa_i, \qquad \proj_{g',f',Z(K_g)} \big( D_1 G_{(\gamma',\kappa')} (\td\gamma'_i) \big) 
= Z(\td \kappa_i). 
\end{equation}
To see this, note that $D_1 G_{(\gamma,0)} |_{V_\gamma} : V_\gamma \to K_g$ is an isomorphism since $D_1 G_{(\gamma,0)} |_{T_\gamma Q}$ is surjective and its kernel is given by the first factor in \eqref{eq_TgQ}; so the first equation can be solved for $\td\gamma_i \in V_\gamma$.
The second equation in \eqref{eq_choice_td_gammai} is equivalent to 
\begin{equation} \label{eq_choice_gammapi_equ}
  D_1 G_{(\gamma',\kappa')} (\td\gamma'_i)  
= \big( \proj_{g',f',Z(K_g)}\big|_{K_g} \big)^{-1} \big( Z(\td\kappa_i) \big), 
\end{equation}
By Lemma~\ref{Lem_Z_map}\ref{Lem_Z_map_a}, if $\delta$ is sufficiently small and $p'$ is sufficiently close to $p$, then $D_1 G_{(\gamma',\kappa')}$ is sufficiently close to $D_1 G_{(\gamma,0)}$ and the right-hand side of \eqref{eq_choice_gammapi_equ} is defined.
Therefore, for $p'$ sufficiently close to $p$, the equation \eqref{eq_choice_gammapi_equ} can be solved for $\td\gamma'_i \in V_{\gamma'}$.
The same analysis implies moreover that for $p'$ sufficiently close to $p$ both bases $\{ \td\gamma_{1}, \ldots, \td\gamma_k \} \subset V_{\gamma}$ and $\{ \td\gamma'_{1}, \ldots, \td\gamma'_k \} \subset V_{\gamma'}$ have the same orientation and that we have
\begin{equation} \label{eq_gamma_gammap}
 \Vert \td\gamma_i \Vert \leq C \Vert \td\kappa_i \Vert_{L^2_{g,f}}, \qquad \Vert \td\gamma_i - \td\gamma'_i \Vert \leq C\delta \Vert \td\kappa_i \Vert_{L^2_{g,f}} 
\end{equation}
for some constant $C < \infty$, which is independent of $p'$.

Summarizing our construction, we have chosen the following bases, which have the same orientation:
\begin{align}
 T_\gamma Q &= \spann \big\{ d(\Pi|_P)_p(w_1|_p), \ldots, d(\Pi|_P)_p(w_l|_p), \td\gamma_1, \ldots, \td\gamma_k \big\} \label{eq_basis_TgammaQ} \\
 T_{\gamma'} Q &= \spann \big\{ d(\Pi|_P)_{p'}(w_1|_{p'}), \ldots, d(\Pi|_P)_{p'}(w_l|_{p'}), \td\gamma'_1, \ldots, \td\gamma'_k \big\} \label{eq_basis_TgammapQ}
\end{align}
Moreover, recalling that we view a neighborhood of $p \in P$ as a subset of $\{ G = 0 \} \subset U_1 \times U_2$, we have the following bases, which have the same orientation with respect to the local manifold structure on $P$ (which is a priori unrelated to the definition of the orientation in Assertion~\ref{Prop_summary_i}):
\begin{align}
 T_p P &= \spann \big\{ w_1|_p, \ldots, w_l|_p, (0,\td\kappa_1), \ldots, (0,\td\kappa_k) \big\} \label{eq_orient_TpP} \\
 T_{p'} P &= \spann \big\{ w_1|_{p'}, \ldots, w_l|_{p'}, (\lambda_1 \td\gamma'_1, \td\kappa_1), \ldots, (\lambda_m \td\gamma'_m, \td\kappa_m), (0,\td\kappa_{m+1}), \ldots, (0,\td\kappa_k) \big\} \label{eq_orient_TppP}
\end{align}
We also have
\begin{align}
 \ker (d(\Pi|_P)_p) &= \spann \{ (0,\td\kappa_1), \ldots, (0,\td\kappa_k) \},  \label{eq_ker_dPi_p} \\
 \ker (d(\Pi|_P)_{p'}) &= \spann \{ (0,\td\kappa_{m+1}), \ldots, (0,\td\kappa_k) \} 
\end{align}
and
\begin{align}
 \Image (d(\Pi|_P)_p) &= \spann \big\{ d(\Pi|_P)_p(w_1|_p), \ldots, d(\Pi|_P)_p(w_l|_p) \big\}, \\
 \Image (d(\Pi|_{P})_{p'}) &= \spann \big\{ d(\Pi|_P)_p(w_1|_p), \ldots, d(\Pi|_P)_p(w_l|_p), \td\gamma'_1, \ldots, \td\gamma'_m \big\}. \label{eq_Image_dPi_pp}
\end{align}
After possibly flipping the orientation on $Q$, which results in a flip of the orientations on all tangent spaces of $P$, we may assume that the bases in \eqref{eq_basis_TgammaQ}, \eqref{eq_basis_TgammapQ} are positively oriented.
Choose the orientations on the spaces \eqref{eq_ker_dPi_p}--\eqref{eq_Image_dPi_pp} given by the indicated bases.
Then if we temporarily equip $T_p P$ and $T_{p'} P$ with the orientation given by the bases \eqref{eq_orient_TpP}, \eqref{eq_orient_TppP}, then the maps
\begin{align*}
 d(\Pi|_P)_p : T_p P/\ker (d(\Pi|_P)_p) &\lto \Image (d(\Pi|_P)_p), \\
d(\Pi|_P)_{p'} : T_{p'} P/\ker( d(\Pi|_P)_{p'}) &\lto \Image (d(\Pi|_P)_{p'}) 
\end{align*}
have the same orientation if and only if $\lambda_1 \cdots \lambda_m > 0$, which by Proposition~\ref{prop_index_formula} is equivalent to the condition that $\Index(-L_g)$ and $\Index(-L_{g'})$ have the same parity.
So it remains to show that the maps
\begin{align}
 S_p :  \Image (d(\Pi|_P)_p) \big\backslash T_\gamma Q &\lto \ker (d(\Pi|_P)_p),  \label{eq_Sp_Imker} \\
  Z_0^{-1} \circ \psi_* \circ S_{p'} :  \Image (d(\Pi|_P)_{p'}) \big\backslash T_{\gamma'} Q &\lto \ker (d(\Pi|_P)_{p'}) \label{eq_Spp_Imker} 
\end{align}
have the same orientation.
The map \eqref{eq_Sp_Imker} is orientation-preserving by definition (see \eqref{eq_choice_td_gammai}), so it remains to show that  $Z_0^{-1} \circ \psi_* \circ S_{p'}$ is orientation-preserving as well.
To see this, recall that
\[ \ker D_2 G_{(\gamma',\kappa')} \times \ker D_2 G_{(\gamma',\kappa')} \to \IR, \qquad
(\td\kappa', \td\kappa'') \mapsto \big\langle Z_0 (\td\kappa'), Z_0 (\td\kappa'') \big\rangle_{g',f'} \]
is an inner product on $\ker D_2 G_{(\gamma',\kappa')}$ and consider the matrix given by the inner products 
\begin{multline} \label{eq_matrix_should_pd}
 \big\langle Z_0 ((Z_0^{-1} \circ \psi_* \circ S'_{p'}) (\td\gamma'_i)), Z_0(\td\kappa_j) \big\rangle_{g',f'} = \big\langle  \psi_* S'_{p'} (\td\gamma'_i)), Z_0(\td\kappa_j) \big\rangle_{g',f'}  \\
  = \big\langle   S'_{p'} (\td\gamma'_i)), \psi^*( Z_0(\td\kappa_j) ) \big\rangle_{\psi^* g',\psi^* f'}  
\end{multline}
for $i,j = m+1, \ldots, k$.
Using Claim~\ref{Cl_Sp}, \eqref{eq_choice_td_gammai} and \eqref{eq_gamma_gammap}, we have 
\begin{align}
 \big| \big\langle S'_{p'} (\td\gamma'_i), \psi^* ( Z_0(\td\kappa_j)) \big\rangle_{g',f'} - \delta_{ij} \big|
&\leq \big| \big\langle S'_{p'} (\td\gamma'_i), \psi^*(Z_0(\td\kappa_j)) \big\rangle_{g',f'} - \langle S'_{p} (\td\gamma'_i), \td\kappa_j \rangle_{g,f} \big| \notag \\
&\qquad  
+ \big| \langle S'_{p} (\td\gamma'_i - \td\gamma_i), \td\kappa_j \rangle_{g,f} \big|  + \big| \langle S'_{p} (\td\gamma_i), \td\kappa_j \rangle_{g,f} - \delta_{ij} \big| \notag \\
&\leq \delta \Vert \td\gamma'_i \Vert \, \Vert \td\kappa_j \Vert_{L^2_{g,f}}  +C \delta \Vert \td\kappa_i \Vert_{L^2_{g,f}}  \, \Vert \td\kappa_j \Vert_{L^2_{g,f}} +  \big| \langle \td\kappa_i, \td\kappa_j \rangle_{g,f} - \delta_{ij} \big| \notag \\
&\leq C\delta \Vert \td\kappa_i \Vert_{L^2_{g,f}}  \, \Vert \td\kappa_j \Vert_{L^2_{g,f}} +  \big| \langle \td\kappa_i, \td\kappa_j \rangle_{g,f} - \delta_{ij} \big|. \label{eq_Sp_matrix_bound}
\end{align}
Note here that $C$ is a generic constant that may depend on the map $S'_p = D_1 G_{(\gamma,0)}$, which is bounded.
By Lemma~\ref{Lem_Z_map}\ref{Lem_Z_map_a} and \eqref{eq_ONB_condition_Z} we have
\[ \Vert \td\kappa_i \Vert_{L^2_{g,f}} 
\leq \Vert Z(\td\kappa_i) \Vert_{L^2_{g,f}} + \delta \Vert \td\kappa_i \Vert_{L^2_{g,f}} 
\leq (1+\delta)\Vert Z(\td\kappa_i) \Vert_{L^2_{g',f'}} + \delta \Vert \td\kappa_i \Vert_{L^2_{g,f}} 
=  1+\delta + \delta \Vert \td\kappa_i \Vert_{L^2_{g,f}}, \]
which shows that  for sufficiently small $\delta$
\[ \Vert \td\kappa_i \Vert_{L^2_{g,f}} \leq 1 + 10 \delta \]
Similarly, for $i \neq j$ we have
\begin{multline*}
 \big| \langle \td\kappa_i, \td\kappa_j \rangle_{g,f} \big|
\leq \big| \langle Z(\td\kappa_i), Z(\td\kappa_j) \rangle_{g,f} \big| + C\delta \Vert \td\kappa_i \Vert_{L^2_{g,f}} \Vert \td\kappa_j \Vert_{L^2_{g,f}} \\
\leq \big| \langle Z(\td\kappa_i), Z(\td\kappa_j) \rangle_{g',f'} \big| + C\delta \Vert \td\kappa_i \Vert_{L^2_{g,f}} \Vert \td\kappa_j \Vert_{L^2_{g,f}} = C\delta. 
\end{multline*}
Thus the right-hand side of \eqref{eq_Sp_matrix_bound} is $\leq C\delta$.
So for sufficiently small $\delta$ the matrix \eqref{eq_matrix_should_pd} is positive definite, which shows that \eqref{eq_Spp_Imker} is orientation-preserving.
This finishes the proof.
\end{proof}

The following lemma shows that the transversality condition \eqref{eq_transversality_condition} from Proposition~\ref{Prop_summary} can be easily satisfied for a good choice of $Q$.

\begin{Lemma} \label{Lem_transverse_Q_exists}
Consider the setting of Proposition~\ref{Prop_summary} and let $\gamma \in \CONEgeq^{k^*}(N)$.
Then there is a finite dimensional linear subspace $V_0 \subset T\GenCONE^{k^*}(N)$ such that the following transversality condition holds
\begin{equation} \label{eq_VS_transversal}
 \Image(D\Pi_p) + V_0 = T\GenCONE^{k^*}(N) \qquad \text{for all} \quad p \in \Pi^{-1}(\gamma) \cap \MMgeqgrad. 
\end{equation}
Moreover, if $Q' \subset \GenCONE^{k^*}(N)$ is a finite dimensional $C^1$-submanifold with $\gamma \in Q'$ such that $V_0 := T_\gamma Q'$ satisfies \eqref{eq_VS_transversal}, then we can find an open neighborhood $\gamma \in Q \subset Q'$ such that the transversality condition \eqref{eq_transversality_condition} holds.
Lastly, for any $\gamma \in \CONEgeq^{k^*}(N)$ there is a finite dimensional, real-analytic, orientable submanifold $Q \subset \GenCONE^{k^*}(N)$ such that \eqref{eq_transversality_condition} holds.
\end{Lemma}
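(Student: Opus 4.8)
The plan is to prove the three statements in order, building each on the previous one. For the first statement, fix $\gamma \in \CONEgeq^{k^*}(N)$. By Proposition~\ref{Prop_summary}\ref{Prop_summary_b}, the map $\Pi|_{\MMgeqgrad}$ is proper, so $\Pi^{-1}(\gamma) \cap \MMgeqgrad$ is compact. For each $p$ in this set, Proposition~\ref{Prop_summary}\ref{Prop_summary_c} tells us $D\Pi_p$ is Fredholm of index $0$, so $\Image(D\Pi_p)$ has finite codimension in $T\GenCONE^{k^*}(N)$; pick a finite dimensional subspace $W_p \subset T\GenCONE^{k^*}(N)$ complementary to it. The first step is to upgrade this pointwise transversality to a \emph{local} statement: since $\Pi$ is $C^{1,\alpha}$ (Proposition~\ref{Prop_summary}\ref{Prop_summary_a}) and the set of surjective bounded operators between Banach spaces is open, the condition $\Image(D\Pi_{p'}) + W_p = T\GenCONE^{k^*}(N)$ persists for $p'$ in a neighborhood of $p$ in $\MM'$. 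By compactness of $\Pi^{-1}(\gamma) \cap \MMgeqgrad$, finitely many such neighborhoods cover it; let $V_0 := W_{p_1} + \cdots + W_{p_m}$, a finite dimensional subspace. Then \eqref{eq_VS_transversal} holds: for any $p$ in the compact set, $p$ lies in one of the chosen neighborhoods, say that of $p_i$, and then $\Image(D\Pi_p) + W_{p_i} = T\GenCONE^{k^*}(N)$, so a fortiori $\Image(D\Pi_p) + V_0 = T\GenCONE^{k^*}(N)$.

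For the second statement, suppose $Q' \subset \GenCONE^{k^*}(N)$ is a finite dimensional $C^1$-submanifold with $\gamma \in Q'$ and $T_\gamma Q' = V_0$ satisfying \eqref{eq_VS_transversal}. We need to shrink $Q'$ to a neighborhood $Q$ of $\gamma$ so that \eqref{eq_transversality_condition} holds, i.e.\ $\Image(D\Pi_p) + T_{\Pi(p)}Q = T\GenCONE^{k^*}(N)$ for all $p \in \Pi^{-1}(Q) \cap \MMgeqgrad$. Again we use that transversality is an open condition together with the properness of $\Pi|_{\MMgeqgrad}$: if no such neighborhood existed, we could find a sequence $\gamma_j \to \gamma$ in $Q'$ and points $p_j \in \Pi^{-1}(\gamma_j) \cap \MMgeqgrad$ with $\Image(D\Pi_{p_j}) + T_{\gamma_j}Q' \neq T\GenCONE^{k^*}(N)$. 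Since $\gamma_j \to \gamma$ lies in a compact subset of $\CONEgeq^{k^*}(N)$, properness gives a subsequence $p_j \to p_\infty \in \Pi^{-1}(\gamma) \cap \MMgeqgrad$. By \eqref{eq_VS_transversal}, $\Image(D\Pi_{p_\infty}) + V_0 = T\GenCONE^{k^*}(N)$; since $T_{\gamma_j}Q'$ is $C^0$-close to $T_\gamma Q' = V_0$ (using $C^1$-regularity of $Q'$), $D\Pi_{p_j}$ is close to $D\Pi_{p_\infty}$ (using $C^{1,\alpha}$-regularity of $\Pi$), and surjectivity of the sum is open, this contradicts the choice of the $p_j$. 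The main technical point here is checking that "transversality of a moving pair $(\Image D\Pi_{p_j}, T_{\gamma_j}Q')$" is stable under small perturbations of both factors — this follows from the fact that surjectivity onto a Banach space with closed complement is stable, which applies since $\Image(D\Pi_{p_\infty})$ has finite codimension.

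For the third statement, given $\gamma \in \CONEgeq^{k^*}(N)$, apply the first statement to obtain a finite dimensional subspace $V_0 \subset T\GenCONE^{k^*}(N)$ satisfying \eqref{eq_VS_transversal}. Now simply take $Q' := \gamma + V_0 \subset \GenCONE^{k^*}(N)$, which is an open neighborhood of $\gamma$ in the affine subspace through $\gamma$ with direction $V_0$ — this is a finite dimensional, real-analytic submanifold (indeed affine linear), and it is orientable since any finite dimensional vector space admits an orientation. Its tangent space at every point is $V_0$, so in particular $T_\gamma Q' = V_0$ satisfies \eqref{eq_VS_transversal}. Applying the second statement, we obtain an open neighborhood $\gamma \in Q \subset Q'$ for which \eqref{eq_transversality_condition} holds; $Q$ inherits real-analyticity and orientability from $Q'$ (an orientation restricts to any open subset, and openness preserves real-analyticity). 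This $Q$ is the desired submanifold. I expect the main obstacle to be the careful bookkeeping in the second step — precisely formulating and verifying that the transversality condition is an open condition in the pair $(p, \gamma'') \mapsto (\Image D\Pi_p, T_{\gamma''}Q')$ when both the operator and the subspace vary — but this is ultimately a routine consequence of the stability of surjectivity plus the Fredholm property, combined with the properness supplied by Proposition~\ref{Prop_summary}\ref{Prop_summary_b}.
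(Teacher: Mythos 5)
Your proposal is correct and follows essentially the same route as the paper: compactness of the fiber (from properness) plus openness of the transversality condition and a finite cover for the first statement, a properness-based sequence/contradiction argument for the second, and the affine slice $\gamma + V_0$ for the third (which the paper leaves implicit by saying the last statement follows from the second). The only cosmetic difference is that the paper organizes the second step as "transversality holds on an open neighborhood $U$ of the fiber, then shrink $Q$ so that $\Pi^{-1}(Q)\cap\MMgeqgrad\subset U$," whereas you fold both into a single contradiction argument — the content is the same.
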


\begin{proof}
We recall that $\Pi^{-1} (\gamma) \cap \MMgeqgrad$ is compact by Proposition~\ref{Prop_summary}\ref{Prop_summary_b}.
By Assertion~\ref{Prop_summary_c} of the same proposition, for any $p \in \Pi^{-1} (\gamma) \cap \MMgeqgrad$ we can find a finite dimensional subspace $V_p \subset T\GenCONE^{k^*}(N)$ such that
\[ \Image(D\Pi_p) + V_p = T\GenCONE^{k^*}(N). \]
Since this condition is open by an argument similar to that in the proof of Claim \ref{Cl_transverse}, we can find a neighborhood $p \in W_p \subset \Pi^{-1}(\gamma) \cap \MMgeqgrad$ such that
\[ \Image(D\Pi_{p'}) + V_{p} = T\GenCONE^{k^*}(N), \qquad \text{for all} \quad p' \in W_p. \]

By compactness, we can find a finite cover $\{ W_{p_1}, \ldots, W_{p_m} \}$ of $\Pi^{-1}(\gamma) \cap \MMgeqgrad$.
We can then set
\[ V_0 := V_{p_1} + \ldots + V_{p_m}. \]

Next we show the second statement, so assume that $Q'$ is given.
The transversality condition implies that there is an open neighborhood $\Pi^{-1}(\gamma) \cap \MMgeqgrad \subset U \subset \MM'$ such that $\Pi|_U$ is transverse to $Q'$.
It remains to show that there is a neighborhood $\gamma \in Q \subset Q'$ such that $\Pi^{-1} (Q) \cap \MMgeqgrad \subset U$.
Suppose not.
Then there is a sequence $p_i \in \MMgeqgrad$ such that $\Pi(p_i) \to \gamma$, but $p_i \not\in U$ for all $i$.
By Proposition~\ref{Prop_summary}\ref{Prop_summary_b} we can pass to a subsequence such that $p_i \to p_\infty \in \MMgeqgrad$.
Since $\Pi(p_\infty) = \gamma$, this implies that $p_\infty \in \Pi^{-1}(\gamma) \cap \MMgeqgrad \subset U$, which is impossible since $p_i \not\in U$ for all $i$.

The last statement is a direct consequence of the second statement.
\end{proof}
\bigskip

\subsection{The degree of a non-proper map}
As a preparation for the definition of the expander degree, we need to establish the following elementary lemma, which allows us to define the degree of a map between oriented manifolds near a subset in the image over which the map is proper.
This definition makes sense, even if the map as a whole is not proper.

\begin{Lemma} \label{Lem_proper_maps}
Let $M^n, N^n$ be $C^1$-manifolds of the same dimension and $f : M \to N$ a $C^1$, but not necessarily proper map.
Suppose that $X \subset N$ is a non-empty, closed, connected subset with the property that $f|_{f^{-1}(X)} : f^{-1}(X) \to X$ is proper.
Then there are open neighborhoods $X \subset N' \subset N$ and $f^{-1}(X) \subset M' \subset M$ such that $f (M') \subset N'$, $N'$ is connected and such that the map $f |_{M'}: M' \to N'$ is proper.

Moreover, if $M, N$ are oriented, then the degree $\deg(f|_{M'} : M' \to N')$ is independent of the choice of $M', N'$; it only depends on $M, N, f$ and $X$.
If $M^n, N^n$ are not oriented, then the same is true for the $\IZ_2$-degree.
\end{Lemma}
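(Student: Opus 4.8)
The statement is a standard but slightly non-trivial packaging of the fact that a map which is proper over a closed set is ``locally proper'' near that set, together with the invariance of the Brouwer (or $\IZ_2$) degree under proper homotopies. The plan is to construct a nested sequence of neighbourhoods of $X$ whose $f$-preimages are eventually contained in a fixed neighbourhood of $f^{-1}(X)$, then to build $N'$ and $M'$ by a diagonal/shrinking argument, and finally to show independence via a straightforward excision argument for the degree.

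\textbf{Step 1: Construction of $N'$ and $M'$.} Fix, using that $M, N$ are manifolds (hence locally compact, second countable, metrizable), an open set $W \supset f^{-1}(X)$ with $\overline{W}$ having compact intersection with each $f^{-1}(K)$, $K \subset X$ compact — this is where properness of $f|_{f^{-1}(X)}$ is used. The key claim is: there is an open neighbourhood $N'$ of $X$ with $f^{-1}(N') \subset W$, after possibly shrinking $W$ to a relatively compact piece near a given compact exhaustion $K_1 \subset K_2 \subset \cdots$ of $X$. One argues by contradiction at the level of a compact exhaustion: if no such $N'$ existed, one would extract a sequence $p_i \in f^{-1}(U_i) \setminus W$ with $U_i \searrow X$ shrinking neighbourhoods, $f(p_i) \to X$; properness of $f|_{f^{-1}(X)}$ over a compact piece of $X$ would force a subsequential limit $p_\infty \in f^{-1}(X) \subset W$, contradicting $p_i \notin W$. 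One has to be a little careful because $f^{-1}(X)$ and $X$ need not be compact, so the argument is run over a compact exhaustion of $X$ and the neighbourhoods $N'$ are assembled as a countable union of ``local'' neighbourhoods, shrinking as one goes to infinity in the exhaustion; this is the standard ``$\sigma$-compact properness'' bookkeeping. Having obtained $N'$ with $f^{-1}(N') \subset W$, set $M' := f^{-1}(N')$; then $f(M') \subset N'$ by construction, $M' \supset f^{-1}(X)$, and $f|_{M'} : M' \to N'$ is proper because for compact $K \subset N'$, $(f|_{M'})^{-1}(K) = f^{-1}(K) \cap W \subset \overline{W} \cap f^{-1}(K)$ is closed in a set with the desired compactness property (again invoking the $\overline W$ choice, refined over the exhaustion). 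Finally, shrink $N'$ to a connected open neighbourhood of the connected set $X$ (possible since $N$ is locally connected and $X$ is connected, by taking the connected component of $N'$ containing $X$), and correspondingly replace $M'$ by $f^{-1}$ of this smaller $N'$.

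\textbf{Step 2: Well-definedness of the degree.} Given two such pairs $(M_1', N_1')$ and $(M_2', N_2')$, consider $N_0' := $ the connected component of $N_1' \cap N_2'$ containing $X$ and $M_0' := f^{-1}(N_0') \cap M_1' \cap M_2'$; this is again a proper map $M_0' \to N_0'$ over which $X$ sits, and one has inclusions $M_0' \hookrightarrow M_i'$, $N_0' \hookrightarrow N_i'$ commuting with $f$. By the excision property of the Brouwer degree (for a proper map $f|_{M_i'} : M_i' \to N_i'$ and a connected open $N_0' \subset N_i'$ containing $X$, with $f^{-1}(X) \subset M_0'$, the degree is computed by counting signed preimages of any regular value in $N_0'$, and such a regular value exists by Sard's theorem applied on $N_0'$), the degrees $\deg(f|_{M_i'} : M_i' \to N_i')$ and $\deg(f|_{M_0'} : M_0' \to N_0')$ coincide for $i = 1, 2$, hence equal each other. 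The same excision argument, run with $\IZ_2$-coefficients and without orientations, gives the statement in the unoriented case. One should note that the degree is constant on $N_i'$ precisely because $N_i'$ is connected, which is why connectedness of $N'$ (and thus of $X$, as assumed) is built into the statement.

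\textbf{Main obstacle.} The one genuinely delicate point is Step 1: assembling the neighbourhood $N'$ with $f^{-1}(N') \subset W$ without compactness of $X$ or $f^{-1}(X)$. The contradiction argument is clean on each compact piece $K_j$ of an exhaustion, but gluing the resulting local neighbourhoods into a single open $N'$ whose full preimage lands in $W$ requires choosing the $j$-th local neighbourhood small enough near $\partial K_j$ and using local compactness to ensure the preimages do not ``escape'' $W$ in the limit $j \to \infty$; this is routine point-set topology but is the step where all the hypotheses (local compactness of $M, N$, properness of $f|_{f^{-1}(X)}$, closedness of $X$) are actually consumed. Everything after that — the construction of $M'$, the connectedness reduction, and the degree invariance — is standard.
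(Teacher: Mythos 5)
There is a genuine gap, and it is in your Step 1: the key claim that one can find an open neighbourhood $N'$ of $X$ with $f^{-1}(N') \subset W$, and then take $M' := f^{-1}(N')$, is false in general. Properness of $f|_{f^{-1}(X)}$ only controls points that actually lie in $f^{-1}(X)$; it says nothing about points $p_i$ whose images merely converge to $X$, and your contradiction argument ("properness of $f|_{f^{-1}(X)}$ would force a subsequential limit $p_\infty \in f^{-1}(X)$") applies properness to a sequence that need not lie in $f^{-1}(X)$ at all. Concretely, take $M = \IR \sqcup (0,\infty)$, $N = \IR$, $f$ the identity on the first component and the inclusion on the second, and $X = \{0\}$. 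Then $f^{-1}(X) = \{0\}$ (first component) and $f|_{f^{-1}(X)}$ is trivially proper, but for \emph{every} open $N' \ni 0$ the full preimage $f^{-1}(N')$ contains $(0,\eps)$ from the second component, so $(f|_{f^{-1}(N')})^{-1}([0,\eps])$ is never compact: no choice of $N'$ makes $f$ proper on the full preimage, and no $N'$ satisfies $f^{-1}(N')\subset W$ for a small $W$ around $f^{-1}(X)$. The lemma is still true because $M'$ is allowed to be strictly smaller than $f^{-1}(N')$; this is exactly how the paper proceeds: it covers $X$ by a locally finite family of relatively compact open sets $V_i$, picks relatively compact open $U_i$ with $f^{-1}(V_i\cap X)\subset U_i\subset f^{-1}(V_i)$, sets $M^* := \bigcup_i U_i$, chooses the connected neighbourhood $N'$ of $X$ \emph{inside} $\big(\bigcup_i V_i\big)\setminus f(\partial \ov{M^*})$, and takes $M' := f^{-1}(N')\cap M^*$. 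The avoidance of $f(\partial \ov{M^*})$ is the mechanism that replaces your (unavailable) control of the full preimage.

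Your Step 2 is also not complete as written. With $M_0' := f^{-1}(N_0')\cap M_1'\cap M_2'$ the restriction $f|_{M_0'} : M_0' \to N_0'$ need not be proper (a limit point produced by properness of $f|_{M_1'}$ over a compact $K \subset N_0'$ may lie in $M_1'\setminus M_2'$ unless its image is in $X$), so the excision step you invoke does not directly apply. The missing ingredient, which is the actual content of the paper's uniqueness argument, is the statement that for regular values $q_j \to q_\infty \in X$ one has $(f|_{M_1'})^{-1}(q_j) = (f|_{M_2'})^{-1}(q_j)$ for large $j$: if not, points $p_j \in M_2'\setminus M_1'$ (say) with $f(p_j)=q_j$ subconverge, by properness of $f|_{M_2'}$, to a point of $f^{-1}(X) \subset M_1'$, a contradiction. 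Once that is proved, comparing signed (or mod $2$) counts over such $q_j$ gives the independence; but it has to be proved by this properness-over-$X$ argument, not by a generic excision statement.
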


Motivated by Lemma~\ref{Lem_proper_maps}, we make the following definition.

\begin{Definition} \label{Def_degfX}
We will write $\deg (f,X) = \deg (f|_{M'} : M' \to N')$ if $M', N'$ are chosen as in Lemma~\ref{Lem_proper_maps}.
\end{Definition}

We record the following immediate consequence:

\begin{Lemma} \label{Lem_deg_f_X12}
Let $f : M \to N$ be a map as in Lemma~\ref{Lem_proper_maps} and consider non-empty, closed, connected subsets $X_1 \subset X_2 \subset N$ with the property that $f|_{f^{-1}(X_i)}$ is proper for $i = 1,2$.
Then $\deg(f,X_1) = \deg (f, X_2)$.
\end{Lemma}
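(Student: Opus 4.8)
The claim is $\deg(f,X_1) = \deg(f,X_2)$ for nested closed connected subsets $X_1 \subset X_2$ over which $f$ restricts to a proper map. The plan is to reduce the statement to Definition~\ref{Def_degfX} by exhibiting a single pair of open neighborhoods that simultaneously witnesses both degrees. First I would apply Lemma~\ref{Lem_proper_maps} to $X_2$ to obtain open sets $X_2 \subset N' \subset N$ and $f^{-1}(X_2) \subset M' \subset M$ with $f(M') \subset N'$, $N'$ connected, and $f|_{M'} : M' \to N'$ proper, so that $\deg(f,X_2) = \deg(f|_{M'} : M' \to N')$. Since $X_1 \subset X_2 \subset N'$ and $f^{-1}(X_1) \subset f^{-1}(X_2) \subset M'$, the pair $(M',N')$ is also a candidate for computing $\deg(f,X_1)$, \emph{provided} we can arrange $N'$ to be connected around $X_1$ in the sense required by Lemma~\ref{Lem_proper_maps}; but $N'$ is already connected, and $f|_{M'}$ is already proper, so the hypotheses needed to use $(M',N')$ in place of the neighborhoods for $X_1$ are met. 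By the independence-of-choice part of Lemma~\ref{Lem_proper_maps}, $\deg(f,X_1) = \deg(f|_{M'} : M' \to N')$ as well. Hence $\deg(f,X_1) = \deg(f|_{M'} : M' \to N') = \deg(f,X_2)$.

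The one point requiring a little care is whether the neighborhoods $N'$, $M'$ produced for $X_2$ genuinely satisfy \emph{all} the conditions in Lemma~\ref{Lem_proper_maps} when viewed as neighborhoods of the smaller set $X_1$. The conditions are: $X_1 \subset N'$ open, $f^{-1}(X_1) \subset M'$ open, $f(M') \subset N'$, $N'$ connected, and $f|_{M'}$ proper. All of these are inherited verbatim from the corresponding properties for $X_2$ together with the inclusions $X_1 \subset X_2$ and $f^{-1}(X_1) \subset f^{-1}(X_2)$; no shrinking is needed. The only subtlety is that in Lemma~\ref{Lem_proper_maps} the map $f|_{f^{-1}(X_1)} : f^{-1}(X_1) \to X_1$ must be proper for the construction to apply at all, but this is precisely one of the standing hypotheses of the present lemma. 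In the oriented case, the degree $\deg(f|_{M'} : M' \to N')$ is a well-defined integer by the standard theory (it equals the signed count of preimages of any regular value in $N'$), and the independence statement in Lemma~\ref{Lem_proper_maps} tells us this integer equals $\deg(f,X_i)$ for $i = 1,2$ simultaneously; in the non-oriented case the identical argument applies with $\IZ_2$-coefficients.

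I do not anticipate a genuine obstacle here: this is a formal consequence of Definition~\ref{Def_degfX} and the uniqueness clause of Lemma~\ref{Lem_proper_maps}. The ``hard part,'' such as it is, is merely the bookkeeping of checking that one pair of neighborhoods works for both subsets, which the nesting $X_1 \subset X_2$ makes automatic. No calculation is involved.

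\begin{proof}
By Lemma~\ref{Lem_proper_maps} applied to $X_2$, there are open neighborhoods $X_2 \subset N' \subset N$ and $f^{-1}(X_2) \subset M' \subset M$ such that $f(M') \subset N'$, the set $N'$ is connected, and $f|_{M'} : M' \to N'$ is proper. By Definition~\ref{Def_degfX}, $\deg(f,X_2) = \deg(f|_{M'} : M' \to N')$.

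Now observe that $(M',N')$ is also an admissible pair of neighborhoods for $X_1$: since $X_1 \subset X_2$ we have $X_1 \subset N'$ with $N'$ open and connected, since $f^{-1}(X_1) \subset f^{-1}(X_2)$ we have $f^{-1}(X_1) \subset M'$ with $M'$ open, and $f(M') \subset N'$ while $f|_{M'} : M' \to N'$ is proper, exactly as required in the conclusion of Lemma~\ref{Lem_proper_maps}. Moreover $f|_{f^{-1}(X_1)} : f^{-1}(X_1) \to X_1$ is proper by hypothesis, so Lemma~\ref{Lem_proper_maps} applies to $X_1$ and the resulting degree is independent of the choice of witnessing neighborhoods. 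Taking the pair $(M',N')$ above, Definition~\ref{Def_degfX} gives $\deg(f,X_1) = \deg(f|_{M'} : M' \to N')$.

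Combining the two identities yields $\deg(f,X_1) = \deg(f|_{M'} : M' \to N') = \deg(f,X_2)$. The same argument with $\IZ_2$-coefficients handles the case in which $M$ and $N$ are not oriented.
\end{proof}
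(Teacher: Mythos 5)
Your proof is correct: the paper records this lemma without proof as an immediate consequence of Lemma~\ref{Lem_proper_maps} and Definition~\ref{Def_degfX}, and your argument — observing that the admissible pair $(M',N')$ produced for $X_2$ is verbatim an admissible pair for $X_1$ and then invoking the independence clause of Lemma~\ref{Lem_proper_maps} (whose proof indeed applies to any pair with the stated properties) — is exactly the intended one.
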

\medskip

\begin{proof}[Proof of Lemma~\ref{Lem_proper_maps}.]
Choose a locally finite cover
\[ X \subset \bigcup_{i \in I} V_i, \qquad V_i \subset N \quad \text{open and relatively compact}. \]
So for each $i \in I$ the preimage $f^{-1}(V_i \cap X)$ is relatively compact in $M$ and therefore we can find an open neighborhood $f^{-1}(V_i \cap X) \subset U_i \subset f^{-1}(V_i)$ that is relatively compact in $M$.
Set $M^* := \bigcup_{i \in I} U_i$.
Then the collection $\{ U_i \}_{i\in I}$, and thus also the collection of closures $\{ \ov U_i \}_{i \in I}$, is locally finite and $$f^{-1}(X) \subset M^* \subset M.$$

We claim that for every sequence $p_j \in \ov M^* = \bigcup_{i \in I} \ov U_i$ with the property that $f(p_j) \to q_\infty \in X$ we have $p_j \to p_\infty \in f^{-1}(X) \subset M^*$ after passing to a suitable subsequence.
In fact, by local finiteness of the cover $\{ V_i \}_{i \in I}$, we can pass to a subsequence such that there is a finite subset $I' \subset I$ with the property that for any $j$ we have $f(p_j) \in V_i$ for some $i \in I'$, but for no $i \in I \setminus I'$.
So we must have $p_j \in \ov U_i$ for some $i \in I'$.
So after possibly passing to another sequence, we can find a fixed $i$ such that $p_j \in \ov U_i$ for all $j$.
Passing to another subsequence yet again, we may assume that $p_j \to p_\infty \in \ov U_i$.
Since $f(p_\infty) = q_\infty \in X$, we obtain that $p_\infty \in f^{-1}(X) \subset M^*$.

It follows that there is an open, connected neighborhood 
\[ X \subset N' \subset \Big(\bigcup_{i \in I} V_i \Big) \setminus f(\partial \ov M^*), \]
because otherwise we could find a sequence $p_j \in \partial \ov M^*$ with $f(p_j) \to q_\infty \in X$, which would imply that $p_j \to p_\infty \in M^*$ for a subsequence, in contradiction to the fact that $\partial \ov M^*$ is closed.
Let $M' := f^{-1}(N') \cap M^*$.
We need to show that $f|_{M'} : M' \to N'$ is proper.
To see this, suppose that $f(p_j) \to q_\infty \in N'$ for some sequence $p_j \in M'$.
Choose $i \in I$ such that $q_\infty \in V_i$.
As before, we may pass to a subsequence and assume that $p_j \in U_i$ for all $j$ and that $p_j \to p_\infty \in \ov U_i \subset \ov M^*$.
Since $f(p_\infty) = q_\infty \in N'$, we find that $p_\infty \in f^{-1}(N') \cap \ov M^* = M'$.

To see the last statement, consider another set of open neighborhoods $X \subset N'' \subset N$, $f^{-1}(X) \subset M'' \subset M$ with the same properties.
Let $q_j \in N'$ be a sequence of regular values of $f |_{M'}$ such that $q_j \to q_\infty \in X$.
Note that $q_j \in N''$ for large $j$.
We claim that for large $j$ we have $(f|_{M''})^{-1}(q_j) = (f|_{M'})^{-1}(q_j)$, which implies the claim.
If not then there is a sequence $p_j \in (M' \setminus M'') \cup (M'' \setminus M')$ such that $f(p_j) = q_j$.
After passing to a subsequence and possibly switching the roles of $M', M''$ and $N', N''$, we may assume that $p_j \in M'' \setminus M'$.
Due to the properness of $f|_{M''}$, we obtain that for a subsequence $p_j \to p_\infty \in M''$.
Since $q_\infty \in X$, we have $p_\infty \in f^{-1}(X) \subset M'$ and thus $p_j \in M'$ for large $j$, in contradiction to our assumption.
\end{proof}
\bigskip

\subsection{Precise definition of the expander degree}\label{subsec_deg_def}
In this subsection, we define the expander degree of an ensemble $(M,N,\iota)$.
Via the correspondence established in Lemma~\ref{Lem_ensemble_vs_X}, this will then allow us to define the expander degree as stated in Theorem~\ref{Thm_main_vague} of the introduction.

As a first step, we define the degree of the projection map $\Pi : \MM^{k^*}(M,N,\iota) \to \GenCONE^{k^*}(N)$ over some finite dimensional, real-analytic submanifold $Q \subset \GenCONE^{k^*}(N)$, which is transverse to $\Pi$.

Fix an ensemble $(M,N,\iota)$ and suppose that $k^* \geq 30$ and $\alpha \in (0,1)$, as in  Proposition~\ref{Prop_summary}.
In the following we write $\MM := \MM (M,N,\iota)$, $\MMgrad := \MMgrad(M,N,\iota)$, etc.
Let $Q \subset \GenCONE^{k^*}(N)$ be a finite dimensional, real-analytic submanifold that satisfies the transversality condition \eqref{eq_transversality_condition} and assume that $Q \cap \CONEgeq^{k^*}(N) \neq \emptyset$.
Recall that by Lemma~\ref{Lem_transverse_Q_exists}, we can always find such a $Q$ with $\gamma \in Q$ for any given $\gamma \in \CONEgeq^{k^*}(N)$.
Let $P = \Pi^{-1}(Q) \cap U$ be the submanifold from Proposition~\ref{Prop_summary}\ref{Prop_summary_g}.
For the definition of the integer degree, we also need to assume that $Q$ is oriented and consider the orientation on $P$ described in Proposition~\ref{Prop_summary}\ref{Prop_summary_i}.
For the definition of the $\IZ_2$-degree, no orientation has to be chosen on $Q$ or $P$.

\begin{Lemma} \label{Lem_deg_MNiQ}
Consider the setting from the previous paragraph.
Then for any non-empty, closed, connected subset $X \subset Q \cap \CONEgeq^{k^*}(N)$ the (integer or $\IZ_2$-valued) degree
\begin{equation} \label{eq_degMNiotaQXdef}
 \deg (M,N,\iota,Q,X) := \deg (\Pi|_P, X)
\end{equation}
is well defined and independent of $P$, the orientation of $Q$ and independent of $\alpha$.
This degree is also independent of $k^*$ in the following sense:
If for some other integer $k^{**} \geq 30$ we have $Q \subset \GenCONE^{k^{**}}(N)$ and $Q$ is real-analytic with respect to the $C^{k^{**}}$-norm, then $\deg (\Pi, Q \cap \CONE^{k^*}(N)) = \deg (\Pi, Q \cap \CONE^{k^{**}}(N))$.

Lastly, the map $\gamma \mapsto \deg (M,N,\iota,Q,\{ \gamma \})$ is constant on every path-component of $Q \cap \CONEgeq^{k^*}(N)$.
\end{Lemma}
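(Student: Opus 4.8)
\textbf{Proof proposal for Lemma~\ref{Lem_deg_MNiQ}.}

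The plan is to establish the four independence/invariance statements one at a time, in increasing order of difficulty, relying throughout on Lemma~\ref{Lem_proper_maps} (degree of a non-proper map near a subset over which it is proper), Lemma~\ref{Lem_deg_f_X12} (independence of the degree under enlarging $X$), and the structural results of Proposition~\ref{Prop_summary}. First, I would note that $\deg(\Pi|_P, X)$ is well defined at all: by Proposition~\ref{Prop_summary}\ref{Prop_summary_b} the restriction $\Pi|_{\MMgeqgrad}$ is proper, and by \eqref{eq_Piinv_geqgrad_geqgrad} (after shrinking $U$ as in Proposition~\ref{Prop_summary}\ref{Prop_summary_h}) we have $(\Pi|_P)^{-1}(Q \cap \CONEgeq^{k^*}(N)) = \MMgeqgrad \cap P$, so $\Pi|_P$ restricted to the preimage of $X \subset Q \cap \CONEgeq^{k^*}(N)$ is proper; thus Lemma~\ref{Lem_proper_maps} applies and \eqref{eq_degMNiotaQXdef} makes sense. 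Independence of the choice of the neighborhood $U$ (equivalently of $P$) is then immediate from the ``only depends on $M,N,f$ and $X$'' clause of Lemma~\ref{Lem_proper_maps}, since two admissible choices $P_1,P_2$ agree on a smaller neighborhood of $\Pi^{-1}(Q)\cap\MMgeqgrad$. Independence of the orientation of $Q$: flipping the orientation of $Q$ flips the orientation of every tangent space of $P$ simultaneously (this is exactly how the orientation in Proposition~\ref{Prop_summary}\ref{Prop_summary_i} is normalized), hence flips the orientation of the codomain and domain of $\Pi|_P$ together, leaving the degree unchanged; for the $\IZ_2$-degree there is nothing to check.

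For independence of $\alpha$: by Proposition~\ref{Prop_summary}\ref{Prop_summary_unique}, increasing $\alpha$ yields a refinement of the $C^{1,\alpha}$-Banach manifold atlas on a neighborhood of $\MMgrad$, so the two atlases agree on a common neighborhood $\MM'''$ of $\MMgrad$, and in particular the submanifolds $P$ (which live in such a neighborhood after shrinking $U$) and their orientations coincide; hence the degrees agree. For independence of $k^*$: here I would use the inclusion $\MM^{k^{**}}(M,N,\iota) \hookrightarrow \MM^{k^*}(M,N,\iota)$ from Remark~\ref{Rmk_differen_regularity_MM} (with $k^* = \min\{k^*,k^{**}\}$) together with the fact, also from Proposition~\ref{Prop_summary}\ref{Prop_summary_unique}, that the Banach manifold structures are compatible across regularities. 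Concretely: the submanifold $P \subset \MM^{k^*}$ obtained from $Q \subset \GenCONE^{k^*}(N)$ and the one obtained from $Q \subset \GenCONE^{k^{**}}(N)$ are identified near $\MMgrad$ via this inclusion, with matching orientations (the orientation of Proposition~\ref{Prop_summary}\ref{Prop_summary_i} is defined purely in terms of the Einstein operator $L_g$ at gradient solitons, which does not see the regularity parameter), and both restrictions of $\Pi$ are proper over $Q \cap \CONEgeq$; so the degrees computed in either model coincide. I would phrase this carefully by picking a regular value $\gamma_0$ of $\Pi|_P$ inside $Q \cap \CONEgeq$ close to a chosen point of $X$ and checking the fibers agree — this is the same fiber-comparison argument as in the proof of Lemma~\ref{Lem_proper_maps}.

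For the final clause — constancy of $\gamma \mapsto \deg(M,N,\iota,Q,\{\gamma\})$ on path-components of $Q \cap \CONEgeq^{k^*}(N)$ — I would argue as follows. Given a continuous path $\sigma : [0,1] \to Q \cap \CONEgeq^{k^*}(N)$, its image $X := \sigma([0,1])$ is a non-empty, closed (compact), connected subset of $Q \cap \CONEgeq^{k^*}(N)$, so $\deg(M,N,\iota,Q,X)$ is defined. By Lemma~\ref{Lem_deg_f_X12} applied to the nested pairs $\{\sigma(0)\} \subset X$ and $\{\sigma(1)\} \subset X$ (each admissible, since $\Pi|_P$ is proper over all of $Q\cap\CONEgeq^{k^*}(N)$), we get $\deg(M,N,\iota,Q,\{\sigma(0)\}) = \deg(M,N,\iota,Q,X) = \deg(M,N,\iota,Q,\{\sigma(1)\})$. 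Hence the degree is locally constant along paths, which gives the claim. The main obstacle I anticipate is the $k^*$-independence: one must take genuine care that the identifications of the Banach manifold structures, of the submanifolds $P$, and of their orientations all commute with the regularity inclusions; the orientation part is the subtle point, but it is resolved by the observation that Proposition~\ref{Prop_summary}\ref{Prop_summary_i} builds the orientation out of $\idx(-L_g)$ and the maps $S_p$, $d(\Pi|_P)_p$, none of which depend on $k^*$ once a $C^{k^*-2}$-regular representative is fixed.
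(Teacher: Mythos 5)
Your proposal is correct and follows essentially the same route as the paper: well-definedness via the identification $(\Pi|_P)^{-1}(X)=(\Pi|_{\MMgeqgrad})^{-1}(X)$ and Proposition~\ref{Prop_summary}\ref{Prop_summary_b} combined with Lemma~\ref{Lem_proper_maps}, independence of $P$, orientation, $\alpha$ and $k^*$ from Proposition~\ref{Prop_summary}\ref{Prop_summary_unique}, \ref{Prop_summary_i} and Remark~\ref{Rmk_differen_regularity_MM}, and path-constancy from Lemma~\ref{Lem_deg_f_X12} applied to the image of a path. The paper's treatment of the $k^*$-independence is terser (it only invokes the agreement of the preimages $\Pi^{-1}(Q)$ in the two regularity classes), but your extra care about orientations is consistent with it.
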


\begin{proof}
For the well definedness of the right-hand side in \eqref{eq_degMNiotaQXdef} we apply Lemma~\ref{Lem_proper_maps} to the map $\Pi|_P : P \to Q$.
To see that 
$\Pi |_{ (\Pi|_P)^{-1}(X)} : ( \Pi|_P)^{-1}(X) \to X$ 
is proper, note first that by Proposition~\ref{Prop_summary}\ref{Prop_summary_g}, \ref{Prop_summary_h} we have
\begin{equation} \label{eq_PiPXinv}
 ( \Pi|_P)^{-1}(X) = P \cap \Pi^{-1}(X) = \MMgeqgrad \cap \Pi^{-1}(X) = (\Pi|_{\MMgeqgrad})^{-1}(X), 
\end{equation}
so the properness follows from  Proposition~\ref{Prop_summary}\ref{Prop_summary_b}.

To see the independence of $P$ note first that by Proposition~\ref{Prop_summary}\ref{Prop_summary_unique} the $C^1$-structure on $P$ is well defined and $P = \Pi^{-1}(Q) \cap U$ may only depend on the choice of the neighborhood $U$ from \eqref{eq_nbhd_U}.
So for any other such neighborhood $U'$ the corresponding manifold $P' := \Pi^{-1}(Q) \cap U'$ has the property that $P \cap P'$ is an open subset of both $P$ and $P'$.
Since by \eqref{eq_PiPXinv} the preimage $(\Pi|_P)^{-1}(X)$ is independent of the choice of $P$, we obtain, using  the last statement of Lemma~\ref{Lem_proper_maps} that
\[ \deg(\Pi|_P, X) = \deg(\Pi|_{P \cap P'}, X) = \deg(\Pi|_{P'}, X). \]
For the independence of the orientation, note that by Proposition~\ref{Prop_summary}\ref{Prop_summary_i} a flip of the orientation on $Q$ forces a flip of the orientation on $P$ as well.
The independence of $\alpha$ follows from the fact that $\MM$ does not depend on the choice of $\alpha$ and from Proposition~\ref{Prop_summary}\ref{Prop_summary_unique}.
To see the independence of $k^*$, notice that by Remark~\ref{Rmk_differen_regularity_MM} the preimages $\Pi^{-1}(Q)$ within $\MM^{k^*}(M,N,\iota)$ and $\MM^{k^{**}}(M,N,\iota)$ agree.

For the last statement note that if $X$ is the image of a path within $Q \cap \CONEgeq^{k^*}(N)$, then by Lemma~\ref{Lem_deg_f_X12} we have $\deg (M,N,\iota,Q,\{\gamma\}) = \deg (M,N,\iota,Q,X)$ for any $\gamma \in X$.
\end{proof}
\medskip

Next, we will use Lemma~\ref{Lem_deg_MNiQ} to define a local (integer or $\IZ_2$-valued) degree near a fixed  $\gamma \in \CONEgeq^{k^*}(N)$.
We will define this degree as $\deg(M,N,\iota,Q, \{ \gamma \})$ for some suitable real-analytic submanifold $Q \subset \GenCONE^{k^*}(N)$ containing $\gamma$.

\begin{Lemma} \label{Lem_deg_indep_Q}
In the setting of Lemma~\ref{Lem_deg_MNiQ} the degree
\[ \deg (M,N,\iota,\gamma) := \deg (M,N,\iota,Q,\{\gamma\}) \]
is independent of the choice of $Q$.
Moreover the degree $\deg (M,N,\iota,\gamma)$ is well defined for any $\gamma \in \CONEgeq^{k^*}(N)$. %
\end{Lemma}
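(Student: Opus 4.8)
The plan is to prove that $\deg(M,N,\iota,\gamma) := \deg(M,N,\iota,Q,\{\gamma\})$ does not depend on the choice of the finite dimensional, real-analytic, oriented submanifold $Q \subset \GenCONE^{k^*}(N)$ satisfying the transversality condition \eqref{eq_transversality_condition} and containing $\gamma$. First I would reduce to comparing two such submanifolds $Q_0$ and $Q_1$ with $\gamma \in Q_0 \cap Q_1$. The natural strategy is to build a cobordism between the relevant proper maps. Specifically, after passing to small neighborhoods of $\gamma$ inside $Q_0$ and $Q_1$ (which is harmless by Lemma~\ref{Lem_deg_f_X12} and Lemma~\ref{Lem_deg_MNiQ}, since shrinking to a connected neighborhood of $\gamma$ does not change $\deg(M,N,\iota,Q_i,\{\gamma\})$), I would embed both $Q_0$ and $Q_1$ into a single larger finite dimensional real-analytic submanifold. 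Concretely, by Lemma~\ref{Lem_transverse_Q_exists} applied to the subspace $T_\gamma Q_0 + T_\gamma Q_1 \subset T\GenCONE^{k^*}(N)$, we obtain a finite dimensional real-analytic submanifold $Q' \ni \gamma$ containing (neighborhoods of $\gamma$ in) both $Q_0$ and $Q_1$ as submanifolds, and still satisfying \eqref{eq_transversality_condition} after shrinking. Then $P' := \Pi^{-1}(Q') \cap U'$ is a finite dimensional manifold and $P_i := \Pi^{-1}(Q_i) \cap U' = (\Pi|_{P'})^{-1}(Q_i)$ are submanifolds of $P'$.

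The key step is then to show $\deg(\Pi|_{P_i},\{\gamma\}) = \deg(\Pi|_{Q_i},\cdot)$ is computed by restricting the proper-near-$X$ map $\Pi|_{P'} : P' \to Q'$ to the preimage of $Q_i$, and that this quantity is independent of $i$. For this I would invoke the standard fact from degree theory for submanifold restrictions: if $f : P' \to Q'$ is a $C^1$-map that is proper near a point $\gamma \in Q'$, and $Q_0, Q_1 \subset Q'$ are two $C^1$-submanifolds through $\gamma$ of complementary dimension to $\ker Df$ (equivalently, to which $f$ is transverse near the relevant preimages), then the local degrees of $f|_{f^{-1}(Q_i)} : f^{-1}(Q_i) \to Q_i$ at $\gamma$ agree. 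The proof of this is: choose a regular value $q$ of $f$ in a small neighborhood of $\gamma$ in $Q'$; by transversality of $\Pi|_{P'}$ to both $Q_0$ and $Q_1$ near $\gamma$ and by properness near $\gamma$, the fiber $f^{-1}(q)$ is finite, and one connects $q$ to $\gamma$ within $Q_i$ by a path and uses the homotopy invariance of the signed count of preimages — the signs match because the orientation on $P_i$ from Proposition~\ref{Prop_summary}\ref{Prop_summary_i} is defined consistently in terms of $\Image(D\Pi_p) \backslash T_\gamma Q_i$, which sits inside the common space $T_\gamma Q'$. More carefully, one picks $q \in \CONEgeq^{k^*}(N) \cap Q_0 \cap Q_1$ close to $\gamma$ that is simultaneously a regular value; since $(\Pi|_{P'})^{-1}(q) \cap \MMgeqgrad$ consists of finitely many gradient expanding solitons with $\Null(-L_p) = 0$ (using Proposition~\ref{Prop_summary}\ref{Prop_summary_b} for finiteness and regularity for the nullity vanishing), the degree over $Q_i$ is the signed count $\sum_p (-1)^{\idx(-L_p)}$ by the orientation description in Proposition~\ref{Prop_summary}\ref{Prop_summary_i}, and this count visibly does not depend on $i$.

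For the well-definedness claim — that $\deg(M,N,\iota,\gamma)$ is defined for \emph{every} $\gamma \in \CONEgeq^{k^*}(N)$ — I would simply note that by the last statement of Lemma~\ref{Lem_transverse_Q_exists}, for any $\gamma \in \CONEgeq^{k^*}(N)$ there exists a finite dimensional, real-analytic, orientable submanifold $Q \ni \gamma$ satisfying \eqref{eq_transversality_condition}, so $\deg(M,N,\iota,Q,\{\gamma\})$ is defined by Lemma~\ref{Lem_deg_MNiQ}, and by the independence-of-$Q$ statement just proved it does not matter which such $Q$ we pick. I expect the main obstacle to be the orientation bookkeeping in the comparison step: one must verify that the signed count $\sum_p (-1)^{\idx(-L_p)}$ at a common regular value $q$ is genuinely the degree over each $Q_i$ with its induced orientation, which requires carefully tracking how the orientation of $P_i$ (defined via $S_p : \Image(D\Pi_p)\backslash T_{\Pi(p)}Q_i \to \ker D\Pi_p$ together with the parity of $\idx(-L_p)$) behaves when $Q_i$ is viewed inside $Q'$, and that the continuity of this orientation in $p$ — guaranteed by Proposition~\ref{Prop_summary}\ref{Prop_summary_i} — is what makes the path-connecting argument legitimate. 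The analytic inputs (properness, Fredholm index zero, finiteness of fibers over regular values) are all already established in Proposition~\ref{Prop_summary}, so this step is purely about the consistency of the orientation convention.
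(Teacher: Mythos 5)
Your overall scaffolding (embed $Q_0,Q_1$ into a common larger real-analytic submanifold $Q'$ after shrinking, then compare restricted degrees) is the same as the paper's, but the key comparison step has a genuine gap. You propose to compute both degrees at a point $q\in\CONEgeq^{k^*}(N)\cap Q_0\cap Q_1$ near $\gamma$ that is \emph{simultaneously a regular value}, and then observe that $\sum_p(-1)^{\idx(-L_p)}$ is visibly independent of $i$. Such a $q$ need not exist, for two independent reasons. First, $Q_0$ and $Q_1$ are finite dimensional submanifolds of the infinite dimensional space $\GenCONE^{k^*}(N)$ passing through $\gamma$; near $\gamma$ their intersection will typically be just $\{\gamma\}$, so there is in general no second common point at which to evaluate anything. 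Second, and more fundamentally, even inside a single $Q_i$ there may be \emph{no} regular values lying in $\CONEgeq^{k^*}(N)$ at all: the differential $D\Pi$ may be degenerate everywhere along $\MMgeqgrad$, and this is exactly the obstruction the paper's construction is built to circumvent (it is why the orientation in Proposition~\ref{Prop_summary}\ref{Prop_summary_i} is defined via $S_p$ and the parity of $\idx(-L_g)$ rather than via $d(\Pi|_P)$, and why the index formula of Theorem~\ref{Thm_degexp_identity} is a consequence of the definition rather than the definition itself). Note also that the index count $\sum_p(-1)^{\idx(-L_p)}$ only makes sense when $q$ lies in the cone locus, since only then does \eqref{eq_Piinv_geqgrad_geqgrad} guarantee that the fiber consists of \emph{gradient} solitons for which $L_p$ and its index are defined; a generic regular value of $\Pi|_{P'}$ in $Q'$ gives you a fiber of possibly non-gradient solitons over which the map need not even be proper.

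The paper avoids all of this by reducing to the \emph{nested} case $Q_1\subset Q_2$ and invoking Lemma~\ref{Lem_proper_f_MMp}: after localizing with Lemma~\ref{Lem_proper_maps}, one compares the degree of the big proper map $\Pi|_{P'_2}$ with that of its restriction $\Pi|_{P''_1}$ using only regular values of the \emph{restricted} map inside $Q_1$ (which exist by Sard, with no requirement that they lie in $\CONEgeq^{k^*}(N)$), together with the structural hypotheses $(\Pi|_{P})^{-1}(\gamma)\subset P_1$, transversality, and — for the integer degree — the co-orientation condition on the quotients $T_pP_2/T_pP_1\to T_\gamma Q_2/T_\gamma Q_1$, which is verified using the maps $S_p$ and a limiting argument as $q_i\to\gamma$. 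If you want to repair your argument, you would need to replace the ``common regular value in the cone locus'' step by a comparison of this structural type; as written, the step on which the whole identity $\deg(Q_0,\{\gamma\})=\deg(Q_1,\{\gamma\})$ rests does not go through.
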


\begin{proof}
Note first that by Lemma~\ref{Lem_transverse_Q_exists}, given $\gamma \in \CONEgeq^{k^*}(N)$, there is always a submanifold $\gamma \in Q \subset \GenCONE^{k^*}(N)$ satisfying the discussion preceding Lemma~\ref{Lem_deg_MNiQ}.
So we just need to establish the independence of $\deg (M,N,\iota,Q,\{\gamma\})$ from $Q$.
To do this, consider first two finite dimensional, real-analytic submanifolds $$\gamma \in Q_1 \subset Q_2 \subset \GenCONE^{k^*}(N),$$ each satisfying the transversality condition \eqref{eq_transversality_condition}.
For the definition of the \emph{integer} degree we also need to assume that $Q_1, Q_2$ are oriented.
Our goal will be to show that
\begin{equation} \label{eq_degQ1Q2}
  \deg (M,N,\iota,Q_1,\{\gamma\}) = \deg (M,N,\iota,Q_2,\{\gamma\})
\end{equation}
As in the construction preceding Lemma~\ref{Lem_deg_MNiQ}, let 
\[ P_i := \Pi^{-1}(Q_i) \cap U_i \subset \MM, \qquad i = 1,2, \]
be the corresponding submanifolds provided by Proposition~\ref{Prop_summary}\ref{Prop_summary_g}, \ref{Prop_summary_h}, with the induced orientations provided by Proposition~\ref{Prop_summary}\ref{Prop_summary_i} if $Q_1, Q_2$ are oriented.
By the same argument as in \eqref{eq_PiPXinv}, we find that
\begin{equation} \label{eq_PiPi}
   (\Pi|_{P_i})^{-1}(\gamma) = (\Pi|_{\MMgeqgrad})^{-1}(\gamma) 
\end{equation}
is independent of $i$.

By Lemma~\ref{Lem_proper_maps} there are open neighborhoods
\[ \gamma \in Q'_2 \subset Q_2, \qquad (\Pi|_{\MMgeqgrad})^{-1}(\gamma) \subset P'_2 \subset P_2,  \]
such that $\Pi|_{P'_2} : P'_2 \to Q'_2$ is proper and $Q'_2$ is connected.
Set $Q'_1 := Q_1 \cap Q'_2$ and 
$$P'_1 := P_1 \cap P'_2 = ( \Pi^{-1}(Q_1) \cap U_1) \cap P'_2 \subset \Pi^{-1}(Q_1) \cap P'_2 = (\Pi|_{P'_2})^{-1}(Q'_1).$$
Then by \eqref{eq_PiPi} we still have
$$ (\Pi|_{P'_1})^{-1}(\gamma)  = (\Pi|_{\MMgeqgrad})^{-1}(\gamma) \subset P'_1. $$
So we can apply Lemma~\ref{Lem_proper_maps} to $\Pi|_{P'_2} : P'_2 \to Q'_2$ to obtain open neighborhoods
\[ \gamma \in Q''_1 \subset Q'_1, \qquad (\Pi|_{\MMgeqgrad})^{-1}(\gamma) \subset P''_1 \subset P'_1,  \]
such that $\Pi|_{P''_1} : P''_1 \to Q''_1$ is proper and $Q''_1$ is connected.
Note that $Q''_1 \subset Q'_2$ and $P''_1 \subset P'_2$.
In order to show \eqref{eq_degQ1Q2} for the $\IZ_2$-degree, we can now argue, using Lemma~\ref{Lem_proper_maps} as well as Lemma~\ref{Lem_proper_f_MMp} below, that:
\[ \deg(M,N,\iota,Q_1, \{ \gamma \}) = \deg(\Pi|_{Q''_1}, \{ \gamma \}) = \deg(\Pi|_{Q'_2}, \{ \gamma \}) = \deg(M,N,\iota,Q_2, \{ \gamma \}). \]

For the integer degree, we need to verify the condition of Lemma~\ref{Lem_proper_f_MMp} involving the preservation of the co-orientation.
For this purpose fix a $p \in (\Pi |_{P''_1})^{-1}(\gamma) = (\Pi|_{\MMgeqgrad})^{-1}(\gamma)$ and consider the orientations on $T_p P''_1 = T_p P_1$, $T_p P'_2 = T_p P_2$, $T_\gamma Q''_1 = T_\gamma Q_1$, $T_\gamma Q'_2 = T_\gamma Q_2$.
Choose arbitrary orientations on 
\begin{equation} \label{eq_ker_same}
 \ker (d(\Pi|_{P_1})_p ) = \ker (d(\Pi|_{P_2})_p ) 
\end{equation}
and on $\Image (D (\Pi|_{P_1})_p )$.
By \eqref{eq_ker_same} and since 
\begin{equation} \label{eq_DPi_invTQTP}
(d(\Pi|_{P_2})_p)^{-1} (T_\gamma Q_1 ) = T_p P_1, 
\end{equation}
the inclusion map $T_\gamma Q_1 \hookrightarrow T_\gamma Q_2$ induces an isomorphism (we are taking quotients from the left again)
\begin{equation} \label{eq_coker_same}
  \Image (d (\Pi|_{P_1})_p ) \big\backslash T_\gamma Q_1  \xrightarrow{\;\cong \;}  \Image (d (\Pi|_{P_2})_p ) \big\backslash T_\gamma Q_2 . 
\end{equation}
Choose the orientation on $\Image (d (\Pi|_{P_2})_p )$ such that this map is orientation preserving.
Then via the identifications \eqref{eq_ker_same} and \eqref{eq_coker_same} the maps $S_p$ from Proposition~\ref{Prop_summary}\ref{Prop_summary_g}, defined for $P_1$ and $P_2$, are the same and are both either orientation-preserving or reversing. 
Next, the quotient of the isomorphisms (see again \eqref{eq_DPi_invTQTP} for the last isomorphism)
\begin{equation} \label{eq_PiTpker}
 d(\Pi|_{P_i})_p : T_p P_i \big/ \ker ( d(\Pi|_{P_i})_p) \longrightarrow \Image (d(\Pi|_{P_i})_p), \qquad i = 1,2, 
\end{equation}
yields the isomorphism
\begin{equation} \label{eq_dPi_quotient}
 T_p P_2 \big/ T_p P_1 \longrightarrow \Image (d(\Pi|_{P_2})_p) \big/ \Image (d(\Pi|_{P_1})_p) \cong T_\gamma Q_2 \big/ T_\gamma Q_1,
\end{equation}
which is induced by the differential $d(\Pi|_{P_2})_p$.
Depending on the $\idx (- L_g)$, for some representative $(g, \nabla^g f, \gamma)$ of $p$, the maps \eqref{eq_PiTpker} have the same or the opposite orientation of $S_p$, so the map \eqref{eq_dPi_quotient} must be orientation-preserving.
This shows the additional condition of Lemma~\ref{Lem_proper_f_MMp} and hence finishes the proof of \eqref{eq_degQ1Q2} for the integer degree.

Next, consider two arbitrary finite dimensional, real-analytic submanifolds $$\gamma \in Q_1, Q_2 \subset \GenCONE^{k^*}(N),$$ each satisfying the transversality condition \eqref{eq_transversality_condition}.
Then we can pass to open neighborhoods
\[ \gamma \in Q'_i \subset Q_i \]
such that there is a real-analytic submanifold $Q_3 \subset \GenCONE^{k^*}(N)$ of dimension $\dim Q_1 + \dim Q_2$ and satisfying the transversality condition \eqref{eq_transversality_condition}, with the property that $Q'_1, Q'_2 \subset Q_3$.
It follows, using \eqref{eq_degQ1Q2} repeatedly, that
\begin{multline*} 
   \deg (M,N,\iota,Q_1,\{\gamma\}) 
= \deg (M,N,\iota,Q'_1,\{\gamma\})
= \deg (M,N,\iota,Q_3,\{\gamma\}) \\
= \deg (M,N,\iota,Q'_2,\{\gamma\})
= \deg (M,N,\iota,Q_2,\{\gamma\}). 
\end{multline*}
This finishes the proof of the independence of $Q$ and shows that $\deg(M,N,\iota,\gamma)$ is well defined.
\end{proof}
\medskip

\begin{Lemma} \label{Lem_proper_f_MMp}
Suppose that $f : M \to N$ is a proper $C^1$-map between two $C^1$-manifolds with $\dim M = \dim N$.
Let $M' \subset M$ and $N' \subset N$ be submanifolds  with $\dim M' = \dim N'$ such that $f(M') \subset N'$ and $f|_{M'} : M' \to N'$ is proper.
Suppose that $N, N'$ are connected and suppose that there is a point $q \in N'$ such that $f^{-1}(q) \subset M'$.
Moreover, suppose that for every $p \in f^{-1}(q)$ we have the transversality condition $df_p(T_pM) + T_q N' = T_q N$.
Then for the $\IZ_2$-degree we have
\begin{equation} \label{eq_degf_degfMp}
   \deg(f) = \deg(f|_{M'}). 
\end{equation}
Now suppose that, in addition, $M, N, M', N'$ are oriented and that for all $p \in f^{-1} (q)$ the induced map $df_p : T_p M / T_p M' \to T_{q} N / T_q N'$ is an orientation-preserving isomorphism.
Then \eqref{eq_degf_degfMp} even holds for the integer degree.
\end{Lemma}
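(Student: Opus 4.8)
\textbf{Proof proposal for Lemma~\ref{Lem_proper_f_MMp}.}
The plan is to reduce the statement to a neighborhood of $f^{-1}(q)$ where the fibers of $f$ and $f|_{M'}$ coincide, and then to compute both degrees using $q$ (and a sequence of nearby regular values) as a common reference point. First I would observe that, since $f$ is proper, $f^{-1}(q)$ is a finite set $\{p_1,\ldots,p_m\}$, and by the transversality hypothesis $df_{p_i}(T_{p_i} M) + T_q N' = T_q N$ together with $\dim M = \dim N$, $\dim M' = \dim N'$, we get $df_{p_i}(T_{p_i}M') = T_q N'$ and in particular the induced map $df_{p_i} : T_{p_i} M / T_{p_i} M' \to T_q N / T_q N'$ is an isomorphism; hence $q$ is a regular value of both $f$ and $f|_{M'}$ (the transversality forces $df_{p_i}$ to be surjective onto $T_q N$, since its image already surjects onto the quotient and contains $T_q N'$). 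This also shows $f^{-1}(q) = (f|_{M'})^{-1}(q)$ as sets, because any preimage of $q$ lies in $M'$ by assumption.

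Next I would choose a connected open neighborhood $q \in W \subset N$ consisting of regular values of $f$ near $q$ --- or rather, pass to a sequence $q_j \to q$ of common regular values of $f$ and $f|_{M'}$ with $q_j \in N'$ --- and argue that for $j$ large we have $f^{-1}(q_j) = (f|_{M'})^{-1}(q_j)$. The argument is the standard one used already in the proof of Lemma~\ref{Lem_proper_maps}: if not, there is a sequence of points $r_j \in f^{-1}(q_j) \setminus M'$; by properness of $f$ a subsequence converges to a point of $f^{-1}(q) \subset M'$, contradicting that $M'$ is (locally) closed near $f^{-1}(q)$. Since $q_j$ is a regular value of $f$, the degree $\deg(f)$ is the signed count $\sum_{p \in f^{-1}(q_j)} \operatorname{sign}\det(df_p)$ computed in oriented charts of $M$ and $N$; similarly $\deg(f|_{M'})$ is $\sum_{p \in (f|_{M'})^{-1}(q_j)} \operatorname{sign}\det(d(f|_{M'})_p)$ computed in oriented charts of $M'$ and $N'$. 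Because the two fibers are equal, it remains a purely linear-algebra statement at each point $p$: comparing $\operatorname{sign}\det(df_p : T_p M \to T_q N)$ with $\operatorname{sign}\det(d(f|_{M'})_p : T_p M' \to T_q N')$.

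For the $\mathbb{Z}_2$-degree this last step is vacuous, since one only counts the fiber modulo $2$ and the fibers agree, giving \eqref{eq_degf_degfMp} immediately. For the integer degree, I would use the short exact sequences $0 \to T_p M' \to T_p M \to T_p M / T_p M' \to 0$ and $0 \to T_q N' \to T_q N \to T_q N / T_q N' \to 0$, each compatibly oriented by our convention (a positively oriented basis of the sub followed by a lift of a positively oriented basis of the quotient is positively oriented in the total space). With respect to such adapted bases, the matrix of $df_p$ is block triangular with diagonal blocks $d(f|_{M'})_p : T_p M' \to T_q N'$ and the induced map $df_p : T_p M / T_p M' \to T_q N / T_q N'$, so $\det(df_p) = \det(d(f|_{M'})_p) \cdot \det(\text{induced map})$, whence $\operatorname{sign}\det(df_p) = \operatorname{sign}\det(d(f|_{M'})_p)$ precisely because the induced map is orientation-preserving by hypothesis. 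Summing over $p \in f^{-1}(q_j) = (f|_{M'})^{-1}(q_j)$ yields $\deg(f) = \deg(f|_{M'})$.

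The only mild subtlety --- and the step I would be most careful about --- is the passage from ``$q$ a regular value'' to ``$q_j \to q$ common regular values with the fibers coinciding,'' which requires invoking Sard's theorem for both $f$ and $f|_{M'}$ and the properness-plus-local-closedness argument to rule out escaping preimages; none of this is hard, but it is where the hypotheses (properness of both maps, connectedness of $N$ and $N'$, and $f^{-1}(q) \subset M'$) genuinely enter. Everything else is the block-triangular determinant identity, which is routine. Note that one does not even need $q$ itself to be a regular value: working with the nearby $q_j$ suffices, and the transversality hypothesis at $f^{-1}(q)$ is used only to guarantee that the induced quotient maps $df_{p}: T_pM/T_pM' \to T_qN/T_qN'$ are isomorphisms, which persists for $p$ in the fiber over $q_j$ for $j$ large by continuity.
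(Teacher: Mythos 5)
Your high-level strategy (localize near $q$, equate the fibers of $f$ and $f|_{M'}$ over nearby regular values, and compare signs via the block-triangular structure of $df_p$) is the same as the paper's, and your determinant comparison at a common regular value is correct. However, the opening deduction is wrong and creates a real gap. Transversality $df_p(T_pM)+T_qN'=T_qN$ does \emph{not} force the image of $df_p$ to contain $T_qN'$, so it does not make $q$ a regular value of $f$ or of $f|_{M'}$, nor does it give $df_p(T_pM')=T_qN'$. Counterexample: $M=N=\IR^2$, $M'=N'=\{y=0\}$, $f(x,y)=(x^2,y)$, $q=(0,0)$ satisfies every hypothesis of the lemma (including the orientation condition on the quotient maps), yet $q$ is a critical value of both $f$ and $f|_{M'}$, and $df_q(T_qM')=\{0\}$. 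This error propagates into your key existence claim: you need $q_j\to q$ in $N'$ that are regular values of \emph{both} maps, and ``Sard for both $f$ and $f|_{M'}$'' does not produce them, because $N'$ has measure zero in $N$, so Sard applied to $f$ says nothing about points of $N'$. The missing step (and the route the paper takes) is to first show, using properness and the openness of the transversality condition, that for every sufficiently small neighborhood $U\ni q$ one has $M'\cap f^{-1}(U)=f^{-1}(N'\cap U)$ and $f$ is transverse to $N'$ on $f^{-1}(U)$; then your own block-triangular observation shows that at a point $p\in M'$ lying over an $N'$-point near $q$, $df_p$ is invertible if and only if $d(f|_{M'})_p$ is, so a Sard-regular value of $f|_{M'}$ near $q$ is automatically a regular value of $f$.

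Two further steps are under-argued. Your proof of the fiber equality $f^{-1}(q_j)=(f|_{M'})^{-1}(q_j)$ derives a ``contradiction with $M'$ being locally closed'' from a sequence $r_j\in f^{-1}(q_j)\setminus M'$ subconverging to a point of $f^{-1}(q)\subset M'$; this is not a contradiction, since points outside a closed set can perfectly well converge into it. What is actually needed is the transversality at the limit point: it makes $f^{-1}(N')$ a local $C^1$-submanifold of dimension $\dim M'$ containing $M'$, hence locally coinciding with (open in) $M'$, forcing $r_j\in M'$ for large $j$; without transversality the fiber equality is genuinely false (e.g.\ $f(x,y)=(x,\,y^3-xy)$ with $M'=N'=\{y=0\}$). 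Similarly, your closing remark that the orientation-preserving property of the induced quotient maps ``persists for fiber points over $q_j$ by continuity'' requires the properness/compactness argument again (a sequence of bad fiber points over $q_j$ would subconverge to a point of $f^{-1}(q)$, where the quotient map is orientation-preserving), since fiber points over $q_j$ are not a priori close to $f^{-1}(q)$. Finally, $f^{-1}(q)$ is only compact, not necessarily finite, though this does not affect the computation at the regular values $q_j$. With these repairs your argument coincides with the paper's proof; as written, the regularity claim at $q$, the existence of common regular values on $N'$, and the fiber-equality justification are genuine gaps.
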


\begin{proof}
We first claim that for any sufficiently small neighborhood $q \in U \subset N$ we have $M' \cap f^{-1}(U) = f^{-1}(N' \cap U)$.
To see this, note that we trivially have $M' \cap f^{-1}(U) \subset f^{-1}(N' \cap U)$ for any such neighborhood.
So if the claim was wrong, then we could find a sequence of points $p_i \in M \setminus M'$ such that $f(p_i) \in N'$ and $f(p_i) \to q$.
By properness of $f$, we may pass to a subsequence and assume that $p_i \to p_\infty \in M$, where $f(p_\infty) = q$.
By assumption we have $p_\infty \in M'$.
Due to the transversality assumption, the preimage $f^{-1}(N')$ is local submanifold near $p_\infty$ of the same dimension as $M' \subset f^{-1}(N')$.
So $p_i \in M'$ for large $i$, which is a contradiction.

Next, we claim that, in addition, for any sufficiently small neighborhood $q \in U \subset N$ the restricted map $f|_{f^{-1}(U)}$ is transverse to $N'$.
To see this, we argue again by contradiction and fix a sequence of points $p_i \in M$ such that $f(p_i) \to q$ and $df_{p_i}(T_{p_i} M) + T_{f(p_i)} N' \neq T_{f(p_i)} N$.
After passing to a subsequence, we may again assume that $p_i \to p_\infty$ with $f(p_\infty) = q$, at which the transversality condition holds by assumption.
Since this condition is open, we obtain a contradiction for large $i$.

Consider now a sufficiently small neighborhood $q \in U \subset N$, in the sense of the previous two paragraphs.
By choosing $U$ to be the domain of a local slice chart for $N'$, we may also assume that $U$ and $N' \cap U$ are connected.
Then the assumptions and the assertions of the lemma are preserved if we replace $N$ with $U$, $M$ with $f^{-1}(U)$, $N'$ with $N' \cap U$ and $M'$ with $M' \cap f^{-1}(U)$. 
So we may assume without loss of generality that $M' = f^{-1}(N')$ and that $f$ is transverse to $N'$.

The lemma for the $\IZ_2$-degree now follows immediately by analyzing $f$ over a regular value of $f|_{M'}$ in $N'$.
To see the lemma for the integer degree, consider a sequence of regular points $q_i \in N'$ for $f|_{M'}$ with $q_i \to q$.
If the induced map $df_p : T_p M / T_p M' \to T_{q_i} N / T_{q_i} N'$ is still orientation-preserving for all $p \in f^{-1}(q_i)$ and some fixed $i$, then the lemma follows.
Now suppose that for each $i$ there is a $p_i \in f^{-1}(q_i)$ for which $df_{p_i} : T_{p_i} M / T_{p_i} M' \to T_{q_i} N / T_{q_i} N'$ is not orientation-preserving.
By the properness of $f$, we may pass to a subsequence and assume that $p_i \to p \in f^{-1}(q)$ and thus $df_p : T_p M / T_p M' \to T_{q} N / T_q N'$ is not orientation-preserving as well, in contradiction to our assumptions.
This finishes the proof.
\end{proof}
\medskip

We can finally state our main result.

\begin{Theorem} \label{Thm_def_degree}
Given an ensemble $(M,N,\iota)$, there is a unique degree
\[ \deg (M,N,\iota) \in \IZ \]
such that for any $k^* \geq 30$ and any $\gamma \in \CONEgeq^{k^*}(N)$ we have
\[ \deg (M,N,\iota) = \deg (M,N,\iota,\gamma), \]
where the latter denotes the integer degree.
Moreover, if $\gamma \in \CONEgeq^{k^*}(N)$ is a regular value of $\Pi$ over $\MMgeqgrad^{k^*}(M,N,\iota)$, in the sense that for any representative $(g,\nabla^g f, \gamma)$ of an element $p \in (\Pi |_{\MMgeqgrad^{k*}(M,N,\iota)})^{-1}(\gamma)$ the operator
\[ L_p := L_g : C^{2,\alpha}_{-2,g,\nabla^g f} (M; S^2 T^*M) \lto C^{0,\alpha}_{-2,g,\nabla^g f} (M; S^2 T^*M) \]
has no kernel, then
\[ \deg (M,N,\iota) = \sum_{p \in (\Pi |_{\MMgeqgrad^{k*}(M,N,\iota)})^{-1}(\gamma)} (-1)^{\idx (-L_{p})}. \]
(Note that the condition of $L_g$ having no kernel and $\idx (-L_p)$ are independent of the choice of the representative $(g,\nabla^g f, \gamma)$ of p.)

The same statement holds for the $\IZ_2$-degree, which is equal to the integer degree modulo $2$.
Moreover, in this case we have the following simpler identity for any regular value $\gamma$ over $\MMgeqgrad^{k^*}(M,\lb N, \lb \iota)$
\[ \deg (M,N,\iota) = \# \big(\Pi |_{\MMgeqgrad^{k*}(M,N,\iota)} \big)^{-1}(\gamma) \qquad \textnormal{mod} \; 2. \]
\end{Theorem}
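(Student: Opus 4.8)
The plan is to assemble Theorem~\ref{Thm_def_degree} from the local degrees $\deg(M,N,\iota,\gamma)$ constructed in Lemma~\ref{Lem_deg_indep_Q}. First I would verify that $\deg(M,N,\iota,\gamma)$ is independent of the choice of $\gamma \in \CONEgeq^{k^*}(N)$. Fix two cone metrics $\gamma_0, \gamma_1 \in \CONEgeq^{k^*}(N)$. Since $\CONEgeq^{k^*}(N)$ is convex (the condition $R_h \geq (n-2)(n-1)$ on the link is preserved under... actually it is not, so instead one connects $\gamma_0$ and $\gamma_1$ by a path in $\CONEgeq^{k^*}(N)$ using that the space of link metrics with $R_h \geq (n-2)(n-1)$ is path-connected, or more robustly, one uses Lemma~\ref{Lem_transverse_Q_exists} to produce a single finite-dimensional real-analytic submanifold $Q \subset \GenCONE^{k^*}(N)$ containing both $\gamma_0$ and $\gamma_1$ and satisfying the transversality condition \eqref{eq_transversality_condition}). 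Then by the last statement of Lemma~\ref{Lem_deg_MNiQ}, the function $\gamma \mapsto \deg(M,N,\iota,Q,\{\gamma\})$ is constant on path-components of $Q \cap \CONEgeq^{k^*}(N)$; since this set is an open subset of the connected manifold $Q$ containing a path from $\gamma_0$ to $\gamma_1$ — and here one genuinely needs that $\CONEgeq^{k^*}(N)$ is path-connected, which follows because the space of positive-scalar-curvature link metrics on $N$ with $R_h \geq (n-2)(n-1)$ can be joined by first increasing $R_h$ and then deforming — we conclude $\deg(M,N,\iota,\gamma_0) = \deg(M,N,\iota,\gamma_1)$. This common value we define to be $\deg(M,N,\iota)$. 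Independence of $k^*$ is already recorded in Lemma~\ref{Lem_deg_MNiQ}, and independence of $\alpha$ likewise, so the degree is well defined as an integer.

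Next I would prove the regular-value formula. Suppose $\gamma \in \CONEgeq^{k^*}(N)$ is a regular value over $\MMgeqgrad := \MMgeqgrad^{k^*}(M,N,\iota)$, meaning $\Null(-L_p) = 0$ for every $p \in (\Pi|_{\MMgeqgrad})^{-1}(\gamma)$. By Proposition~\ref{Prop_summary}\ref{Prop_summary_d}, $\dim \ker D\Pi_p = \dim K_g = \Null(-L_p) = 0$, so $D\Pi_p$ is an isomorphism at each such $p$; in particular the set $\Pi^{-1}(\gamma) \cap \MMgeqgrad$ is finite by properness (Proposition~\ref{Prop_summary}\ref{Prop_summary_b}). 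Use Lemma~\ref{Lem_transverse_Q_exists} to choose a finite-dimensional real-analytic oriented submanifold $Q \ni \gamma$ satisfying \eqref{eq_transversality_condition}; since $\ker D\Pi_p = 0$ for $p \in \Pi^{-1}(\gamma) \cap \MMgeqgrad$, one may in fact take $\dim Q$ as small as needed, but any choice works. Form $P = \Pi^{-1}(Q) \cap U$ as in Proposition~\ref{Prop_summary}\ref{Prop_summary_g}, with the orientation of Proposition~\ref{Prop_summary}\ref{Prop_summary_i}. Then $\gamma$ is a regular value of $\Pi|_P : P \to Q$ because $d(\Pi|_P)_p$ is an isomorphism at each $p$ in the (finite) fiber, and $\deg(M,N,\iota) = \deg(M,N,\iota,Q,\{\gamma\}) = \deg(\Pi|_P, \{\gamma\})$, which by the classical definition of degree over a regular value equals $\sum_{p \in (\Pi|_P)^{-1}(\gamma)} \sign(\det d(\Pi|_P)_p)$. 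By Proposition~\ref{Prop_summary}\ref{Prop_summary_g}, \ref{Prop_summary_h}, $(\Pi|_P)^{-1}(\gamma) = (\Pi|_{\MMgeqgrad})^{-1}(\gamma)$. It remains to identify the local sign $\sign(\det d(\Pi|_P)_p)$ with $(-1)^{\idx(-L_p)}$: when $\ker D\Pi_p = 0$ the orientation prescription in Proposition~\ref{Prop_summary}\ref{Prop_summary_i} says that $d(\Pi|_P)_p : T_p P \to T_\gamma Q$ (with the quotients and $S_p$ now acting on trivial spaces, oriented by $+$ if $\idx(-L_g)$ is even and $-$ if it is odd) is orientation-preserving precisely when $\idx(-L_g)$ is even; hence $\sign(\det d(\Pi|_P)_p) = (-1)^{\idx(-L_p)}$. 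This yields the stated formula.

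Finally, the $\IZ_2$-statements are the mod-$2$ reductions of the above: repeating the construction without orientations gives a $\IZ_2$-valued degree, which by the compatibility of the integer and $\IZ_2$ degrees over regular values (the integer degree reduces mod $2$ to the $\IZ_2$ degree, as every sign contributes $1$ mod $2$) agrees with $\deg(M,N,\iota) \bmod 2$; and over a regular value $\gamma$ the $\IZ_2$-degree is simply the count $\#(\Pi|_{\MMgeqgrad})^{-1}(\gamma) \bmod 2$ by the classical $\IZ_2$-degree formula. The main obstacle I anticipate is the independence of $\gamma$: one must argue that $\CONEgeq^{k^*}(N)$ — equivalently the space of $C^{k^*}$ metrics on $N$ with $R_h \geq (n-2)(n-1)$ — is path-connected, so that Lemma~\ref{Lem_deg_MNiQ}'s path-component statement actually forces global constancy; for $N$ a quotient of $S^3$ this is classical (the round metric can be reached), but it should be spelled out. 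The secondary subtlety is the bookkeeping in identifying the local degree sign with $(-1)^{\idx(-L_p)}$ in the degenerate-free case, where one must carefully unwind the quotient-orientation conventions of Proposition~\ref{Prop_summary}\ref{Prop_summary_i}; this is routine once one observes that all the quotient spaces collapse to a point when $\ker D\Pi_p = 0$.
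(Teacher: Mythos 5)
Your overall route coincides with the paper's: local constancy of $\gamma' \mapsto \deg(M,N,\iota,\gamma')$ obtained from Lemmas~\ref{Lem_deg_MNiQ} and \ref{Lem_deg_indep_Q} over finite-dimensional transversal slices, followed by a connectedness argument in the space of cones, and the regular-value formula extracted by unwinding the orientation convention of Proposition~\ref{Prop_summary}\ref{Prop_summary_i} at points where $\ker D\Pi_p = 0$ (the paper leaves this last step implicit in the construction, and your unwinding of it is correct). However, there is a genuine gap exactly at the point you yourself flag as the main obstacle: the path-connectedness of $\CONEgeq^{k^*}(N)$. Your proposed justification --- ``first increasing $R_h$ and then deforming,'' with the remark that for quotients of $S^3$ this is ``classical'' --- is not an argument. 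After the easy reduction (shrinking the link slightly moves any $\gamma \in \CONEgeq^{k^*}(N)$ into $\CONEg^{k^*}(N)$ along a line segment), what remains is precisely the connectedness of the space of metrics on $N$ with $R > 6$, equivalently of the positive-scalar-curvature metrics on a 3-manifold that is a connected sum of spherical space forms and copies of $S^2 \times S^1$ (the class allowed by Definition~\ref{Def_ensemble}). This is a deep theorem, not a classical fact: the paper invokes \cite{Marques_2012} (for $S^3$, itself relying on Ricci flow with surgery) and \cite{Bamler_Kleiner_space_metrics} in general. Without this input the independence of $\gamma$, and hence the very existence of $\deg(M,N,\iota)$, is not established; ``deforming'' one PSC metric to another is the entire content of the missing step.

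A secondary, fixable issue: your first attempt to handle two cone metrics $\gamma_0,\gamma_1$ by producing a single finite-dimensional real-analytic $Q$ containing both and satisfying the transversality condition \eqref{eq_transversality_condition} is not supported by Lemma~\ref{Lem_transverse_Q_exists}. That lemma only yields transversality after shrinking $Q'$ to a neighborhood of a single point $\gamma$ (its proof uses compactness of the single fiber $\Pi^{-1}(\gamma)\cap\MMgeqgrad^{k^*}$ and a properness argument to control nearby fibers), so there is no reason a slice containing a whole path from $\gamma_0$ to $\gamma_1$ satisfies \eqref{eq_transversality_condition} at every $p \in \Pi^{-1}(Q)\cap\MMgeqgrad^{k^*}$. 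The paper avoids this by working only along line segments $G \subset \CONEgeq^{k^*}(N)$: for each $\gamma \in G$ it chooses a $Q$ containing a neighborhood of $\gamma$ in $G$, deduces from Lemmas~\ref{Lem_deg_MNiQ} and \ref{Lem_deg_indep_Q} that the degree is locally constant along $G$, and then concludes constancy on $G$; your fallback to ``local constancy plus path-connectedness'' is the correct fix, but it again feeds back into the missing connectedness input above.
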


\begin{proof}
By Lemma~\ref{Lem_transverse_Q_exists}, given $\gamma \in \CONEgeq^{k^*}(N)$, $k^* \geq 30$ and given any line segment $G \subset \CONEgeq^{k^*}(N)$ with $\gamma \in G$, we can find a finite dimensional, real-analytic, oriented submanifold $\gamma \in Q \subset \GenCONE^{k^*}(N)$ that contains a neighborhood of $\gamma$ in $G$ and that allows us to apply Lemma~\ref{Lem_deg_MNiQ}.
So by Lemmas~\ref{Lem_deg_MNiQ}, \ref{Lem_deg_indep_Q}, the quantity
\[ \deg(M,N,\iota,\gamma') = \deg(M, N,\iota, Q,\{\gamma' \}) \]
is constant on path-components of $\gamma' \in Q \cap G$.
It follows that the map $G \to \IZ$, $\gamma' \mapsto \deg(M,N,\iota,\gamma')$ is locally constant and therefore constant.

Next note that every $\gamma \in \CONEgeq^{k^*}(N)$ can be connected to an element $\gamma' \in \CONEg^{k^*}(N)$ via a line segment within $\CONEgeq^{k^*}(N)$ that corresponds to slightly shrinking the link of the cone metric $\gamma$.
So it remains to show that $\CONEg^{k^*}(N)$, or equivalently the space of $C^{k^*}$-metrics on $N$ with $\inf_M R > 6$ is connected.
This follows from \cite{Bamler_Kleiner_space_metrics} or from \cite{Marques_2012} if in addition $N \approx S^3$.
\end{proof}
\medskip

\begin{Corollary}\label{Cor_nonzero_degree}
If $(M,N,\iota)$ is an ensemble with
\[ \deg (M,N,\iota) \neq 0, \]
then for any $30 \leq k^* \leq \infty$ the map
\[ \Pi |_{\MMgeqgrad^{k*}(M,N,\iota)} : \MMgeqgrad^{k*}(M,N,\iota) \lto \CONEgeq^{k^*}(N) \]
is surjective.
In particular, for every $\gamma \in \CONEgeq^{\infty}(N)$ there is a $p = [(g,\nabla^g f, \gamma)] \in \MMgeqgrad^{\infty}(M, \lb N, \lb \iota)$, where $g$ and $f$ are smooth.
\end{Corollary}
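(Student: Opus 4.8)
The plan is to deduce Corollary~\ref{Cor_nonzero_degree} directly from Theorem~\ref{Thm_def_degree} together with the local degree formula over regular values. First I would fix an integer $k^* \geq 30$ (treating $k^*=\infty$ at the end via Remark~\ref{Rmk_differen_regularity_MM} and Corollary~\ref{Cor_smooth_rep}) and an arbitrary $\gamma \in \CONEgeq^{k^*}(N)$. The goal is to produce a point $p \in \MMgeqgrad^{k^*}(M,N,\iota)$ with $\Pi(p) = \gamma$. Suppose, for contradiction, that $\big(\Pi|_{\MMgeqgrad^{k^*}(M,N,\iota)}\big)^{-1}(\gamma) = \emptyset$. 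Then $\gamma$ is vacuously a regular value of $\Pi$ over $\MMgeqgrad^{k^*}(M,N,\iota)$: the condition in Theorem~\ref{Thm_def_degree} that $L_p$ has no kernel for every $p$ in the preimage is satisfied because the preimage is empty. Hence the degree formula applies and yields
\[ \deg(M,N,\iota) = \sum_{p \in (\Pi|_{\MMgeqgrad^{k^*}(M,N,\iota)})^{-1}(\gamma)} (-1)^{\idx(-L_p)} = 0, \]
the empty sum. This contradicts the hypothesis $\deg(M,N,\iota) \neq 0$, so the preimage must be nonempty, i.e.\ there exists $p = [(g,\nabla^g f,\gamma)] \in \MMgeqgrad^{k^*}(M,N,\iota)$ with $\Pi(p) = \gamma$. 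Since $\gamma$ was arbitrary, $\Pi|_{\MMgeqgrad^{k^*}(M,N,\iota)}$ is surjective onto $\CONEgeq^{k^*}(N)$.

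The one genuine subtlety is the case $k^* = \infty$, which is not literally covered by Theorem~\ref{Thm_def_degree} (that theorem is stated for finite $k^* \geq 30$). Here I would argue as follows: given $\gamma \in \CONEgeq^\infty(N)$, view $\gamma$ as an element of $\CONEgeq^{k^*}(N)$ for some finite $k^* \geq 30$ (in fact for all such $k^*$). By the finite-regularity case just proved, there is a $p^{k^*} \in \MMgeqgrad^{k^*}(M,N,\iota)$ with $\Pi(p^{k^*}) = \gamma$. Because $\gamma \in \GenCONE^\infty(N)$, Remark~\ref{Rmk_differen_regularity_MM} identifies $\Pi^{-1}(\gamma)$ inside $\MM^{k^*}(M,N,\iota)$ with $\Pi^{-1}(\gamma)$ inside $\MM^\infty(M,N,\iota)$, so in fact $p^{k^*}$ already lies in $\MMgeqgrad^\infty(M,N,\iota)$; and by Corollary~\ref{Cor_smooth_rep} it has a smooth representative $(g,V=\nabla^g f,\gamma)$, with $f$ then automatically smooth since $g,V$ are. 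Moreover this representative lies in $\MMgeqgrad$ because $\gamma$ has nonnegative scalar curvature, by Lemma~\ref{Lem_Rgeq0_CONE_implies_MM}. This gives the final ``in particular'' statement.

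I do not expect any serious obstacle here — the corollary is essentially a formal bookkeeping consequence of the degree formula and the $k^*$-independence built into Theorem~\ref{Thm_def_degree} and Remark~\ref{Rmk_differen_regularity_MM}. The only point requiring mild care is making sure the ``regular value'' hypothesis of the degree formula is genuinely satisfied by an \emph{empty} preimage; this is immediate once one reads the quantifier ``for any $p$ in the preimage'' over the empty set, but it is worth stating explicitly so the reader sees why the contradiction is valid. A secondary point of care is that the degree formula as stated presupposes that such a regular value exists — but we are not invoking any existence claim; we are simply observing that \emph{if} the preimage were empty then the formula (which holds for that particular $\gamma$ since $\gamma$ is then a regular value) would force $\deg = 0$.
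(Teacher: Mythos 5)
Your proposal is correct and follows essentially the same route as the paper: the paper also notes that any $\gamma$ outside the image is vacuously a regular value over $\MMgeqgrad^{k^*}(M,N,\iota)$, so the degree formula of Theorem~\ref{Thm_def_degree} forces $\deg(M,N,\iota)=0$, a contradiction, and then handles $k^*=\infty$ and the smoothness claim via Remark~\ref{Rmk_differen_regularity_MM} and Corollary~\ref{Cor_smooth_rep}. Your extra appeal to Lemma~\ref{Lem_Rgeq0_CONE_implies_MM} is harmless but unnecessary, since membership in $\MMgeqgrad^{k^*}$ already encodes non-negative scalar curvature.
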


\begin{proof}
Since every point $\CONEgeq^{k*}(M,N,\iota) \setminus \Pi (\MMgeqgrad^{k*}(M,N,\iota))$ is trivially a regular point of the map $\Pi$ over $\MMgeqgrad^{k*}(M,N,\iota)$, the statement for $k^* < \infty$ follows from Theorem~\ref{Thm_def_degree}.
The last statement is a consequence of Remark~\ref{Rmk_differen_regularity_MM} and Corollary~\ref{Cor_smooth_rep}.
\end{proof}
\bigskip

\subsection{The degree in the case of a cone over a spherical space form}
In the following we compute the degree of the standard ensemble $(\IR^4/\Gamma, S^3/\Gamma,\iota_\Gamma)$.
This is possible due to the uniqueness of the trivial Euclidean expanding soliton among all gradient expanding solitons with non-negative scalar curvature that are asymptotic to Euclidean space.

\begin{Theorem}\label{Thm_deg_std_disk}
Consider the standard ensemble $(\IR^4/\Gamma, S^3/\Gamma,\iota_\Gamma)$, where $\Gamma \subset SO(4)$ is a finite group acting freely on $S^3$ and $\iota_\Gamma : (1,\infty) \times S^3/\Gamma \to \IR^4/\Gamma$ is the standard radial embedding.
Then 
\[ \deg (\IR^4/\Gamma, S^3/\Gamma,\iota_\Gamma)
 = 1. \]
Moreover, if $(M,N,\iota)$ is an ensemble such that $N \approx S^3 / \Gamma$ and such that $M$ has an orbifold cover such that $\iota$ lifts to a map from $(1,\infty) \times S^3$, then the following is true.
If $(M,N,\iota)$ is isomorphic to $(\IR^4/\Gamma, S^3/\Gamma,\iota_\Gamma)$, meaning that there are a diffeomorphisms $\phi : M \to \IR^4 / \Gamma$ and $\psi : N \to S^3/\Gamma$ such that
\[ \iota_\Gamma \circ (\id_{(1,\infty)} ,\psi) = \phi \circ \iota, \]
then $\deg(M,N,\iota) = 1$.
Otherwise $\deg(M,N,\iota) = 0$.
\end{Theorem}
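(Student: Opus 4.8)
The plan is to reduce everything to the computation of $\deg(\IR^4/\Gamma, S^3/\Gamma, \iota_\Gamma)$ and to the rigidity statement for the Euclidean cone. First I would fix a locally Euclidean cone metric $\gamma_{\eucl} \in \CONEgeq^{k^*}(S^3/\Gamma)$, say with link equal to a quotient of the round sphere of a fixed (small) radius so that its scalar curvature is nonnegative; by Theorem~\ref{Thm_def_degree} the integer degree can be computed as $\deg(M,N,\iota) = \deg(M,N,\iota,\gamma_{\eucl})$, and similarly $\deg(\IR^4/\Gamma,S^3/\Gamma,\iota_\Gamma) = \deg(\IR^4/\Gamma,S^3/\Gamma,\iota_\Gamma,\gamma_{\eucl})$. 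The key input is Proposition~\ref{Prop_eucl_case}: for the standard ensemble $(\IR^4/\Gamma,S^3/\Gamma,\iota_\Gamma)$ the fiber $\Pi^{-1}(\gamma_{\eucl})$ consists of exactly one element, represented by the (quotient of the) standard Euclidean soliton $(g_{\eucl},V_{\eucl}=-\tfrac12 r\partial_r,\gamma_{\eucl})$, and for any ensemble $(M,N,\iota)$ with the stated covering property we have $\#\Pi^{-1}(\gamma_{\eucl}) \le 1$, with equality forcing $(M,N,\iota) \cong (\IR^4/\Gamma,S^3/\Gamma,\iota_\Gamma)$.

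Next I would show that $\gamma_{\eucl}$ is a regular value of $\Pi$ over $\MMgeqgrad^{k^*}$, i.e.\ that $\Null(-L_{g_{\eucl}}) = 0$ for the Einstein operator $L_{g_{\eucl}} = \triangle_{f_{\eucl}} + 2\Rm_{g_{\eucl}}$ on $S^2 T^*(\IR^4/\Gamma)$. Since $\Rm_{g_{\eucl}} = 0$, this amounts to the drift Laplacian $\triangle_{f_{\eucl}} = \triangle - \nabla_{\nabla f_{\eucl}}$ having trivial kernel in $C^{2,\alpha}_{-2,\nabla f_{\eucl}}$; on the Gaussian soliton $f_{\eucl} = -\tfrac14 r^2$ this is a standard fact (the drift Laplacian on the Euclidean Gaussian space has spectrum $\{-\tfrac12 k : k \ge 0\}$ acting componentwise, and the relevant weighted space excludes the constant/harmonic kernel; alternatively invoke Corollary~\ref{Cor_Lundardi} applied to each scalar component, since $\triangle_f$ acting on functions with the prescribed weight is invertible). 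Granting this, the degree formula from Theorem~\ref{Thm_def_degree} gives, for the standard ensemble,
\[
\deg(\IR^4/\Gamma,S^3/\Gamma,\iota_\Gamma) = \sum_{p \in \Pi^{-1}(\gamma_{\eucl})} (-1)^{\idx(-L_p)} = (-1)^{\idx(-L_{g_{\eucl}})}.
\]
It remains to check $\idx(-L_{g_{\eucl}}) = 0$: again since $\Rm = 0$ and $-\triangle_{f_{\eucl}}$ is a nonnegative operator in $L^2_{g_{\eucl},f_{\eucl}}$ (this follows from $\langle -\triangle_f u, u\rangle_f = \int |\nabla u|^2 e^{-f} \ge 0$, which is the $b=0$ case of Proposition~\ref{Prop_L2_theory}\ref{Prop_L2_theory_a}), there are no negative eigenvalues, so the index is $0$ and $\deg(\IR^4/\Gamma,S^3/\Gamma,\iota_\Gamma) = 1$.

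For the general ensemble $(M,N,\iota)$ with $N \approx S^3/\Gamma$ and the covering hypothesis: if it is isomorphic to the standard one, then the degree is a topological invariant of the ensemble (it depends only on $(M,N,\iota)$ up to the equivalences in Lemma~\ref{Lem_equivalence_MM}, since the whole construction of $\MM$, $\Pi$, $\MMgeqgrad$ and the degree is functorial in the ensemble), so $\deg(M,N,\iota) = \deg(\IR^4/\Gamma,S^3/\Gamma,\iota_\Gamma) = 1$. If it is not isomorphic to the standard one, then by the $\#\Pi^{-1}(\gamma_{\eucl}) \le 1$ bound in Proposition~\ref{Prop_eucl_case} together with its rigidity clause, we must have $\Pi^{-1}(\gamma_{\eucl}) = \emptyset$ for the metric $\gamma_{\eucl}$; hence $\MMgeqgrad^{k^*}(M,N,\iota) \cap \Pi^{-1}(\gamma_{\eucl}) = \emptyset$, so $\gamma_{\eucl}$ is vacuously a regular value over $\MMgeqgrad^{k^*}$ and the degree formula of Theorem~\ref{Thm_def_degree} gives an empty sum, i.e.\ $\deg(M,N,\iota) = 0$. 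The main obstacle I anticipate is the clean verification of $\Null(-L_{g_{\eucl}}) = 0$ and $\idx(-L_{g_{\eucl}}) = 0$ in the precise weighted function-space setup of Section~\ref{sec_elliptic}: one must make sure the $C^{2,\alpha}_{-2,\nabla f}$-decay condition really rules out every element of the $\triangle_{f_{\eucl}}$-kernel on the orbifold $\IR^4/\Gamma$ (the $\Gamma$-invariance only helps, never hurts), and that the spectral index from Definition~\ref{Def_idx_null} coincides with "no negative eigenvalues"; both should follow from Proposition~\ref{Prop_L2_theory} and Corollary~\ref{Cor_Lundardi}, but the bookkeeping between the $C^{2,\alpha}$-theory and the $L^2_f$-theory (Proposition~\ref{Prop_L2_theory}\ref{Prop_L2_theory_b}) deserves care.
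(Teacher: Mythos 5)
Your proposal is correct and follows essentially the same route as the paper: fix the locally Euclidean cone $\gamma_{\eucl}$, use Proposition~\ref{Prop_eucl_case} to identify the fiber (a single point for the standard ensemble, empty otherwise), verify that $L_{g_{\eucl}}=\triangle_{f_{\eucl}}$ has nullity $0$ and index $0$ (the paper dispatches this with a one-line maximum-principle remark, while you invoke Proposition~\ref{Prop_Lundardi_weighted} and the nonnegativity of the form in Proposition~\ref{Prop_L2_theory}, which is a legitimate filling-in of the same step), and then apply the degree formula of Theorem~\ref{Thm_def_degree}. One small correction: the link of $\gamma_{\eucl}$ must be the round metric of radius exactly $1$ (curvature $1$) for the cone to be locally Euclidean as Proposition~\ref{Prop_eucl_case} requires — shrinking the radius would destroy that property, and it is unnecessary since the flat cone already has $R_\gamma=0\geq 0$.
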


\begin{proof}
Fix some $k^* \geq 30$.
Suppose first that $(M,N,\iota) = (\IR^4/\Gamma, S^3/\Gamma,\iota_\Gamma)$.
Denote by $p_{\eucl} \in \MMgeqgrad^{k^*}(M,N,\iota)$ the element that is represented by $(g_{\eucl}, V_{\eucl} := \nabla^{g_{\eucl}} f_{\eucl}, \gamma_{\eucl})$, where $(\IR^4, \lb g_{\eucl}, \lb f_{\eucl} := - \tfrac14 \mathbf{r}^2)$ denotes the standard Euclidean gradient expanding soliton with radial coordinate $\mathbf{r}$ and $\gamma_{\eucl}   \in \CONE^\infty (S^3/\Gamma)$ denotes the standard local Euclidean cones, whose link metric is the standard round metric on $S^3/\Gamma$. 
An application of the maximum principle shows that $L_{g_{\eucl}}$ has nullity $0$ and index $0$.
By Proposition~\ref{Prop_eucl_case}, we have 
\[ \big(\Pi |_{\MMgeqgrad^{k*}(M,N,\iota)} \big)^{-1}(\gamma_{\eucl}) = [p_{\eucl} ]. \]
So, since $L_{g_{\eucl}}$ has nullity $0$, the point $\gamma_{\eucl}$ is a regular point and Theorem~\ref{Thm_def_degree} implies that
\[ \deg (M,N,\iota) = \deg(M,N,\iota,\{ \gamma_{\eucl} \}) = 1. \]

For the second statement it remains to consider the case in which $(M,N,\iota)$ is not isomorphic to $(\IR^4/\Gamma, S^3/\Gamma,\iota_\Gamma)$.
Then Proposition~\ref{Prop_eucl_case} implies that
\[ \big(\Pi |_{\MMgeqgrad^{k*}(M,N,\iota)} \big)^{-1}(\gamma_{\eucl}) = \emptyset. \]
So by Theorem~\ref{Thm_def_degree} we have $\deg (M,N,\iota)=0$.
\end{proof}
\bigskip

\subsection{Proofs of the theorems from Subsection~\ref{subsec_statement_of_results}} \label{subsec_proofs_of_main_thms}
Below we give the proofs of Theorems \ref{Thm_main_vague}, \ref{Thm_deg_1}, \ref{Thm_Banach_mf_intro}, \ref{Thm_degexp_identity}, \ref{Thm_many_deformations}, \ref{Thm_preservation_gradient_intro} and \ref{Thm_standard_disk}. In many cases this amounts to collecting the appropriate results already proved earlier.

\begin{proof}[Proof of Theorem \ref{Thm_main_vague}]
Proposition \ref{Prop_properness} shows the properness of $\Pi \big|_{\MMgeqgrad^{k^*}(X)}$, and Theorem \ref{Thm_def_degree} gives the existence of the integer expander degree as an invariant of the smooth structure of $X$ (the equivalence between the topological setups on $X$ and an ensemble $(M,N,\iota)$ is described in Lemma \ref{Lem_ensemble_vs_X}; see also Lemma~\ref{Lem_equivalence_MM}). 
Namely, we define $\deg_{\exp}(X):=\deg(M,N,\iota)$. 
The determination of $\deg_{\exp}$ from the local behavior of an extension of $\Pi$ near any $\gamma\in\CONEgeq^{k^*}(\partial X)$ is given by Lemmas \ref{Lem_deg_MNiQ} and \ref{Lem_deg_indep_Q}. Finally, the surjectivity of $\Pi \big|_{\MMgeqgrad^{k^*}(X)}$ when the expander degree is nonzero is the content of Corollary \ref{Cor_nonzero_degree}.
\end{proof}
\medskip

\begin{proof}[Proof of Theorem \ref{Thm_deg_1}]
This is the first half of Theorem \ref{Thm_deg_std_disk}.
\end{proof}
\medskip

\begin{proof}[Proof of Theorem \ref{Thm_Banach_mf_intro}]
Corollary \ref{Cor_MM_is_Banach_manifold} gives the $C^{1,\alpha}$-Banach manifold structure of $\MM'$ and the smoothness of $\Pi|_{\MM'}$. And the integer expander degree satisfying the stated properties is defined in Subsection \ref{subsec_deg_def}, as described above in the proof of Theorem \ref{Thm_main_vague}.
\end{proof}
\medskip

\begin{proof}[Proof of Theorem \ref{Thm_degexp_identity}]
This is the second half of Theorem \ref{Thm_def_degree}.
\end{proof}
\medskip

\begin{proof}[Proof of Theorem \ref{Thm_many_deformations}]
To prove Assertion \ref{Thm_many_deformations_a}, first write $\gamma=\Pi(p)$ for some $p = [(g,\nabla^g f, \gamma)] \in\MM^{k^*}_{\grad}(X)$. 
Denote by $K \subset C^{2,\alpha}_{-2}(M; S^2 T^*M)$ the kernel of the Einstein operator $L_g$.
By Proposition~\ref{prop_smooth_dependence} we can find a neighborhood
\[ (\gamma,0) \in U = U_1 \times U_2 \subset \GenCONE^{k^*}(X) \times K \]
and a real-analytic map $G : U \to K$ such that $(D_1 G)_{(\gamma,0)}$ has full rank and such that for any $(\gamma',\kappa') \in U$ with $G(\gamma',\kappa') = 0$ we have $\gamma' \in \Image \Pi$.
It follows that $\Gamma := \{ G(\cdot,0) = 0 \} \subset U_1 \subset \GenCONE^{k^*}(X)$ is a Banach-submanifold with the desired properties.

To prove Assertion~\ref{Thm_many_deformations_b}, first observe that by Lemma~\ref{Lem_transverse_Q_exists} we may enlarge $Q \subset \GenCONE^{k^*}(\partial X)$ to a finite dimensional, real-analytic submanifold (of possibly larger dimension), which in addition satisfies the transversality condition \eqref{eq_transversality_condition}.
Assertion~\ref{Thm_many_deformations_b} now follows from Lemmas~\ref{Lem_deg_MNiQ} and \ref{Lem_deg_indep_Q} together with Theorem~\ref{Thm_def_degree} and Definition~\ref{Def_degfX}.
\end{proof}
\medskip

\begin{proof}[Proof of Theorem \ref{Thm_preservation_gradient_intro}]
If $\sigma:[0,1]\rightarrow\MM^{k^*}(X)$ is a curve as described in the statement of the theorem, then by Corollary \ref{Cor_MM_is_Banach_manifold} we obtain a family $(M,g_s,v_s)_{s\in[0,1]}$ of expanding solitons on $M$ satisfying the hypotheses of Proposition \ref{Prop_gradientness}, and the proof of Proposition \ref{Prop_gradientness} shows again in this case that every member of this family is, in fact, a gradient expanding soliton, and hence in $\MM^{k^*}_{\grad}(X)$.
\end{proof}
\medskip

\begin{proof}[Proof of Theorem \ref{Thm_standard_disk}]
This is the second half of Theorem \ref{Thm_deg_std_disk}.
\end{proof}
\bigskip

\appendix

\section{Technical results} \label{appx_technical}
In this appendix, we prove several technical results that are needed in the course of the paper.

The following lemma establishes the existence of a smooth structure on an expanding soliton with respect to which both the metric and vector field are smooth.
This fact is used frequently throughout the paper, for example, in the definition of the weighted H\"older norms in Subsection~\ref{subsec_list_Holder}.

\begin{Lemma} \label{Lem_soliton_smooth}
If $(M,g,V)$ is an expanding soliton, where $M$ is a smooth orbifold, $g$ has regularity $C^2$ and $V$ has regularity $C^1$, then there is a unique smooth structure on $M$ such that $g,V$ are smooth.
\end{Lemma}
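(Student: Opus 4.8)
The plan is to obtain the smooth structure by passing to \emph{$g$-harmonic coordinates} and running an elliptic bootstrap on the coupled pair $(g,V)$. Since the statement is local and $M$ already carries a smooth orbifold structure, I would first fix $p\in M$, work in a smooth chart if $p$ is regular, and pass to a smooth local orbifold cover $\hat U\to U$ and argue $\Gamma$-equivariantly if $p$ is singular (possible since $\Gamma$ is finite). As $g$ is $C^2$, the functions $y^i$ solving $\triangle_g y^i=0$ with suitable initial data form a coordinate system, and by the regularity theory of harmonic coordinates (DeTurck--Kazdan) the components of $g$ in these coordinates lie in $C^{1,\beta}_{\mathrm{loc}}$ for every $\beta<1$ (this is all that will be used), while $V$ keeps regularity $C^1_{\mathrm{loc}}$. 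The point of harmonic coordinates is that $\Ric_g$ acquires the quasilinear elliptic form $\Ric_{ij}=-\tfrac12 g^{kl}\partial_k\partial_l g_{ij}+Q_{ij}(g,\partial g)$, so the soliton equation $\Ric_g+\tfrac12\LL_V g+\tfrac12 g=0$ turns into the strictly elliptic equation
\[
 g^{kl}\partial_k\partial_l g_{ij}=\LL_V g_{ij}+g_{ij}+2Q_{ij}(g,\partial g),
\]
whose right-hand side is a universal smooth function of $g$, $\partial g$, $V$ and $\partial V$.

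The second equation of the system comes from applying the operator $\DIV_g-\tfrac12\nabla^g\tr_g$ to the soliton equation: using $\DIV_g\Ric_g=\tfrac12 dR_g$, $\DIV_g g=0$, $\tr_g\Ric_g=R_g$, $\tr_g g=n$, and $\DIV_g(\LL_V g)-\tfrac12\nabla^g\tr_g(\LL_V g)=\triangle_g V+\Ric_g(V)$, one obtains, after the $dR_g$ terms cancel, the identity $\triangle_g V+\Ric_g(V)=0$ — the same computation used in the proof of Proposition~\ref{Prop_converging_representatives}. Because $g$ and $V$ are only of low regularity this is first read in the distributional sense, which is legitimate since $g\in W^{2,p}_{\mathrm{loc}}$ and $V\in W^{1,p}_{\mathrm{loc}}$ for all $p<\infty$, and it provides a strictly elliptic equation for $V$ whose principal part in harmonic coordinates is again $g^{kl}\partial_k\partial_l$. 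Its lower-order terms formally involve $\partial^2g$ through derivatives of Christoffel symbols, but in harmonic gauge $g^{kl}\Gamma^m_{kl}=0$ forces the only such contraction to be a curvature expression; hence at every stage of the bootstrap no derivative of $g$ worse than $\Ric_g$ enters the right-hand side.

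The core of the argument is then a simultaneous elliptic bootstrap for $(g,V)$, regarded as the unknown of the quasilinear elliptic system formed by the two equations above (its principal symbol is block-diagonal, each block the scalar Laplacian of $g$). One first uses interior $L^p$ estimates to reach $(g,V)\in C^{1,\beta}\times C^{1,\beta}$, noting that although $\Ric_g$ is a priori only distributional when $g\in C^{1,\beta}$, the soliton equation forces it to be continuous; and then, by induction, if $(g,V)\in C^{k,\beta}\times C^{k,\beta}$ the right-hand sides of both equations lie in $C^{k-1,\beta}$ (for the $V$-equation after re-expressing the $\partial^2 g$ terms via $\Ric_g$ and the soliton equation), so interior Schauder estimates upgrade both to $C^{k+1,\beta}$. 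Iterating gives $(g,V)\in C^\infty_{\mathrm{loc}}$ in each harmonic chart. Covering $M$ by such charts and observing that a transition map between two $g$-harmonic charts has $g$-harmonic components — hence is $C^\infty$ once $g$ is known to be $C^\infty$ — produces a smooth atlas (an orbifold atlas at singular points) with respect to which $g$ and $V$ are smooth, which gives existence. For uniqueness, if $g$ and $V$ are smooth with respect to two smooth structures $\mathcal A_1,\mathcal A_2$, then any $g$-harmonic coordinate system solves an elliptic system with $C^\infty$ coefficients with respect to either $\mathcal A_i$ and so is $C^\infty$ and a local diffeomorphism for each; thus the $g$-harmonic charts form a common pairwise-compatible covering atlas for $\mathcal A_1$ and $\mathcal A_2$, and since a smooth structure is determined by any such atlas, $\mathcal A_1=\mathcal A_2$.

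The main obstacle is technical rather than conceptual: organizing the two equations into a genuinely elliptic coupled system so that the merely $C^1$ a priori regularity of $V$, together with the appearance of $\partial^2 g$ in the invariant form of $\triangle_g V$, does not obstruct the bootstrap, and handling the initial low-regularity step, where the Bianchi-type identity must be read distributionally and one works in Sobolev spaces before entering the H\"older scale. I also expect that keeping track of equivariance under $\Gamma$ at singular points, so that the resulting atlas is an orbifold atlas, will require a small amount of care (e.g.\ choosing the harmonic charts from symmetric boundary data), but no essential difficulty.
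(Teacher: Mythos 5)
Your argument is correct, but it follows a genuinely different route from the paper's proof of Lemma~\ref{Lem_soliton_smooth}. You work in ordinary $g$-harmonic coordinates and bootstrap the \emph{coupled} system consisting of the soliton equation for $g$ and the equation $\triangle_g V+\Ric_g(V)=0$ obtained by applying $\DIV_g-\tfrac12\nabla^g\tr_g$ to the soliton equation (an identity the paper does use, but only later, in the proof of Proposition~\ref{Prop_converging_representatives}); this forces you to justify the contracted Bianchi identity distributionally at $C^2\times C^1$ regularity and to rewrite the $g^{kl}\partial_k\Gamma^m_{ls}$ term in $\triangle_g V$ via the harmonic gauge condition and the soliton equation so that no raw $\partial^2 g$ enters the right-hand side — a step you flag and which does check out, yielding a (staggered) Schauder bootstrap in which $g$ and $V$ gain regularity alternately. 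The paper instead chooses coordinates adapted to the soliton, namely drift-harmonic coordinates with $\triangle x^i-V\cdot\nabla x^i=0$, and this buys two simplifications: first, testing against $Y=\varphi\,\nabla x^j$ and symmetrizing in $i,j$ produces a weak elliptic equation $\triangle g^{ij}-\nabla_V g^{ij}=2\nabla^2_{kl}x^i\nabla^2_{kl}x^j-g^{ij}$ whose right-hand side contains \emph{no} derivatives of $V$ (the antisymmetric $dV^\flat$ contribution cancels), so no Bianchi-type identity or low-regularity distributional bookkeeping for $V$ is needed; second, in this gauge $V^k=\triangle x^k=-g^{kl}g^{ij}\big(\partial_i g_{jl}-\tfrac12\partial_l g_{ij}\big)$ is an algebraic expression in $g$ and $\partial g$, so $V$'s regularity is slaved to $g$'s and the system is effectively decoupled into a single scalar bootstrap. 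Your approach uses only off-the-shelf DeTurck--Kazdan harmonic-coordinate technology and also supplies a slightly more explicit uniqueness argument (the common atlas of $g$-harmonic charts) than the paper, which merely asserts uniqueness; the paper's gauge choice gives the shorter and cleaner argument at the lowest regularity. Both are valid proofs of the lemma.
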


\begin{proof}
In the following, we will construct the smooth structure via certain modified harmonic coordinates.
It will be clear that this structure is unique.
Since our construction will be local, we may pass to a local cover and assume from now on that $(M,g)$ is a (possibly incomplete) Riemannian \emph{manifold.}

Let $p \in M$ be an arbitrary point and choose coordinates $(y^1, \ldots, y^n)$ in a neighborhood $p \in U \subset M$ such that the coefficient functions of $g$ and $V$ with respect to this coordinate system have regularity $C^2$ and $C^1$, respectively.
After possibly shrinking $U$, we will now choose functions $x^1, \ldots, x^n \in C^2(U)$ such that
\begin{equation} \label{eq_choice_xi_ph}
 \triangle x^i - V \cdot \nabla x^i = 0 
\end{equation}
and such that $\{ dx^i|_p \}$ are linearly independent.
Such functions can, for example, be constructed by solving a Dirichlet problem.
The desired regularity follows from Schauder estimates.
After possibly shrinking $U$ once more, we assume that the functions $x^1, \ldots, x^n$ comprise a local coordinate system near $p$.
Our goal will be to show that the coefficient functions $g_{ij}$ and $V^k$ of $g$ and $V$, respectively, with respect to $(x^1, \ldots, x^n)$ are smooth.
To do this, note that the inverse matrix $(g^{ij})$ of the metric coefficients $(g_{ij})$ is given by $g^{ij} = \nabla x^i \cdot \nabla x^j$.
Moreover, by the change-of-coordinate formula, we find that $g_{ij}$, $g^{ij}$ and $V^k$ are of regularity $C^1$ in the coordinates $(x^1, \ldots, x^n)$.

We now compute $\triangle \nabla x^i$ in the weak sense.
For any compactly supported vector field $Y \in C^2 (U; TM)$ and for any $i = 1, \ldots, n$, we can carry out the following computation, which should be understood to occur with respect to the original smooth structure on $M$,
\begin{align*}
 - \int_U \nabla^2_{kl} x^i \nabla_k Y^l \, dg
&= - \int_U \nabla^2_{lk} x^i \nabla_k Y^l \, dg
=  \int_U \nabla_{k} x^i \nabla^2_{lk} Y^l \, dg \\
&= \int_U \big(  \nabla_k x^i \nabla^2_{kl} Y^l + \Ric(\nabla x^i, Y) \big) \, dg \displaybreak[1] \\
&= \int_U \big(  - \triangle x^i \, \nabla_{l} Y^l + \Ric(\nabla x^i, Y) \big) \, dg \displaybreak[1] \\
&= \int_U \big(  -( V \cdot \nabla x^i ) \, \nabla_{l} Y^l + \Ric(\nabla x^i, Y) \big) \, dg \displaybreak[1] \\
&=  \int_U \big(  \nabla_l V^k  \nabla_k x^i \, Y^l +  V^k  \nabla_{lk}^2 x^i \, Y^l +  \Ric(\nabla x^i, Y) \big) \, dg
\\
&=  \frac12 \int_U \big(  (\nabla_l V^k - \nabla_k V^l)  \nabla_k x^i \, Y^l + 2 V^k  \nabla_{lk}^2 x^i \, Y^l - \nabla x^i \cdot Y \big) \, dg. 
\end{align*}
Thus, if we set $Y := \varphi \nabla x^j$ for some $j = 1, \ldots, n$, then we get
\[ - \int_U \nabla^2_{kl} x^i \nabla_k (\varphi  \nabla_l x^j) dg 
=  \frac12 \int_U \big(  (\nabla_l V^k - \nabla_k V^l)  \nabla_k x^i \nabla_l x^j + 2 V^k  \nabla_{lk}^2 x^i \, \nabla_l x^j - \nabla x^i \cdot \nabla x^j \big) \varphi \, dg, 
\]
hence, by symmetrizing with respect to $i,j$, we obtain the following weak characterization of $\triangle  g^{ij}$:
\begin{align}
   - \int_U \nabla_k  g^{ij} \nabla_k \varphi \, dg
&= - \int_U \big( \nabla^2_{kl} x^i \nabla_l x^j + \nabla^2_{l} x^i \nabla_{kl} x^j \big) \nabla_k \varphi \, dg \notag \\
&=  \int_U \big( V^k \nabla^2_{lk} x^i \nabla_l x^j + V^k \nabla^2_{lk} x^j \nabla_l x^i + 2 \nabla^2_{kl} x^i \nabla^2_{kl} x^j - g^{ij} \big) \varphi \,dg \notag \\
&=  \int_U \big( V^k \nabla_k g^{ij} + 2 \nabla^2_{kl} x^i \nabla^2_{kl} x^j - g^{ij} \big) \varphi \,dg. \label{eq_trgij_weak}
\end{align}
Since \eqref{eq_trgij_weak} only involves zeroth and first derivatives of the metric, this weak characterization also makes sense in the coordinate system $(x^1, \ldots, x^n)$.
Next, note that in this coordinate system, the coefficients $\nabla^2_{kl} x^i$ and $\nabla^2_{kl} x^i$ depend only on the metric coefficients $g_{ij}$ and their first derivatives.
So in the weak sense we have $\triangle g^{ij} = u^{ij}$ for some $u^{ij}$ of regularity $C^{0}$.
Therefore, using standard elliptic theory and Schauder estimates, we obtain that the $g^{ij}$, and therefore the $g_{ij}$, are even of regularity $C^{1,\alpha}$ for some arbitrary $\alpha \in (0,1)$, which we will fix henceforth.
Applying this improved regularity to \eqref{eq_trgij_weak}, implies that the functions $u^{ij}$ are even of regularity $C^{0,\alpha}$ and thus the coefficient functions $g^{ij}$ are of regularity $C^{2,\alpha}$.

Due to the improved regularity, we can now rewrite \eqref{eq_trgij_weak} in its strong form as
\begin{equation} \label{eq_trigijstrong}
 \triangle g^{ij} - \nabla_V g^{ij} = 2 \nabla_{kl}^2 x^i \nabla_{kl} x^j - g^{ij}. 
\end{equation}
Suppose by induction that the $g^{ij}$, and thus the $g_{ij}$, are of regularity $C^{m,\alpha}$ for some $m \geq 2$.
By \eqref{eq_choice_xi_ph}, the coefficients of the vector field $V$ satisfy
\[ V^k = dx^k (V) =    \nabla x^k \cdot V= \triangle x^k = - g^{kl}g^{ij} \bigg( \frac{\partial g_{jl}}{\partial x^i}  - \frac12 \, \frac{\partial g_{ij}}{\partial x^l} \bigg), \] 
so the coefficients $V^k$ have regularity $C^{m-1,\alpha}$.
Schauder estimates applied to \eqref{eq_trigijstrong} then imply that the $g^{ij}$ are even of regularity $C^{m+1,\alpha}$, which finishes the induction, showing that the coefficient functions $g_{ij}$ and $V^k$ are smooth.
\end{proof}
\medskip

The following lemma states that if a smooth Ricci flow $(g_t)_{t \in (0,T]}$ with uniformly bounded curvature converges in the $C^0$-sense, as $t \searrow 0$, to a metric $g_0$ that is isometric to a metric $\ov g$ of regularity $C^k$, then $g_0$ is of regularity $C^k$ and the convergence even occurs in the $C^{k-2}_{\loc}$-sense.

\begin{Lemma} \label{Lem_RF_Ck_convergence}
Let $(M,(g_t)_{t \in (0,T]})$ be a smooth Ricci flow with uniformly bounded curvature on a manifold $M$.
(We do not require the time-slices $(M,g_t)$ to be complete.) 
Suppose that we have local uniform convergence $g_t \to g_0$, as $t \to 0$, where $g_0$ is a $C^0$-metric with the property that the metric space $(M, d_{g_0})$ is locally isometric to the length space of a Riemannian manifold $(\ov M, \ov g)$, where $\ov g$ has regularity $C^k$, $k \geq 3$.
Then $g_0$ is even of regularity $C^{k-2}$ and the convergence $g_t \to g_0$ is in $C^{k-2}_{\loc}$.
\end{Lemma}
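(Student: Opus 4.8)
The plan is to bootstrap regularity by combining the parabolic smoothing of the Ricci flow with the elliptic estimates coming from harmonic-type coordinates, very much as in the proof of Lemma~\ref{Lem_soliton_smooth}. First I would observe that the statement is local, so it suffices to work in a small neighborhood $U$ of a fixed point $p \in M$ and to show that $g_0|_U$ has regularity $C^{k-2}$ and that $g_t \to g_0$ in $C^{k-2}(U')$ for $U' \Subset U$. Since $(M,d_{g_0})$ is locally isometric to the length space of the $C^k$-manifold $(\ov M,\ov g)$, there is a bijection $\Psi$ from a neighborhood in $\ov M$ onto $U$ which is an isometry of length metrics; the core issue is that a priori $\Psi$ is only known to be bilipschitz, and one needs to upgrade it, along with $g_0$, to the claimed regularity.

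The main technical device is to introduce, on $(\ov M,\ov g)$, a coordinate system $(x^1,\dots,x^n)$ that is harmonic for $\ov g$ (solving $\triangle_{\ov g} x^i = 0$), so that in these coordinates the components $\ov g_{ij}$ gain a derivative and lie in $C^{k-1,\alpha}$ by elliptic regularity. Transporting these coordinates to $U$ via $\Psi$, one gets Lipschitz coordinate functions $\td x^i := x^i \circ \Psi^{-1}$ on $U$, and one checks, using that $\Psi$ is a metric isometry, that $g_0(\nabla \td x^i, \nabla \td x^j) = \ov g^{ij}$ in a suitable weak sense. Now I would run the following bootstrap. For each fixed small $t>0$, solve the $g_t$-harmonic coordinate problem $\triangle_{g_t} x_t^i = 0$ on $U$ with boundary data $\td x^i|_{\partial U}$. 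Standard Schauder estimates for the (smooth) metric $g_t$, together with the uniform curvature bound and the Shi derivative estimates applied to $(g_t)$ on $(t/2,T]$, give interior bounds on $x_t^i$ that are uniform as $t \searrow 0$ once one knows a uniform $C^{1,\alpha}$-bound on $g_t$; the latter follows from the classical parabolic estimate that a Ricci flow with $|{\Rm}| \le \Lambda$ satisfies $|\nabla \Rm| \le C(\Lambda,n) t^{-1/2}$, hence $g_t$ is uniformly bounded in $C^{1,\alpha}$ on any fixed $U' \Subset U$ away from $t=0$ only after further work — so instead I would pass to the flow on $[s,T]$ for $s>0$ and first establish $C^{m}_{\loc}$-convergence on $(s,T]$ for all $m$, then send $s \to 0$ at the end. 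Concretely: by Arzel\`a--Ascoli the $x_t^i$ subconverge in $C^{1,\beta}_{\loc}$ to Lipschitz functions $x_0^i$ which are $g_0$-harmonic in the weak sense and agree with $\td x^i$; writing the flow equation $\partial_t g_t = -2\Ric(g_t)$ in the coordinates $x_t^i$ and using that in harmonic coordinates $-2\Ric_{ij}$ equals $\triangle_{g_t} (g_t)_{ij}$ plus lower-order quadratic terms in $\partial g_t$, one gets that $(g_t)_{ij}$, expressed in these coordinates, solves a uniformly parabolic quasilinear system with coefficients controlled by the (now bounded) $C^{1,\alpha}$-norm of $g_t$. Interior parabolic Schauder theory then promotes this to a uniform $C^{2,\alpha}$-bound, hence to $C^{2+m,\alpha}$-bounds for all $m$ by iteration, uniformly on compact subsets of $U \times (0,T]$; in particular $g_t \to g_0$ in $C^{\infty}_{\loc}(U\times(0,T])$-in-space for $t$ in compact subsets of $(0,T]$.

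Having obtained smooth convergence on $(0,T]$, the remaining point is regularity of the limit $g_0$ itself as $t\searrow 0$. For this I would note that in the harmonic coordinates $\td x^i$ the limiting components $(g_0)_{ij}$ coincide with $\ov g_{ij}$ (via $\Psi$), which are in $C^{k-1,\alpha}$, so $g_0$ has regularity $C^{k-1,\alpha}$ in these coordinates; since the $C^{k-2}$-regularity of an atlas is independent of the choice of coordinates of at least that regularity, $g_0$ is a genuine $C^{k-2}$-metric. To upgrade the convergence to $C^{k-2}_{\loc}$ up to $t=0$, I would use the evolution equation one more time: $\partial_t (g_t)_{ij} = \triangle_{g_t}(g_t)_{ij} + (\text{quadratic in }\partial g_t)$, together with the now-available uniform bounds on $g_t$ in $C^{k-2}$ on a slightly larger set and a standard interpolation/maximum-principle argument bounding $\|g_t - g_0\|_{C^{k-2}}$ in terms of $\|g_t-g_0\|_{C^0} \to 0$ and uniform higher bounds; more precisely, uniform $C^{k-1,\alpha}$-bounds on $g_t$ combined with $C^0$-convergence force $C^{k-2,\beta}$-convergence by compactness, which is exactly the claim. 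The main obstacle, I expect, is the first step: establishing that the limit metric $g_0$ and the identification $\Psi$ are regular enough that $g_t$-harmonic coordinates converge to genuine coordinates on $(\ov M,\ov g)$, i.e. controlling the passage from "isometry of length metrics" to "diffeomorphism of the required regularity." This is where one must carefully invoke the fact that distance-preserving maps between Riemannian manifolds are smooth (Myers--Steenrod type rigidity) in the appropriate low-regularity setting, applied after the parabolic smoothing has already made the intermediate metrics $g_t$ smooth, so that the limiting identification inherits regularity from the uniform estimates rather than being assumed a priori.
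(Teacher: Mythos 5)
There is a genuine gap, and it sits at the heart of your argument. Your plan is to solve the elliptic problem $\triangle_{g_t} x_t^i = 0$ slice by slice and then claim that, written in these time-dependent coordinates, the Ricci flow becomes a uniformly parabolic quasilinear system $\partial_t (g_t)_{ij} = \triangle_{g_t}(g_t)_{ij} + Q(\partial g_t)$. This is not true as stated: the identity $-2\Ric_{ij} = \triangle_{g_t} (g_t)_{ij} + Q$ holds in coordinates harmonic for the metric at that instant, but if the coordinates themselves depend on $t$ then $\partial_t$ of the metric components picks up an uncontrolled transport term coming from $\partial_t x_t^i$ (equivalently, a Lie derivative along the time-dependent gauge), and if you instead freeze the coordinates they cease to be harmonic at later times. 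Closing exactly this gap is what the DeTurck trick is for, and it is how the paper proceeds: one first shows (via Shi's estimates) that $g_0$ is $C^{1,\alpha}$ and that the length-space identification $\phi$ with $(\ov M, \ov g)$ is a $C^{2,\alpha}$ diffeomorphism, then runs the \emph{harmonic map heat flow} $\partial_t \chi_t^i = \triangle_{g_t}\chi_t^i$ starting from a chart $\chi = \ov\chi \circ \phi$. The pushed-forward family $g'_t = (\chi_t)_* g_t$ then solves the genuinely parabolic Ricci--DeTurck equation with $C^k$ initial data $g'_0 = \ov\chi_* \ov g$, and parabolic regularity yields $g'_t \to g'_0$ in $C^{k-1}_{\loc}$ together with $|\partial^k g'_t| \le C t^{-1/2}$.

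The second problem is your final step. Knowing that the components of $g_0$ in the (at best $C^{2,\alpha}$) harmonic coordinates agree with $\ov g_{ij} \in C^{k-1,\alpha}$ does \emph{not} give $g_0 \in C^{k-2}$ with respect to the smooth structure of $M$ on which the flow lives: the transition maps between those coordinates and the original ones are only known to be $C^{2,\alpha}$ (that is precisely the a priori regularity of $\phi$, since $g_0$ is only $C^{1,\alpha}$ at this stage), so the appeal to "regularity of an atlas is coordinate-independent" is circular, as is the later invocation of "uniform $C^{k-1,\alpha}$ bounds on $g_t$" in the original coordinates, which have not been established. The paper avoids this by converting the DeTurck estimates into diffeomorphism-invariant bounds $|\nabla^{m}{\Rm}_{g_t}| \le C$ for $m \le k-3$ and $|\nabla^{k-2}{\Rm}_{g_t}| \le C t^{-1/2}$, and then integrating $|\partial_t \partial^{k'} g_t| \le C|\partial^{k'} g_t| + C + Ct^{-1/2}$ in the \emph{original} coordinates, inductively for $k' = 2, \dots, k-2$; this is also why the conclusion loses two derivatives ($C^{k-2}$ rather than $C^{k-1,\alpha}$). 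To repair your argument you would need to replace the slice-wise elliptic gauge by the harmonic map heat flow gauge and add the curvature-transfer-plus-integration step; as written, both the parabolicity claim and the regularity transfer back to $M$ fail.
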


See also the related result \cite[Theorem~1.6]{Deruelle_Schulze_Simon_2022}, which holds in the smooth setting under weaker assumptions. 

\begin{proof}
Without loss of generality, we may assume that $M$ is an open subset of $\IR^n$ and that $g_t$ is smooth with respect to the Cartesian coordinates; denote by $(g_t)_{ij}$ the corresponding metric coefficients.
We may also assume that $M$ contains the origin $\vec 0 \in \IR^n$ and it suffices to establish the desired regularity and convergence property in a neighborhood of $\vec 0$.

The strategy of the proof is to gauge the Ricci flow $(g_t)$ into a Ricci DeTurck flow $(g'_t)$, which satisfies a strongly parabolic equation, and must therefore have better regularity.
The main complication will be to ensure the existence of a harmonic map heat flow despite the low regularity of $(g_t)$ at $t=0$.
In the following, we will use the notation and computations from \cite[Appendix~A]{Bamler_Kleiner_uniqueness}.
We will also fix some arbitrary $\alpha \in (0,\frac12)$ and denote by $C$ a generic constant.

\begin{Claim}
The metric coefficients $(g_t)_{ij}$ and the spatial derivatives $\partial (g_t)_{ij}$ are of regularity $C^{\alpha}$ in space-time $M \times [0,T]$.
In particular, the metric coefficients $(g_0)_{ij}$ are of regularity $C^{1,\alpha}$.
\end{Claim}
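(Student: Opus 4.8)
The plan is to exploit the fact that a Ricci flow with bounded curvature enjoys a priori interior parabolic regularity, and then to bootstrap using the assumed $C^k$-regularity of the limiting metric $\ov g$ through a harmonic-map-heat-flow gauge fixing. First I would observe that since the curvature of $(g_t)$ is uniformly bounded on $M\times(0,T]$, Shi's estimates provide uniform bounds on all spatial and temporal derivatives of $(g_t)_{ij}$ on compact subsets of $M\times(\delta,T]$ for each $\delta>0$; the issue is purely the behavior as $t\searrow 0$. For the claim at hand, one first establishes that the metric coefficients $(g_t)_{ij}$ and their spatial first derivatives extend $C^\alpha$-H\"older continuously up to $t=0$. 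For this I would use the evolution equation $\partial_t g_t = -2\Ric_{g_t}$ together with the uniform curvature bound: this immediately gives $|\partial_t(g_t)_{ij}|\le C$, so $(g_t)_{ij}$ is Lipschitz in $t$, hence $C^\alpha$ in space-time once combined with the $C^0_{\loc}$-convergence $g_t\to g_0$ and local uniform spatial continuity (which itself follows from a short-time interior estimate: at scale $\sqrt{t}$ the rescaled flow has bounded geometry, so $|\partial(g_t)_{ij}|\le C t^{-1/2}$ a priori, but this is not yet enough). To upgrade to a genuine spatial $C^{1,\alpha}$-bound uniform down to $t=0$, I would pass to harmonic-type or DeTurck coordinates.

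The key step, and the main obstacle, is constructing a solution $\chi_t:M\to\ov M$ of the harmonic map heat flow with $\chi_0=\id$ (interpreted via the local isometry between $(M,d_{g_0})$ and the length space of $(\ov M,\ov g)$), so that $g'_t:=(\chi_t)_*g_t$ solves the Ricci--DeTurck flow with background metric $\ov g$. The difficulty is that at $t=0$ the map $\chi_0$ is only an isometry of metric spaces where the source metric $g_0$ is merely $C^0$, so standard harmonic-map-heat-flow existence theory does not directly apply. The plan is to solve the harmonic map heat flow backwards from a positive time $t_0>0$ (where $g_{t_0}$ is smooth) with fixed target $(\ov M,\ov g)$, obtain uniform estimates on $\chi_t$ depending only on the curvature bound and the $C^k$-bound on $\ov g$, and then let $t_0\searrow 0$. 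The uniform estimates come from the fact that, because $g_0$ is isometric to $\ov g$ as length spaces, the identity map is "almost harmonic at $t=0$" in a quantitative sense: the tension field of $\chi_t$ is controlled, so $\chi_t$ stays $C^{1,\alpha}$-close to an isometry uniformly, and in particular extends continuously to $t=0$. One then checks that $g'_t=(\chi_t)_*g_t$ satisfies the strictly parabolic Ricci--DeTurck equation $\partial_t g'_t = g'^{ab}\partial_a\partial_b g'_t + (\text{lower order in } g', \ov g)$, with coefficients involving up to $k$ derivatives of $\ov g$.

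Once the Ricci--DeTurck flow $(g'_t)$ with $C^k$-background $\ov g$ is in hand, the conclusion follows from interior Schauder estimates for parabolic systems: since the equation is strictly parabolic with coefficients controlled by $\ov g\in C^k$, and since $g'_t\to g'_0$ (where $g'_0$ is $C^k$, being the pushforward of $g_0$ under an isometry onto $\ov g$, hence equal to $\ov g$ up to a fixed diffeomorphism) locally uniformly, a standard bootstrap gives $g'_t\to g'_0$ in $C^{k-2}_{\loc}(M\times[0,T'])$ for small $T'$ — one loses two derivatives because the coefficients and forcing terms involve second derivatives of the background and of $g'$ itself near the "initial" regularity threshold. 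Transferring back via $\chi_t$, which is $C^{k-1}_{\loc}$-regular up to $t=0$ by parabolic regularity for the harmonic map heat flow, we deduce $g_t=(\chi_t^{-1})_*g'_t\to g_0$ in $C^{k-2}_{\loc}$, and in particular $g_0\in C^{k-2}$. Finally, for the displayed claim specifically, the weaker statement that $(g_t)_{ij}$ and $\partial(g_t)_{ij}$ are $C^\alpha$ in space-time (equivalently $(g_0)_{ij}\in C^{1,\alpha}$) is the $k=3$ base case of this scheme, and in fact can be obtained more directly: the DeTurck trick with a smooth background metric (take $\ov g$ itself, already $C^3\supset C^{2,\alpha}$) plus the local uniform convergence and interior parabolic Schauder estimates on balls of unit size give uniform $C^{2,\alpha}$-bounds on $g'_t$ down to $t=0$, hence $C^{1,\alpha}$ on $g_t$ after undoing the gauge, which yields the stated space-time $C^\alpha$-regularity of $(g_t)_{ij}$ and $\partial(g_t)_{ij}$.
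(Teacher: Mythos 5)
There is a genuine gap, and it is essentially a circularity. The Claim you are asked to prove is the \emph{first} step in the proof of Lemma~\ref{Lem_RF_Ck_convergence}, and the paper proves it without ever invoking the identification with $(\ov M,\ov g)$: it uses only the uniform curvature bound and Shi's estimates $|\nabla^{l,g_t}\Rm_{g_t}|\le Ct^{-l/2}$, integrating the ODI $|\partial_t\partial g_t|\le C|\partial g_t|+Ct^{-1/2}$ to get $C^1_{\loc}$-convergence and temporal H\"older bounds, and then a two-point ODI for $(\partial g_t)_{p_2}-(\partial g_t)_{p_1}$ together with the interpolation $\min\{t^{-1/2},t^{-1}r\}\le t^{-(1-\alpha)/2-\alpha}r^\alpha$ to get spatial $C^\alpha$-control of $\partial g_t$ uniformly down to $t=0$. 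The reason the paper proceeds this way is that the $C^{1,\alpha}$-regularity of $g_0$ is needed \emph{before} one can say anything about the map $\phi$: the hypothesis only gives a metric-space isometry between $(M,d_{g_0})$ and the length space of $(\ov M,\ov g)$, and with $g_0$ merely $C^0$ such an isometry need not be differentiable at all. The paper upgrades $\phi$ to a $C^{2,\alpha}$-diffeomorphism in its \emph{next} claim, via weakly harmonic functions and elliptic regularity, using precisely the $C^{1,\alpha}$-regularity of $g_0$ established here.

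Your proposal runs this logic backwards. To solve the harmonic map heat flow into $(\ov M,\ov g)$, or to set up a Ricci--DeTurck flow with background $\ov g$, you must already know that the identification of $M$ with $\ov M$ (equivalently $\chi_0$, or the statement that ``$g'_0$ equals $\ov g$ up to a fixed diffeomorphism'') has at least $C^1$-regularity; that is exactly what is unavailable at this stage, and your assertion that the tension field of $\chi_t$ is controlled ``because $g_0$ is isometric to $\ov g$ as length spaces'' has no quantitative content without differentiability of that isometry. Independently of the gauge issue, the final step also fails: interior parabolic Schauder estimates for the Ricci--DeTurck flow give uniform $C^{2,\alpha}$-bounds \emph{up to} $t=0$ only when the initial data already lies in the corresponding H\"older class; with merely $C^0$ initial data one only gets smoothing estimates with constants blowing up like $t^{-l/2}$, which is no more than what Shi's estimates already provide and does not yield H\"older regularity of $g_t$ or $\partial g_t$ at $t=0$. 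So the regularity you need for the conclusion is implicitly assumed in two places. The fix is the paper's direct argument: work entirely on the Ricci-flow side with Shi's estimates and integrate the ODIs for the coefficients and for differences of first derivatives at two points; the DeTurck/harmonic-map machinery belongs to the later stage of the lemma, after $\phi$ has been shown to be a $C^{2,\alpha}$-diffeomorphism.
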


\begin{proof}
Shi's estimates combined with the uniform curvature assumption give us that for $l \geq 0$
\begin{equation} \label{eq_Shi_bounds}
|{\nabla^{l,g_t} \Rm_{g_t}}| \leq C t^{-l/2}.
\end{equation}
This implies that we have the following bound on the time derivative of the first spatial derivative of the metric coefficients
\[ |\partial_t \partial g_t| 
\leq C |\partial g_t| + C |\nabla {\Ric_{g_t}}|
\leq C |\partial g_t| + Ct^{-1/2}. \]
Integrating this ODI implies convergence $(g_t)_{ij} \to (g_0)_{ij}$ in $C^1_{\loc}$, as well as $C^{\alpha}$-bounds of the spatial derivatives $\partial (g_t)_{ij}$ with respect to the time variable. 

It remains to show that the spatial derivatives converge in the $C^{\alpha}_{\loc}$-sense as $t \to 0$.
To do this, consider two points $p_1,p_2 \in M$ and set $r := |p_1 - p_2|$.
A similar computation, combined with the fact that we have local uniform bounds on the metric coefficients $(g_t)_{ij}$ and their first derivatives $\partial (g_t)_{ij}$ yields
\begin{align}
 \big|\partial_t \big( (\partial g_t)_{p_2} - (\partial g_t)_{p_1} \big) \big|
&\leq  C\big| (\partial g_t)_{p_2} - (\partial g_t)_{p_1} \big| + C\big| {\Ric}_{g_t, p_2} - {\Ric}_{g_t,p_1} \big| \notag \\
&\qquad + C\big| (g_t)_{p_2} - (g_t)_{p_1} \big| 
+ C \big| (\nabla {\Ric_{g_t}})_{p_2} - (\nabla {\Ric_{g_t}})_{p_1} \big| \notag \\
&\leq  C\big| (\partial g_t)_{p_2} - (\partial g_t)_{p_1}  \big| + C t^{-1/2} r + C r + C \min\{ t^{-1/2}, t^{-1} r \}, \label{eq_diff_g_partialt}
\end{align}
where in the last term we have bounded the difference of the curvature terms using \eqref{eq_Shi_bounds} for both $l=1$ and $l=2$.
Since
\[ \min\{ t^{-1/2}, t^{-1} r \} \leq t^{-(1-\alpha)/2 -\alpha}  r^\alpha, \]
we obtain the desired result by integrating \eqref{eq_diff_g_partialt}.
\end{proof}

\begin{Claim}
There is a $C^{2,\alpha}$-regular diffeomorphism $\phi : M \to \ov M$ with $\phi^* \ov g = g$.
\end{Claim}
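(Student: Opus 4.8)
The plan is to construct the diffeomorphism $\phi$ using the same harmonic-map heat flow gauge that underlies Ricci-DeTurck flow, but carried out carefully at low regularity near $t=0$. First I would note that the previous claim gives us $g_t \to g_0$ in $C^1_{\loc}$ with uniform $C^{1,\alpha}$-bounds in space-time, and that $g_0$ is $C^{1,\alpha}$ and is locally isometric, as a length space, to $(\ov M, \ov g)$ with $\ov g$ of regularity $C^k$, $k\ge 3$. The first real step is to upgrade this \emph{metric-space} isometry to a $C^{2,\alpha}$-regular \emph{Riemannian} isometry: a theorem of Calabi--Hartman (or the low-regularity version, e.g.\ as in Taylor's work on the regularity of distance-preserving maps) says that a distance-preserving homeomorphism between a $C^{1,\alpha}$ and a $C^k$ Riemannian manifold is automatically $C^{2,\alpha}$ in harmonic coordinates. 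So there is a $C^{2,\alpha}$-diffeomorphism $\phi_0 : M \to \ov M$ with $\phi_0^* \ov g = g_0$. This already proves the claim \emph{at} $t=0$; but to conclude that $g_0$ itself (as the limit of the smooth flow) has regularity $C^{k-2}$ and the convergence improves to $C^{k-2}_{\loc}$, I would pull everything back by $\phi_0$ and instead work with a Ricci flow $(\ov g_t)$ on $\ov M$ with $\ov g_t \to \ov g$ in $C^1_{\loc}$, where now the limit $\ov g$ is already as smooth as we want; so without loss of generality assume $M = \ov M$ and $g_0 = \ov g$ is of regularity $C^k$.

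Next I would set up the harmonic map heat flow $\chi_t : (M, g_t) \to (M, g_0)$ with $\chi_t \to \id_M$ as $t\to 0$, satisfying $\partial_t \chi_t = \triangle_{g_t, g_0} \chi_t$. The subtlety is existence and regularity of $\chi_t$ down to $t=0$: the source metrics $g_t$ are smooth for $t>0$ but only $C^{1,\alpha}$ uniformly up to $t=0$, while the target metric $g_0$ is $C^k$ fixed. In local coordinates the equation for the components of $\chi_t$ is a strictly parabolic system with coefficients controlled by $g_t^{ij}$ (which are uniformly $C^{\alpha}$ in space-time by the previous claim) and by Christoffel symbols of $g_0$ (which are $C^{k-1}$); the inhomogeneous term involves $\partial g_t$, which is uniformly $C^\alpha$ in space-time. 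So by standard linear parabolic Schauder theory with $C^\alpha$ coefficients (e.g.\ Ladyzhenskaya--Solonnikov--Uraltseva), for a short time the flow $\chi_t$ exists, is a diffeomorphism for $t$ small, and satisfies uniform $C^{2,\alpha}$-in-space, $C^{1,\alpha/2}$-in-time bounds, with $\chi_t \to \id_M$ in $C^{2}_{\loc}$ as $t\to 0$. This is the step I expect to be the main obstacle — one must run a linear parabolic existence argument whose coefficients degenerate in smoothness exactly at the initial time, and then bootstrap to get that $\chi_t$ is genuinely $C^{2,\alpha}$ and invertible near $t=0$; the saving grace is that the \emph{inverse} function theorem / openness of diffeomorphisms only needs $C^1$-closeness to the identity, which the Schauder bounds provide.

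Finally, with $\chi_t$ in hand, the pushed-forward metrics $g'_t := (\chi_t^{-1})^* g_t$ solve the Ricci-DeTurck flow $\partial_t g'_t = -2\Ric_{g'_t} + \LL_{W_t} g'_t$, which in local coordinates (with the fixed background $g_0$) is a \emph{strictly parabolic, quasilinear} system; its coefficients are smooth functions of $g'_t$ and the fixed $C^k$-data $g_0$, and $g'_0 = g_0$ is of regularity $C^k$. Interior parabolic Schauder estimates applied iteratively then give $g'_t \to g'_0 = g_0$ in $C^{k-2}_{\loc}$ as $t\to 0$ (one loses two derivatives off the initial data as usual for quasilinear parabolic problems; that is precisely the $C^{k-2}$ in the statement). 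Undoing the gauge, $g_t = \chi_t^* g'_t$ converges to $g_0$ in $C^{k-2}_{\loc}$ because $\chi_t \to \id$ in $C^{2}$ and $g'_t$ is uniformly bounded in $C^{k-2}$; in particular $g_0$ is of regularity $C^{k-2}$, completing the proof of the lemma. For the statement of the present claim specifically, the composition $\phi := \phi_0$ (or in the reduced picture $\phi = \id$, transported back) is the desired $C^{2,\alpha}$-diffeomorphism with $\phi^* \ov g = g$, since $g_0 = g$ in the notation of the lemma.

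\begin{proof}[Proof sketch, to be expanded]
By the previous claim and the low-regularity Calabi--Hartman regularity theorem for distance-preserving maps, the local isometry of length spaces between $(M,d_{g_0})$ and the length space of $(\ov M, \ov g)$ is realized by a $C^{2,\alpha}$-regular diffeomorphism $\phi : M \to \ov M$ with $\phi^* \ov g = g_0 = g$. Pulling back the flow by $\phi$ reduces the remaining assertions of Lemma~\ref{Lem_RF_Ck_convergence} to the case $g_0 = \ov g \in C^k$; one then constructs the harmonic map heat flow $\chi_t : (M,g_t) \to (M,g_0)$ with $\chi_t \to \id_M$ via linear parabolic Schauder theory with $C^\alpha$ space-time coefficients, pushes forward to a Ricci--DeTurck flow $g'_t$ with $C^k$ initial data, applies quasilinear parabolic Schauder estimates to obtain $g'_t \to g_0$ in $C^{k-2}_{\loc}$, and undoes the gauge.
\end{proof}
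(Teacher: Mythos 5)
Your proof of this claim is correct, and it is essentially the same argument as the paper's, except that you outsource the key step to a citation: you invoke the Calabi--Hartman/Taylor regularity theorem for distance-preserving maps, whereas the paper reproves the relevant low-regularity case self-containedly — showing that harmonic functions on $(\ov M,\ov g)$ pull back under the metric isometry to weakly harmonic (Lipschitz) functions on $(M,g_0)$, applying elliptic regularity to get $C^{2,\alpha}$ harmonic coordinates, and reading off that $\phi$ is $C^{2,\alpha}$ with $\phi^*\ov g = g_0$ everywhere. Since the harmonic-coordinate argument is precisely the standard proof of the theorem you cite, the mathematical content coincides; your route is legitimate provided you note that the hypotheses are asymmetric ($g_0$ only $C^{1,\alpha}$ by the previous claim, $\ov g \in C^k$) and that the isometry's regularity is governed by the rougher metric, which yields exactly the claimed $C^{2,\alpha}$. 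The remaining two-thirds of your proposal (harmonic map heat flow with $C^\alpha$ space-time coefficients, Ricci--DeTurck flow with $C^k$ initial data, bootstrapping to $C^{k-2}_{\loc}$-convergence) concerns the rest of Lemma~\ref{Lem_RF_Ck_convergence} rather than this claim; it is broadly consistent with the paper's continuation, which instead composes $\phi$ with a smooth chart of $\ov M$ and runs the harmonic map heat flow in coordinates, but that difference is immaterial for the statement under review.
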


\begin{proof}
We will show that the local metric isometry $\phi : M \to \ov M$ between the length spaces of $(M, g_0)$ and $(\ov M, \ov g_0)$ has the desired regularity.
Obviously, $\phi$ is locally bilipschitz, so $\phi$ and $\phi^{-1}$ are differentiable almost everywhere and we have $\phi^* \ov g = g$ almost everywhere.
Consider a harmonic function $\ov y \in C^3 (\ov M)$, $\triangle_{\ov g} \ov y = 0$ and set $y := \ov y \circ \phi$.
Then $y$ is locally Lipschitz and $dy = \phi^* d\ov y$ almost everywhere.
Moreover, for any compactly supported test function $h \in C^2_c (M)$ we have
\[ \int_M (dy \cdot_g dh) dg = \int_{\ov M} \big( d\ov y \cdot_{\ov g} d (h \circ \phi^{-1}) \big) d\ov g = 0. \]
So $y$ is weakly harmonic.
So standard elliptic regularity theory implies that $y$ is of regularity $C^{2,\alpha}$ and harmonic.
It follows that if $(\ov y^1, \ldots, \ov y^n)$ are harmonic coordinates of $(\ov M, \ov g)$ of regularity $C^3$, then $(y^1 := \ov y^1 \circ y^1, \ldots, y^n := \ov y^n \circ y^n)$ are harmonic coordinates of $(M,g)$ of regularity $C^{2,\alpha}$.
Moreover, the metrics $g, \ov g$ in these coordinates have the same coefficients almost everywhere and thus everywhere.
Since $\phi$ arises by composing the inverse $(\ov y^1, \ldots, \ov y^n)$ with $(y^1, \ldots, y^n)$, we obtain that $\phi$ has regularity $C^{2,\alpha}$.
\end{proof}

Let us now choose a neighborhood $\vec 0 \subset U \subset M$ and a local diffeomorphism $\chi : U \to \IR^n$ of regularity $C^{2,\alpha}$ such that $\chi = \ov\chi  \circ \phi$ for some smooth $\ov\chi : \ov U \to \IR^n$, where $\ov U \subset \ov M$ is a neighborhood of $\phi (\vec 0)$.
So $\chi_* g = \ov\chi_* \ov g$ is of regularity $C^k$.
After possibly shrinking $U$, we can find a solution $(\chi_t : U \to \IR^n)$ to the harmonic map heat flow equation $\partial_t \chi_t^i = \triangle_{g_t} \chi_t^i$ with initial condition $\chi_0 = \chi$.
By standard parabolic estimates (see, for example, \cite[Lemma~4.3]{Appleton_2018}), and after possibly shrinking $U$ again, we obtain that the solution $\chi_t$, its first and second spatial derivative, as well as its first time derivative are of H\"older regularity on $U \times [0,T]$.
In addition, the family of maps restricted to $U \times (0,T]$ is even smooth.
So, after possibly shrinking $T$, we obtain that there is a neighborhood $\chi_0 (\vec 0) \in V \subset \IR^n$ such that $\chi_t (U) \supset V$ for all $t \in [0,T]$.

We conclude that the family of metrics $(g'_t := (\chi_t)_* g_t)_{t \in [0,T]}$ and its first spatial derivatives are H\"older continuous on $V \times [0,T]$.
Moreover, this family is smooth when restricted to $V \times (0,T]$ and evolves by the Ricci DeTurck equation with respect to the Euclidean background metric on $V$.
Recall that the Ricci DeTurck equation is non-linear, but strongly parabolic.
Since $g'_0$ is of regularity $C^k$, bootstrapping standard parabolic estimates (see, for example, again \cite[Lemma~4.3]{Appleton_2018}) allows us to conclude that $g'_t \to g'_0$ in $C^{k-1}_{\loc}$ and that we have the following spatial bound near $\chi(\vec 0)$:
\[ |\partial^{k} g'_t | \leq C t^{-1/2}. \]
Since the curvature operator is diffeomorphism invariant, these bounds imply that near $\vec 0$ we have for $t \in (0,T]$
\[ |{\Rm_{g_t}}|, \ldots, |\nabla^{k-3,g_t} {\Rm}_{g_t}| \leq C, \qquad
|\nabla^{k-2,g_t} {\Rm}_{g_t}| \leq C t^{-1/2}. \]
These bounds now allow us to establish the desired regularity properties for $(g_t)_{t \in (0,T]}$ near $0$.
To do this, assume by induction that $g_t \to g_0$ in $C^{k'-1}_{\loc}$ near $\vec 0$ for some $2 \leq k' \leq k-2$.
Then we have the following bounds on the $k'$ spatial derivatives of the metric coefficients:
\[ | \partial_t \partial^{k'} g_t | 
\leq C |\partial^{k'} g_t | + C + C |\nabla^{k',g_t} {\Rm}_{g_t}|
\leq C |\partial^{k'} g_t | + C + C t^{-1/2} . \]
Integrating this bound implies that $g_t \to g_0$ in $C^{k'}_{\loc}$ near $\vec 0$, which finishes the induction and hence the proof of the lemma.
\end{proof}

The following lemma allows us to compare $C^k$-norms taken with respect to different background metrics $g_1, g_2$, as long as both metrics are comparable to each other in the $C^k$-norm.

\begin{Lemma} \label{Lem_change_background_metric}
Let $(M,g_i,f_i)$, $i=1,2$, two gradient expanding solitons, where $g_i, f_i$ have regularity $C^2$, on the same $n$-dimensional orbifold $M$ and assume that $\sup_M f_1, \sup_M f_2 <\infty$.
Suppose that for some $A <\infty$ and $k \geq 0$ we have $|{\Rm_{g_1}}|, |{\Rm_{g_2}}| \leq A$ and
\[  A^{-1} g_1 \leq g_2 \leq A g_2, \qquad A^{-1} |\nabla^{g_1} f_1|_{g_1} - A \leq |\nabla^{g_2} f_2|_{g_2} \leq  A |\nabla^{g_1} f_1|_{g_1} + A  \]
and, using the H\"older norms from Subsection~\ref{subsec_list_Holder},
\begin{equation} \label{eq_g1g2_bounds}
 \Vert g_1 - g_2 \Vert_{C^k_{g_1}} \leq A \qquad \text{or} \qquad \Vert g_1 - g_2 \Vert_{C^k_{g_2}} \leq A. 
\end{equation}
Then for any $a \geq 0$ and any tensor field $u \in C^k_{-a,g_1} (M; T^{b_1}_{b_2} M )$ we have
\[ \Vert u \Vert_{C^{k}_{g_2,-a}} \leq C(A,n,k,a,b_1,b_2) \Vert u \Vert_{C^{k}_{g_1, -a}}. \]
\end{Lemma}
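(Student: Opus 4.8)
\textbf{Proof proposal for Lemma~\ref{Lem_change_background_metric}.}

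The plan is to reduce the comparison of the two weighted H\"older norms to two separate tasks: first, comparing the \emph{weights}, i.e. the functions $(S_1 - f_1 + 1)^{-a/2}$ and $(S_2 - f_2 + 1)^{-a/2}$ where $S_i := \sup_M f_i$; and second, comparing the unweighted $C^k$-norms $\Vert\cdot\Vert_{C^k_{g_1}}$ and $\Vert\cdot\Vert_{C^k_{g_2}}$ of a fixed tensor field. For the weights, I would first observe that from $|\nabla^{g_i} f_i|_{g_i}^2 + R_{g_i} + f_i \equiv \mathrm{const}$ together with the curvature bound and the comparison of $|\nabla^{g_1}f_1|$ with $|\nabla^{g_2}f_2|$, one gets that $-f_1$ and $-f_2$ are comparable up to additive and multiplicative constants of the form $C(A)$ away from a compact set, hence $S_1 - f_1 + 1$ and $S_2 - f_2 + 1$ are comparable on all of $M$ (the compact region being harmless since both functions are bounded below by $1$ there). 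Consequently $(S_1 - f_1 + 1)^{a/2}(S_2 - f_2 + 1)^{-a/2}$ is a positive bounded function bounded away from $0$. To absorb it into a H\"older-norm estimate I would further need that this ratio, and its derivatives up to order $k$, are bounded in $C^k_{g_1}$ (equivalently $C^k_{g_2}$): this follows from Shi's estimates applied to both solitons, which give $|\nabla^{g_i,m+2} f_i| = |\nabla^{g_i,m}\Ric_{g_i}| \leq C(A,m)$, exactly as in the proof of Lemma~\ref{Lem_Holder_norms_properties} (see \eqref{eq_nabmfmb}). So the weight conversion costs only a factor $C(A,n,k,a)$.

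The main work is the unweighted comparison: I claim that under \eqref{eq_g1g2_bounds} and the bilipschitz hypothesis, for any tensor bundle $E = T^{b_1}_{b_2}M$ one has $\Vert u \Vert_{C^k_{g_2}} \leq C(A,n,k,b_1,b_2)\Vert u \Vert_{C^k_{g_1}}$. The idea is to express $\nabla^{g_2}$ in terms of $\nabla^{g_1}$: the difference of Levi-Civita connections is a tensor $\Delta\Gamma = g_2^{-1}*\nabla^{g_1}g_2$, schematically, so by induction $\nabla^{g_2,m}u$ is a universal polynomial expression in $\nabla^{g_1,i}u$ for $i\leq m$ and $\nabla^{g_1,j}g_2$, $g_2^{-1}$ for $j\leq m-1$ (contracted using $g_2$). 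The bilipschitz bound controls $g_2$ and $g_2^{-1}$ pointwise, and \eqref{eq_g1g2_bounds} (using that $C^k_{g_1}$ and $C^k_{g_2}$ norms of $g_1-g_2$, hence of $g_2$, are equivalent thanks to the same induction plus the bilipschitz bound) controls $\nabla^{g_1,j}g_2$ for $j\leq k$. This gives the bound on the $C^0,\ldots,C^k$ parts of the norm; for the H\"older seminorm of $\nabla^{g_2,k}u$ one additionally needs to compare parallel transports $P^{\gamma,g_1}$ and $P^{\gamma,g_2}$ along short geodesics, which is handled by the same tensorial-difference argument together with the fact that a $g_2$-geodesic stays $C(A)$-close to a $g_1$-geodesic of comparable length — this is the kind of estimate already used implicitly in the proof of \eqref{eq_sup_equivalent} in Lemma~\ref{Lem_Holder_norms_properties}. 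Combining the weight comparison with the unweighted comparison, and noting $\Vert u\Vert_{C^k_{g_i,-a}} = \Vert (S_i-f_i+1)^{-a/2}u\Vert_{C^k_{g_i}}$, yields the claim.

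The step I expect to be the main obstacle is making the reduction between the two alternatives in \eqref{eq_g1g2_bounds} clean: the hypothesis is stated as an \emph{or}, so I must show that a $C^k_{g_1}$-bound on $g_1-g_2$ implies a $C^k_{g_2}$-bound and conversely, before the induction described above can even be phrased symmetrically. This is itself an instance of the unweighted comparison applied to the tensor $u = g_1 - g_2$, so there is a mild circularity to disentangle; I would resolve it by running the induction on $m$ simultaneously for the statements ``$\nabla^{g_2,j}(g_1-g_2)$ is controlled by $\Vert g_1-g_2\Vert_{C^{j}_{g_1}}$ for $j\leq m$'' and ``$\nabla^{g_2,m}u$ is controlled by $\Vert u\Vert_{C^m_{g_1}}$,'' using only the pointwise bilipschitz bound on $g_2,g_2^{-1}$ as input at the base step. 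Everything else is bookkeeping with the Leibniz rule and the curvature bounds, and should go through routinely.
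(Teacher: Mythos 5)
Your proposal is correct and follows essentially the same route as the paper: the weights are compared via the soliton identity together with the curvature bound and the gradient-comparison hypothesis, the unweighted comparison is an induction on $k$ using $\nabla^{g_2} u = \nabla^{g_1} u + g_2^{-1} * \nabla^{g_1} g_2 * u$ and the ingredients of Lemma~\ref{Lem_Holder_norms_properties}, and the asymmetry of the two alternatives in \eqref{eq_g1g2_bounds} is resolved exactly by applying the statement to $u = g_1 - g_2$ with $a = 0$ (the paper does this as an a-priori reduction with the roles of $g_1,g_2$ switched, rather than your simultaneous induction, but it is the same idea). The only superfluous part is your parallel-transport comparison: the lemma concerns pure $C^k$ norms, with no H\"older seminorm, so that step is not needed.
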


\begin{proof}
Without loss of generality, we may assume that $\sup_M f_1 = \sup_M f_2 = -1$ such that
\[ \Vert u \Vert_{C^k_{-a,g_i}} = \Vert (-f_i)^a u \Vert_{C^k_{g_i}}. \]
Due to the curvature bound and the soliton identity $R_{g_i} + |\nabla^{g_i} f_i|^2 = f_i$ we have \begin{equation} \label{eq_f1f2_comparison}
c(A,n) (-f_1) \leq -f_2 \leq C(A,n) (-f_1).
\end{equation}
Next note that it suffices to assume that the first bound in \eqref{eq_g1g2_bounds} holds, because if the lemma is true assuming the first bound, then the first bound follows from the second one after switching the roles of $g_1, g_2$ and setting $u = g_1 - g_2$ and $a = 0$.
So assume in the following that the first bound of \eqref{eq_g1g2_bounds} holds.
By Lemma~\ref{Lem_soliton_smooth}, we may moreover switch the smooth structure and assume in the following that $g_1, f_1$ are smooth.
Due to \eqref{eq_g1g2_bounds} and the soliton equation for $g_2, f_2$, this implies that $g_2, f_2$ are of regularity $C^k$. 

In the following, $C$ will denote a generic constant that only depends on $A,n,k,a,b_1,b_2$.
We will show the lemma by induction.
The case $k = 0$ follows from the bilipschitz bound and \eqref{eq_f1f2_comparison}.
Now suppose that the lemma is true for some $k \geq 0$ and that \eqref{eq_g1g2_bounds} holds for $k$ replaced with $k+1$.
Then we compute via Lemma~\ref{Lem_Holder_norms_properties} that
\begin{align*}
 \Vert u \Vert_{C^{k+1}_{g_2,-a}} 
&\leq \Vert \nabla^{g_2} u \Vert_{C^{k}_{g_2,-a}} + \Vert u \Vert_{C^{k}_{g_2,-a}} \\
&\leq \Vert \nabla^{g_1} u + g_2^{-1} * \nabla^{g_1} g_2 * u \Vert_{C^{k}_{g_1,-a}} + \Vert u \Vert_{C^{k}_{g_1,-a}} \\
&\leq C \Vert \nabla^{g_1} u \Vert_{C^{k}_{g_1,-a}} + C \Vert g_2 - g_1 \Vert_{C^{k+1}_{g_1}} *  \Vert  u \Vert_{C^{k}_{g_2,-a}}  + C \Vert  u \Vert_{C^{k}_{g_2,-a}} \\
&\leq C \Vert \nabla^{g_1} u \Vert_{C^{k}_{-a,g_1}}  + C \Vert  u \Vert_{C^{k}_{g_2,-a}} \leq  C\Vert u \Vert_{C^{k+1}_{g_2,-a}}.
\end{align*}
So the lemma also holds for $k+1$, which finishes the induction.
\end{proof}
\medskip

The next lemma states that an almost isometry $\chi$ (in the $C^1$-sense) that is close to the identity (in the $C^0$-sense) has differentials that are close to the identity (in the $C^0$-sense).

\begin{Lemma} \label{Lem_dchi_small}
Let $M$ be an $n$-dimensional Riemannian orbifold with isolated singularities, $g$ a complete Riemannian metric on $M$ of regularity $C^3$ and $p_0 \in M$, $r := d(p_0,\cdot)$.
Consider a $C^2$-diffeomorphism $\chi : M \to M$ such that for $m = 0,1$ and some $A < \infty$
\[ |{\Rm}| \leq Ar^{-2}, \qquad \inj \geq A^{-1} r - 1 \qquad |\nabla^m (\chi^* g - g)| \leq A (r+1)^{-2}, \qquad \sup_{p \in M} d_g(\chi(p),p) \leq A. \]
Then via parallel transport along minimizing geodesics we have for any $p \in M$
\begin{equation} \label{eq_dchi_id}
   |d\chi_p - \id_{T_p M} | \leq C(A,n) r^{-1}(p). 
\end{equation}
\end{Lemma}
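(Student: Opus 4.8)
The statement is a local, pointwise estimate on $d\chi_p$, so the strategy is to reduce to a scale-invariant comparison in a ball of radius comparable to $r(p)$ and then apply standard harmonic-coordinate / Arzel\`a--Ascoli compactness. Fix $p \in M$ and set $\rho := r(p)$. If $\rho$ is bounded (say $\rho \le 10$), then \eqref{eq_dchi_id} is trivial after adjusting constants because $d\chi_p$ and $\id$ are both uniformly controlled; so we may assume $\rho$ is large. The first step is to pass to the rescaled metric $g' := \rho^{-2} g$, in which the hypotheses become $|{\Rm}_{g'}| \le C$, $\inj_{g'}(p) \ge c$, $|\nabla^{m,g'}(\chi^* g' - g')| \le C\rho^{-2-m}\rho^{m+2}\cdot\rho^{-2}\cdot\text{(harmless)}$ --- more carefully, $\chi^* g' - g' = \rho^{-2}(\chi^* g - g) = O(\rho^{-2})$ with one derivative also $O(\rho^{-2})$ in the $g'$-sense --- and $\sup d_{g'}(\chi(q),q) \le A/\rho \to 0$. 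In particular, on a fixed-size $g'$-ball $B' := B_{g'}(p,1)$, the map $\chi$ restricted to a slightly larger ball has image in $B'$ (for $\rho$ large, since the displacement tends to $0$), and $\chi^* g'$ is $C^{1}$-close to $g'$ while being $C^0$-close to $\id$ as a map.

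\textbf{Second step.} Choose $g'$-harmonic coordinates $\vec x : B' \to \mathbb{R}^n$ centered at $p$, which exist with uniform $C^{1,\alpha}$-control on the metric coefficients by the curvature and injectivity-radius bounds (this is the standard harmonic-coordinate existence theorem; compare the construction in Lemma~\ref{Lem_soliton_smooth} and the coordinate bounds used in Lemma~\ref{Lem_Holder_norms_properties}). In these coordinates the Euclidean metric $\delta_{ij}$ and $(g')_{ij}$ differ by $O(1)$ in $C^{1,\alpha}$, uniformly. The map $\chi$ expressed in these coordinates is $C^0$-close to the identity map of $\mathbb{R}^n$ (by the displacement bound), and satisfies $\chi^*(g')_{ij} = (g')_{ij} + O(\rho^{-2})$ in $C^1$. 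Applying $d$ to the identity $\chi^*(g')_{kl} = (d\chi)^i_k (d\chi)^j_l\,(g')_{ij}\circ\chi$ and using that $(g')_{ij}\circ\chi - (g')_{ij} = O(\rho^{-1})$ in $C^0$ (because $\chi$ moves points by $O(\rho^{-1})$ in $g'$-distance and $(g')_{ij}$ is $C^{1}$-bounded), we obtain that $(d\chi)^i_k (d\chi)^j_l\,(g')_{ij} = (g')_{kl} + O(\rho^{-1})$. Thus $d\chi$ is an $O(\rho^{-1})$-approximate isometry of $(g')_{ij}$; combined with $C^0$-closeness of $\chi$ to the identity (which pins down which approximate isometry it is, up to the same error), a linear-algebra argument yields $|d\chi_p - \id|_{g'} \le C\rho^{-1}$. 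Undoing the rescaling (the quantity $|d\chi_p - \id|$ is scale-invariant, being dimensionless) gives \eqref{eq_dchi_id}.

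\textbf{Main obstacle.} The delicate point is the step ``$C^0$-closeness to the identity forces $d\chi$ to be close to $\id$ rather than merely to \emph{some} isometry of the tangent space.'' An approximate isometry of a ball that is $C^0$-close to the identity must have differential close to the identity, but making this quantitative with the correct $\rho^{-1}$ rate requires interpolation: one controls $\chi - \id$ in $C^0$ by $O(\rho^{-1})$ (from the displacement bound and the fact that $g$-distance and the coordinate distance are comparable on scale $\rho$), and one controls $\chi - \id$ in $C^{1,\alpha}$ by $O(1)$ (from the uniform metric bounds and elliptic or direct estimates on $\chi$ --- note $\chi$ satisfies no PDE a priori, but $d\chi$ is bounded since $\chi^* g'$ is bilipschitz to $g'$, and one differentiates the almost-isometry relation once more using $|\nabla^{1,g'}(\chi^*g'-g')| = O(\rho^{-2})$ to get a $C^{1,\alpha}$-bound on $d\chi$ by Schauder-type estimates for the Christoffel-symbol transformation law). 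Interpolating, $\|\chi - \id\|_{C^1} \le C\|\chi-\id\|_{C^0}^{\theta}\|\chi-\id\|_{C^{1,\alpha}}^{1-\theta}$ for a suitable $\theta$ gives a rate $\rho^{-\theta}$, which is weaker than $\rho^{-1}$; to recover the sharp rate one instead argues directly: write $d\chi = O + E$ with $O$ orthogonal (w.r.t.\ $g'$) and $\|E\| \le C\rho^{-1}$ from the approximate-isometry relation, and then use that $\chi$ has a fixed point within $O(\rho^{-1})$ of $p$ together with the $C^{1}$-bound on the second derivatives of $\chi$ (controlled again by $|\nabla(\chi^*g'-g')| = O(\rho^{-2})$) to conclude that $O$ itself is within $O(\rho^{-1})$ of $\id$ by integrating along rays. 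This is the part that needs genuine care, but it is a standard rigidity-of-almost-isometries argument and presents no conceptual difficulty; everything else is bookkeeping with the rescaling and the harmonic coordinates.
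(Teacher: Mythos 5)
Your overall reduction is fine (the case of bounded $r$ is trivial, the rescaling to $g'=\rho^{-2}g$ is harmless, and the polar-decomposition step giving $d\chi_p = O + E$ with $O$ orthogonal and $|E|\le C\rho^{-1}$ is correct), but the decisive step --- pinning the orthogonal factor $O$ to $\id$ at the rate $\rho^{-1}$ --- is not established, and the two mechanisms you invoke for it do not work as stated. First, $\chi$ need not have a fixed point at all (think of a small translation of Euclidean space); the hypothesis only gives $d_{g'}(\chi(p),p)\le A/\rho$. Second, the scaling of the derivative hypothesis is off: $|\nabla^{g'}(\chi^*g'-g')|_{g'} = \rho\,|\nabla^{g}(\chi^*g-g)|_{g} = O(\rho^{-1})$, not $O(\rho^{-2})$. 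Most importantly, the coordinate second derivatives of $\chi$ are \emph{not} small: since $\chi:(M,\chi^*g')\to(M,g')$ is an isometry, one has schematically $\partial^2\chi = \Gamma(\chi^*g')\ast\partial\chi - \big(\Gamma(g')\circ\chi\big)\ast\partial\chi\ast\partial\chi$, which is only $O(1)$ because it involves the background Christoffel symbols evaluated at two different points. To make the relevant combination $O(\rho^{-1})$ you would need a Lipschitz bound on $\Gamma(g')$, i.e.\ pointwise control of \emph{second} derivatives of the metric, which does not follow from $|{\Rm}|\le Ar^{-2}$ alone (harmonic coordinates give only uniform $C^{1,\alpha}$ control of $g'$, hence $C^{0,\alpha}$ control of $\Gamma$). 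With what is actually available, ``integrating along rays'' yields only $|d\chi_p-\id|\le C\rho^{-1/2}$ (or $C\rho^{-\alpha}$), i.e.\ the same loss as the interpolation route you already discarded. So the part you flag as needing ``genuine care'' is exactly the part that fails.

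The paper's proof avoids differentiating the background metric altogether by comparing geodesics. The key observation is that $t\mapsto\chi^{-1}\big(\exp_{\chi(p)}(t\,d\chi_p v_0)\big)$ is a unit-speed geodesic of $\chi^*g$, hence its $g$-acceleration is controlled by the difference of the two Levi-Civita connections, which is $O(r^{-2})$ directly from the hypotheses $|\chi^*g-g|,|\nabla(\chi^*g-g)|\le A(r+1)^{-2}$. Rauch comparison (using only $|{\Rm}|\le Ar^{-2}$) then gives
\begin{equation*}
d_g\big(\exp_p(v),\,\chi^{-1}(\exp_{\chi(p)}(d\chi_p(v)))\big)\le C\,r^{-2}(p)\,|v|^2 ,
\end{equation*}
so at the macroscopic scale $|v|=c\,r(p)$ the endpoints $\exp_p(v)$ and $\exp_{\chi(p)}(d\chi_p(v))$ differ by $O(1)$ after absorbing the $O(1)$ displacement of $\chi$; parallel transporting $d\chi_p(v)$ from $\chi(p)$ to $p$ and applying Alexandrov/Toponogov comparison for geodesics of length $\sim r(p)$ emanating from $p$ converts this $O(1)$ endpoint discrepancy into $|P(d\chi_p(v))-v|\le C$, i.e.\ $|d\chi_p-\id|\le C\,r^{-1}(p)$. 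If you wish to keep your rescaled framework, replace the Taylor-expansion-along-rays step by this geodesic comparison (at unit scale: connection difference $O(\rho^{-1})$, endpoint deviation $O(\rho^{-1})$ over unit length, then triangle comparison), which needs no more regularity than the hypotheses provide.
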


\begin{proof}
Let $C,c$ be generic constants, which may only depend on $A, n$.

\begin{Claim}
For any $p \in M$ with $r(p) \geq 10A$ and $v \in T_p M$ with $|v| \leq \frac12 r(p)$ we have
\[d_g \big( \exp_p(v), \chi^{-1} (\exp_{\chi(p)}(d\chi_p(v))) \big)  \leq C r^{-2}(p) |v|^2 \]
\end{Claim}

\begin{proof}
Let $l := |v| \leq \frac12 r(p)$ and $v_0 := l^{-1} v$.
The curve $\gamma : [0,l] \to M$ with $$\gamma(t) := \chi^{-1} (\exp_{\chi(p)} (d\chi_p(t v_0)))$$ satisfies $\gamma(0) = p$, $\gamma'(0) = v$ and is a unit speed geodesic with respect to $\chi^* g$.
Therefore, its second derivative with respect to $g$ is bounded by $C r^{-2}(p)$.
Consider now the curve
\[ \sigma(t) :=  \exp_{\gamma( l - t)}(t \gamma'(l-t)). \]
and note that $\sigma(0) = \gamma(l)$ and $\sigma(l) = \exp_p(v)$.
By Rauch comparison
\[ |\sigma'(t)| = \bigg| \frac{d}{ds}\Big|_{s=t-l}\exp_{\gamma( l - t)}(t \gamma'(s)) \bigg|
\leq C t r^{-2} (p) \leq  C l r^{-2} (p). \]
So $d_g (\sigma(0), \sigma(l)) \leq Cl^2 r^{-2}(p)$, which proves the claim.
\end{proof}

Since $\chi^* g - g$ is bounded, the norm of the differential $d\chi_p$ and hence the left-hand side of \eqref{eq_dchi_id} are uniformly bounded.
So it suffices to assume that $r(p)$ is sufficiently large such that $\inj(p) \geq (2A)^{-1} r(p)$ and therefore the ball $B(p, (2A)^{-1} r(p))$ only consists of regular point.
Choose $0 < c \leq (2A)^{-1}$ such that $\sec \leq \frac{\pi}2 (cr(p))^{-2}$ on $B(p,cr(p))$.
Applying the Claim and the last assumption of the lemma for some $v \in T_p M$ with $|v| = c r(p)$ yields
\begin{multline}  \label{eq_dexpexpchi}
   d_g( \exp_p(v), \exp_{\chi(p)}(d\chi_p(v)))  \\
\leq d_g\big( \exp_p(v), \chi^{-1} (\exp_{\chi(p)}(d\chi_p(v))) \big) 
+ d_g \big( \chi^{-1} (\exp_{\chi(p)}(d\chi_p(v))) , \exp_{\chi(p)}(d\chi_p(v)) \big) \\
\leq C r^{-2}(p) |v|^2 + A \leq C. 
\end{multline}
Let now $v' \in T_p M$ be the result of parallel transporting $d\chi_p(v)$ along a minimizing geodesic between $\chi(p)$ and $p$.
Using comparison geometry, we obtain that
\[ d_g \big( \exp_p (v'), \exp_{\chi(p)} (d\chi_p(v)) \big) \leq C. \]
Combining this with \eqref{eq_dexpexpchi} yields
\[ d_g \big(  \exp_p (v') , \exp_p(v)\big)
\leq d_g \big( \exp_p(v), \exp_{\chi(p)}(d\chi_p(v)) \big) +
d_g \big( \exp_{\chi(p)} (d\chi_p(v)) , \exp_p (v')\big) \leq C. \]
The desired bound now follows from Alexandrov comparison since $|v| = c r(p)$.
\end{proof}
\medskip

The last lemma provides a lower bound on the $\mu$-functional for a complete Riemannian orbifold of bounded geometry.

\begin{Lemma} \label{Lem_mu_bounded_geometry}
Let $(M^n,g)$ be a complete, $n$-dimensional Riemannian orbifold, where we assume that $g$ has regularity $C^2$.
Suppose that we have the curvature bounds $|{\Rm}| \leq A$, $R \geq R_{\min}$ and the volume bound $\inf_{p \in M} |B(p,r)|\geq A^{-1}r^n$ for some $r > 0$, $A < \infty$ and $R_{\min} \in \IR$.
Then $$\mu[g,\tau] \geq -C(n,A,r) + R_{\min} \tau - \frac{n}2 (\log \tau)_+.$$
Here the $\mu$-functional is defined via functions $f : M \to (-\infty,\infty]$ such that $e^{-f}$ is smooth and has compact support; see the discussion in Subsection~\ref{subsec_soliton_noncollapsing}.
\end{Lemma}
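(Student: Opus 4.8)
The plan is to reduce the statement to a uniform logarithmic Sobolev inequality on $(M,g)$, which in turn follows from a uniform Sobolev inequality guaranteed by the bounded geometry hypotheses. First I would rewrite Perelman's $\WW$-functional in the standard substituted form: given $f$ with $e^{-f}$ smooth and compactly supported and $\int_M (4\pi\tau)^{-n/2}e^{-f}\,dg=1$, set $u:=(4\pi\tau)^{-n/4}e^{-f/2}\in C^\infty_c(M)$, so that $\int_M u^2\,dg=1$ and, using $|\nabla f|^2 u^2 = 4|\nabla u|^2$,
\[
\WW[g,f,\tau] = \tau\int_M R\, u^2\, dg + 4\tau\int_M|\nabla u|^2\, dg - \int_M u^2\log u^2\, dg - n - \tfrac{n}{2}\log(4\pi\tau).
\]
Since $R\ge R_{\min}$ and $\int_M u^2\,dg=1$, the curvature term is bounded below by $R_{\min}\tau$, and it remains to bound $4\tau\int_M|\nabla u|^2\,dg - \int_M u^2\log u^2\,dg$ from below, uniformly in $u$ and $\tau$.

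Second, I would establish a uniform Sobolev inequality: there is $C_S=C_S(n,A,r)$ such that for $n\ge 3$ and all $v\in C^\infty_c(M)$,
\[
\Big(\int_M |v|^{\frac{2n}{n-2}}\, dg\Big)^{\frac{n-2}{n}} \leq C_S\Big(\int_M|\nabla v|^2\, dg + \int_M v^2\, dg\Big).
\]
This is standard in the presence of bounded curvature and non-collapsed volume: the hypotheses $|{\Rm}|\le A$ and $\inf_{p}|B(p,r)|\ge A^{-1}r^n$ yield, via Bishop--Gromov and Croke-type isoperimetric estimates (equivalently, via the Gaussian heat-kernel bounds of Li--Yau), uniform \emph{local} Sobolev inequalities on $r$-balls, which patch to a global one by a standard covering argument. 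On an orbifold with isolated singularities one passes to smooth local orbifold covers (every orbifold here is good) or simply notes that the codimension-$n$ singular set is negligible for all integral quantities and for the density of $C^\infty_c$ in $W^{1,2}$. For $n\le 2$, where this exponent is unavailable, I would instead replace $(M,g)$ by the product $(M\times S^{n'},\, g+g_{S^{n'}})$ with a large auxiliary round sphere, apply the $(n+n')$-dimensional case, and use $\mu[g_1\times g_2,\tau]=\mu[g_1,\tau]+\mu[g_2,\tau]$ together with the elementary upper bound $\mu[g_{S^{n'}},\tau]\le R_{S^{n'}}\tau+C(n')$.

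Third, I would deduce the logarithmic Sobolev inequality from the Sobolev inequality and conclude by an elementary one-variable estimate. Applying Jensen's inequality to the concave function $\log$ with the probability measure $u^2\,dg$ and the test function $u^{2(p-1)}$ for $p=\tfrac{n}{n-2}$ gives $(p-1)\int_M u^2\log u^2\,dg \le \log\int_M u^{2p}\,dg$, and inserting the Sobolev bound $\bigl(\int_M u^{2p}\,dg\bigr)^{1/p}\le C_S\bigl(\int_M|\nabla u|^2\,dg+1\bigr)$ together with $\tfrac{p}{p-1}=\tfrac n2$ yields
\[
\int_M u^2\log u^2\, dg \leq \tfrac{n}{2}\log\!\Big(C_S\Big(\int_M|\nabla u|^2\, dg + 1\Big)\Big).
\]
Writing $x:=\int_M|\nabla u|^2\,dg\ge 0$, it then suffices to bound $4\tau x-\tfrac n2\log(C_S(x+1))-\tfrac n2\log(4\pi\tau)$ from below uniformly in $x\ge 0$, $\tau>0$. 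Minimizing $4\tau x-\tfrac n2\log(C_S(x+1))$ over $x\ge0$ (interior minimum at $x+1=\tfrac{n}{8\tau}$ when $\tau\le n/8$, boundary minimum $x=0$ otherwise) and combining with $-\tfrac n2\log(4\pi\tau)$: in the small-$\tau$ regime the two $\tfrac n2\log\tau$ contributions cancel, leaving a constant depending only on $n$ and $C_S$, while in the large-$\tau$ regime one is left with at most $-\tfrac n2\log\tau\le-\tfrac n2(\log\tau)_+$. This gives $\WW[g,f,\tau]\ge R_{\min}\tau-C(n,A,r)-\tfrac n2(\log\tau)_+$, and taking the infimum over $f$ yields the claim.

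The main obstacle is the second step: making the uniform Sobolev inequality fully rigorous in the orbifold setting using only the non-collapsing volume bound (rather than an injectivity-radius bound) as input, including checking that passing to local covers or ignoring the isolated singular points is harmless. Once the Sobolev inequality is in hand, the remaining steps are routine manipulations of the entropy functional.
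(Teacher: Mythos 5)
Your proposal is correct in its main case and rests on the same basic mechanism as the paper's proof: write $u^2=(4\pi\tau)^{-n/2}e^{-f}$, peel off the $R_{\min}\tau$ term, and bound $4\tau\int_M|\nabla u|^2\,dg-\int_M u^2\log u^2\,dg$ from below using a uniform Sobolev inequality furnished by the bounded-curvature and non-collapsing hypotheses (the paper simply normalizes $r=1$ and cites Aubin for this inequality, rather than rebuilding it from heat-kernel or isoperimetric estimates as you sketch). The execution differs in two ways. First, the paper removes the $\tau$-dependence by the scaling identity $\mu[g,\tau]=\mu[\tau^{-1}g,1]$, which reduces $\tau\le1$ to $\tau=1$ (dilation by $\tau^{-1}\ge1$ only improves the geometric bounds) and leaves the $-\tfrac n2\log\tau$ loss only for $\tau\ge1$; you instead keep $\tau$ and recover the $(\log\tau)_+$ behaviour by explicitly minimizing $4\tau x-\tfrac n2\log\bigl(C_S(x+1)\bigr)-\tfrac n2\log(4\pi\tau)$ in $x\ge0$, which is a clean and correct alternative. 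Second, the paper never passes through a genuine log-Sobolev inequality: it uses the pointwise bound $\log u^2\le\tfrac2\alpha u^\alpha$ together with a \emph{subcritical} exponent $2+2\alpha<\tfrac{2n}{n-2}$ and a H\"older/absorption argument, so the same computation works uniformly in all dimensions, whereas your Jensen-at-the-critical-exponent route requires $n\ge3$ and forces the auxiliary product trick when $n\le2$.

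In that low-dimensional case there is one small slip: with only the upper bound $\mu[g_{S^{n'}},\tau]\le R_{S^{n'}}\tau+C(n')$, subtracting the sphere factor from the product estimate leaves $-\tfrac{n+n'}2(\log\tau)_+$ rather than the claimed $-\tfrac n2(\log\tau)_+$ for large $\tau$. The fix is immediate: the constant test function gives $\mu[g_{S^{n'}},\tau]\le R_{S^{n'}}\tau-\tfrac{n'}2\log(4\pi\tau)+C(n')$, and its $-\tfrac{n'}2\log\tau$ term restores the correct coefficient; note also that you only need the easy subadditivity $\mu[g_1\times g_2,\tau]\le\mu[g_1,\tau]+\mu[g_2,\tau]$ coming from product test functions, not the equality you assert, whose reverse inequality is the nontrivial direction. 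Since the paper's subcritical-exponent trick avoids this case distinction entirely, this is a cosmetic rather than substantive gap, but it should be repaired if you want the statement in all dimensions.
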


\begin{proof}
By volume comparison and after possibly adjusting $A$, we may assume in the following that $r = 1$.
Moreover, since $\mu[g,\tau] = \mu[\tau^{-1} g, 1]$ and since the other geometric bounds remain conserved after rescaling with a constant $\geq 1$, we may assume in the following that $\tau \geq 1$.
Fix $e^{-f} \in C^\infty_c(M)$ such that $\int_M (4\pi \tau)^{-n/2} e^{-f} dg = 1$ and set $u^2 := (4\pi \tau)^{-n/2} e^{-f}$.
Then
\begin{multline} \label{eq_W_bound_logsob}
 \WW[g,f,\tau] \geq \int_M \big( \tau |\nabla f|^2 + f \big) (4\pi \tau)^{-n/2}e^{-f} dg + R_{\min} \tau - n\\
 \geq \int_M \big(4\tau |\nabla u|^2 - (\log u^2) u^2 \big)  dg  - C(n,A)+ R_{\min} \tau - \frac{n}2 \log \tau.
\end{multline}
The last integral in \eqref{eq_W_bound_logsob} can be bounded as follows using the Sobolev inequality \cite{Aubin_1976} and H\"older inequality  for some arbitrary $\alpha \in (0,1)$ with $1+\alpha < \frac{n}{n-2}$ if $n \geq 3$ and the fact that $\log u^2 \leq \frac2\alpha u^\alpha$:
\begin{align*}
 \int_M \big(4\tau |\nabla u|^2 &- (\log u^2) u^2 \big)  dg
\geq c(n,A) \bigg( \int_M u^{2+ 2\alpha} dg \bigg)^{1/(1 + \alpha)} - C(n,A) \int_M u^2 dg - \frac2\alpha \int_M u^{2+ \alpha} dg \\
&\geq c(n,A) \bigg( \int_M u^{2+ \alpha} dg \bigg)^{2/(1 + \alpha)}\bigg( \int_M u^{2} dg \bigg)^{-1/(1 + \alpha)} - C(n,A) -\frac2\alpha\int_M u^{2+ \alpha} dg \\
&= c(n,A) \bigg( \int_M u^{2+ \alpha} dg \bigg)^{2/(1 + \alpha)} - C(n,A) -\frac2\alpha \int_M u^{2+ \alpha} dg \geq - C(n,A,\alpha).
\end{align*}
Note that for the last inequality we have used the fact that the exponent $2/(1+\alpha) > 1$.
This finishes the proof of the lemma.
\end{proof}
\medskip

\section{Real-analytic varieties} \label{appx_real_analytic}
In the following proposition we show that real-analytic varieties are locally path connected.
Recall that a topological space $X$ is locally path connected if every neighborhood of any point contains an open path connected neighborhood of the same point.
This property implies that $X$ can be decomposed into the disjoint union of connected components, which are also path connected.
Proposition~\ref{Prop_variety_loc_conn} is needed in Subsection~\ref{subsec_summary_main} to show that the preimage $\Pi^{-1}(\CONE^{k^*}(N)) \subset \MM^{k^*}(M,N,\iota)$ is locally path connected near points where a Banach manifold structure exists.
This allows us to show, using Proposition~\ref{Prop_gradientness}, that the subset $\MMgrad^{k^*}(M,N,\iota) \subset \Pi^{-1}(\CONE^{k^*}(N))$ is a union of connected components.

\begin{Proposition} \label{Prop_variety_loc_conn}
Let $U \subset \IR^n$ be open and $F : U \to \IR^k$ be real-analytic.
Then $V := F^{-1} (0)$ is locally path connected.
Moreover, for any two points $p,q \in V$ in the same path component of $V$ there is a piecewise real-analytic (and thus piecewise smooth) arc $\gamma : [0,1] \to V \subset \IR^n$ connecting $p$ and $q$.
\end{Proposition}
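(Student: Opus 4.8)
\textbf{Proof plan for Proposition~\ref{Prop_variety_loc_conn}.}
The plan is to reduce the statement to a classical fact about real-analytic sets, namely that a real-analytic variety admits a locally finite stratification into connected real-analytic (even semialgebraic-type) manifolds, and more concretely that near any point such a set is a finite union of images of real-analytic arcs emanating from that point (the curve selection lemma in its ``all of $V$ is reached by arcs'' form). First I would observe that, since $V=F^{-1}(0)$ and $F$ is real-analytic, $V$ is a closed real-analytic subvariety of $U$; it suffices to prove the local path-connectedness statement, since the global statement about piecewise real-analytic arcs follows from it by a standard compactness/covering argument: a path component is open (by local path-connectedness) and closed (as the complement of the union of the other components, which are open), hence coincides with the connected component, and any path in $V$ can be covered by finitely many of the open path-connected neighbourhoods, within each of which consecutive points are joined by a piecewise real-analytic arc in $V$.

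For the local statement, I would fix $p\in V$ and a neighbourhood $W\ni p$. After a translation assume $p=0$. The key input is the theory of real-analytic sets as developed, e.g., by {\L}ojasiewicz and Bierstone--Milman: there is a neighbourhood $W'\subset W$ of $0$ such that $V\cap W'$ is a finite union of connected, locally closed real-analytic strata, with $\{0\}$ being one stratum, and such that $0$ lies in the closure of each stratum whose closure contains $0$. The precise tool I want is the curve selection lemma in the form: there is a neighbourhood $W''\subset W'$ of $0$ such that every point $x\in V\cap W''$ can be joined to $0$ by a real-analytic arc $\sigma_x:[0,1]\to V\cap W'$ with $\sigma_x(0)=0$, $\sigma_x(1)=x$. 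Granting this, $V\cap W''$ is path-connected (any two points are joined through $0$), and it contains the open neighbourhood $V\cap W''$ of $0$ (one may shrink $W''$ to be a ball), which is exactly local path-connectedness at $0$. The piecewise real-analytic claim then falls out immediately, since concatenating $\sigma_p^{-1}$ with $\sigma_q$ gives a piecewise real-analytic arc from $p$ to $q$.

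Concretely the steps are: (1) recall/cite that $V$ is a real-analytic set and fix the stratification / curve selection statement from the literature; (2) deduce that $0$ has arbitrarily small path-connected neighbourhoods in $V$, establishing local path connectedness; (3) pass from local to global: show path components are open and closed, hence are the connected components, and are themselves locally path-connected; (4) upgrade continuous paths to piecewise real-analytic arcs via a finite cover by the neighbourhoods from step~(2) together with concatenation of the selected analytic arcs. The main obstacle I expect is step~(1): the curve selection lemma is usually stated either for semialgebraic sets or for the complement of a variety, so some care is needed to get the version asserting that \emph{every} nearby point of $V$—not just points in a fixed stratum—is reached by an analytic arc from the base point. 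I would handle this by applying curve selection stratum-by-stratum: for each stratum $S$ with $0\in\overline S$, the pair $(S,0)$ falls under the classical curve selection lemma (e.g.\ via resolution of singularities or the {\L}ojasiewicz inequality applied to $\dist(\cdot,0)$ on $\overline S$), yielding arcs into $S$ from $0$; finitely many strata cover a neighbourhood, and a point in a higher stratum can first be connected within its own stratum to a point near $0$ and then along the selected arc—here one uses that each stratum is itself a connected real-analytic manifold, hence locally path-connected, so the within-stratum step is routine. Assembling these finitely many pieces gives the required uniform neighbourhood $W''$.
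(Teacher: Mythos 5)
Your argument is essentially correct, but it takes a genuinely different route from the paper. You reduce everything to the classical stratification theory of real-analytic (semianalytic) sets plus the curve selection lemma, applied stratum-by-stratum to reach a prescribed nearby point from the base point, and you correctly identify and patch the main obstacle (curve selection produces \emph{some} arc into a set, not an arc ending at a given point). The paper instead either quotes the statement directly from Wallace and from Whitney--Bruhat, or sketches an alternative via Hironaka resolution, and for its ``elementary'' treatment it inspects the inductive cell-decomposition proof of Bierstone--Milman (their Theorem~2.6): there the local model is a graph or band between two semianalytic root functions $\xi^{\pm}$ of a polynomial family with real-analytic coefficients, and the piecewise-analytic arc statement is obtained by lifting analytic arcs through this decomposition using a Puiseux-type lemma (Lemma~\ref{Lem_branched_cover}) asserting that $t\mapsto\xi(t^{l})$ is real-analytic for a suitable integer $l$. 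So the paper trades your black-box use of curve selection for a hands-on analysis of root functions, which makes the arc-lifting step explicit; your route is shorter at the level of exposition but leans on machinery (curve selection, existence and local finiteness of stratifications) whose proofs are themselves of comparable depth. If you flesh your version out, two details deserve explicit mention: (i) you must choose $W''$ small enough that every connected component of a stratum meeting $W''$ actually has $0$ in its closure (this uses local finiteness of the components of a semianalytic set), so that curve selection applies to the component containing the given point $x$; and (ii) your local construction joins points of $V\cap W''$ by paths in the larger set $V\cap W'$, which gives local path-connectedness only via the standard fact that ``connected im kleinen at every point'' implies locally path connected --- harmless, but worth stating.
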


\begin{proof}
This can be seen in several ways.
For a direct reference of this statement see \cite[Lemma~5.2]{Wallace_1960} or \cite[Proposition~2]{Whitney_Bruhat_1959}.
Another possibility is to construct a resolution $p : \td V \to V$, where $\td V$ is a real-analytic manifold and $p$ is real-analytic, via Hironaka's Theorem in the real-analytic setting \cite{Hironaka_1964, Bierstone_Milman_1989, Krantz_Steven_2002}; $\td V$ is clearly locally path connected in the real-analytic sense.

For a more elementary treatment, see \cite[Theorem~2.6]{Bierstone_Milman_1988}, which implies the first statement of the proposition.
The second statement of the proposition follows from a closer inspection of the proof.
More precisely, in each step of the induction the sets $A \subset \IR^n$ are characterized as follows.
There is a semianalytic set $B \subset \IR^{n-1}$ and two semi-analytic functions $\xi^-, \xi^+ : B \to \IR$, which either agree everywhere or satisfy $\xi^- < \xi^+$ such that
\begin{multline*}
 A = \big\{ (x_1, \ldots, x_{m-1}, y) \in B \times \IR \;\; : \;\; \xi^-(x_1, \ldots, x_{m-1}) = y = \xi^+(x_1, \ldots, x_{m-1})  \quad \text{or} \\
 \xi^-(x_1, \ldots, x_{m-1}) < y < \xi^+(x_1, \ldots, x_{m-1}) \big\},
\end{multline*}
depending on the case.
It suffices to show that, given a real-analytic arc $\gamma : [0,1] \to \ov B$ with $\gamma((0,1]) \subset B$, there is a lift $\td\gamma : [0,1] \to \ov A$ whose first $m-1$ coordinates agree with $\gamma$.
Suppose first that we are in the case $\xi^- = \xi^+ =: \xi$.
Then the last coordinate of $\td\gamma$ must be $\xi (\gamma(t))$.
A closer look at the proof of \cite[Theorem~2.6]{Bierstone_Milman_1988} implies that there is a real-analytic function $g(x_1, \ldots, x_m; y)$, which is a polynomial of bounded degree in $y$ such that $\xi(x_1, \ldots, x_{m-1})$ is a simple zero for all $(x_1, \ldots, x_{m-1} ) \in B$.
Therefore, the coefficients of the polynomial $P_t(y) := g(\gamma(t); y)$ are real-analytic in $t$ and $\xi(\gamma(t))$ is a simple zero for all $(0,1]$.
Lemma~\ref{Lem_branched_cover} below then implies that $t \mapsto \xi(\gamma(t^l))$ is real-analytic on $[0,1]$ for some integer $l$.
In the case in which $\xi^- < \xi^+$ we can argue similarly, using the fact that $t \mapsto \xi^\pm (\gamma(t^l))$ is real-analytic on $[0,1]$ for some integer $l$.
\end{proof}

\begin{Lemma} \label{Lem_branched_cover}
Consider a family of polynomials $P_t(x) = x^k + a_{k-1}(t)x^{k-1} + \ldots + a_0(t)$, $t \in [0,1]$, whose coefficients $a_i(t)$ are real-analytic in $t$.
Let $\xi(t) \in \IC$, $t \in [0,1]$ be a continuous family of zeros for $P_t(x)$.
Then there is an $l \in \{ 1, \ldots, k \}$ such that the map $[0,1) \to \IR$, $t \mapsto \xi(t^l)$ is real-analytic near $0$.
Moreover, if $\xi(t)$ is simple for all $t \in (0,1]$, then $t \mapsto \xi(t^l)$ is real-analytic on $[0,1]$.
\end{Lemma}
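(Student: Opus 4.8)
<br>

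The plan is to prove Lemma~\ref{Lem_branched_cover} via the classical Puiseux-series description of the roots of a polynomial with real-analytic coefficients, together with a little care about which branch one is tracking and about reality of the branch.

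\medskip

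First I would recall the analytic structure of the roots. Viewing $t$ as a complex variable, the family $P_t(x)$ defines a (possibly singular) analytic curve $\{ P_t(x) = 0 \} \subset \mathbb{C}^2$ near $t = 0$. By the Puiseux/Newton-Puiseux theorem (equivalently, by Weierstrass preparation applied to $P$, followed by the standard branched-cover normalization), there is an integer $m \in \{1, \ldots, k\}$ and finitely many convergent power series $\phi_1(s), \ldots, \phi_r(s)$ in a variable $s$ with $s^m = t$ such that, for $|s|$ small, the multiset of roots of $P_{s^m}(x)$ is obtained by substituting the $m$-th roots of unity into the $\phi_j$. Concretely, after possibly replacing $m$ by a common multiple of the ramification indices, every root of $P_t(x)$ for $t \in (0,\varepsilon)$ is of the form $\phi_j(s)$ for some choice of $s$ with $s^m = t$ and some $j$. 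Thus each root, as a function of $t$, becomes analytic after pulling back along $t = s^m$.

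\medskip

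Next I would match the given continuous family $\xi(t)$ to one of these Puiseux branches. Fix $l := m$ (with $m$ as above) and consider the map $\Psi : s \mapsto \xi(s^l)$ for $s \in (0, \varepsilon^{1/l})$. For each such $s$, $\Psi(s)$ is a root of $P_{s^l}(x)$, hence equals $\phi_{j(s)}(\zeta(s) s)$ for some index $j(s) \in \{1,\ldots,r\}$ and some $l$-th root of unity $\zeta(s)$; since the finitely many analytic functions $s \mapsto \phi_j(\zeta s)$ are either identically equal or agree only on a discrete set, and since $\Psi$ is continuous, $j(s)$ and $\zeta(s)$ can be taken locally constant on the connected set $(0,\varepsilon^{1/l})$. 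Therefore $\Psi(s) = \phi_{j_0}(\zeta_0 s)$ on $(0, \varepsilon^{1/l})$ for fixed $j_0, \zeta_0$, which extends to an analytic function of $s$ near $0$; absorbing $\zeta_0$ into the variable (or replacing $\phi_{j_0}$ by $s \mapsto \phi_{j_0}(\zeta_0 s)$, still convergent) shows $t \mapsto \xi(t^l)$ is real-analytic near $0$. Note also that since $\xi(t)$ is real for $t \in [0,1]$ by hypothesis and the power series has real values on a real interval, its coefficients are automatically real.

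\medskip

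Finally I would handle the global statement on $[0,1]$ under the simplicity hypothesis. If $\xi(t)$ is a simple root of $P_t(x)$ for every $t \in (0,1]$, then at each such $t$ we have $P_t'(\xi(t)) \neq 0$, so by the analytic implicit function theorem $\xi$ is real-analytic in a neighborhood of $t$ inside $(0,1]$; combined with the analyticity of $t \mapsto \xi(t^l)$ near $0$ (which in particular gives continuity up to $t=0$ of the reparametrized branch), and since $t \mapsto t^l$ is a real-analytic diffeomorphism of $[0,1]$ onto itself, we conclude $t \mapsto \xi(t^l)$ is real-analytic on all of $[0,1]$. The main obstacle here is purely bookkeeping: ensuring that the single continuous branch $\xi(t)$ is genuinely captured by one Puiseux series after the pullback $t = s^l$ — i.e.\ that the index $j(s)$ and root-of-unity factor $\zeta(s)$ really are constant — which is where the continuity of $\xi$ and the discreteness of coincidences among the finitely many analytic branches are used. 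Everything else (Puiseux's theorem, the implicit function theorem, reality of real-valued analytic branches) is standard and can be cited.
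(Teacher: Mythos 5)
Your argument is correct in substance, but it takes a genuinely different route from the paper. You invoke the Newton--Puiseux theorem as a black box and then identify the given continuous root $\xi$ with one of the finitely many Puiseux branches, using discreteness of coincidences between distinct analytic branches plus continuity and connectedness; the statement on all of $[0,1]$ then follows from the analytic implicit function theorem at simple roots. The paper instead gives a self-contained proof: it inducts on the degree $k$, runs Euclid's algorithm on $P_t$ and $P_t'$ to either split off a lower-degree factor containing $\xi(t)$ (whose coefficients extend holomorphically across $t=0$ because the roots are uniformly bounded) or reduce to the case where all roots are simple on a punctured disc, and in that case obtains $\xi(t^l)$ as a lift of $z\mapsto z^l$ through the covering $\{P_t(x)=0\}\to \dot B_1$, extended over the puncture by boundedness (removable singularity). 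In effect the paper reproves exactly the piece of Puiseux's theorem it needs, while you cite the theorem; your route is shorter, the paper's is elementary, avoids the branch-matching bookkeeping, and keeps direct control of $l$.

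Two points should be tightened. First, if you set $l=m$ equal to a common multiple of \emph{all} ramification indices, then $l$ may exceed $k$ (e.g.\ cycles of lengths $2$ and $3$ with $k=5$), whereas the statement asks for $l\in\{1,\dots,k\}$; take instead the ramification index of the single cycle containing $\xi$, which is at most $k$ (for the application in Proposition~\ref{Prop_variety_loc_conn} the bound on $l$ is immaterial, but the statement requires it). Second, the claim that the labels $j(s),\zeta(s)$ are locally constant does not follow from continuity alone: at a point where two distinct branches cross, a continuous root could in principle switch branches. The repair is that each difference $\phi_i(\zeta s)-\phi_j(\zeta' s)$ is analytic at $s=0$, so if it is not identically zero it is nonvanishing on a punctured neighborhood of $0$; after shrinking the interval the distinct branches are pairwise disjoint there, and then the finitely many disjoint closed sets $\{\Psi=\phi_j(\zeta\,\cdot)\}$ covering a connected interval force $\Psi$ to agree with a single branch near $0$, which is all the first assertion needs. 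Finally, the hypothesis only gives $\xi(t)\in\IC$, so the remark that the Puiseux coefficients are real is unnecessary (real-analyticity here is simply analyticity of the $\IC$-valued function of the real variable); it is harmless but should not be presented as following from the hypotheses. With these adjustments your proof is complete.
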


\begin{proof}
The goal is to establish real-analyticity at $t = 0$.
We argue by induction on $k$.
The lemma is obvious if $k = 1$, so suppose that $k \geq 2$.

Let $B_r := B(0,r) \subset \IC$ and denote by $\dot B_r := B_r \setminus \{ 0 \}$ the punctured $r$-neighborhood of the origin.
Consider the holomorphic continuations of $a_{k-1}(t), \ldots, a_0 (t)$ and $P_t(x)$ for $t \in B_\eps$ for some $\eps > 0$.

We first reduce the lemma to the case in which $P_t(x)$ has only simple zeros for all $t \in \dot B_\eps $, after possibly shrinking $\eps$.
To do this, we determine a non-trivial common divisor of $P_t(x)$ and $P'_t(x)$ using Euclid's algorithm.
Set $P^0_t(x) := P_t(x)$ and $P^1_t(x) := P'_t(x)$.
Note that the leading coefficients of $P^0_t(x), P^1_t(x)$ are non-zero for any $t \in [0,1]$.
We will construct a sequence of polynomials $P^2_t(x), P^3_t(x), \ldots$, defined for $t \in \IC \setminus \{ 0 \}$ near $0$ as follows.
Let $i \geq 0$ and suppose that $P^0_t(x), \ldots, P^{i+1}_t(x)$ have already been constructed and that the leading coefficient $a_{k_{i+1}}^{(i+1)}(t)$ of $P^{i+1}_t(x)$ is non-zero for all $t \in \dot B_{\eps_{i+1}} $ for some $0 < \eps_{i+1} < \eps$.
This allows us to compute the remainder
\[ P^i_t(x) = P^{i+1}_t(x) H^i_t(x) + P^{i+2}_t (x), \]
where the degree of $P^{i+2}_t (x)$ is strictly smaller than $k_{i+1}$ for all $t \in \dot B_{\eps_{i+1}} $.
Suppose that $P^{i+2}_t(x)$ is non-zero for for some $t \in \dot B_{\eps_{i+1}} $ and let $k_{i+2}$ be the maximum of the degree of $P^{i+2}_t (x)$  for all $t \in \dot B_{\eps_{i+1}} $; so $k_{i+2} < k_{i+1}$. 
Consider the leading coefficient $a_{k_{i+2}}^{(i+2)}(t)$ of $P^{i+2}_t(x)$, which is a non-zero meromorphic function on $ B_{\eps_i}$.
So if $a_{k_{i+2}}^{(i+2)}(t_j) = 0$ for some sequence  $t_j \to 0$, $t_j \in \dot B_{\eps_{i+1}}$, then $a_{k_{i+2}}^{(i+2)} \equiv 0$ on $\dot B_{\eps_{i+1}}$, a contradiction.
So $a_{k_{i+2}}^{(i+2)}(t) \neq 0$ for all $t \in \dot B_{\eps_{i+2}}$ for some $0 < \eps_{i+2} < \eps_{i+1}$
and we can continue the inductive process.
Our process terminates once $P^{i+2}_t(x) \equiv 0$ for all $t \in \dot B_{\eps_{i+1}}$.
Then the polynomial $P^{i+1}_t(x)$ has constant degree and is the largest common divisor of $P_t(x), P'_t(x)$ for all $t \in \dot B_{\eps_{i+1}}$.
This implies that one of the following two cases occurs:
Either $P_t(x)$ only has simple zeros for all $t \in \dot B_{\eps_{i+1}}$ or a multiple zero for all $t \in \dot B_{\eps_{i+1}}$.
Consider the latter case.
Let $P^{**}_t(x)$, $t \in \dot B_{\eps_{i+1}}$, be the normalization of $P^{i+1}_t(x)$.
Then we can write for all $t \in \dot B_{\eps_{i+1}}$
\[ P_t (x) = P^*_t (x) P^{**}_t(x), \]
where $P^*_t(\xi(t)) = 0$, both polynomials are normalized and the degree of $P^*_t(x)$ has strictly smaller degree than $P_t(x)$.
The coefficient functions of $P^*_t(x)$ are holomorphic on $\dot B_{\eps_{i+1}}$.
Next note, that since $P_t(x)$ is normalized, its zeros are uniformly bounded for $t \in B_\eps$.
So since the zeros of $P^*_t(x)$ are a subset of the zeros of $P_t(x)$, they must also be uniformly bounded and therefore the coefficient functions of $P^*_t(x)$ must be uniformly bounded as well.
So these functions can be extended to holomorphic functions on $B_{\eps_{i+1}}$ and the lemma follows by applying the induction hypothesis to $P^*_t(x)$.

We may therefore assume (after possibly reparameterizing $t$) that all zeros of $P_t(x)$ are simple for all $t \in \dot B_1$.
Then the projection
\[ \pi : \{ (t,x) \in \dot B_1 \times \IC \;\; : \;\; P_t(x) = 0 \} \to \dot B_1, \qquad (t,x) \mapsto t \]
is a covering of degree $l \leq k$ and we can find a lift $h : \dot B_1 \to \IC$ of $z \mapsto z^l$ with $P_{z^l} (h(z)) = 0$ and $h(t) = \xi(t^l)$ for $t \in (0,1)$.
Since $h$ is bounded, it can be extended to a holomorphic $\ov h : B_1 \to \IC$ with the same property, which finishes the proof.
\end{proof}
\bigskip

\bibliography{references}	
\bibliographystyle{amsalpha}

\end{document}